\newif\ifdraft\draftfalse
\newif\ifcite
\newif\ifblow
\ifcite\usepackage{showkeys}\else\usepackage[notcite,notref]{showkeys}\fi\fi
\theoremstyle{plain}
\newtheorem{proposition}[equation]{Proposition}
\newtheorem{Property}[equation]{Property}
\newtheorem{theorem}[equation]{Theorem}
\newtheorem{lemma}[equation]{Lemma}
\newtheorem{conjecture}[equation]{Conjecture}
\newtheorem{query}[equation]{Q}
\newtheorem{corollary}[equation]{Corollary}
\newtheorem{prop}[equation]{Proposition}
\theoremstyle{remark}
\theoremstyle{definition}
\newtheorem{definition}[equation]{Definition}
\theoremstyle{remark}
\newtheorem{remark}[equation]{Remark}
\newtheorem{example}[equation]{Example}
\def\define{\def}
\define\R{\mathbb{R}}
\define\Q{\mathbb{Q}}
\define\Im{\operatorname{Im}}
\define\Ext{\operatorname{Ext}}
\DeclareMathOperator{\Coh}{\mathrm{Coh}}
\DeclareMathOperator{\sw}{\mathrm{sw}}
\DeclareMathOperator{\Hdg}{\mathrm{Hdg}}
\DeclareMathOperator{\EXTpan}{\mathrm{EXTPAN}}
\DeclareMathOperator{\EXTPAN}{\mathrm{EXTPAN}}
\DeclareMathOperator{\Extpan}{\mathrm{Extpan}}
\DeclareMathOperator{\can}{\mathrm{can}}
\DeclareMathOperator{\coker}{\mathrm{coker}}
\DeclareMathOperator{\Isom}{\mathrm{Isom}}
\define\Hom{\operatorname{Hom}}
\newcommand\Jac{\operatorname{Jac}}
\newcommand\Pic{\operatorname{Pic}}
\define\sing{\operatorname{sing}}
\define\ad{\rm ad\,}
\newcommand\mer{\mathrm{mer}}
\define\g{\gamma}
\define\beq{\begin{equation}}
\define\eeq{\end{equation}}
\def\bp{\mathbb P}
\def\bean{\begin{eqnarray}}
\def\eean{\end{eqnarray}}
\def\bea{\begin{eqnarray*}}
\def\eea{\end{eqnarray*}}
\def\x0{X_{s_0}}
\def\2p{\bp^1\times \bp^1}
\def\g{\gamma}
\def\dim{{\rm dim}}
\DeclareMathOperator\Tor{\mathrm{Tor}}
\DeclareMathOperator\Biext{\mathrm{Biext}}
\DeclareMathOperator{\sExtpan}{\mathbf{Extpan}}
\DeclareMathOperator{\sExt}{\mathbf{Ext}}
\DeclareMathOperator\sHom{\mathbf{Hom}}
\DeclareMathOperator\sNF{\mathbf{NF}}
\DeclareMathOperator\sANF{\mathbf{ANF}}
\newcommand\metric{|\, \, |}
\def\ben{\begin{equation}}
\def\een{\end{equation}}
\newcommand\delr{\Delta^{r}}
\newcommand\delsr{\Delta^{*r}}
\newcommand{\leqnomode}{\tagsleft@true}
\newcommand{\reqnomode}{\tagsleft@false}
\newcommand\effm{\mathit{effm}}
\newcommand\topp{\mathrm{top}}
\DeclareMathOperator{\hPic}{\widehat{\Pic}}
\DeclareMathOperator\id{\mathrm{id}}
\DeclareMathOperator\EXT{\mathrm{EXT}}
\DeclareMathOperator\IH{\mathrm{IH}}
\newcommand\arr{\ifinner \to\else\longrightarrow\fi}
\newcommand\arrto{\ifinner\mapsto\else\longmapsto\fi}
\newcommand*\Laplace{\mathop{}\!\mathbin\bigtriangleup}
\def\QQ{\mathbb Q}
\def\ZZ{\mathbb Z}
\newcommand\calB{\mathcal B}
\newcommand\calH{\mathcal{H}}
\newcommand\calJ{\mathcal J}
\newcommand\calL{\mathcal L}
\newcommand\calV{\mathcal V}
\newcommand\Ad{\operatorname{Ad}}
\newcommand\nilp{\mathrm{nilp}}
 \newcommand\rH{\mathrm{H}}
\newcommand\pr{\mathop{\mathrm{pr}}\nolimits}
\newcommand\codim{\mathop{\mathrm{codim}}\nolimits}
\newcommand\End{\mathop{\mathrm{End}}\nolimits}
\newcommand\Aut{\mathop{\mathrm{Aut}}\nolimits}
\newcommand\im{\mathop{\mathrm{im}}\nolimits}
\newcommand\cl{\mathop{\mathrm{cl}}\nolimits}
\newcommand\MHS{\mathop{\mathrm{MHS}}\nolimits}
\newcommand\VMHS{\mathop{\mathrm{VMHS}}\nolimits}
\newcommand\NF{\mathop{\mathrm{NF}}\nolimits}
\newcommand\ANF{\mathop{\mathrm{ANF}}\nolimits}
\newcommand\Perv{\mathop{\mathrm{Perv}}}
\newcommand\Gr{\mathop{\mathrm{Gr}}\nolimits}
\newcommand\IC{\mathop{\mathrm{IC}}\nolimits}
\newcommand\Loc{\mathrm{Loc}}
\newcommand\chv{\calH^{\vee}}
\begin{document}
\title{Jumps in the Archimedean Height}
\author{Patrick Brosnan}
\address{Department of Mathematics\\
  University of Maryland\\
  College Park, MD USA}
\email{pbrosnan\@math.umd.edu}
\author{Gregory Pearlstein}
\address{Department of Mathematics\\
Texas A\&M University\\
College Station, TX USA}
\email{gpearl\@math.tamu.edu}
\subjclass{}

\begin{abstract}
  We introduce a pairing on local intersection cohomology groups of
  variations of pure Hodge structure, which we call the asymptotic
  height pairing.  Our original application of this pairing was to
  answer a question on the Ceresa cycle posed by R.~Hain and D.~Reed.
  (This question has since been answered independently by Hain.)  Here
   we show that a certain analytic line bundle,
  called the biextension line bundle, defined in terms of normal
  functions, always extends to any smooth partial compactification of
  the base.  We then show that the asymptotic height pairing on intersection cohomology
  governs the extension of the natural metric on this line bundle
  studied by Hain and Reed (as well as, more recently, by several
  other authors).  We also prove a positivity property of the
  asymptotic height pairing, which generalizes results of a recent
  preprint of J.~Burgos Gill, D.~Holmes and R. de Jong, along with a
  continuity property of the pairing in the normal function case.
  Moreover, we show that the asymptotic height pairing arises in a
  natural way from certain Mumford-Grothendieck biextensions
  associated to normal functions.

\end{abstract}

\maketitle

\tableofcontents

\section{Introduction}
Let $\calH$ be a weight $-1$ polarizable variation of pure Hodge
structure with integral coefficients over a complex manifold $S$.
Assume that the local system $\calH_{\ZZ}$ associated to $\calH$ is
torsion-free.  The elements of the group $\NF(S,\calH)$ of normal
functions into $\calH$ can be thought of either as certain holomorphic
sections of the Griffiths intermediate Jacobian $\calJ(\calH)\arr S$,
or as Yoneda extensions of $\ZZ$ by $\calH$ in the category $\VMHS(S)$
of variations of mixed Hodge structure over $S$, i.e., elements of
$\Ext^1_{\VMHS(S)}(\ZZ,\calH)$.  Define 
$\NF(S,\mathcal H)^{\vee} = \Ext^1_{\VMHS(S)}(\calH,\ZZ(1))$ and 
note that via duality (cf. Proposition~\eqref{p.dual})  
$$
  \NF(S,\mathcal H)^{\vee} \cong\NF(S,\mathcal H^{\vee})
$$ 
where $\mathcal H^{\vee} = \Hom(\mathcal H,\ZZ(1))$.

\par A biextension variation of type $\mathcal H$ is a variation of mixed Hodge 
structure $\mathcal V$ over $S$ equipped with isomorphisms
\begin{equation}
     Gr^W_0(\mathcal V)\cong\mathbb Z(0),\qquad
     Gr^W_{-1}(\mathcal V)\cong\mathcal H,\qquad
     Gr^W_{-2}(\mathcal V)\cong\mathbb Z(1).
     \label{eq:type-H}
\end{equation}
To any biextension variation $\mathcal V$ over $S$ there is an associated 
$C^{\infty}$ function [cf. \eqref{eq:JDG-height}]
\begin{equation}
          h:S\to\mathbb R            \label{eq:delta-height}
        \end{equation}
        originally studied by Hain~\cite{HainBiext}.  The function $h$
        can be 
defined by Deligne's $\delta$-splitting (Theorem $(2.20)$, \cite{CKS86}), which 
measures how far the fibers $\mathcal V_s$ are from being $\mathbb R$-split mixed 
Hodge structures.

\par Given $\nu\in\NF(S,\calH)$ and $\omega\in\NF(S,\mathcal H)^{\vee}$, the set of 
biextension variations of type $(\nu,\omega)$ consists of all biextension variations 
over $S$ such that 
\begin{align}
  (W_0/W_{-2})\calV    &\cong \nu;\label{e.Biextension}\\
  (W_{-1}/W_{-3})\calV  &\cong \omega.\nonumber
\end{align}
For any open subset $U$ of $S$, $B(\nu,\omega)(U)$ is defined to be the set of isomorphism classes
of biextension variations of type $(\nu,\omega)$ over $U$.

\par Hain and Reed noticed that $\calB(\nu,\omega)$ is naturally 
an $\mathcal{O}_S^{\times}$-torsor over $S$.  Therefore the sections of $\calB(\nu,\omega)$ can be identified with the non-vanishing sections of an analytic line bundle $\calL=\calL(\nu,\omega)$ over $S$ which 
Hain and Reed have called the \emph{biextension line bundle}~\cite{HainReedJDG}.

\par In~\cite{HainReedJDG}, Hain and Reed further observed that $\mathcal L(\nu,\omega)$ comes with a 
natural metric defined by the requirement that
\begin{equation}
         |\mathcal V| = \exp(-h(\mathcal V))                     \label{eq:hr-metric}
\end{equation}
for any $\mathcal V\in B(\nu,\omega)$.  In the case where $\nu$ and $\omega$ arise from a family
of homologically trivial algebraic cycles, the function \eqref{eq:delta-height} is the archimedean
height pairing.

\par Suppose that $j:S\hookrightarrow\overline{S}$ is an inclusion of
$S$ as a Zariski open subset of a complex manifold $\overline{S}$.  Let 
$\mathcal U$ be a weight $-1$ polarizable variation of pure Hodge structure 
with integral coefficients on $S$.  Assume that $\mathcal U_{\mathbb Z}$
is torsion free. Then, M.~Saito~\cite{SaitoANF} defined the group of admissible 
normal functions $\ANF(S,\mathcal U)_{\bar S}$  to be set of extensions of $\mathbb Z(0)$ 
by $\mathcal U$ in the category of admissible variations of mixed Hodge 
structure, i.e.
$$
    \ANF(S,\mathcal U)_{\bar S} :=\Ext^1_{\VMHS(S)_{\bar S}^{\rm ad}}(\mathbb Z(0),\mathcal U)
$$
Normal functions of geometric origin (e.g. arising from families of algebraic cycles) are admissible.

\begin{remark} A choice of polarization $Q$ of $\mathcal U$ determines
  a morphism $a_Q:\mathcal{U}\to\mathcal{U}^{\vee}$ given by
  $\beta\mapsto Q(\beta,\_)$.  This induces a map
  $\ANF(S,\mathcal{U})_{\bar S} \to \ANF(S,\mathcal{U}^{\vee})_{\bar
    S}$ which we denote by $\nu\mapsto\nu^{\vee}$. 
\end{remark}      

\par Let $\nu\in\ANF(S,\calH)_{\bar S}$ and $\omega\in\ANF(S,\calH^{\vee})_{\bar S}$.
Then, one can ask two questions.
\begin{query}
\label{query:Q1}
  Does the line bundle $\calL$ extend to a line bundle $\overline{\calL}$
  on $\overline{S}$?  If fact, what we really seek is an extension
  $\overline{\calL}$ satisfying the following property:
  For
  $U$ an open subset of $\overline{S}$, the restrictions of the  non-vanishing sections in
  $\overline{\calL}(U)$ to $\calL(U)$ are biextension variations which
  are admissible as variations of mixed Hodge structure.
\end{query}
\begin{query}
\label{query:Q2}
Does the metric $\metric$ extend to $\overline{\calL}$?
\end{query}

Note that, in the (most interesting) case where $\bar S$ is a smooth, 
projective variety, Q~\ref{query:Q1} can be rephrased (via the GAGA 
principle~\cite{GAGA}) as asking whether or not $\calL$ is algebraic.  In Theorem~\ref{eb1}
we show that Q~\ref{query:Q1} has a positive answer. While there are, in
general, many extensions of $\calL$ to $\bar S$, we
show in \S\ref{sec:merext} that there is a canonical
meromorphic extension.   Moreover, there is a canonical  
choice of extension $\overline{\calL}_{\can}\in \Pic \bar S\otimes\QQ$.
(See Remark~\ref{canext}.)

\par On the other hand, we will give a criterion in terms of the
local intersection cohomology of $\nu$ and  $\omega$ for Q~\ref{query:Q2} 
to have a negative answer and, using this criterion, we will give an example 
where the metric does not extend.

\begin{remark}\label{remexth}
  Note that there are many examples of analytic line bundles over
  smooth complex algebraic varieties $S$ which do not extend to
  $\bar S$ with $\bar S$ as in Q\ref{query:Q1}.  For example,
  in~\cite{SerreProlong}, Serre gives uncountably many examples
  of analytic line bundles on $\mathbb{C}^2\setminus\{(0,0)\}$ which do
  not extend to $\mathbb{C}^2$.   Consequently, it is not obvious 
  that Q\ref{query:Q1} has a positive answer even in the case that
  the codimension of $\bar S\setminus S$ in $\bar S$ is greater than $1$.
  
  D.~Lear's 1991 University of Washington (unpublished) PhD thesis
  treats special cases of Q\ref{query:Q1} and~Q\ref{query:Q2}.  For
  Q\ref{query:Q1}, Lear handles the case where $S$ is a curve and the
  local monodromy is unipotent with Jordan blocks of size at most $2$.
  In this case, there are local monodromy invariant splittings of the
  local systems underlying the variations of Hodge structure
  corresponding to the normal functions $\nu$ and $\omega$.  While
  Lear does not show explicitly that the extension $\overline{\calL}$
  satisfies the property in Q\ref{query:Q1}, we believe that his
  arguments could be used to establish this.   However, we remark that
  in this case, the monodromy invariant splitting makes the arguments
  that we use in \S\ref{sec:mix} much simpler.
\end{remark}

\par We now sketch our results concerning intersection cohomology
and obstructions to Q\ref{query:Q2}.  Let $\bar S=\Delta^r$ be a polydisk with local coordinates 
$(s_1,\dots,s_r)$ and $S=\Delta^{*r}$ be the complement of the divisor
$s_1\dots s_r=0$.  Let $\mathcal H$ be a torsion free variation on $S$
with unipotent monodromy and let $\IH^1(\mathcal{H})$ denote the local intersection 
cohomology group attached to $\mathcal{H}$.  If $i:\{0\}\to \Delta^r$ is the inclusion 
of the origin, then, in terms of the intersection complex $\IC(\mathcal{H})$ on 
$\Delta^r$ associated to $\mathcal{H}$,  $\IH^1(\mathcal{H})=H^{-r+1}(i^*\IC(\mathcal{H}))$.
The group $\IH^1(\mathcal{H})$ is a subgroup of $H^1(\Delta^{*r},\mathcal{H})$.
Moreover, since $\mathcal{H}$ is a variation of weight $-1$,
the natural map (cf. Theorem~\eqref{factor}) 
$$
     \sing:\ANF(\Delta^{*r},\mathcal{H})_{\Delta^r}\to 
            H^1(\Delta^{*r},\mathcal{H})
$$
factors through $\IH^1(\Delta^r,\mathcal{H})$~\cite{BFNP}.
We then define, for each $t\in\mathbb{Q}_{\geq 0}^r$, a pairing
\begin{equation}
h(t):\IH^1(\mathcal{H})\otimes\IH^1(\mathcal{H}^*)\to \mathbb{Q}, 
\label{eq:def-ahp}
\end{equation}
which we call the asymptotic height pairing. 

\begin{remark} It will be clear from context if $h$ refers to the 
height $h(\mathcal V)$ of a variation of mixed Hodge structure, or the
asymptotoic height pairing $h(t)$.
\end{remark}

We will show that the pairing $h$ has the following properties 
(cf. Theorem~\eqref{ahp0}, Proposition~\eqref{ahp1}
and Theorem~\ref{cext}).
\begin{enumerate}
\item It is homogeneous of degree $1$ in $t$: $h(\lambda t)(\alpha,\beta)
=\lambda h(t)(\alpha,\beta)$ for $\lambda\in\mathbb{Q}_{\geq 0}$. 
\item If $(\nu,\omega)\in\ANF(\Delta^{*r},\mathcal{H})\times
  \ANF(\Delta^{*r},\mathcal{H}^{\vee})$, then
  $h(\nu,\omega)(t)$ is a rational function which extends
  continuously to the cone $\mathbb{R}_{\geq 0}^{r}$. 
 
\item For each fixed $t$, $h(t)$ induces a morphism of Hodge structures
with $\mathbb{Q}$ given the pure Hodge structure of weight $0$.  Thus 
$h(t)$ factors through a pairing 
\begin{equation}\label{HodgePairing}
\bar h(t):
\Gr^W_0\IH^1(\mathcal{H})\otimes\Gr^W_0\IH^1(\mathcal{H}^*)\to \mathbb{Q}(0),
\end{equation}
which is a morphism of Hodge structures.
\item Suppose $\phi:\mathcal{H}\otimes\mathcal{H}\to \mathbb{Q}(1)$
  is a polarization.  Then, for $t\in\mathbb{Q}_+^r$,  the pairing on
  $\Gr^W_0\IH^1(\mathcal{H})$
  induced by $\phi$
  and \eqref{HodgePairing} is a polarization.   
\end{enumerate}

\begin{remark} In section \ref{higher-p} we show that $h(t)$ can be generalized to a pairing
\eqref{higherhp2} on the higher intersection cohomology groups 
$\IH^p(\mathcal H)$ as well.
\end{remark}

\par Returning to Q~\ref{query:Q2}, it follows from Theorem~\eqref{mod_m} below that 
the metric $\metric$ on $\mathcal L$ does not extend from $S$ to $\bar S$ if $h(t)$ 
is not linear in $t$.


To illustrate the consequences of the results listed
in (iii) and (iv) above, we give the following
corollary. 

\begin{corollary}\label{e.jumping} $h(t)(\nu,\nu^{\vee})\geq 0$ for all $\nu$,
$t$.  Moreover, $h(t)(\nu,\nu^{\vee}) = 0$ for all $t$ if and only if 
$\text{\rm sing}(\nu)=0$.
\end{corollary}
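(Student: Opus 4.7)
The plan is to derive the corollary directly from properties (iii) and (iv) of the asymptotic height pairing, combined with the standard fact that the singularity of an admissible normal function into a weight $-1$ variation is a Hodge class of weight zero. First I would verify that, for any $\nu\in\ANF(\Delta^{*r},\calH)_{\Delta^r}$, the singularity $\sing(\nu)\in\IH^1(\calH)$ lies in $W_0\IH^1(\calH)$ and defines a class of type $(0,0)$ in $\Gr^W_0\IH^1(\calH)_{\CC}$. This should follow by applying strictness of morphisms of mixed Hodge structures to the connecting homomorphism for the extension $0\to\calH\to\calV_{\nu}\to\ZZ(0)\to 0$ of admissible variations, together with the factorization of $\sing$ through $\IH^1$ recorded in Theorem~\eqref{factor}.

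Next, the remark preceding Q\ref{query:Q1} identifies $\nu^{\vee}=(a_Q)_*\nu$, where $a_Q\colon\calH\to\calH^{\vee}$ is the morphism determined by the polarization, and functoriality of $\sing$ then gives $\sing(\nu^{\vee})=(a_Q)_*\sing(\nu)$. Property (iii) says that $h(t)$ factors through $\Gr^W_0\IH^1$, so
\begin{equation*}
 h(t)(\nu,\nu^{\vee})=\bar h(t)\bigl(\sing(\nu),\,(a_Q)_*\sing(\nu)\bigr).
\end{equation*}
Property (iv) asserts that, for $t\in\QQ_+^r$, the bilinear form $(\alpha,\beta)\mapsto\bar h(t)(\alpha,(a_Q)_*\beta)$ is a polarization of the pure weight-zero Hodge structure $\Gr^W_0\IH^1(\calH)$; the first Hodge--Riemann bilinear relation then makes it positive definite on rational Hodge classes. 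Combined with the first step, this yields $h(t)(\nu,\nu^{\vee})\geq 0$ for $t\in\QQ_+^r$, with strict inequality exactly when $\sing(\nu)\neq 0$. Nonnegativity on the full cone $\RR_{\geq 0}^r$ follows from the continuity statement of property (2), while the equivalence ``$h(t)(\nu,\nu^{\vee})\equiv 0$ iff $\sing(\nu)=0$'' is immediate from the displayed formula and the positive definiteness just obtained.

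The main obstacle is the first step: establishing that $\sing(\nu)$ is a Hodge class in $\Gr^W_0\IH^1(\calH)$, rather than merely a class in $H^1(\Delta^{*r},\calH)$. The analogous statement in ordinary cohomology is classical, but the refinement to local intersection cohomology requires unpacking the mixed Hodge module structure on the intersection complex and invoking admissibility of $\nu$ to control simultaneously the weight and Hodge type of the singularity. Once this is in place, the corollary reduces entirely to Hodge--Riemann positivity transported along properties (iii) and (iv).
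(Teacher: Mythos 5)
Your proposal is correct and follows essentially the same route as the paper: both arguments rest on the fact that $\sing(\nu)$ is a Hodge class in $\IH^1(\calH)$ (a mixed Hodge structure with $W_0\IH^1(\calH)=\IH^1(\calH)$) together with properties (iii) and (iv), which make $\bar h(t)$ a morphism of Hodge structures inducing a polarization of $\Gr^W_0\IH^1(\calH)$, hence positive definite on nonzero rational $(0,0)$ classes. The only differences are presentational --- the paper simply quotes the factorization of $\sing$ through $\Hdg\IH^1(\calH)$ rather than re-deriving it from strictness of the connecting homomorphism, and your use of the continuity property (ii) to pass from $t\in\QQ_+^r$ to the closed cone (as well as your calling the positivity statement the first rather than the second Hodge--Riemann relation) are harmless refinements of what the paper leaves implicit.
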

\begin{proof} $\IH^1(\mathcal{H})$ carries a mixed Hodge structure
such that $W_0\IH^1(\mathcal{H})=\IH^1(\mathcal{H})$.  Furthermore, the map
$\sing:\ANF(\Delta^{*r},\mathcal{H})_{\Delta^r}\to \IH^1(\mathcal{H})$
factors through the subgroup $\Hdg\IH^1(\mathcal{H})$ consisting of 
classes in the image of morphisms in $\Hom_{\MHS}(\mathbb{Z}(0),\IH^1(\mathcal{H}))$.  
Therefore (cf.~\eqref{HodgePairing}), it follows from (iii) and (iv) above 
that $h(t)(\nu,\nu^{\vee})\geq 0$.  Moreover, $h(t)(\nu,\nu^{\vee})=0$ for 
all $t$ if and only if $\text{\rm sing}(\nu)=0$.
\end{proof}

\par Singularities of normal functions can also be defined
in the non-normal crossing case (see~\cite{BFNP} or
\eqref{singdef} below).
Let $S$ is a complex manifold and $j:S\hookrightarrow\bar S$
is an inclusion of $S$ as a Zariski open subset of a complex
manifold $\bar S$. We say that $\nu\in\ANF(S,\mathcal H)_{\bar S}$ 
is {\it singular} at $p\in\bar S - S$ if $\sing_p(\nu)\neq 0$. 

\subsection{Applications}

\subsection*{Griffiths--Green Program}  Let $X$ be a complex projective 
manifold of dimension $2n$ and $L\to X$ be a very ample line bundle.  Let 
$\bar P = |L| = \mathbb{P}(H^0(X,L))$, and let $\mathcal X\subset X\times\bar P$ 
be the incidence variety consisting of pairs $(x,\sigma)\in X\times\bar P$ such that 
$\sigma(x)=0$.  Let $\pi:\mathcal X\to\bar P$ denote the projection map 
$\pi(x,\sigma) = \sigma$.  For $\sigma\in\bar P$ let 
$X_{\sigma} = \pi^{-1}(\sigma)$.  Let $P\subset\bar P$ denote the locus
of points over which $\pi$ is smooth, and $\hat X = \bar P -P$.

\par Let $\zeta$ be a primitive integral, non-torsion Hodge class on $X$ of 
type $(n,n)$.  Then, via Deligne cohomology, $\zeta$ determines an
admissible  normal function
$$
       \nu_{\zeta}:P\to J(\mathcal H)
$$
where $\mathcal H$ is the variation of Hodge structure on the variable 
cohomology of the smooth hyperplane sections of $X$.  

\begin{conjecture}\label{bfnp2} For every triple $(X,L,\zeta)$ as above, there exists an 
integer $d>0$ such that after replacing $L$ by $L^d$, the normal function 
$\nu_{\zeta}$ is singular at some point of $\hat X$.
\end{conjecture}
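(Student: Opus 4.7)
The conjecture is a deep open problem (widely attributed to Griffiths--Green, sharpened by Brosnan--Fang--Nie--Pearlstein), and the plan I would follow is not a full attack on it but rather a reformulation via the asymptotic height pairing that isolates the geometric content still required. The overall strategy is to use Corollary~\ref{e.jumping} to convert the assertion ``$\nu_\zeta$ has no singularity on $\hat X$'' into the identical vanishing of the positive semidefinite pairing $h(t)(\nu_\zeta,\nu_\zeta^\vee)$, and then to contradict this vanishing by producing a boundary degeneration on which the height can be shown, via a period computation using $\zeta$, to be nonzero.

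The first step is local and is essentially the observation of~\cite{BFNP}. After replacing $L$ by $L^d$, I would take a general Lefschetz pencil $\ell\subset|L^d|$ whose generic member is smooth and which meets $\hat X$ transversally at finitely many points corresponding to nodal hypersurface sections. At such a nodal point $p$, the local monodromy is a Picard--Lefschetz transformation along a vanishing cycle $\delta_p$, and a direct residue computation identifies $\sing_p(\nu_\zeta|_\ell)$ with the intersection number $\langle\zeta,\delta_p\rangle$ up to a normalization. Thus the assumption that $\sing(\nu_\zeta)=0$ on $\hat X$ for every $d$ is equivalent to $\zeta$ annihilating every vanishing cycle of every nodal hypersurface section of every $L^d$.

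The second step is where the present paper contributes. Under this assumption, Corollary~\ref{e.jumping} forces $h(t)(\nu_\zeta,\nu_\zeta^\vee)\equiv 0$ along every normal-crossing partial compactification, and by Theorem~\ref{mod_m} this forces the Hain--Reed metric on $\mathcal L(\nu_\zeta,\nu_\zeta^\vee)$ to extend continuously across $\hat X$, so that the biextension height $h(\mathcal V)$ associated to $\nu_\zeta$ remains bounded at every boundary stratum. To obtain a contradiction I would try to construct, for sufficiently large $d$, an admissible one-parameter degeneration inside $|L^d|$ (for instance, a family acquiring many coalescing nodes, or a semistable family in the style of Kulikov) along which an explicit asymptotic analysis of the period of $\zeta$ against the vanishing cycles of the degeneration forces $h(\mathcal V)$ to grow logarithmically. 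Homogeneity (property (i)) and the rationality/continuity statement (property (ii)) for $h(t)$ reduce this growth to a computation of $\bar h(t)$ on the graded piece $\Gr^W_0\IH^1(\mathcal H)$ at the boundary point.

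The hard part, which I do not see how to avoid, is this last computation: one needs to translate the Hodge-theoretic content of the class $\zeta$ into strict positivity of $\bar h(t)$ on the vanishing-cycle contribution to $\Gr^W_0\IH^1(\mathcal H)$ at some boundary point of some $|L^d|$. In the BFNP framework this translation is effected by a Thom--Sebastiani reduction to an isolated hypersurface singularity, where non-vanishing of the singularity of $\nu_\zeta$ becomes a statement about Koszul or Jacobian-ring cohomology; that step remains the principal unresolved input. My expectation is therefore that the present formalism offers a clean functorial restatement of Conjecture~\ref{bfnp2} as a positivity-and-nondegeneracy assertion for the asymptotic height pairing, but that verifying this positivity for a specific $(X,L,\zeta)$ and some $d\gg 0$ still requires effective control on vanishing cycle data beyond the scope of the tools assembled here.
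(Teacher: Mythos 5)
The statement you were asked to prove is a \emph{conjecture}, not a theorem.  The paper does not prove it: Theorem~\ref{bfnp} (citing \cite{GG,BFNP,dCM}) establishes that Conjecture~\ref{bfnp2} is \emph{equivalent} to the Hodge conjecture, which is open.  There is therefore no ``paper's own proof'' to compare against.  You correctly recognize all of this and honestly present your text as a reformulation strategy rather than a proof; that judgment is sound, and the second part of your proposal --- that Corollary~\ref{e.jumping} turns ``$\sing(\nu_\zeta)\equiv 0$'' into the identical vanishing of the positive semidefinite pairing $h(t)(\nu_\zeta,\nu_\zeta^\vee)$, and that producing a boundary degeneration with $h\neq 0$ would therefore suffice --- is in line with how the paper suggests its machinery could interact with the Griffiths--Green program.

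However, your first step contains a real error.  You claim that at a nodal point $p$ on $\hat X$ (a generic point of $\hat X$, which is a divisor in $\bar P$), the singularity $\sing_p(\nu_\zeta|_\ell)$ along a Lefschetz pencil $\ell$ transverse to $\hat X$ detects the pairing $\langle \zeta,\delta_p\rangle$ with the vanishing cycle.  This is incompatible with the paper's own Theorem~\ref{IH1Ad} and with the purity theorem it rests on.  For a one-variable degeneration the partial Koszul complex is $B^0=L \xrightarrow{\;N\;} B^1 = NL$, which is surjective, so $\IH^1_0(\mathcal H)=0$; equivalently, by Lemma~\ref{l.H1Weights} and purity, the weights on source and target of the restriction map to a test curve are disjoint.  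Consequently $\sing_p(\nu_\zeta)$ vanishes at every generic (codimension-one, single-node) point of $\hat X$, and $\sing_p(\nu_\zeta|_\ell)$ vanishes identically for any test curve $\ell$.  The singularity of a normal function can only be nonzero at boundary strata of codimension at least two; in the Griffiths--Green setting these are points of $\hat X$ parametrizing hyperplane sections with worse-than-nodal singularities.  This is exactly why the conjecture refers to ``some point of $\hat X$'' and not to the generic point, and it means your proposed Lefschetz-pencil residue step cannot serve as a starting point.  Any genuine attack along the lines you sketch must begin with a multi-parameter degeneration at a deep stratum, which is where the asymptotic height pairing of \S\ref{s.ed} (and its general form in \S\ref{gjp}) is actually designed to apply.
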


\begin{theorem}\cite{GG,BFNP,dCM}\label{bfnp} The Hodge conjecture is equivalent to 
Conjecture~\eqref{bfnp2}.
\end{theorem}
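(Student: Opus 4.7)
The plan is to prove the two implications separately. For the forward direction ``Hodge conjecture $\Rightarrow$ Conjecture~\ref{bfnp2}'', suppose $\zeta$ is algebraic, so that some multiple of $\zeta$ is represented by an effective algebraic cycle $Z$ of codimension $n$. After replacing $L$ by $L^d$ for $d \gg 0$, the linear system $|L^d|$ is sufficiently rich that a Bertini-type argument produces sections $\sigma$ whose zero loci $X_\sigma$ have a single ordinary node at a preassigned smooth point $x$ of $\mathrm{supp}(Z)$, and such that the tangent cone of $X_\sigma$ at $x$ meets $Z$ transversely. Near such a nodal degeneration, Picard--Lefschetz theory identifies the vanishing cohomology with a class $\delta \in H^{2n-1}(X_{\sigma_0},\mathbb{Z})$ for a nearby smooth section, and a local model computation shows that $\sing_\sigma(\nu_\zeta) \in \IH^1(\mathcal{H})$ is proportional to the intersection pairing of $Z$ against a locally defined Lefschetz thimble. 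A dimension count ensures this pairing is nonzero for appropriate $d$ and generic $x$.

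For the converse ``Conjecture~\ref{bfnp2} $\Rightarrow$ Hodge conjecture'', I would invoke the decomposition theorem of Beilinson--Bernstein--Deligne in Saito's form for pure Hodge modules, applied to the projection $\pi:\mathcal{X} \to \bar{P}$. This yields a canonical splitting of $R\pi_*\mathbb{Q}_{\mathcal{X}}[\dim\mathcal{X}]$ into a direct sum of intersection cohomology complexes, which allows the local intersection cohomology $\IH^1(\mathcal{H}_p)$ at a point $p \in \hat{X}$ to be related back to $H^{2n}(X,\mathbb{Q})$. Since the map $\sing$ on admissible normal functions factors through $\Hdg\, \IH^1(\mathcal{H})$ (as noted in the discussion preceding Corollary~\ref{e.jumping}), a nonzero singularity $\sing_p(\nu_\zeta)$ produces a nontrivial Hodge class in the ambient cohomology of $X$, and the construction of $\nu_\zeta$ via Deligne cohomology identifies this class with $\zeta$ itself up to a nonzero scalar arising from the Lefschetz data. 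Following de Cataldo--Migliorini, one then argues that Hodge classes arising as singularities of admissible normal functions of this geometric origin are forced to be in the image of the cycle class map, so $\zeta$ is algebraic.

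The principal obstacle lies in the converse direction, specifically in the identification of $\sing_p(\nu_\zeta)$ with the original primitive Hodge class $\zeta$ on $X$. This requires a careful compatibility between Saito's decomposition and the Deligne-cohomology construction of $\nu_\zeta$, together with a delicate argument that local intersection cohomology classes occurring as singularities of admissible normal functions can be lifted to algebraic cycles on the total space. By contrast, the forward direction is essentially classical, the only subtlety being the need for $d \gg 0$ to guarantee the existence of nodal hyperplane sections with the required transversality at chosen points of $\mathrm{supp}(Z)$, together with the nonvanishing of the resulting vanishing-cycle intersection number.
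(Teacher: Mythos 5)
The paper does not prove this theorem; it attributes the equivalence to Green--Griffiths, to Brosnan--Fang--Nie--Pearlstein, and to de Cataldo--Migliorini, so your proposal can only be compared against what those works actually do. Measured against them, both halves of your sketch have problems.

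In the forward direction you write that ``some multiple of $\zeta$ is represented by an effective algebraic cycle $Z$ of codimension $n$.'' This is impossible. Since $\zeta$ is primitive of middle degree and $H := c_1(L)$ is the hyperplane class, one has $\zeta\cup H = 0$ in $H^{2n+2}(X)$ and hence $\zeta\cup H^n = 0$; but an effective cycle of codimension $n$ has strictly positive degree $[Z]\cdot H^n$. So the Hodge conjecture only produces a cycle $Z = Z_+ - Z_-$ which is a \emph{difference} of effective cycles, and the nodal/Bertini argument must be carried out keeping that in mind (placing the node at a point where the supports of $Z_+$ and $Z_-$ do not conspire to cancel the vanishing-cycle pairing). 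The structural shape of your forward argument---construct a section of $L^d$ with an ordinary node at a well-chosen point of $|Z|$, then compute the vanishing-cycle intersection number---is in the spirit of what Green--Griffiths do, but the effectivity claim is an actual error rather than a harmless simplification.

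The converse direction is where the proposal genuinely breaks down. You argue that a nonzero singularity $\sing_p(\nu_\zeta)$ ``produces a nontrivial Hodge class in the ambient cohomology of $X$'' which is ``identified with $\zeta$ itself,'' and then appeal to a principle that Hodge classes arising as singularities of geometric normal functions are cycle classes. The first step only tells you that $\zeta$ is a nontrivial Hodge class, which is part of the hypothesis and so gains nothing; the second step is not a theorem in any of the cited papers, and if taken at face value it is essentially a restatement of the conclusion. What actually makes the converse direction work is an \emph{induction on dimension} following the Lefschetz program: one first reduces the Hodge conjecture, by hard Lefschetz and the Lefschetz hyperplane theorem, to the statement that primitive middle Hodge classes on $2n$-dimensional varieties are algebraic, assuming the Hodge conjecture already known in all lower dimensions. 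Given such a $\zeta$, one applies Conjecture~\ref{bfnp2} to produce a singular point $p$; the local intersection cohomology $\IH^1_p(\calH)$ in which $\sing_p(\nu_\zeta)$ lives is related, via the decomposition theorem of~\cite{dCM} and the BFNP analysis, to Hodge classes on a resolution of the singular fiber $X_p$ (a variety of dimension $2n-1$); by the inductive hypothesis those Hodge classes are algebraic, and one then pushes the resulting cycle back up to $X$ using the Lefschetz apparatus. Your proposal invokes the decomposition theorem but omits the induction entirely, and without it there is no mechanism for converting a nonzero local intersection cohomology class into an algebraic cycle. That missing induction is not a technicality but the core of the Green--Griffiths argument.
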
 

 \par By Theorem $(7.18)$ of~\cite{BFNP}, Conjecture~\eqref{bfnp2} is equivalent to the
following statement:

\begin{conjecture}\label{bfnp3} For every triple $(X,L,\zeta)$ as above, there exists
$d>0$ and a resolution of singularities $\bar S\to\bar P$ of $\hat X$ for 
$\bar P = |L^d|$ such that $f^*\nu_{\zeta}$ is singular at some point of $\bar S - S$.
\end{conjecture}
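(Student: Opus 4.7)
By the equivalence stated in Theorem~\eqref{bfnp}, Conjecture~\eqref{bfnp3} is merely a reformulation of the full Hodge conjecture for codimension-$n$ classes on the $2n$-fold $X$, so any complete proof would have to match the depth of that problem. My plan is to exploit the biconditional in Corollary~\ref{e.jumping}: singularity of $f^*\nu_\zeta$ at a point $p\in\bar S\setminus S$ is equivalent to nonvanishing at $p$ of the asymptotic height pairing $h(t)(f^*\nu_\zeta,(f^*\nu_\zeta)^{\vee})$. Thus the task reduces to showing that, after replacing $L$ by a sufficiently high power $L^d$ and passing to some resolution $f:\bar S\to\bar P$ of $\hat X$, this asymptotic height pairing is not identically zero on $\bar S\setminus S$.

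I would proceed by contradiction. Assume that for all $d$ and all resolutions $f$, the pairing $h(t)(f^*\nu_\zeta,(f^*\nu_\zeta)^{\vee})$ vanishes identically along every boundary stratum. Step one: combine this with Theorem~\eqref{mod_m} to conclude that the Hain--Reed metric on the biextension line bundle $\calL(\nu_\zeta,\nu_\zeta^\vee)$ extends continuously across the whole of $\bar S$. Step two: compare this continuous metric extension with the canonical extension $\overline{\calL}_{\can}\in\Pic(\bar S)\otimes\QQ$ produced by Theorem~\ref{eb1}; triviality of all boundary singularities should pin down $\overline{\calL}_{\can}$ up to numerical equivalence. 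Step three: let $d$ vary and compute, using the Hain--Reed asymptotic analysis of heights of cycles in families, the behavior of $\deg\overline{\calL}_{\can}|_C$ on a generic curve $C\subset\bar P$. The polarization property (iv) listed above, applied fiberwise, should force this degree to grow nontrivially in $d$ unless $\zeta$ is the class of an algebraic cycle on $X$, yielding the desired contradiction.

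The decisive obstacle is step three: passing from the absence of metric singularities to the algebraicity of $\zeta$. In codimension one this step is supplied by Lefschetz's $(1,1)$-theorem, but no such mechanism is available in higher codimension, which is exactly why Conjecture~\ref{bfnp3} remains open. A partial attack would be to isolate the local contribution of the asymptotic height pairing at each boundary stratum --- using the homogeneity (i) and the continuity (ii) properties of $h(t)$ to rule out cancellations between strata --- and to translate the resulting stratum-by-stratum vanishing into a rigidity statement for $\nu_\zeta$. However, the final algebraization step seems to lie outside what can be extracted from the biextension formalism of this paper alone, and would require an essentially new geometric input relating Hain--Reed heights to Chow groups.
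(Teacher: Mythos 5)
You should note first that the statement you were asked to prove is a \emph{conjecture} in the paper, not a theorem: the paper records that Conjecture~\ref{bfnp3} is equivalent to Conjecture~\ref{bfnp2} (via Theorem $(7.18)$ of~\cite{BFNP}), and hence, by Theorem~\ref{bfnp}, to the Hodge conjecture itself. The paper therefore offers no proof, and none should be expected; the only content the paper extracts from it is the implication that, granting the Hodge conjecture, the biextension metric on $\mathcal{L}(\nu_{\zeta},\nu_{\zeta}^{\vee})$ fails to extend for some $d$ and some resolution $\bar S$. Your proposal is honest about this, and your use of Corollary~\ref{e.jumping} to translate ``singular at a boundary point'' into ``the asymptotic height pairing $h(t)(\nu,\nu^{\vee})$ is not identically zero'' is exactly the dictionary the paper sets up.

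The genuine gap is the one you yourself flag, and it is worth being precise about why it is fatal rather than merely hard. Your step three asks to deduce, from the hypothesis that $h(t)(f^*\nu_{\zeta},(f^*\nu_{\zeta})^{\vee})$ vanishes for every $d$ and every resolution, that $\zeta$ is algebraic (so that the known direction of Theorem~\ref{bfnp} produces a singularity and a contradiction). But ``no singularities for any $d$ implies $\zeta$ is not a Hodge class violating HC'' is logically the entire content of the equivalence you are trying to reprove; nothing in the biextension formalism --- Theorem~\ref{mod_m}, Theorem~\ref{eb1}, the positivity (iv), or the canonical extension of Remark~\ref{canext} --- produces an algebraic cycle from the extension of a metric, any more than the $L^2$-boundedness of a normal function produces one. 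There are also smaller technical slips in steps one and two: vanishing of $h(t)$ combined with Theorem~\ref{mod_m} gives boundedness of the \emph{renormalized} height $\bar h(\mathcal V)=h(\mathcal V)+\sum_j\mu_j\log|s_j|$, i.e.\ continuity of the metric on the twisted extension $\overline{\mathcal{L}}$ determined by the $\mu_j$ (and even that requires the codimension-two and plurisubharmonicity arguments of \S\ref{sec:eandr}), not a continuous extension of the original metric across all of $\bar S$; and the assertion that triviality of the boundary jumps ``pins down $\overline{\mathcal{L}}_{\can}$ up to numerical equivalence'' is not justified --- the extension class depends on the choice of coset representatives $\tilde\tau_i$ in Theorem~\ref{eb1}, and no degree computation on test curves in $|L^d|$ is carried out or available in the paper. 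So the proposal is a reasonable heuristic sketch of why metric jumping detects the Hodge conjecture, but it is not, and could not be by these methods alone, a proof of the stated conjecture.
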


\par So, using Corollary~\ref{e.jumping}, the Hodge conjecture would
imply that there exists a $d>0$ and an $\bar S$ as above such that the
biextension metric on $\mathcal{L}(\nu_{\zeta},\nu_{\zeta}^{\vee})$
does not extend to
$\overline{\mathcal{L}}(\nu_{\zeta},\nu_{\zeta}^{\vee})$.

\subsection*{Ceresa Cycle} Let $C$ be a smooth projective curve of genus 
$g>2$.  Pick a point $p\in C$.  Then, the maps $x\mapsto x-p$ and 
$x\mapsto p-x$ give two embeddings of $C$ into $\Jac(C)$. The difference,
denoted $C-C^-$, is a homologically trivial algebraic 1-cycle on $\Jac(C)$ 
which has a well defined image in the intermediate Jacobian of the primitive 
part of the cohomology of $\Jac(C)$, independent of $p$.

\par Let $\mathcal M_g$ denote the moduli space of smooth projective
curves of genus $g>2$.  Then, application of the previous construction
determines a normal function $\nu = \nu_{C-C^-}$ over
$\mathcal M_g$.  In Theorem $(8.3)$ of~\cite{HainMSRI}, Richard
Hain showed that is the \lq\lq basic\rq\rq{} normal function over
$\mathcal M_g$.

\par In~\cite{HainReedJDG}, Hain and Reed showed explicitly that 
$\mathcal L(\nu,\nu^{\vee})$ extended to a line bundle over 
$\overline{\mathcal M_g}$, and asked if the metric also extends to 
$\overline{\mathcal M_g}$.  Our calculations show that it does not.   
(Hain has since given an independent proof of this result.  
See~\cite{HainJump}.) 

\subsection{Existence and Regularity}\label{sec:eandr} 

%

\subsection*{Existence of $\mathcal L$,
  Local Case} In the case where $\mathcal H$
is a torsion free variation of pure Hodge structure of weight $-1$
over a punctured disk $\Delta^{*r}$,
it follows from the results of \S\ref{sec:mix} that $\mathcal L$
has an analytic extension over $\Delta^r$. (As mentioned above, this is proved 
in greater generality 
in Theorem~\ref{eb1}.)

\par Moreover, assuming the monodromy of $\mathcal H$ is unipotent, the biextension metric has a 
continuous extension on the complement of a codimension 2 subset after a suitable 
renormalization.  In the case where $\omega = \nu^{\vee}$ the biextension metric has a 
plurisubharmonic extension as described below.  More precisely, given a biextension variation $\mathcal V$ 
such that $Gr^W_{-1}(\mathcal V)=\mathcal H$ has unipotent monodromy, it follows 
from~(Theorem $(5.37)$~\cite{GregJDG}) that there exists a unique element 
\begin{equation}
          \mu(\mathcal V)\in V_r                
\end{equation}
with the following property:

\begin{Property}
Let $\phi:\Delta\to \Delta^r$ be a holomorphic map such that
$\phi(0)=0$ and $\phi(\Delta^*)\subset\Delta^{*r}$. Suppose that  $\phi(s)=(\phi_1(s),\ldots, \phi_r(s))$ 
with $\phi_i(s)=a_is^{m_i} + \text{higher order terms}$ and $m_j>0$. 
Then 
\begin{equation}
         h(\mathcal V_{\phi(s)})\sim -\mu(\mathcal V)(m_1,\ldots,m_r)\log|s|    
\label{eq:mu-def}
\end{equation}
for $s$ close to $0$. (In other words, the difference between the left
side and the right is bounded near $0$.)
\end{Property}


\begin{remark}\label{rat-mu} The value of $\mu(m)$ depends only on the local monodromy 
logarithm $N(m)=\sum_j\, m_j N_j$, the weight filtration $W(V_{\mathbb Q})$ and 
the choice of positive generators $1\in Gr^W_0(V_{\mathbb Q})$ and $1^{\vee}\in Gr^W_{-2}(V_{\mathbb Q})$. 
More precisely, as long as $m_1,\dots,m_r\geq 0$, the admissibility of $\mathcal V$
implies the existence of a rational direct sum decomposition
\begin{equation}
        V_{\mathbb Q} = \mathbb Q v_0\oplus U_{\mathbb Q}\oplus\mathbb Q v_{-2}
        \label{eq:rat-mu-2}
\end{equation}
such that
\begin{itemize}
\item[---] $v_0$ projects to $1\in Gr^W_0$ and $N(m)(v_0)\in W_{-2}$;
\item[---] $U_{\mathbb Q}$ is an $N(m)$-invariant subspace of $W_{-1}$;
\item[---] $v_{-2}\in W_{-2}$ projects to $1^{\vee}$.
\end{itemize}
Then, by Theorem $(5.19)$ of~\cite{GregJDG}, for any such decomposition \eqref{eq:rat-mu-2}
we have $N(m)v_0 = \mu(m)v_{-2}$.  Indeed, in the language of~\cite{GregJDG}, such a 
decomposition of $V_{\mathbb Q}$ defines a grading $Y'$ of $W$ (cf. \S \ref{sec:vmhs}) such that
$[Y',N]$ lowers $W$ by $2$. For future use, we define $\mu_j = \mu(\epsilon_j)$ where $\epsilon_j$ 
is the $j$'th unit vector.
\end{remark}

\begin{theorem}\label{mod_m} If $\mathcal V$ is a biextension variation of type $(\nu,\omega)$ 
over $\Delta^{*r}$ with unipotent monodromy then 
$h(m)(\nu,\omega) = \sum_j\, m_j\mu_j-\mu(\mathcal V)$.
\end{theorem}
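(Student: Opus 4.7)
The plan is to unfold both sides in the limit mixed Hodge structure (LMHS) of $\mathcal V$ at the origin of $\Delta^r$ and match them using Remark~\ref{rat-mu} together with the definition of the asymptotic height pairing.

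First I would unpack the right-hand side. By Remark~\ref{rat-mu}, $\mu(\mathcal V)(m)$ is characterized by $N(m) v_0 = \mu(\mathcal V)(m) v_{-2}$ for a rational splitting $V_{\mathbb{Q}} = \mathbb{Q} v_0 \oplus U_{\mathbb{Q}} \oplus \mathbb{Q} v_{-2}$ adapted to $N(m) = \sum_j m_j N_j$. Applying the same principle to each $\epsilon_j$ yields splittings $v_0^{(j)}$ with $N_j v_0^{(j)} = \mu_j v_{-2}$. Because each $v_0^{(j)}$ and $v_0$ projects to $1 \in \Gr^W_0$, the discrepancies $w_j := v_0^{(j)} - v_0$ lie in $W_{-1} V_{\mathbb{Q}}$, and
\[
  \Bigl(\sum_j m_j \mu_j - \mu(\mathcal V)(m)\Bigr) v_{-2} \;=\; \sum_j m_j N_j w_j .
\]
Any two biextension lifts of $(\nu, \omega)$ differ by a biextension with trivial graded pieces $\Gr^W_0$ and $\Gr^W_{-2}$; such a modification shifts $v_0^{(j)}$ and $v_0$ by the same element of $W_{-1}$, so the right-hand side depends only on $(\nu, \omega)$ and not on the chosen lift $\mathcal V$.

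Next I would identify $\sum_j m_j N_j w_j$ with $h(m)(\nu,\omega)$. The singularity classes $\sing(\nu) \in \IH^1(\mathcal H)$ and $\sing(\omega) \in \IH^1(\mathcal H^{\vee})$ admit explicit representatives drawn from the nilpotent orbit of $\mathcal V$: $\sing(\nu)$ is represented by the cocycle $(N_j v_0^{(j)})_j$ modulo $\mathbb{Q} v_{-2}$, and $\sing(\omega)$ by the dual family determined by $v_{-2}$. Contracting these representatives via the duality pairing $\mathcal H \otimes \mathcal H^{\vee} \to \mathbb{Q}(1)$, weighting by $m_j$, and identifying $\Gr^W_{-2} \cong \mathbb{Q}$ recovers exactly $\sum_j m_j N_j w_j$. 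Combined with the definition~\eqref{eq:def-ahp} of the asymptotic height pairing, this yields the desired identity.

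The main obstacle is the second step: $h(m)$ is built from intersection-cohomology data in the Cattani--Kaplan--Schmid framework, while $\mu(\mathcal V)$ and the $\mu_j$ are defined via Deligne's $\delta$-splitting and Hain's biextension height as in~\cite{HainBiext,CKS86}. Bridging these formulations requires matching the asymptotic~\eqref{eq:mu-def} along curves $\phi(s) = (a_1 s^{m_1}, \ldots, a_r s^{m_r})$ with the CKS pairing on $\IH^1$; once this comparison is in place, the rest reduces to the linear-algebra identity above inside the LMHS.
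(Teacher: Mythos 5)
Your plan mirrors the paper's own proof (Theorem~\eqref{JDG5}): choose a rational splitting of $W$, use Remark~\eqref{rat-mu} together with Lemma~\eqref{stronger-mu} to read $\mu(\mathcal V)(m)$ and each $\mu_j$ off lifts $v_0, v_0^{(j)}$ of $1\in\Gr^W_0$ carried into $W_{-2}$ by $N(m)$ and $N_j$ respectively, and then compare with the asymptotic height pairing.  Your opening identity
\[
\Bigl(\sum_j m_j\mu_j - \mu(\mathcal V)(m)\Bigr)v_{-2} = \sum_j m_j N_j w_j,\qquad w_j := v_0^{(j)} - v_0\in W_{-1},
\]
is correct, and is essentially the paper's computation with $e_0(t)=e_0-l(t)$ and $e_0-l_j$ repackaged.

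The step where you identify $\sum_j m_j N_j w_j$ with $h(m)(\nu,\omega)$ has a computational error and a genuine gap.  The error: by construction $N_j v_0^{(j)}=\mu_j v_{-2}\in\mathbb{Q}v_{-2}$, so the cocycle $(N_j v_0^{(j)} \bmod \mathbb{Q}v_{-2})_j$ is identically zero and cannot represent $\sing(\nu)$ when $\sing(\nu)\neq 0$.  The correct representative uses the \emph{single} lift $v_0$, independent of $j$: the cocycle $(N_j v_0\bmod W_{-2})_j=(\alpha_j-\bar N_j l(m))_j$, which, crucially, lies in $L^1_m\mathcal H=\ker\delta_m\cap\ker d$.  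The gap: ``contracting via the duality pairing weighted by $m_j$'' does not compute $h(m)$.  You need Proposition~\eqref{ahp1}, which gives $h(m)(\alpha,\beta)=\sum_j m_j(\alpha_j-\bar N_j l(m),\beta_j)_{\bar N_j}$ using the \emph{twisted} pairing $(\,,\,)_{\bar N_j}$ of \S\ref{ed.0} on $\bar N_j H\otimes\bar N_j^*H^*$, not the naive pairing $\mathcal H\otimes\mathcal H^{\vee}\to\mathbb{Q}(1)$; your concluding appeal to ``definition~\eqref{eq:def-ahp}'' cannot supply this, as that line only declares the source and target of $h(t)$.  With Proposition~\eqref{ahp1} cited the argument does close via your $w_j$: one checks $\sum_j m_j\bar N_j(l(m)-l_j)=\alpha(m)-\alpha(m)=0$, whence $\sum_j m_j N_j w_j=-\sum_j m_j(l(m)-l_j,\beta_j)v_{-2}=h(m)(\alpha,\beta)v_{-2}$.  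Finally, your diagnosis of ``the main obstacle'' is misplaced: Remark~\eqref{rat-mu} and Theorem~\eqref{mu} have already done the translation between Hain's biextension height (and the $\delta$-splitting) and the limit mixed Hodge structure; the piece actually missing from your sketch is the Koszul-complex formula of Proposition~\eqref{ahp1}, a purely algebraic computation, not an asymptotic comparison along test curves.
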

\begin{proof} See Theorem~\eqref{JDG5}
\end{proof}

\subsection*{Regularity of the Biextension Metric}  Let $\mathcal V$ be a 
biextension variation with unipotent monodromy defined on the complement 
of the divisor $D=\{s_1\cdots s_k=0\}$ in a polydisk $\Delta^r$ with local 
coordinates $(s_1,\dots,s_r)$.  In this case, it follows by Theorem $(5.8)$ 
of~\cite{HP} that $\log|\mathcal V|$ is locally $L^1$.

\par Let $E$ denote the singular locus of $D$ and $D_j\subset D$ denote the divisor 
defined by $s_j=0$.  Given a point $p\in (D-E)\cap D_j$ let $\Delta(p)\subset\Delta^r$ 
denote the disk defined by $s_i = s_i(p)$ for $i\neq j$.  Then, $\mathcal V$ restricts 
to an admissible biextension variation on $\Delta^*(p) = \Delta(p)-\{p\}$ with unipotent 
monodromy, and hence
$$
        h(\left.\mathcal V\right|_{\Delta^*(p)})
               \sim -\mu_j\log|s_j|
$$
in the notation of Remark~\eqref{rat-mu}.
 
\par Define 
\begin{equation}
        \bar h(\mathcal V) = h(\mathcal V) 
         +\sum_{j=1}^k\, \mu_j\log|s_j|  :\Delta^r-D\to [-\infty,\infty)   
		\label{eq:smooth-nc}
\end{equation}
Then, for any point $p\in D-E$, by restriction to a small polydisk containing $p$, 
we can reduce the study of the regularity of $h(\mathcal V)$ at $p$ to the case 
$k=1$.  

\par Assume therefore that $k=1$ and $\mathcal V$ has unipotent monodromy 
about $s_1=0$.   

\begin{theorem}\label{sm-continous-0} Let $\mathcal V$ be 
a biextension variation with unipotent monodromy defined on the complement 
of the divisor $s_1=0$ in $\Delta^r$.  Then, $\bar h(\mathcal V)$ extends
continuously to $\Delta^r$.  
\end{theorem}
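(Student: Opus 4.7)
Fix a point $p_0 = (0, q) \in \Delta^r$ with $q = (s_2^0,\dots,s_r^0)$. By a local analysis, it suffices to show that $\bar h(\mathcal V)$ extends continuously across $p_0$. Let $N_1$ denote the logarithm of the local monodromy around $\{s_1=0\}$, which is nilpotent by hypothesis, and let $(F_\infty, W)$ denote the limit mixed Hodge structure at $p_0$ furnished by the admissibility of $\mathcal V$. By the nilpotent orbit theorem for admissible variations of mixed Hodge structure (as extended by Kashiwara and used throughout \cite{GregJDG}), after shrinking the polydisk around $p_0$ we may write the period map in the form
\begin{equation*}
F(s_1,s') \;=\; \exp\!\bigl(\tfrac{\log s_1}{2\pi i} N_1\bigr)\,\exp(\Gamma(s_1,s'))\,F_\infty(s'),
\end{equation*}
where $s' = (s_2,\dots,s_r)$, $\Gamma$ is holomorphic in all variables and vanishes to appropriate order at $s_1 = 0$, and $F_\infty(s')$ is a holomorphically varying family of limit Hodge filtrations, with $N_1$ acting horizontally.

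\noindent The strategy is to compare the height function $h(\mathcal V)$ against the height function $h_0$ of the nilpotent orbit $\mathcal V_0$ whose period map is $\exp\!\bigl(\tfrac{\log s_1}{2\pi i}N_1\bigr)F_\infty(s')$. For fixed $s'$, the restriction of $\mathcal V_0$ to a small punctured disk in the $s_1$-direction is the nilpotent orbit of a biextension variation with unipotent monodromy. By Remark~\ref{rat-mu} the constant $\mu_1 = \mu(\epsilon_1)$ depends only on $N_1$ and $W$, both of which are independent of $s'$, and by the explicit description of Deligne's $\delta$-splitting on a nilpotent orbit one computes
\begin{equation*}
h_0(s_1,s') \;=\; -\mu_1 \log|s_1| \;+\; c(s'),
\end{equation*}
where $c(s')$ depends continuously (in fact real-analytically) on $s'$. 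Thus $h_0 + \mu_1\log|s_1|$ is continuous at $p_0$.

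\noindent It remains to show that the correction $h(\mathcal V) - h_0$ extends continuously across $\{s_1=0\}$. This is where the hypothesis on $\Gamma$ plays its decisive role: because $\Gamma$ is holomorphic and takes values in the relevant negative-weight piece of the grading algebra, the exponential $\exp(\Gamma(s_1,s'))$ is a bounded real-analytic correction to the Hodge filtration of $\mathcal V_0$, with a well-defined limit as $s_1 \to 0$. Since Deligne's $\delta$-splitting is a real-analytic function of the Hodge filtration on the open subset of the flag variety where the splitting is defined, and since the weight filtration and real structure are preserved, $h(\mathcal V) - h_0$ is a bounded real-analytic function on the polydisk minus $\{s_1=0\}$ admitting a continuous extension across the divisor. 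Combining with the previous paragraph gives the continuous extension of $\bar h(\mathcal V)$.

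\noindent\textbf{Main obstacle.} The crux is the continuity of the correction $h(\mathcal V) - h_0$ across $\{s_1 = 0\}$: while the nilpotent orbit theorem gives a holomorphic comparison of Hodge filtrations, the $\delta$-splitting is only real-analytic, and one must verify that the asymptotic growth of $h$ and $h_0$ cancels exactly rather than leaving an uncontrolled term of order $\log|s_1|$. This requires a careful tracking of which component of $\Gamma$ enters the formula for $\delta$, using the compatibility of the nilpotent orbit theorem with the weight filtration together with the structure of admissible variations at the boundary (as in the regularity analysis preceding Theorem~\ref{mod_m} and in \cite{HP}).
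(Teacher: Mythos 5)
Your high-level plan — compare $h(\mathcal V)$ with the height $h_0$ of the one-variable nilpotent orbit in the $s_1$-direction, show $h_0+\mu_1\log|s_1|$ is continuous, and then show the correction $h(\mathcal V)-h_0$ is continuous — is the right shape of argument, and the paper's proof lives in roughly this framework (via the local normal form $F(z_1,s') = e^{z_1 N_1}e^{\Gamma(s)}F_\infty$). But the step that carries all the difficulty is exactly the one you leave unresolved, and the heuristic you offer for it does not work.

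The claim that ``$\exp(\Gamma(s_1,s'))$ is a bounded real-analytic correction to the Hodge filtration of $\mathcal V_0$, and since $\delta$ is real-analytic on the flag variety, $h(\mathcal V)-h_0$ is bounded and continuous'' is not a valid inference. The $\delta$-invariant is real-analytic on the classifying space $\mathcal M$, but as $s_1\to 0$ both $F(z_1,s')$ and $e^{z_1 N_1}F_\infty$ leave every compact subset of $\mathcal M$ (since $\operatorname{Im}(z_1)\to\infty$). A uniformly bounded multiplicative perturbation $e^{\Gamma}$ of a filtration escaping to infinity can perfectly well produce a difference of $\delta$-invariants that grows like $\log|s_1|$ or worse; boundedness of $\Gamma$ alone does not control the cross terms between $\Gamma$ and the unbounded factor $e^{iyN_1}$. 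Your ``Main obstacle'' paragraph acknowledges this, but then gestures at ``careful tracking'' without supplying the estimate, so the gap remains.

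What the paper actually uses to close this gap is the $\mathrm{SL}_2$-orbit theorem for biextension variations (Theorem 4.2 of \cite{GregJDG}, restated as Theorem~\ref{SL2-JDG}), together with the auxiliary Lemma~\ref{g-twist}. The key mechanism is a rescaling: one conjugates by $y^{H/2}$ (where $H=\tilde Y - Y$ is the semisimple element from the associated $\mathrm{sl}_2$-triple) and uses the convergent series expansion of the $\mathrm{SL}_2$-orbit function $g(y)$ to show that $g^{\ddag}(y) = \mathrm{Ad}(y^{H/2})g(y)$ has a convergent expansion in powers of $y^{-1/2}$ with an explicit limit. Crucially, the paper also splits $\Gamma(s)=\Gamma^1+\Gamma_1$ with $\Gamma_1=\Gamma(0,s')$ (which commutes with $N_1$ by \cite{HP}) and $\Gamma^1$ divisible by $s_1$; after conjugation by $y^{H/2}$ each piece decays or converges: $\mathrm{Ad}(y^{H/2})\Gamma^1$ is a sum of terms $(\log|s_1|)^{k/2}s_1\cdot(\text{holomorphic})$ tending to zero, and $\mathrm{Ad}(y^{H/2})\Gamma_1$ is a sum of terms $(\log|s_1|)^{-k/2}\cdot(\text{holomorphic in }s')$ with $k\geq 0$. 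Only after this controlled rescaling does the argument that ``$\delta$ is continuous on the flag variety'' apply, because now the filtration whose $\delta$-invariant is being taken genuinely stays in a compact set and converges. Without the $\mathrm{SL}_2$-orbit expansion (or some substitute providing the same quantitative control of the period map's approach to the boundary) the cancellation of the $\log|s_1|$ terms cannot be verified, and your proposal does not supply that input.
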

\begin{proof} See Theorem~\eqref{sm-continuous}.
\end{proof}

\begin{remark} The proof of Theorem \eqref{sm-continous-0} involves showing that 
in the case of a biextension over $\Delta^*$
$$
        \lim_{s\to 0}\, h(\mathcal V) + \mu\log(s) = ht(N,F,W)
$$
where $(e^{zN}F,W)$ is the associated nilpotent orbit. See Theorem \eqref{disk} for 
details.
\end{remark}

\par Returning now to the general case $\Delta^r-D$ where $D$ is given by the vanishing
of $s_1\cdots s_k=0$ we first recall the following result in the case where 
$\omega = \nu^{\vee}$:

\begin{theorem}\cite{PP} If $\mathcal V$ is 
a biextension variation of type $(\nu,\nu^{\vee})$ on a complex manifold 
$S$ then $h(\mathcal V)$ is plurisubharmonic function on $S$.
\end{theorem}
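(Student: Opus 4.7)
The plan is to rewrite $h(\mathcal V)$ as a squared Hodge norm of a (smooth, non-holomorphic) section of a Hodge bundle and then derive plurisubharmonicity from a curvature computation that uses the positivity of the polarization together with Griffiths transversality. Self-duality of the biextension (the hypothesis $\omega=\nu^{\vee}$) is what makes the relevant Hermitian pairing definite.

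The first step is a Hodge-theoretic formula for $h(\mathcal V)$. Working fiberwise, choose an integral lift $\tilde 1\in\mathcal V$ of the generator $1\in\Gr^W_0\mathcal V=\ZZ(0)$ and write $\tilde 1=1+u+v$ with $u\in W_{-1}\mathcal V_{\CC}$ and $v\in W_{-2}\mathcal V_{\CC}$. Let $u_{\mathcal H}$ denote the image of $u$ in $\mathcal H_{\CC}/F^{0}\mathcal H$. Using Deligne's bigraded splitting $\mathcal V_{\CC}=\bigoplus I^{p,q}$ and comparing with the $\delta$-splitting one obtains, in the self-dual case $\omega=\nu^{\vee}$, the identity
\begin{equation*}
    h(\mathcal V)=Q_{\mathcal H}\bigl(u_{\mathcal H},\overline{u_{\mathcal H}}\bigr),
\end{equation*}
where $Q_{\mathcal H}$ is the Hermitian form on $\mathcal H_{\CC}/F^{0}\mathcal H$ induced by the polarization of $\mathcal H$. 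The decisive point is that the $W_{-2}$-extension class is determined by the $W_{-1}$-class via $\omega=\nu^{\vee}$, which forces the bilinear form measuring the $\delta$-splitting to coincide with the polarization pairing of $\mathcal H$ composed with its own conjugate; this is the step that fails for general $\omega$.

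The second step is to differentiate. Since $\tilde 1$ is an integral (hence flat) section, applying the Gauss--Manin connection and projecting onto $\mathcal H_{\CC}/F^0\mathcal H$ yields $\nabla u_{\mathcal H}$ whose $(1,0)$-part, by Griffiths transversality, lies in $F^{-1}\mathcal H/F^0\mathcal H$. A direct $\partial\bar\partial$ computation of the norm squared gives
\begin{equation*}
    \partial\bar\partial\, h(\mathcal V)= Q_{\mathcal H}(\nabla''u_{\mathcal H},\overline{\nabla''u_{\mathcal H}})+\text{(curvature contribution)},
\end{equation*}
and the curvature contribution is exactly the curvature of the quotient Hodge bundle $\mathcal H_{\CC}/F^{0}\mathcal H$ with its polarization metric. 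Since $Q_{\mathcal H}$ is positive definite on $\mathcal H_{\CC}/F^{0}\mathcal H$ and the induced metric on this quotient has semipositive curvature (a standard consequence of Griffiths' curvature formulas for Hodge bundles), both terms are nonnegative as $(1,1)$-forms, and we conclude that $h$ is plurisubharmonic.

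The main obstacle is the first step, i.e. producing the clean formula $h(\mathcal V)=Q_{\mathcal H}(u_{\mathcal H},\overline{u_{\mathcal H}})$ in the self-dual case. This requires an explicit identification of Deligne's $\delta$-splitting with the Hodge-theoretic splitting of a biextension of type $(\nu,\nu^{\vee})$, including a careful bookkeeping of Tate twists to ensure that the pairing that appears is exactly the polarization rather than some shifted indefinite form. Once this identification is in place, the plurisubharmonicity is a standard Hodge-theoretic curvature argument and the hypothesis $\omega=\nu^{\vee}$ is used only to guarantee definiteness; the computation in Step 2 would otherwise produce a pairing of mixed signature, consistent with the general case where plurisubharmonicity fails.
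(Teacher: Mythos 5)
The proposed formula $h(\mathcal V)=Q_{\mathcal H}\bigl(u_{\mathcal H},\overline{u_{\mathcal H}}\bigr)$ in your first step cannot be right, and this is not a Tate-twist bookkeeping issue. Such a formula would force $h(\mathcal V)\geq 0$ everywhere, but the biextension height is unbounded below. Indeed, the set of biextension variations of the fixed type $(\nu,\nu^{\vee})$ carries a simply transitive $\mathcal{O}_S^{\times}$-action, and rescaling by a global constant $\lambda\in\CC^{\times}$ shifts the height by a constant: combining \eqref{eq:Gm-action} with \eqref{eq:mult-prop} gives $h(\lambda.\mathcal V)=h(\mathcal V)-\log|\lambda|$. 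This rescaling is implemented by $e^{t\eta}$ for a suitable $t$, and since $\eta$ annihilates $W_{-1}\mathcal V$ and acts trivially on $\Gr^{W}$, it leaves the class $u_{\mathcal H}\in\mathcal H_{\CC}/F^{0}\mathcal H$ unchanged. Taking $|\lambda|$ large makes the left side of your formula arbitrarily negative while leaving the right side fixed. The degenerate case $\mathcal H=0$ already exposes the gap: a biextension of type $(0,0)$ is a variation of extensions of $\ZZ(0)$ by $\ZZ(1)$, i.e.\ a nowhere vanishing holomorphic function $z$ on $S$, with $h=-\log|z|$ pluriharmonic and typically nonzero, whereas your formula would give $h\equiv 0$.

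What the self-duality hypothesis actually buys is a sign-definite \emph{curvature} formula, not a norm expression for $h$ itself. A correct Hodge-theoretic expression for $h$ must retain, in addition to the quadratic term involving the polarization of $\mathcal H$, a pluriharmonic term recording the $W_{-2}$-extension datum (the biextension period); once that term is present, your second step must control the cross-terms between the period piece and the quadratic piece, and must establish the sign of the curvature of the quotient bundle $\mathcal H_{\CC}/F^{0}\mathcal H$ in the graded-polarized \emph{mixed} setting, which is not the naive pure-weight statement. That curvature computation for the mixed Hodge metric is precisely the technical content of the cited work of Pearlstein and Peters, which the present paper invokes as a black box. Your sketch correctly identifies the two essential inputs (Griffiths transversality and polarization positivity, with self-duality supplying definiteness), but without the pluriharmonic correction in the formula for $h$ and without carrying out the mixed curvature computation in detail, the argument does not close.
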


\par To continue, we recall the following [See $(7.1)$ and $(7.2)$~\cite{Kiselman}]:  Let $\{f_j\}$ be 
a sequence of plurisubharmonic functions on a domain $\Omega\subseteq\mathbb C^r$ which are locally 
bounded from above.  Given $z\in\Omega$ let 
$$
        f(z) = \limsup_{j\to\infty}\, f_j(z)
$$
and define $f^*(z) = \limsup_{w\to z}\, f(w)$.  Then, $f^*$ is plurisubharmonic.

\begin{theorem}\label{pluri-ext} Let $\mathcal V$ be a biextension of type 
$(\nu,\nu^{\vee})$ with unipotent monodromy on the complement of 
$D=\{s_1\cdots s_k=0\}$ in the polydisk $\Delta^r$.  Suppose that 
$\mu_1,\dots,\mu_k \geq 0$. Then, $\bar h(\mathcal V)$  has a 
unique plurisubharmonic extension to $\Delta^r$.
\end{theorem}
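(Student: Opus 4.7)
The plan has three stages: establish plurisubharmonicity on the open stratum $\Delta^r\setminus D$, use Theorem~\ref{sm-continous-0} to extend continuously across the smooth locus $D\setminus E$, and then remove the codimension-two singular locus $E$ by invoking a local $L^1$ bound together with the standard removable-singularity theorem for plurisubharmonic functions.

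On $\Delta^r\setminus D$, $h(\mathcal V)$ is plurisubharmonic by~\cite{PP}, and since $\mu_j\geq 0$ for each $j$ the sum $\sum_{j=1}^{k}\mu_j\log|s_j|$ is plurisubharmonic on the full polydisk; hence $\bar h(\mathcal V)$ is plurisubharmonic on $\Delta^r\setminus D$. At a point $p\in D\setminus E$ precisely one branch $D_j$ of $D$ passes through $p$, so after restricting to a small polydisk neighborhood of $p$, $\mathcal V$ has unipotent monodromy around a single coordinate axis and Theorem~\ref{sm-continous-0} provides a continuous extension of $\bar h$ across $D_j$ near $p$. Gluing these local extensions yields a continuous function $\bar h$ on $\Delta^r\setminus E$. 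It remains plurisubharmonic there: for any holomorphic disc $\phi:\Delta\to\Delta^r\setminus E$ whose image is not contained in $D$, the pullback $\bar h\circ\phi$ is subharmonic on the complement of the discrete set $\phi^{-1}(D)$ and continuous on all of $\Delta$, hence subharmonic; discs contained in $D$ are handled by approximation.

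To extend across $E$, which is an analytic subset of $\Delta^r$ of codimension at least two, I use that $\log|\mathcal V|=-h(\mathcal V)$ is locally $L^1$ on $\Delta^r$ by Theorem~(5.8) of~\cite{HP}; together with the local integrability of $\sum\mu_j\log|s_j|$, this gives $\bar h\in L^1_{\mathrm{loc}}(\Delta^r)$. Set
$$
\tilde h(p)=\limsup_{z\to p,\;z\notin E}\bar h(z)\quad\text{for }p\in E,\qquad \tilde h|_{\Delta^r\setminus E}=\bar h.
$$
The sub-mean-value inequality applied to $\bar h$ on small Euclidean balls inside $\Delta^r\setminus E$, combined with the $L^1$ bound on a slightly larger ball, shows that $\bar h$ is locally bounded above in every neighborhood of $E$, so $\tilde h$ is locally bounded above on $\Delta^r$. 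The classical removable-singularity theorem for plurisubharmonic functions across analytic sets then yields that $\tilde h$ is plurisubharmonic on $\Delta^r$, extending $\bar h$. Uniqueness is standard: any two plurisubharmonic extensions agree on the open dense set $\Delta^r\setminus E$ and hence on all of $\Delta^r$.

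The most delicate point I expect is passing from the curve-wise asymptotics (Theorem~\ref{mod_m}) and the pointwise information on $\bar h$ to a uniform upper bound in a full neighborhood of a point of $E$; the sub-mean-value inequality together with the $L^1$ bound accomplishes this, but the argument rests essentially on the hypothesis $\mu_j\geq 0$, which is used both to make $\sum\mu_j\log|s_j|$ plurisubharmonic on the unpunctured polydisk and, via Corollary~\ref{e.jumping}, to guarantee that the renormalization $\bar h$ does not acquire a $+\infty$ contribution from below along any curve approaching $E$.
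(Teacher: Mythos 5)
Your skeleton matches the paper's proof: establish plurisubharmonicity on $\Delta^r\setminus D$, extend continuously and show plurisubharmonicity on $\Delta^r\setminus E$, and then remove the codimension-two set $E$. Steps one and two are sound. For step two you argue via restriction to complex discs, approximating discs contained in $D$ by transverse ones and using the continuity of the extended $\bar h$ to pass to the limit; the paper instead considers the sequence $f_j = \bar h + \frac{1}{j}\log|s_1\cdots s_k|$ on $\Delta^r\setminus E$, observes that each $f_j$ is plurisubharmonic since $f_j=-\infty$ on $D\setminus E$ makes the subaveraging there vacuous, and then applies the Kiselman result quoted just before the theorem to conclude that $f^*=\bar h$ is plurisubharmonic on $\Delta^r\setminus E$. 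Both routes work; yours makes the approximation argument explicit, while the paper's replaces it with a regularization.

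The gap is in step three. You want to extend across $E$ using the $\limsup$-regularization, which requires local boundedness of $\bar h$ from above near $E$, and you claim this follows from the sub-mean-value inequality on small balls $B(p,\rho)\subset\Delta^r\setminus E$ combined with the $L^1$ bound. This does not work as written: for $p$ approaching $E$, the allowable radius $\rho=\rho(p)$ must shrink to zero, and the estimate $\bar h(p)\le |B(p,\rho)|^{-1}\int_{B(p,\rho)}\bar h\,dV$ degenerates, since a fixed $L^1$ bound on a slightly larger ball does not control averages over arbitrarily small balls centered near $E$. The paper sidesteps this issue entirely: because $E$ is analytic of codimension at least two, the theorem of Grauert and Remmert~\cite{GR} applies directly to show that a plurisubharmonic function on $\Delta^r\setminus E$ extends uniquely across $E$, with no boundedness or integrability hypothesis needed. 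Replacing your $L^1$-based boundedness argument with that citation closes the gap and also removes the dependence on Theorem~$(5.8)$ of~\cite{HP} in this particular proof.
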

\begin{proof} By Theorem~\eqref{sm-continous-0} it follows that 
$g = \bar h(\mathcal V)\in C^0(\Delta^r-D)$ extends to $\bar g\in C^0(\Delta^r-E)$.
 Moreover, since $h(\mathcal V)$ is plurisubharmonic on $\Delta^r-D$ 
and all $\mu_j\geq 0$ by assumption, it follows that $g$ is plurisubharmonic on 
$\Delta^r-D$.  In the paragraphs below, we shall prove that $\bar g$ is 
plurisubharmonic on $\Delta^r-E$.  Since $E$ has codimension $2$, it then 
follows from a theorem of Grauert and Remmert\cite{GR} that $\bar g$ has a 
unique plurisubharmonic extension to $\Delta^r$.

\par To prove that $\bar g$ is plurisubharmonic on $\Delta^r-E$, consider the sequence of functions
$$
       f_j = \bar g + \frac{1}{j}\log|s_1\dots s_k|:\Delta^r-E\to[-\infty,\infty)
$$
where $f_j = -\infty$ on $D-E$.   Clearly, each $f_j$ is plurisubharmonic on $\Delta^r-E$ since 
\begin{itemize}
\item[---] $g$ and $\log|s_1\dots s_k|$ are plurisubharmonic 
on $\Delta^r-D$;
\item[---] $f_j$ is upper semicontinuous on $\Delta^r-E$ as the sum of the continuous function 
$\bar g$ and the upper semicontinuous function $\frac{1}{j}\log|s_1\dots s_k|$.
\item[---] $f_j$ has the subaveraging property at any point of $D-E$;
\end{itemize}
Moreover, since $\bar g$ is continuous and all $\mu_j\geq 0$ it follows that each $f_j$ is 
locally bounded above.  Accordingly,  $f^*$ is plurisubharmonic.  Finally, since $\bar g$
is continuous on $\Delta^r-E$ it follows form the definition of the sequence $f_j$ that
$f^* = \bar g$.
\end{proof}

\par Having constructed an extension of $\bar h(\mathcal V)$ to $\Delta^r$, we can inquire
as to its value at $(0,\dots,0)\in\Delta^r$.  To this end, let 
$m=(m_1,\dots,m_r)\in\mathbb Z^r_{>0}$ and $\phi:\Delta\to\Delta^r$
be the test curve defined by $s_j = s^{m_j}$, $j=1,\dots,r$.  Then, by Theorem \ref{mod_m}
$$
\aligned
       \phi^*(\bar h(\mathcal V)) 
        &= \phi^*(h(\mathcal V)) + (\sum_j\, m_j\mu_j)\log|s| 
        \sim (-\mu(\mathcal V)(m)+\sum_j\, m_j\mu_j)\log|s|  \\ 
        &\sim h(m)(\nu,\nu^{\vee})\log|s|
\endaligned
$$
By Corollary \eqref{e.jumping}, we have the following two cases:
\begin{itemize}
\item[(a)] If $\sing_0(\nu)\neq 0$ then $h(m)>0$ and hence 
$\lim_{s\to 0}\, \phi^*(\bar h(\mathcal V)) = -\infty$.
\item[(b)] If $\sing_0(\nu)=0$ then $h(m)=0$ and hence $\phi^*(\bar h(\mathcal V))$ is bounded near zero.
Since $\bar h(\mathcal V)$ plurisubharmonic it follows from upper semicontinuity that 
$\bar h(\mathcal V)$ is bounded near zero.
\end{itemize}

\subsection{Mumford-Grothendieck Biextensions} There is a close
relationship between the concept of biextension variation
from~\eqref{eq:type-H} and the concept of a biextension introduced by
Mumford and studied extensively by Grothendieck in~\cite{SGA71}.
Essentially, as we vary the normal functions $\nu$ and $\omega$, the
set of all biextension variations $\mathcal{V}$ forms a biextension
(in the Mumford-Grothendieck sense) of
$\NF(S,\mathcal{H})\times\NF(S,\mathcal{H}^{\vee})$ by the sheaf
$\mathcal{O}_S^{\times}$.  (See Corollary~\ref{me3}.)  Starting in \S\ref{meb} we exploit this
connection, and we use it in \S\ref{tp} to define another pairing on
intersection cohomology with values in $\mathbb{Q}/\mathbb{Z}$ which we
use to answer Question~\ref{query:Q1}.

Unfortunately, the two terminologies clash.  We think that we have
solved the problems by writing the sections involving
Mumford-Grothendieck biextensions in such a way that it is always
clear which type of biextension we mean.  But we would like to warn
the reader (and apologize for the fact) that it will sometimes be
necessary to decide this based on context.

The Mumford-Grothendieck biextensions we deal with come out of
another notion studied by Grothendieck in~\cite{SGA71}, the notion of
\emph{extensions panach\'ees}.
These are objects $X$ in an abelian
category equipped with a two step filtration.  For example,
biextension variations are \emph{extensions panach\`ees} of
$\mathbb{Z}$ by $\mathcal{H}$ by $\mathbb{Z}(1)$.  In~\S\ref{meb}, we
work out the (slightly subtle) conditions under which the set of
isomorphism classes of \emph{extension panach\'ees} give rise to a
Mumford-Grothendieck biextension.  Then we apply these to certain
Mumford-Grothendieck biextensions, mainly of topological type, which
arise in connection with the asymptotic height pairing.
In \S\ref{gjp}, we use these results, to  generalize the asymptotic height pairing from the case of
variations with unipotent monodromy in the normal crossing case
to arbitrary pure variations
on $S$ in a neighborhood of some $s\in \bar S$ (where $S$ is a
Zariski open subset of a complex manifold $\bar S$). 

In this paper, we translate the term \emph{extensions panach\'ees} as ``mixed extensions.'' The (probably better) term ``blended extensions'' is used by
D.~Bertrand in the abstract to~\cite{Bertrand13}.

\subsection{Related Work} The thesis~\cite{Lear} of D. Lear contains the initial results of the asymptotics
of the height over a punctured disk.  An explicit example of the non-linearity of $\mu$ appears in 
Example $(5.44)$ of~\cite{GregJDG}.  The asymptotics of the biextension bundle also appear in 
the recent work of J. Burgos Gill, D. Holmes and R. de Jong \cite{BHR2} and \cite{BHJ}.  Another 
approach to limits of heights having to do with Feynman amplitudes is given in~\cite{ABGF}.  A preliminary discussion 
of heights from the viewpoint of log geometry and compactification of mixed period domains may be found
at the end of~\cite{MR2784750}, with additional results appearing in~\cite{KNU-IV}.

\subsection{Acknowledgments} This paper has taken a long time to write
up and, over the years, many people have helped us.  Questions
\eqref{query:Q1} and \eqref{query:Q2} were initially suggested to the
second author by R. Hain, and we are both very grateful to him for
posing the questions and for his support and interest. During the
authors' visit to the Institute for Advanced Study in 2004--2005,
P.~Griffiths introduced us to the notion of singularities of
normal functions and P.~Deligne wrote a letter to the second author
giving an example of jumps~\cite{DeligneLetter}.

\par The authors have also had extensive discussions of the asymptotics of the height pairing with
C. Schnell, M. Kerr and J. Lewis.  We thank B. Moonen for informing us about the sections in~\cite{SGA71} 
containing the \emph{extensions panach\'ees}.  We also thank H. Boas and S. Biard for helpful discussions 
concerning extensions of plurisubharmonic functions.

\par The authors thank Institute Henri Poincare and the Max Planck Institute for Mathematics for partial 
support during the final stages of this work.  This work was also partially supported by NSF grants DMS 
1361159 (P. Brosnan) and DMS 1361120 (G. Pearlstein). 

\section{Variations of Mixed Hodge Structure}\label{sec:vmhs}

\subsection*{Weights} Suppose $M$ is any object with a weight
filtration $W$.  For example, $M$ could be a mixed Hodge structure, a
variation of mixed Hodge structure or a mixed Hodge module.  We say
that $M$ has \emph{weights in an interval $[a,b]$} if $\Gr^W_k M=0$
for $k\not\in [a,b]$.

\subsection*{Mixed Hodge Structures} In this paper, all mixed Hodge structure 
are defined over $A=\mathbb Z$, $\mathbb Q$ or $\mathbb R$.  We let 
$\mathbb F=\mathbb Q$ or $\mathbb R$.  For an integer $n$, we let $A(n)$ denote
the pure $A$-Hodge structure of weight $-2n$ with underlying $A$-module $A$
[not $(2\pi i)^n A$].
If $H$ and $K$ are mixed Hodge structures and $n\in\mathbb{Z}$, an element 
$f\in\Hom_{\rm MHS}(H,K(-n))$ is called a $(n,n)$ morphism of Hodge structures.  
A morphism of mixed Hodge structures is a $(0,0)$ morphism.

\par A mixed Hodge structure $(F,W)$ induces a unique, functorial bigrading 
[Theorem $(2.13)$,~\cite{CKS86}]
\begin{equation}
            V_{\mathbb C} = \bigoplus_{p,q}\, I^{p,q}   \label{eq:D-bigrading}
\end{equation}
on the underlying complex vector space $V_{\mathbb C}$ such that 
\begin{itemize}
\item[(a)] $F^p = \oplus_{a\geq p}\, I^{a,b}$;
\item[(b)] $W_k = \oplus_{a+b\leq k}\, I^{a,b}$;
\item[(c)] $\bar{I^{p,q}} = I^{q,p}\mod\oplus_{a<q,b<p}\, I^{a,b}$.
\end{itemize}
A mixed Hodge structure is said to be split over $\mathbb R$ if 
$\bar{I}^{p,q} = I^{q,p}$.  In this case
\begin{equation}
              I^{p,q} = F^p\cap{\bar F^q}\cap W_{p+q}        \label{eq:split-eq-1}
\end{equation}

\par If $K$ is a field of characteristic zero and $W$ is an increasing 
filtration of a finite dimensional $K$-vector space $V$ then a grading of 
$W$ is a semisimple endomorphism $Y$ of $V$ with integral eigenvalues 
such that 
$$
        W_k = E_k(Y)\bigoplus W_{k-1}
$$
for each index $k$, where $E_k(Y)$ denotes the $k$-eigenspace of $Y$.  In 
particular, a mixed Hodge structure $(F,W)$ determines a $\mathbb C$-grading 
$Y = Y_{(F,W)}$ of $W$ by the rule
\begin{equation}
        E_k(Y) = \bigoplus_{p+q=k}\, I^{p,q}           \label{eq:D-grading}
\end{equation}
In what follows, the grading $Y_{(F,W)}$ will be called the Deligne grading of 
$(F,W)$. 

\par The set of all gradings of $W$ will be denoted $\mathcal Y(W)$.  The
subalgebra of $gl(V)$ consisting of all elements $\alpha$ such that 
$$
          \alpha(W_k)\subseteq W_{k-\ell}
$$
for all $k$ will be denoted $W_{-\ell}gl(V)$.

\begin{proposition} [Prop. $(2.2)$~\cite{CKS86}] $\exp(W_{-1}gl(V))$
acts simply transitively on $\mathcal Y(W)$ by the adjoint action.
\end{proposition}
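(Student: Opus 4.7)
The plan is to prove well-definedness, transitivity, and freeness separately, using the fact that once a base grading $Y \in \mathcal{Y}(W)$ is fixed, the endomorphism $\ad(Y)$ of $gl(V)$ is semisimple with integer eigenvalues, and its eigenspace decomposition $gl(V) = \bigoplus_{k}\, gl(V)_k$ refines the filtration $W_{\bullet} gl(V)$ by the identity $W_{-\ell}gl(V) = \bigoplus_{k\leq -\ell}\, gl(V)_k$. This decomposition is the main computational tool.

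First I would check well-definedness. For $\alpha \in W_{-1}gl(V)$, the element $g = \exp(\alpha)$ satisfies $g - \id \in W_{-1}gl(V)$, so $g$ preserves each $W_k$ and induces the identity on $\Gr^W_k$. Thus $gYg^{-1}$ is semisimple with the same eigenvalues as $Y$, and $E_k(gYg^{-1}) = g(E_k(Y))$ still gives a complementary subspace to $W_{k-1}$ inside $W_k$. Hence $gYg^{-1} \in \mathcal{Y}(W)$.

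For transitivity and freeness simultaneously, I would fix $Y, Y' \in \mathcal{Y}(W)$ and show there is a unique $\alpha \in W_{-1}gl(V)$ with $e^{\ad(\alpha)}(Y) = Y'$. First I observe that $X := Y' - Y$ lies in $W_{-1}gl(V)$: both $Y$ and $Y'$ act as multiplication by $k$ on $\Gr^W_k$, so $X(W_k) \subseteq W_{k-1}$. Expanding in the $\ad(Y)$-eigenspaces, write $\alpha = \sum_{j\geq 1} \alpha_{-j}$ and $X = \sum_{j\geq 1} X_{-j}$ with $\alpha_{-j}, X_{-j} \in gl(V)_{-j}$. Since $[\alpha_{-j}, Y] = j\,\alpha_{-j}$, the $(-j)$-component of $[\alpha, Y]$ equals $j\alpha_{-j}$, while brackets of higher order $\tfrac{1}{n!}\ad(\alpha)^n(Y)$ for $n\geq 2$ only involve the $\alpha_{-i}$ with $i < j$ in their $(-j)$-component (each additional bracket shifts the grading). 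The equation
\begin{equation*}
e^{\ad(\alpha)}(Y) - Y = \sum_{n\geq 1} \frac{\ad(\alpha)^n(Y)}{n!} = X
\end{equation*}
therefore decomposes into a triangular system
\begin{equation*}
j\,\alpha_{-j} + P_{-j}(\alpha_{-1},\dots,\alpha_{-j+1}) = X_{-j}, \qquad j\geq 1,
\end{equation*}
where $P_{-j}$ is a universal polynomial. This is solved uniquely by induction on $j$, establishing both transitivity and the fact that only $\alpha = 0$ stabilizes $Y$ (take $Y' = Y$, so $X = 0$, forcing all $\alpha_{-j} = 0$).

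I do not expect any real obstacle; the hardest part is merely bookkeeping the filtration shift under iterated brackets, which is handled cleanly by the $\ad(Y)$-eigenspace decomposition. The uniqueness can alternatively be deduced from the observation that the centralizer of $Y$ in $gl(V)$ is $gl(V)_0$, which meets $W_{-1}gl(V) = \bigoplus_{k\leq -1} gl(V)_k$ trivially; but I prefer the recursive proof because it yields existence and uniqueness in a single argument.
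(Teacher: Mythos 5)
The paper states this as a cited result from Cattani--Kaplan--Schmid and does not reproduce a proof, so there is no internal argument to compare against; your proof should therefore be judged on its own terms, and it is correct. The key identifications are all right: $W_{-\ell}\,gl(V)=\bigoplus_{k\le-\ell}gl(V)_k$ under the $\mathrm{ad}(Y)$-eigenspace decomposition, the observation that $Y'-Y\in W_{-1}gl(V)$ since both gradings induce multiplication by $k$ on $\mathrm{Gr}^W_k$, and the triangular system $j\,\alpha_{-j}+P_{-j}(\alpha_{-1},\dots,\alpha_{-(j-1)})=X_{-j}$, which solves uniquely by induction precisely because $\mathrm{char}\,K=0$ makes each coefficient $j$ invertible. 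Your alternative uniqueness argument via the centralizer is also sound, once one notes that $\alpha\in W_{-1}gl(V)$ is nilpotent, so commutativity of $\exp(\alpha)$ with $Y$ upgrades to commutativity of $\alpha$ with $Y$ via the logarithm, forcing $\alpha\in gl(V)_0\cap W_{-1}gl(V)=0$. This is the standard argument and is, to my knowledge, essentially the one in the original reference.
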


\begin{proposition}\label{split-2} A mixed Hodge structure $(F,W)$ is split over
$\mathbb R$ if and only if $Y = \bar Y$ where $Y$ is the Deligne
grading of $(F,W)$.  
\end{proposition}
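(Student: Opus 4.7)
The plan is to compare how $Y$ and $\bar{Y}$ act on the Deligne bigrading $\{I^{p,q}\}$ and use property (c) from \eqref{eq:D-bigrading} to identify the condition for splitting. The key preliminary observation is that since $Y$ acts as the real scalar $p+q$ on $I^{p,q}$, and since for any $\mathbb{C}$-linear operator $A$ and vector $v$ one has $\bar{A}\bar{v} = \overline{A v}$, it follows that $\bar{Y}$ acts on $\overline{I^{p,q}}$ as the scalar $p+q$ as well.

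For the forward implication, assume $(F,W)$ is split over $\mathbb{R}$, so that $\overline{I^{p,q}} = I^{q,p}$. Then by the previous observation $\bar{Y}$ acts on $I^{q,p}$ by $p+q$, which is exactly the action of $Y$ on $I^{q,p}$. Since the subspaces $I^{q,p}$ span $V_{\mathbb{C}}$, we conclude $Y = \bar{Y}$.

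For the reverse implication, suppose $Y = \bar{Y}$ and pick $v \in I^{p,q}$. By property (c), we may write $\bar{v} = v_{q,p} + \sum_{a<q,\, b<p} v_{a,b}$ with $v_{a,b} \in I^{a,b}$. Computing $\bar{Y}\bar{v} = \overline{Y v} = (p+q)\bar{v}$ on the one hand, and using $Y=\bar Y$ with the eigenspace decomposition \eqref{eq:D-grading} on the other, we get
$$
(p+q) v_{q,p} + \sum_{a<q,\, b<p} (p+q) v_{a,b} \;=\; (p+q) v_{q,p} + \sum_{a<q,\, b<p} (a+b) v_{a,b}.
$$
Since $a+b < p+q$ whenever $a<q$ and $b<p$, each $v_{a,b}$ must vanish, so $\bar v = v_{q,p} \in I^{q,p}$. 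This proves $\overline{I^{p,q}} \subseteq I^{q,p}$; applying complex conjugation to this inclusion gives the reverse containment, and hence $(F,W)$ is $\mathbb{R}$-split.

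The only nontrivial step is the reverse direction, and even there the work amounts to a single eigenvalue comparison once property (c) is invoked. The mildest care is needed in handling the convention $\bar{A}\bar{v} = \overline{A v}$ for the conjugate of a $\mathbb{C}$-linear endomorphism, and in observing that the eigenvalues $p+q$ of $Y$ are real, so conjugation does not shift them.
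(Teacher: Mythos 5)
Your proof is correct, and the reverse implication is argued differently from the paper.  The paper constructs a candidate bigrading $J^{p,q} = \{ v \in E_{p+q}(Y) : [v] \in H^{p,q} \subseteq \Gr^W_{p+q}\}$ out of the grading $Y$ and the pure Hodge structures on $\Gr^W$, checks that it satisfies properties (a)--(c), and then invokes the uniqueness of the Deligne bigrading to conclude $I^{p,q} = J^{p,q}$; the $\mathbb{R}$-splitting then follows because $\overline{J^{p,q}} = J^{q,p}$ is built into the definition of $J$.  You instead stay entirely inside the given bigrading: you expand $\bar v$ via property (c), apply $Y = \bar Y$ to both sides, and compare eigenvalues to kill the lower-order error terms.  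Your route is shorter and more self-contained, since it avoids re-verifying the axioms for a new bigrading; the paper's route has the mild advantage of producing the $\mathbb{R}$-split bigrading directly from data on $\Gr^W$, which fits with the way such splittings are used elsewhere in the Cattani--Kaplan--Schmid framework.  One small point worth stating explicitly in your argument: when you conclude from $\sum_{a<q,\,b<p}\bigl((p+q)-(a+b)\bigr)v_{a,b}=0$ that each $v_{a,b}=0$, you are using that the $I^{a,b}$ are in direct sum (not merely that the eigenvalues $a+b$ differ, which they need not), so the coefficients' positivity is what matters.
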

\begin{proof} If $(F,W)$ is $\mathbb R$-split then $Y = \bar Y$ since 
$\bar{I}^{p,q} = I^{q,p}$.  Conversely, suppose that $Y = \bar Y$ and
let $\oplus_{p+q=k}\, H^{p,q}$ be the Hodge decomposition of $Gr^W_k$.  Define
$$
        J^{p,q} = \{ v\in E_{p+q}(Y) \mid [v]\in H^{p,q}\subseteq Gr^W_{p+q}\}
$$
Then, $V_{\mathbb C} = \oplus_{p,q}\, J^{p,q}$ satisfies conditions 
$(a)$--$(c)$ of \eqref{eq:D-bigrading}:  Condition $(a)$ follows 
from the fact that $Y$ is a grading of $W$ which preserves $F$.
Condition $(b)$ follows from the fact that $Y$ is a grading of 
$W$.  Finally, condition $(c)$ follows from the fact that 
${\bar J}^{p,q} = J^{q,p}$.  Thus, $I^{p,q} = J^{p,q}$ by
the uniqueness of~\eqref{eq:D-bigrading}, and hence $(F,W)$ is split 
over $\mathbb R$. 
\end{proof}

\par If $(F,W)$ is a mixed Hodge structure with underlying complex vector 
space $V_{\mathbb C}$ then 
\begin{equation}
        \Lambda^{-1,-1}_{(F,W)} = \bigoplus_{p,q<0}\, gl(V_{\mathbb C})^{p,q}
        \label{eq:Lambda}
\end{equation}
Note that by property $(c)$ of \eqref{eq:D-bigrading}, $\Lambda^{-1,-1}$ is 
closed under complex conjugation.  It follows from the defining properties 
(a)--(c) of \eqref{eq:D-bigrading} that 
\begin{equation}
      e^{\lambda}I^{p,q}_{(F,W)} = I^{p,q}_{(e^{\lambda}F,W)}
      \label{eq:lamba-transform}
\end{equation}

\begin{theorem}[Theorem $(2.20)$~\cite{CKS86}] Given a mixed Hodge 
structure $(F,W)$ with underlying real vector space $V_{\mathbb R}$ there 
exists a unique real element
\begin{equation}
        \delta\in\Lambda^{-1,-1}_{(F,W)}\cap gl(V_{\mathbb R})       
		\label{eq:D-splitting}
\end{equation}
such that $(e^{-i\delta}F,W)$ is a mixed Hodge structure which is split over 
$\mathbb R$.  Moreover, $\delta$ commutes with all $(r,r)$-morphisms of 
$(F,W)$.
\end{theorem}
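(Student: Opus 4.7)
The plan is to construct $\delta$ by solving a triangular system of equations along the weight filtration.

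First, I would reformulate the problem in terms of the Deligne bigrading. Write $I^{p,q}=I^{p,q}_{(F,W)}$. By \eqref{eq:lamba-transform}, the bigrading of $(e^{-i\delta}F,W)$ is $e^{-i\delta}I^{p,q}$, and since $\delta\in gl(V_{\mathbb R})$ one has $\overline{e^{-i\delta}}=e^{i\delta}$. Hence the $\mathbb R$-splitness condition $\overline{I^{p,q}_{(e^{-i\delta}F,W)}}=I^{q,p}_{(e^{-i\delta}F,W)}$ becomes
$$
e^{2i\delta}\bigl(\overline{I^{p,q}}\bigr)=I^{q,p} \qquad \text{for all } p,q.
$$
Property (c) of the Deligne bigrading, applied pointwise, says $\overline{v^{p,q}}\equiv \pi_{q,p}(\overline{v^{p,q}}) \pmod{\bigoplus_{a<q,\,b<p} I^{a,b}}$. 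Packaging the correction terms produces a canonical element $\eta\in\Lambda^{-1,-1}_{(F,W)}$ measuring the defect of $\mathbb R$-splitness, and the equation on $\delta$ becomes a universal polynomial identity $P(\delta)=\eta$ in $\Lambda^{-1,-1}$ whose leading term is $-2i\delta$ and whose higher terms are iterated commutators of $\delta$ with itself.

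Next, I would solve $P(\delta)=\eta$ inductively. Set $\Lambda_k:=\Lambda^{-1,-1}\cap W_{-k}gl(V)$, so $\Lambda_2=\Lambda^{-1,-1}\supseteq\Lambda_3\supseteq\cdots$ with $[\Lambda_k,\Lambda_\ell]\subseteq\Lambda_{k+\ell}$. The equation is triangular with respect to this filtration: its reduction modulo $\Lambda_{k+1}$ determines $\delta\bmod\Lambda_{k+1}$ uniquely in terms of $\delta\bmod\Lambda_k$ (already computed at the previous stage) and $\eta$. Because $\Lambda^{-1,-1}$ is nilpotent, this process terminates after finitely many steps and yields a unique $\delta\in\Lambda^{-1,-1}$.

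The main obstacle will be verifying that this unique $\delta$ lies in $gl(V_{\mathbb R})$. For this I would exploit the symmetry between $(p,q)$ and $(q,p)$: complex conjugation preserves $\Lambda^{-1,-1}$ (since \eqref{eq:Lambda} is manifestly symmetric in $(p,q)$), and the defect $\eta$ satisfies a reality constraint built in to the fact that the original conjugation $\sigma$ on $V_{\mathbb C}$ restricts to the identity on $V_{\mathbb R}$. Combined with the factor $-2i$ in the leading term of $P$, this constraint forces each newly-determined component of $\delta$ in $\Lambda_k/\Lambda_{k+1}$ to be real; the verification is inductive and, once the symmetry is set up correctly, reduces to a linear-algebra check in each graded piece.

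Finally, commutation with $(r,r)$-morphisms follows from uniqueness. Any $(r,r)$-morphism $f:(F,W)\to (F',W'(-r))$ satisfies $f(I^{p,q}_{(F,W)})\subseteq I^{p+r,q+r}_{(F',W')}$, so $f$ intertwines the obstructions $\eta$ and $\eta'$ and the universal equations defining $\delta$ and $\delta'$. By the uniqueness of the solution, $f\circ\delta=\delta'\circ f$; applied to an $(r,r)$-endomorphism of $(F,W)$ this specializes to $[f,\delta]=0$.
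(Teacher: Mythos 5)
This theorem is stated in the paper as a result cited from Cattani--Kaplan--Schmid~\cite{CKS86}; the paper does not reprove it, but it does record the defining equation \eqref{eq:delta-def}, namely $\overline{Y_{(F,W)}} = e^{-2i\operatorname{ad}\delta}Y_{(F,W)}$, which is the same condition as yours expressed in terms of the Deligne grading rather than the bigrading. Your plan --- recast $\mathbb{R}$-splitness as a filtration-triangular equation in the nilpotent algebra $\Lambda^{-1,-1}$ and solve by induction --- is sound in outline and is essentially the CKS argument. Two steps are not yet a proof.

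First, the ``packaging.'' You assert that property (c) lets you encode the defect of $\mathbb{R}$-splitness in a single element $\eta\in\Lambda^{-1,-1}$ satisfying a universal polynomial equation $P(\delta)=\eta$, but this is not immediate. Property (c) only says that each $\overline{I^{p,q}}$ lies in $I^{q,p}\oplus\bigoplus_{a<q,\,b<p}I^{a,b}$; it does not obviously produce one element $\eta$ with $\overline{I^{p,q}}=e^{-\eta}I^{q,p}$ for every $(p,q)$ simultaneously. The way to get such an $\eta$ is to note that $\{\overline{I^{q,p}}\}$ is the Deligne bigrading of the conjugate mixed Hodge structure $(\bar F,W)$, which induces the same Hodge structures on $\Gr^W$ as $(F,W)$; then one must show that any two bigradings satisfying conditions (a)--(c) of \eqref{eq:D-bigrading} and inducing the same data on $\Gr^W$ differ by $\exp$ of an element of $\Lambda^{-1,-1}$. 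This lemma is real work and belongs in the proof.

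Second, reality of $\delta$. Your sketch defers this to an unspecified ``linear-algebra check in each graded piece'' that tracks complex conjugation through the induction together with BCH corrections. This can be made to work, but there is a one-line argument once you pass to the grading form: conjugating $\bar Y=e^{-2i\operatorname{ad}\delta}Y$ gives $Y=e^{2i\operatorname{ad}\bar\delta}\bar Y=e^{2i\operatorname{ad}\bar\delta}e^{-2i\operatorname{ad}\delta}Y$, so $e^{2i\operatorname{ad}\bar\delta}e^{-2i\operatorname{ad}\delta}$ stabilizes $Y$; by the simple transitivity of $\exp(W_{-1}gl(V_{\mathbb{C}}))$ on gradings (the paper's quoted Proposition~(2.2) of~\cite{CKS86}) this composite is the identity, and by injectivity of $\exp$ on the nilpotent algebra $\bar\delta=\delta$. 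Using this rather than the bigrading-by-bigrading computation both shortens the argument and avoids any bookkeeping of BCH terms.

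Your derivation of commutation with $(r,r)$-morphisms from uniqueness and functoriality is correct as stated.
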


\par The splitting~\eqref{eq:D-splitting} is henceforth called the Deligne 
splitting or $\delta$-splitting of $(F,W)$.  The defining equation (ibid) 
for $\delta$ is 
\begin{equation}
         \overline{Y_{(F,W)}} = e^{-2i\text{\rm ad}\,\delta}Y_{(F,W)}  
			\label{eq:delta-def}
\end{equation}

\begin{definition}\label{1v} An unoriented biextension in the category of $A$-mixed Hodge 
structures is an $A$-mixed Hodge structure $(F,W)$ with weights in $[-2,0]$ 
such that $Gr^W_0$, $Gr^W_{-2}$ have rank 1.  
A biextension mixed Hodge structure is an unoriented biextension equipped 
with a choice of isomorphisms $Gr^W_0\cong A(0)$ and 
$Gr^W_{-2}\cong A(1)$.  The positive generators of $A(0)$ and $A(1)$
will be denoted $1$ and $1^{\vee}$ respectively.
\end{definition}

\par If $(F,W)$ is a biextension mixed Hodge structure then $\eta$ is 
the unique $(-1,-1)$-morphism of $(F,W)$ such that the induced map 
$Gr^W_0\to Gr^W_{-2}$ sends $1$ to $1^{\vee}$.  The group $(\mathbb C,+)$ acts 
additively on the set of biextension mixed Hodge structures by the rule
\begin{equation}
        t+(F,W) = (e^{t\eta}F,W)             \label{eq:linear-action}
\end{equation}
Since $\eta$ is a $(-1,-1)$-morphism, $t+(F,W) = (F,W)$ if and only if $t=0$. 

\begin{definition} The height of a biextension mixed Hodge
structure $(F,W)$ is the unique real number $h(F,W)$ such 
that 
\begin{equation}
	2\pi\delta_{(F,W)} = h(F,W)\eta          \label{eq:JDG-height}
\end{equation}
\end{definition}

\begin{lemma}  Let $(F,W)$ be a biextension mixed Hodge structure.  
Then,
\begin{itemize}
\item[(a)] For any complex number $t$, 
\begin{equation}
        h(e^{t\eta}F,W) = h(F,W) + 2\pi\text{\rm Im}(t)\eta  
	\label{eq:mult-prop}
\end{equation}
\item[(b)] $h(F,W)$ depends only the underlying isomorphism
class of $(F,W)$ as an $\mathbb R$-biextension, i.e. $h(F,W)$
is invariant under isomorphisms of $\mathbb R$-mixed Hodge
structures such that the induced maps
$$
    Gr^W_0\to Gr^W_0,\qquad Gr^W_{-2}\to Gr^W_{-2}
$$
are the identity.
\item[(c)] $h(F,W)=0$ if and only if $(F,W)$ is $\mathbb R$-split.
\end{itemize}
\end{lemma}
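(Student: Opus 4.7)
The plan is first to isolate $\delta$ as a multiple of $\eta$. For a biextension MHS, the $(p,q)$-types appearing in the bigrading \eqref{eq:D-bigrading} of $V$ are $(0,0)$ (from $Gr^W_0\cong A(0)$), various $(p,q)$ with $p+q=-1$ (from $Gr^W_{-1}$), and $(-1,-1)$ (from $Gr^W_{-2}\cong A(1)$). A short case analysis on which $(a,b)$ with $a,b<0$ can send one of these types to another type present in $V$ shows that only $(a,b)=(-1,-1)$ contributes, giving exactly the line spanned by $\eta$. Hence $\Lambda^{-1,-1}_{(F,W)}=\mathbb{C}\cdot\eta$. Because $\eta$ is defined over $A$ (the generators $1,1^{\vee}$ are $A$-integral), it is real, and $\delta_{(F,W)}\in \Lambda^{-1,-1}\cap gl(V_{\mathbb R})=\mathbb{R}\cdot\eta$. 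Thus $\delta_{(F,W)}=c\,\eta$ for a unique $c\in\mathbb{R}$, so the scalar $h(F,W)=2\pi c$ defined by \eqref{eq:JDG-height} is well-posed.

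For part (a), I would apply \eqref{eq:lamba-transform} with $\lambda=t\eta$ to obtain $I^{p,q}_{(e^{t\eta}F,W)}=e^{t\eta}I^{p,q}_{(F,W)}$, so the Deligne gradings are related by $Y_{\text{new}}=\mathrm{Ad}(e^{t\eta})Y_{\text{old}}$. Since $[Y_{\text{old}},\eta]=-2\eta$ (because $\eta$ carries $I^{0,0}$ to $I^{-1,-1}$ and vanishes on all other $I^{p,q}$) and $[\eta,\eta]=0$, the Baker--Campbell--Hausdorff expansion truncates to $Y_{\text{new}}=Y_{\text{old}}+2t\eta$. Writing $\delta_{\text{old}}=c_{\text{old}}\eta$ and $\delta_{\text{new}}=c_{\text{new}}\eta$, the same commutator identities make the exponential in \eqref{eq:delta-def} collapse to a single bracket, yielding $\bar Y_{\text{old}}=Y_{\text{old}}-4ic_{\text{old}}\eta$ and $\bar Y_{\text{new}}=Y_{\text{new}}-4ic_{\text{new}}\eta$. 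Using $\bar\eta=\eta$, direct conjugation also gives $\bar Y_{\text{new}}=\bar Y_{\text{old}}+2\bar t\,\eta$. Equating the two expressions for $\bar Y_{\text{new}}$ and solving yields $c_{\text{new}}=c_{\text{old}}+\mathrm{Im}(t)$, whence $h(e^{t\eta}F,W)=h(F,W)+2\pi\,\mathrm{Im}(t)$.

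Part (b) follows from the functoriality clause of Theorem $(2.20)$ of \cite{CKS86}: $\delta$ is canonically determined by the underlying $\mathbb{R}$-MHS, so any $\mathbb{R}$-isomorphism of MHS preserves it. If the isomorphism induces the identity on $Gr^W_0$ and $Gr^W_{-2}$ under the fixed identifications with $A(0)$ and $A(1)$, it preserves $1$ and $1^{\vee}$ and hence the morphism $\eta$ (which depends only on these data); therefore the scalar $h=2\pi c$ is preserved. For (c), since $\eta\neq 0$ we have $\delta=0$ if and only if $h(F,W)=0$, and by the uniqueness clause of Theorem $(2.20)$ of \cite{CKS86}, $\delta=0$ if and only if $(F,W)$ is $\mathbb{R}$-split.

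The main obstacle is the bookkeeping underlying part (a): the crucial input is the reduction $\Lambda^{-1,-1}=\mathbb{C}\cdot\eta$, which forces $\delta$ to be a scalar multiple of $\eta$, after which the commutation relations $[Y,\eta]=-2\eta$ and $[\eta,\eta]=0$ make both the adjoint action defining $Y_{\text{new}}$ and the exponential in \eqref{eq:delta-def} truncate at first order. The dependence on $t$ then extracts cleanly as $\mathrm{Im}(t)$ from comparing the conjugation identity $\bar Y_{\text{new}}=\bar Y_{\text{old}}+2\bar t\,\eta$ with the direct computation $\bar Y_{\text{new}}=Y_{\text{new}}-4ic_{\text{new}}\eta$; parts (b) and (c) then fall out as essentially formal consequences.
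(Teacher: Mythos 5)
Your proof is correct and takes essentially the same route the paper sketches (the paper's proof of (a) just says ``direct computation from \eqref{eq:delta-def} and \eqref{eq:linear-action},'' and your argument via $\Lambda^{-1,-1}=\mathbb{C}\eta$, the commutation relations $[Y,\eta]=-2\eta$, $[\eta,\eta]=0$, and the resulting truncation of the exponentials is exactly that computation, carried out carefully). Your proofs of (b) and (c) likewise match the paper's one-line arguments via functoriality and uniqueness of the Deligne $\delta$-splitting; as a minor note, your computation also silently corrects the spurious $\eta$ appearing on the right-hand side of \eqref{eq:mult-prop} in the paper's statement, which should read $h(e^{t\eta}F,W)=h(F,W)+2\pi\,\mathrm{Im}(t)$.
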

\begin{proof} Item $(a)$ follows by direct computation from equations
\eqref{eq:delta-def} and \eqref{eq:linear-action}.  Item $(b)$
follows from the fact that $\delta$ is an isomorphism invariant 
of $\mathbb R$-mixed Hodge structures, i.e. an isomorphism of 
$\mathbb R$-mixed Hodge structures induces an isomorphism of  
Deligne bigradings, and hence intertwines $\delta$-invariants.
Regarding item $(c)$, observe that by construction 
$\delta_{(F,W)}$ vanishes if and only if $(F,W)$ is split over 
$\mathbb R$.
\end{proof}

\par In the case where $X$ is a smooth complex projective variety of 
dimension $n$ and $Z$ and $W$ are null homologous cycles of dimensions $d$ 
and $e$ such that $|Z|\cap|W|$ is empty and $d+e=n-1$ there exists a 
canonical subquotient $(F,W)$ of the mixed Hodge structure on 
$H_{2d+1}(X-|W|,|Z|)$ such that $h(F,W)$ is the archimedean height of 
the pair $(Z,W)$.  See ~\cite{HainBiext} and equation equation $(5.4)$ 
of~\cite{GregJDG} for details.

\par If $(F,W)$ is a biextension mixed Hodge structure let $\tilde B = \tilde B(F,W)$
denote the $(\mathbb C,+)$-orbit of $(F,W)$ under~\eqref{eq:linear-action}.
Since $\eta$ is a $(-1,-1)$-morphism of $(F,W)$, any two elements of $\tilde B(F,W)$ induce 
the same mixed Hodge structures on $Gr^W_k$ as well as $W_0/W_{-2}$ and $W_{-1}$.  Let $B$ be the 
quotient of $\tilde B$ by $(A,+)\subset (\mathbb C,+)$.  By~\eqref{eq:mult-prop}, the height function 
\eqref{eq:JDG-height} descends to a function $h:B\to\mathbb R$.   For $b\in B$ we define
\begin{equation}
           |b| = \exp(-h(b))          \label{eq:height-def}
\end{equation}

\par In the case $A=\mathbb Z$, the set $B$ has a simply transitive $\mathbb C^*$ action 
given by  
\begin{equation}
         t.[(\tilde F,W)] = [(e^{\frac{1}{2\pi i}\log(t)\eta}\tilde F,W)]
         \label{eq:Gm-action}
\end{equation}
since $\frac{1}{2\pi i}\log(t)$ is well defined modulo $\mathbb Z$.  

\begin{proposition} Let $(F,W)$ be a $\mathbb Z$-biextension mixed Hodge structure
and $t\in\mathbb C^*$.  Then, $|t.b| = |t||b|$.
\end{proposition}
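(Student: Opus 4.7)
The plan is to reduce the claim to a direct computation using the lemma's multiplicativity formula (eq:mult-prop) together with the elementary identity $\operatorname{Im}\bigl(\tfrac{1}{2\pi i}\log t\bigr) = -\tfrac{1}{2\pi}\log|t|$. So the work is essentially bookkeeping; there is no real obstacle.

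First, I would pick a representative $(\tilde F, W)$ of $b\in B$ and a branch of $\log$, so that $s := \tfrac{1}{2\pi i}\log t$ is a specific complex number (well-defined modulo $\mathbb Z$). By the definition \eqref{eq:Gm-action} of the $\mathbb C^*$-action on $B$, we then have $t\cdot b = [(e^{s\eta}\tilde F,W)]$.

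Next, applying the height formula from item (a) of the preceding lemma (read with $h$ regarded as the scalar coefficient of $\eta$, consistent with its definition \eqref{eq:JDG-height}), we obtain
$$
   h(t\cdot b) \;=\; h(e^{s\eta}\tilde F,W) \;=\; h(\tilde F, W) + 2\pi \operatorname{Im}(s) \;=\; h(b) + 2\pi\operatorname{Im}(s).
$$
Before using this I would note the two well-definedness checks it subsumes: replacing the branch of $\log$ changes $s$ by an integer, and $h$ is invariant under such shifts since $\operatorname{Im}$ of an integer is zero; this is consistent with $B$ being the quotient of $\tilde B$ by $(\mathbb Z,+)$ on which $h$ descends.

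Finally, writing $t = |t|e^{i\theta}$ gives $\log t = \log|t| + i\theta$, hence
$$
   s = \frac{\log|t| + i\theta}{2\pi i} = \frac{\theta}{2\pi} - i\,\frac{\log|t|}{2\pi},
$$
so $\operatorname{Im}(s) = -\tfrac{1}{2\pi}\log|t|$. Plugging into the definition \eqref{eq:height-def} of the norm yields
$$
   |t\cdot b| = \exp(-h(t\cdot b)) = \exp(-h(b))\exp(-2\pi\operatorname{Im}(s)) = |b|\cdot\exp(\log|t|) = |t|\,|b|,
$$
which is the desired identity.
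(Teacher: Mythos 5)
Your proof is correct and follows essentially the same route as the paper, which simply invokes \eqref{eq:mult-prop} to obtain $h(t.b) = h(b) - \log|t|$ and then exponentiates; you have merely unpacked the logarithm computation and the well-definedness modulo $\mathbb{Z}$ that the paper leaves implicit. You are also right that the stray $\eta$ appearing in \eqref{eq:mult-prop} and the paper's one-line proof is a typo, since $h$ is a scalar by \eqref{eq:JDG-height}.
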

\begin{proof} By~\eqref{eq:mult-prop}, $h(t.b) = h(b) - \log|t|\eta$.
\end{proof}

\subsection*{Classifying Spaces} A polarization of a pure $A$-Hodge structure $H$ of weight $k$
is a $(-1)^k$-symmetric, non-degenerate bilinear form
$$
           Q:H\otimes H\to A(-k)
$$
which is a morphism of Hodge structure such that if $C$ is the Weil operator which acts on $H^{p,q}$ 
as $i^{p-q}$ then
$
            <u,v> = Q(Cu,\bar v)
$
is a positive definite hermitian inner product on $H_{\mathbb C}$.

\par A graded-polarization of an $A$-mixed Hodge structure $(F,W)$ is a 
collection of non-degenerate bilinear forms  
$$
       Q_k:Gr^W_k\otimes Gr^W_k\to A(-k)
$$
which polarize the pure Hodge structure $FGr^W_k$ for each index $k$.

\par Given a graded-polarized mixed Hodge structure $\{(F,W),Q_{\bullet}\}$ with underlying $A$-module $V_A$
let $X$ denote the generalized flag variety consisting of all decreasing filtrations $\tilde F$
of $V_{\mathbb C}$ such that 
$
       \dim\, \tilde F^p = \dim F^p
$
for each index $p$.  Let $\mathcal M\subset X$ denote the classifying space of all $\tilde F\in X$
such that 
\begin{itemize}
\item[---] $(\tilde F,W)$ is a mixed Hodge structure with the same graded Hodge numbers as $(F,W)$;
\item[---] $Q_{\bullet}$ is a graded-polarization of $(\tilde F,W)$.
\end{itemize}

\par Let $GL(V_{\mathbb C})^W$ denote the subgroup of $GL(V_{\mathbb C})$ consisting of elements which
preserve $W$.  Define $G_{\mathbb C}$ to be the complex subgroup of $GL(V_{\mathbb C})^W$ consisting of 
elements which act by isometries of $Q_{\bullet}$ on $Gr^W$.  Let 
$G_{\mathbb R} = G_{\mathbb C}\cap GL(V_{\mathbb R})$ and $G\subset G_{\mathbb C}$ be the set consisting
of elements of $G_{\mathbb C}$ which act by real isometries of $Q_{\bullet}$ on $Gr^W$.  Then, $G$ is a 
real Lie group which acts transitively on $\mathcal M$ by biholomorphisms~\cite{PearlsteinManuscripta}.  
The "compact dual" $\check{\mathcal M}$ is defined to be the $G_{\mathbb C}$ orbit of $F$ in $X$.
In general, $\check{\mathcal M}$ is not compact due to the fact that $G_{\mathbb C}$ contains the 
complex subgroup $G_{-1}$ consisting of element of $GL(V_{\mathbb C})^W$ which act trivially on 
$Gr^W$.  Since $G$ also contains $G_{-1}$, viewed as a real Lie group, it follows that 
$G_{\mathbb C}$ is not the complexification of $G$ unless $G_{-1} = 1$.

\par See~\cite{PearlsteinManuscripta} and the references therein for further details on classifying spaces 
of mixed Hodge structures.  In general, variations of mixed Hodge structure do not have good asymptotic 
behavior in the absence of the existence of a graded-polarization.  

\subsection*{Period Maps.} The axioms of an admissible variation of mixed Hodge structure are given
in~\cite{SZ,Kashiwara}.  As described in~\cite{PearlsteinManuscripta}, given a  variation of 
(graded-polarized) mixed Hodge structure $\mathcal V\to S$ we let
\begin{equation}
           \varphi:S\to\Gamma\backslash\mathcal M         \label{eq:period-map}
\end{equation}
denote the corresponding period map, determined by a choice of point 
$s_0\in S$ and $\Gamma$ is the monodromy group of $\mathcal V$.  

\par For the remainder of this section, we fix a polydisk $\Delta^r$ with 
local coordinates $(s_1,\dots,s_r)$ and let $\Delta^{*r}$ denote the
complement of $s_1\cdots s_r=0$.   Let $(z_1,\dots,z_r)$ denote the standard 
Euclidean coordinates on $\mathbb C^r$ and $\mathfrak h^r$ be the product of 
upper half-planes defined by ${\rm Im}(z_1),\dots,{\rm Im}(z_r)>0$.  Let 
$\mathfrak h^r\to\Delta^{*r}$ be the universal covering defined by 
$s_j = e^{2\pi i z_j}$.

\par An admissible period map~\cite{SZ,Kashiwara} 
$\varphi:\Delta^{*r}\to\Gamma\backslash\mathcal M$ with unipotent monodromy
then has a local normal form~\cite{PearlsteinManuscripta}
\begin{equation}
        F(z_1,\dots,z_r) = e^{\sum_j\, z_j N_j}e^{\Gamma(s)}F_{\infty}    
        \label{eq:lnf}
\end{equation}
where $T_j = e^{N_j}$, $F_{\infty}$ is the limit Hodge filtration and 
$\Gamma(s)$ is a holomorphic function on $\Delta^r$ which vanishes at the 
origin and takes values in a vector space complement to the stabilizer of 
$F_{\infty}$ in $Lie(G_{\mathbb C})$.  See 
\cite{PearlsteinManuscripta} for details.

\par The local normal form~\eqref{eq:lnf} for variations of pure Hodge structure 
appears in~\cite{Luminy}, and works in the setting of admissible $\mathbb R$-VMHS 
with unipotent monodromy.  Indeed, the special case $\Gamma(s) = 0$ is an admissible
nilpotent orbit (or an infinitesimal mixed Hodge module).

\par Passage from the period map $\varphi:\Delta^{*r}\to\Gamma\backslash\mathcal M$
to the local normal form~\eqref{eq:lnf} may involve replacing $\Delta^{*r}$ by a polydisk
of smaller radius.  Since we are only interested in the asymptotic behavior of the period
map in our discussions involving the local normal form, we generally omit this step.
However, when it becomes necessary to explicitly restrict to a smaller disk we write
\begin{equation}
        \Delta^r_a=\{\, s\in\Delta^r\mid |s_1|,\dots,|s_r|<a\,\}.      \label{eq:small-disk}
\end{equation}

\begin{remark} Variations of pure, polarized Hodge structure with unipotent
monodromy are admissible by the results of Schmid~\cite{Schmid73} and
Cattani, Kaplan and Schmid~\cite{CKS86}.  
If $\mathcal V$ is a variation of pure Hodge structure,
we write $\check{\mathcal D}$ and $\mathcal D$ in place of 
$\check{\mathcal M}$ and $\mathcal M$.  
\end{remark}

\subsection{Nilpotent Orbits}  The nilpotent orbit attached to an
admissible variation $\mathcal V\to\Delta^{*r}$ is the map
$\theta(z) = \exp(\sum_j\, z_j N_j)F_{\infty}$ from 
$\mathbb C^r\to\check{\mathcal M}$
obtained by setting $\Gamma=0$ in~\eqref{eq:lnf}.  In language 
of~\cite{Kashiwara}, the data $(N_1,\dots,N_r;F,W)$
determines an infinitesimal mixed Hodge module (IMHM)
We shall often use the alternative term admissible nilpotent orbit instead of IMHM.  In the
case where $\mathcal M$ is a classifying space of pure Hodge structures,
an IMHM is just a nilpotent orbit in the sense of Schmid~\cite{Schmid73}.
If $\mathcal V\to\Delta^{*r}$ is an admissible variation of mixed Hodge
structure, we also use the notation $\mathcal V_{\nilp}$ to denote the
associated nilpotent orbit.

\begin{definition} Let $\mathcal D$ be a classifying space of pure Hodge
structures of weight $k$ polarized by $Q$ and $\mathfrak g_{\mathbb R}$ denote the Lie
algebra consisting of infinitesimal, real automorphisms of $Q$.  Then,
an $r$-variable nilpotent orbit $\theta$ with values in $\mathcal D$
is a map $\theta:\mathbb C^r\to\check{\mathcal D}$ of the form
\begin{equation}
       \theta(z) = \exp(\sum_j\, z_j N_j)F            \label{eq:nil-orbit-1}
\end{equation}
where $N_1,\dots,N_r\in\mathfrak g_{\mathbb R}$ are commuting nilpotent
endomorphisms and $F\in\check{\mathcal D}$ such that
\begin{itemize}
\item[(i)] $N_j(F^p)\subseteq F^{p-1}$;
\item[(ii)] There exists a constant $a$ such that if $\Im(z_1),\dots,\Im(z_r)>a$
then $\theta(z)\in\mathcal D$.
\end{itemize}
\end{definition}

\par In particular, since $\mathcal D$ encodes the weight of the Hodge structures
it parametrizes, the data of a nilpotent orbit with values in $\mathcal D$ reduces 
to $(N_1,\dots,N_r;F)$. If the data of $\mathcal D$ is defined over a subfield $K$ 
of $\mathbb R$, one can define the notion of an $K$-nilpotent orbit by requiring 
that $N_1,\dots,N_r\in\mathfrak g_K = Lie({\rm Aut}_K(Q))$.

\begin{lemma}\label{scale-nilp} If $(N_1,\dots,N_r;F)$ generate a $K$-nilpotent 
orbit of pure Hodge structure then so does 
$$
     (t_1 N_1,\dots,t_r N_r;F)
$$ 
for any choice of positive scalars $t_1,\dots,t_r\in K_+$.
\end{lemma}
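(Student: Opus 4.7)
The plan is to verify the two defining conditions of a nilpotent orbit directly for the rescaled data $(t_1 N_1,\dots,t_r N_r;F)$, using only the corresponding properties for $(N_1,\dots,N_r;F)$ and the fact that $t_j\in K_+$.

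First, I would check the algebraic structure. Since $K_+\subseteq K\subseteq \mathbb{R}$, each $t_j N_j$ still lies in $\mathfrak{g}_K$, remains nilpotent (scalar multiples of nilpotents are nilpotent), and the collection $\{t_j N_j\}$ still commutes because the $N_j$ do. The horizontality condition (i) is immediate: $(t_j N_j)(F^p) = t_j N_j(F^p) \subseteq F^{p-1}$ since $F^{p-1}$ is a vector subspace and hence closed under scalar multiplication.

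The substantive step is condition (ii), and it reduces to a rescaling of the imaginary parts. The rescaled orbit is
\[
\tilde\theta(z) = \exp\Bigl(\textstyle\sum_j z_j (t_j N_j)\Bigr)F = \exp\Bigl(\textstyle\sum_j (t_j z_j) N_j\Bigr)F = \theta(t_1 z_1,\dots,t_r z_r),
\]
so $\tilde\theta(z) = \theta(w)$ under the linear change of variables $w_j = t_j z_j$. Since $t_j>0$, we have $\Im(w_j) = t_j \Im(z_j)$, so if $a$ is a constant realizing (ii) for $\theta$, then setting $a' = \max_j a/t_j$ guarantees $\Im(w_j)>a$ whenever $\Im(z_j)>a'$, and hence $\tilde\theta(z)\in\mathcal{D}$ on this region.

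There is essentially no main obstacle: the argument is a direct unpacking of the definition combined with the rescaling $w_j = t_j z_j$. The only thing one should be careful about is using $t_j > 0$ (rather than merely $t_j \in K^\times$); this positivity is exactly what ensures $\Im(w_j)$ and $\Im(z_j)$ have the same sign, so that the open region where $\tilde\theta$ lands in $\mathcal{D}$ is again of the required form $\Im(z_j) > a'$ for a uniform constant $a'$.
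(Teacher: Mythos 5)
Your proof is correct and takes essentially the same approach as the paper, which simply asserts that conditions (i) and (ii) remain true after replacing $N_j$ by $t_j N_j$; you have spelled out the rescaling $w_j = t_j z_j$ and the resulting constant $a' = \max_j a/t_j$ that the paper leaves implicit.
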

\begin{proof} Conditions $(i)$ and $(ii)$ remain true
upon replacing $N_j$ by $t_j N_j$.  
\end{proof}

\par To continue, we recall that if $N$ is a nilpotent endomorphism of a finite 
dimensional vector space $V$ defined over a field of characteristic zero then there
exists a unique, increasing filtration $W=W(N)$ of $V$ such that [cf.~\cite{CKS86}]
\begin{itemize}
\item[(a)] $N(W_j)\subseteq W_{j-2}$;
\item[(b)] $N^{\ell}:Gr^W_{\ell}\to Gr^W_{-\ell}$ is an isomorphism;
\end{itemize}
for all $j$ and $\ell$.

\begin{theorem}\label{mono-cone} (See~\cite{CKS86}) Let $\theta$ be a nilpotent orbit
of pure Hodge structure of weight $k$ and 
$$
      \mathcal C = \{\sum_j\, a_j N_j \mid a_1,\dots,a_r>0\,\}
$$
Then, the map $N\mapsto W(N)$ is constant on $\mathcal C$.
Pick $N\in\mathcal C$ and define $W(N)[-k]_j = W_{j-k}(N)$.
Then,
\begin{equation}
        (F,W(N)[-k])              \label{eq:lmhs-1}
\end{equation}
is a mixed Hodge structure with respect to which each $N_j$
is a $(-1,-1)$-morphism.
\end{theorem}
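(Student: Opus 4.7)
The plan is to reduce to Schmid's one-variable limit mixed Hodge structure theorem and then upgrade the pointwise limit structures to a constancy statement on the cone. First I would observe that for any $N = \sum_j a_j N_j \in \mathcal{C}$, the map $w \mapsto \exp(wN)F$ is itself a one-variable nilpotent orbit with values in $\check{\mathcal{D}}$: condition (i) for the $N_j$ gives $N F^p \subseteq F^{p-1}$, and condition (ii) for the $r$-variable orbit gives that $\exp(wN)F = \theta(a_1 w, \dots, a_r w) \in \mathcal{D}$ once $\Im(w)$ is sufficiently large, since $a_j > 0$. Applying Schmid's nilpotent orbit theorem to this one-variable orbit yields that $(F, W(N)[-k])$ is a mixed Hodge structure with $N$ as a $(-1,-1)$-morphism.

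The main step is the constancy of $N \mapsto W(N)$ on $\mathcal{C}$. I would first reduce to the two-variable case: given $N, N' \in \mathcal{C}$, form $\tilde\theta(w_1, w_2) = \exp(w_1 N + w_2 N')F$. Commutativity of $N$ and $N'$ (inherited from that of the $N_j$), together with positivity of the coefficients, makes $\tilde\theta$ a two-variable nilpotent orbit, so it suffices to establish $W(N + tN') = W(N)$ for all $t \geq 0$ in the setting of a two-variable orbit. The strategy is to use the limit mixed Hodge structure $(F, W(N)[-k])$ just produced and combine it with the compatibility between $N$ and $N'$ to show that $N'$ is also a $(-1,-1)$-morphism of $(F, W(N)[-k])$. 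Once this is known, Jacobson--Morozov and standard $\mathfrak{sl}_2$-representation theory applied on the associated gradeds $\Gr^{W(N)}$ force the monodromy weight filtration of $N + tN'$ to coincide with $W(N)$ for every $t \geq 0$.

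The delicate obstacle is showing that $N'$ is a $(-1,-1)$-morphism of the limit mixed Hodge structure associated to $N$; this is precisely the input supplied by Schmid's $\mathrm{SL}_2$-orbit theorem, which approximates the two-variable nilpotent orbit, asymptotically as $\Im(w_1)/\Im(w_2) \to \infty$, by a tower of split $\mathrm{SL}_2$-orbits in which $N'$ acts coherently across successive limits. I would carry this out by iterating the one-variable $\mathrm{SL}_2$-orbit theorem and tracking the action of $N'$ through the approximating splittings; this is the technical heart of the argument.

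Granted constancy, the final assertion that each $N_j$ is a $(-1,-1)$-morphism of $(F, W(N)[-k])$ is formal. Condition (i) already gives $N_j F^p \subseteq F^{p-1}$. For the weight shift, fix $N \in \mathcal{C}$ and pick $\varepsilon > 0$ small enough that $N + \varepsilon N_j \in \mathcal{C}$; by constancy $W(N + \varepsilon N_j) = W(N)$, so $(N + \varepsilon N_j) W_\ell(N) \subseteq W_{\ell-2}(N)$ by definition of the weight filtration. Subtracting the inclusion $N W_\ell(N) \subseteq W_{\ell-2}(N)$ and dividing by $\varepsilon$ yields $N_j W_\ell(N) \subseteq W_{\ell-2}(N)$, as required.
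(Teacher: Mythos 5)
The paper does not prove Theorem~\ref{mono-cone}; it is stated as a citation to Cattani--Kaplan--Schmid~\cite{CKS86}, so the comparison must be with the CKS argument, whose general outline your sketch does follow: Schmid's one-variable nilpotent orbit theorem to produce the limit mixed Hodge structure along each ray, the $\mathrm{SL}_2$-orbit theorem to control the relationship between different rays, and a formal deduction at the end. The final step (deriving $N_j W_\ell \subseteq W_{\ell-2}$ from $W(N+\varepsilon N_j)=W(N)$) is fine.

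There is, however, a genuine gap in the middle. You claim that once $N'$ is known to be a $(-1,-1)$-morphism of $(F,W(N)[-k])$, ``Jacobson--Morozov and standard $\mathfrak{sl}_2$-representation theory applied on the associated gradeds $\Gr^{W(N)}$'' force $W(N+tN')=W(N)$ for all $t\geq 0$. That is false as a piece of pure $\mathfrak{sl}_2$ bookkeeping. Already on the $3$-dimensional irreducible, the commutant of the lowering operator $\bar N$ in $\mathrm{ad}$-weight $-2$ is spanned by $\bar N$ itself, so the most general $\bar N'$ of weight $-2$ commuting with $\bar N$ is $\bar N'=c\bar N$, and then $(\bar N+t\bar N')^2=(1+tc)^2\bar N^2$ degenerates at $t=-1/c>0$ whenever $c<0$ --- so the monodromy weight filtration of $N+tN'$ would jump on the half-line $t\geq 0$. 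What rules out such a sign in the Hodge-theoretic setting is precisely the positivity coming from the polarization $Q$ on the nilpotent orbit (ultimately the Hodge--Riemann positivity on the primitive subspaces of the polarized limit mixed Hodge structure), not abstract $\mathfrak{sl}_2$-representation theory. You do invoke the $\mathrm{SL}_2$-orbit theorem in the preceding step, so the correct mechanism may be implicitly present in your outline; but as written, the argument would, if taken literally, prove a false statement, and the essential role of the polarization in the cone-constancy step is hidden.
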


\par The pair \eqref{eq:lmhs-1} is called the limit mixed Hodge structure of $\theta$.
Analysis of the possible nilpotent orbits which can arise can then be reduced
to the study of nilpotent orbits with limit mixed Hodge structure
which are split over $\mathbb R$ by the following results of 
Cattani, Kaplan and Schmid~\cite{CKS86}.

\begin{theorem}\label{eq:rsplit-orbit}~\cite{CKS86} If $\theta(z_1,\dots,z_r)$ is a 
nilpotent orbit of pure Hodge structure such that \eqref{eq:lmhs-1} is 
split over $\mathbb R$ then $\theta(z)$ takes values in the appropriate 
classifying space of pure Hodge structure as soon as $\Im(z_1),\dots,\Im(z_r)>0$.
\end{theorem}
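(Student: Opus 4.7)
The plan is to use the reality of the $N_j$ to reduce to showing $\exp(iN)F \in \mathcal D$ for every $N$ in the interior cone $\mathcal C$, then further reduce this to a one-variable situation by exploiting the constancy of $W(N)$ on $\mathcal C$, and finally invoke Schmid's single-variable $SL_2$-orbit theorem applied to the $\mathbb R$-split datum.

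First, writing $z_j = x_j + i y_j$ with $y_j > 0$, I decompose
$$
\theta(z) \;=\; \exp\Bigl(\sum_j x_j N_j\Bigr)\cdot \exp(iN)F, \qquad N := \sum_j y_j N_j \in \mathcal C.
$$
The first factor lies in $G_{\mathbb R}$ since each $N_j \in \mathfrak g_{\mathbb R}$, and $G_{\mathbb R}$ preserves $\mathcal D$, so it suffices to prove $\exp(iN)F \in \mathcal D$ for every $N \in \mathcal C$. By Theorem~\ref{mono-cone}, $W := W(N)$ is constant on $\mathcal C$, the pair $(F,W[-k])$ is a fixed $\mathbb R$-split mixed Hodge structure independent of the choice of $N \in \mathcal C$, each such $N$ is a $(-1,-1)$-morphism of $(F,W[-k])$, and $Q$ polarizes it in the graded sense. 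This reduces the problem to the one-variable situation determined by the single nilpotent $N \in \mathfrak g_{\mathbb R}$.

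Second, I would construct an $sl_2$-triple $(N,H,N^+)$ in $\mathfrak g_{\mathbb R}$ acting on $V_{\mathbb R}$. Here $H := Y - kI$, where $Y = Y_{(F,W[-k])}$ is the Deligne grading, which is real by Proposition~\ref{split-2}. Standard Jacobson--Morozov together with the isomorphism $N^{\ell}:\Gr^W_\ell \xrightarrow{\sim} \Gr^W_{-\ell}$ from the definition of the monodromy weight filtration produces a nilpotent complement $N^+$, and the $\mathbb R$-split condition together with $(-1,-1)$-compatibility pins down a canonical real lift, yielding a Lie algebra homomorphism $\rho: \mathfrak{sl}_2(\mathbb R) \to \mathfrak g_{\mathbb R}$ respecting $Q$, under which $\exp(iN) = \rho\bigl(\begin{smallmatrix}1 & i \\ 0 & 1\end{smallmatrix}\bigr)$.

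Third, I would use the Lefschetz primitive decomposition $V = \bigoplus_\ell N^\ell \ker(N^+)$ to verify Hodge--Riemann positivity for $\exp(iN)F$: on each primitive summand of weight $\ell$, the polarized mixed Hodge condition on $(F,W[-k])$ supplies a polarization $(u,v)\mapsto Q(u, N^\ell \bar v)$, and an Iwasawa decomposition of $\bigl(\begin{smallmatrix}1 & i \\ 0 & 1\end{smallmatrix}\bigr)$ in $SL_2(\mathbb C)$ converts primitive positivity into positive definiteness of $Q(C_{\exp(iN)F}\,u,\bar v)$, i.e., $\exp(iN)F \in \mathcal D$. The main obstacle is this final positivity verification, which is the essential content of Schmid's single-variable $SL_2$-orbit theorem: the subtlety is keeping track of how the Weil operator of $\exp(iN)F$ acts on each Lefschetz summand via $\rho$, so that the global positivity assembles correctly from the primitive Hodge--Riemann relations.
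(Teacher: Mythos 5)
The paper does not supply its own proof of this theorem; it is cited directly from Cattani--Kaplan--Schmid~\cite{CKS86}. Your strategy --- peel off the real translation $\exp(\sum x_jN_j)\in G_{\mathbb R}$, note that $G_{\mathbb R}$ preserves $\mathcal D$, then reduce the claim ``$\exp(iN)F\in\mathcal D$ for all $N\in\mathcal C$'' to a one-variable statement by exploiting the constancy of $W(N)$ on the cone --- is precisely the CKS reduction, and the final one-variable positivity via the $\mathrm{sl}_2$-triple and Lefschetz primitive decomposition is the correct mechanism. So your route is essentially the same as the one the reference uses.

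Two small imprecisions worth flagging. First, the one-variable statement you need (an $\mathbb{R}$-split nilpotent orbit lies in $\mathcal D$ for all $y>0$) is \emph{not} Schmid's $\mathrm{SL}_2$-orbit theorem itself; it is a preliminary, purely algebraic lemma that precedes the orbit theorem (in Schmid's paper and again in \cite{CKS86}). Citing the full $\mathrm{SL}_2$-orbit theorem here would be circular-looking at best, since that theorem is about approximating \emph{general} orbits by the $\mathbb{R}$-split case; what you actually use is the easier direction. Second, ``standard Jacobson--Morozov... pins down a canonical real lift'' glosses over a genuine point: the Jacobson--Morozov complement $N^+$ with $[H,N^+]=2N^+$ a priori has components in all $\mathfrak{g}^{a,b}$ with $a+b=2$, and the fact that one can and must take $N^+\in\mathfrak{g}^{1,1}$ (so that the $\mathrm{sl}_2$-action respects the Deligne bigrading, which is what makes the Lefschetz/Hodge--Riemann bookkeeping go through) needs the $\mathbb{R}$-split hypothesis in an essential way; it is a lemma, not a reflex. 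Neither remark is fatal --- your sketch has the right skeleton --- but both are exactly the places where a referee would push.
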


\begin{theorem}\cite{CKS86}\label{eq:rsplit-orbit-2} If $\theta(z_1,\dots,z_r)$ is a nilpotent
orbit of pure Hodge structure and $(\tilde F,W) = (e^{-i\delta}F,W)$
is the $\delta$-splitting of the limit mixed Hodge structure 
\eqref{eq:lmhs-1} then $\tilde\theta(z_1,\dots,z_r) = e^{\sum_j\, z_jN_j}\tilde F$ is 
a nilpotent orbit of pure Hodge structure.
\end{theorem}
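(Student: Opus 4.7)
The approach is to verify the two defining properties of a nilpotent orbit for $\tilde\theta(z) = e^{\sum_j z_j N_j}\tilde F$. The key structural observation is that $\delta$ commutes with every $N_j$: by Theorem~\ref{mono-cone}, each $N_j$ is a $(-1,-1)$-morphism of the limit mixed Hodge structure $(F,W(N)[-k])$, and Theorem $(2.20)$ of~\cite{CKS86} asserts that $\delta$ commutes with all $(r,r)$-morphisms of $(F,W)$. Consequently $e^{-i\delta}$ and $e^{\sum_j z_j N_j}$ commute, yielding
$$
  \tilde\theta(z) = e^{\sum_j z_j N_j}\,e^{-i\delta}F = e^{-i\delta}\theta(z).
$$

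The horizontality condition (i), namely $N_j\tilde F^p\subseteq \tilde F^{p-1}$, is then immediate from
$$
  N_j\tilde F^p = N_j e^{-i\delta} F^p = e^{-i\delta}N_j F^p \subseteq e^{-i\delta} F^{p-1} = \tilde F^{p-1},
$$
using the horizontality of the original orbit $\theta$.

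For condition (ii) I must produce some $a$ so that $\tilde\theta(z)\in\mathcal{D}$ whenever $\Im z_j>a$ for all $j$. This amounts to checking the two Hodge--Riemann bilinear relations for the polarization $Q$. Since each $N_j$ lies in $\mathfrak g_{\mathbb R}$, and the $\delta$-splitting operator also lies in $\mathfrak g_{\mathbb R}$ (a standard consequence of the construction of $\delta$ for a polarized limit mixed Hodge structure, using that $\delta$ commutes with the polarization viewed as a $(k,k)$-morphism), both $e^{\sum_j z_j N_j}$ and $e^{-i\delta}$ preserve $Q$ bilinearly. The first Riemann relation for $\tilde\theta(z)$ then reduces to the same relation at $F$, which holds because $\theta$ is a nilpotent orbit.

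The main obstacle is the second, positivity, relation on the primitive parts of $\tilde\theta(z)$. Here the plan is to invoke the $SL_2$-orbit theorem of Schmid~\cite{Schmid73} and Cattani--Kaplan--Schmid~\cite{CKS86}, which identifies $\tilde\theta$ as the leading $\mathbb R$-split term in the asymptotic expansion of $\theta$ near the boundary and deduces the required positivity from the $sl_2$-representation attached to $(F,W)$. Once positivity is in hand, $\tilde\theta$ is a nilpotent orbit whose limit mixed Hodge structure $(\tilde F,W)$ is $\mathbb R$-split by construction; Theorem~\ref{eq:rsplit-orbit} can then be applied to sharpen the admissible threshold, giving $\tilde\theta(z)\in\mathcal{D}$ for all $\Im z_j>0$.
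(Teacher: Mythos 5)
The paper states this result with the citation \cite{CKS86} and gives no proof of its own, so there is no in-paper argument to compare against; I assess your attempt on its own terms.

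Your preliminary steps are correct. By Theorem~\ref{mono-cone} each $N_j$ is a $(-1,-1)$-morphism of the limit mixed Hodge structure $(F,W)$, so by the last clause of Theorem~$(2.20)$ of~\cite{CKS86} the Deligne element $\delta$ commutes with each $N_j$; hence $\tilde\theta(z)=e^{-i\delta}\theta(z)$ as you say. Horizontality transports directly, and $\delta\in\mathfrak g_{\mathbb R}$ (by functoriality of the Deligne splitting applied to the isomorphism of mixed Hodge structures $a_Q\colon V\to V^*(-k)$ induced by the polarization) gives the first Riemann bilinear relation.

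The positivity step, however, is where all the content lies, and there you have a genuine gap. You assert that the $\mathrm{SL}_2$-orbit theorem ``identifies $\tilde\theta$ as the leading $\mathbb R$-split term in the asymptotic expansion of $\theta$.'' It does not. The leading $\mathbb R$-split term furnished by the $\mathrm{SL}_2$-orbit theorem is the $\mathrm{sl}_2$-split orbit $\hat\theta(z)=e^{\sum_j z_jN_j}\hat F$ with $\hat F=e^{-\xi}F$ as in \eqref{eq:sl2-split}, and $(e^{-i\delta}F,W)$ and $(e^{-\xi}F,W)$ are, in general, distinct $\mathbb R$-split mixed Hodge structures --- the paper itself keeps them apart, recording the $\mathrm{sl}_2$-split version of the present theorem separately in Remark~\ref{K-orbit}. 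As written, your outline would at best yield the statement for $\hat\theta$, not for $\tilde\theta$, and even that is a plan to invoke a theorem rather than an argument. What is missing is a step that actually controls positivity: either a direct estimate showing that acting by $e^{-i\delta}\in\exp\Lambda^{-1,-1}_{(F,W)}$ on the degenerating polarized Hodge structures $\theta(z)$ preserves the second Riemann relation once $\Im z_j$ is large, or a mechanism to pass from the $\mathrm{sl}_2$-split orbit to the $\delta$-split one. This is the crux of the theorem and needs to be supplied.
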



\par The $\text{\rm SL}_2$-orbit theorem of \cite{Schmid73,CKS86}
involves the $\text{\rm sl}_2$ or canonical splitting
\begin{equation}
          (F,W)\mapsto (e^{-\xi}F,W)         \label{eq:sl2-split}
\end{equation}
where $\xi$ is given by universal Lie polynomials in the Hodge
components of $\delta$.  In particular, the mixed Hodge structures 
$(F,W)$, $(e^{-i\delta}F,W)$ and $(e^{-\xi}F,W)$ all induce the same
pure Hodge structures on $Gr^W$ since $\delta,\xi\in\Lambda^{-1,-1}_{(F,W)}$.

\begin{definition}\label{orbit-split} Let $\theta(z) = e^{\sum_j\, z_j N_j}F$ 
be a nilpotent orbit with values in $\mathcal D$ and limit mixed Hodge structure 
$(F,W)$.  Let $(\tilde F,W)$ be the Deligne $\delta$-splitting of $(F,W)$.  Then,
$
       \tilde\theta(z) = e^{\sum_j\, z_j N_j}\tilde F
$
is called the $\delta$-splitting of $\theta$.  Likewise, if $(\hat F,W)$
is the $\text{\rm sl}_2$-splitting of $(F,W)$ then 
$
        \hat\theta(z) = e^{\sum_j\, z_j N_j}\hat F
$
is called the $\text{\rm sl}_2$-splitting of $\theta$. 
\end{definition}

\begin{remark}\label{K-orbit} Theorem~\eqref{eq:rsplit-orbit-2} holds mutatis mutandis for $\hat\theta$.
Since the property of being a $K$-nilpotent orbit does not involve the limit Hodge filtration,
passage to the $\delta$ or $\text{\rm sl}_2$-splitting does not effect the property of 
being a $K$-nilpotent orbit.
\end{remark}

\par For later use, we record that if $(F,W)$ is a biextension mixed
Hodge structure then 
\begin{equation}
          W_{-2}gl(V) = gl(V)^{-1,-1}_{(F,W)}    \label{eq:w2}
\end{equation}
for every point $F\in\mathcal M$.  Moreover, in the graded-polarized
case,
\begin{equation}
          \eta\in Z(\mathfrak g_{\mathbb C})      \label{eq:eta-central}
\end{equation}
i.e. $\eta$ commutes with every element of $\mathfrak g_{\mathbb C}$.
Indeed, any element of $\mathfrak g_{\mathbb C}$ must act trivially
on $Gr^W_0$ and $Gr^W_{-2}$ and $\eta$ maps $Gr^W_0$ to $Gr^W_{-2}$
and annihilates $W_{-1}(V)$.

\subsection{Heights of Nilpotent Orbits}  Let $V$ be a finite
dimensional vector space over a field of characteristic zero
equipped with a nilpotent endomorphism $N$ and increasing 
filtration $W$ such that $N(W_k)\subseteq W_k$ for each index
$k$.  Then, there exists at most one increasing filtration 
$M=M(N,W)$, called the relative weight filtration of $W$
with respect to $N$ (see~\cite{SZ}), such that
\begin{itemize}
\item[(a)] $N(M_k)\subseteq M_{k-2}$ for all $k$;
\item[(b)] For any $k$ and $\ell$, the induced map
$
            N^{\ell}:Gr^M_{\ell+k}Gr^W_k\to Gr^M_{k-\ell}Gr^W_k
$
is an isomorphism. 
\end{itemize}

\begin{definition}~\cite{SZ} The data $(N,F,W)$ defines an admissible 
nilpotent orbit $\theta(z) = (e^{zN}F,W)$ with values in the classifying space 
$\mathcal M$ of graded-polarized mixed Hodge structure provided that
\begin{itemize}
\item[---] $N$ is nilpotent, preserves $W$ and acts by infinitesimal 
isometries on $Gr^W$;
\item[---] $M=M(N,W)$ exists; 
\item[---] $F\in\check{\mathcal M}$ and $N(F^p)\subseteq F^{p-1}$;
\item[---] $e^{zN}F$ induces nilpotent orbits of pure, polarized Hodge 
structure on $Gr^W$.
\end{itemize}
\end{definition}

\begin{theorem}~\cite{SZ} If $(F,N,W)$ defines an admissible nilpotent orbit
and $M=M(N,W)$ then $(F,M)$ is a mixed Hodge structure for which $N$ is a 
$(-1,-1)$-morphism.
\end{theorem}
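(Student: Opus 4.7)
The plan is to verify the two assertions in the order that their proofs require. First, the statement that $N$ is a $(-1,-1)$-morphism of $(F,M)$ follows almost formally from the two infinitesimal hypotheses already in hand: $N(M_\ell)\subseteq M_{\ell-2}$ is condition (a) in the definition of the relative weight filtration $M = M(N,W)$, while $N(F^p)\subseteq F^{p-1}$ is built into the definition of an admissible nilpotent orbit. So once $(F,M)$ is known to be a mixed Hodge structure, $N$ automatically gives a morphism of type $(-1,-1)$. The real work is therefore to establish the MHS property.

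For that, I would use $W$ as an auxiliary filtration to bootstrap from the known pure case. By admissibility, on each $\Gr^W_k$ the induced data define a nilpotent orbit $(N, F\Gr^W_k)$ of pure, polarized Hodge structure of weight $k$. By the Schmid/CKS limit Hodge structure theorem (Theorem~\ref{mono-cone} applied to a single nilpotent operator), $(F\Gr^W_k,\, W(N)[-k])$ is then a mixed Hodge structure. The characterization of the relative weight filtration $M=M(N,W)$ is precisely that $M$ induces $W(N)[-k]$ on each $\Gr^W_k$; hence $\Gr^M_\ell\Gr^W_k$ carries a pure Hodge structure of weight $\ell$ coming from $F$.

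To upgrade this to a pure Hodge structure on $\Gr^M_\ell$ itself, I would argue that $W$ induces on $\Gr^M_\ell$ a filtration whose associated graded is $\bigoplus_k \Gr^W_k\Gr^M_\ell$, and that this agrees canonically with $\bigoplus_k \Gr^M_\ell \Gr^W_k$, each summand being pure of weight $\ell$ by the previous step. One then invokes the standard extension principle: a filtration on a vector space whose graded pieces are pure Hodge structures of weight $\ell$ and compatible with the ambient $F$ assembles into a pure Hodge structure of weight $\ell$, provided the two filtrations are strictly compatible. Finally, this gives $(F, M)$ as a MHS, and combined with the first paragraph, $N$ is a $(-1,-1)$-morphism.

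The main obstacle is the strictness $\Gr^W\Gr^M = \Gr^M\Gr^W$, i.e.\ the degeneration of the two-filtration spectral sequence of Deligne for $(F,W,M)$. I would reduce to the $\mathbb{R}$-split case by passing to the $\delta$-splitting and then to the $\operatorname{sl}_2$-splitting of the limit mixed Hodge structure (following Definition~\ref{orbit-split}): the change of filtration is by an element of $\Lambda^{-1,-1}$, which preserves $W$, $M$, and the Hodge components on $\Gr^W$ and $\Gr^M$, hence does not affect the strictness question. In the split case one has a lift of the $\operatorname{sl}_2$-triple attached to $N$ acting compatibly with $W$ (this is where admissibility plus the existence of $M$ is crucial), and the resulting weight decomposition splits $W$ against $M$ directly, reducing everything to a formal direct sum computation that makes strictness transparent.
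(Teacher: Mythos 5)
The paper does not actually prove this result; it is quoted directly from Steenbrink--Zucker~\cite{SZ}, so there is no internal argument to compare against. On its own terms, your first two paragraphs and the dimension count implicit in the third are fine: once $(F,M)$ is known to be a MHS, the $(-1,-1)$ property of $N$ is immediate from $N(M_\ell)\subseteq M_{\ell-2}$ and $N(F^p)\subseteq F^{p-1}$; and once the strictness $\Gr^W\Gr^M=\Gr^M\Gr^W$ (with $F$ induced) is in hand, purity of the double graded pieces does force purity of $\Gr^M_\ell$ by the intersection-plus-dimension argument you sketch. You have correctly isolated the real content as the three-filtration compatibility of $(F,W,M)$.

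The difficulty is that the reduction to the $\mathbb{R}$-split case in your last paragraph is circular. The $\delta$-splitting and $\operatorname{sl}_2$-splitting of Definition~\ref{orbit-split} replace $F$ by $e^{-i\delta}F$ or $e^{-\xi}F$ where $\delta$ and $\xi$ are built from the Deligne bigrading $I^{p,q}_{(F,M)}$ of the limit mixed Hodge structure; these constructions exist, as operations, precisely because $(F,M)$ is already known to be a MHS (Theorem~$(2.20)$ of~\cite{CKS86}). In the pure case the MHS property of $(F,W(N)[-k])$ is supplied by Schmid's theorem, so the split reduction is legitimate there. In the admissible mixed case the assertion that $(F,M(N,W))$ is a MHS is exactly what is to be proved, so you cannot appeal to its $\delta$-splitting, nor to Lemma~\ref{DL-1} (whose $\operatorname{sl}_2$-triple $(N_0,H,N_0^+)$ likewise presupposes $(F,M)$ is a MHS), to produce a split model. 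The proofs in~\cite{SZ} and in Kashiwara~\cite{Kashiwara} instead proceed by induction on the length of $W$, propagating the relative monodromy filtration and the pure case across the exact sequences $0\to W_{a-1}\to W_a\to \Gr^W_a\to 0$ and verifying the three-filtration opposedness directly, without ever assuming a split model of $(F,M)$. Your outline of the target is right, but the step intended to supply the strictness is exactly the step that begs the question.
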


\par As in the pure case, $(F,M)$ is called the limit mixed Hodge 
structure of the nilpotent orbit generated by $(N,F,W)$.  An admissible nilpotent 
orbit $\theta$ induces nilpotent orbits of pure Hodge structure on $Gr^W$ such 
that the limit mixed Hodge structure of $\theta$ induces the limit mixed Hodge 
structure of the graded orbits.  The analogs of Theorems~\eqref{eq:rsplit-orbit}
and \eqref{eq:rsplit-orbit-2} hold mutatis mutandis in the mixed case.

\par As in Definition~\eqref{orbit-split}, one defines the $\delta$-splitting
and the $\text{\rm sl}_2$-splitting of an admissible nilpotent orbit $\theta$ 
by replacing by replacing the limit mixed Hodge structure of $\theta$ with the 
corresponding Deligne or $\text{\rm sl}_2$-splitting.

\begin{lemma}[Deligne~\cite{DL,BP-Comp}]\label{DL-1} Let $(N,F,W)$ generate
an admissible nilpotent orbit $\theta$ with $\delta$-splitting generated by
$(N,\tilde F,W)$.  Define,
\begin{equation}
           Y(N,F,W) = \text{\rm Ad}(e^{-iN})Y_{(e^{iN}\tilde F,W)},\qquad
           \tilde Y(N,F,W) = Y_{(\tilde F,M)}   
           \label{eq:DL-grading}
\end{equation}
Then, writing $Y = Y(N,F,W)$ and $\tilde Y = \tilde Y(N,F,W)$,
\begin{itemize}
\item[(i)] $Y = \bar Y$;
\item[(ii)] $Y$ preserves $\tilde F$;
\item[(ii)] $Y$ commutes with $\tilde Y$ ($\implies Y$ preserves $M$);
\item[(iii)] If $N = \sum_{j\geq 0}\, N_{-j}$ relative to ${\rm ad}\, Y$ then $N_0$ and $H = \tilde Y - Y$ 
form an $\text{\rm sl}_2$-pair, with associated  $\text{\rm sl}_2$-triple $(N_0,H,N_0^+)$.
\item[(iv)] $[N-N_0,N_0^+] = 0$.
\end{itemize}
\end{lemma}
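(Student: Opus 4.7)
The strategy is to work in the Deligne bigrading $V_{\mathbb C}=\bigoplus I^{p,q}$ of the $\mathbb R$-split limit mixed Hodge structure $(\tilde F,M)$. Two facts are central: $\overline{I^{p,q}}=I^{q,p}$ (from $\mathbb R$-splitness), and $N\colon I^{p,q}\to I^{p-1,q-1}$ (so $[\tilde Y,N]=-2N$). Each of the statements (i)--(v) is an assertion about the interaction of the two gradings---of $M$ by $\tilde Y$ and of $W$ by $Y$---with $N$ and with complex conjugation.

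For (i) and the preservation of $\tilde F$ by $Y$, the main step is to show that $(e^{iN}\tilde F,W)$ is a mixed Hodge structure split over $\mathbb R$. The MHS property follows from the mixed analogue of Theorem \eqref{eq:rsplit-orbit} noted in the text. For the $\mathbb R$-split property, one computes the Deligne bigrading $\{I^{p,q}_{(e^{iN}\tilde F,W)}\}$ explicitly from the bigrading of $(\tilde F,M)$ and the action of $e^{iN}$, and checks $\overline{I^{p,q}_{(e^{iN}\tilde F,W)}}=I^{q,p}_{(e^{iN}\tilde F,W)}$ using $\overline{I^{p,q}_{(\tilde F,M)}}=I^{q,p}_{(\tilde F,M)}$ together with $\overline{e^{iN}}=e^{-iN}$. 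Proposition \eqref{split-2} then gives $Y_{(e^{iN}\tilde F,W)}=\overline{Y_{(e^{iN}\tilde F,W)}}$, and the explicit bigraded description shows $Y=\Ad(e^{-iN})Y_{(e^{iN}\tilde F,W)}$ is also real, giving (i). Since $Y_{(e^{iN}\tilde F,W)}$ preserves $e^{iN}\tilde F$ by property (a) of \eqref{eq:D-bigrading}, conjugating by $e^{-iN}$ shows that $Y$ preserves $\tilde F$.

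For the commutation $[Y,\tilde Y]=0$, observe that both $Y$ and $\tilde Y$ are semisimple real operators preserving $\tilde F$, and both admit descriptions in terms of the single bigrading $\{I^{p,q}_{(\tilde F,M)}\}$ and the nilpotent $N$: $\tilde Y$ acts by $p+q$ on $I^{p,q}$, while $Y$ refines this grading via the relative weight filtration data. Joint diagonalizability then gives $[Y,\tilde Y]=0$, from which preservation of $M$ by $Y$ is immediate since $M$ is built from $\tilde Y$-eigenspace combinations.

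Finally, for (iv) and (v), decompose $N=\sum_{j\geq 0}N_{-j}$ under $\operatorname{ad}(Y)$. The identities $[\tilde Y,N]=-2N$ and $[Y,\tilde Y]=0$ force $[\tilde Y,N_{-j}]=-2N_{-j}$ and $[Y,N_{-j}]=-jN_{-j}$. Setting $H=\tilde Y-Y$, one gets $[H,N_0]=-2N_0$, so Jacobson--Morozov produces an $\mathfrak{sl}_2$-triple $(N_0,H,N_0^+)$; semisimplicity of $\operatorname{ad}(Y)$ allows $N_0^+$ to be chosen with $[Y,N_0^+]=2N_0^+$, i.e.\ homogeneous of $\operatorname{ad}(Y)$-weight $+2$. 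For (v), the commutator $[N_{-j},N_0^+]$ with $j\geq 1$ has $\operatorname{ad}(Y)$-weight $2-j\leq 1$ and $\operatorname{ad}(\tilde Y)$-weight $0$, so it lies in the $H$-weight $2-j$ part of an $\mathfrak{sl}_2$-module generated under $\operatorname{ad}(N_0^+),\operatorname{ad}(N_0),\operatorname{ad}(H)$; a standard representation-theoretic argument then forces it to vanish. The main obstacle is the first step: proving $\mathbb R$-splitness of $(e^{iN}\tilde F,W)$ and extracting reality of $Y$, since this requires a careful, explicit handling of the Deligne bigrading for a filtration that is not itself real.
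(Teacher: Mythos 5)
The paper does not prove Lemma~\ref{DL-1}: it is cited to Deligne's letter and to~\cite{BP-Comp}, where the argument goes through the formal theory of ``Deligne systems''---an abstract classification of gradings $Y$ of $W$ compatible with $N$ and a given grading $\tilde Y$ of $M$. There is therefore no in-paper proof to compare against, so I will evaluate the proposal on its own.

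Your proposal has three genuine gaps. First, the reality $Y=\bar Y$ is equivalent to $(e^{iN}\tilde F,W)$ being split over $\mathbb R$, and this is the technical heart of the entire lemma. You propose to ``compute the Deligne bigrading $I^{p,q}_{(e^{iN}\tilde F,W)}$ explicitly from the bigrading of $(\tilde F,M)$,'' but no such computation is exhibited, and it is not a formality: the bigrading of $(\tilde F,M)$ trivializes $M$, whereas what is needed is a bigrading compatible with the \emph{different} filtration $W$, and there is no simple transport of one to the other. (In the pure case, $W$ is trivial and the $\mathbb R$-split statement reduces to the classical fact that $e^{iN}\tilde F$ is a polarized Hodge structure; in the genuinely mixed case, the statement is a theorem of Deligne requiring a delicate inductive argument, not a direct computation from the $I^{p,q}_{(\tilde F,M)}$.)

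Second, your argument for $[Y,\tilde Y]=0$ is circular. You observe that $Y$ and $\tilde Y$ are semisimple and that ``joint diagonalizability'' would give the commutation. But two semisimple operators are jointly diagonalizable if and only if they commute, so invoking joint diagonalizability assumes the conclusion. The substantive step---that $Y$ preserves each $I^{p,q}_{(\tilde F,M)}$---is precisely what must be proven and is not established.

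Third, there is a weight error for $N_0^+$. Since $H=\tilde Y-Y$ and $N_0$ both have $\mathrm{ad}(Y)$-weight $0$, and the $\mathfrak{sl}_2$-relation $[N_0^+,N_0]=H$ must be homogeneous for $\mathrm{ad}(Y)$, the element $N_0^+$ must also have $\mathrm{ad}(Y)$-weight $0$, not $+2$ as you state (its $\mathrm{ad}(\tilde Y)$-weight is $+2$). Your subsequent weight bookkeeping for $[N_{-j},N_0^+]$ inherits this error: the correct $\mathrm{ad}(Y)$-weight of $[N_{-j},N_0^+]$ is $-j$, not $2-j$. With the corrected weights, the claimed ``standard representation-theoretic argument'' is no longer a one-liner, and you would need a more careful argument (as in~\cite{BP-Comp}) to conclude that $[N_{-j},N_0^+]=0$ for $j\geq 1$.
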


\begin{remark} $ $
\begin{itemize}
\item[---] In the language of Deligne systems, $Y$ is the grading attached to $N$ and 
$\tilde Y$ (cf. \cite{BP-Comp}).
\item[---] For $j>0$ it follows that $N_{-j}$ is highest weight $j-2$ for $(N_0,H,N_0^+)$.
This forces $N_{-1} = 0$ and $[N_0,N_{-2}] = 0$.
\item[---] In the case where $(e^{zN}\tilde F,W)$ is a biextension mixed Hodge structure 
then $N = N_0 + N_{-2}$.
\end{itemize}
\end{remark}

\begin{theorem}[Theorem $(5.19)$~\cite{GregJDG}]\label{mu} For an admissible
biextension variation $\mathcal V\to\Delta^*$ with nilpotent orbit generated
by $(N,F,W)$,  
\begin{equation}
         \mu(\mathcal V) = N_{-2}\eta          \label{eq:mu-def-2}
\end{equation}
where $N = N_0 + N_{-2}$ defined as in Lemma~\eqref{DL-1}.
\end{theorem}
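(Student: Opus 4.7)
My approach is to compute Deligne's $\delta$-splitting of the biextension mixed Hodge structure attached to the nilpotent orbit explicitly along the imaginary axis, and match its leading term with $\mu(\mathcal V)\eta$ via the defining relation $2\pi\delta = h\eta$ in \eqref{eq:JDG-height}.

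I would first reduce to the $\delta$-split nilpotent orbit. The local normal form $F(z) = e^{zN}e^{\Gamma(s)}F_\infty$ with $\Gamma(s)\to 0$ allows replacing $\mathcal V$ by its associated nilpotent orbit, since the correction from $e^{\Gamma(s)}$ perturbs $h(\mathcal V_s)$ by a bounded amount and hence leaves the leading $\log|s|$ coefficient $\mu(\mathcal V)$ untouched. Item (b) of the earlier height lemma --- invariance of $h$ under $\mathbb R$-isomorphisms of biextensions fixing $Gr^W_0$ and $Gr^W_{-2}$ --- then lets me replace the limit Hodge filtration $F$ by its $\delta$-splitting $\tilde F$ without altering $\mu$. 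After these reductions, Lemma~\eqref{DL-1} supplies a real grading $Y = Y(N,\tilde F,W)$ that preserves $\tilde F$, together with the decomposition $N = N_0 + N_{-2}$ satisfying $[N_0,N_{-2}]=0$, $[Y,N_0]=0$, and $N_{-2}\in W_{-2}\mathfrak{gl}(V) = \mathfrak{gl}(V)^{-1,-1}_{(\tilde F,W)}$ by \eqref{eq:w2}. In the biextension setting $W_{-3}=0$ forces $N_{-2}(W_{-1})=0$, so $N_{-2}^2=0$, $e^{iyN_{-2}}=1+iyN_{-2}$, and $N_{-2}=\mu(\mathcal V)\eta$ as elements of the one-dimensional subspace of $W_{-2}\mathfrak{gl}(V)$ vanishing on $W_{-1}$, where the scalar $\mu(\mathcal V)$ matches Remark~\eqref{rat-mu}.

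The core computation takes $z=iy$ and uses commutativity to write $e^{iyN}\tilde F = e^{iyN_0}(1+iyN_{-2})\tilde F$. Because $iyN_{-2}\in\Lambda^{-1,-1}_{(\tilde F,W)}$, equation \eqref{eq:lamba-transform} implies the Deligne grading of $(1+iyN_{-2})\tilde F$ equals $\mathrm{Ad}(e^{iyN_{-2}})Y = Y+2iyN_{-2}$ (the series truncates since $[N_{-2},[N_{-2},Y]]=0$). Since $e^{iyN_0}$ commutes with both $Y$ and $N_{-2}$, a further application of $e^{iyN_0}$ does not alter this grading, so $Y_{(e^{iyN}\tilde F,W)} = Y+2iyN_{-2}$ with complex conjugate $Y-2iyN_{-2}$. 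Plugging into the characterizing equation $\bar Y = e^{-2i\,\mathrm{ad}\,\delta}Y$ and using $[Y,N_{-2}]=-2N_{-2}$, a short calculation shows the unique solution is $\delta_{(e^{iyN}\tilde F,W)} = yN_{-2}$. The relation $2\pi\delta = h\eta$ combined with $N_{-2}=\mu(\mathcal V)\eta$ then gives $h(e^{iyN}\tilde F,W) = 2\pi y\,\mu(\mathcal V)$, which under $\log|s|=-2\pi y$ from $s=e^{2\pi iz}$ matches the defining asymptotic $h(\mathcal V_s)\sim -\mu(\mathcal V)\log|s|$.

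The main obstacle is justifying that the Deligne grading of $e^{iyN_0}(1+iyN_{-2})\tilde F$ really coincides with that of $(1+iyN_{-2})\tilde F$. The plan is to verify this by tracking the $I^{p,q}$-decomposition directly: since $N_0$ commutes with $Y$ the operator $e^{iyN_0}$ preserves each $Y$-eigenspace, while also commuting with complex conjugation (as $N_0$ is real) and preserving $W$; uniqueness of the Deligne bigrading then forces the identification. Without this equivariance, cross terms from the $\mathfrak{sl}_2$-triple attached to $N_0$ in Lemma~\eqref{DL-1}(iii) could in principle modify the leading-order $\delta$, but the commutation $[N_0,N_{-2}]=0$ rules out any such interference.
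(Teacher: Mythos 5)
Your target identity $\delta_{(e^{iyN}\tilde F,W)}=yN_{-2}$ is correct, but the two reductions that carry all the analytic weight are asserted rather than proved, and this is where the argument has a genuine gap. (i) Passing from $\mathcal V$ to its nilpotent orbit on the grounds that $e^{\Gamma(s)}$ ``perturbs $h$ by a bounded amount'' is exactly the nontrivial estimate at stake: $\delta_{(F(z),W)}$ is evaluated at points tending to the boundary of the classifying space, and a filtration perturbation tending to $0$ need not produce a bounded change in $\delta$ without the polynomial bounds coming from the $\mathrm{SL}_2$-orbit theorem; this is precisely why the proofs of Theorems~\ref{disk} and~\ref{sm-continuous} invoke Theorem~\ref{SL2-JDG} and Corollary~\ref{jdg-cor} to control the $e^{\gamma(z)}$ and $e^{\Gamma(s)}$ terms. (ii) Replacing $F$ by $\tilde F=e^{-i\delta}F$ is not covered by item (b) of the lemma on invariance of $h$, since $e^{-i\delta}$ is not a real isomorphism; invariance of the $\log|s|$-coefficient under this change again needs the orbit theorem (via $e^{iyN}F=g(y)e^{iyN}\tilde F$ with $g(y)$ convergent), or else Remark~\ref{rat-mu}, which itself rests on Theorem $(5.19)$ of~\cite{GregJDG} --- circular here, since that theorem is what you are in effect reproving. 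For comparison, the paper's own proof is just this citation: $Y(N,F,W)$ is a grading of $W$ with $[Y,N]\in W_{-2}gl(V)$, so Theorem $(5.19)$ of~\cite{GregJDG} applies directly.

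The core computation also does not hold together as written, even though its conclusion is right. The pairs $(\tilde F,W)$ and $(e^{iyN_{-2}}\tilde F,W)$ are not mixed Hodge structures (they induce on $\Gr^W_{-1}$ the limit filtration of $\mathcal H$, not a pure weight $-1$ structure), so ``$\Lambda^{-1,-1}_{(\tilde F,W)}$'' and ``the Deligne grading of $(1+iyN_{-2})\tilde F$'' are undefined and \eqref{eq:lamba-transform} cannot be applied to them; and the pivotal claim that applying $e^{iyN_0}$ leaves the Deligne grading unchanged is defended by saying $e^{iyN_0}$ commutes with complex conjugation, which is false ($\overline{e^{iyN_0}}=e^{-iyN_0}$), so the uniqueness characterization \eqref{eq:D-bigrading} cannot be invoked: a non-real substitution generally destroys condition (c). The repair is to reverse the order of operations: from Lemma~\ref{DL-1}, $Y_{(e^{iN}\tilde F,W)}=\mathrm{Ad}(e^{iN})Y=Y+2iN_{-2}$, so \eqref{eq:delta-def} gives $\delta_{(e^{iN}\tilde F,W)}=N_{-2}$ and hence $(e^{iN_0}\tilde F,W)$ is split over $\mathbb R$ with grading $Y$ (the fact used in the proof of Theorem~\ref{disk}); conjugating by $y^{-H/2}$, which preserves $W$ and fixes $\tilde F$ and $Y$, shows the same for $(e^{iyN_0}\tilde F,W)$ for all $y>0$, and only then does perturbing by the \emph{real} element $yN_{-2}\in\Lambda^{-1,-1}$ of that split structure yield $\delta_{(e^{iyN}\tilde F,W)}=yN_{-2}$, hence $h\sim -\mu\log|s|$ with $N_{-2}=\mu\,\eta$. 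With that fix the orbit computation is fine; the unproved reductions in the first paragraph remain the essential gap.
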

\begin{proof} Theorem $(5.19)$ of~\cite{GregJDG} asserts that any grading $Y$ of $W$ such 
that $[Y,N]\in W_{-2}gl(V)$ can be used to compute $\mu(\mathcal V)$ using~\eqref{eq:mu-def-2}.
\end{proof}

\begin{definition} Let $(N,F,W)$ generate an admissible nilpotent orbit
of biextension type with Deligne splitting $\delta$ of the limit mixed Hodge structure
and $Y=Y(N,F,W)$ defined as in~\eqref{eq:DL-grading}.  Let 
$\delta = \sum_{k\leq 0}\,\delta_k$ with $[Y,\delta_k] = k\delta_k$.  Then, 
the limit height $ht(N,F,W)$ of the nilpotent orbit $(e^{zN}F,W)$ is given 
by the formula
\begin{equation}
            2\pi\delta_{-2} = ht(N,F,W)\eta           \label{eq:limit-height}
\end{equation}
\end{definition} 

\begin{remark} A biextension mixed Hodge structure $(F,W)$ on $V$ determines an admissible
nilpotent orbit by setting $N=0$.  In this case $M=W$, $\delta$ is the 
Deligne splitting of $(F,W)$ and $Y(N,F,W)$ is a grading of $W$.  Since 
$\delta\in W_{-2}gl(V)$ it then follows from the short length of $W$ that 
$\delta = \delta_{-2}$ relative to $Y(N,F,W)$ and hence $ht(N,F,W) = h(F,W)$.
\end{remark}

\section{Regularity Results}\label{sec:reg}  The following result justifies the name 
limit height (Theorem \eqref{disk}): Let $\mathcal V\to\Delta^*$ be an admissible 
biextension VMHS with nilpotent orbit $(e^{zN}F_{\infty},W)$ and asymptotic 
height
$
         h(\mathcal V)\cong -\mu\log|s|
$.
Then,
$$
        \bar h(s) = h(\mathcal V) + \mu\log|s|
$$
has a continuous extension to $\Delta$ with $\bar h(0) = ht(N,F_{\infty},W)$.

\subsection*{$\text{\rm SL}_2$-orbits} In this section we review the theory of ${\rm SL}_2$ orbits for
nilpotent orbits of biextension type following~\cite{GregJDG}.  
Let $\mathcal M$ be the ambient classifying space with associated Lie group $G$ and 
$\tilde G$ be the subgroup of $G$ which acts as real transformations on $W_k/W_{k-2}$ for all $k$.

\par Let $(e^{zN}F,W)$ be an admissible nilpotent orbit of biextension type with 
limit mixed Hodge structure $(F,M)$, let $(\tilde F,M) = (e^{-i\delta}F,M)$ denote
the Deligne splitting of $(F,M)$ and $\tilde Y = Y(N,F,W)$, $Y = Y(N,F,W)$, and 
$N=N_0 + N_{-2}$, $N_0^+$ etc. as in Lemma \eqref{DL-1}.   

\par To continue, we recall the $SL_2$-orbit Theorem of~\cite{GregJDG}:

\begin{theorem}[Theorem 4.2~\cite{GregJDG}]\label{SL2-JDG} Let $(e^{zN}F,W)$ be an admissible nilpotent 
orbit of biextension type.  Then, there exists 
$$
    \chi\in Lie(\tilde G)\cap\ker(N)\cap\Lambda^{-1,-1}_{(\tilde F,M)}
$$
and distinguished real analytic function $g:(a,\infty)\to\tilde G$ such that 
\begin{itemize}
\item[(a)] $e^{iyN}.F = g(y)e^{iyN}.\tilde F$;
\item[(b)] $g(y)$ and $g^{-1}(y)$ have convergent series expansions about
$\infty$ of the form
$$
\aligned
       g(y) &= e^{\chi}(1 + g_1 y^{-1} + g_2 y^{-2} + \cdots)      \\
  g^{-1}(y) &= (1 + f_1 y^{-1} + f_2 y^{-2} + \cdots)e^{-\chi}         
\endaligned
$$
with $g_k$, $f_k\in \ker(\ad N_0)^{k+1}\cap\ker(\ad N_{-2})$;
\item[(c)] $\delta$, $\chi$ and the coefficients $g_k$ are related by
the formula
$$
      e^{i\delta} 
      = e^{\chi}\left(1 + \sum_{k>0}\, \frac{1}{k!}(-i)^k(\ad\, N_0)^k\,g_k
                 \right)
$$
\end{itemize}
Moreover, $\chi$ can expressed as a universal Lie polynomial over $\Q(\sqrt{-1})$ in the Hodge components 
$\delta^{r,s}$ of $\delta$ with respect to $(\tilde F,W)$.  Likewise, the coefficients $g_k$ and $f_k$  
can be expressed as universal, non-commuting polynomials over $\Q(\sqrt{-1})$ in $\delta^{r,s}$ and $\ad\, N_0^+$.
\end{theorem}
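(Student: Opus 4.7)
My plan is to produce $g(y)$ by conjugating $e^{i\delta}$ by $e^{iyN}$, then extracting its asymptotic expansion via the $sl_2$-representation theory packaged into Lemma~\eqref{DL-1}. Since $(\tilde F,M) = (e^{-i\delta}F,M)$ is the Deligne splitting of $(F,M)$ we have $F = e^{i\delta}\tilde F$, and therefore
\[
 e^{iyN}.F \;=\; \bigl(e^{iyN}e^{i\delta}e^{-iyN}\bigr)\,e^{iyN}.\tilde F.
\]
So a first candidate for $g(y)$ is $e^{iyN}e^{i\delta}e^{-iyN}$ modulo the isotropy of $e^{iyN}\tilde F$ in $\tilde G$. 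The task is to normalize this candidate so that it takes the form claimed in (b), and simultaneously to identify $\chi$ and the coefficients $g_k$.

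The key input is Lemma~\eqref{DL-1}: the grading $Y=Y(N,F,W)$ preserves $\tilde F$ and commutes with $\tilde Y=Y(\tilde F,M)$, and $N=N_0+N_{-2}$ with $[N_0,N_{-2}]=0$, $(N_0,H,N_0^+)$ an $sl_2$-triple (for $H=\tilde Y-Y$), and $[N_{-2},N_0^+]=0$. Decomposing $\delta=\sum_{k\leq-2}\delta_k$ into $\ad Y$-eigenspaces, each $\delta_k$ lies in a finite dimensional $sl_2$-representation under $(N_0,H,N_0^+)$. Since $N_0$ and $N_{-2}$ commute, $e^{iyN}=e^{iyN_0}e^{iyN_{-2}}$; because $\ad N_{-2}$ commutes with both $\ad N_0$ and $\ad N_0^+$, it contributes only polynomially bounded corrections which automatically preserve the $\ker(\ad N_{-2})$ condition. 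The main computation is then the $sl_2$-representation-theoretic expansion of $e^{iyN_0}e^{i\delta}e^{-iyN_0}$: using the highest-weight decomposition together with the ``shift'' operator $\ad N_0^+$, one rewrites this as $e^{\chi}\cdot(1+g_1y^{-1}+g_2y^{-2}+\cdots)$, where the $y^{-k}$ coefficient lands in $\ker(\ad N_0)^{k+1}$ as a consequence of the $sl_2$-weight structure. This parallels the pure case of~\cite{CKS86}, now adapted to the biextension type.

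Part (c) is obtained by rearranging the defining relation: since $\chi\in\ker(N)$, conjugation by $e^{-iyN_0}$ fixes $e^{\chi}$, so
\[
e^{i\delta} \;=\; e^{\chi}\cdot e^{-iyN_0}\bigl(1+\sum_{k\geq1} g_k y^{-k}\bigr)e^{iyN_0}.
\]
Expanding $e^{-iyN_0}g_k e^{iyN_0}=\sum_{n\geq0}\tfrac{(-iy)^n}{n!}(\ad N_0)^n g_k$ and using $g_k\in\ker(\ad N_0)^{k+1}$, only the term with $n=k$ survives as $y\to\infty$, yielding the stated formula in (c). The assertion $\chi\in\ker(N)$ comes from the fact that the $y^0$-term of the conjugation, being a limit as $y\to\infty$, must commute with both $N_0$ (by the highest-weight character of the leading part) and $N_{-2}$ (by commutativity with $N_0^+$ and the biextension-type weight constraints).

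The principal obstacle is the $sl_2$-representation-theoretic bookkeeping required to verify that the $y^{-k}$ coefficients land exactly in the prescribed kernels and that $\chi$ is annihilated by the full $N$. The universal-polynomial character of $\chi$ and the $g_k$ in the Hodge components $\delta^{r,s}$ and $\ad N_0^+$ then follows automatically because every step is carried out by universal $sl_2$-representation-theoretic formulas; convergence of the series is built in via the nilpotency of $\ad N$ on each weight space of $Y$.
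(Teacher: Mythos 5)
You should first note that the paper does not prove this statement at all: it is quoted verbatim as Theorem~4.2 of \cite{GregJDG}, and the only ``proof'' in the present paper is that citation, so any genuine proof would have to reproduce the analytic content of that reference (which itself builds on the $\mathrm{SL}_2$-orbit theorems of \cite{Schmid73,CKS86}).

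The central step of your sketch does not work as described. Your candidate $g(y)=e^{iyN}e^{i\delta}e^{-iyN}=\exp\bigl(i\,\mathrm{Ad}(e^{iyN})\delta\bigr)$ is, because $\mathrm{ad}\,N$ is nilpotent, a \emph{polynomial in positive powers of} $y$; generically it is unbounded as $y\to\infty$ and is nowhere near the form $e^{\chi}(1+g_1y^{-1}+g_2y^{-2}+\cdots)$. The entire content of the theorem is that this naive factor can be corrected by an isotropy-valued function (stabilizing $e^{iyN}.\tilde F$) so that the result is real analytic near $y=\infty$, bounded, with a \emph{convergent} expansion in integer powers of $y^{-1}$ whose coefficients lie in $\ker(\mathrm{ad}\,N_0)^{k+1}\cap\ker(\mathrm{ad}\,N_{-2})$ and whose constant term is pinned down as in (c). That correction is not ``$sl_2$ bookkeeping'': in the pure case it is precisely Schmid's $\mathrm{SL}_2$-orbit theorem, whose proof uses the horizontality of the period map, the associated differential equation, and delicate norm estimates; in the biextension case it is the main analytic theorem of \cite{GregJDG}, proved by running Schmid/CKS on $\Gr^W$ and then analyzing the Deligne gradings of Lemma~\ref{DL-1}, not by expanding $\mathrm{Ad}(e^{iyN_0})e^{i\delta}$ directly. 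Your sketch gives no reason why a bounded choice of $g$ exists, why the expansion is in $y^{-1}$ (note the paper's own Lemma~\ref{g-twist} only extracts half-integral powers after the further twist by $y^{H/2}$), or why the $g_k$ land in the stated kernels; and the appeal to ``nilpotency of $\mathrm{ad}\,N$'' for convergence is backwards, since nilpotency produces a finite sum in positive powers of $y$, whereas the series in the theorem is an infinite series in $y^{-1}$ whose convergence is an analytic assertion.

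A secondary but real flaw: item (a) is an equality of \emph{filtrations} in $\check{\mathcal M}$, so $g(y)$ is only determined modulo the isotropy of $e^{iyN}.\tilde F$. Your derivation of (c) promotes (a) to an identity of group elements, $e^{i\delta}=e^{\chi}e^{-iyN_0}(1+\sum_k g_ky^{-k})e^{iyN_0}$, and then lets $y\to\infty$; that identity does not follow from (a) and (b), and indeed (c) is an additional property of the distinguished $g$ constructed in \cite{GregJDG}, not a formal consequence of the other two parts. (The one step that is fine is $[\chi,N]=0\Rightarrow[\chi,N_0]=0$, since $N_{-2}$ is central by \eqref{eq:eta-central}.)
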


\begin{corollary}[Corollary $(4.3)$~\cite{GregJDG}]\label{jdg-cor} Let $\mathcal V\to\Delta^*$ be an 
admissible biextension variation, with period map $F(z):\mathfrak h\to\mathcal M$ and nilpotent orbit 
$e^{zN}F$.  Let $F_0 = e^{iN_0}\tilde F$.  Then, (cf.~Theorem~\eqref{SL2-JDG}), there exists a 
distinguished, real--analytic 
function $\gamma(z)$ with values in $Lie(\tilde G)$ such that, for $\text{\rm Im(z)}$ sufficiently large,
\begin{itemize}
\item[(i)]  $F(z) = e^{xN}g(y)e^{iyN_{-2}}y^{-H/2}e^{\gamma(z)}.F_0$;
\item[(ii)] $|\gamma(z)| = O(\Im(z)^{\beta}e^{-2\pi\text{\rm Im}(z)})$ as $y\to\infty$ and
$x$ restricted to a finite subinterval of $\mathbb R$, for some constant $\beta\in\R$.
\end{itemize}
\end{corollary}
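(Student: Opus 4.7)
The plan is to derive the factorization by inserting the one-variable $SL_2$-orbit theorem (Theorem~\eqref{SL2-JDG}) into the local normal form \eqref{eq:lnf}, and then treating the holomorphic correction $e^{\Gamma(s)}$ as an exponentially small perturbation that becomes $e^{\gamma(z)}$ after being transported to act on $F_0$.

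First I would start from the local normal form $F(z)=e^{zN}e^{\Gamma(s)}F$ with $s=e^{2\pi iz}$ and $\Gamma$ holomorphic on $\Delta$ with $\Gamma(0)=0$. Writing $z=x+iy$ and commuting $e^{iyN}$ past $e^{\Gamma(s)}$ gives
\begin{equation*}
F(z)=e^{xN}e^{\Gamma'(z)}e^{iyN}F,\qquad \Gamma'(z):=\mathrm{Ad}(e^{iyN})\Gamma(s),
\end{equation*}
and Theorem~\eqref{SL2-JDG}(a) then rewrites $e^{iyN}F=g(y)e^{iyN}\tilde F$.

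Next I would unpack $e^{iyN}\tilde F$ using the sl$_2$-structure from Lemma~\eqref{DL-1}. Since $N=N_0+N_{-2}$ with $[N_0,N_{-2}]=0$, we have $e^{iyN}=e^{iyN_{-2}}e^{iyN_0}$. The bracket relation $[H,N_0]=-2N_0$ coming from the sl$_2$-triple $(N_0,H,N_0^+)$ yields the standard identity $y^{-H/2}e^{iN_0}y^{H/2}=e^{iyN_0}$. Moreover $H=\tilde Y-Y$ preserves $\tilde F$: $Y$ preserves $\tilde F$ by Lemma~\eqref{DL-1}(ii), while $\tilde Y=Y_{(\tilde F,M)}$ preserves $\tilde F$ by construction of the Deligne grading. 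Hence $y^{H/2}\tilde F=\tilde F$, and combining with $F_0=e^{iN_0}\tilde F$ gives $e^{iyN}\tilde F=e^{iyN_{-2}}y^{-H/2}F_0$. Substituting back yields
\begin{equation*}
F(z)=e^{xN}e^{\Gamma'(z)}g(y)e^{iyN_{-2}}y^{-H/2}F_0 .
\end{equation*}

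Finally, to match the statement I would conjugate the middle factor to the right of $F_0$, setting
\begin{equation*}
e^{\gamma(z)}:=\mathrm{Ad}\bigl(y^{H/2}e^{-iyN_{-2}}g(y)^{-1}e^{iyN}\bigr)\,e^{\Gamma(s)},
\end{equation*}
so that $F(z)=e^{xN}g(y)e^{iyN_{-2}}y^{-H/2}e^{\gamma(z)}F_0$. The bound on $|\gamma(z)|$ then follows because $\Gamma(s)=O(|s|)=O(e^{-2\pi y})$ is exponentially small, while each adjoint factor grows at worst polynomially in $y$: $\mathrm{Ad}(e^{iyN})$ and $\mathrm{Ad}(e^{-iyN_{-2}})$ are polynomial in $y$ since $N$ and $N_{-2}$ are nilpotent; $\mathrm{Ad}(g(y)^{-1})$ is bounded by Theorem~\eqref{SL2-JDG}(b); and $\mathrm{Ad}(y^{H/2})$ acts by $y^{k/2}$ on the $k$-eigenspace of $\mathrm{ad}\,H$. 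This yields $|\gamma(z)|=O(y^{\beta}e^{-2\pi\Im(z)})$ uniformly for $x$ in a bounded interval.

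The main obstacle is showing that $\gamma(z)$ is real-analytic with values in $\mathrm{Lie}(\tilde G)$ rather than merely in $\mathrm{Lie}(G_{\mathbb C})$, since $\Gamma(s)$ a priori lies in a complex complement to the stabilizer of $F$. To handle this one uses that $F(z)\in\mathcal M$ and that all the other factors either lie in $\tilde G$ (the $g(y)$ factor by Theorem~\eqref{SL2-JDG}) or are canonical complex factors whose action on $F_0$ lands in $\mathcal M$; combined with the exponential smallness of $e^{\gamma(z)}$, which makes $\gamma(z)$ the unambiguous logarithm, one concludes that $\gamma(z)$ lies in the appropriate real Lie algebra by comparing the real structures on both sides of the identity.
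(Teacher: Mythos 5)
Since the paper cites this corollary directly from~\cite{GregJDG} without giving its own proof, I am evaluating your argument on its merits.

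The algebraic reduction in the first three paragraphs is correct and is the standard route: inserting Theorem~\eqref{SL2-JDG}(a) into the local normal form, using the commutation identity $y^{-H/2}e^{iN_0}y^{H/2}=e^{iyN_0}$ and $H\tilde F=\tilde F$ to unfold $e^{iyN}\tilde F=e^{iyN_{-2}}y^{-H/2}F_0$, and then conjugating the holomorphic perturbation to the right.  The growth estimates for the adjoint factors are also fine, and the conclusion $|\gamma(z)|=O(y^\beta e^{-2\pi y})$ follows for the $\gamma(z)$ \emph{that you wrote down}.

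The genuine gap is where you yourself flag it, and the fix you propose does not close it.  Your explicit formula $e^{\gamma(z)}=\mathrm{Ad}(y^{H/2}e^{-iyN_{-2}}g(y)^{-1}e^{iyN})e^{\Gamma(s)}$ manufactures a specific element of $G_{\mathbb C}$, and its logarithm lands in $\mathrm{Ad}(\cdots)\mathfrak q\subset\mathfrak g_{\mathbb C}$, not in $\mathrm{Lie}(\tilde G)$; the factor $\Gamma(s)$ lies in the complex complement $\mathfrak q$ and $\mathrm{Ad}(e^{iyN})$ is non-real, so there is no reason for the result to lie in the real subalgebra.  The fact that $e^{\gamma(z)}$ is close to the identity makes the $\mathfrak g_{\mathbb C}$-valued logarithm unambiguous, but it does nothing to move it into $\mathrm{Lie}(\tilde G)$.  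What the corollary actually asserts is stronger: that the point $p(z):=\bigl(e^{xN}g(y)e^{iyN_{-2}}y^{-H/2}\bigr)^{-1}.F(z)\in\mathcal M$, which tends to $F_0$, can be written as $e^{\gamma(z)}.F_0$ with $\gamma(z)$ in $\mathrm{Lie}(\tilde G)$.  This requires using that $\tilde G$ acts transitively near $F_0$ and choosing a complement $\mathfrak m\subset\mathrm{Lie}(\tilde G)$ to the isotropy algebra of $F_0$, so that $\xi\mapsto e^\xi.F_0$ is a real-analytic local chart on $\mathcal M$ with values $\xi\in\mathfrak m$.  The ``distinguished'' $\gamma(z)$ is then the unique element of $\mathfrak m$ with $e^{\gamma(z)}.F_0=p(z)$, and the estimate (ii) follows because this chart is bi-Lipschitz near $0$ together with your estimate that $p(z)\to F_0$ at the rate $O(y^\beta e^{-2\pi y})$.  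Your construction shows that $p(z)$ approaches $F_0$ exponentially fast, which is the analytically hard input, but it does not by itself supply the $\tilde G$-valued chart that makes $\gamma$ distinguished and real.  You need to replace the closing paragraph with this homogeneous-space argument (or some equivalent), rather than asserting that ``comparing real structures'' finishes the job.
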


\subsection*{Punctured Disk.} Let $\mathcal V\to\Delta^*$ be an admissible biextension with nilpotent
orbit $(e^{zN}F,W)$ and define $\delta$, $Y$, $\tilde Y$ etc. as in the previous subsection.  Observe 
that since $\tilde Y$ and $Y$ commute, we can write 
$$
          \delta = \sum_{a,b}\, \delta_{a,b},\qquad [\tilde Y,\delta_{a,b}] = a\delta_{a,b},\quad
                                                    [Y,\delta_{a,b}] = b\delta_{a,b}
$$
Define $\chi = \sum_{a,b}\, \chi_{a,b}$ similarly.  Let 
$$
       \delta(j) = \sum_{a-b=j}\,\delta_{a,b},\qquad
       \chi(j) = \sum_{a-b=j}\chi_{a,b}
$$
and note that $\delta(j)$ and $\chi(j)$ are the respective projections of $\delta$ and $\chi$
onto the $j$-eignespace of $H$.  In particular, since $N = N_0 + N_{-2}$ and $N_{-2}$ is central in 
$\mathfrak g_{\mathbb C}$ it follows that $[N_0,\delta(j)] = 0$ and $[N_0,\chi(j)] = 0$. Consequently, 
$\delta(j)$ and $\chi(j)$ vanish for $j>0$.

\begin{lemma}\label{g-twist} $g^{\ddag}(y) = \text{\rm Ad}(y^{H/2})g(y)$ is given by a convergent power series
in powers of $y^{-1/2}$ with 
$$
	\lim_{y\to\infty}\, g^{\ddag}(y) = e^{i\delta_{-2}}
$$
where $\delta = \delta_0 + \delta_{-1} + \delta_{-2}$ relative to $\text{\rm ad}\,Y$.
\end{lemma}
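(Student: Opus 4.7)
The plan is to expand $g^\ddag(y)$ using the series of Theorem~\eqref{SL2-JDG}(b) and track how $\text{Ad}(y^{H/2})$ scales each joint $(\tilde Y,Y)$-weight component. Writing $\chi = \sum_{a,b}\chi_{a,b}$ and $g_k = \sum_{a,b} g_{k,a,b}$ relative to $(\ad\tilde Y,\ad Y)$, an element of weight $(a,b)$ acquires a factor of $y^{(a-b)/2}$, so $\text{Ad}(y^{H/2})\chi = \sum_j y^{j/2}\chi(j)$ and $\text{Ad}(y^{H/2})g_k = \sum_j y^{j/2}g_k(j)$. Since the preceding discussion gives $\chi(j)=0=\delta(j)$ for $j>0$, the former series tends to $\chi(0)$ as $y\to\infty$, and hence $\text{Ad}(y^{H/2})e^\chi\to e^{\chi(0)}$.

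The key weight bound is $g_k(j)=0$ whenever $j\geq 2k$ (for $k\geq 1$). To see this, decompose $gl(V)$ into isotypical components for the $\text{sl}_2$-triple $(N_0,H,N_0^+)$ of Lemma~\eqref{DL-1} acting by $\ad$. In an irreducible summand of highest weight $w\geq 0$, the condition $g_k\in\ker(\ad N_0)^{k+1}$ forces the $H$-weight $h$ part of $g_k$ to satisfy $h\leq 2k-w$; in particular $h\leq 2k$, with equality requiring $w=0$, but a trivial $\text{sl}_2$-summand carries no vector of positive $H$-weight. Thus $g_k$ has $H$-weights strictly less than $2k$, so $y^{-k}\text{Ad}(y^{H/2})g_k = \sum_{j<2k}y^{j/2-k}g_k(j)$ has only strictly negative half-integer powers of $y$ and tends to $0$. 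Combining with the previous paragraph, $\lim_{y\to\infty}g^\ddag(y)=e^{\chi(0)}$.

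To identify $\chi(0)$, I apply $\text{Ad}(y^{H/2})$ to both sides of the identity in Theorem~\eqref{SL2-JDG}(c) and pass to the limit. The left side $\text{Ad}(y^{H/2})e^{i\delta}=e^{i\sum_j y^{j/2}\delta(j)}$ limits to $e^{i\delta(0)}$. On the right, each $(\ad N_0)^k g_k$ has $H$-weight strictly less than $0$ by the bound above, so the bracketed factor limits to $1$. Hence $\chi(0)=i\delta(0)$. It remains to identify $\delta(0)$ with $\delta_{-2}$: by~\eqref{eq:w2}, $W_{-2}\,gl(V)=\Hom(\Gr^W_0,\Gr^W_{-2})$ is one-dimensional spanned by $\eta$, and since $\Gr^W_0=\mathbb{Z}(0)$ and $\Gr^W_{-2}=\mathbb{Z}(1)$ are pure of Hodge type $(0,0)$ and $(-1,-1)$ respectively (so $\tilde Y$ descends to the Deligne grading $0$ and $-2$ on these quotients), $\eta$ has $(\tilde Y,Y)=(-2,-2)$, i.e.\ $H$-weight $0$. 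Therefore $\delta_{-2}\in W_{-2}\,gl(V)=\mathbb{R}\eta$ contains only the $(-2,-2)$-component; conversely, $\delta(0)$ requires $\tilde Y=Y$, and with $\delta$ having $\tilde Y$-weight $\leq -2$ (from $\Lambda^{-1,-1}_{(F,M)}$) and $Y$-weight in $\{0,-1,-2\}$, the only possibility is $\tilde Y=Y=-2$. Thus $\delta(0)=\delta_{-2,-2}=\delta_{-2}$, and the limit is $e^{i\delta_{-2}}$.

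The convergence of $g^\ddag(y)$ as a power series in $y^{-1/2}$ follows by regrouping the absolutely convergent series from Theorem~\eqref{SL2-JDG}(b) according to common powers of $y^{-1/2}$: for each such power only finitely many weight components contribute, as the $H$-weights appearing in $gl(V)$ form a finite set. The principal technical obstacle is the sharp $\text{sl}_2$-representation bound $h<2k$ for $g_k$, which exploits the triviality of $H$-weight on $\text{sl}_2$-invariants; the remaining identification $\delta(0)=\delta_{-2}$ depends crucially on the biextension-specific one-dimensionality of $\Hom(\Gr^W_0,\Gr^W_{-2})$.
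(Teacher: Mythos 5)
Your proof is correct, and while its overall skeleton (decompose by $\ad H$-weights and show that conjugation by $y^{H/2}$ kills everything except a constant term) matches the paper, the two key identifications are reached by a genuinely different route. The paper pins down $\chi(0)=i\delta(0)$ by manipulating the orbit relation $e^{iyN}F=g(y)e^{iyN}\tilde F$ of Theorem~\ref{SL2-JDG}(a), passing to the limit, and invoking the injectivity of $\lambda\mapsto e^{\lambda}\tilde F$ on $\Lambda^{-1,-1}_{(\tilde F,M)}$; you instead apply $\Ad(y^{H/2})$ directly to the closed-form identity of Theorem~\ref{SL2-JDG}(c) and use the representation-theoretic bound that the $\ad H$-weights of $g_k$ are $<2k$ (derived from $g_k\in\ker(\ad N_0)^{k+1}$; note the paper's proof records the sharper bound that $g_k(j)=0$ unless $j\le k$, but your weaker bound suffices), which makes the argument purely algebraic and avoids the flag-variety step. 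Likewise, where the paper isolates the $(-2,-2)$-component by a reality argument ($\delta$ real, $\chi$ real on $W_k/W_{k-2}$), you get $\delta(0)=\delta_{-2,-2}=\delta_{-2}$ by pure bi-weight bookkeeping: diagonal components have $\tilde Y$-weight $\le -2$ (from $\Lambda^{-1,-1}$) and $\ad Y$-weight $\ge -2$ (three-step $W$), and $W_{-2}gl(V)=\mathbb{R}\eta$ is of type $(-1,-1)$ by \eqref{eq:w2} --- arguably a cleaner argument, since in the biextension setting the reality hypothesis is not actually needed to force $p=-2$. Two small points to tidy: your phrase ``$Y$-weight in $\{0,-1,-2\}$'' should refer to the $\ad Y$-weights of $\delta$ (a priori in $[-2,2]$; only the bound $\ge -2$ is used, or one can cite the lemma's own decomposition $\delta=\delta_0+\delta_{-1}+\delta_{-2}$), and the passage of the limit through the infinite sum over $k$ (both in $\Ad(y^{H/2})(e^{-\chi}g(y))\to 1$ and in the bracketed factor of (c), where the sum is in fact finite because $\ad N_0$ is nilpotent) deserves a word about absolute convergence --- though the paper is equally terse on this.
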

\begin{proof} To compute the limiting value, it is sufficient to show that
\begin{itemize}
\item[(a)] $\lim_{y\to\infty}\, \text{\rm Ad}(y^{H/2})(e^{-\chi}g(y)) =  1$; 
\item[(b)] $\lim_{y\to\infty}\, \text{\rm Ad}(y^{H/2})e^{\chi} =  e^{i\delta_{-2}}$;
\end{itemize}
The intermediate computations will show that $g^{\ddag}(y)$ has a power series expansion of the 
stated form.

\par To verify $(a)$, recall that 
$$
	g(y) = e^{\chi}(1 + g_1 y^{-1} + g_2 y^{-2} + \cdots)   
$$
with $g_k\in\ker(\text{\rm ad}(N_0)^{k+1})$. Let $g(y)(j)$ denote the projection of $e^{-\chi}g(y)$
to the $j$-eigenspace of $\text{\rm ad}(H)$.   (Note the slight conflict with the notation
used above for $\delta$ and $\chi$.)  Define
$$
       g_-(y) = \sum_{j<0}\, g(y)(j),\qquad
       g_0(y) = g(y)(0),\qquad g_+(y) = \sum_{j>0}\, g(y)(j)
$$
Then,
$$
          e^{-\chi}g(y) = g_-(y) + g_0(y) + g_+(y).
$$
By construction, $\text{\rm Ad}(y^{H/2})g_-(y)$ is a convergent power series in $y^{-1/2}$ with 
constant term zero.  Likewise, $\text{\rm Ad}(y^{H/2})g_0(y) = g(y)(0)$ is a convergent series in 
$y^{-1/2}$ with constant term $1$.

\par To continue, observe that $g_k(j) = 0$ unless $k\geq j$ since $\text{\rm ad}(N_0)^{k+1}g_k = 0$.  
Consequently, for $j>0$ we have 
$$
       g(y)(j) = \sum_{k\geq j}\, g_k(j)y^{-k} = y^{-j}\sum_{k\geq j}\, g_k(j)y^{j-k}
$$
As such, for $j>0$, $\text{\rm Ad}(y^{H/2})g(y)(j)$ is a convergent power series in $y^{-1/2}$
with constant term zero. Combining this with the previously computations, this shows that 
$g^{\ddag}(y)$ has a convergent series expansion in $y^{-1/2}$ with constant term $1$.  This
establishes $(a)$.

\par To verify $(b)$, we note that 
$$
       e^{iyN}e^{i\delta}\tilde F = e^{\chi}e^{-\chi}g(y)e^{iyN}\tilde F.
$$
Since $N_{-2}$ is central in $\mathfrak g_{\mathbb C}$, it follows that 
$$
      e^{iyN_0}e^{i\delta}\tilde F = e^{\chi}e^{-\chi}g(y)e^{iyN_0}\tilde F. 
$$
Applying $y^{H/2}$ to each side of this equation, and using the fact that $H$ preserves 
$\tilde F$ it follows that 
$$
       e^{iN_0}e^{i\sum_{j\leq 0}y^{j/2}\delta(j)}\tilde F 
       = e^{\sum_{j\leq 0}\, y^{j/2}\chi(j)}\text{\rm Ad}(y^{H/2})(e^{-\chi}g(y))e^{iN_0}\tilde F 
$$
Using part $(a)$, and taking the limit at $y\to\infty$ yields
$$
    e^{iN_0}e^{i\delta(0)}\tilde F = e^{\chi(0)}e^{iN_0}\tilde F 
$$
In particular, since $[N_0,\chi] = 0$ it follows that $[N_0,\chi(j)] = 0$ for each $j$.
Applying $e^{-iN_0}$ to the previous equation therefore implies
$$
       e^{i\delta(0)}\tilde F = e^{\chi(0)}\tilde F    
$$
Finally, $\delta(0)$ and $\chi(0)$ take values in $\Lambda^{-1,-1}_{(\tilde F,M)}$ and the 
map
$
        \lambda\in \Lambda^{-1,-1}_{(\tilde F,M)}\to e^{\lambda}\tilde F
$
is injective.  Thus, $i\delta(0) = \chi(0)$.    

\par To finish the proof of part $(b)$, observe that $\delta$ is real and $\chi$ is required
to act by real transformations on $W_k/W_{k-2}$.   Therefore $i\delta(0) = \chi(0)$ implies 
that $\chi(0) = \chi_{-2,-2} = i\delta_{-2,-2}$.  Indeed, 
$$
       i\delta(0) = \chi(0) \iff i\delta_{p,p} = \chi_{p,p}
$$
for each $p$.  In order for $\chi$ to act by real transformations on $W_k/W_{k-2}$ we must have
$p=-2$.  Since $W_{-2}Lie(\mathbb G_{\mathbb C})$ is central, 
$$
           [\tilde Y,\delta_{-2}] = [H+Y,\delta_{-2}] = [Y,\delta_{-2}] = -2\delta_{-2}
$$
Consequently, $\chi(0)=i\delta_{-2,-2} = i\delta_{-2}$.  Using the fact that
$\chi(j)=0$ for $j>0$, the computation of the limit in (b) then follows
directly.  
\end{proof}

\begin{theorem}\label{disk} Let $\mathcal V\to\Delta^*$ be an admissible biextension with unipotent monodromy 
$T=e^N$ and limit mixed Hodge structure $(F,M)$. Then
$$
         \lim_{s\to 0}\, h(s) = \lim_{s\to 0}\, h(\mathcal V) + \mu\log|s| = ht(N,F,W).
$$
\end{theorem}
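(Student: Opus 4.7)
Proof proposal. The strategy is to extract the $\eta$-component of the Deligne splitting of $F(z)$ from the refined period-map decomposition of Corollary~\eqref{jdg-cor},
\[
  F(z) \;=\; e^{xN}\,g(y)\,e^{iyN_{-2}}\,y^{-H/2}\,e^{\gamma(z)}\,F_0,\qquad F_0 = e^{iN_0}\tilde F,
\]
with $\gamma(z)$ exponentially decaying in $y$, together with the key asymptotic $g^{\ddag}(y):=\mathrm{Ad}(y^{H/2})g(y)\to e^{i\delta_{-2}}$ from Lemma~\eqref{g-twist}. Since $\log|s|=-2\pi y$, it suffices to prove the refined expansion
\[
  h(F(z),W) \;=\; 2\pi\mu y \;+\; ht(N,F,W) \;+\; o(1) \qquad\text{as } y\to\infty.
\]

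The plan is to process each factor using the isomorphism invariance of $h$ and the translation formula~\eqref{eq:mult-prop}. First, because $x\in\mathbb R$, the element $e^{xN}$ lies in $G_{\mathbb R}$ and so defines a real isomorphism of the biextension, contributing $0$ to $h$. Second, $N_{-2}$ is central in $\mathfrak g_{\mathbb C}$ by~\eqref{eq:eta-central} and equal to $\mu\eta$ on the biextension (with trivial action on $W_{-1}$), so $e^{iyN_{-2}}=e^{iy\mu\eta}$ commutes through every remaining factor; by~\eqref{eq:mult-prop} it contributes exactly $2\pi\,\mathrm{Im}(iy\mu)=2\pi y\mu$. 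Third, I would rewrite $g(y)y^{-H/2}=y^{-H/2}g^{\ddag}(y)$, let $y\to\infty$ so that $e^{\gamma(z)}\to 1$ and (by Lemma~\eqref{g-twist}) $g^{\ddag}(y)\to e^{i\delta_{-2}}$, reducing to the limiting configuration $y^{-H/2}e^{i\delta_{-2}}F_0$. Centrality of $\delta_{-2}$ lets one commute it past $y^{-H/2}$; by the defining identity $2\pi\delta_{-2}=ht(N,F,W)\eta$ together with~\eqref{eq:mult-prop}, the factor $e^{i\delta_{-2}}$ contributes exactly $ht(N,F,W)$ to the height.

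The remaining task is to prove that the factor $y^{-H/2}F_0$ itself contributes $o(1)$. Since $H$ is real and preserves $\tilde F$, and $[H,N_0]=-2N_0$ by the $\mathrm{sl}_2$-triple in Lemma~\eqref{DL-1}, I would rewrite
\[
  y^{-H/2}F_0 \;=\; y^{-H/2}e^{iN_0}\tilde F \;=\; e^{iyN_0}\tilde F.
\]
This is the pure $N_0$-nilpotent orbit based at the $\mathbb R$-split point $\tilde F$. Since $N_0$ has $Y$-weight $0$ it preserves each $W_k$ and acts trivially on the one-dimensional pieces $Gr^W_0$ and $Gr^W_{-2}$, so it does not deform the biextension class of $(\tilde F,W)$. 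Continuity of the Deligne splitting along an admissible nilpotent orbit with $\mathbb R$-split limit should then force the $W_{-2}\mathfrak g$-component of $\delta_{(e^{iyN_0}\tilde F,W)}$ to tend to $0$ as $y\to\infty$.

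The main obstacle will be precisely this last verification: the intuitively clear claim that the $\mathrm{sl}_2$-orbit $e^{iyN_0}\tilde F$ does not asymptotically twist the $\eta$-direction must be made quantitative through the Deligne splitting, presumably by expanding the $\delta$-splitting as a convergent series in negative half-powers of $y$ analogous to Lemma~\eqref{g-twist} and checking that its $W_{-2}$-part has vanishing constant term. Once this is established, assembling the three contributions gives $h(F(z),W)=2\pi\mu y+ht(N,F,W)+o(1)$, whence adding $\mu\log|s|=-2\pi\mu y$ yields $\lim_{s\to 0}h(s)=ht(N,F,W)$, as required.
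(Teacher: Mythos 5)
You follow the paper's route: the decomposition of Corollary~\ref{jdg-cor}, the asymptotics of Lemma~\ref{g-twist}, and the peeling off of $e^{xN}$ (real, contributing nothing) and $e^{iyN_{-2}}=e^{iy\mu\eta}$ (central, contributing $2\pi\mu y$ via \eqref{eq:mult-prop}) are exactly the paper's first steps, and your bookkeeping of these contributions and of $\mu\log|s|=-2\pi\mu y$ is correct. The endgame, however, has two gaps.

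First, the reduction ``let $y\to\infty$ \dots reducing to the limiting configuration $y^{-H/2}e^{i\delta_{-2}}F_0$'' is an unjustified interchange of limits: the outer factor $y^{-H/2}$ is unbounded, and writing $y^{-H/2}g^{\ddag}(y)e^{\gamma(z)}F_0=\bigl(\mathrm{Ad}(y^{-H/2})(g^{\ddag}(y)e^{\gamma(z)})\bigr)\,y^{-H/2}F_0$ shows that $\mathrm{Ad}(y^{-H/2})$ multiplies the $O(y^{-1/2})$ tails of $g^{\ddag}(y)e^{\gamma(z)}$ on their negative $\mathrm{ad}\,H$-weight components by positive powers of $y$, so convergence of the inner factors alone does not give convergence of the height. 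The repair is precisely the step the paper uses: for a biextension, $\delta_{(F',W)}$ lies in $W_{-2}gl(V)$, which by \eqref{eq:w2} and \eqref{eq:eta-central} is central, hence $\delta_{(y^{-H/2}F',W)}=\mathrm{Ad}(y^{-H/2})\delta_{(F',W)}=\delta_{(F',W)}$; so the factor $y^{-H/2}$ can be deleted from the height computation \emph{before} any limit is taken, and then plain continuity of $F'\mapsto\delta_{(F',W)}$ applied to $g^{\ddag}(y)e^{\gamma(z)}F_0\to e^{i\delta_{-2}}e^{iN_0}\tilde F$ finishes the computation. (You invoke centrality only to move the limiting factor $e^{i\delta_{-2}}$ past $y^{-H/2}$; you need it for the whole $\delta$-invariant.) Second, what you flag as the ``main obstacle'' --- that $(e^{iyN_0}\tilde F,W)$ contributes $o(1)$ --- needs no quantitative expansion in powers of $y^{-1/2}$: one has $e^{iyN_0}\tilde F=y^{-H/2}e^{iN_0}\tilde F$ with $y^{-H/2}$ real and $W$-preserving, and $(e^{iN_0}\tilde F,W)$ is split over $\mathbb R$ with $Y_{(e^{iN_0}\tilde F,W)}=Y$ (this is exactly the fact the paper invokes at the end of its proof; cf.\ Lemma~\ref{DL-1} and Proposition~\ref{split-2}); since a biextension mixed Hodge structure has height zero precisely when it is $\mathbb R$-split, this contribution is identically zero, not merely $o(1)$. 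With these two repairs your argument coincides with the paper's proof of Theorem~\ref{disk}.
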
 
\begin{proof}  Let $(\tilde F,M) = (e^{-i\delta}F,M)$ be Deligne's $\delta$-splitting of the limit mixed Hodge
structure $(F,M)$ of $\mathcal V$.  Let $F(z)$ be the lifting of the period map of $\mathcal V$ to the upper 
half-plane. By Corollary~\eqref{jdg-cor}, we have
$$
      \delta_{(F(z),W)} = \delta_{(e^{xN}g(y)e^{iyN_{-2}}y^{-H/2}e^{\gamma(z)}.F_0,W)}
$$
where $F_0 = e^{iN_0}\tilde F$, and we use $g.F_0$ instead of $gF$ to break up long computations.
Moreover, $|\gamma(z)| = O(\text{\rm Im}(z)^{\beta}e^{-2\pi\text{\rm Im}(z)})$ as $y\to\infty$ and
$x$ restricted to a finite subinterval of $\mathbb R$.  Since $xN$ is real, it follows from
that  
$$
\aligned
     \delta_{(F(z),W)} &= \text{\rm Ad}(e^{xN})\delta_{(g(y)e^{iyN_{-2}}y^{-H/2}e^{\gamma(z)}.F_0,W)}   \\
                       &= \delta_{(g(y)e^{iyN_{-2}}y^{-H/2}e^{\gamma(z)}.F_0,W)}.
\endaligned
$$
Likewise, since $N_{-2}\in W_{-2}\mathfrak g_{\mathbb C}$ is central,
it follows (using \eqref{eq:JDG-height} and \eqref{eq:mult-prop}) that
$$
\aligned
     \delta_{(F(z),W)} &= \delta_{(g(y)e^{iyN_{-2}}y^{-H/2}e^{\gamma(z)}.F_0,W)} \\
                       &= \delta_{(g(y)y^{-H/2}e^{\gamma(z)}.F_0,W)} + yN_{-2}.
\endaligned
$$
Consequently,
$$
\aligned
       \delta_{(F(z),W)} - yN_{-2} &= \delta_{(g(y)y^{-H/2}e^{\gamma(z)}.F_0,W)} \\
                                   &= \delta_{(y^{-H/2}y^{H/2}g(y)y^{-H/2}e^{\gamma(z)}.F_0,W)} \\
                                   &= \text{\rm Ad}(y^{-H/2})\delta_{(y^{H/2}g(y)y^{-H/2}e^{\gamma(z)}.F_0,W)} \\
                                   &=  \delta_{(y^{H/2}g(y)y^{-H/2}e^{\gamma(z)}.F_0,W)}
\endaligned
$$
since $W_{-2} gl(V)\subset Z(\mathfrak g_{\mathbb C})$ and hence ${\rm Ad}(y^{-H/2})$ acts trivially
on $W_{-2} gl(V)$.  Therefore, 
\begin{equation}
        \lim_{\text{\rm Im}(z)\to\infty}\,
            \delta_{(F(z),W)} - yN_{-2} = \delta_{(e^{i\delta_{-2}}e^{iN_0}.\tilde F,W)}
         \label{eq:step-n}
\end{equation}
for $\text{\rm Re}(z)$ restricted to an interval of finite length.

\par To finish the proof, we note that $(e^{iN_0}.\tilde F,W)$ is a mixed Hodge structure which is 
split over $\mathbb R$ with $Y_{(e^{iN_0}.\tilde F,W)} = Y$.   
Consequently,
$$
        \delta_{(e^{i\delta_{-2}}e^{iN_0}.\tilde F,W)} 
            = \delta_{-2} + \delta_{(e^{iN_0}.\tilde F,W)} = \delta_{-2}.
$$
Therefore, using equations \eqref{eq:limit-height}, \eqref{eq:mu-def-2} and \eqref{eq:JDG-height},
equation \eqref{eq:step-n} becomes
$$
          \lim_{\text{\rm Im}(z)\to\infty}\, h(F(z),W) + \mu\log|s| = ht(N,F,W).
$$    
\end{proof}

\subsection*{Smooth Divisor.} Consider now the case where $\mathcal V$ is a biextension variation over
the complement of $s_1 = 0$ in a polydisk $\Delta^r$ with coordinates $(s_1,\dots,s_r)$. Let 
\begin{equation}
        F(z_1,s_2,\dots,s_r) = e^{z_1 N_1}e^{\Gamma(s)}.F_{\infty}    \label{eq:smooth-1} 
\end{equation}
denote the local normal form of the period map of $\mathcal V$ after lifting to $U\times\Delta^{r-1}$
where $U$ is the upper half-plane with coordinate $z_1$ and $s_1 = e^{2\pi i z_1}$.   To simplify notation, 
we shall write $N$ in place of $N_1$ and $z = x + i y$ in place of $z_1$ where convenient. 

\par To continue, we recall~\cite{HP} that 
$$
          [N_1,\Gamma(0,s_2,\dots,s_r)]=0
$$
Accordingly, we let
$$
         \Gamma_1 = \Gamma(0,s_2,\dots,s_r),\qquad e^{\Gamma(s)} = e^{\Gamma^1}e^{\Gamma_1}
$$
and note that $\Gamma^1(0,s_2,\dots,s_r) = 0$ by construction, and hence $s_1|\Gamma^1$.

\par Let $(\tilde F_{\infty},M) = (e^{-i\delta}.F_{\infty},M)$ be the Deligne splitting of the limit
mixed Hodge structure of $\mathcal V$.  Define $N = N_0 + N_{-2}$ and $H = \tilde Y - Y$ etc.~as in 
Theorem~\eqref{SL2-JDG} for the nilpotent orbit $(e^{zN}.F_{\infty},W)$.  Then, since $N_{-2}$ is
central in $\mathfrak g_{\mathbb C}$ and $[N,\Gamma_1]=0$ it follows that $[N_0,\Gamma_1]=0$.  
This forces
$$
          \Gamma_1 = \sum_{k\leq 0}\, \Gamma_{1,k},\qquad [H,\Gamma_{1,k}] = k\Gamma_{1,k}.
$$
Let $\Gamma^1 = \sum_k\, \Gamma^1_k$ where $[H,\Gamma^1_k] = k \Gamma^1_k$.

\begin{theorem}\label{sm-continuous} Let $\mathcal V$ be 
a biextension variation with unipotent monodromy defined on the complement 
of the divisor $s_1=0$ in $\Delta^r$.  Then, 
$$
          \bar h(\mathcal V) = h(\mathcal V) + \mu\log|s_1|
$$ 
extends continuously to $\Delta^r$.  
\end{theorem}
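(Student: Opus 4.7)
The plan is to parallel the proof of Theorem~\ref{disk} but with the extra parameters $s'=(s_2,\dots,s_r)$ treated as a continuous family, and then to check that all the asymptotic estimates of the $\mathrm{SL}_2$-orbit theorem depend continuously (and are uniformly valid) in $s'$ on compact subsets of $\Delta^{r-1}$.

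First, I would fix $s'\in\Delta^{r-1}$ and consider the slice $\Delta^*\times\{s'\}$. The local normal form together with the decomposition $e^{\Gamma(s)}=e^{\Gamma^1(s)}e^{\Gamma_1(s')}$ (with $s_1\mid\Gamma^1$ and $[N,\Gamma_1]=0$) identifies the restriction of $\mathcal V$ to this slice with an admissible biextension variation over $\Delta^*$ whose nilpotent orbit is generated by $(N,e^{\Gamma_1(s')}F_\infty,W)$. Applying Theorem~\ref{disk} slicewise would give a pointwise limit
$$
       \lim_{s_1\to 0}\bar h(\mathcal V)(s_1,s')
        = ht(N,\,e^{\Gamma_1(s')}F_\infty,\,W),
$$
and the right-hand side, call it $H(s')$, varies real-analytically in $s'$ since $\Gamma_1$ is holomorphic and the operations producing $\delta_{-2}$ from $(N,F,W)$ are real-analytic (via the Deligne $\delta$-splitting and the grading $Y$ of Lemma~\ref{DL-1}). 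Continuity of $\bar h$ away from $s_1=0$ is automatic, so the remaining question is continuity of $\bar h$ at points of the divisor $\{s_1=0\}$, i.e.\ whether $\bar h(s_1,s')\to H(s_0')$ as $(s_1,s')\to(0,s_0')$.

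Next, I would upgrade the slicewise limit to a uniform one. The key tool is Corollary~\ref{jdg-cor} applied to the $s'$-dependent nilpotent orbit: the period map can be written
$$
      F(z,s') = e^{xN}g_{s'}(y)\,e^{iyN_{-2}}\,y^{-H/2}\,e^{\gamma(z,s')}.F_0(s'),
$$
where now $g_{s'}$ is the distinguished $\mathrm{SL}_2$-orbit function for the nilpotent orbit with limit $(e^{\Gamma_1(s')}F_\infty,M)$, and $\gamma(z,s')$ absorbs both the usual exponentially decaying error from~\cite{GregJDG} and the contribution of $\Gamma^1(s)$, which vanishes at $s_1=0$. I would then repeat the calculation of Theorem~\ref{disk}: using that $xN$ is real, that $N_{-2}\in W_{-2}\mathfrak{g}_\CC\subset Z(\mathfrak{g}_\CC)$, and that $\mathrm{Ad}(y^{-H/2})$ acts trivially on $W_{-2}\mathfrak{g}_\CC$, one obtains
$$
    \delta_{(F(z,s'),W)} - yN_{-2}
     = \delta_{(y^{H/2}g_{s'}(y)y^{-H/2}e^{\gamma(z,s')}.F_0(s'),\,W)}.
$$
The content of Lemma~\ref{g-twist} is that $y^{H/2}g_{s'}(y)y^{-H/2}\to e^{i\delta_{-2}(s')}$; together with $\gamma(z,s')\to 0$ as $y\to\infty$ and $s_1\to 0$, this gives $\delta_{(F(z,s'),W)}-yN_{-2}\to\delta_{(e^{i\delta_{-2}(s')}e^{iN_0}.\tilde F_\infty(s'),W)}=\delta_{-2}(s')$, hence $\bar h\to H(s')$ pointwise, as before.

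The main obstacle, therefore, is to establish the uniform version of this limit. For that I would argue as follows. The coefficients $g_k(s')$, $f_k(s')$ and $\chi(s')$ are universal Lie polynomials in the Hodge components of $\delta(s')$ and in $\mathrm{ad}\,N_0^+(s')$ (Theorem~\ref{SL2-JDG}), so they depend real-analytically on $s'$; consequently the series for $g_{s'}(y)$ converges uniformly on sets of the form $\{y\geq y_0,\ s'\in K\}$ for $K\Subset\Delta^{r-1}$ and $y_0$ sufficiently large. The estimate in Corollary~\ref{jdg-cor}(ii) for $\gamma(z,s')$ is likewise uniform in $s'$ on compact subsets (the exponential bound depends only on the choice of $N_0^+$-triple), and the extra factor $e^{\Gamma^1(s)}$ contributes a term of order $|s_1|\cdot O(1)$ on compact sets, which goes to $0$ as $s_1\to 0$. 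Combining these uniform bounds shows that $\delta_{(F(z,s'),W)}-yN_{-2}\to\delta_{-2}(s')$ uniformly for $s'$ in compacta and $x$ in bounded intervals. Taking inner product with $\eta$ and dividing by $2\pi$ (using \eqref{eq:JDG-height} and Theorem~\ref{mu} to identify $N_{-2}\eta=\mu\cdot 1^\vee$) yields
$$
      \bar h(s_1,s')\ \longrightarrow\ H(s_0')\qquad\text{as }(s_1,s')\to(0,s_0'),
$$
which together with continuity off the divisor and continuity of $H$ gives the desired continuous extension to $\Delta^r$.
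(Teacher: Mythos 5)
Your overall strategy — restrict to the slices $\Delta^*\times\{s'\}$, apply Theorem~\ref{disk} fiberwise to get the pointwise limit $H(s')=ht(N,e^{\Gamma_1(s')}F_\infty,W)$, and then upgrade to a limit that is uniform in $s'$ on compacta — is a legitimate alternative in outline, and the slicewise identifications you make (admissibility of the restriction, the limit filtration $e^{\Gamma_1(s')}F_\infty$, constancy of $\mu$, continuity of $H$) are all fine. The genuine gap is in the uniformity step, which is exactly where the difficulty of the theorem sits. You justify it by saying that $\chi(s')$, $g_k(s')$, $f_k(s')$ are universal Lie polynomials in the Hodge components of $\delta(s')$ and $\operatorname{ad}N_0^+(s')$, hence real-analytic in $s'$, and "consequently" the series for $g_{s'}(y)$ converges uniformly on $\{y\ge y_0\}\times K$. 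That implication does not follow: real-analytic dependence of each coefficient gives no lower bound on the radius of convergence in $y^{-1}$ as $s'$ varies, nor any uniform control of the tails, and similarly the estimate $|\gamma_{s'}(z)|=O(y^{\beta}e^{-2\pi y})$ in Corollary~\ref{jdg-cor}(ii) comes with a constant and an exponent $\beta$ that a priori depend on the variation, so citing it fiber by fiber gives nothing uniform over $s'\in K$. To close this you would essentially have to rework the proofs of Theorem~\ref{SL2-JDG} and Corollary~\ref{jdg-cor} with parameters, i.e.\ prove a parametrized $\mathrm{SL}_2$-orbit statement; asserting it is not enough.

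This is precisely what the paper's proof is engineered to avoid. There the $\mathrm{SL}_2$-orbit theorem is invoked only once, for the single nilpotent orbit $(e^{zN}F_\infty,W)$ with the fixed limit filtration $F_\infty$, and all dependence on $(s_1,s')$ is pushed into the holomorphic factor via $e^{\Gamma(s)}=e^{\Gamma^1(s)}e^{\Gamma_1(s')}$ with $s_1\mid\Gamma^1$ and $[N,\Gamma_1]=0$. The relation $[N_0,\Gamma_1]=0$ forces $\Gamma_1$ to have only non-positive $H$-weights, so after the twist $\operatorname{Ad}(y^{H/2})$ the factor $\Gamma_1^{\ddag}$ involves only non-positive powers of $\log|s_1|$, while $\Gamma^1_{\ddag}$ is $s_1$ times powers of $\log|s_1|$ and hence tends to $0$; together with Lemma~\ref{g-twist} for $g^{\ddag}(y)$ this makes the whole expression inside the $\delta$-functional jointly continuous on $\Delta^r$, with no uniformity-in-parameters statement about the $\mathrm{SL}_2$ data ever needed. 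If you want to keep your fiberwise approach, you must either prove the uniform (parametrized) versions of Theorem~\ref{SL2-JDG} and Corollary~\ref{jdg-cor}, or replace that step by an argument like the paper's single-orbit twisting trick.
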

\begin{proof} By the above remarks, we can write
$$
\aligned
         F(z,s_2,\dots,s_r)   &= e^{xN}e^{iyN}e^{\Gamma(s)}e^{-iyN}e^{iyN}.F_{\infty} \\
                              &= e^{xN}e^{iyN}e^{\Gamma^1(s)}e^{\Gamma_1(s)}e^{-iyN}e^{iyN}.F_{\infty} \\
							  &= e^{xN}e^{iyN}e^{\Gamma^1(s)}e^{-iyN}e^{\Gamma_1(s)}e^{iyN}.F_{\infty} \\
                              &= e^{xN}e^{iyN_0}e^{iyN_{-2}}e^{\Gamma^1(s)}e^{-iyN_{-2}}e^{-iyN_0}
                                 e^{\Gamma_1(s)}e^{iyN}.F_{\infty} \\
                              &= e^{xN}e^{iyN_0}e^{\Gamma^1(s)}e^{-iyN_0}e^{\Gamma_1(s)}
 								 e^{iyN}.F_{\infty}.
\endaligned
$$

\par By theorem~\eqref{SL2-JDG}, we can then write
$$
\aligned
        e^{iyN}F_{\infty} &= g(y)e^{iyN}.\tilde F_{\infty} \\   
                          &= g(y)e^{iyN_{-2}}e^{iyN_0}\tilde F_{\infty} \\
                          &= e^{iyN_{-2}}g(y)e^{iyN_0}\tilde F_{\infty}  \\
                          &= e^{iyN_{-2}}g(y)y^{-H/2}e^{iN_0}\tilde F_{\infty} \\
                          &= e^{iyN_{-2}}y^{-H/2} g^{\ddag}(y)e^{iN_0}\tilde F_{\infty}
\endaligned 
$$
where $g^{\ddag}(y) = \text{\rm Ad}(y^{H/2})g(y)$ as in Lemma \eqref{g-twist}.

\par Combining the previous two paragraphs yields
$$
\aligned
        \delta_{(F(z,s_2,\dots,s_r),W)} 
           &= \delta_{(e^{xN}e^{iyN_0}e^{\Gamma^1(s)}e^{-iyN_0}e^{\Gamma_1(s)}e^{iyN}.F_{\infty},W)} \\
           &= \delta_{(e^{iyN_0}e^{\Gamma^1(s)}e^{-iyN_0}e^{\Gamma_1(s)}e^{iyN}.F_{\infty},W)} \\
           &= \delta_{(e^{iyN_0}e^{\Gamma^1(s)}e^{-iyN_0}e^{\Gamma_1(s)}
                       e^{iyN_{-2}}y^{-H/2} g^{\ddag}(y)e^{iN_0}\tilde F_{\infty},W)} \\
           &= \delta_{(e^{iyN_{-2}}e^{iyN_0}e^{\Gamma^1(s)}e^{-iyN_0}e^{\Gamma_1(s)}
                       y^{-H/2} g^{\ddag}(y)e^{iN_0}\tilde F_{\infty},W)} \\
            &= \delta_{(e^{iyN_0}e^{\Gamma^1(s)}e^{-iyN_0}e^{\Gamma_1(s)}
                       y^{-H/2} g^{\ddag}(y)e^{iN_0}\tilde F_{\infty},W)} + yN_{-2}.
\endaligned
$$

\par To complete the proof, let $\Gamma^1_{\ddag} = \text{\rm Ad}(y^{H/2})\Gamma^1$, and observe
that $\Gamma^1_{\ddag}$ is a finite sum of functions $(\log|s_1|)^{k/2}\, s_1 f$ where $f$ is a 
holomorphic function on $\Delta^r$ with values in $\mathfrak q$, and hence has a continuous 
extension to $\Delta^r$.  Likewise, 
$$
          \Gamma^{\ddag}_1 = \text{\rm Ad}(y^{H/2})\Gamma_1
$$  
is a finite sum of functions of the form $(\log|s_1|)^{-k/2}\,f$ for $k\geq 0$ and $f$ a holomorphic 
function on $\Delta^r$ with values in $\mathfrak q$ which is independent of $s_1$, and hence
also has a continuous extension to $\Delta^r$.  Finally, $g^{\ddag}(y)$
is independent of $(s_2,\dots,s_r)$ and has a convergent series expansion in powers of 
$(\log|s_1|)^{-1/2}$ by Lemma \eqref{g-twist}.

\par Combining the above, it follows that 
$$
\aligned
        \delta_{(F(z,s_2,\dots,s_r),W)} - y N_{-2}
          &= \delta_{(e^{iyN_0}e^{\Gamma^1(s)}e^{-iyN_0}e^{\Gamma_1(s)}
                       y^{-H/2} g^{\ddag}(y)e^{iN_0}\tilde F_{\infty},W)} \\
          &= \delta_{(y^{-H/2}e^{iN_0}e^{\Gamma^1_{\ddag}(s)}e^{-iN_0}
                      e^{\Gamma_1^{\ddag}(s)}g^{\ddag}(y)e^{iN_0}.\tilde F_{\infty},W)} \\
          &={\rm Ad}(y^{-H/2})\delta_{(e^{iN_0}e^{\Gamma^1_{\ddag}(s)}e^{-iN_0}
                      e^{\Gamma_1^{\ddag}(s)}g^{\ddag}(y)e^{iN_0}.\tilde F_{\infty},W)} \\
		  &= \delta_{(e^{iN_0}e^{\Gamma^1_{\ddag}(s)}e^{-iN_0}
                      e^{\Gamma_1^{\ddag}(s)}g^{\ddag}(y)e^{iN_0}.\tilde F_{\infty},W)} 
\endaligned
$$
since $\text{\rm Ad}(y^{-H/2})$ acts trivially on $W_{-2}\mathfrak g_{\mathbb C}$.  By the remarks
of the previous paragraph 
$$
       e^{iN_0}e^{\Gamma^1_{\ddag}(s)}e^{-iN_0}
                      e^{\Gamma_1^{\ddag}(s)}g^{\ddag}(y)e^{iN_0}
$$
has a continuous extension to $\Delta^r$, and hence so does $\delta_{(F(z,s_2,\dots,s_r),W)} - y N_{-2}$.
Using equations \eqref{eq:limit-height}, \eqref{eq:mu-def-2} and \eqref{eq:JDG-height}, it follows
as in the last paragraph of the proof of Theorem~\eqref{disk} that $\bar h(\mathcal V)$ has a 
continuous extension to $\Delta^r$.
\end{proof}

\begin{remark} Since $\bar h(\mathcal V)$ extends continuously to $\Delta^r$ its value at any
point $(0,s_2,\dots,s_r)$ can be computed by taking the limit along $s\to (s,s_2,\dots,s_r)$.
Since this corresponds to a 1 parameter degeneration, it follows from Theorem \eqref{disk}
that $\bar h(\mathcal V)(0,s_2,\dots,s_r)$ depends only on the limit mixed Hodge structure
of $\mathcal V$ at $(0,s_2,\dots,s_r)$.
\end{remark}

\section{Existence of Biextensions}\label{sec:mix}  In this section
we prove the following result, which shows that over $\Delta^{*r}$ a
pair of admissible normal functions can be glued together to give an
admissible biextension.  For the remainder of this section, $A$ is
either $\mathbb Z$, $\mathbb Q$ or $\mathbb R$, and 
\begin{itemize}
\item[---] $\Delta^r$ is a polydisk with coordinates $(s_1,\dots,s_r)$;
\item[---] $D$ is the divisor defined by $s_1\cdots s_k=0$;
\item[---] $\mathcal H$ is a torsion free, polarized variation of pure $A$-Hodge structure 
of weight $-1$ on $\Delta^r-D$ with quasi-unipotent monodromy;
\item[---] $\nu$ is an admissible extension of $A-{\rm VMHS}$
$$
          0 \to \mathcal H\to \mathcal V_{\nu}\to A(0)\to 0
$$
with respect to inclusion of $\Delta^r-D$ into $\Delta^r$;
\item[---] $\omega$is an admissible extension of $A-{\rm VMHS}$
$$
          0 \to A(1) \to \mathcal V_{\omega}^{\vee}\to\mathcal H\to 0 
$$
with respect to inclusion of $\Delta^r-D$ into $\Delta^r$.
\end{itemize}

\begin{theorem}\label{sec:mix-1} The set $B(\nu,\omega)$ of admissible biextension 
variations of type $(\nu,\omega)$ over $\Delta^r-D\hookrightarrow \Delta^r$ is non-empty.
\end{theorem}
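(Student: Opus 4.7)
\emph{Plan of proof.} My plan is to recast the existence of an admissible biextension of type $(\nu,\omega)$ as an obstruction problem in the category of admissible VMHS on $\Delta^r-D$ with respect to $\Delta^r$, and then to vanish the obstruction via M.~Saito's theory of mixed Hodge modules.

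The first step is to observe that an admissible biextension variation $\mathcal{V}$ of type $(\nu,\omega)$ is the same data as an admissible extension
\begin{equation}
0\to A(1) \to \mathcal{V} \to \mathcal{V}_\nu \to 0
\end{equation}
in $\text{VMHS}(\Delta^r-D)_{\Delta^r}^{\text{ad}}$ whose pullback along $\mathcal{H}\hookrightarrow \mathcal{V}_\nu$ reproduces $\mathcal{V}_\omega^\vee$. Applying $\operatorname{Ext}^\bullet(-,A(1))$ to the defining sequence of $\nu$, the existence of such a lift of $\omega$ is governed by the connecting map
\begin{equation}
\partial:\operatorname{Ext}^1(\mathcal{H},A(1))\longrightarrow \operatorname{Ext}^2(A(0),A(1));
\end{equation}
a standard Yoneda calculation identifies $\partial\omega$ with the product $\omega\cup\nu$. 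Thus it is enough to show that $\operatorname{Ext}^2_{\text{VMHS}(\Delta^r-D)_{\Delta^r}^{\text{ad}}}(A(0),A(1))=0$.

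For the vanishing I would use Saito's realization of admissible VMHS as a subcategory of mixed Hodge modules on $\Delta^r$. The constant variations $A(0),A(1)$ correspond to the constant mixed Hodge modules $A_{\Delta^r}(0)$ and $A_{\Delta^r}(1)$, and their Ext-groups in $\operatorname{MHM}(\Delta^r)$ compute Deligne--Beilinson cohomology $H^\bullet_{\mathcal{D}}(\Delta^r,A(1))$. Since $\Delta^r$ is contractible and Stein, the Deligne complex $A(1)\to\mathcal{O}_{\Delta^r}$ has trivial hypercohomology in degrees $\geq 2$, yielding $\operatorname{Ext}^2(A(0),A(1))=0$.

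The main obstacle is ensuring that the Ext-groups in the admissible VMHS category really do agree with (or inject into) the corresponding $\operatorname{MHM}(\Delta^r)$ Ext-groups: admissibility imposes logarithmic growth and relative-monodromy-filtration conditions at every point of $D$, and one must verify that these are precisely the conditions enforced by the Hodge-module structure on $\Delta^r$. As an alternative, one could attempt a direct construction by adjoining a central nilpotent to the combined nilpotent orbit data of $\mathcal{V}_\nu$ and $\mathcal{V}_\omega^\vee$ and checking the Kashiwara--Cattani--Kaplan admissibility axioms (existence of relative monodromy filtrations for every $N_j$) by a block-matrix calculation; here the crucial point is that the ``cross term'' $\omega\cup\nu$ lives in the center of the Lie algebra acting on $\mathcal{V}$ (cf.~\eqref{eq:eta-central}), so it does not interfere with the existence of the needed relative weight filtrations.
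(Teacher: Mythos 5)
Your proposal correctly reformulates the existence question in terms of Grothendieck's obstruction $\partial\omega=\omega\cup\nu\in\Ext^2(A(0),A(1))$; this is exactly Proposition~\ref{prop:Grothendieck}(iii), and in fact the paper itself deploys a version of this idea in \S\ref{gjp} at the level of $\mathbb{Q}$-perverse sheaves on $\Delta^r$. The difficulty is that the proposed vanishing argument does not hold. The objects $A(0)$ and $A(1)$ live on $\Delta^{*r}$, and the Yoneda $\Ext$-groups you need are computed in the exact category $\VMHS(\Delta^{*r})^{\ad}_{\Delta^r}$, not in $\MHM(\Delta^r)$. There is no exact functor taking the constant variation $A(1)$ on $\Delta^{*r}$ to the constant Hodge module on $\Delta^r$: the intermediate extension $j_{!*}$ does take $\mathbb{Q}$ to $\mathbb{Q}_{\Delta^r}$, but it is not exact and does not preserve $\Ext$. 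The mismatch is already visible in degree one: Lemma~\ref{me2}(ii) identifies $\Ext^1_{\VMHS^{\ad}}(\mathbb{Z},\mathbb{Z}(1))$, locally on $\Delta^r$, with the group of non-vanishing meromorphic functions with poles along $D$, whereas $\Ext^1_{\MHM(\Delta^r)}(\mathbb{Q},\mathbb{Q}(1))=H^1_{\mathcal{D}}(\Delta^r,\mathbb{Q}(1))$ sees only holomorphic invertible functions. So the natural comparison map of $\Ext$-groups is not an isomorphism, and the vanishing of $H^2_{\mathcal{D}}(\Delta^r,A(1))$ says nothing about the class $\partial\omega$ in the admissible category where it actually lives. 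A corrected version of your plan would replace the constant objects with the $\IC$-sheaves $\IC(\mathbb{Q})$, $\IC(\mathcal{H})$ on $\Delta^r$ and place the obstruction in $\Ext^2_{\Perv(\Delta^r)}(\IC\mathbb{Q},\IC\mathbb{Q})$, which does vanish because $\Delta^r$ is contractible. This is exactly what \S\ref{gjp} does, but it produces the biextension only at the level of $\mathbb{Q}$-perverse sheaves.

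The paper's actual proof takes a genuinely different, constructive route precisely because Theorem~\ref{sec:mix-1} asks for an admissible biextension over $A=\mathbb{Z}$, $\mathbb{Q}$ or $\mathbb{R}$, not merely a $\mathbb{Q}$-perverse-sheaf one. It uses the $\mathrm{SL}_2$-orbit theorem (via \cite{bpannals}) to produce canonical limiting trigradings of $\mathcal{V}_{\nu}$ and $\mathcal{V}^{\vee}_{\omega}$ (Corollary~\ref{limit-trigrading}), glues the trigraded vector spaces along $\Gr^W_{-1}$ to form $B=U\star V$, defines the monodromy by gluing the individual $N_j$ and then adding a central correction $c_j\eta$ as in \eqref{eq:new-monodromy} chosen so that $T_j$ preserves the integral lattice $B_A$, and finally checks horizontality, the holomorphic extension of Hodge bundles, and the existence of relative weight filtrations directly. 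Your closing sketch (gluing nilpotent-orbit data and exploiting the centrality of $\eta$ from \eqref{eq:eta-central}) points in the right direction, but without the trigraded-gluing machinery it does not by itself account for the commutativity $[\tilde{N}_j,\tilde{N}_k]=0$ of the glued nilpotents, for the choice of the integral lattice, or for the verification of the relative monodromy filtrations, and these are exactly where the work lies.
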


\par We prove Theorem~\eqref{sec:mix-1} in the case of unipotent monodromy first,
and hence assume that all VMHS have unipotent monodromy unless otherwise noted.
By formally allowing the monodromy about $s_j=0$ to be trivial, we
can reduce the proof of Theorem~\eqref{sec:mix-1} to the case where
$D$ is the divisor $s_1\dots s_r=0$.  The outline of the proof is as
follows:  The normal functions $\nu$ and $\omega$ have local normal
forms~\eqref{eq:lnf}.  Likewise, as a consequence of the ${\rm SL}_2$-orbit
theorem if $F:\mathfrak h\to\mathcal M$ is the period 
map of an admissible nilpotent orbit then 
$$
        \lim_{{\rm Im}(z)\to\infty}\, Y_{(F(z),W)}
$$
exists as a grading of $W$.  Moreover, this grading will preserve the
$I^{p,q}$'s of the ${\rm sl}_2$-splitting of the limit mixed Hodge
structure of $(F(z),W)$.  This produces a trigrading of the underlying
vector space.  For $\nu$ and $\omega$, we must then glue these two
trigradings together, and use the resulting new vector space as the
model for the biextension variation.  It remains then to add an integral 
structure, local monodromy logarithms et. cetera.

\par A sketch of the proof of Theorem~\eqref{sec:mix-1} in the quasi-unipotent case is a 
follows:  A quasi-unipotent VMHS can be viewed as a VMHS on the pullback to the unipotent
setting which has the extra property that when the $i$'th coordinate is multiplied by a
suitable root of unity, the Hodge filtration changes by the appropriate semisimple factor.
Gluing $\nu$ and $\omega$ together in the quasi-unipotent case therefore reduces
to the unipotent case, together with the fact that the semisimple parts of the local
monodromy are morphisms of Hodge structure in the limit, and are therefore compatible
with the gluing procedure.

\subsection*{Trigraded Vector Spaces.}  The {\it weight filtration} of a trigraded vector space
\begin{equation}
         L = \bigoplus_{p,q,i} L^{p,q}_i              \label{eq:tri-def}
\end{equation}
is the increasing filtration $W_k(L) = \bigoplus_{i\leq k,p,q}\, L^{p,q}_i$.
The subspaces $L_i =\bigoplus_{p,q}\, L^{p,q}_i$ define a grading of $W_{\bullet}(L)$.
Likewise, we set $L^{p,q} = \oplus_i\, L^{p,q}_i$ and call
$$
          L = \bigoplus_{p,q}\, L^{p,q}
$$
is the associated {\it bigrading} of $L$.  We define the {\it relative weight
filtration} of $L$ to be the increasing filtration $M_k(L) = \oplus_{p+q\leq k,i}\, L^{p,q}_i$.
If $L$ is a complex vector space with real form $L_{\mathbb R}$ then a trigrading of $L$ is said 
to split over $\mathbb R$ if $\bar L^{p,q}_i = L^{q,p}_i$.  In this case, the decreasing 
filtration $F^p(L) = \oplus_{a\geq p,b}\, L^{a,b}$
pairs with $M$ to define an $\mathbb R$-split mixed Hodge structure $(F,M)$ which induces
pure Hodge structures on $Gr^W$ and has Deligne bigrading $I^{p,q} = L^{p,q}$.

\par If $U$ and $V$ are vector spaces equipped with linear maps $f:U\to Z$ and $g:V\to Z$
then the fiber product
$$
        U\times_Z V = \{ (u,v)\in U\times V \mid f(u) = g(v)\}
$$
is called the {\it gluing} of $U$ and $V$ along $Z$.  We let $\pi_U:U\times_Z V\to U$ and 
$\pi_V:U\times_Z V\to V$ denote the natural projection maps.

\par Let $U$ and $V$ be trigraded vector spaces.  Then $U$ {\it abuts} $V$ if there
exists an index $\ell$ such that  
\begin{itemize}
\item[(a)] $U_i=0$ for $i>\ell$ and $V_i=0$ for $i<\ell$;
\item[(b)] There exists an isomorphism $Gr^W_{\ell}(U)\cong Gr^W_{\ell}(V)$ for which the composite map
$$
        \sigma:U_{\ell}\to Gr^W_{\ell}(U)\cong Gr^W_{\ell}(V)\to V_{\ell}
$$
induces an isomorphism $U^{p,q}_{\ell}\to V^{p,q}_{\ell}$ for each bi-index $(p,q)$.
\end{itemize}
 
\par If $U$ abuts $V$ (at $\ell$) we define $Gr^W_{\ell}\subset Gr^W_{\ell}(U)\times Gr^W_{\ell}(V)$ to 
be the subspace consisting of points $([u],[v])$ which are identified under the isomorphism $(b)$.  We then 
define
$$
        U\star V = U\times_{Gr^W_{\ell}} V
$$
relative to the linear maps $f:U\to U_{\ell}\to Gr^W_{\ell}$ and $g:V\to V_{\ell}\to Gr^W_{\ell}$.

\par Let $\iota_U$ from $U\to U\star V$ denote the linear map which sends $u\in U_i$ to $(u,0)\in U\star V$
for $i<\ell$ and maps $u\in U_{\ell}$ to $(u,\sigma(u))\in U\star V$. Let $\iota_V$ from $V\to U\star V$
be the map that send $v\in V_i$ to $(0,v)\in U\star V$ for $i>\ell$ and maps
$v\in V_{\ell}$ to $(\sigma^{-1}(v),v)\in U\star V$.  Accordingly, we can introduce a trigrading 
on $U\star V$ by setting
\begin{equation}
(U\star V)^{p,q}_i = \left\{\begin{aligned} \iota_U(U^{p,q}_i),\qquad &&i\leq\ell \\
                                            \iota_V(V^{p,q}_i),\qquad &&i>\ell
                              \end{aligned}\right.\label{eq:tri-bi}
\end{equation}
If the trigradings of $U$ and $V$ are split over $\mathbb R$ and the isomorphism 
$Gr^W_{\ell}(U)\to Gr^W_{\ell}(V)$ intertwines complex conjugation then the
trigrading \eqref{eq:tri-bi} is split over $\mathbb R$.

\subsection*{Endomorphisms.} Let $U$ and $V$ be trigraded vector spaces which abut (at $\ell$).  
Let $\alpha_U$ and $\alpha_V$ be endomorphisms of $U$ and $V$ respectively which preserve the 
corresponding weight filtrations $W(U)$ and $W(V)$.  Assume that $\alpha_U$ and $\alpha_V$ induce the same 
map on $Gr^W_{\ell}$.  For $\eta\in (U\star V)_i$ define
\begin{equation}
     (\alpha_U\star \alpha_V)(\eta) =  
     \left\{\begin{aligned} \iota_U\circ\alpha_U\circ\pi_U(\eta),\qquad &&i\leq\ell \\
                            \iota_V\circ\alpha_V\circ\pi_V(\eta),\qquad &&i>\ell.
                              \end{aligned}\right.    \label{eq:endo-glue}
\end{equation}
Then, $\alpha = \alpha_U\star\alpha_V$ is an endomorphism of $U\star V$ which preserves 
$W(U\star V)$ such that 
$$
      \pi_U\circ\alpha\circ\iota_U = \alpha_U,\qquad
      \pi_V\circ\alpha\circ\iota_V = \alpha_V.
$$.

\begin{lemma} The construction \eqref{eq:endo-glue} is compatible with the bigrading \eqref{eq:tri-bi}, i.e.
if $\alpha_U(U^{a,b})\subseteq U^{p+a,q+b}$ and $\alpha_V(V^{a,b})\subseteq V^{p+a,q+b}$
then
\begin{equation}
     \alpha(U\star V)^{a,b} \subseteq (U\star V)^{p+a,q+b}
     \label{eq:ipq}
\end{equation}
\end{lemma}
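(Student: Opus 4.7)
The statement is a compatibility check, and the plan is to reduce to homogeneous pieces, split on the three regimes $i<\ell$, $i=\ell$, $i>\ell$, and verify in each case that $\alpha$ preserves the bigrading by inspection, using three ingredients: (1) the bigrading-compatibility of the identification $\sigma$ built into the definition of abutting, (2) the weight-filtration preservation by $\alpha_U$ and $\alpha_V$, and (3) the vanishing $U_j=0$ for $j>\ell$ and $V_j=0$ for $j<\ell$.

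First I would reduce to the case of a homogeneous element $\eta\in(U\star V)^{a,b}_i$ by linearity. If $i<\ell$, then $\eta=\iota_U(u)$ with $u\in U^{a,b}_i$, and by \eqref{eq:endo-glue} we have $\alpha(\eta)=\iota_U(\alpha_U(u))$. The hypothesis on $\alpha_U$ gives $\alpha_U(u)\in U^{p+a,q+b}$, and weight preservation gives $\alpha_U(u)\in W_i(U)$; therefore $\alpha_U(u)=\sum_{j\le i}u'_j$ with $u'_j\in U^{p+a,q+b}_j$. Since $j\le i<\ell$, each $\iota_U(u'_j)$ lies in $(U\star V)^{p+a,q+b}_j$ by \eqref{eq:tri-bi}, so $\alpha(\eta)\in (U\star V)^{p+a,q+b}$.

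The case $i=\ell$ is essentially the same: write $\eta=\iota_U(u)=(u,\sigma(u))$ with $u\in U^{a,b}_\ell$, apply $\alpha_U$, and decompose $\alpha_U(u)=\sum_{j\le\ell}u'_j$ with $u'_j\in U^{p+a,q+b}_j$. For $j<\ell$, $\iota_U(u'_j)=(u'_j,0)\in(U\star V)^{p+a,q+b}_j$, while for $j=\ell$, $\iota_U(u'_\ell)=(u'_\ell,\sigma(u'_\ell))$ lies in $(U\star V)^{p+a,q+b}_\ell$ precisely because $\sigma$ carries $U^{p+a,q+b}_\ell$ isomorphically onto $V^{p+a,q+b}_\ell$ (this is where the abutting condition is used).

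Finally, for $i>\ell$, we have $\eta=\iota_V(v)$ with $v\in V^{a,b}_i$ and $\alpha(\eta)=\iota_V(\alpha_V(v))$. The decomposition $\alpha_V(v)=\sum_{j\le i}v'_j$ with $v'_j\in V^{p+a,q+b}_j$ truncates automatically at $j\ge\ell$ since $V_j=0$ for $j<\ell$. For $j>\ell$, $\iota_V(v'_j)\in(U\star V)^{p+a,q+b}_j$ directly; for $j=\ell$, $\iota_V(v'_\ell)=(\sigma^{-1}(v'_\ell),v'_\ell)$, and $\sigma^{-1}(v'_\ell)\in U^{p+a,q+b}_\ell$ by the abutting condition, so this term also lies in $(U\star V)^{p+a,q+b}_\ell$. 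I do not anticipate any real obstacle; the only point that requires care (and the only place the hypotheses actually interact) is the handling of the $j=\ell$ piece in the second and third cases, where one must invoke that $\sigma$ is an isomorphism of bigradings.
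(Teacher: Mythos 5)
Your proof is correct and follows the same underlying idea as the paper's one-line argument: in each filtration degree, $\alpha$ equals $\iota_U\circ\alpha_U\circ\pi_U$ or $\iota_V\circ\alpha_V\circ\pi_V$, a composite of maps that preserve or shift the bigrading by $(p,q)$, and your case analysis simply unpacks this observation. One small remark: your invocation of the weight-filtration preservation of $\alpha_U$ and $\alpha_V$ is harmless but not actually needed for this lemma, since $U_j=0$ for $j>\ell$ and $V_j=0$ for $j<\ell$ already guarantee that $\iota_U(U^{p+a,q+b})\subseteq (U\star V)^{p+a,q+b}$ and $\iota_V(V^{p+a,q+b})\subseteq (U\star V)^{p+a,q+b}$, regardless of which filtration levels $j$ the image hits.
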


\begin{proof} This follows from that fact that $\pi_U\circ\iota_U$ is the identity on $U$,
$\pi_V\circ\iota_V$ is the identity on $V$ and both the maps $\pi_U$, $\pi_V$, $\iota_U$,
$\iota_V$ all preserve the relevant bigradings by construction.
\end{proof}

\subsection*{Limiting Splitting.} Let $\psi:\Delta^*\to\Gamma\backslash\mathcal M$ be the period map of 
an admissible extension of $\mathbb R(0)$ by a variation of pure, polarized $\mathbb R$-Hodge structure 
of weight $-1$. Let $V$ denote the underlying vector space of $\psi$,
let $W$ denote the weight filtration, and assume the monodromy is unipotent 
and given by $T=e^N$. Let 
$$
         F(z):\frak h\to\mathcal M,\qquad F(z+1) = T F(z)
$$
be a lifting of $\psi$ to a map from the upper half-plane $\frak h$ into $\mathcal M$.  Then, by~\cite[Theorem 3.9]{bpannals}, it follows that
\begin{equation}
       Y^{\ddag} = \lim_{\text{Im}(z)\to\infty}\, Y_{(F(z),W)}    \label{eq:limit-split}
\end{equation}
exists, where the limit is taken with $\text{Re}(z)$ restricted to a finite interval.  
More precisely,
$$
       Y^{\ddag} = \rm{Ad}(e^{-iN})Y_{(e^{iN}\hat F,W)}
$$
where $(\hat F,M) = (e^{-\xi}F,M)$ is ${\rm sl}_2$-splitting of the limit mixed Hodge structure $(F,M)$
of $F:\mathfrak h\to\mathcal M$.

\begin{corollary}\label{limit-trigrading} The subspaces
$$
      L^{p,q}_k = \{\, v\in I^{p,q}_{(\hat F,M)} \mid Y^{\ddag}(v) = kv\,\}
$$
form a trigrading of $V$ which is split over $\mathbb R$ such that $W(L) = W$, $F(L) = \hat F$ and 
$M(L) = M$ as in \eqref{eq:tri-def}.
\end{corollary}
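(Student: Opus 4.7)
The plan is to reduce the statement to showing that $Y^{\ddag}$ commutes with the Deligne grading $Y_{(\hat F, M)}$ and preserves $\hat F$, and then to take joint eigenspace decompositions. First I would establish the $\text{sl}_2$-analog of Lemma~\eqref{DL-1}: namely that $Y^{\ddag} = \overline{Y^{\ddag}}$, that $Y^{\ddag}$ preserves $\hat F$, and that $Y^{\ddag}$ commutes with $Y_{(\hat F, M)}$. These follow from the formula
$Y^{\ddag} = \text{\rm Ad}(e^{-iN})Y_{(e^{iN}\hat F, W)}$
together with the defining properties of the $\text{sl}_2$-splitting; the arguments are identical in structure to the $\delta$-splitting case of Lemma~\eqref{DL-1}, with $\hat F$ in place of $\tilde F$. (One may alternatively invoke \cite[Theorem~3.9]{bpannals} directly.)

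Granted those three properties, the construction of $L^{p,q}_k$ is automatic. Since $Y_{(\hat F, M)}$ has the $I^{p,q}_{(\hat F, M)}$ as its eigenspaces (with $I^{p,q}$ in the $(p+q)$-eigenspace), and since $Y^{\ddag}$ is semisimple with integer eigenvalues and commutes with $Y_{(\hat F, M)}$, the operator $Y^{\ddag}$ preserves each $I^{p,q}_{(\hat F, M)}$ and decomposes it into its own integer eigenspaces $L^{p,q}_k$. This yields the trigrading of $V$.

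Next I would verify the three filtration identities. For $W(L) = W$: the $i$-th weight piece of $L$ is $\bigoplus_{p,q} L^{p,q}_i$, which is the $i$-eigenspace of $Y^{\ddag}$; since $Y^{\ddag}$ is a grading of $W$, we get $W_k = \bigoplus_{i \le k} \bigoplus_{p,q} L^{p,q}_i$. For $M(L) = M$: by construction $I^{p,q}_{(\hat F, M)} = \bigoplus_k L^{p,q}_k$, and $M_k = \bigoplus_{p+q \le k} I^{p,q}_{(\hat F, M)}$ since $(\hat F, M)$ is an $\mathbb{R}$-split MHS, so $M_k = \bigoplus_{p+q \le k,\, i} L^{p,q}_i$. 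For $F(L) = \hat F$: the $\mathbb{R}$-split condition on $(\hat F, M)$ gives $\hat F^p = \bigoplus_{a \ge p, b} I^{a,b}_{(\hat F,M)} = \bigoplus_{a \ge p,\, b,\, i} L^{a,b}_i$.

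Finally, for the $\mathbb{R}$-split property of the trigrading: because $(\hat F, M)$ is $\mathbb{R}$-split we have $\overline{I^{p,q}_{(\hat F, M)}} = I^{q,p}_{(\hat F, M)}$, and because $Y^{\ddag}$ is real the map $v \mapsto \bar v$ carries the $k$-eigenspace of $Y^{\ddag}$ inside $I^{p,q}$ isomorphically onto the $k$-eigenspace inside $I^{q,p}$. Hence $\overline{L^{p,q}_k} = L^{q,p}_k$. The main obstacle is the first step: establishing the three properties of $Y^{\ddag}$ relative to the $\text{sl}_2$-splitting. Once that is in place, everything else is formal bookkeeping with the joint eigenspace decomposition.
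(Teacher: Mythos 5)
Your proof is correct and spells out in detail what the paper leaves as an implicit consequence of the formula $Y^{\ddag} = \mathrm{Ad}(e^{-iN})Y_{(e^{iN}\hat F,W)}$ and the properties of $Y^{\ddag}$ established in~\cite{bpannals, BP-Comp}. Two small remarks. First, you attribute the identity $\hat F^p = \bigoplus_{a \ge p,\, b} I^{a,b}_{(\hat F,M)}$ to the $\mathbb R$-split condition; in fact this is property (a) of the Deligne bigrading~\eqref{eq:D-bigrading} and holds for \emph{any} mixed Hodge structure. The $\mathbb{R}$-split hypothesis is needed only for the conjugation identity $\overline{I^{p,q}} = I^{q,p}$ used in your final paragraph and, via the formula $M_k = \bigoplus_{p+q\le k} I^{p,q}$, for $M(L) = M$ — though the latter also holds in general. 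Second, you correctly flag that the real content is the $\text{sl}_2$-analog of Lemma~\ref{DL-1}: the properties of $Y^{\ddag}$ (reality, preservation of $\hat F$, commutation with $Y_{(\hat F,M)}$) are not established for the $\text{sl}_2$-splitting by Lemma~\ref{DL-1} as stated, which concerns the $\delta$-splitting $\tilde F = e^{-i\delta}F$. Citing~\cite{bpannals} directly (as the paper does when it introduces $Y^{\ddag}$) is the cleaner route, since the assertion that the arguments are ``identical in structure'' is plausible but would itself require unwinding the relation between $\xi$ and $\delta$.
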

       
\par For future use, we record that in this setting, the element $\xi$ appearing~\eqref{eq:sl2-split}
in the  ${\rm sl}_2$-splitting of $(F,M)$ also preserves the weight filtration W~\cite{BP-Comp}. 
The next result is only used in the quasi-unipotent case:

\begin{prop}\label{DS-morphism} If $\g$ is a morphism of $(F,M)$ which preserves $W$ such that 
$[\g,N] = 0$ then $[\g,Y^{\ddag}] = 0$.
\end{prop}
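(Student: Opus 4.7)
The plan is to unpack the definition of $Y^{\ddag}$ and chase functoriality through the $\delta$-splitting, the $\mathrm{sl}_2$-splitting, and the nilpotent orbit. By definition $Y^{\ddag} = \mathrm{Ad}(e^{-iN})\, Y_{(e^{iN}\hat F,W)}$, where $(\hat F,M)=(e^{-\xi}F,M)$ is the $\mathrm{sl}_2$-splitting of $(F,M)$. Since $[\gamma,N]=0$ implies $\gamma$ commutes with $\mathrm{Ad}(e^{-iN})$, it suffices to show that $\gamma$ commutes with the Deligne grading $Y_{(e^{iN}\hat F,W)}$, and this will follow as soon as I verify that $\gamma$ is a morphism (in the $(0,0)$ sense, as defined in \S\ref{sec:vmhs}) of the mixed Hodge structure $(e^{iN}\hat F,W)$.

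First I would check that $\gamma$ is a morphism of $(\hat F,M)$. Because $\gamma$ is a morphism of $(F,M)$, functoriality of the Deligne bigrading gives $\gamma(I^{p,q}_{(F,M)})\subseteq I^{p,q}_{(F,M)}$; in particular $\gamma$ preserves each Hodge component $\delta^{r,s}$ of $\delta$, so it commutes with $\delta$ and, being a Lie polynomial in the $\delta^{r,s}$, with $\xi$ as well. Consequently $\gamma$ preserves $\hat F^p = e^{-\xi}F^p$ and $M$, and by hypothesis $W$. Because $[\gamma,N]=0$, we then have $\gamma(e^{iN}\hat F^p)= e^{iN}\gamma(\hat F^p)\subseteq e^{iN}\hat F^p$, so $\gamma$ also preserves $e^{iN}\hat F$. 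Thus $\gamma$ is a $(0,0)$-morphism of $(e^{iN}\hat F,W)$ as claimed. (That $(e^{iN}\hat F,W)$ is a bona fide mixed Hodge structure is part of Theorem~\ref{eq:rsplit-orbit} applied to the $\mathbb R$-split limit MHS $(\hat F,M)$, so $Y_{(e^{iN}\hat F,W)}$ is defined.)

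With $\gamma$ a morphism of $(e^{iN}\hat F,W)$, it commutes with the corresponding Deligne grading $Y_{(e^{iN}\hat F,W)}$, since this grading is determined functorially by the bigrading $I^{p,q}$ of the mixed Hodge structure. Applying $\mathrm{Ad}(e^{-iN})$ and using $[\gamma,N]=0$ one last time gives $[\gamma,Y^{\ddag}]=\mathrm{Ad}(e^{-iN})[\gamma,Y_{(e^{iN}\hat F,W)}]=0$, completing the argument.

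I do not anticipate any serious obstacle: the entire proof is a functoriality chase, and the only slightly delicate point is making sure that each of $\delta$, $\xi$, and $N$ is built from data that a $(0,0)$-morphism of $(F,M)$ automatically preserves. The crucial inputs are Theorem~2.20 of~\cite{CKS86} (naturality of the $\delta$-splitting), the fact from~\cite{BP-Comp} that $\xi$ is a universal Lie polynomial in the Hodge components of $\delta$, and the hypothesis $[\gamma,N]=0$.
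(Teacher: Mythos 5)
Your proof is correct and follows essentially the same route as the paper: show $\gamma$ commutes with $\delta$ and hence $\xi$, conclude $\gamma$ is a morphism of $(\hat F,M)$ and of $(e^{iN}\hat F,W)$, and then use the formula $Y^{\ddag}=\mathrm{Ad}(e^{-iN})\,Y_{(e^{iN}\hat F,W)}$ together with $[\gamma,N]=0$. The only minor infelicity is the phrase ``$\gamma$ preserves each Hodge component $\delta^{r,s}$'' — what is really meant (and what the paper also elides) is that $[\gamma,\delta]=0$ by Theorem~2.20 of \cite{CKS86} and then $[\gamma,\delta^{r,s}]=[\gamma,\delta]^{r,s}=0$ because $\gamma$ has type $(0,0)$ — but this is exactly the gloss the paper itself uses.
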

\begin{proof} Since $\g$ is a morphism of $(F,M)$ it follows that $\g$ commutes with the Hodge 
components $\{\delta^{p,q}\}$ of the Deligne splitting of $(F,M)$.  As $\xi$ is given by universal 
Lie polynomials in the Hodge components of $\delta$, it follows that $\g$ commutes with $\xi$.  
Therefore, $\g$ is a morphism of $(\hat F,M) = (e^{-\xi}F,M)$.  By~\cite{bpannals},
$$
          Y^{\ddag} = {\rm Ad}(e^{-iN})Y_{(e^{iN}\hat F,W)}.
$$
As $\gamma$ commutes with $N$ and preserves $W$, it follows that $\gamma$ is of type 
$(0,0)$ with respect to $(e^{iN}\hat F,W)$ as well.  Thus, $[\gamma,Y^{\ddag}]=0$.
\end{proof}

\subsection*{Limit Mixed Hodge Structure.}
We now return to the notation from the beginning of this section.
In particular, $\mathcal{V}_{\nu}$ and $\mathcal{V}_{\omega}$ are as in the
paragraph before Theorem~\ref{sec:mix-1}.

Let $\Delta^*\subset\Delta^{*r}$ be the punctured disk defined by the
equation $s_1 = \cdots = s_r$, and fix a point $s_0\in\Delta^*$. Let
$V$ be the fiber of $\mathcal V_{\nu}$ over $s_0$.  Then,
applying Corollary~\ref{limit-trigrading} to the 
restriction
of $\mathcal V_{\nu}$ to $\Delta^*\subset\Delta^{*r}$ produces a
trigrading $V^{p,q}_k$ of $V$ which is split over $\mathbb R$.
Likewise, let $U$ be the fiber of
$\mathcal V_{\omega}^{\vee}$ over $s_0$.  Then, using duality,  restriction of
$\mathcal V_{\omega}^{\vee}$ to $\Delta^*\subset\Delta^{*r}$ produces
a trigrading $U^{p,q}_k$ of $U$ which is also split over $\mathbb R$.

\par Since $U$ and $V$ abut at $\ell=-1$, we can form the trigraded vector space
$$      
           B = U\star V
$$
with associated trigrading which is split over $\mathbb R$.  The associated bigrading 
determines an $\mathbb R$-split mixed Hodge structure
$(\hat F_{\infty}, M)=(\hat F_{\infty}(B),M(B))$
as in~\eqref{eq:tri-def}.

\par Let $(F_{\nu},M^{\nu})$ be the limit mixed Hodge structure of $\mathcal V_{\nu}$ and 
$(F_{\omega},M^{\omega})$ be the limit mixed Hodge structure $\mathcal V_{\omega}^{\vee}$.  
Let $\xi_{\nu}\in gl(V)$ and $\xi_{\omega}\in gl(U)$ be the corresponding endomorphisms \eqref{eq:sl2-split}
of $V$ and $U$.
Recall that $\xi_{\nu}$ and $\xi_{\omega}$ preserve the weight filtrations $W(V)$ and $W(U)$ 
respectively.  Moreover, they induce the same action on $Gr^W_{-1}$ since $\mathcal V_{\nu}$ and 
$\mathcal V_{\omega}^{\vee}$ induce the same limit mixed Hodge structure on $Gr^W_{-1}$.  Define
\begin{equation}
         F_{\infty} = e^{\xi}\hat F_{\infty}                     \label{eq:limit-MHS}
\end{equation}
where $\xi = \xi_U\star\xi_V$.   Since $\xi_{\nu}$ and $\xi_{\omega}$ act trivially on $Gr^M(V)$
and $Gr^M(U)$ it follows that $\xi$ acts trivially on $Gr^M$ by \eqref{eq:ipq}. 

\begin{remark} The mixed Hodge structure $(\hat F_{\infty},M)$ projects to $(\hat F_{\nu},M^{\nu})$
and $(\hat F_{\omega},M^{\omega})$ on $V$ and $U$ respectively.   The monodromy logarithms
$N_j^V$ and $N_j^U$ of $\mathcal V_{\nu}$ and $\mathcal V^{\vee}_{\omega}$ are $(-1,-1)$-morphisms
of $(\hat F_{\nu},M^{\nu})$ and $(\hat F_{\omega},M^{\omega})$.  For future use, we also note
that $W_{-2}\End(B)$ consists of $(-1,-1)$-morphisms of $(\hat F_{\infty},M)$ and 
$(F_{\infty},M)$.  This follows from the fact that $B_0=B_0^{0,0}$ while
$B_{-2}=B_{-2}^{-1,-1}$. 
\end{remark}

\subsection*{Local System.} The monodromy logarithms $N_j^V$ and $N_j^U$ induce the same action 
on $Gr^W_{-1}$ and hence glue together to define an endomorphism $\tilde N_j$ of $B_{\mathbb R}$.
By the previous remark, $\tilde N_j$ is a $(-1,-1)$-morphism of $(\hat F_{\infty},M)$.  To show
that the endomorphisms $\{\tilde N_j\}$ define a local system over $\Delta^{*r}$ with fiber 
$B_{\mathbb R}$, it is sufficient to show that $[\tilde N_j,\tilde N_k]=0$ for all $j$ and $k$.

\par Let $\{V^{p,q}_i\}$ be the trigrading of $V$ of Corollary \eqref{limit-trigrading}, and let 
$v_0\in V_0$ be a lifting of $1\in Gr^W_0$.  Set $b_0= \iota_V(v_0)$. Then, since $v_0$ is type
$(0,0)$ with respect to the bigrading of $V$, the same is true of $b_0$ with respect to the bigrading
of $B$.  Moreover, tracing through the above definitions, one sees that 
$$
      \pi_V[\tilde N_j,\tilde N_k]b_0 = [N_j^V,N_k^V]v_0 = 0
$$
Therefore, $[\tilde N_j,\tilde N_k]b_0$ is an element of $W_{-2}(B)$ which is of type $(-2,-2)$
with respect to the bigrading of $B$.  But $W_{-2}(B)$ is pure of type $(-1,-1)$, and hence
$
        [\tilde N_j,\tilde N_k]b_0 = 0
$.
Likewise, 
$
        \pi_U[\tilde N_j,\tilde N_k]\iota_U(u) = [N_j^U,N_k^U]u = 0
$.
Consequently, $[\tilde N_j,\tilde N_k] =0$.

\subsection*{$A$-Structure.} Pick an element $v_A\in V_A$ which projects to 
$1\in Gr^W_0(V)$ and let $b_A = \iota_V(v_A)$.
Define
$$
        B_A = A b_A\oplus\iota_U(U_A)
$$
Then, $B_A\otimes\mathbb R = B_{\mathbb R}$ since $v_A = v_0 + v_{-1}$
with $v_0$, $v_{-1}$ real and $v_0$ as above in $V^{0,0}_0$.

\par Let $u_{-2}$ be the generator of $W_{-2}(U_A)\cong A(1)$ corresponding
to $1^{\vee}$ (with $1^{\vee}$ a generator of $A(1)$ as in
Definition~\ref{1v}).  Set 
$b_{-2} = \iota_U(u_{-2})$.  Note that $b_{-2}\in B^{-1,-1}_{-2}$.  Let $\eta$ be
the endomorphism of $B_{\mathbb R}$ which annihilates $W_{-1}(B)$ and maps
$b_A$ to $b_{-2}$ (or, equivalently, $b_0$ to $b_{-2}$).  We are going to
set 
\begin{equation}
       N_j = \tilde N_j + c_j\eta					\label{eq:new-monodromy}
\end{equation}
for some scalar $c_j$ to be determined below. 

\par Since $W_{-2}(B)$ has rank $1$ and $\tilde N_j$ is nilpotent and preserves $W(B)$, 
it follows that $\tilde N_j$ acts trivially on $W_{-2}(B)$.  Consequently, $[\tilde N_j,\eta]=0$ 
and hence $[N_j,N_k]=0$.  Likewise, $[N_j,\eta] = 0$ and hence
$
         T_j = e^{N_j} = e^{\tilde N_j+c_j\eta} = (\tilde T_j)(1+c_j\eta)  
$.
Therefore, 
$$
	T_j(b_A) = \tilde T_j(b_A) + c_j u_{-2}
                       = b_A + (\tilde T_j-1)b_A + c_j u_{-2}
$$
In particular, because $(T_j^V-1)v$ maps to an integral class in $Gr^W_{-1}$,
it follows that for suitable choice of real scalar $c_j$, $T_j(b)\in B_A$.
Likewise, $T_j$ acts as $T_j^U$ on $U$, and hence $T_j$ preserves $B_A$.

\subsection*{Hodge filtration.}
By rescaling $s$ if necessary, we can assume that 
$\mathcal{V}_{\nu}$ and $\mathcal V_{\omega}^{\vee}$
have local normal forms 
$\Gamma^V$ and $\Gamma^U$
on $\Delta^{r}$ 
as in \eqref{eq:lnf}.  Then, $\Gamma^V$ preserves 
$W(V)$, $\Gamma^U$ preserves $W(U)$ and the induced actions on $Gr^W_{-1}$ agree since the induced 
VHS on $Gr^W_{-1}$ coincide.  Therefore, we can glue $\Gamma^V$ to $\Gamma^U$ to define a 
function $\Gamma$ on $\Delta^r$ which preserves $W(U\star V)$ and acts by infinitesimal isometries on $\Gr^W B$.  
Define
\begin{equation}
        F(z) = e^{\sum_j\, z_j N_j}e^{\Gamma(s)}F_{\infty}             \label{eq:glue-lnf}
\end{equation}
where $F_{\infty}$ is the filtration \eqref{eq:limit-MHS}, and $N_j$ is the endomorphism 
\eqref{eq:new-monodromy}.  Then, $F(z)$ induces the corresponding Hodge filtrations of 
$\mathcal V_{\nu}$ on $Gr^W_j(V)$ for $j=0$, $j=-1$, and $\mathcal V_{\omega}^{\vee}$ for 
$j=-1$, $-2$.  Clearly, $F(z)$ is holomorphic and descends to a period map over $\Delta^{*r}$
with monodromy $\{T_j\}$.

\subsection*{Horizontality.}  We need to show that $\frac{\partial}{\partial z_j}F^p(z)\subseteq F^{p-1}(z)$.
By manipulation of \eqref{eq:glue-lnf}, this is equivalent to 
$$
       \left(e^{-\text{\rm ad}\Gamma}N_j + 2\pi i s_j e^{-\Gamma}\frac{\partial}{\partial s_j}e^{\Gamma}\right)
		(F_{\infty}^p)\subseteq F^{p-1}_{\infty}
$$
By construction, this holds modulo $W_{-2}\End(B)$, which is sufficient to prove horizontality for $p>0$.
For the case $p=0$, observe that $W_{-2}\End(B)$ consists of $(-1,-1)$-morphisms
of $(F_{\infty},M)$.  For $p<0$, horizontality follows from horizontality modulo $W_{-2}B$ and
the case $p=0$.

\subsection*{Admissibility.} To show that \eqref{eq:glue-lnf} defines an admissible normal function,
it remains to show that (i) the Hodge bundles extend holomorphically over $\Delta^r$ and induces the
limit filtration of Schmid on each $Gr^W$, and (ii) the required relative weight filtrations exist.

\par The first condition follows from that fact that 
$
      \exp(-\sum_j\, z_j N_j)F(z) = e^{\Gamma(s)}F_{\infty}
$
which obviously extends to a holomorphic filtration on $\Delta^r$, and $F_{\infty}$ induces 
Schmid's filtration on $Gr^W$ by construction.

\par To prove the existence of the required relative weight filtrations, we recall the following
result from \cite[Theorem 2.20]{SZ}: Let ${}_{(i)}M$ denote the
relative weight filtration of $N_{W_i}$ and $W$.  Suppose that ${}_{(k-1)}M$ exists.  Then,
${}_{(k)}M$ exists if and only if for all $j>0$,
$$
        (N^j W_j)\cap W_{k-1}\subset N^j W_{k-1} + {}_{(k-1)}M_{k-j-1}
$$

\par Set $N=N_j$ and $W=W(B)$.  Then, the relative weight filtration of $N$ restricted to $W_{-1}$ 
exists and equals the image of $M(N_j^U,W(U))$ under $\iota_U$.  Therefore, we need only check 
the above for $k=0$.  Since $N = \tilde N_j + c_j\eta$, 
$$
		(N^j W_j)\cap W_{-1} = N^j W_{-1}+\text{\rm Im}(c_j\eta) 
		\subset N^j W_{-1} + {}_{(-1)}M_{-2}
$$
because $W_{-2}\subset {}_{(-1)}M_{-2}$ since $W_{-2}$ is pure of type $(-1,-1)$ for the limit 
mixed Hodge structure.

\par This completes the proof of Theorem~\ref{sec:mix-1}.

\subsection*{Quasi-Unipotent Monodromy}  

\par In this section, we extend Theorem~\ref{sec:mix-1} to the case of admissible normal functions
with quasi-unipotent monodromy, and give an analog of the local normal form~\eqref{eq:lnf} for 
admissible variations with quasi-unipotent monodromy. 

\par Let $T$ be an automorphism of a finite dimensional complex vector space.
Then, by the Jordan decomposition theorem, $T = T_s + T_n$ where $T_s$
is semisimple, $T_n$ is nilpotent and $[T_s,T_n] = 0$.  Moreover, there
exists polynomials $p$ and $q$ without constant term such 
that $T_s = p(T)$ and $T_n = q(T)$.  In particular, since $T$ is an 
automorphism, $T_s$ is invertible.  Let $T_u = 1 + T_s^{-1}T_n$.
Then, $T_u$ is unipotent and $T = T_s T_u = T_u T_s$ is called the
multiplicative Jordan decomposition of $T$.

\begin{remark} If $T$ is a automorphism of a finite dimensional rational
vector space, and $T_s$ has finite order $m$, then both $T_s$ and $T_u$
are rational.  To see this, note that $T_u = e^{\alpha}$ and hence
$T^m = T_u^m = e^{m\alpha}$ is rational.  Therefore, $\alpha$ is rational,
and hence so is $T_u$ and $T_s = T T_u^{-1}$.
\end{remark}

\begin{prop} Let $T$ and $T'$ be commuting automorphisms of a finite dimensional
complex vector space, and let $T=T_s + T_n$ and $T' = T'_s + T'_n$ be the Jordan
decomposition of $T$ and $T'$.   Then, $\{T_s,T_n,T'_s,T'_n\}$ is a set of 
mutually commuting endomorphisms.
\end{prop}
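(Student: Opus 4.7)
The approach is to exploit the structural fact, recalled in the paragraph immediately preceding the statement, that the semisimple and nilpotent parts of a single endomorphism's Jordan decomposition are polynomials (without constant term) in the endomorphism itself. Once this is in hand, everything follows from the elementary observation that polynomials in commuting operators commute.

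Concretely, I would first write
\[
T_s = p(T), \quad T_n = q(T), \quad T'_s = p'(T'), \quad T'_n = q'(T'),
\]
where $p, q, p', q'$ are polynomials without constant term, as guaranteed by the Jordan decomposition theorem.

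Next, I would note that from $TT' = T'T$ one gets $T^a (T')^b = (T')^b T^a$ for all non-negative integers $a, b$ by a trivial induction, hence any polynomial in $T$ commutes with any polynomial in $T'$. Applying this with the four polynomials above shows that $T_s$ commutes with both $T'_s$ and $T'_n$, and that $T_n$ commutes with both $T'_s$ and $T'_n$.

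Finally, the remaining commutation relations $[T_s, T_n] = 0$ and $[T'_s, T'_n] = 0$ are built into the Jordan decomposition itself (they also follow from the fact that $T_s, T_n$ are both polynomials in $T$, and $T'_s, T'_n$ in $T'$). This exhausts all pairs from $\{T_s, T_n, T'_s, T'_n\}$, so the set is mutually commuting. There is no real obstacle here: the only point worth checking is that the polynomial representation of the Jordan components is already stated in the excerpt, so the proof reduces to a single line once that fact is cited.
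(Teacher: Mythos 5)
Your proof is correct and is exactly the argument the paper gives: each of $T_s,T_n,T'_s,T'_n$ is a polynomial (without constant term) in $T$ or $T'$, and polynomials in commuting operators commute. Nothing further is needed.
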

\begin{proof} Each of the elements listed is a polynomial in $T$ or $T'$.
\end{proof}

\begin{corollary} Let $T$ and $T'$ be commuting automorphisms of a finite dimensional
complex vector space, and let $T=T_s T_u$ and $T' = T'_s T'_u$ be the multiplicative
Jordan decompositions of $T$ and $T'$.  Then, $\{T_s,T_u,T'_s,T'_u\}$ is a set of 
mutually commuting automorphisms.
\end{corollary}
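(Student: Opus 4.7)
The plan is to deduce the multiplicative statement directly from the additive one (the immediately preceding proposition), together with the observation that $T_s^{-1}$ lies in the algebra generated by $T_s$.

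More concretely, I would first invoke the preceding proposition to conclude that $\{T_s, T_n, T'_s, T'_n\}$ is a set of mutually commuting endomorphisms. Since $T$ is an automorphism, $T_s$ is invertible, and because $T_s$ is semisimple its inverse is a polynomial in $T_s$ (for instance, by Cayley--Hamilton applied to the restriction of $T_s$ to any eigenspace, or just by diagonalizing). Therefore $T_s^{-1}$ commutes with every endomorphism that commutes with $T_s$; in particular $T_s^{-1}$ commutes with each of $T_n$, $T'_s$, $T'_n$. The same reasoning applied to $T'$ shows that $(T'_s)^{-1}$ commutes with $T_s$, $T_n$, and $T'_n$.

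Next, I would use the defining relations $T_u = 1 + T_s^{-1} T_n$ and $T'_u = 1 + (T'_s)^{-1} T'_n$. Each of these is a polynomial (indeed, a linear combination of products) in elements drawn from the mutually commuting set $\{T_s, T_s^{-1}, T_n, T'_s, (T'_s)^{-1}, T'_n\}$. Hence $T_u$ and $T'_u$ lie in the commutative subalgebra generated by this set, from which the mutual commutativity of $\{T_s, T_u, T'_s, T'_u\}$ follows immediately.

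There is no genuine obstacle here; the corollary is essentially a bookkeeping consequence of the additive version. The only mild point to verify carefully is that $T_s^{-1}$ can be written as a polynomial in $T_s$, but this is standard for an invertible semisimple operator and needs no more than one line.
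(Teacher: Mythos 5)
Your proof is correct and follows the route the paper clearly intends: reduce to the additive Jordan decomposition via the preceding proposition, then note that $T_u = 1 + T_s^{-1}T_n$ (and likewise $T'_u$) lies in the commutative algebra generated by $\{T_s, T_n, T'_s, T'_n\}$ together with the inverses $T_s^{-1}$, $(T'_s)^{-1}$, each of which is a polynomial in the corresponding $T_s$. One minor simplification: you do not need semisimplicity or diagonalization to see that $T_s^{-1}$ is a polynomial in $T_s$; Cayley--Hamilton applied to the invertible operator $T_s$ already gives this, since the constant term of the characteristic polynomial is $\pm\det T_s \neq 0$.
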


\par Let $(\Delta^r,s)$ be a polydisk with holomorphic coordinates $(s_1,\dots,s_r)$
and $\mathcal V\to\Delta^{*r}$ be a variation of mixed Hodge structure
defined on the complement of the divisor $s_1\cdots s_r = 0$.  Assume that 
$\mathcal V$ has quasi-unipotent monodromy $\g_i$ about $s_i =0$, and let 
$\g_i = \g_{i,s}\g_{i,u}$ be the multiplicative Jordan decomposition of 
$\g_i$.  Then, by the previous Corollary, $\{\g_{j,s},\g_{j,u},\g_{k,s},\g_{k,u}\}$
is a set of mutually commuting automorphisms of the reference fiber of $\mathcal V$
for any pair of indices $j$ and $k$.  Let $m_i>0$ denote the order of $\g_{i,s}$
and $N_i = \log\g_{i,u}$.

\par Assume that $\mathcal V\to(\Delta^{*r},s)$ is admissible.  Let $(\Delta^{*r},t)$ 
be a polydisk with coordinates $(t_1,\dots,t_r)$ and 
$
       f:(\Delta^{*r},t)\to (\Delta^{*r},s)
$
denote the covering map $t_j = s_j^{m_j}$.  Then, $f^*(\mathcal V)$ is an
admissible variation of mixed Hodge structure over $(\Delta^{*r},t)$ with
unipotent monodromy $e^{m_j N_j}$ about $t_j=0$.  Let
$$
          \tilde F:({\mathfrak h}^r,w)\to\mathcal M
$$
denote the lifting of the period map of $f^*(\mathcal V)$ to the product
of upper half-planes ${\mathfrak h}^r\subset\mathbb C^r$ with Cartesian coordinates 
$(w_1,\dots,w_r)$ relative to the covering map $t_j = e^{2\pi i w_j}$.

\par For $w\in\mathbb C^r$ let $N(w) = \sum_k\, w_k m_k N_k$, and 
\begin{equation}
     \tilde\psi(w) = e^{-N(w)}\tilde F(w).        \label{eq:p-lnf-1}
\end{equation}
Let $\{e_1,\dots,e_r\}$ denote the standard Cartesian basis of $\mathbb C^r$.  Then,
$\tilde\psi(w+e_j) = \tilde\psi(w)$, and hence $\tilde\psi$ descends to a map
$$
     \psi:\Delta^{*r}\to\check{\mathcal M}.
$$
By admissibility, $\psi$ extends to a holomorphic map $\Delta^r\to\check{\mathcal M}$,
such that 
\begin{equation}
          (m_1N_1,\dots,m_r N_r;F_{\infty},W)                   \label{eq:p-nilp}
\end{equation}
is an admissible nilpotent orbit, where $F_{\infty} = \psi(0)$.  

Let $\mathfrak g_{\mathbb C} = \oplus_{p,q<0}\,\mathfrak g^{p,q}$
relative to the limit mixed Hodge structure $(F_{\infty},M)$ of \eqref{eq:p-nilp}.
Then, 
$$      
           \mathfrak q = \oplus_{a<0,b}\mathfrak g^{a,b}
$$
is a vector space complement to the stabilizer of $F_{\infty}$ in $\mathfrak g_{\mathbb C}$.
Therefore, there exists a unique holomorphic function 
$$
         \Gamma:\Delta^r\to\mathfrak q
$$
such that $e^{\Gamma(t)}F_{\infty} = \psi(t)$, after shrinking $\Delta^r$ as necessary.
This gives the local normal form
\begin{equation}
        \tilde F(w) = e^{N(w)}e^{\Gamma(t)}F_{\infty}.         \label{eq:q-lnf}
\end{equation}
 
\par The next result asserts that each $\g_{j,s}$ is a morphism of the limit mixed Hodge
structure.  In the geometric case this is due to J. Steenbrink~\cite{Steenbrink}.

\begin{prop}\label{ss-morphism} Each $\g_{j,s}$ is morphism of $(F_{\infty},M)$.  Moreover,
\begin{equation}
               \Gamma\circ\rho_j(t) = \text{\rm Ad}(\g_{j,s})\Gamma(t)   \label{eq:q-lnf-4}
\end{equation}
where $\rho_j(t_1,\dots,t_r) = (t_1,\dots,e^{2\pi i/m_j}t_j,\dots,t_r)$.
\end{prop}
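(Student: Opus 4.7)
\medskip

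\noindent\textbf{Proof proposal.} The plan is to first extract a functional equation for the descended map $\psi$ on $\Delta^{*r}$ that encodes the deck-transformation action of $\rho_j$, then pass to the limit $t\to 0$ to see that $\gamma_{j,s}$ stabilizes $F_\infty$, and finally invoke uniqueness in the local normal form to deduce the conjugation identity for $\Gamma$.

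First, I would derive the functional equation. Since the deck transformation $w_j \mapsto w_j + 1/m_j$ of the covering $f$ corresponds to going around the $s_j$-loop exactly once downstairs (and leaves the other $w_k$ alone), the lifted period map satisfies
\begin{equation*}
      \tilde F(w + e_j/m_j) = \gamma_{j,s}\, e^{N_j}\, \tilde F(w).
\end{equation*}
Here we use that $\gamma_j = \gamma_{j,s}\gamma_{j,u}$ with $\gamma_{j,u} = e^{N_j}$. Because $\gamma_{j,s}$ has finite order $m_j$ and commutes with all $\gamma_{k,s}$ and $\gamma_{k,u}$ by the corollary on commuting Jordan decompositions, it commutes with every $N_k$, in particular with $N(w)$. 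Combining this with $N(w + e_j/m_j) = N(w) + N_j$ and substituting $\tilde F(w) = e^{N(w)}\psi(t)$ gives, after cancellation,
\begin{equation*}
        \psi(\rho_j(t)) = \gamma_{j,s}\,\psi(t).
\end{equation*}

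Next, I would verify that $\gamma_{j,s}$ is a morphism of $(F_\infty, M)$. The preservation of $W$ is immediate from the fact that $\gamma_j$ preserves $W$ (monodromy of a VMHS), and $\gamma_{j,s}$ is a polynomial in $\gamma_j$; since $\gamma_{j,s}$ also commutes with $N = \sum m_k N_k$, uniqueness of the relative weight filtration forces $\gamma_{j,s}(M) = M$. For the Hodge filtration, set $t = 0$ in $\psi(\rho_j(t)) = \gamma_{j,s}\psi(t)$: using that $\rho_j(0) = 0$ and that $\psi$ extends continuously (holomorphically) to $\Delta^r$ by admissibility, the fact that $\psi(0) = F_\infty$ yields $F_\infty = \gamma_{j,s} F_\infty$. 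Thus $\gamma_{j,s}$ preserves every $F_\infty^p$, and is a $(0,0)$-morphism of $(F_\infty, M)$.

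For the second assertion, I would write $\psi(t) = e^{\Gamma(t)} F_\infty$ and use the functional equation:
\begin{equation*}
    e^{\Gamma(\rho_j(t))} F_\infty \;=\; \gamma_{j,s} e^{\Gamma(t)} F_\infty \;=\; \gamma_{j,s} e^{\Gamma(t)} \gamma_{j,s}^{-1}\, F_\infty \;=\; e^{\operatorname{Ad}(\gamma_{j,s})\Gamma(t)} F_\infty,
\end{equation*}
where the middle equality uses $\gamma_{j,s} F_\infty = F_\infty$ just established. Now $\Gamma(\rho_j(t))\in\mathfrak{q}$ by construction, and $\operatorname{Ad}(\gamma_{j,s})\Gamma(t)\in\mathfrak{q}$ because $\gamma_{j,s}$ is a $(0,0)$-morphism of $(F_\infty,M)$, so $\operatorname{Ad}(\gamma_{j,s})$ preserves the Hodge bigrading of $\mathfrak{g}_{\mathbb{C}}$ and hence the complement $\mathfrak{q}$. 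The uniqueness statement underlying the local normal form \eqref{eq:q-lnf} (uniqueness of the element of $\mathfrak{q}$ whose exponential carries $F_\infty$ to a given nearby point of $\check{\mathcal M}$) then forces $\Gamma(\rho_j(t)) = \operatorname{Ad}(\gamma_{j,s})\Gamma(t)$.

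The main obstacle I anticipate is the limit argument that $\gamma_{j,s}$ fixes $F_\infty$: strictly speaking one must know that $\psi$ extends holomorphically across $t=0$ taking the value $F_\infty$ (and similarly that $\psi\circ\rho_j$ does, with the same value), which is exactly the content of admissibility that produced \eqref{eq:p-nilp}. Once this continuity is in hand, the rest reduces to the commutation properties of the Jordan decomposition and to uniqueness of the $\mathfrak{q}$-valued coordinate $\Gamma$.
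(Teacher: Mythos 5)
Your proposal is correct and follows essentially the same route as the paper: derive $\psi\circ\rho_j(t)=\g_{j,s}\psi(t)$ from the deck-transformation identity $\tilde F(w+e_j/m_j)=\g_j\tilde F(w)$ and the commutation of $\g_{j,s}$ with the $N_k$, let $t\to 0$ to get $\g_{j,s}F_\infty=F_\infty$, deduce preservation of $W$ and $M$ from $\g_{j,s}=p(\g_j)$ and uniqueness of the relative weight filtration, and then use that $\Ad(\g_{j,s})$ preserves $\mathfrak q$ together with the uniqueness of the $\mathfrak q$-valued coordinate to conclude \eqref{eq:q-lnf-4}. The only point you omit is the brief remark that for $A=\ZZ$ or $\QQ$ the operator $\g_{j,s}$ is itself rational (finite order plus rationality of $\g_j$), which the paper includes so that $\g_{j,s}$ is a morphism of the $A$-mixed Hodge structure and not merely of its complexification.
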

\begin{proof} To prove that $\g_{j,s}$ is a morphism of the limit mixed Hodge structure we begin
with the observation that
\begin{equation}
          \psi\circ\rho_j(t) = \g_{j,s}\psi(t)                       \label{eq:q-lnf-2}
\end{equation}
To see this we note that we have a commutative diagram
$$
\begin{CD}
      ({\mathfrak h}^r,w) @>{z_k = m_k w_k}>> ({\mathfrak h}^r,z) @>{F(z)} >> \mathcal M \\
      @V{t_k = e^{2\pi i w_k}}VV       @VV{s_k = e^{2\pi i z_k}}V  @VVV \\
      (\Delta^{*r},t) @>{s_k = t_j^{m_k}}>> (\Delta^{*r},s) @>{\varphi}>>\Gamma\backslash\mathcal M
\end{CD}
$$
where $F:({\mathfrak h}^r,z)\to\mathcal M$ is the local lifting of the period map of $\mathcal V\to(\Delta^{*r},s)$
relative to the covering map $s_k = e^{2\pi iz_k}$ where $(z_1,\dots,z_r)$ are Cartesian coordinates
on the product of upper half-planes ${\mathfrak h}^r\subset\mathbb C^r$.  In particular, since $F(z+e_j) = \g_j F(z)$
it follows that $\tilde F(w + (1/m_j)e_j) = \g_j\tilde F(w)$.  Therefore,
$$
      \psi\circ\rho_j(t) = e^{-N(w+(1/m_j)e_j)}\tilde F(w+(1/m_j)e_j) 
                         = e^{-N(w+(1/m_j)e_j)}\g_j\tilde F(w) = \g_{j,s}\psi(t).
$$ 
Taking the limit as $t\to 0$ yields $F_{\infty} = \g_{j,s}F_{\infty}$.

\par By the remark at the beginning of this section, in the case where $A=\mathbb Z$ or $A=\mathbb Q$,
$\gamma_{j,s}$ is rational since $\g_{j,s}$ is finite order and $\g_j$ is rational.  To prove that 
$\g_{j,s}$ is a morphism of $(F,M)$ it remains to check that $\g_{j,s}$ preserves $M$.  This
follows from the defining properties of the relative weight filtration and the fact that:
\begin{itemize}
\item[---] Since $\g_j$ preserves $W$ so does $\g_{j,s} = p(\g_j)$;
\item[---] $\g_{j,s}$ commutes with $N_1,\dots,N_r$ and $M = M(N_1 + \cdots + N_r,W)$.
\end{itemize}
\par To verify \eqref{eq:q-lnf-4}, note that by the derivation of \eqref{eq:q-lnf} and equation 
\eqref{eq:q-lnf-2}, we have
$$
      \psi\circ\rho_j(t) = e^{\Gamma\circ\rho_j(t)}F_{\infty} = \g_{j,s}\psi(t) 
       = \g_{j,s}e^{\Gamma(t)}F_{\infty}.
$$
Since $\g_{j,s}$ preserves $F_{\infty}$ by the previous paragraphs, it follows that 
$$
        e^{\Gamma\circ\rho_j(t)}F_{\infty} = \g_{j,s}e^{\Gamma(t)}\g_{j,s}^{-1}F_{\infty}.
$$
In particular, since $\g_{j,s}$ is a morphism of $(F_{\infty},M)$ the adjoint action of 
$\g_{j,s}$ preserves $\mathfrak q$.  Because $\mathfrak q$ is a vector space complement
to the stabilizer of $F_{\infty}$ in $\mathfrak g_{\mathbb C}$, it then follows that 
$\Gamma\circ\rho_j(t) = \Ad(\g_{j,s})\Gamma(t)$.
\end{proof}

\par To continue, we note that by commutative diagram of Proposition~\eqref{ss-morphism}, 
we have $\tilde F(w_1,\dots,w_r) = F(m_1 w_1,\dots,m_r w_r)$.  Setting $w_j = z_j/m_j$ it
then follows from \eqref{eq:q-lnf} that 
\begin{equation}
      F(z_1,\dots,z_r) = \tilde F(z_1/m_1,\dots,z_r/m_r)
                       = e^{\sum_k\, z_k N_k}e^{\Gamma(v_1,\dots,v_r)}F_{\infty} \label{eq:q-lnf-3}
\end{equation}
where $v_j = e^{2\pi iz_j/m_j}$.

\begin{theorem}\label{q-lnf} 
\begin{itemize} Let
\item[(1)] $\theta(z_1,\dots,z_r) = (e^{\sum_k\, z_k N_k}F_{\infty},W)$ be an admissible
nilpotent orbit with limit mixed Hodge structure $(F_{\infty},M)$ and values in the a classifying 
space $\mathcal M$ of graded-polarized mixed Hodge structure for 
$\text{\rm Im}(z_1),\dots,\text{\rm Im}(z_r)\gg 0$;
\item[(2)]   $\{\g_{1,s},\dots,\g_{r,s}\}$ be a set of semisimple automorphisms of
$(F_{\infty},M)$ which act by finite order isometries on $Gr^W$.  Let $m_j$ be the order of $\g_{j,s}$ and 
$\g_{j,u} = e^{N_j}$.  Assume that
$
         \{\g_{1,s},\g_{1,u},\dots,\g_{r,s},\g_{r,u}\}
$
is a set of mutually commuting automorphisms; 
\item[(3)] $\Gamma:\Delta^r\to\mathfrak q$ be a holomorphic function which vanishes at the origin 
and satisfies \eqref{eq:q-lnf-4}.
\end{itemize}
Suppose that 
\begin{equation}
       \tilde F(w_1,\dots,w_r) = e^{\sum_k\, w_k m_k N_k}e^{\Gamma(t)}F_{\infty}   \label{eq:q-lnf-5}
\end{equation}
satisfies Griffiths infinitesimal period relation on the upper half-plane $({\mathfrak h}^r,w)$ where 
$t_j = e^{2\pi i w_j}$. Define $F(z)$ from $\tilde F(w)$ via equation~\eqref{eq:q-lnf-3}.  Then, 
\begin{itemize}
\item[(i)] $F(z_1,\dots,z_j+1,\dots,z_r) = \g_i F(z_1,\dots,z_r)$ where $\g_i = \g_{i,s}\g_{i,u}$;
\item[(ii)] There exists a positive real number $A$ such that 
$$
       F(z_1,\dots,z_r) = e^{\sum_j\, z_j N_j}e^{\Gamma(v_1,\dots,v_r)}F_{\infty}
$$
descends to an admissible period map on the set of points in $(\Delta^{*r},s)$ 
such that $|s_j| = |e^{2\pi i z_j}|<A$.
\end{itemize}
\end{theorem}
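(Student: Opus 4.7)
\emph{Part (i)} is a direct computation. Substituting $z + e_j$ into the defining relation $F(z) = \tilde F(z_1/m_1,\ldots,z_r/m_r)$, the shift $w_j \mapsto w_j + 1/m_j$ produces an extra factor of $e^{N_j} = \gamma_{j,u}$ from the exponential $e^{\sum_k w_k m_k N_k}$ and replaces the argument $t$ by $\rho_j(t)$. Applying hypothesis~(3) converts $\Gamma(\rho_j(t))$ into $\Ad(\gamma_{j,s})\Gamma(t)$, and hypothesis~(2) then allows $\gamma_{j,s}$ to be slid past the $N_k$-exponential (since $\gamma_{j,s}$ commutes with each $N_k = \log\gamma_{k,u}$) and past $F_{\infty}$ (which it fixes as a morphism of $(F_{\infty},M)$, cf.\ Proposition~\eqref{ss-morphism}). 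The result collapses to $\gamma_{j,u}\gamma_{j,s} F(z) = \gamma_j F(z)$.

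\emph{Part (ii).} The formula $F(z) = e^{\sum_k z_k N_k} e^{\Gamma(v)} F_{\infty}$ with $v_j = e^{2\pi i z_j/m_j}$ is immediate from $F(z) = \tilde F(z_1/m_1,\ldots,z_r/m_r)$ and~\eqref{eq:q-lnf-5}. Part~(i) then shows that under the covering $s_j = e^{2\pi i z_j}$, $F$ transforms by the monodromy group generated by the $\gamma_j$'s, so it descends to a map $\varphi$ into $\Gamma\backslash\mathcal M$. To exhibit an $A > 0$ beyond which the image lies in $\mathcal M$ (rather than merely $\check{\mathcal M}$), note that $|s_j|\to 0$ forces $v_j\to 0$, so $\Gamma(v)\to 0$ and $F(z)$ approaches the nilpotent orbit $\theta(z)$; by hypothesis~(1), $\theta(z)\in\mathcal M$ when all $\Im(z_k)$ are sufficiently large, and $\mathcal M$ is open in $\check{\mathcal M}$, which yields the desired $A$.

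\emph{Admissibility and the main obstacle.} It suffices to verify admissibility of the pullback $\tilde F$ on $(\Delta^{*r},t)$, which has unipotent monodromy $e^{m_j N_j}$; the quasi-unipotent admissibility of $\varphi$ then follows from the equivariance of Part~(i). Horizontality of $\tilde F$ is assumed. The unipotent-twisted filtration $e^{-\sum_k w_k m_k N_k}\tilde F(w) = e^{\Gamma(t)}F_{\infty}$ extends holomorphically to $\Delta^r$ by construction, with specialization $F_{\infty}$ at the origin, and (by the openness used to produce $A$) takes values in $\mathcal M$ on a sufficiently small polydisk. The required relative weight filtration $M(\sum_k m_k N_k, W)$ coincides with that supplied by hypothesis~(1), since by Theorem~\eqref{mono-cone} the filtration $M(\sum_k a_k N_k, W)$ is constant on the open cone $\{a_k>0\}$ and therefore insensitive to positive integer rescaling; the induced pure variations on $\Gr^W$ are the rescalings by $m_k$ of those induced by $\theta$, still admissible by Lemma~\eqref{scale-nilp}. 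The main technical point is precisely this passage from admissibility of $\theta$ to admissibility of the scaled orbit $(m_1 N_1,\ldots,m_r N_r; F_{\infty}, W)$, together with the verification that pullback followed by equivariance descends to a genuine admissible quasi-unipotent variation on $\Delta^{*r}_A$; all of this is a direct assembly of the results recalled earlier in \S\ref{sec:vmhs}.
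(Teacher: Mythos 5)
Your proof of (i) is correct and matches the paper's argument: shift $w_j\mapsto w_j+1/m_j$, peel off the factor $e^{N_j}=\gamma_{j,u}$, apply~\eqref{eq:q-lnf-4} and then hypothesis~(2) together with Proposition~\eqref{ss-morphism} to slide $\gamma_{j,s}$ past $e^{\sum_k z_kN_k}$ and $F_\infty$.

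For (ii), the overall reduction to checking horizontality, membership in $\mathcal M$ for $\Im(w_j)\gg 0$, and existence of relative weight filtrations on the unipotent pullback $\tilde F(w)$ is the right structure, and the descent argument is fine. However, your justification of the existence of $A$ has a genuine gap. You argue that as $|s_j|\to 0$ we have $\Gamma(v)\to 0$, so $F(z)$ ``approaches'' the nilpotent orbit $\theta(z)$, and then invoke openness of $\mathcal M$ in $\check{\mathcal M}$. This does not work in the multi-variable setting: as $\Im(z_k)\to\infty$, both $F(z)$ and $\theta(z)$ escape to infinity in $\check{\mathcal M}$ (the factor $e^{\sum z_kN_k}$ is unbounded), so there is no compact region to which openness applies, and the pointwise smallness of $\Gamma(v)$ does not translate into the perturbed filtration landing in $\mathcal M$. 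The rigorous statement needed here is precisely Theorem~$(2.8)$ of~\cite{Luminy} (Cattani--Kaplan): if $\tilde F(w)$ is horizontal and $e^{-\sum w_km_kN_k}\tilde F(w)$ extends holomorphically over $\Delta^r$ with value $F_\infty$ at the origin such that $(m_1N_1,\ldots,m_rN_r;F_\infty)$ is an admissible nilpotent orbit, then $\tilde F(w)$ lands in $\mathcal M$ for $\Im(w_j)\gg 0$. This is what the paper cites; without it, the step does not go through.

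A smaller issue: you justify the existence of the relative weight filtration $M(\sum_k m_kN_k,W)$ by appealing to Theorem~\eqref{mono-cone}, but that result concerns the monodromy weight filtration $W(N)$ of a \emph{pure} nilpotent orbit, not the relative monodromy filtration $M(N,W)$ needed for admissibility in the mixed case. The correct justification is simply that the axioms of an admissible nilpotent orbit for $(N_1,\ldots,N_r;F_\infty,W)$ (including the existence of $M$, which is indeed constant on the open cone by Kashiwara's theorem) are insensitive to positive rescaling of the $N_j$, exactly as in Lemma~\eqref{scale-nilp}; that is the argument the paper uses.
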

\begin{proof}  Since $v_j = e^{2\pi iz_j/m_j}$ it follows that changing $z_j$ to $z_j+1$
changes $v_j$ to $e^{2\pi i/m_j}v_j$.  Therefore, by equation \eqref{eq:q-lnf-4},
$$
\aligned
      F(z_1,\dots,z_j+1,\dots,z_r) 
      &= e^{N_j}e^{\sum_k\, z_k N_k}\g_{j,s}e^{\Gamma(v_1,\dots,v_r)}\g_{,s}^{-1}F_{\infty} \\
      &= \g_{j,u}\g_{j,s}e^{\sum_k\, z_k N_k}e^{\Gamma(v_1,\dots,v_r)}F_{\infty} 
\endaligned
$$
using hypothesis $(2)$ and $(3)$.  This proves part $(i)$.

\par To prove part $(ii)$, we need to show that 
\begin{itemize}
\item[(a)] $F(z)$ satisfies Griffiths infinitesimal period relation;
\item[(b)] $F(z)$ takes values in $\mathcal M$ for ${\rm Im}(z_1),\dots,{\rm Im z_r}\gg 0$;
\item[(c)] Verify the existence of the requisite relative weight filtrations.
\end{itemize}
Condition $(c)$ follows from the fact that $\theta$ is an admissible nilpotent orbit.
On the other hand, since $(\Delta^{*r},t)\to(\Delta^{*r},s)$ is a covering map, it is 
sufficient to verify conditions $(a)$ and $(b)$ for the map \eqref{eq:q-lnf-5}.
Condition $(a)$ holds for $\tilde F(w)$ by assumption.  To verify condition $(b)$, 
we need only show that $\tilde F(w)$ induces pure, polarized Hodge structures on $Gr^W$
for ${\rm Im}(w_1),\dots,{\rm Im w_r}\gg 0$.  This follows immediately from the fact
that $\theta$ is an admissible nilpotent orbit, $\tilde F(w)$ is horizontal and  
Theorem $(2.8)$~\cite{Luminy}.
\end{proof}

\begin{remark} In order to produce a variation with an integral (or rational structure) via
Theorem~\eqref{q-lnf}, each $\g_{j,s}$ and $\g_{j,u}$ must be integral (or rational).
\end{remark}

\begin{theorem}\label{sec:mix-q} Theorem~\eqref{sec:mix-1} remains valid in the setting where
$\mathcal H$ has quasi-unipotent monodromy.
\end{theorem}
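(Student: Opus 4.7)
The plan is to reduce to the unipotent case via a ramified pullback and then descend using Theorem~\eqref{q-lnf}. The key structural inputs are already in place: the semisimple part of monodromy is a morphism of the limit mixed Hodge structure (Proposition~\eqref{ss-morphism}) and commutes with the limit splitting $Y^{\ddag}$ (Proposition~\eqref{DS-morphism}), so both the $\star$-gluing construction and its equivariance properties should survive the quasi-unipotent setting.

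First, let $m_j$ be the least common multiple of the orders of the semisimple parts $\gamma_{j,s}^V$ and $\gamma_{j,s}^U$ of the local monodromies of $\mathcal V_{\nu}$ and $\mathcal V_{\omega}^{\vee}$ about $s_j = 0$. Under the covering $f:(\Delta^{*r},t)\to(\Delta^{*r},s)$, $t_j = s_j^{m_j}$, the pullbacks $f^*\nu$ and $f^*\omega$ have unipotent monodromy, so by the already-proven unipotent case of Theorem~\eqref{sec:mix-1} there exists an admissible biextension variation $\tilde{\mathcal V}\in B(f^*\nu,f^*\omega)$ on the cover. Writing $\tilde{\mathcal V}$ in the local normal form of Theorem~\eqref{q-lnf} produces a gluing $B = U\star V$ with limit mixed Hodge structure $(F_\infty, M)$, monodromy logarithms $N_j$ as in \eqref{eq:new-monodromy}, and a holomorphic function $\Gamma = \Gamma^U\star\Gamma^V:\Delta^r\to\mathfrak q$.

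The central step is to glue the semisimple monodromies into a single $\gamma_{j,s}\in\Aut(B)$ meeting hypothesis~(2) of Theorem~\eqref{q-lnf}. By Proposition~\eqref{ss-morphism}, $\gamma_{j,s}^V$ is a morphism of $(F_\nu,M^\nu)$ and $\gamma_{j,s}^U$ is a morphism of $(F_\omega,M^\omega)$, and both induce the same finite-order action on $\Gr^W_{-1}$ inherited from $\mathcal H$; hence formula \eqref{eq:endo-glue} produces an endomorphism $\gamma_{j,s}\in\End(B)$. Because $\gamma_{j,s}^V$ commutes with $N_j^V$ (they come from a Jordan decomposition), Proposition~\eqref{DS-morphism} applied to each of $\nu$ and $\omega$ shows that $\gamma_{j,s}^V$ preserves the trigrading $L^{p,q}_k$ of $V$ built from $Y^{\ddag}_V$, and similarly for $\gamma_{j,s}^U$. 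Thus the glued $\gamma_{j,s}$ respects the trigrading of $B$, hence the bigrading of $(F_\infty,M)$, and is a finite-order isometry on $\Gr^W B$. The commutation relations $[\gamma_{j,s},\gamma_{k,s}] = [\gamma_{j,s},\gamma_{k,u}] = 0$ follow from their pieces on $U$ and $V$, and the equivariance $\Gamma\circ\rho_j = \Ad(\gamma_{j,s})\Gamma$ follows from the individual equivariances \eqref{eq:q-lnf-4} for $\Gamma^U$ and $\Gamma^V$ together with the definition of the $\star$-operation on endomorphisms.

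Once these hypotheses are verified, Theorem~\eqref{q-lnf} produces an admissible period map over $(\Delta^{*r},s)$ inducing $\nu$ on $W_0/W_{-2}$ and $\omega$ on $W_{-1}/W_{-3}$. The integral structure descends: the full monodromy $\gamma_j = \gamma_{j,s}\gamma_{j,u}$ preserves the lattice $B_A$ of the unipotent construction because $\gamma_{j,s}$ is glued from integral automorphisms and $\gamma_{j,u} = e^{N_j}$ was checked integral in the proof of Theorem~\eqref{sec:mix-1}. The main obstacle is the compatibility claim in the previous paragraph: the $\star$-construction was designed using the limit splittings of the unipotent pullbacks, and one must check it remains intrinsic enough that $\gamma_{j,s}^V$ and $\gamma_{j,s}^U$ (themselves only defined via Jordan decomposition on the original disk) glue to something acting on $B$ by a morphism of $(F_\infty,M)$. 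Proposition~\eqref{DS-morphism} is exactly the tool that guarantees this, reducing the descent to a formal verification.
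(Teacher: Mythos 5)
Your proposal is correct and follows essentially the same route as the paper's proof: pull back to the unipotent cover, run the $\star$-gluing on $\Gamma^V,\Gamma^U$ and on the semisimple parts $\g_{j,s}^V,\g_{j,s}^U$ (which preserve the trigradings by Proposition~\eqref{DS-morphism} and agree on $\Gr^W_{-1}$), check the equivariance \eqref{eq:q-lnf-4} for the glued $\Gamma$, and descend via Theorem~\eqref{q-lnf}. The only point the paper spells out that you leave implicit is that the glued $\g_{j,s}$ commutes with $\eta$, and hence with the corrected monodromy $N_j=\tilde N_j+c_j\eta$ of \eqref{eq:new-monodromy}; this follows from $\g_{j,s}\in gl(B)^{0,0}_0$ together with the trivial action on the rank-one pieces $\Gr^W_0$ and $\Gr^W_{-2}$, and it is exactly what allows the $A$-structure step to proceed as in the unipotent case.
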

\begin{proof} By Theorem~\eqref{q-lnf}, it is sufficient to work on the pullback to the 
unipotent case and ensure that the resulting function $\Gamma$ satisfies \eqref{eq:q-lnf-4}.
Each of the functions $\Gamma^V$ and $\Gamma^U$ attached to the normal functions satisfy
an appropriate version of \eqref{eq:q-lnf-4} relative to the semisimple automorphisms 
$\g_{j,s}^V$ and $\g_{j,s}^U$ respectively.   It follows from Proposition~\eqref{DS-morphism}
that $\g_{j,s}^V$ and $\g_{j,s}^U$ preserve the the trigradings of $V$ and $U$.  Since
they agree on $Gr^W_{-1}$ it follows that they glue together to an automorphism
$\g_{j,s}$ of $B$ which preserves the trigrading of $B$.  Therefore, the function $\Gamma$
produced by gluing $\Gamma^U$ and $\Gamma^V$ also satisfies \eqref{eq:q-lnf-4}. 
\par In the case where $A=\mathbb Z$ or $\mathbb Q$, we add the underlying $A$ structure
as follows:  The element $\eta$ appearing \eqref{eq:new-monodromy} commutes with each $\g_{j,s}$
since $\g_{j,s}\in gl(B)^{0,0}_0$ with respect to the induced trigrading on $gl(B)$,
and $Gr(\g_{j,s})$ acts trivially on the 1-dimensional factors $Gr^W_0$ and $Gr^W_{-2}$
by hypothesis (i.e. these factors are assumed to have trivial global monodromy). 
The construction of the $A$-structure therefore follows exactly as in the unipotent case.
\end{proof}

\section{Singularities of Normal Functions}\label{sec:sing}

\par In this section, we review the notion of the singularities of normal functions,
in the setting where $\mathcal H$ is an admissible variation of mixed Hodge structure
such that $W_{-1}\mathcal H = \mathcal H$.

\subsection{Admissible Normal Functions.}\label{s1.1}
Suppose $\bar S$ is a complex manifold and $S$ is a Zariski open subset.  
Suppose
$\mathcal{H}$ is an object in $\VMHS(S)_{\bar S}^{\rm ad}$ with
$W_{-1}\mathcal{H}=\mathcal{H}$ and underlying $\mathbb Z$-local
structure $\mathcal H_{\mathbb Z}$.  Following M.~Saito, we define
$$
  \ANF(S,\mathcal{H})_{\bar S}:=
  \Ext^1_{\VMHS(S)^{\ad}_{\bar S}}(\mathbb{Z},\mathcal{H})
$$
to be the group of admissible normal functions on $S$ relative to $\bar S$
with underlying variation of mixed Hodge structure $\mathcal H$.

\subsection{Singularities of admissible normal functions.}\label{s1.2}  
Let $S$, $\bar S$ and $\mathcal H$ be as in \S\ref{s1.1}, and let $s\in \bar
S\setminus S$.  Write $j:S\to \bar S$ and $i:\{s\}\to \bar S$ for the
inclusions.  
Suppose $\nu\in\ANF(S,\mathcal{H})_{\bar S}$ is an admissible normal
function.   Let 
\begin{equation}\label{e.S1}
0\to \mathcal{H}\to \mathcal{V}\to \mathbb{Z}\to 0
\end{equation}
by the extension class corresponding to $\nu$. 
By applying the forgetful
functor  from $\VMHS(S)^{\ad}_{\bar S}$ to the category $\Loc(S)$ of 
local systems on $S$, we wind up with an extension in the category 
of local systems on $S$.   Thus we can associate to $\nu$ a class
$\cl(\nu)\in \rH^1(S,\mathcal{H})$.   Tensoring with $\mathbb{Q}$,
we get a class $\cl_{\mathbb{Q}}\in \rH^1(S,\mathcal{H}_{\mathbb{Q}})=
\rH^1(\bar S,Rj_*\mathcal{H}_{\mathbb{Q}})$.
Finally, by pull-back via $i$, we get a class 
\begin{equation}\label{singdef}
\sing_s(\nu)\in
\rH^1_s(\mathcal{H}_{\mathbb{Q}}):=\rH^1(i^*Rj_*\mathcal{H}_{\mathbb{Q}}).
\end{equation}
This class is called the \emph{singularity} of $\nu$.  We write
$\cl_s(\nu)$ for the image of $\cl(\nu)$ in
$\rH^1(i^*Rj_*\mathcal{H})$.

\subsection{Singularities.}\label{s1.2b}  Suppose $\mathbb{F}=\mathbb Q$ or $\mathbb R$.  Let  
$\mathcal{L}$ denote a $\mathbb{F}$-local system on $S$ and $d=\dim_{\mathbb C}\, S$.
  The intersection complex of $\mathcal{L}$ is 
$$
\IC(\mathcal{L}):=j_{!*}(\mathcal{L}[d])
$$
The intersection and local intersection cohomology spaces are 
defined by 
\begin{align*}
  \IH^k(\bar S,\mathcal{L})&:=\rH^{k-d}(\bar S,\IC(\mathcal{L}))\\
  \IH^k_s(\mathcal{L})&:=\rH^{k-d}(i^*\IC(\mathcal{L})).
\end{align*}
There is a morphism $\IC(\mathcal{L})\to Rj_*\mathcal{L}$ which 
induces maps $\IH^k(\bar S,\mathcal{L})\to \rH^k(S,\mathcal{L})$ and 
$\IH^k_s(\mathcal{L})\to \rH^k_s(\mathcal{L})
:=\rH^k(i^*Rj_*\mathcal{H}_{\mathbb{Q}})$.   For $k=0$, these 
maps are isomorphisms, and for $k=1$ they are injections. 

Note that, by M.~Saito's theory of mixed Hodge modules, if $\calH$ is a 
$\mathbb{Q}$ variation of mixed Hodge structure, then $\rH^p_s(\calH)$ 
and $\IH^p_s(\calH)$ both carry canonical mixed Hodge structures. 

\begin{theorem}~\cite{BFNP}\label{factor} Suppose $\mathcal{H}$ is a weight $-1$ variation of
Hodge structure on $S$.  The map 
$$
    \cl_{\mathbb{Q}}: \ANF(S,\mathcal{H})_{\bar S}\to \rH^1(S,\mathcal{H}_{\mathbb{Q}})
$$ 
factors through $\IH^1(S,\mathcal{H}_{\mathbb{Q}})$.  Similarly, the map
$$
     \sing_s:\ANF(S,\mathcal{H})_{\bar S}\to \rH^1_s(\mathcal{H}_{\mathbb{Q}})
$$ 
factors through $\IH^1_s(\mathcal{H}_{\mathbb{Q}})$.
\end{theorem}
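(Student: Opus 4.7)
The plan is to realize the admissible extension $\mathcal{V}$ attached to $\nu$ as a mixed Hodge module on $\bar S$ via the intermediate extension, and then read off the required factorization from the long exact sequence of intersection cohomology.  Let $d = \dim_{\mathbb C}\bar S$, and recall from Saito's theory that the intermediate extension functor $j_{!*}$ carries admissible VMHS on $S$ (relative to $\bar S$) to mixed Hodge modules on $\bar S$.  For the pure weight $-1$ variation $\mathcal{H}$, the module $j_{!*}\mathcal{H}_{\mathbb{Q}}[d]$ is a pure Hodge module of weight $d-1$; similarly, $j_{!*}\mathbb{Q}_S[d]$ is pure of weight $d$.

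The key step is to produce a short exact sequence of mixed Hodge modules on $\bar S$
$$
0 \to j_{!*}\mathcal{H}_{\mathbb{Q}}[d] \to j_{!*}\mathcal{V}_{\mathbb{Q}}[d] \to j_{!*}\mathbb{Q}_S[d] \to 0
$$
extending the defining sequence $0 \to \mathcal{H}_{\mathbb{Q}} \to \mathcal{V}_{\mathbb{Q}} \to \mathbb{Q}_S \to 0$ on $S$.  Since $\mathcal{V}_{\mathbb{Q}}$ has weights concentrated in $\{-1,0\}$, its intermediate extension $j_{!*}\mathcal{V}_{\mathbb{Q}}[d]$ is a mixed Hodge module with weight filtration supported in degrees $d-1$ and $d$.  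The associated graded pieces $\Gr^W_{d-1}$ and $\Gr^W_{d}$ are pure Hodge modules whose restrictions to $S$ are $\mathcal{H}_{\mathbb{Q}}[d]$ and $\mathbb{Q}_S[d]$ respectively; since a pure Hodge module is determined by its restriction to any Zariski open subset on which it is a variation of Hodge structure, these graded pieces must coincide with $j_{!*}\mathcal{H}_{\mathbb{Q}}[d]$ and $j_{!*}\mathbb{Q}_S[d]$.  The two-step weight filtration of $j_{!*}\mathcal{V}_{\mathbb{Q}}[d]$ is then the desired short exact sequence.

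Applying the global sections functor $\rH^*(\bar S, -)$ and the stalk functor $\rH^*(i^*-)$, and shifting indices by $d$, one obtains connecting homomorphisms
$$
\delta : \IH^0(\bar S, \mathbb{Q}) \to \IH^1(\bar S, \mathcal{H}_{\mathbb{Q}}),
\qquad
\delta_s : \IH^0_s(\mathbb{Q}) \to \IH^1_s(\mathcal{H}_{\mathbb{Q}}).
$$
Composing with the natural transformation $\IC \to Rj_*$ gives a commutative diagram relating these sequences to the ordinary long exact sequences obtained by applying $R\Gamma(\bar S, Rj_*(-))$ and $i^*Rj_*(-)$ to $0\to \mathcal{H}_{\mathbb{Q}}\to\mathcal{V}_{\mathbb{Q}}\to \mathbb{Q}_S\to 0$; by construction the images of $1\in \IH^0(\bar S,\mathbb{Q})=\rH^0(S,\mathbb{Q})$ and $1\in \IH^0_s(\mathbb{Q})$ under the boundary maps of the latter sequences are exactly $\cl_{\mathbb{Q}}(\nu)$ and $\sing_s(\nu)$.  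Hence $\delta(1)$ and $\delta_s(1)$ provide the required lifts, yielding both factorizations.

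The main obstacle is the first step: one must know that the intermediate extension is a well-defined functor from admissible VMHS (relative to $\bar S$) to $\MHM(\bar S)$ and that its weight filtration satisfies $\Gr^W_k\, j_{!*}\mathcal{V}[d] = j_{!*}(\Gr^W_k\mathcal{V})[d]$.  Both are nontrivial inputs from Saito's theory; the compatibility with weights is precisely what makes the argument work for weight $-1$ normal functions and fails for general extensions.  Once these facts are granted, the remainder of the proof is formal homological algebra in $\MHM(\bar S)$ together with a routine diagram chase comparing $\IC$ with $Rj_*$.
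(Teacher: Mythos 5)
Your overall strategy---realizing $\nu$ as a two-step weight filtration on the intermediate extension $j_{!*}\mathcal{V}_{\mathbb{Q}}[d]$ in $\MHM(\bar S)$ and reading off the factorization from the resulting long exact sequences---is the right one, and is in the spirit of the argument in~\cite{BFNP}. (Note the paper itself does not reprove this theorem; it cites~\cite{BFNP}, so there is no in-paper proof to compare against.) However, there is a genuine gap in your justification of the key short exact sequence. The sentence ``since a pure Hodge module is determined by its restriction to any Zariski open subset on which it is a variation of Hodge structure, these graded pieces must coincide with $j_{!*}\mathcal{H}_{\mathbb{Q}}[d]$ and $j_{!*}\mathbb{Q}_S[d]$'' is false as stated. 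By the decomposition theorem, a polarizable pure Hodge module $P$ on $\bar S$ with $j^*P\cong\mathcal{H}[d]$ has the form $j_{!*}(\mathcal{H}[d])\oplus T$ with $T$ supported on $\bar S\setminus S$, and $T$ is not seen by restriction to $S$. So you have not shown that the graded pieces are intersection complexes. Relatedly, the identity $\Gr^W_k\,j_{!*}\mathcal{V}[d]=j_{!*}\bigl(\Gr^W_k\mathcal{V}\bigr)[d]$ is not ``a nontrivial input from Saito's theory'' that you can simply cite---as a general statement about mixed Hodge modules it is wrong, and the whole difficulty of the theorem is that it happens to hold here.

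The repair, and the place where the weight $-1$ hypothesis actually enters, is as follows. Since $\mathcal{V}_{\mathbb{Q}}$ has weights in the two \emph{adjacent} degrees $-1$ and $0$, the weight estimates for $j_!$ (weights $\leq d$) and $j_*$ (weights $\geq d-1$) trap $j_{!*}\mathcal{V}_{\mathbb{Q}}[d]$ in the band $[d-1,d]$, so the entire weight filtration consists of a single subobject $W_{d-1}$ and a single quotient $\Gr^W_d$. By construction, $j_{!*}\mathcal{V}_{\mathbb{Q}}[d]$ (being the image of $j_!\to j_*$) has no nonzero subobject and no nonzero quotient supported on $\bar S\setminus S$. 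Applying the decomposition theorem to the pure Hodge module $W_{d-1}$ (a subobject) and to $\Gr^W_d$ (a quotient) therefore forces the complementary summand $T$ to vanish in each case, giving $W_{d-1}=j_{!*}\mathcal{H}_{\mathbb{Q}}[d]$ and $\Gr^W_d=j_{!*}\mathbb{Q}_S[d]$. This is precisely why the argument breaks if $\mathcal{H}$ has weight $\leq -2$: there would then be a graded piece in an intermediate weight which is neither a subobject nor a quotient of $j_{!*}\mathcal{V}_{\mathbb{Q}}[d]$, and nothing would rule out a summand supported on the boundary. With this correction, the remainder of your proof (the long exact sequences and the compatibility with $\IC\to Rj_*$) goes through as written.
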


\subsection{Duality in the Weight $-1$ case.}  
If $\mathcal{H}$ is variation of pure Hodge structure of weight $-1$
on $S$, as in the introduction, we write
$\mathcal{H}^{\vee}:=\mathcal{H}^{*}(1)$.  This is another variation
of pure Hodge structure of weight $-1$ on $S$.  There is a canonical
morphism $\mathcal{H}\to (\mathcal{H}^{\vee})^{\vee}$ which is an
isomorphism if $\mathcal{H}$ is torsion-free.

\begin{proposition}\label{p.dual}
Suppose $\mathcal{H}$ is torsion free.  Then duality gives a 
canonical isomorphism 
$$
    \ANF(S,\mathcal{H})_{\bar S}\cong
    \Ext^1_{\VMHS(S)^{\ad}_{\bar S}}(\mathcal{H}^{\vee},\mathbb{Z}(1))
$$
\end{proposition}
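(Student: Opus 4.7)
The plan is to establish the isomorphism by showing that the dualization functor $D\colon \mathcal{V}\mapsto \Hom(\mathcal{V},\mathbb{Z}(1))$ is a contravariant exact involution on the full subcategory of torsion-free objects in $\VMHS(S)^{\ad}_{\bar S}$, and then invoking the standard categorical fact that such a duality induces an isomorphism $\Ext^1(A,B)\cong\Ext^1(DB,DA)$. Taking $A=\mathbb{Z}(0)$ and $B=\mathcal{H}$, this yields the claim, since $D\mathbb{Z}(0)=\mathbb{Z}(1)$ and $D\mathcal{H}=\mathcal{H}^{\vee}$ by definition. Concretely, the map sends an extension
$$0\to \mathcal{H}\to \mathcal{V}\to \mathbb{Z}(0)\to 0$$
to its dual extension
$$0\to \mathbb{Z}(1)\to D\mathcal{V}\to \mathcal{H}^{\vee}\to 0,$$
and the inverse is obtained in the same way, using the canonical biduality $(\mathcal{H}^{\vee})^{\vee}\cong\mathcal{H}$.

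The first step is exactness of $D$ on torsion-free admissible VMHS. The middle term $\mathcal{V}$ of any such extension is torsion-free since $\mathcal{H}$ and $\mathbb{Z}(0)$ are, and at the stalk level the sequence of $\mathbb{Z}$-modules splits because $\mathbb{Z}(0)$ is free of rank $1$; thus $\sHom(-,\mathbb{Z}(1))$ preserves exactness on underlying local systems. Compatibility with the Hodge and weight filtrations is formal. The second step is to verify preservation of admissibility: the Kashiwara--Steenbrink--Zucker conditions (existence of the relative monodromy filtration $M(N_j,W)$ and appropriate extension of the Hodge bundles) are self-dual under $N\mapsto -N^{t}$ on the nilpotent logarithms. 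This can be read off the local normal form~\eqref{eq:lnf}, since the dual of a period map satisfying \eqref{eq:lnf} is again of this form with limit data $(-N_j^{t},F_{\infty}^{\vee},W^{\vee})$.

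The third step is biduality. For $\mathcal{V}$ torsion-free of finite rank, the evaluation map $\mathcal{V}\to DD\mathcal{V}$ is an isomorphism on stalks and intertwines Hodge, weight, and relative weight data by construction. This is exactly where the torsion-free hypothesis on $\mathcal{H}$ is used: without it, $\mathcal{H}\to(\mathcal{H}^{\vee})^{\vee}$ has a nontrivial kernel and the two $\Ext^1$ groups differ by torsion contributions. Given biduality, the induced map on Yoneda $\Ext^1$-classes is visibly invertible, with inverse given by dualizing back and composing with the biduality identification.

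The main obstacle is the admissibility check in step two; everything else is formal manipulation once one has an exact duality. Admissibility preservation is essentially folklore but deserves explicit verification in the mixed setting, particularly the compatibility of the relative monodromy filtration with dualization. Once this is in hand, the proposition follows immediately from the two-variable functoriality of Yoneda $\Ext^1$ under a contravariant exact equivalence.
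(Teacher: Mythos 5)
Your argument is essentially the same as the paper's: the paper's proof is a one-line application of the duality functor $D$ from $\Ext^1_{\VMHS(S)^{\ad}_{\bar S}}(\mathbb{Z},\mathcal{H})$ to $\Ext^1_{\VMHS(S)^{\ad}_{\bar S}}(\mathcal{H}^{\vee},\mathbb{Z}(1))$, asserted to be an isomorphism because $\mathcal{H}$ is torsion-free, which is exactly the dualize-the-extension-and-use-biduality map you construct. Your write-up just makes explicit the exactness, admissibility-preservation, and biduality checks that the paper leaves implicit.
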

\begin{proof}
  We have 
$$\ANF(S,\mathcal{H})_{\bar S}=
   \Ext^1_{\VMHS(S)^{\ad}_{\bar S}}(\mathbb{Z},\mathcal{H})
   \stackrel{D}{\to}\Ext^1_{\VMHS(S)^{\ad}_{\bar S}}(\mathcal{H}^*,\mathbb{Z}) 
    =\Ext^1_{\VMHS(S)^{\ad}_{\bar S}}(\mathcal{H}^{\vee},\mathbb{Z}(1)).
$$  
The map $D$, which is given by duality, is an isomorphism because $\mathcal{H}$ is torsion-free.
\end{proof}

\subsection{Admissible classes}\label{sec:adm}

\begin{definition} Let $j:S\to \bar S$ and $i:\{s\}\to \bar S$ be as
  in \ref{s1.1}.  We call a holomorphic map $\bar\varphi:\Delta\to
  \bar S$ a \emph{test curve at $s$} if $\bar\varphi(0)=s$ and
  $\bar\varphi(\Delta^*)\subset S$.  Write $\varphi:\Delta^*\to S$ for
  the restriction of $\bar\varphi$ to $\Delta^*$.  Suppose
  $\mathcal{L}$ is a $\mathbb{F}$-local system on $S$ for $\mathbb{F}=\mathbb Q$
  or $\mathbb R$ (as in \S\ref{s1.2}).  We call a class
  $\alpha\in\rH^1_s(\mathcal{L})$ \emph{admissible} if the pull-back
  $\varphi^*\alpha$ in $\rH^1_0(\varphi^*\mathcal{L})$ vanishes for
  every test curve through $s$.  Similarly, we call a class
  $\alpha\in\rH^1(S,\mathcal{L})$ admissible if $\varphi^*\alpha$
  vanishes in $\rH^1(\Delta^*,\varphi^*\mathcal{L})$ for all test
  curves through all points of $\bar S$.  We write
  $\rH^1_s(\mathcal{L})^{\ad}$ (resp. $\rH^1(S,\mathcal{L})^{\ad}$)
  for the set of admissible classes in $\rH^1_s(\mathcal{L})$
  (resp. $\rH^1(S,\mathcal{L})$).  Similarly, we write
  $\IH^1_s(\mathcal{L})^{\ad}:=\IH^1_s(\mathcal{L})\cap
  \rH^1_s(\mathcal{L})^{\ad}$ and $\IH^1(\bar S,\mathcal{L})^{\ad}:=
  \IH^1(\bar S,\mathcal{L})\cap \rH^1(S,\mathcal{L})^{\ad}$.
\end{definition}

\begin{theorem}\label{IH1Ad}
  Let $\mathcal{L}$ be an variation of pure Hodge structure on $S$.
  Then for $s\in\bar S$, we
  have $\IH^1_s(\mathcal{L})=\IH^1_s(\mathcal{L})^{\ad}$.  In other
  words, every class is admissible.
\end{theorem}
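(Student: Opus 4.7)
The statement is local at $s$ and the groups involved depend only on a punctured neighborhood of $s$ in $\bar S$, so I first shrink $\bar S$ to a small contractible neighborhood of $s$. Using resolution of singularities, I then choose a proper birational morphism $\pi\colon\tilde S\to\bar S$ which is an isomorphism over $S$ and such that $E=\tilde S\setminus S$ is a normal crossings divisor. Any test curve $\bar\varphi\colon\Delta\to\bar S$ lifts, after a finite ramified base change $t\mapsto t^n$ (harmless for admissibility over $\mathbb Q$, as pullback along $[n]$ is injective on the relevant cohomology), to a test curve $\tilde\varphi$ in $\tilde S$; a further ramified cover makes $\tilde\varphi^*\pi^*\mathcal L$ have unipotent monodromy. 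By Saito's decomposition theorem for polarizable pure Hodge modules, $\IC_{\bar S}(\mathcal L)$ is a direct summand of $R\pi_*\IC_{\tilde S}(\pi^*\mathcal L)$, reducing the proof to the case in which $\bar S$ is a polydisk $\Delta^r$, $S=\Delta^{*r}$, and $\mathcal L$ has unipotent monodromy around each coordinate divisor.

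The key observation is the following elementary vanishing on a disk: for any local system $\mathcal L'$ on $\Delta^*$, the intersection complex $\IC_\Delta(\mathcal L')$ equals the shifted underived pushforward $j'_*\mathcal L'[1]$, whose stalk at $0$ is $(\mathcal L')^T[1]$ concentrated in degree $-1$. Therefore
\[
\IH^1_0(\mathcal L') \;=\; H^0(\iota^*\IC_\Delta(\mathcal L')) \;=\; 0.
\]
Consequently, it would suffice to show that the pullback $\bar\varphi^*\colon\rH^1_s(\mathcal L)\to\rH^1_0(\varphi^*\mathcal L)$ factors through $\IH^1_0(\varphi^*\mathcal L)$ when restricted to the subspace $\IH^1_s(\mathcal L)\subset\rH^1_s(\mathcal L)$.

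To produce such a factorization I would use the Cattani-Kaplan-Schmid description of $\IC_{\Delta^r}(\mathcal L)$ via the Koszul complex of the commuting monodromy logarithms $N_1,\ldots,N_r$ acting on a reference fiber $V$. A test curve $\bar\varphi$ with vanishing orders $m=(m_1,\ldots,m_r)$ at $0$ induces the pullback map on $H^1$ sending a Koszul cocycle $(\alpha_1,\ldots,\alpha_r)$ to $\sum_j m_j\alpha_j\bmod N_\varphi V$, where $N_\varphi=\sum_j m_j N_j$. The main obstacle is to verify that Koszul cocycles representing classes in $\IH^1_s(\mathcal L)$ satisfy $\sum_j m_j\alpha_j\in N_\varphi V$ for every $m\in\mathbb Z_{\geq 0}^r$. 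This is a Lefschetz-type primitivity statement and the place where the Hodge-theoretic purity of $\mathcal L$ enters essentially: it follows from the compatibility of the CKS relative weight filtrations $M(N(\sigma),W)$ attached to each face $\sigma$ of the open monodromy cone with the $\mathfrak{sl}_2^r$-splitting of the limit mixed Hodge structure, which produces a decomposition of each $\alpha_j$ along the cone-faces against which the weighted sum has the required divisibility by $N_\varphi$.
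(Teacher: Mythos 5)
The crucial step of your argument --- that a Koszul cocycle $\alpha=\sum\alpha_j\otimes e_j$ representing a class in $\IH^1$ satisfies $\alpha(m)=\sum_j m_j\alpha_j\in\im N(m)$ for \emph{every} $m\in\mathbb{Z}_{\geq 0}^r$ --- is exactly where the content of the theorem lies (it is Proposition~\ref{ced6} of the paper), and your proposal does not prove it. The appeal to ``compatibility of the relative weight filtrations $M(N(\sigma),W)$ with the $\mathfrak{sl}_2^r$-splitting, which produces a decomposition of each $\alpha_j$ along cone-faces'' is an assertion, not an argument: no mechanism is given for why the weighted sum lands in $\im N(m)$, in particular when $m$ lies on a face of the cone. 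Moreover, no purely formal manipulation of the Koszul/partial Koszul complex can succeed, because the statement is false for general commuting nilpotent monodromies. For instance, on $\Delta^{*2}$ take $V=\mathbb{Q}\langle a,b,c\rangle$ with $N_1:a\mapsto b$, $N_2:a\mapsto c$ (all other basis vectors to $0$): then $(\beta b,\gamma c)$ is a cocycle in $B^1$ for all $\beta,\gamma$, it is a coboundary only when $\beta=\gamma$, and for $\beta\neq\gamma$ and $t\in\mathbb{Q}_{>0}^2$ one has $t_1\beta b+t_2\gamma c\notin\im(t_1N_1+t_2N_2)=\mathbb{Q}(t_1b+t_2c)$. (Here $W(N(t))$ varies with $t$, so this datum underlies no polarized nilpotent orbit.) So the Hodge-theoretic input is essential, and what is actually needed is the purity theorem of Cattani--Kaplan--Schmid and Kashiwara--Kawai, that $\IH^1_s(\mathcal{L})$ has weights $\leq w+1$, played off against the bound of Lemma~\ref{l.H1Weights} that $\rH^1_0$ of the one-variable degeneration has weights $\geq w+2$; since the restriction along a test curve is a morphism of mixed Hodge structures, it must vanish. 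That three-line weight argument is the paper's entire proof, and it applies directly for arbitrary $\bar S$, with no Koszul reduction at all.

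Two further remarks on your reduction steps. First, the passage to the normal crossing, unipotent case via resolution and the decomposition theorem is not needed, and as written it is delicate: $i_s^*R\pi_*\IC(\pi^*\mathcal{L})$ computes hypercohomology over the whole exceptional fibre $\pi^{-1}(s)$, not the local intersection cohomology at the point $\tilde\varphi(0)$ where the lifted test curve lands, so you must restrict further to that point and check that the (non-canonical) splitting furnished by the decomposition theorem is compatible with the canonical maps to $Rj_*\mathcal{L}$ (this can be rescued by an adjunction argument, since $\Hom(\IC(\mathcal{L})[-r],Rj_*\mathcal{L})=\End(\mathcal{L})$, but it needs to be said). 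Second, the observation that $\IH^1_0=0$ on the disk, while correct, buys nothing: admissibility is by definition the vanishing of $\varphi^*\alpha$ in $\rH^1_0(\varphi^*\mathcal{L})$, which is exactly the membership statement $\alpha(m)\in\im N(m)$ you still have to prove.
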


We will prove the theorem after the following lemma.

\begin{lemma}\label{l.H1Weights}
  Suppose $\mathcal{L}$ is a variation of pure Hodge structure of 
weight $w$ on $\Delta^*$.    Then $\Gr^W_k\rH^1_0(\mathcal{L})=0$
for $k<w+2$.  In other words, $\rH^1_0\mathcal{L}$ has weights
in the interval $[w+2,\infty)$.    
\end{lemma}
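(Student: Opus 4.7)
The plan is to realize $\rH^1_0(\mathcal{L})$ as the cokernel of the monodromy logarithm acting on Schmid's limit mixed Hodge structure, and then read off the weight bound from the defining properties of the monodromy weight filtration.

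First I would reduce to the case of unipotent monodromy. Let $T=T_sT_u$ be the multiplicative Jordan decomposition of the monodromy around $0$ and let $m$ be the order of $T_s$. Pulling back along the finite \'etale cover $\varphi:\Delta^*\to \Delta^*$, $s\mapsto s^m$, yields a variation $\varphi^*\mathcal{L}$ of pure Hodge structure of weight $w$ with unipotent monodromy $T_u^m$, and the natural pullback $\rH^1_0(\mathcal{L})\hookrightarrow \rH^1_0(\varphi^*\mathcal{L})$ is an injective morphism of MHS identifying the source with the $\mathbb{Z}/m$-invariants of the target. It therefore suffices to bound the weights in the unipotent case.

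Assume now $T=e^N$ with $N$ nilpotent. The input from Saito's theory is that, in the derived category of mixed Hodge modules on $\{0\}$, the complex $i^*Rj_*\mathcal{L}$ is quasi-isomorphic to $[\psi\mathcal{L}\xrightarrow{\,N\,}\psi\mathcal{L}(-1)]$ placed in degrees $0$ and $1$, where $\psi\mathcal{L}$ is Schmid's limit mixed Hodge structure and $N$, being of type $(-1,-1)$, is a genuine morphism of MHS into the Tate twist. Taking $\rH^1$ yields
$$
\rH^1_0(\mathcal{L})\;\cong\;\coker\bigl(N:\psi\mathcal{L}\to \psi\mathcal{L}(-1)\bigr)
$$
as mixed Hodge structures. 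On $\psi\mathcal{L}$ the weight filtration $W$ is Schmid's, i.e.\ the monodromy weight filtration $M=M(N)$ centered at $0$ and shifted by $w$. By the primitive decomposition of $\Gr^M\psi\mathcal{L}$ (the $\mathrm{sl}_2$-structure provided by the isomorphisms $N^\ell:\Gr^M_\ell \to \Gr^M_{-\ell}$), the cokernel of $N:\Gr^M_\ell \to \Gr^M_{\ell-2}$ vanishes for $\ell\leq 1$, while for $\ell\geq 2$ it equals the primitive piece in $M$-weight $\ell$. Combining this with strictness of MHS morphisms to pass from the graded to the filtered statement, and then accounting for the Tate twist $(-1)$ (which shifts weights by $2$), one sees that $\Gr^W_k$ of $\coker(N:\psi\mathcal{L}\to\psi\mathcal{L}(-1))$ vanishes for $k<w+2$, giving the bound.

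The main technical obstacle is the MHS identification above, especially the placement of the Tate twist on the target of $N$, which must be extracted carefully from Saito's conventions for nearby cycles applied to $Rj_*$. Once that identification is in hand, the weight estimate reduces to the purely formal $\mathrm{sl}_2$-calculation on $\Gr^M\psi\mathcal{L}$.
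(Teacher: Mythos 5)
Your proposal is correct and takes essentially the same route as the paper: both identify $\rH^1_0(\mathcal{L})$ with $\coker\bigl(N:\psi\mathcal{L}\to\psi\mathcal{L}(-1)\bigr)$ as mixed Hodge structures and read off the bound from the monodromy weight filtration centered at $w$ together with the Tate twist. The only differences are cosmetic: the paper works with the unipotent nearby cycles $\psi_{z,1}$, which makes your covering reduction to unipotent monodromy unnecessary, and in your $\mathrm{sl}_2$ step the cokernel of $N:\Gr^M_\ell\to\Gr^M_{\ell-2}$ for $\ell\geq 2$ is the primitive part in weight $\ell-2$ rather than $\ell$ (harmless, since only the surjectivity for $\ell\leq 1$ is used).
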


\begin{proof}[Proof of Lemma~\ref{l.H1Weights}]
  We have an exact sequence 
  \begin{equation}
    \label{eq:Brylinski}
    0\to \rH^0_0(\mathcal{L})\to \psi_{z,1}\mathcal{L}\stackrel{N}{\to} 
\psi_{z,1}\mathcal{L}(-1)\to \rH^1_0(\mathcal{L})\to 0
  \end{equation}
where $\psi_{z,1}$ denotes the unipotent nearby cycles functor and
$N$ denotes the logarithm of unipotent part of the monodromy (of $\mathcal{L}$ 
around $0$).   The weight filtration on $\psi_{z,1}\mathcal{L}$ 
is induced from the monodromy filtration of $N$ so that 
$$
N^k:\Gr^W_{w-k}(\psi_{z,1}\mathcal{L}(k))\to \Gr^W_{w-k} \psi_{z,1}
\mathcal{L}
$$
is an isomorphism for $k\geq 0$.
So $\coker(N:\psi_{z,1}\mathcal{L}(1)\to \psi_{z,1}\mathcal{L})$
has weights in the interval $[w,\infty)$.   The result follows immediately.
\end{proof}

\begin{proof}[Proof of Theorem~\ref{IH1Ad}]
Suppose $\bar\varphi:\Delta\to \bar S$ is a test curve through $s$. 
Write $f$ for the composition 
$$\IH^1_s(\mathcal{L})\to \rH^1_s(\mathcal{L})
\to \rH^1_0(\varphi^*\mathcal{L}).$$
Then $f$ is a morphism of mixed Hodge structures.   By the purity
theorem of Cattani-Kaplan-Schmid and Kashiwara-Kawai, 
the source of $f$ has weights in the interval $(-\infty,w+1]$. 
But by Lemma~\ref{l.H1Weights}, the target has weights in the 
interval $[w+2,\infty)$.   It follows that $f=0$. 
\end{proof}

\section{The asymptotic height pairing in
the normal crossing case}\label{s.ed}

The goal of this section is to define the asymptotic height pairing on
local intersection cohomology in the normal crossing case using a
complex $B^*$ which computes the local intersection cohomology in this
case.  (See~Theorem~\ref{ahp0}.)

\subsection{Normal crossing setup.}\label{ed1} 
In this section, we write $\mathbb{F}$ for a field which is either
$\mathbb{Q}$ or $\mathbb{R}$  and $r$ for a fixed
non-negative integer.  Let $\Delta^r$ denote a polydisk with local
coordinates $(s_1,\dots,s_r)$ and $\Delta^{*r}$ be the complement
of the divisor $s_1\cdots s_r =0$.  Let $\mathcal{L}$ be a local system of
$\mathbb{F}$-vector spaces on $\delsr$ with unipotent monodromy, and we
write $L$ for a fixed fiber of $\mathcal{L}$.  We let $T_1,\ldots,
T_r\in {\rm Aut}_{\mathbb{F}} L$ denote the monodromy operators.  Then, for each $i$, 
the monodromy logarithm $N_i=\log T_i$ is nilpotent.

Note that , if $\mathcal{L}^*$ denotes the local system dual to
$\mathcal{L}$, then the monodromy operators on the fiber $L^*$ are
given by $(T_i^*)^{-1}$ and the logarithms are given by $-N_i^*$.

\subsection{Some linear algebra.}\label{ed.0} Suppose $T$ is an endomorphism of a
finite dimensional vector space $V$ over a field $F$.  Write
$T^*\in\End V^*$ for the adjoint of $T$ and let $(\, ,\, ): V\otimes
V^*\to F$ denote the canonical pairing.  There is a natural perfect
bilinear pairing
\begin{align*}
  (\, , \, )_T: TV\otimes T^*V^*&\to F, \text{ given by}\\
              Tv\otimes T^*\lambda& \mapsto (v,T^*\lambda)=(Tv,\lambda).
\end{align*}
To see that $(\, ,\, )_T$ is well-defined, note that $Tv=0\Rightarrow
(v,T^*\lambda)=(Tv,\lambda)=0$.   Similarly, $T^*\lambda=0\Rightarrow 
(Tv,\lambda)=(v,T^*\lambda)=0$. To see that the pairing is perfect,
suppose $(Tv,T^*\lambda)_T=0$ for all $\lambda$.  Then $(Tv,\lambda)=0$ for all $\lambda$.
So $Tv=0$.  Similarly, $(Tv,T^*\lambda)_T=0$ for all $v$ implies that 
$T^*\lambda=0$.

\subsection{Koszul and partial Koszul complexes.}\label{ed2}
The Koszul and partial Koszul complexes are complexes of vector spaces
which compute the cohomology groups $\rH^p(\mathcal{L})=\rH^p(\delsr,\mathcal{L})$
and the intersection cohomology groups $\IH^p(\mathcal{L})=\IH^p(\Delta^r, \mathcal{L})$
when $\mathcal{L}$ is as in \S\ref{ed1}.  We follow the notation of Kashiwara and Kawai
and Cattani, Kaplan and Schmid
for these complexes.  (See \cite[\S 3.4]{KK} and \cite{CKS87}.)  

Let $E$ denote an $r$ dimensional $\mathbb{F}$-vector space with basis
$\{e_1,\dots ,e_r\}$.  As a $\mathbb{F}$-vector space, the Koszul complex is  $K(\mathcal{L})=L\otimes
\bigwedge^*E$.  It is graded by the usual grading of $\bigwedge^*E$, and it
has differential given by
\begin{equation}
  \label{ed2.1}
  d(l\otimes\omega)=\sum_{i=1}^r N_il\otimes (e_i\wedge\omega).
\end{equation}
There is a canonical isomorphism $\rH^p(\mathcal{L})=\rH^p K(\mathcal{L})$.  

Suppose $J=(j_1,\ldots, j_p)$ is a multi-index in $\{1,\ldots, r\}$.  
Let  $N_J$ denote the product $N_{j_1}\cdots N_{j_p}$, and write  $e_J=e_{j_1}\wedge \cdots\wedge e_{j_p}$.
The partial Koszul complex is the subcomplex $B(\mathcal{L})$ of
$K(\mathcal{L})$ given by $\sum_{J}\, N_J(L)\otimes e_J$.  By the results of Kashiwara, Kawai~\cite{KK}
and Cattani, Kaplan, Schmid~\cite{CKS87}, there is a canonical isomorphism
$\IH^p(\mathcal{L})=H^p(B(\mathcal{L}))$. 
In particular, the canonical map 
$\IH^p(\Delta^r, \mathcal{L})\to \rH^p(\Delta^{*r},\mathcal{L})$ is precisely the map induced by the 
inclusion of complexes $B(\mathcal{L})
\to K(\mathcal{L})$.  

 Note that a morphism $f:\mathcal{L}\to \mathcal{M}$ of local
systems on $\Delta^{*r}$ induces morphisms $f:K^*(\mathcal{L})\to K^*(\mathcal{M})$ and $f:B^*(\mathcal{L})\to B^*(\mathcal{M})$ in the obvious way:
$l\otimes \omega \mapsto f(l)\otimes\omega$.  These induce the corresponding
homomorphisms $\rH^*(\mathcal{L})\to \rH^*(\mathcal{M})$ and $\IH^*(\mathcal{L})\to \IH^*(\mathcal{M})$.  

Note also that the complexes $K(\mathcal{L})$ and $B(\mathcal{L})$ 
depend on only on the data $(L,N_1,\ldots, N_r)$.  

\subsection{}
If $(C,d)$ is a complex and $p$ is a an integer, we write 
$Z^p(C)=\{\alpha\in C^p:d\alpha=0\}$.

\subsection{}\label{ed3} Suppose $t\in\mathbb{F}^r$.  Write $N(t):=\sum_{i=1}^r
t_i N_i$, and write $\mathcal{L}_t$ for the local system on $\Delta^*$ 
with monodromy logarithm $N(t)\in\End L$. 
Then, if $\mathbb{F}\{e\}$ denotes the free $\mathbb{F}$-vector
space on one generator $e$, we have 
 $K(\mathcal{L}_t)=L \otimes \bigwedge^*\mathbb{F}\{e\}$. 
As a complex, $K(\mathcal{L}_t)$ is just the map $N(t):L\to L$ 
placed in degrees $0$ and $1$.  

The map of vector spaces $E\to \mathbb{F}\{e\}$
given by $e_i\mapsto t_ie$
induces a map $\lambda_t:\wedge^*E\to \wedge^* \mathbb{C}\{e\}$.
Write $\phi_t^{\dagger}$
for the map $\mathrm{id}_L\otimes\lambda_t: K(\mathcal{L})\to 
K(\mathcal{L}_t)$.

If $\alpha=\sum_{i=1}^r\alpha_i\otimes e_i\in K^1(\mathcal{L})$,
write $\alpha(t):=\sum_{i=1}^r t_i\alpha_i$.
Then note that $\phi_t^{\dagger}\alpha=\alpha(t)\otimes e$.

\begin{lemma}\label{ed4}
  The map $\phi_t^{\dagger}:K(\mathcal{L})\to K(\mathcal{L}_t)$ is a morphism of complexes.
\end{lemma}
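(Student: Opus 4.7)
The plan is to verify the commutation $\phi_t^{\dagger}\circ d = d_t\circ \phi_t^{\dagger}$ by direct computation on elementary tensors $l\otimes\omega\in L\otimes\bigwedge^p E$, where $d_t$ denotes the differential on $K(\mathcal{L}_t)$. Since both $d$ and $d_t$ are given by explicit formulas in terms of the $N_i$'s and the exterior product, and since $\phi_t^{\dagger}$ is defined as $\mathrm{id}_L\otimes\lambda_t$ with $\lambda_t$ induced by the linear map $e_i\mapsto t_i e$, the verification reduces to unwinding the definitions.

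First I would apply the formulas: on one side,
\[
\phi_t^{\dagger}\bigl(d(l\otimes\omega)\bigr) = \phi_t^{\dagger}\Bigl(\sum_{i=1}^r N_i l\otimes (e_i\wedge\omega)\Bigr)=\sum_{i=1}^r N_i l\otimes \lambda_t(e_i\wedge\omega),
\]
and using that $\lambda_t$ is an algebra homomorphism (being the extension of a linear map to exterior algebras), $\lambda_t(e_i\wedge\omega)=t_i\,e\wedge\lambda_t(\omega)$. Pulling out the scalar $t_i$ and interchanging the finite sum with the tensor product gives
\[
\phi_t^{\dagger}\bigl(d(l\otimes\omega)\bigr) = \Bigl(\sum_{i=1}^r t_i N_i\Bigr)l\otimes e\wedge\lambda_t(\omega) = N(t)l\otimes e\wedge \lambda_t(\omega).
\]
On the other side,
\[
d_t\bigl(\phi_t^{\dagger}(l\otimes\omega)\bigr)=d_t(l\otimes \lambda_t(\omega)) = N(t)l\otimes e\wedge \lambda_t(\omega),
\]
since $K(\mathcal{L}_t)$ has a single monodromy logarithm $N(t)$ and a single basis vector $e$. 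Comparing the two expressions yields equality.

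The only thing worth a sentence of justification is that $\phi_t^{\dagger}$ is in fact a graded map, i.e.\ that $\lambda_t\colon\bigwedge^*E\to\bigwedge^*\mathbb{F}\{e\}$ is a well-defined algebra homomorphism; this is immediate from the universal property of the exterior algebra applied to the linear map $E\to\mathbb{F}\{e\}$, $e_i\mapsto t_i e$. Extending by linearity from elementary tensors $l\otimes\omega$ to all of $K^p(\mathcal{L})$ then finishes the proof. There is no real obstacle here: the lemma is essentially a bookkeeping statement whose content is that the definition of $\phi_t^{\dagger}$ has been chosen so as to be compatible with the Koszul differentials, and it is recorded precisely so that the pullback operation $\alpha\mapsto \alpha(t)\otimes e$ on $1$-cochains used in the sequel makes sense on cohomology.
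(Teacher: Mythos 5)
Your proof is correct and is essentially the same computation as the paper's: the identity $\phi_t^{\dagger}(dl)=N(t)l\otimes e=d_t\phi_t^{\dagger}(l)$ in degree $0$ is exactly the paper's verification. The only (harmless) difference is that the paper dispenses with higher degrees by noting that $K(\mathcal{L}_t)$ is concentrated in degrees $0$ and $1$, whereas you check all degrees directly using that $\lambda_t$ is an algebra homomorphism; both routes are fine.
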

\begin{proof}
Since $K(\mathcal{L}_t)$ is concentrated in degrees $0$ and $1$, we just need to check
that $d\phi_t^{\dagger}(l)=\phi_t^{\dagger}(dl)$ for $l\in K^0(\mathcal{L})$.  
We compute $\phi_t^{\dagger}dl=\phi_t^{\dagger}\sum N_il\otimes e_i=
\sum t_iN_il\otimes e=N(t)l\otimes e$ as desired. 
\end{proof}

\subsection{}\label{ed5}  Suppose $t\in\mathbb{Z}_{\geq 0}^r$ and pick a $a\in\Delta^*$.
Define a test curve $\phi_t:\Delta^*\to \delsr$ by setting 
$\phi_t(s)=a(s^{t_1},\ldots, s^{t_r})$.   Then $\phi_t$ induces a map
\begin{equation}\label{ed6}
\phi_t^*:\rH^p(\Delta^{*r},\mathcal{L})\to \rH^p(\Delta^*,\mathcal{L}). 
\end{equation}
We leave it to the reader to check that, under the isomorphism of \S\ref{ed2},
$\phi_t^*=\phi_t^{\dagger}$.  

\subsection{}\label{HodgeOnDelta} In the rest of this section,
$\mathcal{H}$
will denote a variation of pure, polarized $\mathbb{F}$-Hodge
structure
of weight $k$ with unipotent monodromy on the poly-punctured disk 
$\delsr$.  Write $H$ for the limit mixed Hodge structure of $\mathcal{H}$.
In the language of nearby cycles,
$H=\psi_{s_1}\cdots \psi_{s_r}\mathcal{H}$.
As a vector space, $H$
is isomorphic to the fiber of $\mathcal{H}$
over any chosen point $s\in \delsr$.
For $i=1,\ldots, r$,
we write $T_i$
for the monodromy operator and $N_i=\log T_i$.
The $N_i$
then induce morphisms $N_i:H\to H(-1)$
of the mixed Hodge structure $H$.
We write $\rH^p(\calH)=\rH^p(\delsr,\mathcal{H})$
and $\IH^p(\calH)=\IH^p(\delr,\mathcal{H})$
for the cohomologies of the local system underlying the variation
$\mathcal{H}$.
By~\cite{CKS87, KK}, these are computed in terms of complexes
$K(\mathcal{H})=H\otimes \wedge^* E$
and $B(\mathcal{H})$
as in \S\ref{ed2}.  But here the vector space $E$
is viewed as the pure, weight $2$,
Hodge structure $\mathbb{F}(-1)^r$.  The map
$$d:K^p(\calH)=H\otimes \wedge^p E\to 
K^{p+1}(\calH)=H\otimes \wedge^{p+1} E$$ is then a morphism of mixed Hodge
structures.   

Write $0\in\delr$ for the origin.  Then we have equalities of Hodge structures
$\rH^p(\calH)=\rH^p_0(\calH)$ and 
$\IH^p(\calH)=\IH^p_0(\calH)$
 where the groups $\rH^p_0(\calH)$ and  $\IH^P_0(\calH)$ 
groups defined in~\ref{s1.2} and~\ref{s1.2b} respectively. 

\subsection{}
Write $F$ for the Hodge filtration on $H$.
Then the data $(N_1,\ldots, N_r;F)$
defines a nilpotent orbit.  
This data in turn defines a variation of Hodge structure $\mathcal{H}_{\nilp}$
of weight $k$ on $\Delta_b^{*r}$ for some $b>0$ (see~\eqref{eq:small-disk}).  Explicitly, 
$\mathcal{H}_{\nilp}$ is the variation with local normal form 
$e^{\sum_{i=1}^r z_iN_i} F$ where the $z_i$ are points in $\mathbb{C}$ such that
$|e^{2\pi iz_i}|<b$.  
We then have equalities
of complexes of mixed Hodge structure: $K(\mathcal{H})=K(\mathcal{H}_{\nilp})$, 
$B(\mathcal{H})= B(\mathcal{H}_{\nilp})$.   In particular, $H^p(\mathcal{H})$ and 
$\IH^P(\mathcal{H))}$ depend only on the nilpotent orbit associated to $\mathcal{H}$.
More precisely:
$\mathcal H$ and $\mathcal H_{\nilp}$ restrict to the same local system over $\Delta_b^{*r}$, and hence
the complexes $B^{\bullet}(\mathcal H)$ and $B^{\bullet}(\mathcal H_{\nilp})$ are equal.  Likewise, all
Hodge theoretic data attached to these complexes depends only the limit mixed Hodge structure, which
agrees for $\mathcal H$ and $\mathcal H_{\nilp}$ by construction.

We now prove versions of Lemma~\ref{l.H1Weights} and Theorem~\ref{IH1Ad}
for the normal crossing case.  

\begin{lemma}\label{l.H1W2}
Suppose $r=1$. Then $W_{k+1} \rH^1(\mathcal{H})=0$.
\end{lemma}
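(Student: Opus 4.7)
The plan is to identify $\rH^1(\calH)$ as a cokernel using the Koszul description from \S\ref{ed2} and then carry out a weight calculation on the limit mixed Hodge structure. When $r=1$ the Koszul complex is the two-term complex $[H \xrightarrow{N_1} H(-1)]$ placed in degrees $0$ and $1$, and the differential $N_1$ is a morphism of MHS by admissibility. Consequently
$$
\rH^1(\calH) \;\cong\; \coker\bigl(N_1 : H \to H(-1)\bigr)
$$
as mixed Hodge structures, and the problem reduces to showing that this cokernel carries no weights $\le k+1$.

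Next I would unravel the weight bookkeeping. By definition the weight filtration on the limit MHS $H$ is the shifted monodromy filtration $M(N_1)[-k]$, so $\Gr^W_j H = \Gr^{M(N_1)}_{j-k} H$; the Tate twist gives $\Gr^W_j H(-1) = \Gr^{M(N_1)}_{j-k-2} H$. Using the standard $\mathrm{sl}_2$-style primitive decomposition of $H$ with respect to $N_1$, one has $H = \bigoplus_{m \ge 0} \bigoplus_{i=0}^{m} N_1^i P_m$, where $P_m \subset \Gr^{M(N_1)}_m H$ is the primitive part and $N_1^i P_m \subset \Gr^{M(N_1)}_{m-2i} H$. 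A direct check shows $N_1 H = \bigoplus_{m \ge 0} \bigoplus_{i=1}^{m} N_1^i P_m$, so $H/N_1 H = \bigoplus_{m \ge 0} P_m$ lives entirely in pieces $\Gr^{M(N_1)}_m$ with $m \ge 0$. Re-expressing this in terms of the weight filtration on the codomain $H(-1)$ (which shifts weights up by $2$), the cokernel $\coker(N_1 : H \to H(-1))$ is concentrated in weights $\ge k+2$, which is exactly $W_{k+1} \rH^1(\calH) = 0$.

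The main obstacle is really just tracking the two independent weight shifts simultaneously: the $[-k]$ shift in the definition of $W$ on the limit MHS, and the further $+2$ shift introduced by the Tate twist on $H(-1)$. Once those are handled, the vanishing of $W_{k+1} \rH^1(\calH)$ is immediate from the primitive decomposition. I note that this lemma is nothing more than the one-variable, variation-version of Lemma~\ref{l.H1Weights}: because $\Delta^*$ retracts onto a small punctured disk around the origin, $\rH^1(\calH) = \rH^1_0(\calH)$ as observed in \S\ref{HodgeOnDelta}, and so the result follows from Lemma~\ref{l.H1Weights} with $w=k$, with the argument sketched above giving the explicit Koszul-style proof.
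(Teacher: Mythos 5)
Your proof is correct and follows essentially the same route the paper takes: both identify $\rH^1(\calH)$ with the cokernel of $N_1\colon H\to H(-1)$ on the limit mixed Hodge structure and then read off the weight bound from the hard-Lefschetz structure of the monodromy weight filtration, exactly as in the proof of Lemma~\ref{l.H1Weights} (to which the paper's own proof simply refers). Your closing observation that $\rH^1(\calH)=\rH^1_0(\calH)$ and that the lemma is literally the $w=k$ case of Lemma~\ref{l.H1Weights} matches the paper's own one-line reduction.
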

\begin{proof}
The proof is essentially the same as the proof of Lemma~\ref{l.H1Weights}. Set $N=N_1$.  As in the proof of Lemma~\ref{l.H1Weights}, we get an exact
sequence of mixed Hodge structures
$$
0\to \ker N\to H\stackrel{N}{\to} H(-1)\to (\coker N)(-1)\to 0.
$$
Then we use the same reasoning as in Lemma~\ref{l.H1Weights} to show that 
$W_{k+1}\rH^1(\mathcal{H})=0$. 
\end{proof}

\begin{proposition}\label{ced6}  Let
  $\alpha=\sum_{i=1}^r\alpha_i\otimes e_i\in
  Z^1(B(\mathcal{H}))$ be a representative of an
  intersection cohomology class $\bar\alpha\in \IH^1\mathcal{H}$.  Then,
  for every $t\in\mathbb{F}_{\geq 0}^{r}$, there is an element
  $l(t)\in H$ such that
\begin{equation}\label{ed6.1}
  \alpha(t)=N(t)l(t).
\end{equation}
\end{proposition}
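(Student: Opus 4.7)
The plan is to interpret the condition $\alpha(t)\in N(t)H$ as the vanishing of the image of $\bar\alpha$ under a pullback morphism $\phi_t^\dagger\colon \IH^1(\mathcal H)\to \rH^1(\mathcal H_t)$, where $\mathcal H_t$ is an auxiliary one-variable variation, and then to establish this vanishing by a weight comparison.

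First, I would reduce to the case where all $t_i>0$: if $I=\{i:t_i>0\}$ is a proper subset of $\{1,\dots,r\}$, then $\alpha':=\sum_{i\in I}\alpha_i\otimes e_i$ is a cocycle in the partial Koszul complex built from $\{N_i\}_{i\in I}$ (the defining conditions $\alpha_i\in N_iH$ and $N_j\alpha_i=N_i\alpha_j$ are inherited), and satisfies $\alpha'(t)=\alpha(t)$, $N'(t)=N(t)$; so one may assume $t\in\mathbb F_{>0}^r$. For such $t$, Lemma~\ref{scale-nilp} implies $(N(t),F)$ is a nilpotent orbit of pure Hodge structure of weight $k$; let $\mathcal H_t$ denote the associated variation on a punctured disk $\Delta^*$. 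By Theorem~\ref{mono-cone}, the monodromy weight filtration is constant on the open cone, so the identity map on $H$ identifies the limit MHS of $\mathcal H$ with that of $\mathcal H_t$.

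Next, I would argue that under this identification the chain map $\phi_t^\dagger\colon K(\mathcal H)\to K(\mathcal H_t)$ from Lemma~\ref{ed4} is a morphism of complexes of mixed Hodge structures: it is induced by the $\mathbb F$-linear map $\lambda_t\colon E\to\mathbb F\{e\}$, $e_i\mapsto t_ie$, and since $E\cong \mathbb F(-1)^r$ and $\mathbb F\{e\}\cong\mathbb F(-1)$ are pure Tate-twisted Hodge structures, every $\mathbb F$-linear map between them is automatically a morphism of Hodge structures. Restricting to the subcomplex $B^\bullet(\mathcal H)$, we obtain an MHS morphism $\phi_t^\dagger\colon \IH^1(\mathcal H)\to \rH^1(\mathcal H_t)$. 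A weight comparison then finishes the argument: the source is pure of weight $k+1$ by the purity theorem for local intersection cohomology of a polarized variation \cite{CKS87,KK}, while the target has weights in $[k+2,\infty)$ by Lemma~\ref{l.H1W2}. Hence the morphism vanishes. Since $\phi_t^\dagger(\alpha)=\alpha(t)\otimes e$ by the computation in \S\ref{ed3}, the vanishing of its class in $\rH^1(\mathcal H_t)$ exhibits $\alpha(t)\otimes e$ as a coboundary $d(l(t)\otimes 1)=N(t)l(t)\otimes e$ in $K(\mathcal H_t)$, producing the required $l(t)\in H$ with $\alpha(t)=N(t)l(t)$.

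The main obstacle is verifying that $\phi_t^\dagger$ is a morphism of MHS for arbitrary $t$ in the open cone rather than only for integer $t$ (for integer $t$ the map arises geometrically from a test curve and Theorem~\ref{IH1Ad} would apply more directly). The essential input for handling general $t\in\mathbb F_{>0}^r$ is the constancy of the monodromy weight filtration on the open cone (Theorem~\ref{mono-cone}), which forces $\mathcal H$ and $\mathcal H_t$ to share the same limit MHS on $H$; the other key ingredient, purity of $\IH^1$ of a pure variation, is classical but indispensable.
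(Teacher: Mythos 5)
Your proposal is correct and follows the same strategy as the paper's proof: construct the auxiliary one--variable variation $\mathcal{H}_t$ from the nilpotent orbit $(N(t),F)$, observe that $\phi_t^\dagger$ is a morphism of complexes of mixed Hodge structure, and finish by comparing weights --- $\IH^1(\mathcal{H})$ has weights at most $k+1$ by the local purity theorem of Cattani--Kaplan--Schmid and Kashiwara--Kawai, while $\rH^1(\mathcal{H}_t)$ has weights at least $k+2$ by Lemma~\ref{l.H1W2}.

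Your explicit reduction to the open cone $\mathbb{F}_{>0}^r$ is a genuine refinement.  The paper simply asserts that $\phi_t^\dagger$ is ``easy to see'' to be a morphism of mixed Hodge structures, but for $t$ on the boundary of $\mathbb{F}_{\geq 0}^r$ the limit mixed Hodge structure of $\mathcal{H}_t$ is built from $W(N(t))$ while that of $\mathcal{H}$ is built from $W(N_1+\cdots+N_r)$; these weight filtrations need not coincide, and the identity on $H$ is not in general a morphism between the two.  Your reduction to the sub-orbit $(\{N_i\}_{i\in I};F)$ with $I=\{i:t_i>0\}$, together with the check that $\alpha'=\sum_{i\in I}\alpha_i\otimes e_i$ still lies in $Z^1(B(\mathcal{H}'))$ and satisfies $\alpha'(t)=\alpha(t)$, $N'(t)=N(t)$, puts one cleanly in the situation where Theorem~\ref{mono-cone} forces the two weight filtrations to agree.  (You should note that $\mathcal{H}'$ is again a polarized pure nilpotent orbit, so the purity theorem applies to $\IH^1(\mathcal{H}')$.)  One small slip: you describe $\IH^1(\mathcal{H})$ as ``pure of weight $k+1$''; local purity only gives the one--sided bound $W_{k+1}\IH^1(\mathcal{H})=\IH^1(\mathcal{H})$, and the group is generally not pure.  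This is harmless, since the weight comparison uses only the upper bound.
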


\begin{proof}
  Suppose $t\in \mathbb{F}_{\geq 0}^r$.
  Then, if $F$
  denotes the Hodge filtration of $H$,
  the triple $(H,F, N(t))$
  defines a nilpotent orbit in one variable, and, thus, a
  $\mathbb{F}$-variation of Hodge structure on the disk $\Delta^*$.
Write $\mathcal{H}_t$ for this variation.  Then, by~\ref{l.H1W2}, 
$W_{k+1}H^1(\mathcal{H}_t)=0$.   

As in~\ref{ed4}, we get a morphism of complexes
$\phi_t^{\dagger}:K(\mathcal{H})\to K(\mathcal{H}_t)$.
Moreover, it is easy to see that $\phi_t^{\dagger}$
is, in fact, a morphism of complexes of mixed Hodge structures.
By~\S\ref{ed3}, the composition
$B^1(\mathcal{H})\to K(\mathcal{H})\to K(\mathcal{H}_t)$
takes a class $\alpha\in B^1(\mathcal{H})$ to $\alpha(t)\otimes e$.

By the purity theorem of~\cite{CKS87}, $W_{k+1}\IH^1(\mathcal{H})= \IH^1(\mathcal{H})$.   So, it follows from Lemma~\ref{l.H1W2} that the composition
$\IH^1(\mathcal{H})\to \rH^1(\mathcal{H})\to \rH^1(\mathcal{H}_t)$ is zero.

So suppose $\alpha\in B^1(\mathcal{H})$ represents a class in $\IH^1(\mathcal{H})$.  Then $\phi_t^{\dagger}\alpha=\alpha(t)\otimes e=dl(t)=N(t)l(t)$ for some $l(t)\in H$. The result follows. 
\end{proof}

\subsection{} Suppose $X\in E^*$.
Then contraction (or interior product) with $X$
gives a map $\iota_X:\wedge^p E\to \wedge^{p-1} E$.   We then get a $(-1,-1)$
morphism of mixed Hodge structures
$\delta_X:K^p(\mathcal{H})\to K^{p-1}(\mathcal{H})$ by 
setting $\delta_X(h\otimes \omega)=h\otimes \iota_X(\omega)$.   Note 
that $\delta_X$ preserves the restricted Koszul complex $B(\mathcal{H})\subset
K(\mathcal{H})$.   It is also easy to see that $\delta_X^2=0$.  

Suppose $t=(t_1,\ldots, t_r)\in \mathbb{F}^{r}$.
Then we write $X(t)=\sum t_ie_i^*\in E^*$ and $\delta_t=\delta_{X(t)}$.   We 
write 
\begin{equation}\label{hei-nablat}
\Laplace_t:=d\delta_t+\delta_td.
\end{equation}
Note that, for $\alpha=\sum \alpha_i\otimes e_i\in K^1(\mathcal{H})$, we have 
\begin{equation}\label{eq:1}
\delta_t\alpha = \sum t_i\alpha_i=\alpha(t).
\end{equation}

The following lemma is a variant of~\cite[Lemma 3.7]{CKS87}. (Indeed, it
can easily be deduced from~\cite[Lemma 3.7]{CKS87}, but it is also easy
enough to give a direct proof.)

\begin{lemma}\label{cks-lemma}
For $t\in\mathbb{F}^r$, write $N(t):K^p(\mathcal{H})\to 
K^p(\mathcal{H})$
for the operator given by 
$N(t)(h\otimes \omega)=(N(t) h)\otimes \omega$.  
Then
 $\Laplace_t= N(t)$. 
\end{lemma}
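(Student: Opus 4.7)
The plan is a direct computation exploiting the standard Cartan-type identity between wedging and contraction on the exterior algebra $\wedge^* E$. Since both $d$ and $\delta_t$ act as $\id_H$ tensored with an operation on the form part (up to the twist by $N_i$ on the $H$-factor for $d$), the identity reduces to an identity inside $\wedge^* E$ together with the fact that $N(t) = \sum_i t_i N_i$.

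The key step is the following. On a pure tensor $h \otimes \omega \in K^p(\mathcal{H})$,
\[
d\delta_t(h\otimes\omega) = \sum_i N_i h \otimes \bigl(e_i\wedge \iota_{X(t)}\omega\bigr),
\]
and
\[
\delta_t d(h\otimes\omega) = \sum_i N_i h \otimes \iota_{X(t)}\bigl(e_i\wedge \omega\bigr).
\]
Now apply the Cartan formula in the exterior algebra: for any $X\in E^*$ and $v\in E$,
\[
\iota_X(v\wedge\omega) + v\wedge \iota_X(\omega) = \langle X, v\rangle\,\omega .
\]
Taking $X = X(t)$ and $v = e_i$ gives $\langle X(t), e_i\rangle = t_i$, so adding the two displays above the mixed terms $N_i h\otimes e_i\wedge \iota_{X(t)}\omega$ cancel and we are left with
\[
\Laplace_t(h\otimes\omega) = \sum_i t_i\, N_i h \otimes \omega = N(t)h\otimes\omega,
\]
which is exactly the stated formula.

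There is no real obstacle here; the lemma is essentially the observation that $(d,\delta_t)$ form a ``Koszul-Laplace'' pair in which the usual Laplacian on $\wedge^* E$-valued forms is replaced by the operator $N(t)$ acting on the coefficient factor $H$. The only point requiring a short remark is $\mathbb{F}$-linearity of all the operations, which is immediate since $d$, $\delta_t$, and $N(t)$ are all $\mathbb{F}$-linear and the identity need only be verified on pure tensors $h\otimes\omega$ with $\omega$ a basis monomial $e_J$. This also makes clear the compatibility with the subcomplex $B(\mathcal{H})$: both $d$ and $\delta_t$ preserve $B(\mathcal{H})$, so the same identity $\Laplace_t = N(t)$ holds there by restriction.
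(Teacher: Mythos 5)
Your proof is correct and follows essentially the same route as the paper's: both compute $d\delta_t$ and $\delta_t d$ on a pure tensor $h\otimes\omega$ and invoke the Cartan (anti)derivation identity $\iota_{X}(e_i\wedge\omega)+e_i\wedge\iota_X\omega=\langle X,e_i\rangle\,\omega$ to cancel the mixed terms and extract $N(t)$. Your added observation that $\Laplace_t$ then restricts to the subcomplex $B(\mathcal{H})$ is a valid and harmless supplement.
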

\begin{proof}
Suppose $\alpha=h\otimes\omega\in K^p(\mathcal{H})$ with $h\in H$ and $\omega\in\wedge^p E$.  Then 
\begin{align*}
  \Laplace_t \alpha&=d\delta_t\alpha +\delta_td\alpha \\
&=d(h\otimes \iota_{X(t)}\omega)+ \delta_t(\sum_{i=1}^r N_i h\otimes e_i\wedge\omega)\\
&=\sum_{i=1}^r N_i h\otimes e_i\wedge \iota_{X(t)}\omega + 
\sum_{i=1}^r N_i h \otimes \iota_{X(t)} (e_i\wedge\omega)\\
&=\sum_{i=1}^r N_i h\otimes e_i\wedge \iota_{X(t)}\omega + 
\sum_{i=1}^r N_i h \otimes t_i\omega - 
\sum_{i=1}^r N_i h \otimes e_i\wedge \iota_{X(t)}\omega\\
&=\sum_{i=1}^r t_i N_ih\otimes \omega=N(t)\alpha.
\end{align*}
\end{proof}

\subsection{A pairing on the $B$ complex}\label{ss:qt}
Suppose $I=(i_1,\ldots, i_k)$ is a multi-index in $\{1,\ldots, r\}$. 
For $t\in\mathbb{F}^r$, set $t_I=t_{i_1}t_{i_2}\cdots t_{i_k}$.    
Using \S\ref{ed.0}, we get a pairing 
\begin{align}
q_t:B^p(\calH)\otimes B^p(\calH^*)&\to \mathbb{F}(-p)\,
\text{given by}\label{qtpair} \\
q_t( N_I h\otimes e_I,N_J^*\lambda\otimes e_J) &= 
  \begin{cases}
    t_I(h,N_J^*\lambda), & I=J;\\ 
                0 , & \text{else}.
  \end{cases}\nonumber
\end{align}
Here $h\in H$ and $\lambda\in H^{*}$, and $I=(i_1,\ldots, i_k)$ and $J=(j_1,\ldots, j_l)$ are multi-indices
with $i_1<\ldots <i_k$ and $j_1<\ldots < j_l$.  

\begin{remark}\label{signremark}
Note that, for $h\in H$ and $\mu\in H^*$, 
$(N_Ih,\mu)=(h,N_I^*\mu)$.   It follows that 
$$q_t(N_Ih\otimes e_I,
N_I^*\lambda\otimes e_I) = t_I(N_I h, \lambda).$$
As we mentioned in~\S\ref{ed1}, the monodromy logarithms on 
$\calH^*$ are give by $-N_i^*$.  So let $\sw:B^p\calH\otimes B^p\calH^*\to
B^p\calH^*\otimes B^p\calH$ be the map switching the factors.  
From this it follows that $q_t\circ\sw=(-1)^pq_t$.  
\end{remark}

\begin{proposition}
  For each $t\in\mathbb{F}^r$, the pairing $q_t$ in~\eqref{qtpair}
is a morphism of mixed Hodge structure.   If $t\in(\mathbb{F}^{\times})^r$,
then $q_t$ is a non-degenerate pairing.  
\end{proposition}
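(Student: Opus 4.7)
The plan is to decompose $q_t$ into block components along the direct sum $B^p(\mathcal{H})=\bigoplus_{|I|=p}(N_I H)\otimes e_I$, verify that each block is a morphism of mixed Hodge structures by factoring it through a pairing defined on $H\otimes H^*$, and then deduce non-degeneracy for $t\in(\mathbb{F}^\times)^r$ from the perfect pairing $(\,,\,)_{N_I}$ of \S\ref{ed.0} combined with the hypothesis that each $t_i$ is nonzero.

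First I will check that the above decomposition of $B^p(\mathcal{H})$ is an internal direct sum of sub-MHS's of $K^p(\mathcal{H})=H\otimes\wedge^p E$. The key observation is that the map $\phi_I\colon H\to K^p(\mathcal{H})$, $h\mapsto N_I h\otimes e_I$, is a $(0,0)$-morphism of MHS: its Hodge type is the sum of the type $(-p,-p)$ of $N_I=N_{i_1}\cdots N_{i_p}$ and the type $(p,p)$ of $e_I$, since $E\cong\mathbb{F}(-1)^r$ and hence $\wedge^p E$ is pure Tate of type $(p,p)$. Since the $e_I$ with $|I|=p$ are linearly independent in $\wedge^p E$, the images $\phi_I(H)=N_I H\otimes e_I$ are in internal direct sum inside $K^p$, and both the weight and Hodge filtrations on $B^p(\mathcal{H})$ decompose accordingly; in particular, the projection to any summand is a morphism of MHS. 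Moreover, $\phi_I$ descends to a MHS isomorphism $\bar\phi_I\colon H/\ker N_I\xrightarrow{\sim} N_I H\otimes e_I$. The same discussion applies verbatim to $\mathcal{H}^*$ to give $\phi_I^*$ and $\bar\phi_I^*$.

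Next, writing $q_t=\sum_{|I|=p}t_I\, q^I$ where $q^I$ is the block supported on the $(I,I)$-summand, I will construct each $q^I$ as a composition of MHS morphisms. Consider
$$\tilde q^I\colon H\otimes H^*\xrightarrow{\mathrm{id}\otimes N_I^*}H\otimes H^*(-p)\xrightarrow{\mathrm{can}}\mathbb{F}(-p),$$
where $\mathrm{can}$ is the Tate twist of the canonical evaluation pairing; this is a composition of MHS morphisms, hence MHS. The identity $(h,N_I^*\lambda)=(N_I h,\lambda)$ shows $\tilde q^I$ vanishes on $\ker N_I\otimes H^*$ and on $H\otimes \ker N_I^*$, so it descends to a MHS morphism $(H/\ker N_I)\otimes (H^*/\ker N_I^*)\to \mathbb{F}(-p)$. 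Transporting via $\bar\phi_I\otimes\bar\phi_I^*$ and then composing with the MHS projection $B^p(\mathcal{H})\otimes B^p(\mathcal{H}^*)\twoheadrightarrow (N_I H\otimes e_I)\otimes (N_I^* H^*\otimes e_I)$ realizes $q^I$ as a MHS morphism, and summing $t_I q^I$ yields $q_t$.

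For non-degeneracy when $t\in(\mathbb{F}^\times)^r$, the defining formula shows that $q_t$ vanishes on distinct $(I,J)$-blocks, while on the $(I,I)$-block it equals $t_I\cdot(\,,\,)_{N_I}$, which is perfect by \S\ref{ed.0}. Since each $t_I=\prod_{i\in I}t_i\neq 0$, $q_t$ becomes a block-diagonal sum of perfect pairings, hence is itself perfect. I expect the main technical obstacle to be the opening step: verifying that the $(-p,-p)$ Hodge-type shift of $N_I$ is exactly compensated by the $(p,p)$ Tate twist carried by $e_I\in\wedge^p E$, so that both the weight and Hodge filtrations on $B^p(\mathcal{H})$ inherited from $K^p(\mathcal{H})$ really do decompose as the claimed MHS direct sum. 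Once this is in place, all remaining steps are essentially formal manipulations of morphisms of MHS.
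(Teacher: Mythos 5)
Your proposal is correct and follows essentially the same approach as the paper, which says only that the result ``follows from the fact that each $N_i$ induces a morphism $N_i:H\to H(-1)$ of mixed Hodge structures (plus a little bit of bookkeeping about the number of Tate twists)'' together with \S\ref{ed.0}. You have simply spelled out that bookkeeping: the verification that $\phi_I:h\mapsto N_Ih\otimes e_I$ is a $(0,0)$-morphism because the $(-p,-p)$ shift of $N_I$ cancels the $(p,p)$ Tate twist carried by $e_I\in\wedge^p E\cong\mathbb{F}(-p)$, the observation that $B^p(\calH)$ is therefore an internal direct sum of sub-MHS, and the factorization of each block through the canonical pairing twisted by $N_I^*$. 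The non-degeneracy argument likewise matches the paper's reference to \S\ref{ed.0}.
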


\begin{proof}
  The fact that the pairing is a morphism of Hodge structures follows
  from the fact that each $N_i$
  induces a morphism $N_i:H\to H(-1)$
  of mixed Hodge structures (plus a little bit of bookkeeping about
  the number of Tate twists).  The fact that the pairing is
  non-degenerate for $t\in(\mathbb{F}^{\times})^r$ follows from~\ref{ed.0}.
\end{proof}

\begin{proposition}\label{p.adjoint}
For $\alpha\in B(\calH)$, $\beta\in B(\calH^*)$ and $t\in\mathbb{F}^r$,
we have
\begin{align} 
q_t(d\alpha,\beta)&=q_t(\alpha,\delta_t\beta);\label{adjoint1}\\
q_t(\delta_t\alpha,\beta)&=q_t(\alpha,d\beta)\label{adjoint2}.
\end{align}
\end{proposition}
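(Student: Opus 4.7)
The strategy is to verify both identities by direct computation on basis elements of $B(\calH)$ and $B(\calH^*)$. Both assertions express the adjointness of $d$ and $\delta_t$ with respect to $q_t$; at the level of $\wedge^* E$, this adjointness is the standard formula for the interior product (the fact that $e_i\wedge$ and $\iota_{e_i^*}$ are adjoint on $\wedge^* E$), and at the level of $H$ and $H^*$, it is the adjointness $(N_i h, \mu) = (h, N_i^* \mu)$ noted in Remark~\ref{signremark}. Since $q_t$ is bilinear, it suffices to prove the identities when $\alpha = N_I h \otimes e_I$ and $\beta = N_J^* \lambda \otimes e_J$ for ordered multi-indices $I$ and $J$.

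For~\eqref{adjoint1}, take $|I| = p$ and $|J| = p+1$. A direct expansion of the definitions gives
\[
d\alpha = \sum_{i \notin I} (-1)^{m(i) - 1} N_{I\cup\{i\}}\, h \otimes e_{I \cup \{i\}}, \qquad
\delta_t \beta = \sum_{j \in J} (-1)^{n(j) - 1} t_j\, N_J^* \lambda \otimes e_{J \setminus \{j\}},
\]
where $m(i)$ is the position of $i$ in the reordering of $I \cup \{i\}$ and $n(j)$ is the position of $j$ in $J$. Applying $q_t$ to the first expression forces $I \cup \{i\} = J$, so $i = J\setminus I$ is unique and sits at some position $m$ in $J$; applying $q_t$ to the second expression likewise forces $J\setminus\{j\} = I$, so again $j = J\setminus I$ sits at position $m$. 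Using $N_i N_I = N_J$ (commutativity) and $t_J = t_I\cdot t_j$, both computations reduce to $(-1)^{m-1} t_J (h, N_J^* \lambda)$, yielding~\eqref{adjoint1}.

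The proof of~\eqref{adjoint2} is entirely analogous with the degrees reversed: take $|I| = p+1$ and $|J| = p$, expand $\delta_t \alpha$ and $d\beta$ in the same fashion, and observe that both pairings vanish unless $I = J \cup \{i\}$ for a unique $i$, in which case both sides equal $(-1)^{m-1} t_I (h, N_I^* \lambda)$.

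The only real technical nuisance is consistent Koszul sign bookkeeping in the exterior algebra, which is precisely the content of the adjointness $\langle e_i \wedge \omega,\, \omega' \rangle = \langle \omega,\, \iota_{e_i^*}\omega' \rangle$ on $\wedge^* E$ together with $\iota_{X(t)} = \sum_i t_i\, \iota_{e_i^*}$. Once the exterior-algebra signs are matched, the identities on the $H$, $H^*$ factors are immediate from $(N_i h,\lambda) = (h, N_i^*\lambda)$ and the defining formula for $q_t$.
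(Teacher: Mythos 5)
Your proposal is correct and is essentially the paper's own argument: by bilinearity one reduces to basis elements $N_Ih\otimes e_I$ and $N_J^*\lambda\otimes e_J$, observes that both sides vanish unless the two index sets differ by exactly one element $i$, and then matches the single surviving term on each side, with the Koszul sign $e_i\wedge e_I=\pm e_J$ appearing identically in both computations and $t_J=t_i\,t_I$ accounting for the extra factor of $t_i$. The only (immaterial) difference is that the paper carries out this computation just for \eqref{adjoint1} and obtains \eqref{adjoint2} from the symmetry of the definition (Remark~\ref{signremark}), whereas you repeat the computation with the degrees reversed.
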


\begin{proof}
  By the symmetry in the definition (or Remark~\ref{signremark}), it suffices to prove
  \eqref{adjoint1}.  For this, suppose $h\in H$ and $\lambda\in
  H^*$, and set $\alpha=N_Jh\otimes e_J$,
  $\beta=N_K^*\lambda\otimes e_K$ for multi-indices
  $J,K\subset \{1,\ldots,
  r\}$.  Then both sides of \eqref{adjoint1} are
  $0$ unless  there is an element 
$i\in K\setminus J$ and an $\epsilon\in\{\pm 1\}$ such that 
$e_i\wedge e_J=\epsilon e_K$. 

Now, we compute
\begin{align*}
q_t(d\alpha,\beta)&=q_t(N_iN_Jh\otimes e_i\wedge e_J, N_K^*\lambda\otimes e_K)\\
&=\epsilon q_t(N_Kh\otimes e_K, N_K^*\lambda\otimes e_K)\\
&= \epsilon  t_K(h, N_K^*\lambda).
\end{align*}

On the other hand, 
\begin{align*}
  q_t(\alpha,\delta_t\beta)&=q_t(N_Jh\otimes e_J, \delta_tN_K^*\lambda\otimes e_K)\\
&=q_t(N_Jh\otimes e_J,\delta_t N_K^*\lambda\otimes (\epsilon e_i\wedge e_J))\\
&=q_t(N_Jh\otimes e_J, t_i N_K^*\lambda\otimes (\epsilon e_J))\\
&=\epsilon t_iq_t(N_Jh\otimes e_J, N_K^*\lambda\otimes e_J)\\
&=\epsilon t_i t_J (h, N_K^*\lambda)\\
&= \epsilon t_K (h,N_K^*\lambda).
\end{align*}
\end{proof}

\begin{lemma}\label{RepInKernel}
Suppose $t\in\mathbb{F}_{\geq 0}^r$.  Then, 
every class in $\IH^1\calH$ has a representative $\alpha\in Z^1(B\calH)$ with 
$\delta_t\alpha=0$. 
\end{lemma}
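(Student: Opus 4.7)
The plan is to modify any given representative by adding an exact form to kill its $\delta_t$-contraction. Specifically, start with any $\alpha\in Z^1(B\calH)$ representing the given class. The goal is to find $\beta\in B^0(\calH)=H$ such that $\alpha':=\alpha+d\beta$ satisfies $\delta_t\alpha'=0$; this $\alpha'$ will automatically still lie in $Z^1(B\calH)$ and still represent the same class.

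The key observation is that via \eqref{eq:1} the contraction $\delta_t\alpha$ is literally $\alpha(t)$. Proposition~\ref{ced6} is tailor-made for this situation: it tells us that for any $t\in\mathbb{F}_{\geq 0}^r$, there exists $l(t)\in H$ with $\alpha(t)=N(t)l(t)$. So the natural candidate is to set $\beta=-l(t)$.

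To verify this works, set $\alpha':=\alpha-d l(t)$. First, $d l(t)=\sum_i N_i l(t)\otimes e_i$ lies in $B^1(\calH)$ since each summand is of the form $N_J h\otimes e_J$ with $J=(i)$, so $\alpha'\in B^1(\calH)$ and $d\alpha'=0$. Next, since $l(t)\in K^0(\calH)=H\otimes\bigwedge^0 E$ and $\delta_t$ lowers the exterior degree, $\delta_t l(t)=0$. Combining this with Lemma~\ref{cks-lemma} applied in degree $0$ gives
\begin{equation*}
\delta_t d\,l(t)=(\delta_t d+d\delta_t)l(t)=\Laplace_t\,l(t)=N(t)l(t).
\end{equation*}
Therefore $\delta_t\alpha'=\delta_t\alpha-N(t)l(t)=\alpha(t)-N(t)l(t)=0$ by the choice of $l(t)$.

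There is no real obstacle here once Proposition~\ref{ced6} is in hand; the content of the lemma is entirely packaged inside that earlier result (which in turn is where the Hodge-theoretic weight argument, using $W_{k+1}\IH^1(\calH)=\IH^1(\calH)$ from Cattani--Kaplan--Schmid and the vanishing $W_{k+1}\rH^1(\calH_t)=0$ from Lemma~\ref{l.H1W2}, does the real work). The only bookkeeping point worth flagging is that the argument requires $t\in\mathbb{F}_{\geq 0}^r$ rather than $t\in\mathbb{F}^r$, precisely because that hypothesis is what allows $(H,F,N(t))$ to form a nilpotent orbit in Proposition~\ref{ced6}.
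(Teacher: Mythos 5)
Your proof is correct and follows essentially the same route as the paper: both start from an arbitrary representative, invoke Proposition~\ref{ced6} to produce $l(t)$ with $N(t)l(t)=\alpha(t)$, and subtract $dl(t)$, using $\delta_t d\,l(t)=\Laplace_t l(t)=N(t)l(t)$ via Lemma~\ref{cks-lemma}. Your explicit remarks that $dl(t)\in B^1(\calH)$ and that $\delta_t$ vanishes on degree zero are minor bookkeeping points left implicit in the paper but add no new content.
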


\begin{proof}
  Suppose $\alpha'\in Z^1(B\calH)$.  Using Proposition~\ref{ced6}, find
  $l=l(t)\in H$ such that $N(t)l=\alpha'(t)$.  Then set
  $\alpha=\alpha'-dl$.  We have
  $\delta_t(\alpha)=\delta_t\alpha'-\delta_t dl =\alpha'(t)-\Laplace_t
  l=\alpha'(t)-N(t)l=0$.
\end{proof}

\begin{corollary}\label{crk}
  Set $L^p_t\calH=\ker \delta_t \cap \ker d \subset B^P(\calH)$.   For each $p$, 
the canonical map $L^p_t\calH\to \IH^p\calH$ is a morphism of mixed Hodge structure. 
The map $L^1_t\calH\to \IH^1\calH$ is surjective.  In fact, the map 
$W_{k+1} L^1_t\calH\to \IH^1\calH$ is surjective as well. 
\end{corollary}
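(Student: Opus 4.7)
The plan is to break the statement into three parts and dispatch them in order, all with tools already established in the excerpt.

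First I would verify that $L^p_t\calH$ is naturally a sub–mixed Hodge structure of $B^p(\calH)$. The complex $B^{\bullet}(\calH)$ is a complex of mixed Hodge structures, and its differential $d$ is a morphism of mixed Hodge structures (\S\ref{HodgeOnDelta}). The operator $\delta_t$ has been identified as a $(-1,-1)$-morphism of mixed Hodge structures on $K^{\bullet}(\calH)$ which preserves $B^{\bullet}(\calH)$. Hence both $\ker d$ and $\ker\delta_t$ are sub–mixed Hodge structures of $B^p(\calH)$, and so is their intersection $L^p_t\calH$. The canonical map $L^p_t\calH\to\IH^p\calH$ is then the composition
\[
   L^p_t\calH \hookrightarrow Z^p(B(\calH)) \twoheadrightarrow \IH^p\calH,
\]
which is a morphism of mixed Hodge structures, proving (1).

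For surjectivity of $L^1_t\calH\to\IH^1\calH$, I would simply invoke Lemma~\ref{RepInKernel}: any class in $\IH^1\calH$ has a representative $\alpha\in Z^1(B\calH)$ with $\delta_t\alpha=0$, i.e.\ $\alpha\in L^1_t\calH$. This gives (2) immediately.

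Finally, for the strengthening to $W_{k+1}L^1_t\calH\to\IH^1\calH$, the idea is strictness of morphisms of mixed Hodge structures with respect to the weight filtration. By the purity theorem of Cattani--Kaplan--Schmid (invoked already in the proof of Proposition~\ref{ced6}) we have $\IH^1\calH=W_{k+1}\IH^1\calH$. Since the surjective map $L^1_t\calH\to\IH^1\calH$ is a morphism of mixed Hodge structures by (1), strictness yields
\[
   \text{image of } W_{k+1}L^1_t\calH \;=\; W_{k+1}\IH^1\calH \;=\; \IH^1\calH,
\]
proving (3). The only subtle point is keeping the sign/Tate-twist bookkeeping for $\delta_t$ straight so that $L^p_t$ is recognized as a genuine sub–MHS, and then ensuring $\IH^1\calH$ really has weights $\leq k+1$; both are already in place in the excerpt, so there is no serious obstacle.
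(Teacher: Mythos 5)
Your proof is correct and follows essentially the same route as the paper's: the paper also observes that $L^p_t\calH\to \IH^p\calH=H^p(B\calH)$ is a morphism of mixed Hodge structures, deduces surjectivity for $p=1$ from Lemma~\ref{RepInKernel}, and gets the $W_{k+1}$ refinement from the purity theorem of Cattani--Kaplan--Schmid and Kashiwara--Kawai together with strictness. You have simply expanded the first step (identifying $L^p_t\calH$ as a sub--MHS of $B^p\calH$ via $\ker d$ and $\ker\delta_t$), which the paper leaves implicit.
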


\begin{proof}
 Since $\IH^p\calH=H^p(B\calH)$ as a mixed Hodge structure, the map 
$L_t^p\calH\to \IH^p\calH$ is a morphism of mixed Hodge structure.
The second assertion follows from Lemma~\ref{RepInKernel}.
The last assertion follows from the purity theorem~\cite{CKS87, KK} and the strictness of morphisms
of mixed Hodge structure with respect to the weight filtration.
\end{proof}

\subsection{} The restriction of $q_t$ to $L^1_t$ gives a pairing
\begin{equation}\label{qtonL}
q_t: L^1_t(\calH)\otimes L^1_t(\calH^{*})\to \mathbb{F}(-1).
\end{equation}
Since $L^1_t$
is a sub-mixed Hodge structure of
$B^1(\calH)$, this is a morphism of mixed Hodge structures.  Using the notation that
$\calH^{\vee}=\calH^*(1)$, we get a pairing $q_t:L^1_t\calH\otimes L^1_t\calH^{\vee}\to \mathbb{F}$.

\begin{lemma}\label{qtonbound} Suppose $\alpha\in L^1_t(\calH), \lambda\in B^0(\calH^*)=\calH^*$.
Then $q_t(\alpha,d\lambda)=0$.   Similarly, if $h\in B^0\calH$ and $\beta\in L^1_t(\calH^*)$, 
then $q_t(dh,\beta)=0$. 
\end{lemma}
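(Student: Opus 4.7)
The plan is to reduce both identities to a one-line consequence of the adjointness formulas \eqref{adjoint1} and \eqref{adjoint2} in Proposition~\ref{p.adjoint}, combined with the defining conditions on $L^1_t$.

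First I would handle the first claim. By definition $L^1_t(\calH) = \ker\delta_t \cap \ker d \subset B^1(\calH)$, so for $\alpha \in L^1_t(\calH)$ we have $\delta_t \alpha = 0$. Applying \eqref{adjoint2} of Proposition~\ref{p.adjoint} with the given $\lambda \in B^0(\calH^*) = \calH^*$ gives
\[
q_t(\alpha, d\lambda) = q_t(\delta_t \alpha, \lambda) = q_t(0,\lambda) = 0.
\]

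For the second claim, I would argue symmetrically: if $\beta \in L^1_t(\calH^*)$, then $\delta_t \beta = 0$, and \eqref{adjoint1} of Proposition~\ref{p.adjoint} yields
\[
q_t(dh, \beta) = q_t(h, \delta_t \beta) = q_t(h, 0) = 0.
\]
There is no genuine obstacle here; the lemma is purely an unpacking of the adjointness already proved. The only point worth a sentence of commentary is that, although Proposition~\ref{p.adjoint} is stated for $\alpha, \beta$ in the bigraded complex, both sides of each identity are defined as soon as the degrees are compatible (here $0$ and $1$), so the formulas apply directly with no shift-of-degree caveat. In the write-up I would simply cite Proposition~\ref{p.adjoint} and the definition of $L^1_t$ and be done.
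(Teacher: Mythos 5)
Your proof is correct and is essentially identical to the paper's: both invoke the adjointness identities of Proposition~\ref{p.adjoint} together with $\delta_t\alpha = 0$ (respectively $\delta_t\beta = 0$) from the definition of $L^1_t$. Nothing to add.
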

\begin{proof}
For the first assertion, we have $q_t(\alpha,d\lambda)=q_t(\delta_t\alpha,\lambda)=0$. 
The second assertion has the same proof (or follows by symmetry).
\end{proof}

\begin{theorem}\label{ahp0}
Suppose $t\in\mathbb{F}_{\geq 0}^r$.  
  The pairing $q_t:L^1_t\calH\otimes L^1_t\calH^{\vee}\to \mathbb{F}$ descends to a pairing
$$
h(t):\IH^1\calH\otimes\IH^1\calH^{\vee}\to \mathbb{F}.
$$
This pairing, which we call the \emph{asymptotic height pairing} is a morphism of mixed
Hodge structures.   Therefore if $\calH$ has weight $k$, the pairing factors through a 
pairing
$$
\bar h(t): \Gr^W_{k+1}\IH^1\calH\otimes \Gr^W_{-k-1}\IH^1\calH^{\vee}\to\mathbb{F}.
$$
If $c\in\mathbb{F}_{\geq 0}$, then $h(ct)=ch(t)$.
\end{theorem}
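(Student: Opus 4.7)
The plan is to assemble the four assertions of the theorem from results already proved in this section. For descent, by Corollary~\ref{crk} the map $L^1_t\calH\to\IH^1\calH$ is surjective, and similarly for $\calH^\vee$. Two representatives in $L^1_t\calH$ of the same class in $\IH^1\calH$ differ by some $d\mu$ lying in $L^1_t\calH\cap dH$; by Lemma~\ref{qtonbound}, $q_t(d\mu,\beta)=0$ for every $\beta\in L^1_t\calH^\vee$, and symmetrically $q_t(\alpha,d\lambda)=0$. Hence $q_t$ factors through a well-defined pairing $h(t)\colon\IH^1\calH\otimes\IH^1\calH^\vee\to\mathbb{F}$.

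For the MHS assertion, the pairing~\eqref{qtonL} on $L^1_t\calH\otimes L^1_t\calH^\vee$ is already a morphism of mixed Hodge structures: $L^1_t\subset B^1\calH$ is the intersection of the kernels of the MHS morphisms $d$ and $\delta_t$ and is therefore a sub-MHS, while the restriction of $q_t$ from $B^1\otimes B^1$ is a morphism of MHS by the proposition following~\eqref{qtpair}. The surjections $L^1_t\calH\to\IH^1\calH$ (and its dual analogue) are morphisms of MHS by Corollary~\ref{crk}, so their kernels are sub-MHS on which $q_t$ vanishes against any element of the opposing factor. Consequently, the descended pairing $h(t)$ is a morphism of MHS.

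For the factorization through graded pieces, I would invoke the purity theorem of Cattani--Kaplan--Schmid~\cite{CKS87} and Kashiwara--Kawai~\cite{KK}: since $\calH$ is pure of weight $k$, $\IH^1\calH$ is pure of weight $k+1$, and since $\calH^\vee$ has weight $-k-2$, $\IH^1\calH^\vee$ is pure of weight $-k-1$. Thus $\IH^1\calH\otimes\IH^1\calH^\vee$ is pure of weight $0$, and any morphism of MHS to $\mathbb{F}$ automatically factors through the (only nonzero) graded piece $\Gr^W_{k+1}\IH^1\calH\otimes\Gr^W_{-k-1}\IH^1\calH^\vee$.

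Finally, homogeneity is essentially formal. The contraction $\delta_t$ is linear in $t$, so $\delta_{ct}=c\delta_t$ and hence $L^1_{ct}\calH=L^1_t\calH$ for every $c>0$. The defining formula for $q_t$ on $B^1\otimes B^1$ depends linearly on $t$ (each multi-index has length $1$), giving $q_{ct}=c\,q_t$, whence $h(ct)=c\,h(t)$; the case $c=0$ is immediate from $q_0\equiv 0$ on $B^1$. The whole proof is a formal assembly of the preceding lemmas, with the only genuine subtlety being the weight bookkeeping — in particular, correctly tracking the Tate twist in $\calH^\vee=\calH^*(1)$ to confirm that the weights on the two factors of $\IH^1$ add to zero, so that the invocation of purity in step three produces the correct target.
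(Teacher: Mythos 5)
Your proof follows essentially the same route as the paper's, and the descent, morphism-of-MHS, and homogeneity steps are sound.  There is, however, a genuine error in the factorization step.  The Cattani--Kaplan--Schmid / Kashiwara--Kawai result being invoked does \emph{not} say that $\IH^1\calH$ is pure of weight $k+1$; it gives only the one-sided estimate that $\IH^1\calH$ has weights in $(-\infty,k+1]$.  This is exactly how the paper uses it (see the proofs of Lemma~\ref{l.H1W2}, Theorem~\ref{IH1Ad}, and Corollary~\ref{crk}): local intersection cohomology stalks are not pure in general, since $i^*$ applied to a pure Hodge module satisfies only an upper weight bound.  Moreover, your purity claim would make the factorization clause of the theorem vacuous, because $\Gr^W_{k+1}\IH^1\calH$ would then \emph{equal} $\IH^1\calH$ --- so the nontrivial content of the theorem would disappear under your reading.

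The conclusion survives once the claim is corrected to the one-sided bound, which is what the paper's phrase ``by weight considerations'' abbreviates.  Since $h(t)$ is a morphism of MHS with target $\mathbb{F}(0)$, strictness forces $h(t)$ to vanish on $W_{-1}\bigl(\IH^1\calH\otimes\IH^1\calH^\vee\bigr)$, so $h(t)$ factors through $\Gr^W_0$ of the tensor product.  The bounds $\Gr^W_a\IH^1\calH=0$ for $a>k+1$ and $\Gr^W_b\IH^1\calH^\vee=0$ for $b>-k-1$ then show that the only summand of $\Gr^W_0$ with $a+b=0$ is $\Gr^W_{k+1}\IH^1\calH\otimes\Gr^W_{-k-1}\IH^1\calH^\vee$.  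So replace ``pure of weight'' with ``weights at most'' in the third step and the rest of your argument goes through unchanged.
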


\begin{proof}
  The first assertion follows from Corollary~\ref{crk} and Lemma~\ref{qtonbound}.  The fact
  that $h(t)$
  is a morphism of Hodge structures follows from the fact that
  \eqref{qtonL} is a morphism of mixed Hodge structures.  If $\calH$
  is pure of weight $k$,
  then $\IH^1\calH$
  has weights in the interval $(-\infty,k+1]$.   Similarly, 
$\IH^1\calH^{\vee}$ has weighs in the interval $(-\infty, -k-1]$.
 So $h(t)$ factors through through a pairing $\bar h(t)$ by weight 
considerations.  

To prove the last assertion, note that $q_{ct}=cq_t$ as a pairing on 
from $B^1(\calH)\otimes B^1(\chv)$ to $\mathbb{F}$.  And $\delta_{ct}=c\delta_t$
on $B^1(\calH)$.  It follows that $L^1_{ct}(\calH)=L^1_t(\calH)$ and
similarly $L^1_{ct}(\chv)=L^1_{c}(\chv)$.  Therefore $h(ct)=c(t)$. 
\end{proof}

\begin{proposition}\label{ahp1}
The pairing
\begin{equation}\label{ahp2}
h(t):\IH^1\mathcal{H}\otimes 
\IH^1\mathcal{H}^{\vee}\to\mathbb{F}
\end{equation}
can be computed as follows. Suppose $\alpha=\sum_{i=1}^r\alpha_i\otimes e_i\in
B^1(\mathcal{H})$ and $\beta=\sum_{i=1}^r\beta_i\otimes e_i\in
B^1(\mathcal{H}^{\vee})$ represent intersection cohomology 
classes $\bar\alpha$ and $\bar\beta$ respectively.   Fix $t$ as above and
$l(t)$ such that $N(t)l(t)=\alpha(t)$.  Then 
\begin{equation}\label{ahp3}
h(t)(\alpha,\beta)=\sum_{i=1}^r t_i(\alpha_i-N_il(t),\beta_i)_{N_i}.
\end{equation}
\end{proposition}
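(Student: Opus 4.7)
The plan is to follow exactly the recipe used in the construction of $h(t)$: find an $L^1_t$-representative of $\bar\alpha$, observe that with such a representative in hand we may keep the original $\beta$, and then unwind the definition of $q_t$.

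First, set $\alpha' := \alpha - dl(t)$. Proceeding as in the proof of Lemma~\ref{RepInKernel}, one checks that $\alpha' \in L^1_t(\mathcal{H})$: indeed, $d\alpha' = d\alpha = 0$, and using \eqref{hei-nablat}, Lemma~\ref{cks-lemma} and the defining relation $N(t)l(t) = \alpha(t)$, one computes
$$
\delta_t \alpha' \;=\; \delta_t\alpha - \delta_t d\,l(t)
\;=\; \alpha(t) - \bigl(\Laplace_t l(t) - d\delta_t l(t)\bigr)
\;=\; \alpha(t) - N(t)l(t) \;=\; 0.
$$
Explicitly, $\alpha' = \sum_{i} (\alpha_i - N_i l(t)) \otimes e_i$, and each component $\alpha_i - N_i l(t)$ still lies in $N_i H$ since $\alpha \in B^1(\mathcal{H})$.

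Second, choose any $L^1_t$-representative $\beta' \in L^1_t(\mathcal{H}^{\vee})$ of $\bar\beta$ (such a representative exists by Lemma~\ref{RepInKernel} applied to $\mathcal{H}^{\vee}$). By the definition of $h(t)$ in Theorem~\ref{ahp0}, we have $h(t)(\bar\alpha,\bar\beta) = q_t(\alpha',\beta')$. But $\beta' - \beta$ is a coboundary $d\gamma$ in $B(\mathcal{H}^{\vee})$, and since $\alpha' \in \ker\delta_t$, Lemma~\ref{qtonbound} yields $q_t(\alpha', d\gamma) = 0$. Hence
$$
h(t)(\bar\alpha,\bar\beta) \;=\; q_t(\alpha',\beta).
$$

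Finally, evaluate $q_t(\alpha',\beta)$ componentwise. Write $\alpha_i - N_i l(t) = N_i h_i$ and $\beta_i = N_i^{*} \lambda_i$ (possibly after a Tate twist we suppress). By \eqref{qtpair}, only the diagonal terms $i = j$ survive, and the formula together with Remark~\ref{signremark} gives
$$
q_t(\alpha',\beta) \;=\; \sum_{i} q_t\bigl(N_i h_i \otimes e_i,\, N_i^{*}\lambda_i \otimes e_i\bigr)
\;=\; \sum_{i} t_i\,(N_i h_i, \lambda_i)
\;=\; \sum_{i} t_i\,(\alpha_i - N_i l(t),\, \beta_i)_{N_i},
$$
which is precisely \eqref{ahp3}. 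There is no serious obstacle: the only subtlety is remembering that one side of the argument must be moved into $L^1_t$ before applying Lemma~\ref{qtonbound}, and the formula then falls out of bookkeeping with \eqref{qtpair}.
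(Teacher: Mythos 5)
Your proof is correct and follows essentially the same route as the paper: set $\alpha' = \alpha - dl(t)$, verify $\delta_t\alpha'=0$, and unwind $q_t(\alpha',\beta)$ using \eqref{qtpair}. The one step you make explicit that the paper leaves implicit is the justification that $h(t)(\bar\alpha,\bar\beta) = q_t(\alpha',\beta)$ even though $\beta$ itself may not lie in $L^1_t(\calH^\vee)$ — which you handle correctly by introducing an $L^1_t$-representative $\beta'$ and invoking Lemma~\ref{qtonbound} to show $q_t(\alpha',\beta'-\beta)=0$.
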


\begin{proof}
  Given $l(t)$ as above, set $\alpha'=\alpha-d l(t)$.  Then $\delta_t\alpha'=
\alpha(t)-\Laplace_t l(t)=0$.  If we have $\alpha_i=N_ih_i$ for $h_i\in H$, then 
\begin{align*}
h(t)(\bar\alpha,\bar\beta)&=q_t(\alpha',\beta)
                            =q_t(\alpha-dl(t),\beta)\\
  &=\sum_{i=1}^r t_i (h_i-l(t), \beta)
    =\sum_{i=1}^r t_i(\alpha_i-N_i l(t),\beta_i)_{N_i}
\end{align*}
as desired. 
\end{proof}

\begin{example}
Let $L=\mathbb{Q}^2$ with basis $u=(1,0)$ and $v=(0,1)$, and write $u^*, v^*$
for the dual basis.   Set
$$
N=
\begin{pmatrix}
  0 & 0 \\
  1 & 0 \\
\end{pmatrix}.
$$
Pick a positive integer $r$ and let $N_i=N$ for $1\leq i\leq r$.  Let
$\mathcal{L}$ denote the local system on $\Delta^{*r}$ with monodromy
logarithms $N_i$.  Then
$\IH^1(\mathcal{L})=\mathbb{Q}^r/\mathbb{Q}$ with the copy
of $\mathbb{Q}$ embedded diagonally.  An element of
$\IH^1(\mathcal{L})$ is represented by a sum 
$\alpha=\sum_{i=1}^r a_iv\otimes e_i$ with $a_i\in\mathbb{Q}$.  Similarly, and
element of $\IH^1(\mathcal{L}^*)$ is given by a sum 
$\beta=\sum_{i=1}^r b_iu^*\otimes e_i$.   
Since $\mathcal{L}$ underlies a pure variation
of Hodge structure of weight $-1$, $\alpha$ is admissible.   We can take 
$$
l(t)=\frac{\sum_{i=1}^r a_it_i}{\sum_{i=1}^r t_i} u.
$$
Then $N(t)l(t)=\alpha(t)$. 
We claim that 
\begin{equation}
\label{ej}
h(t)(\alpha,\beta)=
\frac{\sum_{i<j} (a_i-a_j)(b_i-b_j)t_it_j}{\sum t_i}.
\end{equation}
To see this, we compute
\begin{align*}
  h(t)(\alpha,\beta)&=\sum_i t_i(\alpha_i-N_il(t),\beta_i)_{N_i}\\
&=\sum_i t_i (a_iv - \frac{\sum_j a_jt_j}{\sum_j t_j} N_iu, b_i u^*)_{N_i}\\
&=\sum_i t_i((a_i -  \frac{\sum_j a_jt_j}{\sum_j t_j}) v, b_i v^*)\\
&=\sum_i t_i(a_i  -\frac{\sum_j a_jt_j}{\sum_j t_j})b_i\\
&=\frac{\sum_{i,j} a_i b_i t_it_j - a_jb_i t_i t_j}{\sum_j t_j}\\
&=\frac{\sum_{i,j} (a_i-a_j)b_i t_it_j}{\sum_j t_j}
\end{align*}
This is easily manipulated into the form in \eqref{ej}.
\end{example}

\section{Higher dimensional pairing and polarizations}\label{higher-p}

In this section $\mathcal{H}$ is a pure variation of $\mathbb{F}$-Hodge
structure of weight $k$ with unipotent monodromy on $\Delta^{*r}$ as in 
\S\ref{HodgeOnDelta}.   We fix a polarization 
$$Q:\mathcal{H}\otimes\mathcal{H}\to
\mathbb{F}(-k).$$ This induces a morphism
$a_Q:\mathcal{H}\to \mathcal{H}^*(-k)$
by $a_Q(h_1)(h_2)=Q(h_1,h_2)$.
By composing $h(t)$
with $a_Q$
we get a pairing on $\IH^1\mathcal{H}$ with values in $\mathbb{F}(-k-1)$.
In fact, we will show that for $t\in\mathbb{F}_+^r$
and any $p\in\mathbb{Z}$, we get a pairing on $\IH^p\mathcal{H}$.
The goal of this section is to show that these pairings are, in fact,
polarizations.   We remark that, for $t=(1,\ldots, 1)$, this is essentially
contained in the paper by Cattani, Kaplan and Schmid on Intersection
Cohomology~\cite{CKS87}.  

\begin{definition}
  For each $t\in\mathbb{F}^r$ and each $p\in\mathbb{Z}$, write
$$R_t^p(\mathcal{H})=\ker d\cap \ker \Laplace_t\cap B^p\mathcal{H}.$$
\end{definition}

Note that, for each $t\in\mathbb{F}^r$,
$R_t^p\mathcal{H}$
is a mixed Hodge substructure of $B^p\mathcal{H}$
containing the Hodge substructure $L_t^p\mathcal{H}$.

\subsection{Real splittings}\label{dsplit} Let $\mathcal D$ be a classifying space of pure Hodge structures
of $k$ which are polarized by a morphism $Q:V\otimes V\to\mathbb R(-k)$.  Let $\theta$ be a
nilpotent orbit with values in $\mathcal D$ generated by $(N_1,\dots,N_r;F)$.  Recall by 
Theorem~\eqref{mono-cone}, the monodromy weight filtration $W(N)$ is constant on the monodromy cone
$\mathcal C$ of positive $\mathbb R$-linear combinations of $N_1,\dots,N_r$.  Moreover, 
if $W = W(N)[-k]$ then $(F,W)$ is a mixed Hodge structure for which each element of $\mathcal C$ is a
$(-1,-1)$-morphism.  The associated $\delta$-splitting of $\theta$ is the nilpotent orbit
$\tilde\theta$ generated by $(N_1,\dots,N_r;\tilde F)$ where $(\tilde F,W) = (e^{-i\delta}F,W)$
is the Deligne $\delta$-splitting of $(F,W)$.  

\par In particular, since $\theta$ and $\tilde\theta$ have the same monodromy logarithms $N_1,\dots,N_r$
they determine the same underlying complex $B=B(N_1,\dots,N_r)$.  Likewise, since $\delta$ acts trivially
on $Gr^W$ it follows that $(F,W)$ and $(\tilde F,W)$ induce the same Hodge structure on $Gr^W(B)$.   
In~\cite{CKS87}, Cattani, Kaplan and Schmid use this method to reduce questions about the intersection cohomology groups of a general nilpotent orbit to the case where the nilpotent orbit has limit split over $\mathbb{R}$.

\begin{theorem}
For each $p\in\mathbb{Z}$ and each $t\in\mathbb{F}_+^r$, the map 
$R_t^p\mathcal{H}\to\IH^p\mathcal{H}$ is onto.  Moreover, $R_t^p\mathcal{H}$
has weights in the interval $(-\infty,p+k]$.  
\end{theorem}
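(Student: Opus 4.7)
The approach is to combine the joint $\mathfrak{sl}_2$-structure of~\cite{CKS86} with the identification $\Laplace_t = N(t)$ from Lemma~\ref{cks-lemma}, extending to arbitrary $p$ the arguments carried out for $p=1$ in the previous section. First I would reduce to the $\mathbb{R}$-split case: the complex $B^\bullet\mathcal{H}$ depends only on $(H,N_1,\ldots,N_r)$, and the $\delta$-splitting $(\tilde F,W)=(e^{-i\delta}F,W)$ gives the same underlying complex with the same MHS on $\Gr^W$, since $\delta\in W_{-2}\,\mathrm{gl}(H)$ acts trivially on $\Gr^W$. Because the weight bound only concerns $\Gr^W$ and the surjectivity is a purely vector-space statement, we may assume $(F,W)$ is $\mathbb{R}$-split. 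This provides commuting $\mathfrak{sl}_2$-triples $(N_i,Y_i,N_i^+)$ on $H$ whose diagonal grading $Y=\sum Y_i$ has eigenspace $E_j(Y)=\Gr^W_{k+j}H$.

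For the weight bound, any $\alpha\in R_t^p\mathcal{H}$ can be written as $\alpha=\sum_{|J|=p}h_J\otimes e_J$ with $h_J\in N_JH\cap\ker N(t)$. In the multi-$\mathfrak{sl}_2$ picture, for $J=\{j_1<\cdots<j_p\}$ with distinct entries, $N_J=N_{j_1}\cdots N_{j_p}$ lowers the diagonal grading $Y$ by $2p$, while $\ker N(t)$ consists of lowest-weight vectors of the diagonal $\mathfrak{sl}_2$-irreducibles inside each joint irrep $V_{n_1}\otimes\cdots\otimes V_{n_r}$. A Clebsch-Gordan analysis then shows that any lowest-weight vector lying also in $\mathrm{im}(N_J)$ must have $Y$-weight at most $-p$: intuitively, the shorter diagonal irreducibles have lowest-weight vectors with a component along some $v^{(a)}_{n_a}$ not in $\mathrm{im}(N_a)$, so they cannot appear in the $N_J$-image. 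Consequently $h_J\in W_{k-p}H$, and since $e_J$ has weight $2p$ we obtain $h_J\otimes e_J\in W_{k+p}(B^p\mathcal{H})$.

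For surjectivity, let $[\alpha]\in\IH^p\mathcal{H}$ be represented by $\alpha\in Z^p(B\mathcal{H})$. Since $N(t)$ commutes with $d$, it induces a $(-1,-1)$-morphism on $\IH^p\mathcal{H}$; by the purity theorem of~\cite{CKS87,KK}, $\IH^p\mathcal{H}$ is pure of weight $k+p$, so this morphism vanishes, and thus $N(t)\alpha=d\beta$ for some $\beta$. Iterating this argument together with the $\mathfrak{sl}_2$-isotypic decomposition of $B^p$ yields inductively a $\gamma\in B^{p-1}$ with $\alpha-d\gamma\in\ker N(t)\cap Z^p(B\mathcal{H})$. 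Alternatively, combining the weight bound of the previous paragraph with purity of $\IH^p\mathcal{H}$ and strictness of MHS morphisms, surjectivity reduces to $\Gr^W_{k+p}R_t^p\twoheadrightarrow\IH^p\mathcal{H}$, which is supplied by the primitive part of the $\mathfrak{sl}_2$-decomposition.

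The main obstacle is precisely the weight bound: the naive estimate using only $\ker N(t)\subset W_kH$ yields just $W_{k+2p}$, off by $p$, and closing the gap depends essentially on how the $N_J$-image condition interacts with the joint $\mathfrak{sl}_2$-structure of~\cite{CKS86}. Once this bound is in hand, surjectivity of $R_t^p\to\IH^p\mathcal{H}$ follows essentially formally from purity and strictness.
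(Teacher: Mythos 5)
Your opening reduction to the $\mathbb{R}$-split case coincides with the paper's first step, and your instinct to combine the several-variable $\mathfrak{sl}_2$-structure with the identity $\Laplace_t = N(t)$ points in the right direction. But there is a genuine gap in the middle. You assert that, in the $\mathbb{R}$-split case, one has \emph{commuting} $\mathfrak{sl}_2$-triples $(N_i,Y_i,N_i^+)$ on $H$ and rest the weight bound on a Clebsch--Gordan analysis of tensor products of the corresponding irreducibles. That is not what~\cite{CKS86} provides. The several-variable $\mathfrak{sl}_2$-orbit theorem yields commuting semisimple gradings $Y_1,\dots,Y_r$ together with commuting triples whose nilnegative entries are \emph{modified} operators $\hat N_i$; one has $\hat N_1 = N_1$, but for $i>1$ the operator $N_i$ differs from $\hat N_i$ by terms of strictly negative weight with respect to the earlier $Y_j$'s. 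The tensor-of-irreducibles picture you invoke therefore applies to the $\hat N_i$, not to the genuine $N_i$ that enter both $B^p$ (through $N_J = N_{j_1}\cdots N_{j_p}$) and $\Laplace_t = N(t)$. Translating the conditions $h_J \in N_J H$ and $N(t)h_J = 0$ into statements about the $\hat N_i$-module structure is the substantive step, and it is exactly the content of~\S3 of~\cite{CKS87}; your ``Clebsch--Gordan analysis'' is a pointer to where the work lives rather than a proof. The same comment applies to your surjectivity argument: the iteration producing $\gamma$ with $\alpha - d\gamma \in \ker N(t)$ is not a formal consequence of purity alone and needs the same structural input. You correctly flag in your last paragraph that this is the hard part, but you do not actually close it.

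The paper's proof sidesteps all of this. After the $\delta$-splitting reduction (the step you do have), it replaces the data $(N_1,\dots,N_r;F)$ by the rescaled data $(t_1N_1,\dots,t_rN_r;F)$, which is again a nilpotent orbit by Lemma~\ref{scale-nilp}. The map $h\otimes e_J\mapsto t_J\,h\otimes e_J$ is an isomorphism of $B^{\bullet}$ intertwining the two Koszul differentials, preserving the weight filtration (since the relative weight filtration is constant on the open cone), and carrying $R^p_t$ of the original orbit onto $R^p_{(1,\dots,1)}$ of the rescaled one because the operator $N(t)$ is the same on both sides. Once $t=(1,\dots,1)$ and the limit is $\mathbb{R}$-split, the surjectivity is precisely~\cite[(3.8)]{CKS87}, and the weight bound follows from the description there of the weight filtration in terms of $N(t)$. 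In short: you miss the rescaling trick that reduces the problem to a directly citable statement, and the direct argument you sketch in its place glosses over the hard part of the CKS analysis.
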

\begin{proof}
It suffices to prove this for real nilpotent orbits.  
When $t=(1,1,\ldots, 1)$ and the limit is split over $\mathbb{R}$, the first
assertion is~\cite[(3.8)]{CKS87}.   We can reduce to the case where the limit is 
split over $\mathbb{R}$ as in \S\ref{dsplit}. 
Then we can reduce to the case to $t=(1,1,\ldots, 1)$ by scaling the
$N_i$ using Lemma~\eqref{scale-nilp}.  The second assertion follows from the definition of the weight 
filtration $W$ in terms of $N(t)$.  
\end{proof}

\begin{definition}
For each $t\in\mathbb{F}^r$, define a pairing $Q_t$ on $B^p\mathcal{H}$ by 
setting 
$$
Q_t(N_Iu\otimes e_I, N_Jv\otimes e_J)=
\begin{cases}
  t_I Q(u, N_J v), & I=J;\\
                  0, &\text{else.}
\end{cases}
$$
\end{definition}

When $t=(1,\ldots, 1)$, this pairing is used in~\cite[(3.4)]{CKS87}.
(They write $S$ instead of $Q$ for the polarization.)

\begin{proposition}\label{p.Rpol}
For each $t\in(\mathbb{F}^{\times})^r$, $Q_t$ defines a pairing 
$$
Q_t:B^p\mathcal{H}\otimes B^p\mathcal{H}\to 
\mathbb{F}(-p-k)
$$
which is a non-degenerate morphism of mixed Hodge structure.   
For $t\in\mathbb{F}_+^{r}$, $Q_t$ restricts to a polarization on 
$\Gr^W_{p+k}R_t^p\mathcal{H}$.  
\end{proposition}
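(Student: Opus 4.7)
For the first two assertions, I would observe that, up to the sign arising from $a_Q \circ N_I = (-1)^{|I|} N_I^* \circ a_Q$, we have the identity $Q_t(\alpha,\beta) = q_t(\alpha, a_Q\beta)$, where $a_Q : \calH \to \calH^*(-k)$ is the isomorphism of variations of MHS induced by the polarization $Q$. For $t \in (\mathbb{F}^{\times})^r$, $q_t$ is a non-degenerate morphism of MHS valued in $\mathbb{F}(-p)$: the argument of \S\ref{ss:qt} (given there in low degree) extends to all $p$ because $B^p = \bigoplus_{|I|=p} N_I(H)\otimes e_I$ decomposes into subspaces on which the restrictions of $q_t$ are mutually orthogonal and each individually non-degenerate by the linear algebra of~\S\ref{ed.0}. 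Composing with the isomorphism $a_Q$ then yields the non-degeneracy of $Q_t$ as a morphism into $\mathbb{F}(-p-k)$.

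For the polarization statement, the plan is a chain of reductions culminating in the polarization theorem of Cattani--Kaplan--Schmid~\cite{CKS87}. I would first extend scalars to $\mathbb{F}=\mathbb{R}$, since positivity is an $\mathbb{R}$-linear condition. Next I would reduce to $t=(1,\ldots,1)$ via a rescaling isomorphism: let $\calH'$ be the nilpotent orbit of pure real Hodge structures of weight $k$ with monodromy logarithms $(t_1 N_1,\ldots,t_r N_r)$ and the same limit filtration $F$ (a nilpotent orbit by Lemma~\ref{scale-nilp}), and define $\Phi_t : K^*(\calH)\to K^*(\calH')$ by $\Phi_t(l\otimes e_I) = t_I\, l\otimes e_I$. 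A direct check shows that $\Phi_t$ is an isomorphism of complexes of MHS that restricts to an isomorphism $B^*(\calH)\cong B^*(\calH')$ and identifies the pair $(\Laplace_t, Q_t)$ with $(\Laplace'_{(1,\ldots,1)}, Q'_{(1,\ldots,1)})$; in particular $\Phi_t(R^p_t\calH)=R^p_{(1,\ldots,1)}\calH'$, so the polarization question for $(\calH,t)$ is equivalent to the same question for $(\calH',(1,\ldots,1))$. Finally, using the observations of~\S\ref{dsplit}, I would pass to the Deligne $\delta$-splitting of the limit MHS of $\calH'$, which does not alter $B^*$, the pairing $Q_{(1,\ldots,1)}$, or the Hodge structures on $\Gr^W$.

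The remaining statement---that $Q_{(1,\ldots,1)}$ polarizes $\Gr^W_{p+k} R^p_{(1,\ldots,1)}$ for an $\mathbb{R}$-split real nilpotent orbit of pure Hodge structures of weight $k$---is essentially the main polarization theorem in~\cite{CKS87}, derived from the $\mathrm{SL}_2^r$-orbit theorem together with the classical Hodge--Riemann bilinear relations applied to the $\mathfrak{sl}_2^r$-primitive subspaces. The hard part will be precisely this last step: matching our definitions of $B^*$, $R^p_{(1,\ldots,1)}$, and the sign and twist conventions in $Q_{(1,\ldots,1)}$ with the corresponding objects in~\cite{CKS87}, and in particular verifying that $R^p_{(1,\ldots,1)}$ is exactly the ``primitive'' subcomplex on which their positivity is proved. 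Once this dictionary is established, the $\mathrm{SL}_2^r$-orbit theorem reduces the required positivity to Hodge--Riemann for a single polarized pure Hodge structure.
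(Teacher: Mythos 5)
Your proposal is correct and follows essentially the same route as the paper: non-degeneracy comes down to the linear algebra of \S\ref{ed.0}, and the polarization is obtained by reducing to the case $t=(1,\ldots,1)$ with limit split over $\mathbb{R}$ (rescaling as in Lemma~\ref{scale-nilp} plus the $\delta$-splitting of \S\ref{dsplit}) and then citing~\cite{CKS87}. The only differences are cosmetic: you make the rescaling isomorphism $\Phi_t$ explicit and derive non-degeneracy by comparing $Q_t$ with $q_t\circ(\id\otimes a_Q)$, whereas the paper quotes \S\ref{ed.0} directly and notes in addition that the pairing kills $W_{p+k-1}R^p_t\mathcal{H}$ for weight reasons, which in your argument is absorbed into the appeal to~\cite{CKS87}.
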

\begin{proof}
Again it suffices to check this for real nilpotent orbits.
  The non-degeneracy follows from \S\ref{ed.0}.   The restriction
of the pairing to $R_t^p\mathcal{H}$ then 
gives a pairing 
\begin{equation}\label{rtor}
R_t^p\mathcal{H}\otimes R_t^p\mathcal{H}\to\mathbb{F}(-p-k).
\end{equation}
Since $R_t^p\mathcal{H}\subset W_{p+k}B^p\mathcal{H}$, the pairing
\eqref{rtor} vanishes on $W_{p+k-1}R^p_t\mathcal{H}$ for weight reasons.  
In the case that $t=(1,\ldots, 1)$ and the limit is split over $\mathbb{R}$
this pairing is a polarization by~\cite{CKS87}.   We reduce to that case 
by applying the $\delta$-splitting of \S\ref{dsplit} and rescaling.
\end{proof}

Note that a polarization on a pure Hodge structure induces a canonical 
polarization on any sub Hodge structure (by restriction), and it also
induces a canonical polarization on any quotient Hodge structure 
(by orthogonal projection).  
Proposition~\ref{p.Rpol} therefore gives a canonical polarization
\begin{equation}\label{higherhp}
\overline{Q}_t:\Gr^W_{p+k} \IH^p\mathcal{H}\otimes\Gr^W_{p+k}\IH^p\mathcal{H}
\to\mathbb{F}(-p-k)
\end{equation}
for $t\in\mathbb{F}_+^r$. 
Since $\IH^p\mathcal{H}$ is concentrated in weights $(-\infty,p+k]$,
the pairing~\eqref{higherhp} gives a pairing
\begin{equation}
  \label{higherhp2}
  \IH^p\mathcal{H}\otimes\IH^p\mathcal{H}\to\mathbb{F}(-p-k)
\end{equation}
also denoted by $\overline{Q}_t$ 
(with kernel $W_{p+k-1}\IH^p\mathcal{H}$).

\begin{lemma}
  For $\alpha,\beta\in B(\mathcal{H})$ we have $Q_t(d\alpha,\beta)=
Q_t(\alpha,\delta_t\beta)$.  
\end{lemma}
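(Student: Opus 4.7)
The plan is to run the argument of Proposition~\ref{p.adjoint} verbatim, with the canonical pairing $(\cdot,\cdot):H\otimes H^*\to\mathbb{F}$ replaced by the polarization $Q:H\otimes H\to\mathbb{F}(-k)$. First, by bilinearity I would reduce to the case of monomial generators $\alpha=N_Ju\otimes e_J$ and $\beta=N_Kv\otimes e_K$, where $J,K\subset\{1,\dots,r\}$ are strictly increasing multi-indices and $u,v\in H$.

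Next I would observe that both $Q_t(d\alpha,\beta)$ and $Q_t(\alpha,\delta_t\beta)$ vanish unless $K=J\cup\{i\}$ for a single index $i\notin J$: the Koszul differential $d$ only adds indices to multi-indices, while the contraction $\delta_t$ only removes them, so matching the multi-indices inside $Q_t$ forces this relationship. In that case I would fix $\epsilon\in\{\pm 1\}$ with $e_i\wedge e_J=\epsilon e_K$ and verify that both sides equal $\epsilon t_K Q(u,N_Kv)$: for $Q_t(d\alpha,\beta)$ this uses $N_iN_J=N_K$ (since the $N_i$ commute), while for $Q_t(\alpha,\delta_t\beta)$ it uses the fact that the coefficient of $e_J$ in $\iota_{X(t)}e_K$ is $\epsilon t_i$, together with $t_it_J=t_K$.

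There is no real obstacle here. The argument is purely formal and is the same Koszul-combinatorial adjointness already used for $q_t$ in Proposition~\ref{p.adjoint}; in particular, it uses neither the symmetry properties of $Q$ nor the fact that the $N_i$ are infinitesimal isometries for $Q$. The pairing $Q_t$ was defined precisely so that the $d$-$\delta_t$ adjointness that worked for $q_t$ carries over word for word, and the only bookkeeping is verifying that the same sign $\epsilon$ appears on both sides.
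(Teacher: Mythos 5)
Your proof is correct and matches the paper, whose own proof of this lemma consists of the single sentence ``The proof is essentially the same as the proof of Proposition~\ref{p.adjoint}''; the reduction to monomial generators, the observation that both sides vanish unless $K=J\cup\{i\}$, and the sign bookkeeping via $e_i\wedge e_J=\epsilon e_K$ and $t_it_J=t_K$ are exactly what is meant. One small caveat on your closing remark: while the adjointness computation itself is indeed purely combinatorial, the well-definedness of $Q_t$ on $B^p\mathcal{H}$ (i.e.\ that $t_IQ(u,N_Jv)$ is independent of the chosen representative $N_Iu\otimes e_I$) does quietly use that each $N_i$ is an infinitesimal isometry of $Q$, so that property has not entirely disappeared from the picture — it has merely been absorbed into the definition.
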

\begin{proof}
The proof is essentially the same as the proof of Proposition~\ref{p.adjoint}.
\end{proof}

\begin{corollary}\label{LPerpToKer}
  Suppose $\alpha\in L_t^p\mathcal{H}$ and $\beta\in \ker(R^p_t\mathcal{H}\to 
\Gr^W_{p+k}\IH^{p}\mathcal{H})$.  Then $Q_t(\alpha,\beta)=0$.
\end{corollary}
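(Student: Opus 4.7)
The plan is to decompose $\beta$ as a low-weight cocycle plus a coboundary, and dispose of the two pieces via weight strictness and the Koszul adjointness formula, respectively. Since $\beta$ maps to zero in $\Gr^W_{p+k}\IH^p\mathcal{H}$ while representing a class $[\beta]\in\IH^p\mathcal{H}$, and $\IH^p\mathcal{H}$ has weights in $(-\infty,p+k]$, the class $[\beta]$ lies in $W_{p+k-1}\IH^p\mathcal{H}$. The map $R^p_t\mathcal{H}\to\IH^p\mathcal{H}$ is a surjective morphism of mixed Hodge structures (surjectivity and the weight bound on $R^p_t\mathcal{H}$ were established in the theorem preceding Proposition~\ref{p.Rpol}), so by strictness with respect to $W$ I can choose $\beta'\in W_{p+k-1}R^p_t\mathcal{H}$ with $[\beta']=[\beta]$. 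As $\beta$ and $\beta'$ are both $d$-closed and represent the same class in $H^p(B^\bullet\mathcal{H})=\IH^p\mathcal{H}$, I can write $\beta=\beta'+d\mu$ for some $\mu\in B^{p-1}\mathcal{H}$, giving
$$Q_t(\alpha,\beta)=Q_t(\alpha,\beta')+Q_t(\alpha,d\mu).$$

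The first summand vanishes by a pure weight count. By Proposition~\ref{p.Rpol}, $Q_t$ restricts to a morphism of MHS $R^p_t\mathcal{H}\otimes R^p_t\mathcal{H}\to\mathbb{F}(-p-k)$ whose target is pure of weight $2(p+k)$. Since $\alpha\in R^p_t\mathcal{H}$ has weight at most $p+k$ and $\beta'$ has weight at most $p+k-1$, the element $\alpha\otimes\beta'$ lies in $W_{2(p+k)-1}$, and strictness of morphisms of MHS forces $Q_t(\alpha,\beta')=0$.

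For the second summand I would invoke the $Q_t$-analog of the adjointness formula of Proposition~\ref{p.adjoint}, namely $Q_t(d\gamma,\eta)=Q_t(\gamma,\delta_t\eta)$, which is exactly the lemma stated just before the corollary and is proved by the same direct Koszul computation as for $q_t$. Combined with the $(-1)^{p+k}$-symmetry of $Q_t$ on $B^p\mathcal{H}$ (a consequence of the $(-1)^k$-symmetry of $Q$ and the infinitesimal-isometry relation $Q(N_ix,y)=-Q(x,N_iy)$, applied $p$ times), this yields
$$Q_t(\alpha,d\mu)=(-1)^{p+k}Q_t(d\mu,\alpha)=(-1)^{p+k}Q_t(\mu,\delta_t\alpha)=0,$$
since $\alpha\in L^p_t\mathcal{H}$ satisfies $\delta_t\alpha=0$.

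The argument is essentially routine, so there is no serious obstacle. The only point demanding minor care is checking that the map $R^p_t\mathcal{H}\to\IH^p\mathcal{H}$ really is a morphism of mixed Hodge structures to which strictness applies; this follows from the fact that $R^p_t\mathcal{H}=\ker d\cap\ker\Laplace_t$ is a sub-MHS of $B^p\mathcal{H}$ (both $d$ and $\Laplace_t=N(t)$ being morphisms of MHS, up to a Tate twist for the latter) and that $\IH^p\mathcal{H}$ carries its canonical MHS as the cohomology of $B^\bullet\mathcal{H}$.
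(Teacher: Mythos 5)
Your proof is correct and follows essentially the same strategy as the paper's: decompose $\beta$ into a low-weight closed piece plus a coboundary, kill the low-weight piece by the weight count $W_{p+k}\otimes W_{p+k-1}\to W_{2(p+k)-1}\mathbb{F}(-p-k)=0$, and kill the coboundary using the Koszul adjointness and $\delta_t\alpha=0$. The paper writes the decomposition directly as $\beta=d\gamma+\eta$ with $\eta\in W_{p+k-1}B^p\mathcal{H}$ (an equivalent use of strictness on the complex $B^\bullet\mathcal{H}$ rather than on the surjection $R^p_t\mathcal{H}\to\IH^p\mathcal{H}$), and it applies the adjointness in the form $Q_t(\alpha,d\gamma)=Q_t(\delta_t\alpha,\gamma)$ directly — you obtain the same thing via the $(-1)^{p+k}$-symmetry of $Q_t$, which is a valid, if slightly more roundabout, way to justify that both adjoint identities hold for $Q_t$ just as for $q_t$.
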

\begin{proof}
  Since $\beta\in R^P_t\mathcal{H}$,
  we have $\beta\in W_{p+k}B^P\mathcal{H}$.
  Then, the hypothesis implies that $\beta=d\gamma + \eta$
  where $\gamma\in W_{p+k}B^{p-1}\mathcal{H}$
  and $\eta\in W_{p+k-1}B^P\mathcal{H}$.
  So
  $Q_t(\alpha,\beta)=Q_t(\alpha, d\gamma+\eta)=
  Q_t(\delta_t\alpha,\gamma)+Q_t(\alpha,\eta)=Q_t(\alpha,\eta)=0$ for
  weight reasons.
\end{proof}

\subsection{} Suppose $\mathcal{H}$ is pure of weight $-1$.  Then the map 
$a_Q:\mathcal{H}\to \mathcal{H}^*(1)=\mathcal{H}^{\vee}$ induces maps $B^P\mathcal{H}
\to B^p(\mathcal{H}^{\vee})$ and  
$\IH^p\mathcal{H}\to \IH^p(\mathcal{H}^{\vee})$, which we also denote by 
$a_Q$.   Write $h_Q(t):\IH^1\mathcal{H}\otimes \IH^1\mathcal{H}\to
\mathbb{Q}$ for the pairing
\begin{equation}
  \label{e.DefOfhQ}
  ([\alpha],[\beta])\mapsto h(t)(a_Q[\alpha],[\beta]).
\end{equation}

\begin{theorem}
Suppose $[\alpha], [\beta]\in \IH^1\mathcal{H}$ with $\mathcal{H}$ pure
of weight $-1$.   
$$
h_Q(t)([\alpha], [\beta])=\overline{Q}_t([\alpha],[\beta]).
$$
\end{theorem}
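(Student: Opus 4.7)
The plan is to pick representatives in the smallest relevant subcomplex so that the defining formulas for both sides collapse to the same expression. By Lemma~\ref{RepInKernel}, I would represent both $[\alpha]$ and $[\beta]$ by cocycles $\alpha,\beta\in L^1_t(\mathcal{H})=\ker d\cap\ker\delta_t\subseteq B^1(\mathcal{H})$. Since $\Laplace_t=N(t)$ on the Koszul complex (Lemma~\ref{cks-lemma}), any such representative also lies in $R^1_t(\mathcal{H})=\ker d\cap\ker N(t)$. Therefore, by the construction of $\overline{Q}_t$ from the polarization of $\Gr^W_{k+1}R^1_t(\mathcal{H})$ in Proposition~\ref{p.Rpol} and the extension~\eqref{higherhp2} to all of $\IH^1\mathcal{H}\otimes\IH^1\mathcal{H}$, the right-hand side reduces to $\overline{Q}_t([\alpha],[\beta])=Q_t(\alpha,\beta)$.

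For the left-hand side, since $a_Q\colon\mathcal{H}\to\mathcal{H}^{\vee}$ is a morphism of variations, it induces a morphism of complexes $B^{\bullet}(\mathcal{H})\to B^{\bullet}(\mathcal{H}^{\vee})$ commuting with both $d$ and $\delta_t$, so $a_Q\alpha\in L^1_t(\mathcal{H}^{\vee})$. The vanishing $\delta_t(a_Q\alpha)=0$ allows me to take $l(t)=0$ in Proposition~\ref{ahp1}, yielding $h(t)(a_Q[\alpha],[\beta])=q_t(a_Q\alpha,\beta)$, where $q_t$ is interpreted as the canonical pairing $B^1(\mathcal{H}^{\vee})\otimes B^1(\mathcal{H})\to\mathbb{F}$ via $(\mathcal{H}^{\vee})^{*}\cong\mathcal{H}$. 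Writing $\alpha_i=N_iu_i$ and $\beta_i=N_iv_i$, and using the intertwining $a_QN_i=-N_i^{*}a_Q$ that matches the monodromy logarithm $-N_i^{*}$ on $\mathcal{H}^{\vee}$, one expresses $a_Q\alpha=\sum_iN_i^{\vee}(a_Qu_i)\otimes e_i$; plugging into the defining formula of $q_t$ from~\S\ref{ss:qt} and using $a_Q(u_i)(N_iv_i)=Q(u_i,N_iv_i)$ recovers $\sum_it_iQ(u_i,N_iv_i)=Q_t(\alpha,\beta)$, as required.

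The main obstacle is the sign and Tate-twist bookkeeping when $q_t$, originally defined on $B^p\mathcal{L}\otimes B^p\mathcal{L}^{*}$, is applied with first factor in $B^1\mathcal{H}^{\vee}$ and second in $B^1\mathcal{H}$: Remark~\ref{signremark} introduces a factor of $(-1)^p=-1$ under literal swap of arguments, and the monodromy-log substitutions $N_i\leadsto N_i^{\vee}=-N_i^{*}$ on $\mathcal{H}^{\vee}$ and $(N_i^{\vee})^{*}=-N_i$ on $(\mathcal{H}^{\vee})^{*}$ must be carried through consistently in the defining formula. Once the conventions are pinned down, all signs cancel and the two explicit expressions agree term by term, giving the stated equality.
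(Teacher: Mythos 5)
Your proof is correct and follows the same route as the paper: choose representatives $\alpha,\beta\in L^1_t\calH$ (which, as you note, lie in $R^1_t\calH$ by Lemma~\ref{cks-lemma}) and then equate $q_t(a_Q\alpha,\beta)$ with $\overline{Q}_t([\alpha],[\beta])$; your careful sign and Tate-twist check, which the paper leaves implicit, does come out right. The one step you should cite explicitly is Corollary~\ref{LPerpToKer}, which the paper invokes at exactly this point: since $\overline{Q}_t$ is defined on $\Gr^W_{p+k}\IH^p$ by orthogonal projection from $R^p_t\calH$, the identity $\overline{Q}_t([\alpha],[\beta])=Q_t(\alpha,\beta)$ requires knowing that $L^1_t\calH$ is $Q_t$-orthogonal to $\ker(R^1_t\calH\to\Gr^W_{p+k}\IH^1\calH)$, which is precisely the content of that corollary.
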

\begin{proof}
 We can find $\alpha,\beta\in L^1_t\mathcal{H}$ representing $[\alpha]$
 and $[\beta]$ respectively.   Then, by Corollary~\ref{LPerpToKer} and
 the definition of $\overline{Q}_t$, we have 
$h(t)(a_Q[\alpha],[\beta])=q_t(a_Q\alpha, \beta)=\overline{Q}_t([\alpha],[\beta])$.
\end{proof}

The remainder of this section concerns the pairing $h(t)$ in a special
case which will be useful for studying the Ceresa cycle.

\begin{lemma}\label{cer-3}
  Suppose $L$ is a local system with unipotent monodromy on
  $\Delta^{*2}$.  Write $\gamma_i$, $i=1,2$ for the counterclockwise
  loops around $s_i=0$.  (So that $\pi_1(\Delta^{*2})$ is the free
  abelian group generated by $\gamma_1$ and $\gamma_2$.)  Suppose that
  the monodromy logarithms $N_1$ and $N_2$ are equal.  Write $N=N_1$
  and suppose further that $N^2=0$.  Let
  $i:\mathbb{Z}\to \pi_1(\Delta^{*2})$ be given by
  $1\mapsto \gamma_2-\gamma_1$.  Then
  \begin{enumerate}
  \item The map $NL\to B^1(L)$ given by $Nv\mapsto (0,Nv)$
    induces an isomorphism $NL\to \IH^1(L)$.
  \item The composition
    $$NL\to\IH^1(L)\to H^1(\Delta^{*2},L)\stackrel{i^*}{\to} H^1(\mathbb{Z},L)=L
    $$
    is the inclusion.
  \end{enumerate}
\end{lemma}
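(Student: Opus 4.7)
\medskip

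The plan is to compute everything directly from the partial Koszul complex and the group-cohomology model for $H^1(\Delta^{*2}, L)$, exploiting the identifications $N_1 = N_2 = N$ and $N^2 = 0$, which simplify the complex drastically. The whole computation should be an explicit linear-algebra calculation; there is no serious obstacle, but a little care is needed to keep the sign conventions for $i^*$ straight.

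For part (1), I would simply write out the complex $B^\bullet(L)$. In our setting, $B^0(L) = L$, $B^1(L) = NL \otimes e_1 \oplus NL \otimes e_2$, and $B^2(L) = N_1 N_2 L \otimes e_1 \wedge e_2 = N^2 L \otimes e_1\wedge e_2 = 0$. Hence $Z^1(B(L)) = B^1(L)$, and the differential $d \colon B^0(L) \to B^1(L)$ sends $v \mapsto (Nv, Nv)$, so its image is the diagonal embedding of $NL$ in $NL \oplus NL$. Therefore
\[
\IH^1(L) \;=\; (NL \oplus NL)/\Delta(NL),
\]
and the assignment $Nv \mapsto [(0,Nv)]$ is visibly an isomorphism $NL \xrightarrow{\sim} \IH^1(L)$.

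For part (2), the key observation is that because $N^2 = 0$, one has $T_i = e^{N_i} = 1 + N$, so $T_i - 1 = N_i$; in particular, the Koszul complex $K(L)$ using the $N_i$'s is literally the group-cohomology complex of $\mathbb{Z}^2 = \pi_1(\Delta^{*2})$ with coefficients in $L$, with a $1$-cocycle $c$ corresponding to the pair $(\alpha_1, \alpha_2) := (c(\gamma_1), c(\gamma_2))$ in $L \oplus L$ satisfying $N\alpha_1 = N\alpha_2$. I would then apply $i^*$ to the cocycle representing $[(0, Nv)]$ in $K^1(L)$: using the cocycle identity $c(g+h) = c(g) + g \cdot c(h)$ and $c(-\gamma_1) = -T_1^{-1} c(\gamma_1)$, a short direct calculation gives
\[
(i^* c)(1) \;=\; c(\gamma_2 - \gamma_1) \;=\; T_1^{-1}(\alpha_2 - \alpha_1) \;=\; (1-N)(\alpha_2 - \alpha_1).
\]
Applied to $(\alpha_1, \alpha_2) = (0, Nv)$, this yields $(1-N)(Nv) = Nv$, again using $N^2 = 0$. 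Since the monodromy of $i^* L$ is $T_2 T_1^{-1} = (1+N)(1-N) = 1$, we have $H^1(\mathbb{Z}, L) = L$ canonically, and this computation shows the composition sends $Nv$ to $Nv$, i.e.\ is the inclusion $NL \hookrightarrow L$.

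The only place that requires any thought is the second step, namely matching up the Koszul-complex model with the group-cohomology model well enough to compute $i^*$; but the hypothesis $N^2 = 0$ collapses this matching to the identity, so the main work is just bookkeeping of signs coming from the substitution $\gamma_1 \mapsto -\gamma_1$ and the identity $T_1^{-1} = 1 - N$.
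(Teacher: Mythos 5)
Your proof is correct and takes essentially the same route as the paper: both compute $\IH^1(L)$ directly from the partial Koszul complex $B^{\bullet}(L)$ (where $N^2=0$ forces $B^2(L)=0$, so $\IH^1(L)=(NL\oplus NL)/\Delta(NL)$) and then evaluate $i^*$ at the chain level. The only difference is one of presentation: the paper exhibits the quasi-isomorphism $[\ker N\to NL]\to B(L)$ and simply writes the chain map computing $i^*$ as $(-\id,\id)$, whereas you derive that formula from explicit group cocycles using $T_i-1=N_i$; the two chain-level formulas agree on cocycles since $T_1^{-1}(b-a)-(b-a)=-N(b-a)=0$ when $Na=Nb$.
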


  \begin{proof}
    Let $C(L)$ denote the complex $\ker N\to NL$ where the
    differential is $0$.  Then under the hypotheses, there is a
    morphism $C(L)\to B(L)$ which is the obvious inclusion on $C^0$
    and where $C^1(L)\to B^1(L)$ is given by $Nv\mapsto (0,Nv)$. It is
    easy to see that this morphism is a quasi isomorphism of complexes.  This proves (i).

    To prove (ii), we note that $H^1(\mathbb{Z},L)$ is computed by the Koszul
    complex $K(L)$ given by $L\to L$ with the $0$ differential.  Then the composition in (ii) is the map induced on the first cohomology by the following
    composition of complexes:
    $$\xymatrix{
      \ker N\ar[r]^{0}\ar[d] & 0\oplus NL\ar[d] \\
      L\ar[r]^{d}\ar[d]            & L\oplus L\ar[d]^{(-\id,\id)} \\
        L\ar[r]^{0}            & L.\\
        } $$
        (Here $d$ denotes the morphism in the Koszul complex.)
  \end{proof}

  \begin{proposition}\label{simple-formula}
    Suppose $\mathcal{H}$ is a variation of Hodge structure
    of weight $-1$ on $\Delta^{*2}$ with unipotent monodromy
    logarithms.  Suppose $N_1=N_2=N$ and $N^2=0$.
    Let $Q$ be a polarization of $\mathcal{H}$.   Identify $NH$ with
    $\IH^1(H)$ using Lemma~\ref{cer-3}.  Then, for each $t\in\mathbb{Q}^2_{\geq 0}$, the pairing
    $$
    h_Q(t):\IH^1\mathcal{H}\otimes\IH^1\mathcal{H}\to \mathbb{Q}
    $$
    amounts to a paring
    $$h_Q(t):NH\otimes NH\to \mathbb{Q}.$$
    This pairing is given, for $(t_1,t_2)\neq  (0,0)$ by
    $$
    h_Q(t)(Nh,Nk)=\frac{t_1t_2}{t_1+t_2}Q(h,Nk),
    $$
    and vanishes for $(t_1,t_2)=(0,0)$.  
  \end{proposition}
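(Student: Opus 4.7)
The plan is to compute $h_Q(t)$ directly by specializing the explicit formula~\eqref{ahp3} of Proposition~\eqref{ahp1} to the natural cocycle representatives of $\IH^1\mathcal{H}$ supplied by Lemma~\ref{cer-3}. Under the isomorphism $NH\cong \IH^1\mathcal{H}$ of Lemma~\ref{cer-3}(i), the class $[Nh]$ is represented by $\alpha := 0\otimes e_1 + Nh\otimes e_2 \in B^1(\mathcal{H})$, and similarly $[Nk]$ by $\beta := 0\otimes e_1 + Nk\otimes e_2$. Applying $a_Q:\mathcal{H}\to\mathcal{H}^{\vee}$ componentwise and using that $N$ is an infinitesimal isometry of $Q$ (so $a_Q(Nh) = -N^{*}a_Q(h)$), one obtains $a_Q\alpha = 0\otimes e_1 + (-N^{*}a_Q(h))\otimes e_2 \in B^1(\mathcal{H}^{\vee})$. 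Note that the monodromy logarithms on $\mathcal{H}^{\vee}$ are $\widetilde{N}_i := -N^{*}$, so this element is a genuine cocycle in the $B$-complex for $\mathcal{H}^{\vee}$.

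Now assume $(t_1,t_2)\neq (0,0)$. To apply~\eqref{ahp3} to the pair $(a_Q\alpha,\beta)$, I first produce an element $l(t)\in H^{\vee}$ solving $\widetilde{N}(t)l(t) = (a_Q\alpha)(t)$. Since $\widetilde{N}(t) = -(t_1+t_2)N^{*}$ and $(a_Q\alpha)(t) = -t_2 N^{*}a_Q(h)$, I may take $l(t) = \tfrac{t_2}{t_1+t_2}a_Q(h)$. The $i=1$ summand in~\eqref{ahp3} vanishes because $\beta_1 = 0$. For $i=2$, a direct calculation gives $(a_Q\alpha)_2 - \widetilde{N}_2 l(t) = -\tfrac{t_1}{t_1+t_2}N^{*}a_Q(h)$; unwinding the definition of $(\,\cdot\,,\,\cdot\,)_{\widetilde{N}_2}$ from~\S\ref{ed.0} and converting it to the natural evaluation pairing between $H^{\vee}$ and $H$ then yields $\tfrac{t_1}{t_1+t_2}Q(h,Nk)$. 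Multiplying by the external factor $t_2$ produces the announced formula.

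The vanishing at $(t_1,t_2)=(0,0)$ is immediate from the homogeneity $h(ct) = ch(t)$ of Theorem~\eqref{ahp0}. The only real subtlety in the argument is sign bookkeeping: one must carefully track the passage from $N$ on $\mathcal{H}$ to $\widetilde{N}=-N^{*}$ on $\mathcal{H}^{\vee}$, and correctly interpret the adjoint pairing $(\,\cdot\,,\,\cdot\,)_{\widetilde{N}_2}$ from~\S\ref{ed.0} in the dual setting. Once these conventions are pinned down, the computation is a mechanical application of Proposition~\eqref{ahp1}.
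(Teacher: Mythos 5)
Your proof is correct and takes essentially the same approach as the paper. The paper obtains the formula by modifying $\beta$ to the representative $\beta(t)=Nk\otimes e_2-d\bigl(\tfrac{t_2}{t_1+t_2}k\bigr)\in L^1_t\calH$ and then directly evaluating $q_t(a_Q\alpha,\beta(t))$, whereas you apply the closed-form expression~\eqref{ahp3} of Proposition~\eqref{ahp1} to $(a_Q\alpha,\beta)$, which under the hood carries out the analogous modification on the other factor; both routes reduce to the same evaluation of $q_t$ on representatives killed by $\delta_t$ and yield $\tfrac{t_1t_2}{t_1+t_2}Q(h,Nk)$.
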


  \begin{proof}
    Under the isomorphism $NH\to\IH^1\mathcal{H}$ of Lemma~\ref{cer-3},
    the element $Nh$ (resp.  $Nk$) corresponds to the class
    $\alpha=Nh\otimes e_2$ (resp. 
    $\beta=Nk\otimes e_2$) in  $B^1H$.
    For, $t\in \mathbb{Q}_{\geq 0}^2\setminus\{(0,0)\}$, set 
    \begin{align*}\beta(t)&=Nk\otimes e_2 - d\left(\frac{t_2}{t_1+t_2} k\right)\\
                          &=-\frac{t_2}{t_1+t_2} Nk\otimes e_1+\frac{t_1}{t_1+t_2}Nk\otimes e_2.
    \end{align*}
    And set $\beta(t)=Nk\otimes e_2$ for $t=(0,0)$.  
    Then $\delta_t\beta=0$.   So $\beta(t)\in L^1_t\mathcal{H}$.
    Therefore, the pairing vanishes when $t=(0,0)$, and, when $t$ is non-zero, 
    $\displaystyle h_Q(t)(\alpha,\beta)=h(t)(a_Q\alpha,\beta)=
    q_t(a_Q\alpha,\beta(t))=
    t_2Q(h, \frac{t_1}{t_1+t_2}Nk)= \frac{t_1t_2}{t_1+t_2}Q(h, Nk)$.
  \end{proof}

\section{Comparison Theorem}\label{several}

\par In this section we relate the asymptotic height pairing \eqref{ahp0} and the 
asymptotics of the height of the biextension line bundle.  We begin with a minor
modification of Remark \ref{rat-mu}.

\begin{lemma}\label{stronger-mu} Let $V_{\mathbb Q}=\mathbb Q v_0\oplus U_{\mathbb Q}\oplus\mathbb Q v_{-2}$
be a finite dimensional vector space.  Suppose that $N$ is a nilpotent endomorphism of $V_{\mathbb Q}$
such that 
$$
   N(v_0) = \mu v_{-2},\qquad N(U_{\mathbb Q})\subseteq U_{\mathbb Q},\qquad N(v_{-2}) = 0.
$$
Let $e_0 = v_0 + u + c v_{-2}$ be another element of $V_{\mathbb Q}$ such that $N(e_0) = \mu' v_{-2}$
where $u\in U_{\mathbb Q}$.  Then, $\mu = \mu'$.
\end{lemma}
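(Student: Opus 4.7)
The plan is to compute $N(e_0)$ directly using linearity and the hypotheses on $N$, and then extract the conclusion from the direct sum decomposition of $V_{\mathbb Q}$.

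More explicitly, I would first expand
\[
N(e_0) = N(v_0) + N(u) + c\,N(v_{-2}) = \mu v_{-2} + N(u),
\]
using $N(v_0) = \mu v_{-2}$ and $N(v_{-2}) = 0$. Comparing this with the assumption $N(e_0) = \mu' v_{-2}$ gives
\[
N(u) = (\mu' - \mu)\, v_{-2}.
\]
Now $N(u) \in U_{\mathbb Q}$ because $N$ preserves $U_{\mathbb Q}$ by hypothesis, while $(\mu'-\mu)v_{-2} \in \mathbb Q v_{-2}$. Since $U_{\mathbb Q} \cap \mathbb Q v_{-2} = 0$ in the given direct sum decomposition, both sides must vanish; in particular $\mu' - \mu = 0$.

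There is no real obstacle here: the lemma is a one-line check whose only content is that $N$ respects the direct sum decomposition well enough that the $v_{-2}$-component of $N(v_0 + u + cv_{-2})$ is determined purely by $N(v_0)$. The lemma's role is presumably to show that the scalar $\mu$ defined in Remark \ref{rat-mu} is independent of the choice of lift $v_0$ of $1 \in \Gr^W_0$ within the prescribed type of splitting, so the proof above is exactly what is needed.
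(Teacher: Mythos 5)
Your proof is correct and is essentially identical to the paper's: both reduce to the identity $N(u)=(\mu'-\mu)v_{-2}$ and conclude from $N(U_{\mathbb Q})\subseteq U_{\mathbb Q}$ and $U_{\mathbb Q}\cap\mathbb Q v_{-2}=0$ that both sides vanish. Your closing remark about the lemma's role (independence of $\mu$ from the choice of lift in Remark~\ref{rat-mu}) also matches how it is used in the paper.
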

\begin{proof} 
By definition, 
$$
          (\mu' - \mu)v_{-2} = N(e_0 - v_0) = N(u + cv_{-2}) = N(u).
$$  
Since $N(u)\in U_{\mathbb Q}$ and $U_{\mathbb Q}\cap\mathbb Q v_{-2} = 0$ it follows that 
both the right and left hand sides of the previous equation vanish.
\end{proof}

\begin{theorem}\label{JDG5} Let $\mathcal V$ be an admissible biextension variation
with unipotent monodromy over $\Delta^{*r}$ with underlying normal functions 
$\nu\in\ANF(\Delta^{*r},\mathcal H)$ and $\omega\in\ANF(\Delta^{*r},\mathcal H^{\vee})$.
Let $\sing(\nu)=\alpha$ and $\sing(\omega) = \beta$.  Define $\mu(\mathcal V)$ as
in equation \eqref{eq:mu-def}.  Define $\mu_1,\dots,\mu_r$ as in Remark \eqref{rat-mu}.
Then, for any $t\in\mathbb Z^r_{\geq 0}$, 
\begin{equation}
  h(t)(\alpha,\beta) = - \mu(\mathcal V)(t) + \sum_j\, t_j\mu_j .   \label{eq:HeightJumpRel}
\end{equation}
In particular, $h(t)(\alpha,\beta)\equiv -\mu(\mathcal V)(t)$ modulo a linear
function of $t$. 
\end{theorem}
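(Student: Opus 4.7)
The plan is to fix a convenient rational splitting of the limit fiber $V$ of $\mathcal{V}$, read off all the ingredients of~\eqref{eq:HeightJumpRel} from the block form of the monodromy logarithms, and then compare the two sides.

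First I would pick rational lifts $v_0\in V_{\mathbb{Q}}$ of $1\in\Gr^W_0$ and $v_{-2}\in W_{-2}V_{\mathbb{Q}}$ of $1^{\vee}\in\Gr^W_{-2}$ together with a splitting $V_{\mathbb{Q}}=\mathbb{Q}v_0\oplus\mathcal{H}_{\mathbb{Q}}\oplus\mathbb{Q}v_{-2}$, so that each monodromy logarithm $N_j$ has the block form
\[
N_j=\begin{pmatrix}0 & 0 & 0\\ \alpha_j & N_j^{\mathcal{H}} & 0\\ b_j & \beta_j & 0\end{pmatrix},\qquad \alpha_j\in\mathcal{H}_{\mathbb{Q}},\ \beta_j\in\mathcal{H}_{\mathbb{Q}}^{*},\ b_j\in\mathbb{Q},
\]
where $N_j^{\mathcal{H}}$ is the induced monodromy logarithm on $\mathcal{H}$. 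Inspection gives that $\sing(\nu)$ is represented in $K^1(\mathcal{H})$ by $\sum_j\alpha_j\otimes e_j$, while tracing the identification $(W_{-1}/W_{-3})\mathcal{V}\cong\omega$ through the duality of Proposition~\eqref{p.dual} (so that $\mathcal{V}_{\omega}=W_{-1}^{*}(1)$) gives $\sing(\omega)=-\sum_j\beta_j\otimes e_j$ in $K^1(\mathcal{H}^{\vee})$.

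Next, Proposition~\eqref{ced6} applied to both singularities produces, for each $t\in\mathbb{Q}_{\geq 0}^{r}$, elements $l(t)\in\mathcal{H}_{\mathbb{Q}}$ with $N(t)^{\mathcal{H}}l(t)=\alpha(t)$ and $\lambda(t)\in\mathcal{H}_{\mathbb{Q}}^{*}$ with $(N(t)^{\mathcal{H}})^{*}\lambda(t)=\beta(t)$; write $l_j=l(\epsilon_j)$ and $\lambda_j=\lambda(\epsilon_j)$. The existence of $\lambda(t)$ is exactly what makes $U(t):=\{h+\lambda(t)(h)v_{-2}\mid h\in\mathcal{H}_{\mathbb{Q}}\}$ an $N(t)$-invariant complement to $\mathbb{Q}v_{-2}$ in $W_{-1}V_{\mathbb{Q}}$, so the decomposition $V_{\mathbb{Q}}=\mathbb{Q}v_0\oplus U(t)\oplus\mathbb{Q}v_{-2}$ is of the type required by Remark~\eqref{rat-mu} and Lemma~\eqref{stronger-mu}. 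A short matrix calculation with the perturbed lift $v_0-l(t)-\lambda(t)(l(t))v_{-2}$ then yields
\[
\mu(\mathcal{V})(t)=b(t)-\beta(t)(l(t)),\qquad \mu_j=b_j-\beta_j(l_j),\qquad b(t):=\sum_j t_jb_j.
\]

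Writing $-\beta_i=(N_i^{\mathcal{H}})^{*}(-\lambda_i)$ to put $\sing(\omega)$ in the form required by Proposition~\eqref{ahp1}, that proposition gives
\[
h(t)(\sing(\nu),\sing(\omega))=-\sum_i t_i\,\lambda_i\bigl(\alpha_i-N_i^{\mathcal{H}}l(t)\bigr)=\sum_i t_i\,\beta_i\bigl(l(t)-l_i\bigr),
\]
after substituting $\alpha_i=N_i^{\mathcal{H}}l_i$ and $\lambda_i\circ N_i^{\mathcal{H}}=\beta_i$. Subtracting the formulas above for $\mu(\mathcal{V})(t)$ and the $\mu_j$'s gives
\[
\sum_j t_j\mu_j-\mu(\mathcal{V})(t)=\sum_j t_j\beta_j\bigl(l(t)-l_j\bigr),
\]
which agrees with the expression for $h(t)$ and proves~\eqref{eq:HeightJumpRel}. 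The only non-mechanical point is the sign identification of $\sing(\omega)$ via Proposition~\eqref{p.dual}: the minus sign there is precisely what produces $-\mu(\mathcal{V})(t)+\sum_j t_j\mu_j$ rather than $+\mu(\mathcal{V})(t)+\sum_j t_j\mu_j$ on the right-hand side of the theorem.
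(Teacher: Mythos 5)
Your proof is correct and follows essentially the same route as the paper's: choose a rational splitting $V_{\mathbb{Q}}=\mathbb{Q}v_0\oplus\mathcal{H}_{\mathbb{Q}}\oplus\mathbb{Q}v_{-2}$, write each $N_j$ in block form, compute $\mu(\mathcal{V})(t)$ and the $\mu_j$ via Remark~\eqref{rat-mu} and Lemma~\eqref{stronger-mu}, and compare with the formula for $h(t)$ from Proposition~\eqref{ahp1}. (The sign bookkeeping on the $(3,2)$-entry versus the representative of $\sing(\omega)$ differs from the paper's, but the two conventions are equivalent; both correctly track the $-N_i^*$ convention from \S\ref{ed1}.)

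The one genuine, if small, variation is that you exhibit the explicit $N(t)$-invariant complement $U(t)$ using the dual solution $\lambda(t)$ produced by Proposition~\eqref{ced6}. The paper avoids constructing any invariant complement: it only checks that $N(t)e_0(t)\in W_{-2}$ for $e_0(t)=e_0-l(t)$ and then appeals to Lemma~\eqref{stronger-mu}, which needs just the abstract existence (from admissibility) of \emph{some} decomposition satisfying Remark~\eqref{rat-mu}'s conditions. Your version is more explicit but not logically shorter, and once you invoke Lemma~\eqref{stronger-mu} the construction of $U(t)$ is actually redundant.

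One phrasing slip worth fixing: the decomposition $\mathbb{Q}v_0\oplus U(t)\oplus\mathbb{Q}v_{-2}$ is \emph{not} of the type required by Remark~\eqref{rat-mu}, because $N(t)v_0=\alpha(t)+b(t)v_{-2}\notin W_{-2}$ unless $\alpha(t)=0$. It is the decomposition with the perturbed lift $\tilde v_0=v_0-l(t)-\lambda(t)(l(t))v_{-2}$ (which you immediately pass to in the next sentence) that satisfies the hypotheses of Remark~\eqref{rat-mu}. Since the subsequent computation is carried out with $\tilde v_0$, this is a wording issue rather than a gap.
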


\begin{proof}
Let $V$ be the $\mathbb{Q}$-vector space corresponding to a reference fiber of $\mathcal V$.  Pick a splitting of the weight filtration $W$ of 
$V$ so that $V=\mathbb{Q}e_0 \oplus H
\oplus \mathbb{Q}e_{-2}$ where: 
\begin{enumerate}
\item the image of $e_0$ is the canonical generator of $\Gr^W_0 V=\mathbb{Q}$;
\item $H$ maps isomorphically to $Gr^W_{-1}$;
\item $e_{-2}$ is the canonical generator of $W_{-2} V=\mathbb{Q}(1)$. 
\end{enumerate}
Write $\bar N_i$ for the $i$-th monodromy operator on $H$ and write $N_i$ for the 
$i$-th monodromy operator on $V$.   Then, in terms of the splitting of $W$ chosen above,
we have 
\[
N_i=
\begin{pmatrix}
  0       &    0      &    0 \\
 \alpha_i &  \bar N_i  &    0\\
 \gamma_i & -\beta_i   &   0 
\end{pmatrix}
\]
where $\gamma_i\in \mathbb{Q}(1)$ and 
$(\alpha_1,\ldots, \alpha_r)$ (resp. $(\beta_1,\ldots, \beta_r)$)
is a representative of the class $\alpha$ (resp. $\beta$) 
in $Z^1(B(H))$ (resp. $Z^1(B(H^{\vee}))$.  (\S\ref{ed1} explains why
$\beta_i$ appears with a minus sign in the matrix of $N_i$.)

\par Set $\bar N(t)=\sum t_i \bar N_i, \gamma(t)=\sum t_i\gamma_i$, etc.
Let $l(t)\in H$ be an element such that $\bar N(t)l(t)=\alpha(t)$. 
Set $e_0(t)=e_0- l(t)$.
Then, 
\begin{align*}
  N(t)e_0(t)&=\alpha(t)+\gamma(t)e_{-2}-\bar N(t)l(t) + (l(t),\beta(t))e_{-2}\\
            &=(\gamma(t) + (l(t),\beta(t)))e_{-2}.
\end{align*}
Therefore, by Remark~\eqref{rat-mu} and Lemma \eqref{stronger-mu}, 
$\mu(\mathcal V)(t) = \gamma(t) + (l(t),\beta(t))$.
Likewise, we have $\mu_i = \gamma_i + (l_i,\beta_i)$ where $\bar N_i(l_i) = \alpha_i$.

\par On the other hand, by \eqref{ahp3}
$$
h(t)(\alpha,\beta)=\sum_{i=1}^r\, t_i(\alpha_i-N_il(t),\beta_i)_{N_i}.
$$
Consequently,
$$
\aligned
       \mu(\mathcal V)(t) + h(t)(\alpha,\beta) 
       &= \gamma(t) + (l(t),\beta(t)) + \sum_{i=1}^r\, t_i(\alpha_i-N_il(t),\beta_i)_{N_i}  \\
       &= \gamma(t) + (l(t),\beta(t)) + \sum_{i=1}^r\, t_i(\alpha_i,\beta_i)_{N_i}
          -\sum_{i=1}^r\, t_i(N_il(t),\beta_i)_{N_i}  \\
       &=  \gamma(t) + (l(t),\beta(t)) + \sum_{i=1}^r\, t_i(\alpha_i,\beta_i)_{N_i}
          -\sum_{i=1}^r\, t_i(l(t),\beta_i)  \\
       &=  \gamma(t) + (l(t),\beta(t)) + \sum_{i=1}^r\, t_i(\alpha_i,\beta_i)_{N_i}
          -(l(t),\beta(t))  \\
       &=  \gamma(t) + \sum_{i=1}^r\, t_i(\alpha_i,\beta_i)_{N_i}  \\
       &=  \gamma(t) + \sum_{i=1}^r\, t_i(l_i,\beta_i) \\
       &=  \sum_{i=1}^r\, t_i \mu_i
\endaligned
$$
\end{proof}

\section{Boundary Values}\label{bdv} Let $\mathcal V\to\Delta^{*r}$ be an admissible biextension variation
with unipotent monodromy, and $C = \mathbb R^r_{>0}$.  Then, as noted in Remark \eqref{rat-mu}, the 
function $\mu(\mathcal V)(m)$ is defined on the closure $\bar C$ of $C$.  

\begin{lemma} Let $J\subset\{1,\dots,r\}$.  Then $\mu(\mathcal V)(m_1,\dots,m_r)$ is given by a rational
function on the subset $\bar C_J\subset \bar C$ on which $m_j>0$ if $j\in J$ and $m_j=0$ if
$j\notin J$.
\end{lemma}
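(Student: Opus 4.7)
My strategy is to exploit the formula
\[
\mu(\mathcal V)(m) \;=\; \gamma(m) + (l(m),\beta(m))
\]
derived in the proof of Theorem~\eqref{JDG5}, and to show that on $\bar C_J$ the auxiliary element $l(m) \in H_{\mathbb Q}$ (satisfying $\bar N(m)l(m)=\alpha(m)$) can be chosen as a rational function of the coordinates $\{m_j\}_{j\in J}$. The formula is independent of the choice of $l(m)$ by Lemma~\eqref{stronger-mu}, so any one rational choice suffices.

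First, I would work in the reference fiber $V = \mathbb Q e_0 \oplus H \oplus \mathbb Q e_{-2}$ with the block decomposition of each $N_i$ recalled in the proof of Theorem~\eqref{JDG5}, so that for $m\in \bar C_J$ we have $\bar N(m)=\sum_{j\in J}m_j\bar N_j$, $\alpha(m)=\sum_{j\in J}m_j\alpha_j$, etc. Existence of a solution $l(m)$ to $\bar N(m)l(m)=\alpha(m)$ for each $m\in\bar C_J$ is supplied by Proposition~\eqref{ced6}, since $\alpha=\sing(\nu)$ represents an intersection cohomology class.

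The core step is the rational choice of $l(m)$. Apply Theorem~\eqref{mono-cone} to the pure variation $\mathcal H$ (of weight $-1$) and to the nilpotent orbit generated by $\{\bar N_j\}_{j\in J}$: the monodromy weight filtration $W(\bar N(m))$ on $H$ is constant on the interior of this cone, i.e.\ on $\bar C_J$. Since the rank of a nilpotent endomorphism on $H$ is determined by its monodromy weight filtration (via the primitive/Jacobson--Morozov decomposition, so that $\dim \ker \bar N(m) = \dim\Gr^W_0 + \dim\Gr^W_1$ for the associated filtration), the rank of $\bar N(m)$ is constant on $\bar C_J$. Fix any $m^\ast \in \bar C_J \cap \mathbb Q^r$ and choose row and column indices specifying a maximal nonzero minor of $\bar N(m^\ast)$; the corresponding minor $\Delta(m)$ of $\bar N(m)$ is a polynomial in $\{m_j\}_{j\in J}$ that does not vanish on $\bar C_J$. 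Cramer's rule applied to this non-degenerate subsystem of $\bar N(m)l=\alpha(m)$ then produces a solution $l(m)$ whose components are polynomials in $\{m_j\}_{j\in J}$ divided by $\Delta(m)$, i.e.\ a rational function on $\bar C_J$. Substituting into $\mu(m)=\gamma(m)+(l(m),\beta(m))$ exhibits $\mu$ as a rational function on $\bar C_J$.

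\textbf{Main obstacle.} The only non-trivial input is the constancy of the rank of $\bar N(m)$ on $\bar C_J$. Once this is in place, the rest is linear algebra via Cramer's rule together with the formula already established in the proof of Theorem~\eqref{JDG5}. One should also keep in mind that the choice of rational $l(m)$ is only unique modulo $\ker \bar N(m)$, so it is essential that the value of $\gamma(m)+(l(m),\beta(m))$ is independent of this choice---but that is precisely what Lemma~\eqref{stronger-mu} guarantees. As a cross-check, rationality also follows from the asserted rationality of $h(\nu,\omega)(t)$ (property (2) of the asymptotic height pairing from the introduction) combined with Theorem~\eqref{JDG5}, since the latter gives $\mu(\mathcal V)(m) = -h(m)(\alpha,\beta) + \sum_j m_j\mu_j$ on the dense set $\mathbb Z_{\geq 0}^r\cap \bar C_J$, an identity that extends to $\bar C_J$ by homogeneity and continuity.
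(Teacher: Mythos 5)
Your proposal and the paper's proof share the same skeleton: both begin from the formula $\mu(\mathcal V)(m)=\gamma(m)+(\ell(m),\beta(m))$ extracted from the proof of Theorem~\eqref{JDG5}, and both reduce the problem to exhibiting a solution $\ell(m)$ of $\bar N(m)\ell(m)=\alpha(m)$ that depends rationally on $m$.  Both also identify the constancy of the monodromy weight filtration of $\bar N(m)$ on $\bar C_J$ (via the Cattani--Kaplan--Schmid theory, Theorem~\eqref{mono-cone} applied to the induced $|J|$-variable orbit) as the crucial Hodge-theoretic input.

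Where your proof diverges from the paper's is in the linear-algebra step, and there is a genuine gap there.  You fix a maximal nonzero minor $\Delta(m^*)$ of $\bar N(m^*)$ at one basepoint and assert that the corresponding polynomial $\Delta(m)$ ``does not vanish on $\bar C_J$''.  Constancy of $\mathrm{rank}\,\bar N(m)$ only guarantees that \emph{some} minor of maximal size is nonzero at each $m\in\bar C_J$, not that the \emph{fixed} one chosen at $m^*$ survives: the kernel and the image of $\bar N(m)$ can in principle rotate within $H$ as $m$ varies while the rank stays constant, and that is exactly when a fixed row/column selection fails.  Ruling this out requires an additional argument, which you have not supplied.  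The paper sidesteps the issue by exploiting the $\mathrm{sl}_2$-triple $(N(m),H,N^+(m))$ with \emph{fixed} semisimple part $H$: it projects $\alpha(m)$ into the $(-1)$-eigenspace of $H$ and solves on $\Gr^{W_J}_{1}\to\Gr^{W_J}_{-1}$, which is an isomorphism for \emph{every} $m\in\bar C_J$ (not merely generic $m$), so that the determinant giving the denominator in Cramer's rule is automatically nowhere zero; a separate weight argument identifies the result of pairing $\ell_1(m)$ with $\beta(m)$ with the quantity one wants.  To repair your argument you would need to pin down a subspace of $H$ complementary to $\ker\bar N(m)$ that can be chosen independently of $m\in\bar C_J$; that is precisely what the fixed grading $H$ hands you, which is why the paper goes through the $\mathrm{sl}_2$-orbit theorem rather than appealing to raw rank constancy.

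Two smaller points.  Your appeal to Lemma~\eqref{stronger-mu} for the independence of $\gamma(m)+(\ell(m),\beta(m))$ under changes of $\ell(m)$ is not quite on point: that lemma requires an $N(m)$-invariant complement $U$, but $N(m)$ does not preserve $H\subset V$ when $\beta\ne 0$; the correct reason for the independence is that $\beta(m)\in\mathrm{im}\,\bar N(m)^*$ (Proposition~\eqref{ced6} applied to $\mathcal H^\vee$), so $\ker\bar N(m)$ is annihilated by pairing with $\beta(m)$.  Also, the ``cross-check'' at the end --- deducing rationality of $\mu$ from the asserted rationality of $h(t)(\nu,\omega)$ --- is circular, since that rationality statement is one of the conclusions being established in this section of the paper rather than a prior input.
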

\begin{proof} The proof Theorem \eqref{JDG5} shows that $\mu(\mathcal V)(m) = \gamma(m) + (\ell(m),\beta(m))$.
Accordingly, the content of the lemma boils down to showing we can find a solution to the equation
$$
              N(m)\ell(m) = \alpha(m)
$$
using rational functions of $m$.  Without loss of generality, we assume that 
$J = \{1,\dots,a\}$.  Let $\mathcal H = Gr^W_{-1}\mathcal V$ with reference fiber $H$.  
Then, the nilpotent orbit of $\mathcal H$ determines an associated nilpotent orbit 
$$
          \tilde\theta(z) = e^{\sum_k\, z_k N_k}\tilde F
$$
with limit mixed Hodge structure split over $\mathbb R$ as in \S 2.  Moreover, $\tilde\theta(z)$
takes values in the appropriate classifying space of pure Hodge structure as soon as 
$\text{\rm Im}(z_1),\dots,\text{\rm Im}(z_r)>0$.  Therefore, 
$$
        \tilde\theta_a(z_1,\dots,z_a) = \tilde\theta(z_1,\dots,z_a,i,\dots,i)
$$
is a nilpotent orbit.  By~\cite{CKS86}, we therefore obtain an $sl_2$-triple $(N(m),H,N^+(m))$ for each 
$m\in \bar C_J$ where $H$ is a fixed semisimple element.  

\par To continue, we note that the fact that $H$ is constant reflects the fact that the monodromy weight
filtration $W_J := W(N(m))$ is constant on $\bar C_J$.  Moreover, $\alpha(m)$ and $\beta(m)\in W_{-1}(J)$.
Let $\alpha_{-1}(m)$ denote the projection of $\alpha(m)$ to the $-1$-eigenspace of $H$.  Then, since 
$N(m):Gr^{W_J}_1\to Gr^{W_J}_{-1}$ is an isomorphism, there exists a unique element $\ell_1(m)$ in 
the $+1$ eigenspace of $H$ such that $N(m)\ell_1(m) = \alpha_{-1}(m)$.  For weight reasons,
$$
         (\ell(m),\beta(m)) = (\ell_1(m),\beta(m))
$$
Accordingly, the rationality of $\mu(m)$ boils down to the rationality of the inverse map
$N(m)^{-1}:Gr^{W_J}_{-1}\to Gr^{W_J}_1$.
\end{proof}

\par Given this lemma, it is natural to ask of if $\mu(\mathcal V)$ has a continuous extension
from $C$ to $\bar C$.  To establish this, let $\mathcal G$ be a class of sequences of points
in $C$ with the property that any sequence of points $\{m(p)\}$ in $C$ which converges to 
$m_o\in\bar C-C$ contains a subsequence which belongs to $\mathcal G$.

\begin{lemma} Let $f:C\to\mathbb R$ be a function which has limit $L$ along every $\mathcal G$-sequence 
which converges to $m_o\in\bar C-C$.  Then, $f$ has limit $L$ along every sequence in $C$ converging to $m_o$.
\end{lemma}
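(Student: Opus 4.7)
The plan is to argue by contradiction, using the hypothesis that every sequence in $C$ converging to $m_o$ contains a $\mathcal{G}$-subsequence. This is the standard trick for upgrading convergence along a suitably rich class of sequences to full sequential convergence.

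Concretely, I would suppose, for contradiction, that there exists a sequence $\{m(p)\}$ in $C$ converging to $m_o$ along which $f(m(p))$ does not converge to $L$. Then there is some $\varepsilon>0$ and a subsequence $\{m(p_k)\}$ with $|f(m(p_k))-L|\geq\varepsilon$ for all $k$. The subsequence still converges to $m_o\in\bar C-C$, so by the defining property of $\mathcal{G}$ it contains a further subsequence $\{m(p_{k_j})\}$ lying in $\mathcal{G}$. By hypothesis, $f(m(p_{k_j}))\to L$ as $j\to\infty$, contradicting $|f(m(p_{k_j}))-L|\geq\varepsilon$.

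There is no real obstacle here; the statement is essentially a tautology once one unwinds the definition of $\mathcal{G}$. The only thing to be slightly careful about is the phrasing of the hypothesis, namely that $\mathcal{G}$-sequences are required to converge to $m_o$ (so when we extract the $\mathcal{G}$-subsequence, we should note it still converges to $m_o$, which is automatic for any subsequence of a convergent sequence). Beyond this bookkeeping, the argument is a direct subsequence-of-a-subsequence manipulation.
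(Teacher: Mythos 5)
Your proof is correct and is essentially the same as the paper's; the paper just spends an initial (and strictly speaking redundant) paragraph establishing that $f(m(p))$ is eventually bounded before carrying out exactly the subsequence-of-a-subsequence contradiction you give.
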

\begin{proof} Let $m(p)$ be a sequence in $C$ which converges to $m_o$.  Then, there exists a real
number $B$ and an index $p'$ such that $|f(m(p))|<B$ for all $p>p'$.  Indeed, if this not true we
can find a subsequence $m(p_i)$ such that $|f(m(p_i)|>i$ for all $i$ sufficiently large.  By the
defining property of $\mathcal G$, we can find a $\mathcal G$-subsequence of $m(p_i)$ which converge 
to $m_o$.  By hypothesis, $f$ has limit $L$ along $\mathcal G$-sequences through $m_o$.

\par Suppose now that $\lim_{p\to\infty}\, f(m(p))\neq L$.  Then, there exists an $\epsilon>0$
and a subsequence $m(p_i)$ such that $|f(m(p_i))-L|\geq\epsilon$ for all $i$.  Passage to
a $\mathcal G$-subsequence of $m(p_i)$ again produces a contradiction.
\end{proof}

\par To apply this lemma, we recall that in~\cite{BP-Comp} the authors defined a notion of 
$\text{\rm sl}_2$-sequences on
$$
         I =\{ (z_1,\dots,z_r) \mid \text{\rm Re}(z)\in[0,1],\quad\text{Im}(z)\in[1,\infty)\,\}
$$
which has the property that every sequence in $I$ contains an $\text{\rm sl}_2$-sequence.
Given a point $m_o\in\bar C - C$ and a sequence of points $m(p)$ in $C$ which converges
to $m_0$, define
\begin{equation}
        \tilde m(p) = m(p)/\min(m_1(p),\dots,m_r(p))           \label{eq:scaled-m}
\end{equation}
keeping in mind that each $m_j(p)>0$.  Then, $\sqrt{-1}\tilde m(p)$ is a sequence of points in $I$.
We say that $m(p)$ is a $\mathcal G$-sequence if $i\tilde m(p)$ is a $\text{\rm sl}_2$-sequence.

\par As our next preliminary, given a mixed Hodge structure $(F,W)$ let $\hat Y_{(F,W)}$
denote the Deligne grading of the $\text{\rm sl}_2$-splitting of $(F,W)$.  Next we note
(cf.~\cite{BP-Comp}) that if $(N_1,\dots,N_r;F,W)$ generate an admissible nilpotent orbit
$\theta$ with limit mixed Hodge structure split over $\mathbb R$ then 
\begin{equation}
         \hat Y_{(e^{icN}F,W)} = \hat Y_{(e^{iN}F,W)}                 \label{eq:DS-scale}
\end{equation}
for any positive real number $c$ and any $N\in\mathcal C =\{\sum_j\, y_j N_j \mid  y_1,\dots,y_r>0\}$.

Finally, suppose that $\theta$ is a nilpotent orbit of biextension type with reference fiber
$V$ and let $e_0\in V$ project to $1\in Gr^W_0$ and $e_{-2}$ be the generator of $Gr^W_{-2}$.
Then, for any $m\in C$
\begin{equation}
         \mu(\theta)(m)e_{-2} = \frac{1}{2}[N(m),\hat Y_{(e^{iN(m)}F,W)}]e_0.    \label{eq:DS-mu}
\end{equation}
Indeed, by the properties of Deligne systems and the short length of $W$, the grading 
$\hat Y_{(e^{iN}F,W)}$ produces a lift $v_0$ of $1\in Gr^W_0$ such that $N(v_0)$
belongs to $W_{-2}$, so we can apply Lemma \eqref{stronger-mu} to conclude the stated
formula.

\begin{lemma}\label{slem}
  Suppose $f:\bar C\to\mathbb{R}$ is a function,
  and suppose that, for every sequence $m(p)$ in $C$ converging
  to $m_0\in\bar C$,
  $\lim f(m(p))=f(m_0)$.  Then $f$ is continuous on $\bar C$.
\end{lemma}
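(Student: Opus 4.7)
The plan is to use a standard diagonal approximation argument to reduce continuity on $\bar C$ (sequences in $\bar C$) to the hypothesis (which only sees sequences in $C$).

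Let $m(p)$ be an arbitrary sequence in $\bar C$ with $m(p)\to m_0\in\bar C$; I want to show $f(m(p))\to f(m_0)$. The point of the hypothesis is that it already supplies this whenever the approximating sequence lies in the open cone $C$. So the only thing to arrange is a replacement of each $m(p)\in\bar C$ by a nearby point $\tilde m(p)\in C$ on which $f$ takes a nearby value.

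For each fixed $p$, the point $m(p)$ lies in $\bar C$ and can therefore be approximated by a sequence $\{m^{(p)}_k\}_{k\geq 1}\subset C$ with $m^{(p)}_k\to m(p)$ as $k\to\infty$ (e.g., $m^{(p)}_k=m(p)+(1/k,\dots,1/k)$ works). By the hypothesis of the lemma applied to this sequence, $f(m^{(p)}_k)\to f(m(p))$ as $k\to\infty$. Hence I can choose an index $k(p)$ so that
\[
\|m^{(p)}_{k(p)}-m(p)\|<1/p \qquad\text{and}\qquad |f(m^{(p)}_{k(p)})-f(m(p))|<1/p,
\]
and I set $\tilde m(p):=m^{(p)}_{k(p)}\in C$.

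Now $\tilde m(p)\in C$ for all $p$ and, by the triangle inequality, $\tilde m(p)\to m_0$. Applying the hypothesis again, this time to the sequence $\tilde m(p)$, gives $f(\tilde m(p))\to f(m_0)$. Since $|f(m(p))-f(\tilde m(p))|<1/p\to 0$, it follows that $f(m(p))\to f(m_0)$, as required.

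There is really no obstacle here: the only thing one has to notice is that the hypothesis is strong enough to control $f$ at a boundary point $m(p)\in\bar C\setminus C$ (by applying it with $m_0$ replaced by $m(p)$), which is precisely what allows the diagonal choice of $\tilde m(p)\in C$. So the proof is just a two-line diagonalization.
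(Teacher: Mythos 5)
Your proof is correct and is essentially identical to the paper's: both are the standard diagonalization, approximating each $m(p)\in\bar C$ by a nearby $\tilde m(p)\in C$ (chosen so that both $\|\tilde m(p)-m(p)\|$ and $|f(\tilde m(p))-f(m(p))|$ tend to $0$) and then applying the hypothesis once more to the diagonal sequence. The only cosmetic difference is that the paper uses the tolerance $2^{-p}$ where you use $1/p$.
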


\begin{proof}
  By the definition of continuity, $f$ is continuous on $C$.  Suppose
  that $m(p)$ is a sequence in $\bar C$ converging to $m_0$.  For each $p$,
  we can find a sequence $\{n(p,q)\}_{q=1}^{\infty}$ of points in $C$
  converging to $m(p)$.   By our hypothesis, for each $p$, we have
  $\lim_{q\to\infty} f(n(p,q))=f(m(p))$.  So we can find a $q\in\mathbb{Z}_{>0}$
  such that $$\max(| n(p,q)-m(p)|, |f(n(p,q))-f(m(p))|)<2^{-p}$$
  where we write $|n(p,q)-m(p)|$ for the usual Euclidean norm.
  Set $n(p):=n(p,q)$.   Then $n(p)\to m_0$, so, by our hypotheses, $f(n(p))\to
  f(m_0)$.  Moreover, $\lim_{p\to\infty} f(n(p))=\lim_{p\to\infty} f(m(p))$.
  So, since $m(p)$ was an arbitrary sequence in $\bar C$ converging to $m_0$,
  this proves that $f$ is continuous. 
\end{proof}

\begin{theorem}\label{cext} $\mu(\mathcal V)$ has a continuous extension from $C$ to $\bar C$.
\end{theorem}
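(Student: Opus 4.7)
The plan is to use Lemma~\ref{slem} combined with the preceding lemma on $\mathcal G$-sequences to reduce continuity of $\mu(\mathcal V)$ on $\bar C$ to the verification, for every $m_0 \in \bar C$ and every sequence $\{m(p)\}$ in $C$ converging to $m_0$ with $i\tilde m(p)$ an $\mathrm{sl}_2$-sequence in the sense of~\cite{BP-Comp}, that $\mu(\mathcal V)(m(p)) \to \mu(\mathcal V)(m_0)$. Here $\tilde m(p) = m(p)/\lambda(p)$ and $\lambda(p) = \min_j m_j(p)$, and the value $\mu(\mathcal V)(m_0)$ on the right is the one supplied by the rational extension on the face $\bar C_J$ with $J = \{j : m_{0,j} > 0\}$ furnished by the lemma that immediately precedes the theorem.

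For such a $\mathcal G$-sequence, I would apply the formula~\eqref{eq:DS-mu} at each $m(p)\in C$, and invoke the scale invariance~\eqref{eq:DS-scale} to replace the Deligne grading $\hat Y_{(e^{iN(m(p))}F,W)}$ with $\hat Y_{(e^{iN(\tilde m(p))}F,W)}$. This yields
$$\mu(\mathcal V)(m(p))\, e_{-2} \;=\; \tfrac{1}{2}\bigl[N(m(p)),\,\hat Y_{(e^{iN(\tilde m(p))}F,W)}\bigr]\,e_0.$$
The main analytic input is the $\mathrm{sl}_2$-orbit asymptotic result of~\cite{BP-Comp}, which gives convergence $\hat Y_{(e^{iN(\tilde m(p))}F,W)} \to \hat Y_\infty$ along the $\mathrm{sl}_2$-sequence $i\tilde m(p)$ to some limiting grading $\hat Y_\infty$. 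Combined with the elementary fact that $N(m(p)) \to N(m_0)$, this produces
$$\lim_{p\to\infty}\mu(\mathcal V)(m(p))\,e_{-2} \;=\; \tfrac12[N(m_0),\,\hat Y_\infty]\,e_0.$$

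The remaining step—and the principal obstacle—is to identify this limit with $\mu(\mathcal V)(m_0)\,e_{-2}$. The issue is that $\mu(\mathcal V)(m_0)$ is defined via the rational formula on $\bar C_J$, which uses the nilpotent orbit generated only by $\{N_j\}_{j\in J}$ (and its attached Deligne splitting), whereas $\hat Y_\infty$ is produced from the $\mathrm{sl}_2$-asymptotics of the full orbit as one approaches the face. The plan to resolve this is to show that the limit grading $\hat Y_\infty$ still provides a lift of $1 \in \Gr^W_0$ to an element of $V_{\mathbb Q}$ on which $N(m_0)$ acts into $W_{-2}$, in which case Lemma~\ref{stronger-mu} forces $\tfrac12[N(m_0),\hat Y_\infty]\,e_0 = \mu(\mathcal V)(m_0)\,e_{-2}$. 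This compatibility should drop out of the iterated $\mathrm{sl}_2$-splitting structure of~\cite{BP-Comp}, where the leading asymptotic grading is exactly the one adapted to the degenerated orbit on $\bar C_J$; but verifying this carefully through the bookkeeping of the several-variable $\mathrm{sl}_2$-orbit theorem is where the bulk of the technical work lies.
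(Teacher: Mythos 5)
Your proposal tracks the paper's own argument almost step for step: reduce to $\mathrm{sl}_2$-sequences via the two lemmas preceding the theorem, rewrite $\mu$ using equations~\eqref{eq:DS-mu} and~\eqref{eq:DS-scale}, invoke the several-variable $\mathrm{sl}_2$-orbit theorem from~\cite{BP-Comp} to obtain a limit grading $\hat Y_\infty$, and then match the resulting expression against $\mu(\mathcal V)(m_0)$ via Lemma~\ref{stronger-mu}. This is exactly the paper's proof. The only minor cosmetic difference is that the paper first passes from $\mu(\mathcal V)$ to $\mu(\theta)$ for the $\mathrm{sl}_2$-split orbit $\theta$, justified by Remark~\ref{rat-mu}; this is implicit in your use of~\eqref{eq:DS-mu} and~\eqref{eq:DS-scale}.

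Concerning the point you flagged as ``where the bulk of the technical work lies'': the paper handles this exactly as you propose. Writing the limit from~\cite[Theorem~2.30]{BP-Comp} as the iterated grading $\hat Y_\infty = Y(N(\theta^1),\,Y(N(\theta^2),\dots,Y_{(F,M)}))$, the paper records two facts and then concludes. First, $N(\theta^1) = N(m_0)$, since $N(m_0)/\min_j m_j(p)$ is the dominant part of $N(\tilde m(p))$. Second, $[N(m_0),\hat Y_\infty]$ lies in $W_{-2}\End(V)$; this is precisely what is needed so that the element $e_0 \in E_0(\hat Y_\infty)_{\mathbb{Q}}$ lifting $1\in\Gr^W_0$ satisfies $N(m_0)\,e_0\in W_{-2}$, after which Lemma~\ref{stronger-mu} gives $\tfrac12[N(m_0),\hat Y_\infty]e_0 = \mu(\mathcal V)(m_0)\,e_{-2}$. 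The paper does not elaborate on the bracket condition beyond asserting it as a property of the iterated grading, so you and the paper bottom out at the same place; the content you would need to fill in is the commutation property of the gradings $Y(N,\,\cdot)$ from the Deligne-systems formalism of~\cite{BP-Comp}. So there is no gap in principle, only a citation you have correctly located but not unpacked.
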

\begin{proof} The first step is to observe that $\mu(\mathcal V) = \mu(\theta)$
where $\theta$ is the nilpotent orbit generated by $(N_1,\dots,N_r;\hat F,M)$
where $(F,M)$ is $\text{\rm sl}_2$-splitting of the limit mixed Hodge
structure of $\mathcal V$.  Indeed, by Remark \ref{rat-mu}, the value of $\mu$
depends only on structure of the local monodromy logarithm.

\par To continue, let $m(p)$ be a sequence in $C$ which converges to $m_o\in\bar C-C$ and
$\tilde m(p)$ be the corresponding sequence \eqref{eq:scaled-m} in $I$.  Then, by 
property \eqref{eq:DS-scale} we can rewrite \eqref{eq:DS-mu} as
$$
      \mu(\theta)(m(p))e_{-2} = \frac{1}{2}[N(m(p)),\hat Y_{(e^{iN(\tilde m(p))}F,W)}]e_0
$$
By Theorem $(2.30)$ of \cite{BP-Comp}, the grading $\hat Y_{(e^{iN(\tilde m(p))}F,W)}$ has 
limits of the form
$$
           Y(N(\theta^1),Y(N(\theta^2),\dots,Y_{(F,M)}))
$$
along $\text{sl}_2$-sequences (see \cite{BP-Comp} for notation).  The first key thing 
to know about this limit is that $N(\theta^1) = N(m_o)$ since $N(m_0)/\min(m_j(p))$ is
the dominant term of $N(\tilde m(p))$.  The second key thing to know is that 
$$
           [N(m_o),Y(N(m_o),Y(N(\theta^2),\dots,Y_{(F,M)}))]
$$
belongs to $W_{-2}\text{\rm End}(V)$.  Therefore,
$$
        \mu(\theta)(m_o)e_{-2} = \frac{1}{2}[N(m_o),Y(N(m_o),Y(N(\theta^2),\dots,Y_{(F,M)}))]e_0
$$
which proves that $\lim_{m\to m_o}\, \mu(\theta)(m) = \mu(\theta)(m_o)$ for sequences $m(p)\in C$
tending to $m_o\in\bar C-C$.   Now apply Lemma~\ref{slem}.
%
\end{proof}

\section{Mixed Extensions and Biextensions}\label{meb}
In~\cite[IX.9.3, pp. 421--426]{SGA71}, Grothendieck develops the
theory of mixed extensions (extensions panach\'ees) in an abelian
category.  This turns out to be a convenient
way to think about mixed extensions of normal functions and the variations
of Hodge structure which we have been calling biextension variations.  In this section, we briefly review this notion and its relation to the Mumford-Grothendieck's concept
of a biextension.

\subsection{}\label{p.torsor}
Suppose $G$ is a group acting on a set $X$.  If $X$ is isomorphic
to $G$ regarded as a $G$-set with the action of left multiplication,
then $X$ is called a torsor for $G$.  If $X$ is either empty or isomorphic
to $G$, then $X$ is called an \emph{pseudo-torsor} for $G$.   

Similarly a sheaf $\mathcal{F}$ is a called a \emph{pseudo-torsor} for a 
sheaf of groups $\mathcal{G}$ if there is an action of $\mathcal{G}$ on 
$\mathcal{F}$ and, for each open $U$, either $\mathcal{F}(U)$ is empty
or $\mathcal{G}(U)$ acts simply transitively on $\mathcal{F}(U)$.  
(See the Stacks Project~\cite[Tag 03AH]{stacks-project}.)

\subsection{}\label{s.MixedExt} Suppose $Q_0,Q_1$ and $Q_2$ are
three objects in an abelian category $\mathbf{C}$.  A \emph{mixed extension}
of $Q_0$ by $Q_1$ by $Q_2$ is an object $X$ of $\mathbf{C}$ with an increasing
filtration $W_i X$ such that $W_{-3}X=0$ and $W_0X=X$ together with isomorphisms
$p_i:\Gr^W_{-i} X\stackrel{\sim}{\to} Q_i$.

Write $\EXTpan(Q_0,Q_1,Q_2)$ for the category of mixed extensions of
$Q_0$ by $Q_1$ by $Q_2$.  Here a morphism in $\EXTpan(Q_0,Q_1,Q_2)$ from $X$
to $X'$ is a morphism of objects in $\mathbf{C}$ commuting with the
isomorphisms from $p_i$. Let 
$\Extpan(Q_0,Q_1,Q_2)$ denote the set of isomorphism classes in
$\EXTpan(Q_0,Q_1,Q_2)$.

From a mixed extension $X$ of $Q_0$ by $Q_1$ by $Q_2$, we get an extension 
$W_0X/W_{-2} X$ of $Q_0$ by $Q_1$ and an extension $W_{-1} X$ of $Q_1$ by $Q_2$.  
In fact, we get a functor
\begin{equation}
  \label{e.biextpan}
  \EXTpan(Q_0,Q_1,Q_2)\to \EXT(Q_0,Q_1)\times \EXT(Q_1,Q_2).
\end{equation}
Here $\EXT(Q_0,Q_1)$ denotes the usual category of extensions of $Q_0$ by $Q_1$. 
Given extensions $E_{0}$ of $Q_0$ by $Q_1$ and $E_{1}$ of $Q_1$ by $Q_2$, Grothendieck
lets 
$\EXTpan(E_0,E_1)$
denote the category of all mixed extensions of $Q_0$ by $Q_1$
by $Q_2$ together with isomorphisms
$
\pi_i:W_{-i} X/W_{-i-2} X \stackrel{\sim}{\to} E_i
$
for $i=0,1$. 
The morphisms in $\EXTpan(E_0,E_1)$
are morphisms $X\to X'$ in $\mathbf{C}$ commuting with the isomorphism
$\pi_i$ for $i=0,1$.

\subsection{Action of $\Ext(Q_0,Q_2)$}
Given an extension $G$ of $Q_0$ by $Q_2$ 
and an object $X$ of $\EXTpan(E_0,E_1)$, Grothendieck defines a mixed extension 
$G\wedge X$ as follows.  First, regard $X$ as an object of the category
$\EXT(Q_0,E_1)$ of extensions of $Q_0$ by $E_1$.   The sequence 
$$
0\to Q_2\stackrel{i}{\to} E_1\to Q_1\to 0
$$ 
gives a functor $i_*$ from the category $\EXT(Q_0,Q_2)$ 
of extensions of $Q_0$ by $Q_2$ to $\EXT(Q_0,E_1)$.  Then using
Baer sum in $\EXT(Q_0,E_1)$ we can define $G\wedge X$ as $i_*G + E_1$.
This gives an action of $\EXT(Q_0,Q_2)$ on $\EXTpan(E_0,E_1)$. 

Alternatively we can define $G\wedge X$ as follows.  Suppose $G$
is given by
$$
0{\to} Q_2 \stackrel{\iota}{\to} G \stackrel{\pi}\to Q_0\to 0.
$$
Then $G\wedge X$ is the cohomology of the 
complex
\begin{equation}\label{tcomplex}
Q_2\to G\oplus X \to Q_0
\end{equation}
where the first map is the one sending $q$ to $(\iota q,-p_2^{-1}(q))$
and the second map sends $(g,x)$ to $\pi(g)-p_0(x)$.   

If we give $G$ the filtration $W_i G$ where $W_{-3} G=0,
W_{-2} G = W_{-1} G = Q_{2}$ and $W_0 G = G$, then
this induces a filtration on $G\oplus X$ making the
morphisms in the complex \eqref{tcomplex} strict (for the obvious
filtrations on $Q_0$ and $Q_2$).   We then get a filtration
on $G\wedge X$ making $G\wedge X$ into an object in $\EXTpan(E_0,E_1)$.
The isomorphism $\pi_0: G\wedge X/W_{-2} (G\wedge X)\stackrel{\sim}{\to} E_0$ is induced from
$\pi_0:X/W_{-2} X\stackrel{\sim}{\to} E_0$.  It sends a pair $(g,x)$ to $\pi_0 x$. 

A class in $W_{-1} (G\wedge X)$ can be represented by an
element $(g,x)\in W_{-1} G\oplus W_{-1} X$.  Since $g\in W_{-1} G$, we can
consider $g$ as an element in $Q_2\subset E_1$.  Then 
$\pi_{1}:W_{-1} (G\wedge X)\stackrel{\sim}{\to} E_1$ sends  $(g,x)$ 
to $g+\pi_1(x)$.

It is easy to check that the action $(G,X)\mapsto G\wedge X$ described above
gives a functorial action; that is, a functor
$\EXT(Q_0,Q_2)\times\EXTPAN(E_0,E_1)\to \EXTPAN(E_0,E_1)$.  
The same prescription gives a functor
$\EXT(Q_0,Q_2)\times \EXTPAN(Q_0,Q_1,Q_2)\to
\EXTPAN(Q_0,Q_1,Q_2)$.

\begin{proposition}[Grothendieck]\label{prop:Grothendieck}
  Suppose $Q_i$ are as above for $i=0,1,2$ and $E_i$ are as above
  for $i=0,1$.  
\begin{enumerate}
\item $\EXTpan(E_0,E_1)$ is a groupoid; that is, every morphism in 
$\EXTpan(E_0,E_1)$ is an isomorphism.
\item The set $\Extpan(E_0,E_1)$ of isomorphism classes of objects in
  $\EXTpan(E_0,E_1)$ is a pseudo-torsor for $\Ext(Q_0,Q_2)$.  In other words,
  $\Extpan(E_0,E_1)$ is either empty or a torsor for $\Ext(Q_0,Q_2)$ under the 
action $(G,X)\mapsto G\wedge X$.  
\item For $E\in\EXT(Q_0,Q_1)$ and $F\in\EXT(Q_1,Q_2)$ as above.  Consider the 
long exact sequence
$$
\Ext^1(Q_0,Q_2)\to \Ext^1(E,Q_2)\to \Ext^1(Q_1,Q_2)\stackrel{\partial}{\to} \Ext^2(Q_0,Q_2)
$$
arising from the short exact sequence 
$$
0\to Q_1\to E\to Q_0\to 0.
$$
Write $\xi\in\Ext^1(Q_1,Q_2)$ for the class of $F$ and define
$c(E,F)=\partial(\xi)$. Then $c(E,F)=0$ if and only if $\Extpan(E,F)$
is non-empty.
\end{enumerate}
\end{proposition}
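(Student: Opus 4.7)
The plan for (i) is a direct application of the five lemma. Any morphism $f\colon X\to X'$ in $\EXTpan(E_0,E_1)$ is required to commute with $\pi_0$ and $\pi_1$, so the induced maps on $W_0 X/W_{-2} X\cong E_0$ and on $W_{-1} X\cong E_1$ equal the identity, and hence the induced maps on $\Gr^W_{-k} X\cong Q_k$ are the identity for $k=0,1,2$. The five lemma applied to the short exact sequences $0\to W_{-1}X\to X\to Q_0\to 0$ and $0\to Q_2\to W_{-1}X\to Q_1\to 0$ then forces $f$ to be an isomorphism.

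For (ii), I first check that $(G,X)\mapsto G\wedge X$ is a well-defined action of $\Ext^1(Q_0,Q_2)$ on $\Extpan(E_0,E_1)$; this is routine from the construction of~\eqref{tcomplex} and the induced filtration on $G\wedge X$. To establish the torsor property, I describe $\Extpan(E_0,E_1)$ through its image in $\Ext^1(Q_0,E_1)$: each $X$ yields an extension $\underline X$ of $Q_0$ by $E_1$ whose pushout along $E_1\to Q_1$ equals $[E_0]$, and the remaining datum is an isomorphism $\pi_0\colon \underline X/W_{-2}\to E_0$ in $\Ext^1(Q_0,Q_1)$. The $\Hom$--$\Ext$ long exact sequence attached to $0\to Q_2\to E_1\to Q_1\to 0$, namely
\begin{equation*}
\Hom(Q_0,E_1)\to\Hom(Q_0,Q_1)\xrightarrow{\delta}\Ext^1(Q_0,Q_2)\xrightarrow{i_*}\Ext^1(Q_0,E_1)\to\Ext^1(Q_0,Q_1),
\end{equation*}
shows that the set of possible $\underline X$ is a torsor for $\mathrm{image}(i_*)$, and that for each fixed $\underline X$ the set of $\pi_0$ modulo the automorphism group $\Hom(Q_0,E_1)$ of $\underline X$ in $\Ext^1(Q_0,E_1)$ is a torsor for $\mathrm{image}(\delta)=\ker(i_*)$. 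Combining these two torsor structures yields a torsor structure on $\Extpan(E_0,E_1)$ for the full group $\Ext^1(Q_0,Q_2)$, and a direct comparison identifies it with Grothendieck's action $G\wedge X$.

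For (iii), the plan is to re-encode a mixed extension $X$ of type $(E,F)$ as an extension $0\to Q_2\to X\to E\to 0$ whose pullback along $\iota\colon Q_1\hookrightarrow E$ equals $F$; this identification is a short diagram chase using the snake lemma, since $W_{-1} X$ as an extension of $Q_1$ by $Q_2$ recovers $F$ by hypothesis and the quotient $X/Q_2$ recovers $E$. Consequently $\Extpan(E,F)$ is non-empty precisely when $\xi=[F]$ lies in the image of the pullback map $\iota^*\colon \Ext^1(E,Q_2)\to\Ext^1(Q_1,Q_2)$, and by exactness of the $\Hom$--$\Ext$ sequence for $0\to Q_1\to E\to Q_0\to 0$ this image equals $\ker\partial$, so $\Extpan(E,F)\neq\emptyset$ iff $\partial\xi=c(E,F)=0$. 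The main obstacle is the bookkeeping for freeness in (ii); once one has matched the $\ker(i_*)$ torsor structure on the $\pi_0$-data with Grothendieck's action, (iii) is an immediate consequence of exactness.
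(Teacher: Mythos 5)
The paper itself does not prove this proposition: it is stated as a result of Grothendieck, with a citation to~\cite[IX.9.3]{SGA71}, so there is no internal argument to compare against; what follows assesses your reconstruction on its own terms.

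Your proofs of (i) and (iii) are correct. For (i), a morphism in $\EXTpan(E_0,E_1)$ preserves $W$ and commutes with the $p_i$, so it is an isomorphism on $\Gr^W$, and the five lemma applied to $0\to W_{-1}X\to X\to Q_0\to 0$ and $0\to Q_2\to W_{-1}X\to Q_1\to 0$ finishes it. For (iii), the reformulation of a mixed extension of type $(E,F)$ as a class $[Y]\in\Ext^1(E,Q_2)$ pulling back to $[F]\in\Ext^1(Q_1,Q_2)$ is exactly right, and the converse (that any $Y$ with $\iota^*[Y]=[F]$ supports the required filtration $Q_2\subset\iota^*Y\subset Y$) is easy; so $\Extpan(E,F)\neq\emptyset$ iff $[F]\in\im\iota^*=\ker\partial$.

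For (ii), the fibration strategy is sound: $\Extpan(E_0,E_1)$ maps onto the fiber $\mathcal F$ of $\Ext^1(Q_0,E_1)\to\Ext^1(Q_0,Q_1)$ over $[E_0]$, which is a torsor for $\im(i_*)$, and each fiber of $\Extpan(E_0,E_1)\to\mathcal F$ is a torsor for $\coker(\Hom(Q_0,E_1)\to\Hom(Q_0,Q_1))\cong\ker(i_*)$. However, the sentence ``combining these two torsor structures yields a torsor structure \ldots and a direct comparison identifies it with Grothendieck's action $G\wedge X$'' hides the only nontrivial verification: torsor structures on the base and on the fibers of a fibration do not by themselves assemble into a torsor for the extension group. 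What makes the assembly work is precisely the check that, for $G=\delta(\phi)\in\ker(i_*)$, the operation $X\mapsto G\wedge X$ preserves $\underline X\in\Ext^1(Q_0,E_1)$ while shifting $\pi_0$ by $\phi$; without this, freeness and transitivity of the $G\wedge X$ action are not established. The needed computation is actually in the paper under a different heading: in the proof of Proposition~\ref{PhiSurj}, the map $\psi\colon G\wedge X\to X$, $(g,x)\mapsto x-h(g)$ (with $G$ the pullback of $E_1$ along $\phi$ and $h\colon G\to E_1$ the induced map) is shown to be an isomorphism in $\EXTpan(Q_0,Q_1,Q_2)$ that fails to commute with $\pi_0$ exactly by $\phi$. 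You should carry out or cite that computation to close the gap in (ii).
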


\subsection{}
There is an obvious map
$\Extpan(Q_0,Q_1,Q_2)\to \Ext^1(Q_0,Q_1)\times \Ext^1(Q_1,Q_2)$ and if
$(E_0,E_1)\in \Ext^1(Q_0,Q_1)\times \Ext^1(Q_1,Q_2)$ we get a commutative
diagram
$$
\xymatrix{
  \Extpan(E_0,E_1)\ar[r]\ar[d]^{\phi}         & \{(E_0,E_1)\}\ar[d]\\
  \Extpan(Q_0,Q_1,Q_2)\ar[r]^-{\pi}     & \Ext^1(Q_0,Q_1)\times\Ext^1(Q_1,Q_2).   
}
$$
We also get an action of $\Hom(Q_0,Q_1)\times\Hom(Q_1,Q_2)$
on the $\Extpan(E_0,E_1)$
as follows.  Take $X\in\Extpan(E_0,E_1)$
and $(f_0,f_1)\in\Hom(Q_0,Q_1)\times \Hom(Q_1,Q_2)$.
Then
\begin{equation}\label{Gd}
  G(f_0,f_1):=f_1\cup [E_0] + [E_1]\cup f_0\in \Ext^1(Q_0,Q_2).
  \end{equation}
Since $\Extpan(E_0,E_1)$ is a $\Ext^1(Q_0,Q_2)$-torsor, $G(f_0,f_1)\wedge X$ is
and element of $\Extpan(E_0,E_1)$.

\begin{proposition}\label{PhiSurj}
  The map $\phi$ surjects onto $\pi^{-1}(E_0,E_1)$.
  Moreover, if $X,Y\in \Extpan(E_0,E_1)$, then
  $\phi(X)=\phi(Y)$ if and only if $Y=G(f_0,f_1)\wedge X$ for some
  $(f_0,f_1)\in \Hom(Q_0,Q_1)\times \Hom(Q_1,Q_2)$. 
\end{proposition}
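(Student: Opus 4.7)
The plan is to establish the two assertions in turn. Surjectivity is essentially formal: any mixed extension representing a class in $\pi^{-1}(E_0,E_1)$ admits trivializations $\pi_0,\pi_1$ because the defining extensions are literally isomorphic to $E_0,E_1$. The harder assertion---the fiber description---reduces via a torsor calculation to identifying the subgroup of $\Ext^1(Q_0,Q_2)$ that acts trivially on the underlying mixed extension, and matching this subgroup with the image of the cup-product map $(f_0,f_1)\mapsto G(f_0,f_1)$.

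For surjectivity, suppose $[X]\in\Extpan(Q_0,Q_1,Q_2)$ satisfies $\pi([X])=(E_0,E_1)$. By the definition of $\pi$, the induced extensions $W_0X/W_{-2}X$ and $W_{-1}X$ lie in the classes $[E_0]$ and $[E_1]$, so are isomorphic as extensions to $E_0$ and $E_1$ respectively. Choose any such isomorphisms $\pi_0$ and $\pi_1$; the triple $(X,\pi_0,\pi_1)$ is an object of $\EXTpan(E_0,E_1)$ whose image in $\Extpan(Q_0,Q_1,Q_2)$ is $[X]$.

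For the fiber description, I first reduce to the case where $X$ and $Y$ have the same underlying mixed extension. Given $\phi(X)=\phi(Y)$, pick an isomorphism $\psi$ in $\EXTpan(Q_0,Q_1,Q_2)$ between the two underlying mixed extensions; pulling back the trivializations of $Y$ along $\psi$ produces an object $Y'\in\EXTpan(E_0,E_1)$ isomorphic to $Y$ whose underlying mixed extension literally equals that of $X$. So I may assume $X$ and $Y$ differ only in the trivializations $\pi_i$. The comparison maps $\alpha_i:=\pi_i^Y\circ(\pi_i^X)^{-1}$ are then automorphisms of $E_i$ that induce the identity on both graded pieces. Such automorphisms of $E_0$ (as an extension of $Q_0$ by $Q_1$) form an abelian group canonically isomorphic to $\Hom(Q_0,Q_1)$ via $f_0\mapsto \id_{E_0}+\iota\circ f_0\circ p$, where $\iota\colon Q_1\hookrightarrow E_0$ and $p\colon E_0\twoheadrightarrow Q_0$ are the structural maps; analogously for $E_1$ and $\Hom(Q_1,Q_2)$. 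This extracts the pair $(f_0,f_1)$.

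The main obstacle, and the real content of the proposition, will be to verify that modifying the trivializations by the pair $(\alpha_0,\alpha_1)$ corresponds precisely to the torsor action of $G(f_0,f_1)=f_1\cup[E_0]+[E_1]\cup f_0$. Using the additivity of the $\Ext^1(Q_0,Q_2)$-torsor action together with the bilinearity of $G$ in $(f_0,f_1)$, it suffices to treat the two special cases $(f_0,0)$ and $(0,f_1)$. For $(f_0,0)$ the relevant class is the pullback $(f_0)^*E_1\in\Ext^1(Q_0,Q_2)$; I would use the presentation of $G\wedge X$ as the cohomology of the complex $Q_2\to G\oplus X\to Q_0$ from \S\ref{s.MixedExt} to construct an explicit morphism of mixed extensions from $(f_0)^*E_1\wedge X$ to $X$, then chase the induced trivializations to verify the resulting automorphism of $E_0$ is exactly $\id+\iota\circ f_0\circ p$ and that of $E_1$ is trivial. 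The case $(0,f_1)$ is handled symmetrically using the pushforward $(f_1)_*E_0$. The reverse implication $(\Leftarrow)$ drops out of the same explicit isomorphism, which shows $\phi(G(f_0,f_1)\wedge X)=\phi(X)$.
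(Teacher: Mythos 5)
Your proposal follows essentially the same route as the paper's proof: surjectivity is formal, the ``only if'' direction reduces to reading off $(f_0,f_1)$ from the automorphisms of $E_0$ and $E_1$ induced by the discrepancy in trivializations, and the ``if'' direction reduces to the two cases $(f_0,0)$ and $(0,f_1)$ via additivity of $G$. The one step you describe but do not execute---constructing the explicit isomorphism $G(f_0,0)\wedge X\to X$ in $\EXTPAN(Q_0,Q_1,Q_2)$---is exactly the computation the paper supplies, namely $(g,x)\mapsto x-h(g)$ (with $h$ the map coming from the pullback presentation of $G(f_0,0)$), followed by the check that this commutes with $p_0,p_1,p_2$ but shifts $\pi_0$ by $f_0$; this is the only nontrivial content and you should carry it out to complete the argument.
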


\begin{proof}
  The first statement is obvious.   For the second,
  suppose $X\in \EXTpan(E_0,E_1)$
  and $Y=G(f_0,0)\wedge X$.  The extension $G:=G(f_0,0)$ is given by
  the following pullback diagram
  $$
\xymatrix{
    0\ar[r] & Q_2\ar[r]       & E_1\ar[r]       & Q_1\ar[r] & 0\\
    0\ar[r] & Q_2\ar[r]\ar@{=}[u] & G\ar[r]\ar[u]^{h} & Q_0\ar[r]\ar[u]^{f_0} & 0}
  $$
  of the extension $E_1$ by the map $f_0$.  We get a map
  $\psi:G\wedge X\to X$ sending a class represented by $(g,x)$
  to $x-h(g)$. This map is well-defined on $G\wedge X$ because, for
  $q\in Q_2$, $\psi(q,-q)=q-h(q)=0$.  The map $\psi$ does not, in
  general commute with the maps $\pi_0$ to $E_0$: we have
  $\pi_0(\psi(g,x))=\pi_0(x-h(g))$ while $\pi_0(g,x)=\pi_0(x)$.
  However, using the fact that $h(g)\in W_{-1} X$, we see that $\psi$
  does commute with $p_0$.  Then it is easy to see that $\psi$
  commutes with $p_1$ and $p_2$.  Thus $\psi$ induces a morphism in
  $\EXTPAN(Q_0,Q_1,Q_2)$ which is easily seen to be an isomorphism.
  So $G\wedge X$ is isomorphic to $X$ in
  $\EXTPAN(Q_0,Q_1,Q_2)$.

  The proof that $G(0,f_1)\wedge X$ is isomorphic to $X$ in
  $\EXTPAN(Q_0,Q_1,Q_2)$ is similar.  This proves the ``if'' part of
  the proposition.

  For the ``only if'' part, suppose that $X$ and $Y$ are objects in
  $\EXTPAN(E_0,E_1)$ and $\psi:Y\to X$ is an isomorphism between them
  as objects in $\EXTPAN(Q_0,Q_1,Q_2)$.  Using $\pi_i$, identify
  $W_{-i}X/W_{-i-2} X$ and $W_{-i}Y/W_{-i-2}Y$ with $E_i$ for $i=0,1$.  Then there exist morphisms
  $f_0:Q_0\to Q_1$ and $f_1:Q_1\to Q_2$ such that
  $(W_{-i}/W_{-i-2})(f)=\id_{E_i} + f_i$.   From this it is not hard to see
  that $Y\cong G(f_0,f_1)\wedge X$.  
\end{proof}

\begin{corollary}\label{DisjTors}
  Suppose that $\Hom(Q_0,Q_1)=\Hom(Q_1,Q_2)=0$.  Then the fiber
  of the map $$\Extpan(Q_0,Q_1,Q_2)\to \Ext^1(Q_0,Q_1)\times \Ext^1(Q_1,Q_2)$$
  over $(E_0,E_1)$ is $\Extpan(E_0,E_1)$.  
\end{corollary}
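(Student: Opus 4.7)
The plan is to deduce this corollary directly from Proposition~\ref{PhiSurj}. Recall that Proposition~\ref{PhiSurj} gives a map $\phi:\Extpan(E_0,E_1)\to\Extpan(Q_0,Q_1,Q_2)$ whose image lies in the fiber $\pi^{-1}(E_0,E_1)$ and which surjects onto that fiber. So to prove the corollary, it suffices to verify that $\phi$ is injective under the vanishing hypothesis $\Hom(Q_0,Q_1)=\Hom(Q_1,Q_2)=0$.

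First, I would invoke the second clause of Proposition~\ref{PhiSurj}: for $X,Y\in\Extpan(E_0,E_1)$, we have $\phi(X)=\phi(Y)$ if and only if $Y=G(f_0,f_1)\wedge X$ for some $(f_0,f_1)\in\Hom(Q_0,Q_1)\times\Hom(Q_1,Q_2)$. Under the hypothesis, the set $\Hom(Q_0,Q_1)\times\Hom(Q_1,Q_2)$ consists only of the pair $(0,0)$. From the definition~\eqref{Gd}, $G(0,0)=0\cup[E_0]+[E_1]\cup 0 = 0\in\Ext^1(Q_0,Q_2)$, which acts trivially on the $\Ext^1(Q_0,Q_2)$-torsor $\Extpan(E_0,E_1)$. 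Therefore $\phi(X)=\phi(Y)$ forces $Y=X$, so $\phi$ is injective.

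Combining injectivity with the surjectivity of $\phi$ onto $\pi^{-1}(E_0,E_1)$, we conclude that $\phi$ induces a bijection between $\Extpan(E_0,E_1)$ and the fiber $\pi^{-1}(E_0,E_1)$, which is precisely the statement of the corollary.

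There is essentially no obstacle here: the corollary is a formal consequence of the preceding proposition, and the only thing to observe is that vanishing of the relevant Hom groups collapses the action of $\Hom(Q_0,Q_1)\times\Hom(Q_1,Q_2)$ via $G(-,-)$ to the trivial action. No further computation or Hodge-theoretic input is needed.
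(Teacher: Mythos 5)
Your proof is correct and fills in exactly the content that the paper dismisses as ``Obvious'': namely, that Proposition~\ref{PhiSurj} gives surjectivity of $\phi$ onto the fiber, and that the vanishing of $\Hom(Q_0,Q_1)$ and $\Hom(Q_1,Q_2)$ trivializes the $G(f_0,f_1)$-ambiguity, yielding injectivity. This is the same reasoning the paper intends, just spelled out.
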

\begin{proof}
  Obvious.  
\end{proof}

\subsection{Pseudo-biextensions.}\label{psbi}
Grothendieck's concept of mixed extension interacts in a rather interesting
way with the Mumford-Grothendieck concept of a biextension~\cite[pp. 156--159]{SGA71}.  To explain this
fix objects $Q_0,Q_1$ and $Q_2$ as above.

Suppose $E_1,E_2\in\Ext^1(Q_0,Q_1)$ and $F_1,F_2\in \Ext^1(Q_1,Q_2)$.  
There are natural composition laws
\begin{align*}
  +_1:\Extpan(E_1,F_1)\times \Extpan(E_2,F_1)\to \Extpan(E_1+E_2,F_1)\\
  +_2:\Extpan(E_1,F_1)\times\Extpan(E_1,F_2)\to \Extpan(E_1,F_1+F_2)
\end{align*}
satisfying various compatibilities.   
This is explained in C.~Hardouin's 2005 thesis~\cite[Theorem 4.4.1]{Hardouin}.
For the convenience of the reader we explain the operations  
$+_1$ and $+_2$.

\subsection{}\label{s.POY}  Suppose $X_i\in\Extpan(E_i,F)$ for $i=1,2$.   Write
$\rho_i:X_i\to Q_0$ for the composition
$$X_i\to\Gr^W_0 X_i\stackrel{p_0}{\to} Q_0$$
and write $j_i:F\to X_i$
for the canonical injection induced by $\pi_1^{-1}$.  Then Hardouin defines $X_1+_1 X_2$
to be the middle homology group of the complex
$$
\xymatrix{
F\ar[r]^(.4){
\begin{pmatrix}
j_1 \\
-j_2 \\
\end{pmatrix}
}   
&
X_1\oplus X_2\ar[r]^*+<1em>{
\begin{pmatrix}
\rho_1 & -\rho_2 \\
\end{pmatrix}} &  Q_0\\
}$$
where we write the arrows in matrix notation.
We then get maps
\begin{align*}
\begin{pmatrix}
j_1 & 0 
\end{pmatrix}:& F\to X_1+_1 X_2,\\
\begin{pmatrix}
  \rho_1 \\ 
      0
\end{pmatrix}:&
X_1+_1 X_2\to Q_0.
\end{align*}
Combining these maps, we obtain an exact sequence
$$
0\to F\to X_1+_1 X_2\to Q_0\to 0.
$$
And it is not hard to see that this sequence makes 
$X_1+_1 X_2$ into an element of $\Extpan(E_1+E_2, F)$ in a canonical way.

\subsection{}\label{s.PO}
Here is another way to think about the composition $+_1$.   An object
$X_i\in\EXTpan(E_i,F)$ can be thought of as an object of $\EXT^1(Q_0,F)$ 
whose image in $\EXT^1(Q_0,Q_1)$ is $E_i$.   In other words, if we let $\pi:\EXT^1(Q_0,F)\to \EXT^1(Q_0,Q_1)$ denote the functor induced by pushforward
along the canonical map $F\to Q_1$, then $\EXTpan(E_i,F)$ is the
fiber category $\pi^{-1}(E_i)$.
If we take the Baer sum of $X_1$ and $X_2$ regarded as objects in $\Ext^1(Q_0,F)$,
then we clearly get an element of $\pi^{-1}(E_1+E_2)$.   It is easy to check
that this element is, in fact, $X_1+_1 X_2$ as defined above.  

\subsection{}  
The construction of $+_2$ is similar, and we explain it in the language
of \S\ref{s.PO}.    Suppose $X_i$ are objects in $\EXTpan(E,F_i)$
for $i=1,2$.  Write $\iota:\EXT^1(E,Q_2)\to \EXT^1(Q_1,Q_2)$ for the canonical 
functor (induced by the inclusion $Q_1\to E$).   Then we can regard $X_i$
as object in the fiber category $\iota^{-1}(F_i)$.  We then define the 
sum $X_1+_2 X_2$ to be the Baer sum of the two $X_i$ in $\EXT^1(E,Q_2)$.
It is easy to see that  $X_1+_2 X_2$ lies in the fiber category 
$\iota^{-1}(F_1+F_2)=\EXTpan(E,F_1+F_2)$.

\begin{definition}\label{d.PBE}
 Suppose $B,C$ and $A$ are three abelian groups.   A \emph{pseudo-biextension}
of $B\times C$ by $A$ is a set $E$ equipped with 
\begin{enumerate}
\item an $A$-action  and a map $\pi:E\to B\times C$ giving  $E$ the structure of an $A$ pseudo-torsor over $B\times C$;
\item commutative operations $+_1:E\times_B E\to E$ and $+_2:E\times_C E\to E$.
\end{enumerate}
Write $p_B=\pr_1\circ\pi$ and $p_C=\pr_2\circ \pi$.   
For each $b\in B$, write $E_b=p_B^{-1}(b)$ and, for each $C$, write
$E_c=p_C^{-1}(c)$. 
Then the above data is assumed to satisfy the following properties
\begin{enumerate}
\setcounter{enumi}{2}
\item For each $b\in B$, the operation $+_1$ makes $E_b$ into an abelian group 
and the canonical map $E_b\to C$ into a group homomorphism.  Moreover,
the action of $A$ on $\ker(E_b\to C)$ coming from the structure
of $E$ as an $A$ pseudo-torsor induces a group isomorphism between $A$
and $\ker(E_b\to C)$. 
\item For each $c\in C$, the operation $+_2$ makes $E_c$ into an abelian group
  and the canonical map $E_c\to B$ into a group homomorphism with kernel
  equal to $A$ as
in (iii).
\item Suppose $\pi(X_{ij})=(E_i,F_j)$ for $i=1,2$.   Then
$$(X_{11}+_1 X_{12})+_2 (X_{21}+_1 X_{22})
= (X_{11}+_2 X_{21})+_1 (X_{12}+_1 X_{22}).
$$ 
\end{enumerate}
\end{definition}

\subsection{} A surjective pseudo-biextension $\pi:E\to B\times C$ is
called a \emph{biextension}.  Grothendieck studied
the notion of biextensions in the category of sheaves of abelian
groups in a topos $T$ in~\cite{SGA71}.    Suppose $B, C$ and $A$ are three sheaves
of abelian groups in $T$.   
 We define a \emph{pseudo-biextension} of $B\times C$ by $A$ 
to be a sheaf $E$ in $T$ equipped with a morphism $\pi:E\to B\times C$
along with an $A$-action making $E$ into an $A$-torsor over $B\times C$ and operations $+_1, +_2$ satisfying the same axioms as in Definition~\ref{d.PBE} above.  A pseudo-biextension $\pi:E\to B\times C$ is a \emph{biextension}
if $\pi$ is surjective as a morphism of sheaves.

\begin{remark}\label{BClass} Suppose $A, B$ and $C$ are abelian groups, and $U$ is
  an extension of $B\otimes C$ by $A$.  Write $E$ for the pull-back of
  $U$ to $B\times C$ via the natural map $B\times C\to B\otimes C$
  given by $(b,c)\mapsto b\otimes c$.  The group $A$ acts on $U$ via
  right translation in such a way that $U$ becomes an $A$-torsor over
  $B\otimes C$.  Therefore, the pull-back $E$ of $U$ becomes an
  $A$-torsor over $B\times C$.  Write $p$ for the map
  $U\to B\otimes C$ and $\pi$ for the map $E\to B\times C$.  Suppose
  $b_i\in B$, $c_i\in C$ for $i=1,2$ and
  $u_{ij}\in p^{-1}(b_i\otimes c_j)$.  Then
  $u_{11}+u_{12}\in p^{-1}(b_1\times C)$.  So we can use the addition
  in $U$ to define a map
$$
+_1: E\times_B E\to E.
$$
We can similarly define $+_2:E\times_C E\to E$.  And it is slightly tedious
but not hard to show that $E$ becomes a biextension of $B\times C$ by $A$.

In fact, Grothendieck showed that all biextensions arise in this way
provided that $\Tor_1(B,C)=0$.  More generally we have an exact
sequence
$$
0\to \Ext^1(B\otimes C, A)\to \Biext(B\times C, A)\to
\Hom(\Tor_1(B\otimes C), A)
$$
where $\Biext(B\times C, A)$ denotes the set of isomorphisms of
biextensions of $B\times C$ by $A$.  
~\cite[p.220]{SGA71}.  
\end{remark}

\begin{theorem}[Hardouin]\label{t.Har}
  Suppose $\mathbf{C}$ is an abelian category and $Q,R,P$ are objects
  satisfying $\Hom(Q,R)=\Hom(R,P)=0$.
  Then the map
$$
\Extpan(Q,R,P)\to \Ext^1(Q,R)\times \Ext^1(R,P)
$$
together with the action of $\Ext^1(Q,P)$ and the operations 
$+_1$ and $+_2$ described above
makes $\Extpan(Q,R,P)$ into a pseudo-biextension of $\Ext^1(Q,R)\times\Ext^1(R,P)$
by $\Ext^1(Q,P)$. 
\end{theorem}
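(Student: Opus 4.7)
The plan is to verify the five conditions of Definition~\ref{d.PBE} in turn, exploiting Corollary~\ref{DisjTors} for the torsor axiom and the bifunctoriality of Baer sum for the two composition laws.

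\emph{Step 1 (pseudo-torsor, axiom (i)).} The hypothesis $\Hom(Q,R)=\Hom(R,P)=0$ kills the correction group $\Hom(Q,R)\times\Hom(R,P)$ appearing in Proposition~\ref{PhiSurj}, so Corollary~\ref{DisjTors} identifies the fiber of $\pi$ over $(E,F)\in\Ext^1(Q,R)\times\Ext^1(R,P)$ with $\Extpan(E,F)$. By Proposition~\ref{prop:Grothendieck}(ii), each such fiber is either empty or an $\Ext^1(Q,P)$-torsor under the action $(G,X)\mapsto G\wedge X$. This gives the pseudo-torsor structure.

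\emph{Step 2 (the two additions, axioms (ii)--(iv)).} Following \S\ref{s.PO}, I would identify $\Extpan(E,F)$ with the fiber over $E$ of the pushforward functor $\pi_1\colon\Ext^1(Q,F)\to\Ext^1(Q,R)$ induced by $F\twoheadrightarrow R$. Baer sum in $\Ext^1(Q,F)$ then yields a law $+_1\colon\Extpan(E_1,F)\times\Extpan(E_2,F)\to\Extpan(E_1+E_2,F)$; it preserves pushforward because pushforward is additive, and commutativity is inherited from Baer sum. Dually, identifying $\Extpan(E,F)$ with the fiber over $F$ of the pullback $\iota\colon\Ext^1(E,P)\to\Ext^1(R,P)$ along $R\hookrightarrow E$ gives $+_2$. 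To verify the group axiom, fix $F$ and view $\bigsqcup_E\Extpan(E,F)\subset\Ext^1(Q,F)$ as an abelian group under $+_1$; the long exact sequence
\[
\Hom(Q,R)\to\Ext^1(Q,P)\xrightarrow{j}\Ext^1(Q,F)\xrightarrow{\pi_1}\Ext^1(Q,R),
\]
combined with $\Hom(Q,R)=0$, identifies $\ker\pi_1$ with $\Ext^1(Q,P)$; a direct unwinding of the definition of $G\wedge X$ in \S\ref{s.MixedExt} shows that $j$ is equivariant for the torsor action from Step~1, so the kernel coincides with the prescribed $A$. Axiom (iv) follows symmetrically from the long exact sequence attached to $0\to R\to E\to Q\to 0$ and the vanishing $\Hom(R,P)=0$.

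\emph{Step 3 (middle-four interchange, axiom (v)).} The main obstacle is the compatibility
\[
(X_{11}+_1 X_{12})+_2(X_{21}+_1 X_{22})=(X_{11}+_2 X_{21})+_1(X_{12}+_2 X_{22}),
\]
because the two sides are computed as Baer sums in \emph{different} ambient Ext-groups: one of the form $\Ext^1(Q,-)$ after $+_1$, the other of the form $\Ext^1(-,P)$ after $+_2$. My plan is to reduce this to a diagrammatic identity by representing each $X_{ij}$ as a Yoneda $2$-extension (equivalently, as a length-two exact sequence with middle pieces spliced through $R$), observing that both sides of the interchange realize the same composite $2$-extension up to the canonical equivalence relation, and concluding via the fact that Baer sum descends from cocycle addition. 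An alternative, which I would pursue if the direct splicing calculation becomes cumbersome, is to encode a mixed extension as a short exact sequence of the form $0\to W_{-1}\to X\to Q\to 0$ together with the sub-extension $0\to P\to W_{-1}\to R\to 0$: $+_1$ and $+_2$ then become ``horizontal'' and ``vertical'' Baer sums in a bifunctorial $\Ext$-diagram, and the interchange reduces to the classical middle-four identity. This is precisely the content of Hardouin~\cite[Theorem~4.4.1]{Hardouin}, to which I would refer for the remaining technicalities.
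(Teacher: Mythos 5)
Your proposal is correct and follows the same route as the paper's own ``Explanation'': the key step in both is to use the vanishing $\Hom(Q,R)=\Hom(R,P)=0$ together with Corollary~\ref{DisjTors} to exhibit the fibers of $\pi$ as $\Ext^1(Q,P)$-pseudo-torsors, after which the remaining axioms are declared straightforward (and the interchange is, in the end, attributed to Hardouin). Your Steps 2--3 simply spell out the parts the paper leaves to the reader, so the two arguments coincide in substance.
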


\begin{proof}[Explanation]  The main point is to check that the
  $\Ext^1(Q,P)$ action gives $\Extpan(Q,R,P)$ the structure of a pseudo-torsor.
  This follows from the condition that $\Hom(Q,R)=\Hom(R,P)=0$ and
  Corollary~\ref{DisjTors}.  The rest of the verification is straightforward.
\end{proof}

\subsection{Sheaf theoretic version}
Suppose now that $T$ is a topological space and $\mathcal{C}$ is a
stack of abelian categories over $T$.  So for each open subset $U$ of
$T$ we have an abelian category $\mathcal{C}(U)$ and for each
inclusion $i:V\to U$ we have restriction functor
$i^*:\mathcal{C}(U)\to \mathcal{C}(V)$, which we also write as
$X\leadsto X_{|V}$.  As in~\cite[p.~5396]{Treumann},
we assume that $i^*$ is exact.   The main example we have in mind is when
$T$ is a complex manifold and $\mathcal{C}(U)$ is the category of mixed Hodge
modules on $T$.  

For any objects $A,B\in \mathcal{C}(T)$, we get a sheaf $\sHom(A,B)$ sending on open
$U$ to $\Hom(A_{|U},B_{|U})$.

\begin{lemma}
Suppose that $A,B$ are objects in $\mathcal{C}(T)$ satisfying
$\sHom(A,B)=0$.  
\begin{enumerate}
\item If $E$ is an extension of $A$ by $B$ in $\mathcal{C}(T)$, then the identity is
  the only automorphism of $E$ as an extension.
\item The assignment $U\leadsto \Ext^1(A_{|U},B_{|U})$, which is naturally a
  presheaf by the exactness of the restriction functors, is a sheaf.
\end{enumerate}
\end{lemma}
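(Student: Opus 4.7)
The plan is to prove (i) first and then leverage (i) to verify both sheaf axioms for (ii).

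For (i), I would use the standard extension-automorphism argument. Given $\phi:E\to E$ commuting with the inclusion $B\hookrightarrow E$ and the projection $E\twoheadrightarrow A$, the morphism $\phi-\id_E$ kills $B$ (because $\phi$ fixes $B$) and lands in $B=\ker(E\to A)$ (because $\phi$ induces the identity on $A$). Hence it factors as $E\twoheadrightarrow A\xrightarrow{f} B\hookrightarrow E$ for a unique $f\in\Hom(A,B)$. The same construction over any open $V\subseteq T$ produces a compatible family of such factorizations, i.e., a section of $\sHom(A,B)$. Since $\sHom(A,B)=0$ by hypothesis, $f=0$ and $\phi=\id_E$.

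For (ii), I would check separation and gluing for the presheaf $U\mapsto\Ext^1(A_{|U},B_{|U})$, using the fact that $\mathcal{C}$ is a stack together with (i). For separation, if $E,E'$ are extensions over $U$ whose classes restrict to the same class in $\Ext^1$ over each $U_i$ of an open cover, choose extension-isomorphisms $\psi_i:E_{|U_i}\to E'_{|U_i}$. Applying (i) to $\psi_i^{-1}\psi_j$ on $U_i\cap U_j$ shows $\psi_i=\psi_j$ there, so the stack property of $\mathcal{C}$ glues the $\psi_i$ to an isomorphism $E\to E'$ of extensions over $U$. For gluing, given extensions $E_i$ on $U_i$ and extension-isomorphisms $\phi_{ij}:E_{j|U_i\cap U_j}\to E_{i|U_i\cap U_j}$, applying (i) to $\phi_{ik}^{-1}\phi_{ij}\phi_{jk}$ (an extension automorphism of $E_{k|U_i\cap U_j\cap U_k}$) shows that the cocycle condition holds automatically. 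The stack property then produces $E\in\mathcal{C}(U)$ with prescribed restrictions $E_{|U_i}\cong E_i$, and the same uniqueness principle lets the maps $B_{|U_i}\to E_i\to A_{|U_i}$ descend to an extension structure $0\to B_{|U}\to E\to A_{|U}\to 0$.

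The main point throughout (ii) is that (i) forces the groupoid of extensions over any open to be equivalent to its set of isomorphism classes, so all descent data is strict; hence no genuine obstacle arises beyond quoting the stack property of $\mathcal{C}$ and the uniqueness of (i). I would expect the only place requiring extra care to be the verification that the glued object $E$ carries a canonical extension structure, which proceeds by gluing the extension structure morphisms on $E_i$ using that they agree on overlaps after transport via the $\phi_{ij}$, again by (i).
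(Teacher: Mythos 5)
Your argument is correct and matches the paper's proof in every essential respect: part (i) is the observation that $\phi-\id$ factors as a morphism $A\to B$, hence vanishes; part (ii) uses the uniqueness from (i) to force the cocycle condition for local isomorphisms of extensions, so the stack property lets you descend. You are somewhat more explicit than the paper, which writes out only the gluing axiom and leaves separation and the descent of the extension-structure morphisms implicit, but the underlying idea is identical.

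One small remark on (i): since $\sHom(A,B)=0$ already means $\Hom(A_{|U},B_{|U})=0$ for every open $U$, including $U=T$, the element $f\in\Hom(A,B)$ you produce is zero outright — there is no need for the intermediate step of building a compatible family of sections over smaller opens.
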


\begin{proof}
  (i) If $\phi$ is an automorphism then $\phi-\id$ induces an element of $\Hom(A,B)$
  which must be $0$.

  (ii) 
  Suppose $\{V_i\}$ is an open cover of $U$ and, for each $i$, $E_i\in \Ext^1(A_{|V_i},B_{|V_i})$
  such that, for all $i,j$, $(E_i)_{|V_i\cap V_j}\cong (E_j)_{|V_i\cap V_j}$.  It follows from (i)
  that there is a \emph{unique} isomorphism $\varphi_{ij}:(E_i)_{|V_i\cap V_j}\cong (E_j)_{|V_i\cap V_j}$.
  In particular, $\varphi_{jk}\circ\varphi_{ij}=\varphi_{ik}$ for all $i,j,k$.  Therefore the
  $E_i$ glue together to form an object $E\in\Ext^1(A_{|U},B_{|U})$.  
\end{proof}

We write $\sExt^1(A,B)$  for the resulting sheaf when $\sHom(A,B)=0$.  

\begin{definition}
  We call a triple $Q_{\bullet}=(Q_0,Q_1,Q_2)$ \emph{disjoint} if $\sHom(Q_0,Q_1)=\sHom(Q_0,Q_2)=
  \sHom(Q_1,Q_2)=0$. 
\end{definition}

The main example we have in mind here is a sequence $Q_i$ of
torsion-free variations of pure Hodge structure of weight $-i$.

If $Q_{\bullet}$ is disjoint then the same argument as above shows
that the presheaf $$U\leadsto \Extpan(Q_0|U, Q_1|U, Q_2|U)$$ is a sheaf.
Similarly, for $E\in\Ext^1(Q_0,Q_1)$ and $F\in\Ext^1(Q_1,Q_2)$, the presheaf
$U\leadsto \Extpan(E_{|U},F_{|U})$ is a sheaf.  We write $\sExtpan(Q_0,Q_1,Q_2)$
for the first sheaf and $\sExtpan(E,F)$ for the second.  By Grothendieck's result,
$\sExtpan(E,F)$ is a $\sExt^1(Q_0,Q_2)$ pseudo-torsor.   Moreover,
$\sExtpan(E,F)$ is a $\sExt^1(Q_0,Q_2)$ torsor if the stalks of $\sExtpan(E,F)$
are non-empty.

Using Theorem~\ref{t.Har} we see
that 
$$
\sExtpan(Q_0,Q_1,Q_2)\to \sExt^1(Q_0,Q_1)\times\sExt^1(Q_1,Q_2)
$$
together with the action of $\sExt^1(Q_0,Q_2)$ and the operations $+_1$ and $+_2$
(which become sheaf morphisms) is a pseudo-biextension of the sheaf
$\sExt^1(Q_0,Q_1)\times \sExt^1(Q_1,Q_2)$ by $\sExt^1(Q_0,Q_2)$.
It is a biextension if the stalks of $\sExtpan(Q_0,Q_1,Q_2)$
are non-empty.

\section{Mixed Extensions of normal functions}\label{me}

\subsection{The sheaf of mixed extensions}\label{me1} 
Suppose $j:S\to\bar S$ is the inclusion of one complex manifold in
another as a Zariski open set.  By analogy with \S\ref{s1.1}, for an $A$-variation
of mixed Hodge structure $\mathcal{H}$
on $S$
with $W_{-1}\mathcal{H}=\mathcal{H}$, 
write $\NF(S,\mathcal{H})$
for $\Ext_{\VMHS(S)}^1(A,\mathcal{H})$ and $\ANF(S,\mathcal{H})_{\bar S}$
for $\displaystyle\Ext^1_{\VMHS(S)^{\ad}_{\bar S}}(A,\mathcal{H})$.
If $\mathcal{H}$ is torsion free and pure of weight $-1$, duality
gives an isomorphism $\NF(S,\mathcal{H}^*(1))=
\Ext^1_{\VMHS(S)}(\mathcal{H},A)$ and sending $\ANF(S,\mathcal{H}^*(1))$
to $\Ext^1_{\VMHS(S)^{\ad}_{\bar S}}(\mathcal{H},A)$.

Suppose now that $\mathcal{H}$ is pure of weight $-1$ and torsion-free.
Since the triple $(A, \mathcal{H}, A(1))$ is disjoint, we get sheaves on $S$:
$U\leadsto \NF(U,\mathcal{H}_{|U}), U\leadsto \NF(U,\mathcal{H}^{\vee}_{|U})$
and $U\leadsto \NF(U,A(1))$.   We also get a sheaf
$U\leadsto \Extpan(A,\mathcal{H}_{|U},A(1))$.   Write $\sNF(\mathcal{H})$
and $\sNF(A(1))$ for these sheaves on $S$.   We also get sheaves $\sANF(\mathcal{H})$ and $\sANF(A(1))$ on $\bar S$ given by admissible normal functions.

Fix
$\nu\in\ANF(S,\mathcal{H})_{\bar S}$ and
$\omega\in\ANF(S,\mathcal{H}^{\vee})_{\bar S}$.
Then, for any open set
$U\subset S$, we write $\calB_A(U)$ for the set of isomorphism
classes of mixed extensions of $A$ by $\mathcal{H}_A$ by $A(1)$
in $A-\VMHS(U)$.  Similarly, if $U\subset\bar S$, we write $\calB_A^{\ad}(U)$ 
for the set of isomorphism classes of mixed extensions of $A$
by $H_A$ by $A(1)$ in $A-\VMHS(U\cap S)^{\ad}_{U}$.  
The normal functions $\nu$ and $\omega$ give rise to extensions
$\displaystyle\nu_A\in   \Ext^1_{A-\VMHS(U\cap S)^{\ad}_{U}}(A,\mathcal{H}_A)$ and
$\displaystyle\omega_A\in\Ext^1_{A-\VMHS(U\cap S)^{\ad}_{U}}(\mathcal{H}_A^{\vee}, A(1))$.  
We write $\calB_A(\nu,\omega)(U)$ (resp. $\calB_A^{\ad}(\nu,\omega)(U)$)
for set of isomorphism classes of mixed extensions of
$\nu_A$ by $\omega_A$ in $A-\VMHS(U)$ (resp. $A-\VMHS(U\cap S)^{\ad}_{U}$).
When $A=\mathbb{Z}$ we drop the subscript $A$ from the notation and
simply write $\calB$ (resp. $\calB^{\ad}$).

By pullback of variations, $\calB_A$ (resp. $\calB^{\ad}_A$) is 
presheaf on $S$ (resp. $\bar S$).

\begin{lemma}
  The presheaf $\calB_A$ (resp. $\calB^{\ad}_A$) is a sheaf on $S$
  (resp. $\bar S$).  
\end{lemma}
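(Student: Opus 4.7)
The plan is to exploit the disjointness of the triple $(A,\mathcal{H}_A,A(1))$---which holds because these torsion-free pure VHS have distinct weights $0$, $-1$, $-2$, so all Hom groups between distinct terms vanish---and then mimic the sheafification argument already used in \S\ref{meb} to show that $\sExt^1$ is a sheaf when $\sHom=0$. The entire argument comes down to a rigidity statement for mixed extensions plus standard gluing.

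First I would establish the rigidity: any automorphism $f$ of a mixed extension $X\in\EXTpan(A,\mathcal{H}_A,A(1))$ over an open $U$ which respects the identifications of the graded pieces with $A$, $\mathcal{H}_A$, $A(1)$ must be the identity. Indeed, $g:=f-\mathrm{id}$ strictly decreases $W$, so it induces morphisms of VMHS $A\to\mathcal{H}_A$ (from $\Gr^W_0$ to $\Gr^W_{-1}$) and $\mathcal{H}_A\to A(1)$ (from $\Gr^W_{-1}$ to $\Gr^W_{-2}$), both of which are zero by disjointness. Hence $g$ actually decreases $W$ by $2$ and thereby induces a morphism $A\to A(1)$, which is likewise zero. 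Thus $f=\mathrm{id}$. The same reasoning shows that between any two mixed extensions there is at most one isomorphism respecting the identifications on graded pieces.

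Next, given an open cover $\{U_i\}$ of $U$ and classes $[X_i]\in\calB_A(U_i)$ agreeing pairwise on overlaps, I would choose for each $(i,j)$ an isomorphism $\varphi_{ij}:X_i|_{U_i\cap U_j}\stackrel{\sim}{\to} X_j|_{U_i\cap U_j}$ of mixed extensions. On triple overlaps, rigidity forces $\varphi_{ik}=\varphi_{jk}\circ\varphi_{ij}$ (both sides are isomorphisms of mixed extensions $X_i\to X_k$, hence equal). Since variations of mixed Hodge structure form a stack---the underlying local system, Hodge filtration, and weight filtration are all gluing data of a local nature---the $X_i$ glue along the $\varphi_{ij}$ to a VMHS $X$ on $U$ equipped with a three-step weight filtration and identifications of the graded pieces with $A$, $\mathcal{H}_A$, $A(1)$; this gives the required $[X]\in\calB_A(U)$. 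Uniqueness of $[X]$ follows by the same rigidity applied to two candidates. For $\calB_A^{\ad}$ the identical argument works because Saito admissibility is a local condition on $\bar S$: the axioms are conditions at each boundary point, hence satisfied on $U\cap S$ relative to $U$ as soon as they are satisfied on each $U_i\cap S$ relative to $U_i$. The only point that requires genuine checking is the rigidity for \emph{three}-step filtrations (as opposed to the two-step case treated in \S\ref{meb})---which is why one needs the full disjointness of the triple and not just the two pairwise vanishings $\sHom(A,\mathcal{H}_A)=\sHom(\mathcal{H}_A,A(1))=0$.
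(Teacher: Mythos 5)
Your proof is correct and is essentially the paper's argument: the paper also reduces the lemma to the disjointness of the triple $(A,\mathcal{H}_A,A(1))$ and then invokes the rigidity-plus-gluing mechanism set up earlier in \S\ref{meb} for $\sExtpan$ of a disjoint triple. The only difference is presentational: you spell out explicitly the three-step rigidity computation (that $f-\id$ first drops $W$ by one, inducing morphisms $A\to\mathcal{H}_A$ and $\mathcal{H}_A\to A(1)$ which vanish, then drops by two, inducing $A\to A(1)$ which also vanishes), whereas the paper defers this to its earlier discussion of disjoint triples; both proofs are the same in substance.
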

\begin{proof}  This follows from the fact that $A, \mathcal{H}$ and
  $A(1)$ are disjoint in the category of $A$ variations.
\end{proof}

\begin{lemma}\label{me2}
 Write $j_*^{\mer}\mathcal{O}_S^{\times}$ for the sheaf of meromorphic
functions on $\bar S$ which are regular and non-vanishing on $S$.  Then 
\begin{enumerate}
\item The functor 
$$U\leadsto \Ext^1_{\VMHS(U)}(\mathbb{Z},\mathbb{Z}(1))=\NF(U,\mathbb{Z}(1))$$
defines a sheaf on $S$ which is canonically isomorphic to 
$\mathcal{O}_S^{\times}$. 
\item The functor 
$$
  U\leadsto \Ext^1_{\VMHS(U\cap S)^{\ad}_{U}} (\mathbb{Z},\mathbb{Z}(1))=\ANF(U\cap S,\mathbb{Z}(1))_{U}
$$ 
defines a sheaf on $\bar S$ which is canonically isomorphic to 
$j_*^{\mer}\mathcal{O}_S^{\times}$.  
\end{enumerate}
\end{lemma}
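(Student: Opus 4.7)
The plan is to prove both parts by exhibiting explicit isomorphisms of sheaves. The sheaf property in both (i) and (ii) is covered by the disjointness argument of Section~\ref{meb}: since $\mathbb{Z}$ and $\mathbb{Z}(1)$ are pure Hodge structures of different weights, $\sHom(\mathbb{Z},\mathbb{Z}(1))=0$ in either $\VMHS(S)$ or $\VMHS(S\cap U)_{U}^{\ad}$, so the standard gluing argument shows that the presheaves in question are sheaves. It remains to identify them.

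For (i), the intermediate Jacobian of the constant variation $\mathbb{Z}(1)$ is $J(\mathbb{Z}(1)) = \mathbb{C}/\mathbb{Z}$ (using the paper's convention for the underlying module of $A(n)$), canonically isomorphic to $\mathbb{C}^{\times}$ via $z\mapsto e^{2\pi i z}$. A class in $\NF(U,\mathbb{Z}(1))$ is a holomorphic section of $J(\mathbb{Z}(1))\times U\to U$ (horizontality is automatic since $\mathbb{Z}(1)$ is constant), which under this identification is an element of $\mathcal{O}(U)^{\times}$. The inverse sends $f\in \mathcal{O}(U)^{\times}$ to the extension whose Hodge filtration is locally generated by $e_0+(2\pi i)^{-1}(\log f)\,e_{-2}$, where $e_0$ and $e_{-2}$ are integral generators of the graded pieces; different branches of $\log f$ correspond to different integral lifts $e_0$, so the isomorphism class of the extension is canonical. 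These constructions are compatible with restriction, giving the isomorphism $\sNF(\mathbb{Z}(1))\simeq\mathcal{O}_S^{\times}$.

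For (ii), the forgetful inclusion $\ANF(U\cap S,\mathbb{Z}(1))_{U}\hookrightarrow\NF(U\cap S,\mathbb{Z}(1))$ composed with (i) identifies admissible normal functions with a subset of $\mathcal{O}(U\cap S)^{\times}$, and the claim is that this subset is $(j_{*}^{\mer}\mathcal{O}_S^{\times})(U)$. Admissibility is local at each $p\in U\setminus S$: in coordinates $(s_1,\dots,s_r)$ at $p$ with $S=\{s_1\cdots s_k\neq 0\}$, the monodromy of the extension $V$ around $s_i=0$ acts trivially on $\Gr^W$ and sends $e_0\mapsto e_0+n_i e_{-2}$ for some $n_i\in\mathbb{Z}$, hence is unipotent with $N_i(e_0)=n_i e_{-2}$. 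Because $W$ has length two the relative weight filtration of any $\sum m_iN_i$ exists trivially, so Saito's admissibility reduces to the Deligne canonical extension condition on $F^0$. Writing the canonical frame as $\tilde e_0=e_0-\sum_{i=1}^k (2\pi i)^{-1} n_i (\log s_i)\,e_{-2}$ and $e_{-2}$, and $f=e^{2\pi i g}$ so that $F^0=\mathcal{O}\cdot(e_0+g\,e_{-2})$, the extension condition becomes that $g+\sum_{i=1}^k (2\pi i)^{-1} n_i\log s_i$ is holomorphic near $p$; this is equivalent to $f$ having the form $s_1^{n_1}\cdots s_k^{n_k}\cdot u$ with $u$ invertible holomorphic near $p$, i.e.\ $f\in(j_{*}^{\mer}\mathcal{O}_S^{\times})(U)$ in a neighborhood of $p$.

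The main obstacle is the explicit match between admissibility and meromorphy in (ii). The length-two weight filtration makes the relative weight filtration automatic and unipotence of monodromy automatic, so the content reduces to the local normal form computation for the canonical extension of $F^0$; this is routine but must be carried out with care to the branches of logarithm and to the effect of changing the integral lift $e_0$ by an element of $\mathbb{Z}(1)$.
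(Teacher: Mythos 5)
Your argument for part (i) is correct and matches the content of the paper's one-line proof, just more explicitly worked out. For part (ii), however, there is a genuine gap. Your local computation at $p\in U\setminus S$ begins by choosing coordinates $(s_1,\dots,s_r)$ in which $S=\{s_1\cdots s_k\neq 0\}$ — but this is exactly the normal crossing hypothesis, which is not assumed in the lemma. When $Y:=\bar S\setminus S$ is an arbitrary Zariski closed subset (possibly singular, possibly with components of codimension $\geq 2$), no such coordinate chart exists, so your canonical-extension calculation has nothing to latch on to, and indeed admissibility itself is not even defined by such a local normal form in that generality.

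The paper handles this by first establishing (ii) in the normal crossing case (via the local normal form, essentially the computation you carry out), and then reducing the general case to the normal crossing case by resolution of singularities: take a proper $p:\bar S'\to\bar S$, an isomorphism over $S$, with $D=p^{-1}Y$ normal crossing. Saito's theory gives $\ANF(U\cap S,\mathbb{Z}(1))_{U}=\ANF(p^{-1}U\cap S',\mathbb{Z}(1))_{p^{-1}U}$, and on the other side one has $j_*^{\mer}\mathcal{O}_S^{\times}=p_*j_*'^{\mer}\mathcal{O}_{S'}^{\times}$, so the identification descends. To complete your proof you would need to add exactly this reduction step; the local normal-crossing computation alone does not suffice.
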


\begin{proof}
  Both of these statements are local.  The first is well known and
follows essentially from the canonical isomorphism 
$\Ext^1_{\MHS}(\mathbb{Z},\mathbb{Z}(1))=\mathbb{C}^{\times}$.  

In the case that $\bar S=\Delta^{a+b}$ and $S=\Delta^{*a}\times \Delta^b$
for non-negative integers $a$ and $b$, the second statement follows
from the local normal form of an admissible normal function.  
Since the statement is local on $\bar S$, this proves that (ii) holds
when $Y:=\bar S\setminus S$ is a normal crossing divisor. 

For the general case, by using Hironaka, we can find a proper morphism
$p:\bar S'\to \bar S$ such that $p$ is an isomorphism over $S$ and 
$D=p^{-1} Y$ is a normal crossing divisor (with $Y=\bar S\setminus S$ as above). 
Write $S'=p^{-1} S$, and $j':S'\to\bar S'$ for the inclusion. 
Then, if $U$ is open in $\bar S$, by M.~Saito we have  
$\ANF(U\cap S,\mathbb{Z}(1))_{U}=\ANF(p^{-1}U\cap S',\mathbb{Z}(1))_{p^{-1}U}$.     
Since $j_*^{\mer}\mathcal{O}_S^{\times}=p_*j_*^{'\mer}\mathcal{O}_{S'}^{\times}$,
this proves that (ii) holds in general.
\end{proof}

\begin{corollary}\label{me3}
The sheaf $\calB$ (resp. $\calB^{\ad}$)  is a biextension (resp. pseudo-biextension) 
of $\sNF(S,\mathcal{H})\times\sNF(S,\mathcal{H}^{\vee})$ by  $\mathcal{O}_S^{\times}$ (resp. of  
$\sANF(S,\mathcal{H})_{\bar S}\times\sANF(S,\mathcal{H}^{\vee})_{\bar S}$ 
by  $j_*^{\mer}\mathcal{O}_S^{\times}$).   
\end{corollary}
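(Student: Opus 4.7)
The plan is to obtain both assertions as applications of the sheaf-theoretic version of Hardouin's theorem (Theorem~\ref{t.Har}), combined with the sheaf identifications coming from Lemma~\ref{me2}, Proposition~\ref{p.dual}, and the definitions of $\sExtpan$ and $\calB$.

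For the non-admissible statement, I apply the sheaf-theoretic version of Theorem~\ref{t.Har} to the stack $U\leadsto \VMHS(U)$ on $S$ and to the triple $(\mathbb{Z}(0),\mathcal{H},\mathbb{Z}(1))$. Disjointness is immediate from weight considerations: the three objects have weights $0,-1,-2$, and a morphism of variations is strict with respect to the weight filtration, so $\sHom(\mathbb{Z}(0),\mathcal{H})=\sHom(\mathbb{Z}(0),\mathbb{Z}(1))=\sHom(\mathcal{H},\mathbb{Z}(1))=0$. The theorem therefore produces a pseudo-biextension structure on
$$
\sExtpan(\mathbb{Z}(0),\mathcal{H},\mathbb{Z}(1))\longrightarrow \sExt^1(\mathbb{Z}(0),\mathcal{H})\times \sExt^1(\mathcal{H},\mathbb{Z}(1))
$$
with group sheaf $\sExt^1(\mathbb{Z}(0),\mathbb{Z}(1))$. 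Now I identify the four sheaves appearing: the left-hand side is $\calB$ by definition; the first factor on the right is $\sNF(S,\mathcal{H})$ by definition; the second factor is identified with $\sNF(S,\mathcal{H}^{\vee})$ by sheafifying the duality isomorphism of Proposition~\ref{p.dual} (the duality map is functorial and an isomorphism on each open, hence an isomorphism of sheaves); and the group sheaf is $\mathcal{O}_S^{\times}$ by Lemma~\ref{me2}(i).

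To finish the non-admissible case I must upgrade pseudo-biextension to biextension, i.e.\ show that the map $\calB\to \sNF(S,\mathcal{H})\times\sNF(S,\mathcal{H}^{\vee})$ is \emph{surjective} as a morphism of sheaves. By Proposition~\ref{prop:Grothendieck}(iii), the obstruction to a fibre $\Extpan(E_0,E_1)$ being non-empty lies in $\Ext^2_{\VMHS(U)}(\mathbb{Z}(0),\mathbb{Z}(1))$. On a sufficiently small polydisk $U\subset S$ this $\Ext^2$ vanishes: one can arrange the underlying local system to trivialise, reducing the question to a Yoneda $\Ext^2$ that is computed by smooth forms on a contractible Stein open and vanishes by standard arguments. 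Consequently every section of $\sNF(S,\mathcal{H})\times \sNF(S,\mathcal{H}^{\vee})$ lifts locally to $\calB$, which is the surjectivity needed for the biextension property.

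For the admissible statement I apply the same argument verbatim to the stack $U\leadsto \VMHS(U\cap S)^{\ad}_{U}$ on $\bar S$; disjointness persists because admissibility is a property of the variation, not an additional morphism-theoretic constraint, and the relevant $\sHom$ sheaves in the admissible category embed in those computed in $\VMHS$. The base sheaves are now $\sANF(S,\mathcal{H})_{\bar S}\times \sANF(S,\mathcal{H}^{\vee})_{\bar S}$ (using Proposition~\ref{p.dual} again), and the group sheaf is identified with $j_*^{\mer}\mathcal{O}_S^{\times}$ by Lemma~\ref{me2}(ii). Since the corollary only asserts a pseudo-biextension in this case, no further surjectivity argument is required. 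The main obstacle in the overall argument is the local vanishing of $\Ext^2_{\VMHS(U)}(\mathbb{Z},\mathbb{Z}(1))$ used in the upgrade step; once this is in hand the rest is a bookkeeping assembly of already-established identifications.
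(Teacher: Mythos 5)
Your plan for the pseudo-biextension part coincides with the paper's: apply the sheaf-theoretic version of Hardouin's theorem (Theorem~\ref{t.Har}) to the disjoint triple $(\mathbb{Z}(0),\mathcal{H},\mathbb{Z}(1))$, and then use Lemma~\ref{me2} to identify the group sheaf. The disjointness check via weight strictness is fine, and sheafifying Proposition~\ref{p.dual} to identify the second factor of the base is correct. No complaints there.

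The gap is in the biextension upgrade for $\calB$. You reduce to showing that $\Ext^2_{\VMHS(U)}(\mathbb{Z},\mathbb{Z}(1))$ vanishes on small polydisks, but the argument you sketch for this --- ``arrange the underlying local system to trivialise, reducing the question to a Yoneda $\Ext^2$ computed by smooth forms on a contractible Stein open'' --- does not go through as stated. The category $\VMHS(U)$ is not a category of sheaves: a $2$-extension of $\mathbb{Z}$ by $\mathbb{Z}(1)$ is a Yoneda chain of variations of mixed Hodge structure, and the constraints (holomorphy of the Hodge filtration, Griffiths transversality, graded-polarizability) have no obvious translation into a de Rham or \v{C}ech complex whose cohomology you can kill by contractibility. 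Trivializing the local system kills the topological part of the obstruction ($H^2(U,\mathbb{Z}(1))=0$ on a polydisk), but the forgetful functor $\VMHS(U)\to\mathrm{Loc}(U)$ can have nontrivial kernel on $\Ext^2$, so vanishing of the topological class is not enough. What you are implicitly claiming is precisely what the paper proves in Theorem~\ref{sec:mix-1}: the paper establishes the nonemptiness of $B(\nu,\omega)$ by the explicit gluing construction of \S\ref{sec:mix} (trigraded vector spaces, local normal forms, horizontality and admissibility checks), and the surjectivity of $\calB\to \sNF(S,\mathcal{H})\times\sNF(S,\mathcal{H}^{\vee})$ is obtained by invoking Theorem~\ref{sec:mix-1} with $D=\emptyset$. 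If you want to argue by vanishing of an $\Ext^2$, you would need to actually prove that vanishing, which is the content of the theorem you are trying to avoid. Your handling of the admissible case (only pseudo-biextension claimed, so no lifting needed) is correct.
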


\begin{proof}
  The fact that $\calB$ and $\calB^{\ad}$
  are pseudo-biextensions follows directly from Lemma~\ref{me2} and Theorem~\ref{t.Har}. The fact that $\calB$ is a biextension
follows from Theorem~\ref{sec:mix-1} (applied to the case where $D=\emptyset$).
\end{proof}

\begin{remark}
  If $\bar S\setminus S$ is a normal crossing divisor, then
  Theorem~\ref{sec:mix-1} implies that $\calB^{\ad}$  in Corollary~\ref{me3} is a biextension. We will prove a stronger result  in Theorem~\ref{eb1}.
\end{remark}

\begin{corollary}\label{torsor}
  The sheaf $\calB(\nu,\omega)$
  (resp. $\calB^{\ad}(\nu,\omega)$)
  is a torsor (resp. pseudo-torsor) for $\mathcal{O}_S^{\times}$
  (resp. $j_*^{\mer}\mathcal{O}_S^{\times}$).
\end{corollary}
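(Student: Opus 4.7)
The plan is to derive the corollary directly from Corollary~\ref{me3}; the substantive work has already been done, and what remains is bookkeeping. The argument has two steps: first identify $\calB(\nu,\omega)$ (resp. $\calB^{\ad}(\nu,\omega)$) with the fiber of the (pseudo-)biextension $\calB$ (resp. $\calB^{\ad}$) over the section $(\nu,\omega)$, then invoke the definition of (pseudo-)biextension to read off the (pseudo-)torsor structure.

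For the fiber identification, I would work locally on an open $U\subset S$ (resp. $U\subset\bar S$). The triple $(A,\mathcal{H}|_U,A(1))$ is disjoint since $\mathcal{H}$ is pure of weight $-1$ and torsion-free, so $\Hom(A,\mathcal{H})=\Hom(\mathcal{H},A(1))=0$. Corollary~\ref{DisjTors} then says that the fiber of
$$
\Extpan(A,\mathcal{H}|_U,A(1)) \;\longrightarrow\; \Ext^1(A,\mathcal{H}|_U)\times\Ext^1(\mathcal{H}|_U,A(1))
$$
over $(\nu|_U,\omega|_U)$ is precisely $\Extpan(\nu|_U,\omega|_U)$, which by construction equals $\calB(\nu,\omega)(U)$. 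The same argument in the admissible category yields $\calB^{\ad}(\nu,\omega)(U)$ as the fiber of $\pi^{\ad}:\calB^{\ad}\to\sANF(S,\mathcal{H})_{\bar S}\times\sANF(S,\mathcal{H}^{\vee})_{\bar S}$ over $(\nu|_U,\omega|_U)$.

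Next, by Definition~\ref{d.PBE}(iii) (equivalently, by Proposition~\ref{prop:Grothendieck}(ii) on stalks), in any pseudo-biextension $\pi:E\to B\times C$ by $A$ the fiber $\pi^{-1}(b,c)$ is an $A$-pseudo-torsor; it is a genuine $A$-torsor exactly when $\pi$ is surjective as a morphism of sheaves, i.e. when $E$ is a biextension. Corollary~\ref{me3} asserts that $\calB$ is a biextension by $\mathcal{O}_S^{\times}$, so $\calB(\nu,\omega)$ is an $\mathcal{O}_S^{\times}$-torsor; the same corollary asserts only that $\calB^{\ad}$ is a pseudo-biextension by $j_*^{\mer}\mathcal{O}_S^{\times}$, so $\calB^{\ad}(\nu,\omega)$ is only a $j_*^{\mer}\mathcal{O}_S^{\times}$-pseudo-torsor.

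The main obstacle is not in this corollary but in what lies behind it. The torsor statement for $\calB(\nu,\omega)$ ultimately depends on Theorem~\ref{sec:mix-1} (in the case $D=\emptyset$), which guarantees local non-emptiness of the fibers and hence the surjectivity needed to upgrade ``pseudo-biextension'' to ``biextension.'' The distinction in the admissible case is genuine and substantive: given admissible $\nu$ and $\omega$, it is \emph{not} automatic that an admissible biextension of type $(\nu,\omega)$ exists at every boundary point, which is precisely Q~\ref{query:Q1}. Accordingly, the proof stops at ``pseudo-torsor'' here; the promotion to ``torsor'' for $\calB^{\ad}(\nu,\omega)$ requires the existence result Theorem~\ref{eb1} proven later in the paper.
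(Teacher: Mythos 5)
Your proposal is correct and takes essentially the same route the paper intends: Corollary~\ref{torsor} is stated without a separate proof precisely because it is the fiber statement implicit in Corollary~\ref{me3}, and you have made that reasoning explicit via Corollary~\ref{DisjTors} and the definition of a (pseudo-)biextension. One small bookkeeping slip: the pseudo-torsor property of the fibers comes from Definition~\ref{d.PBE}(i) (the pseudo-torsor axiom on $\pi$), not (iii), though your parenthetical appeal to Proposition~\ref{prop:Grothendieck}(ii) on stalks gives the same conclusion; everything else, including the correct observation that upgrading $\calB^{\ad}(\nu,\omega)$ to a torsor requires Theorem~\ref{eb1}, matches the paper's logic exactly.
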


%

\section{The torsion pairing}\label{tp}
\newcommand\tors{\mathrm{tors}}
Suppose $\calL$ is a torsion-free local system of $\mathbb{Z}$ modules
on $\Delta^*$.  Write $\rH^1(\calL)_{\tors}$ for the torsion elements of
$\rH^1(\calL):=\rH^1(\Delta^*,\calL)$.  Note that $\rH^1(\calL^*)$ is
canonically isomorphic to
$\Ext^1_{\Delta^*}(\calL,\mathbb{Z})$ via the map taking an extension to
its dual.  Let $M$ denote the category of mixed extensions of $\mathbb{Z}$
by $\calL$ by $\mathbb{Z}$ (in the category of sheaves of abelian groups
over $\Delta^*$).  
We call a mixed extension
$X$ in $M$ \emph{restricted} if $X/W_{-2} X\in \rH^1(\calL)_{\tors}$ and
$W_{-1}X\in \rH^1(\calL^*)_{\tors}$.   Write $\mathcal{R}$ for the set of
isomorphism classes of such
mixed extensions.    It comes equipped with an obvious map
$\mathcal{R}\to \rH^1(\calL)_{\tors}\times \rH^1(\calL^*)_{\tors}$.

\begin{proposition}
  We have an action of $\mathbb{Z}=\Ext^1_{\Delta^*}(\mathbb{Z},\mathbb{Z})$
  on $\mathcal{R}$ along with operations $+_1$ and $+_2$ on $R$.  These
  make $\mathcal{R}$ into a biextension of $\rH^1(\calL)_{\tors}\times \rH^1(\calL^*)_{\tors}$
  by $\mathbb{Z}$. 
\end{proposition}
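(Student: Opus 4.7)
The plan is to apply the Grothendieck--Hardouin machinery of \S\ref{meb} to the triple $(\mathbb{Z},\calL,\mathbb{Z})$ in the topos of sheaves of abelian groups on $\Delta^*$ and then descend the resulting structure on $\Extpan(\mathbb{Z},\calL,\mathbb{Z})$ to the restricted subset $\mathcal{R}$.

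The two topological inputs driving the argument are
\[
  \Ext^i_{\Delta^*}(\mathbb{Z},\mathbb{Z}) \;=\; \rH^i(\Delta^*,\mathbb{Z}) \;=\; \begin{cases} \mathbb{Z}, & i=1, \\ 0, & i\geq 2, \end{cases}
\]
since $\Delta^*$ is homotopy equivalent to $S^1$.  The isomorphism in degree $1$ supplies the $\mathbb{Z}$ that is supposed to act.  The vanishing in degree $2$ annihilates the obstruction class $c(E,F)\in\Ext^2(\mathbb{Z},\mathbb{Z})$ from Proposition~\ref{prop:Grothendieck}(iii), so for every pair of representative extensions $E$ of a class $a\in\rH^1(\calL)_{\tors}$ and $F$ of a class $b\in\rH^1(\calL^*)_{\tors}$, the set $\Extpan(E,F)$ is a non-empty $\mathbb{Z}$-torsor.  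This yields surjectivity of $\mathcal{R}\to\rH^1(\calL)_{\tors}\times\rH^1(\calL^*)_{\tors}$.

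The main obstacle is that the triple $(\mathbb{Z},\calL,\mathbb{Z})$ is \emph{not} disjoint in the sense of \S\ref{meb}: the sheaves $\sHom(\mathbb{Z},\calL)$ and $\sHom(\calL,\mathbb{Z})$ are generally non-zero, so Theorem~\ref{t.Har} does not apply directly.  Consequently Proposition~\ref{PhiSurj} only identifies the fiber of $\mathcal{R}\to\rH^1(\calL)_{\tors}\times\rH^1(\calL^*)_{\tors}$ over $(a,b)$ with the \emph{quotient} of the $\mathbb{Z}$-torsor $\Extpan(E,F)$ by the image of $\Hom(\mathbb{Z},\calL)\times\Hom(\calL,\mathbb{Z})$ acting through
\[
  G(f_0,f_1) \;=\; f_{1*}(a) + f_0^*(b) \;\in\; \rH^1(\Delta^*,\mathbb{Z})=\mathbb{Z}.
\]
The key observation is that $f_{1*}$ and $f_0^*$ are group homomorphisms landing in the torsion-free group $\mathbb{Z}$, so $f_{1*}(a)=0$ and $f_0^*(b)=0$ whenever $a$ and $b$ are torsion.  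Thus $G(f_0,f_1)\equiv 0$ on the restricted locus, the spurious action is trivial, and each fiber of $\mathcal{R}\to\rH^1(\calL)_{\tors}\times\rH^1(\calL^*)_{\tors}$ is a bona fide $\mathbb{Z}$-torsor.

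Finally, the partial compositions $+_1$ and $+_2$ and the compatibilities (iii)--(v) of Definition~\ref{d.PBE} are inherited from the general constructions of \S\ref{s.POY} and \S\ref{psbi}, which do not require disjointness for their definition.  The only remaining verification is that $+_1$ and $+_2$ preserve the subset $\mathcal{R}$, and this is immediate because $\rH^1(\calL)_{\tors}$ and $\rH^1(\calL^*)_{\tors}$ are subgroups, hence closed under the induced addition on the $W_0/W_{-2}$ and $W_{-1}$ graded pieces respectively.
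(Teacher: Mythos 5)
Your proof is correct and follows essentially the same route as the paper's. The decisive observation in both is that, for $E_0$ and $E_1$ with torsion cohomology classes, the class $G(f_0,f_1)\in\Ext^1_{\Delta^*}(\mathbb{Z},\mathbb{Z})=\mathbb{Z}$ from Proposition~\ref{PhiSurj} vanishes (you spell out the reason — homomorphisms to a torsion-free group kill torsion — which the paper leaves implicit), so $\Extpan(E_0,E_1)$ injects into $\Extpan(\mathbb{Z},\calL,\mathbb{Z})$ and the fibers of $\mathcal{R}\to\rH^1(\calL)_{\tors}\times\rH^1(\calL^*)_{\tors}$ are genuine $\mathbb{Z}$-torsors; non-emptiness via $\Ext^2_{\Delta^*}(\mathbb{Z},\mathbb{Z})=0$ and the inheritance of $+_1,+_2$ from \S\ref{psbi} are likewise the paper's (tacit) steps.
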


\begin{proof}
  Suppose $E_0$ is an extension of $\mathbb{Z}$ by $\mathcal{L}$
  and $E_1$ is an extension of $\mathcal{L}$ by $\mathbb{Z}$.
  If $E_0$ and $E_1$ have torsion cohomology classes, then, for $(f_0, f_1)\in
  \Hom(\mathbb{Z},\calL)\times \Hom(\calL,\mathbb{Z})$, the
  class $G(f_0,f_1)\in \Ext^1_{\Delta^*}(\mathbb{Z},\mathbb{Z})=
  \rH^1(\Delta^*,\mathbb{Z})$ (given by~\eqref{Gd}) is trivial.  It follows by
  Proposition~\ref{PhiSurj} that $\Extpan(E_0,E_1)$ injects into
  $\Extpan(\mathbb{Z},\calL,\mathbb{Z})$ (where the $\Extpan$ sets
  are taken with respect to the category of sheaves of abelian
  groups on $\Delta^*$).   The set $\Extpan(E_0,E_1)$ is an
  $\Ext^1_{\Delta^*}(\mathbb{Z},\mathbb{Z})$-torsor.   In other words,
  it is a $\mathbb{Z}$-torsor.  Consequently, $\mathcal{R}$ has an action
  of $\mathbb{Z}$ making it into a $\mathbb{Z}$-torsor over
  $\rH^1(\calL)_{\tors}\times \rH^1(\calL^*)_{\tors}$.  

  The operations $+_1$ and $+_2$ are defined as in \S\ref{psbi}.
  The rest of the verification is left to the reader.
\end{proof}

By the results in~\cite{SGA71} summarized in Remark~\ref{BClass}, biextensions of 
$\rH^1(\calL)_{\tors}\otimes\rH^1(\calL^*)_{\tors}$ by $\mathbb{Z}$ are classified by the group
$\Ext^1(\rH^1(\calL)_{\tors}\otimes\rH^1(\calL^*)_{\tors},\mathbb{Z})$. 
This group sits in an exact sequence
\begin{align*}
  \Hom(\rH^1(\calL)_{\tors}\otimes\rH^1(\calL^*)_{\tors},\mathbb{Q})
&\to 
\Hom(\rH^1(\calL)_{\tors}\otimes\rH^1(\calL^*)_{\tors},\mathbb{Q}/\mathbb{Z})\\
\to\Ext^1(\rH^1(\calL)_{\tors}\otimes\rH^1(\calL^*)_{\tors},\mathbb{Z})
&\to \Ext(\rH^1(\calL)_{\tors}\otimes\rH^1(\calL^*)_{\tors},\mathbb{Q}).
\end{align*}
Since the first and last groups are $0$, we have an isomorphism
between the second and third groups.  Thus, the biextension gives rise
to a bilinear pairing
$$
\tau:\rH^1(\calL)_{\tors}\otimes\rH^1(\calL^*)_{\tors}\to \mathbb{Q}/\mathbb{Z}
$$
which we call the \emph{Grothendieck torsion pairing} or just the
\emph{torsion pairing}. 

We want to compute the torsion pairing in an explicit way.  To do
this, write $\mathcal{R}_{\mathbb{Q}}$ for the set of isomorphism
  classes of mixed extensions $X$ of $\mathbb{Q}$ by
  $\calL_{\mathbb{Q}}$ by $\mathbb{Q}$ which are restricted in the
  sense that $X/W_{-2}$ and $W_{-1} X$ are both $0$ (in
  $\rH^1(\calL_{\mathbb{Q}})$ and $\rH^1(\calL^*_{\mathbb{Q}})$
  respectively).  The same argument as above shows that
  $\mathcal{R}_{\mathbb{Q}}$ has the structure of a biextension, but
  this time it is a biextension over the trivial group $0$.
  Therefore, there is a canonical isomorphism
  \begin{equation}\label{ciso}
    \mathcal{R}_{\mathbb{Q}}=\Ext^1_{\Delta^*}(\mathbb{Q},\mathbb{Q})=\mathbb{Q}.
    \end{equation}
  Moreover, tensoring with $\mathbb{Q}$ gives a morphism of biextensions
  $X\leadsto X_{\mathbb{Q}}$ from $\mathcal{R}$ to $\mathcal{R}_{\mathbb{Q}}$.
  (See~\cite[p.~162]{SGA71} for the notion of a morphism of biextensions.)
  So, for $X\in\mathcal{R}$, we get a rational number $\tilde\tau X$
  given by the image of $X_{\mathbb{Q}}$ under \eqref{ciso}.  

  \begin{definition}
    Write
    $\mathcal{R}'$ for the set of triples $(\alpha,\beta,\gamma)\in
    \rH^1(\calL_{\tors})\times \rH^1(\calL^*)_{\tors}\times\mathbb{Q}$
    such that there exists $X\in R$ with $\pi X=(\alpha,\beta), \tilde\tau X=\gamma$.
  \end{definition}

  Since $X\to X_{\mathbb{Q}}$ is a morphism of biextensions from $\mathcal{R}$
  to $\mathcal{R}_{\mathbb{Q}}$, we have, for $i=1,2$,
  $$\tilde\tau(X+_i X')=\tilde\tau(X)+\tilde\tau(X')$$ whenever
  $+_i$ is defined.   Moreover, for $n\in\mathbb{Z}$, $\tilde\tau(n+X)=
  n+\tilde\tau X$.  
  
From this, it is not hard to see that the group $\mathbb{Z}$ acts on
  $\mathcal{R}'$ by the rule
  $n+(\alpha,\beta,\gamma)= (\alpha,\beta, n+\gamma)$.  Moreover, by setting
\begin{align*}
      (\alpha,\beta,\gamma)+_1 (\alpha',\beta,\gamma')&=(\alpha+\alpha',\beta,\gamma+\gamma') \\
  (\alpha,\beta,\gamma)+_2 (\alpha,\beta',\gamma')&=(\alpha,\beta+\beta',\gamma+\gamma')
\end{align*}
we get the structure of a biextension on $\mathcal{R}'$.  In fact, we get an isomorphism
of biextensions
\begin{equation}\phi:\mathcal{R}\to\mathcal{R}'\label{isobiext}
\end{equation}
given by $X\mapsto (X/W_{-2} X, W_{-1}X,\tilde\tau X)$.
All this leads to the following proposition. 

\begin{proposition}
Suppose $(\alpha,\beta)=\pi X$ for $X\in\mathcal{R}$.  Then $\tau(\alpha\otimes\beta)= \tilde\tau X \pmod{\mathbb{Z}}$. 
\end{proposition}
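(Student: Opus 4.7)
The plan is to show that $X\mapsto \tilde\tau(X)\bmod\mathbb{Z}$ descends to a biadditive pairing on $A\times B$ where $A=\rH^1(\calL)_{\tors}$ and $B=\rH^1(\calL^*)_{\tors}$, and then to identify this pairing with $\tau$ via the explicit classification $\Biext(A\times B,\mathbb{Z})\cong\Hom(A\otimes B,\mathbb{Q}/\mathbb{Z})$.

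First I would verify well-definedness of the mod-$\mathbb{Z}$ reduction. The fiber $\pi^{-1}(\alpha,\beta)\subset\mathcal{R}$ is a torsor for $\Ext^1_{\Delta^*}(\mathbb{Z},\mathbb{Z})=\mathbb{Z}$, and the $\mathbb{Z}$-action satisfies $\tilde\tau(n+X)=n+\tilde\tau(X)$ by the fact that $X\mapsto X_{\mathbb{Q}}$ is $\Ext^1(\mathbb{Z},\mathbb{Z})$-equivariant along the inclusion $\mathbb{Z}\hookrightarrow\mathbb{Q}$. Hence $\tilde\tau(X)\bmod\mathbb{Z}$ depends only on $(\alpha,\beta)=\pi X$, yielding a well-defined function $\sigma:A\times B\to\mathbb{Q}/\mathbb{Z}$.

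Next, I would observe that $\sigma$ is biadditive. Since $X\mapsto X_{\mathbb{Q}}$ is a morphism of biextensions from $\mathcal{R}$ to $\mathcal{R}_{\mathbb{Q}}\cong\mathbb{Q}$ (the latter regarded as a biextension over the trivial group), it intertwines the operations $+_1,+_2$ on $\mathcal{R}$ with ordinary addition on $\mathbb{Q}$. Reducing mod $\mathbb{Z}$ then gives the two additivity laws for $\sigma$, so $\sigma$ factors through a homomorphism $\bar\sigma:A\otimes B\to\mathbb{Q}/\mathbb{Z}$. It remains to show $\bar\sigma=\tau$. Using the isomorphism $\phi$ of~\eqref{isobiext} to identify $\mathcal{R}$ with $\mathcal{R}'$, the biextension $\mathcal{R}$ is explicitly the pullback of the extension $0\to\mathbb{Z}\to\mathbb{Q}\to\mathbb{Q}/\mathbb{Z}\to 0$ along $\bar\sigma$: by definition of $\mathcal{R}'$, a triple $(\alpha,\beta,\gamma)\in A\times B\times \mathbb{Q}$ lies in $\mathcal{R}'$ precisely when $\gamma$ reduces to $\sigma(\alpha,\beta)$ modulo $\mathbb{Z}$. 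This is precisely the biextension associated, via Remark~\ref{BClass} and the boundary map $\Hom(A\otimes B,\mathbb{Q}/\mathbb{Z})\to\Ext^1(A\otimes B,\mathbb{Z})$, to the element $\bar\sigma\in\Hom(A\otimes B,\mathbb{Q}/\mathbb{Z})$. Since $\tau$ is by definition the unique element of $\Hom(A\otimes B,\mathbb{Q}/\mathbb{Z})$ whose associated biextension class in $\Biext(A\times B,\mathbb{Z})$ is $[\mathcal{R}]$, we conclude $\bar\sigma=\tau$, proving the proposition.

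The main obstacle is step three: making precise the identification of the pullback biextension $\mathcal{R}'$ with the class in $\Ext^1(A\otimes B,\mathbb{Z})$ corresponding to $\bar\sigma$ under the connecting map from $\Hom(A\otimes B,\mathbb{Q}/\mathbb{Z})$. This is a standard computation in homological algebra, but requires carefully tracing through Grothendieck's construction of the isomorphism $\Biext(A\times B,\mathbb{Z})\cong\Ext^1(A\otimes B,\mathbb{Z})$ from~\cite{SGA71} (using $\Tor_1(A,B)$ torsion, hence $\Hom(\Tor_1(A,B),\mathbb{Z})=0$) and of the connecting homomorphism from the sequence $0\to\mathbb{Z}\to\mathbb{Q}\to\mathbb{Q}/\mathbb{Z}\to 0$. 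Once the identification is verified on the biextension side, the equality $\bar\sigma=\tau$ is immediate.
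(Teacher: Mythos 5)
Your proof is correct and takes essentially the same route as the paper: define $\bar\sigma$ as the mod-$\mathbb{Z}$ reduction of $\tilde\tau$, identify the biextension $\mathcal{R}'$ (and hence, via $\phi$ of~\eqref{isobiext}, $\mathcal{R}$) with the pullback of $0\to\mathbb{Z}\to\mathbb{Q}\to\mathbb{Q}/\mathbb{Z}\to 0$ along $\bar\sigma$ by directly comparing fibers, and then invoke the defining property of $\tau$. The preliminary well-definedness and biadditivity facts you recall are already recorded in the text immediately preceding the proposition, and the ``obstacle'' you flag at the end is discharged by the same fiberwise comparison the paper uses, so no further homological bookkeeping is required.
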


\begin{proof}
  Write $t$ for the reduction of $\tilde\tau$ modulo $\mathbb{Z}$.
  Then $t$ gives a map
  \begin{equation}\label{tmap}
  t:\rH^1(\calL)_{\tors}\otimes \rH^1(\calL^*)_{\tors}\to\mathbb{Q}/\mathbb{Z},
\end{equation}
and what we have to prove is that $\tau=t$.
Using $t$ and the isomorphism
\begin{equation}\Hom(\rH^1(\calL)_{\tors}\otimes\rH^1(\calL^*)_{\tors}, \mathbb{Q}/\mathbb{Z})\to \Ext^1(\rH^1(\calL)_{\tors}\otimes\rH^1(\calL^*)_{\tors}, \mathbb{Z}),
\end{equation}
we see that $t$ corresponds to the extension of
$\rH^1(\calL)_{\tors}\otimes\rH^1(\calL^*)_{\tors}$ by $\mathbb{Z}$
whose fiber over $\alpha\otimes\beta$ is the set of triples
$(\alpha,\beta,\gamma)$ with $\gamma\in t(\alpha\otimes\beta)$ (where
we view $t(\alpha\otimes\beta)$ as a coset of $\mathbb{Z}$ in $\mathbb{Q}$).
But this is exactly the fiber of $\mathcal{R}'$ over $\alpha\times\beta$.
And this shows that the $\mathcal{R}'$ is the image of $t$ under the
map
\begin{equation}
  \label{etob}
  \Ext^1(\rH^1(\calL)_{\tors}\otimes\rH^1(\calL^*)_{\tors},\mathbb{Z})
  \to\Biext(\rH^1(\calL)_{\tors}\times\rH^1(\calL^*)_{\tors},\mathbb{Z}).
\end{equation}
Since $\mathcal{R}$ is isomorphic to $\mathcal{R}'$, this shows that $\tau=t$.
\end{proof}

To write the torsion pairing explicitly, let $L$ denote the fiber of $\calL$
at a chosen point $s_0\in\Delta^*$ and write $T\in\Aut L$ for the monodromy operator.   Then
$\rH^1(\calL)$ is computed by the Koszul complex $K_{\mathbb{Z}}(L)$
given by
$$
L\stackrel{T-1}{\to} L
$$
in degrees $0$ and $1$.   So that $H^0\calL=L^{T}$ and $H^1\calL=
L/(T-1)L$.  
We can make the computation of $H^1\calL$ explicit if we view and element
of $H^1\calL$ as an extension
\begin{equation}
\label{e.extz}
0\to L\to E\stackrel{p}{\to} \mathbb{Z}\to 0
\end{equation}
in the category of abelian groups equipped with a $\mathbb{Z}$-action 
(where the action on $\mathbb{Z}$ on itself is taken to be trivial).

If we pick an element $e\in E$ such that $p(e)=1$, then $(T-1)e\in L$.  
We can change $e$ to $e':=e+l$ for some $l\in L$.  Then 
$(T-1)e'=(T-1)e+ (T-1)l$.   Therefore the class $[(T-1)e]\in L/(T-1)L$
is well defined.  And it is easy to see that this identifies $H^1(\Delta^*,\calL)$ with $L/(T-1)L$.   

\begin{corollary}\label{c.torsion}
 Under the above identification, we have  
$$H^1(\calL)_{\tors}=\frac{L\cap (T-1)L_{\mathbb{Q}}}{(T-1)L}.$$
\end{corollary}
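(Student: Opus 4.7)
The plan is to exploit the explicit identification $H^1(\calL) = L/(T-1)L$ established just before the corollary and then characterize the torsion subgroup as the kernel of the rationalization map.

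First, since $\calL$ is a torsion-free local system of $\mathbb{Z}$-modules on $\Delta^*$, the fiber $L$ is a finitely generated free abelian group, so $H^1(\calL) = L/(T-1)L$ is a finitely generated abelian group. For any such group $A$, the torsion subgroup $A_{\tors}$ equals the kernel of the canonical map $A \to A \otimes \mathbb{Q}$. So I would apply this to $A = L/(T-1)L$.

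Next, I would identify the rationalization. Since $(-)\otimes\mathbb{Q}$ is exact, the natural map
\[
(L/(T-1)L) \otimes \mathbb{Q} \longrightarrow L_{\mathbb{Q}}/(T-1)L_{\mathbb{Q}}
\]
is an isomorphism, and the canonical map $L/(T-1)L \to L_{\mathbb{Q}}/(T-1)L_{\mathbb{Q}}$ is induced by $l \mapsto l$. (One can also recognize this group as $H^1(\calL_{\mathbb{Q}})$ via the same Koszul computation applied to the rational local system.)

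Finally, I would chase through the definitions: a class $[l] \in L/(T-1)L$ lies in the kernel if and only if $l \in (T-1)L_{\mathbb{Q}}$, i.e.\ if and only if $l \in L \cap (T-1)L_{\mathbb{Q}}$. Therefore
\[
H^1(\calL)_{\tors} = \ker\!\bigl(L/(T-1)L \to L_{\mathbb{Q}}/(T-1)L_{\mathbb{Q}}\bigr) = \frac{L \cap (T-1)L_{\mathbb{Q}}}{(T-1)L},
\]
as asserted. There is no real obstacle here; the statement is essentially a formal consequence of the preceding identification of $H^1(\calL)$ together with the standard fact that torsion in a finitely generated abelian group coincides with the kernel of rationalization.
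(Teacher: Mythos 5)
Your proof is correct and is exactly the routine verification the paper leaves out (its proof is simply ``Obvious''): torsion is the kernel of the rationalization map, rationalizing $L/(T-1)L$ yields $L_{\mathbb{Q}}/(T-1)L_{\mathbb{Q}}$, and the kernel of the induced map consists precisely of the classes of elements of $L\cap(T-1)L_{\mathbb{Q}}$. Nothing further is needed.
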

\begin{proof}
  Obvious.
\end{proof}

Suppose $X\in\mathcal{R}$ with $\pi(X)=(\alpha,\beta)\in
\rH^1(\calL)_{\tors}\times \rH^1(\calL^*)_{\tors}$.  Write $V$
for the fiber of $X$ at $s_0$, and write $\tilde{T}\in\Aut V$
for the monodromy action.   This action preserves the filtration
$W$ on $V$.

\begin{proposition}\label{ttauX}
  Suppose $X\in\mathcal{R}$ has monodromy matrix $\tilde{T}$.
  There exists an $e_0\in V_{\mathbb{Q}}$ with the following properties
  \begin{enumerate}
  \item the projection of $e_0$ to $\Gr^WV_{\mathbb{Q}}$ is
    equal to $1$ under the identification
    $p_0:\Gr^W_{\mathbb{Q}}\stackrel{\sim}{\to}\mathbb{Q}$;
  \item $(\tilde{T}-1)e_0\in W_{-2} V_{\mathbb{Q}}$. 
  \end{enumerate}
  For any such element $e_0$, we have $\tilde\tau(X)=p_{-2}((\tilde T-1)(e_0))$.
\end{proposition}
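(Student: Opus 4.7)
The plan is to construct $e_0$ from the triviality of $\alpha_{\mathbb{Q}}$, then identify $\tilde\tau X$ explicitly by extracting a canonical $2$-step subquotient of $X_{\mathbb{Q}}$ whose extension class is computable via the Koszul model used throughout \S\ref{tp}. The class $\alpha = [X/W_{-2}X] \in \rH^1(\calL)$ is torsion by hypothesis, so its image $\alpha_{\mathbb{Q}} \in \rH^1(\calL_{\mathbb{Q}})$ vanishes, and the extension $(X/W_{-2}X)_{\mathbb{Q}}$ of $\mathbb{Q}$ by $\calL_{\mathbb{Q}}$ admits a $\tilde T$-equivariant splitting. Fix such a splitting; it yields a $\tilde T$-invariant lift $\bar e_0 \in V_{\mathbb{Q}}/W_{-2}V_{\mathbb{Q}}$ of the canonical generator $1 \in \mathbb{Q} = \Gr^W_0 V_{\mathbb{Q}}$. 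Any lift $e_0 \in V_{\mathbb{Q}}$ of $\bar e_0$ then satisfies (i), and also (ii) since $(\tilde T-1)e_0$ projects to $(\tilde T-1)\bar e_0 = 0$ in $V_{\mathbb{Q}}/W_{-2}V_{\mathbb{Q}}$.

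To exhibit $\tilde\tau X$ in terms of $e_0$, I would likewise use that $\beta = [W_{-1}X]$ is torsion, so $\beta_{\mathbb{Q}}$ vanishes and the extension $(W_{-1}X)_{\mathbb{Q}}$ of $\calL_{\mathbb{Q}}$ by $\mathbb{Q}$ admits a $\tilde T$-equivariant retraction $r: W_{-1}V_{\mathbb{Q}} \to W_{-2}V_{\mathbb{Q}}$. Let $L = \ker r$, a $\tilde T$-invariant lift of $\calL_{\mathbb{Q}}$ inside $W_{-1}V_{\mathbb{Q}}$, and form the pushout quotient
\[
\bar X := V_{\mathbb{Q}}/L.
\]
This is a $\tilde T$-equivariant extension of $\mathbb{Q}$ by $\mathbb{Q}$ in the category of sheaves of $\mathbb{Q}$-vector spaces on $\Delta^*$. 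The image of $e_0$ in $\bar X$ is a lift of $1 \in \mathbb{Q}$, and by the Koszul description of $\Ext^1_{\Delta^*}(\mathbb{Q},\mathbb{Q}) = \rH^1(\Delta^*,\mathbb{Q}) = \mathbb{Q}$ recalled earlier in \S\ref{tp}, the class of $\bar X$ equals $r((\tilde T-1)e_0)$. Since $(\tilde T-1)e_0 \in W_{-2}V_{\mathbb{Q}}$ by (ii) and $r$ restricts to $p_{-2}$ on $W_{-2}V_{\mathbb{Q}}$, this class is $p_{-2}((\tilde T-1)e_0)$.

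It remains to identify $[\bar X]$ with $\tilde\tau X$. The pushout assignment $X_{\mathbb{Q}} \mapsto \bar X$ is a morphism of biextensions from $\mathcal{R}_{\mathbb{Q}}$ to $\Ext^1_{\Delta^*}(\mathbb{Q},\mathbb{Q})$ (the latter viewed tautologically as a biextension over the terminal object); it is $\Ext^1_{\Delta^*}(\mathbb{Q},\mathbb{Q})$-equivariant for the torsor structures, and sends the neutral mixed extension $\mathbb{Q}\oplus\calL_{\mathbb{Q}}\oplus\mathbb{Q}$ to the trivial extension. Since $\mathcal{R}_{\mathbb{Q}}$ is canonically isomorphic to $\Ext^1_{\Delta^*}(\mathbb{Q},\mathbb{Q})$ via its structure as a biextension over a point---this is the content of \eqref{ciso}---the pushout morphism must coincide with that canonical isomorphism, yielding $\tilde\tau X = [\bar X] = p_{-2}((\tilde T-1)e_0)$.

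The main obstacle is precisely this last identification: tracing through Grothendieck's pushout, Baer-sum, and torsor constructions to confirm that the pushout $X_{\mathbb{Q}} \mapsto \bar X$ really is the canonical isomorphism of \eqref{ciso}. Granting it, independence of the choice of $e_0$ is automatic, though it can also be checked directly: any two valid lifts differ by $w \in W_{-1}V_{\mathbb{Q}}$ with $(\tilde T-1)w \in W_{-2}V_{\mathbb{Q}}$, so the image $\bar w \in \calL_{\mathbb{Q}}$ is $\tilde T$-invariant; the discrepancy $p_{-2}((\tilde T-1)w)$ then equals the pullback $\bar w^{\ast}[\omega_{\mathbb{Q}}] \in \Ext^1_{\Delta^*}(\mathbb{Q},\mathbb{Q})$, which vanishes since $\beta$ is torsion.
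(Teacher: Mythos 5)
Your proof is correct, and it fills in the unwinding that the paper's very terse argument leaves implicit. The existence of $e_0$ is established exactly as the paper does, from the vanishing of $\alpha_{\mathbb{Q}}$. For the formula, you make the canonical isomorphism \eqref{ciso} concrete by pushing $X_{\mathbb{Q}}$ forward along a $\tilde T$-equivariant retraction $r:W_{-1}V_{\mathbb{Q}}\to W_{-2}V_{\mathbb{Q}}$ (which exists because $\beta_{\mathbb{Q}}=0$), and you then correctly observe that the resulting class $r((\tilde T-1)e_0)$ equals $p_{-2}((\tilde T-1)e_0)$ because $(\tilde T-1)e_0$ already lies in $W_{-2}$; this also shows the construction is independent of $r$. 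The identification you flag as the ``main obstacle'' — that $X_{\mathbb{Q}}\mapsto \bar X$ realizes \eqref{ciso} — is indeed the substantive step, but your sketch is sound: the map is $\Ext^1_{\Delta^*}(\mathbb{Q},\mathbb{Q})$-equivariant because $r\circ\iota=\id_{Q_2}$ where $\iota:Q_2\to W_{-1}X$ is the inclusion (so $r_*(\iota_*G+ X)=G+r_*X$), and it sends the split mixed extension to the trivial extension; these two facts pin down the map on the torsor.

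One remark on the route: instead of pushing forward along $r$, one can pull back along the splitting $\mathbb{Q}\to X_{\mathbb{Q}}/W_{-2}X_{\mathbb{Q}}$, $1\mapsto [e_0]$, which yields the subrepresentation $W_{-2}V_{\mathbb{Q}}+\mathbb{Q}e_0$; its class in $\Ext^1_{\Delta^*}(\mathbb{Q},\mathbb{Q})$ is visibly $p_{-2}((\tilde T-1)e_0)$. This dual route is slightly more economical — it reads the formula off directly from $e_0$ without needing to introduce $r$ or verify independence of $r$ — and it is probably what the paper's one-line justification has in mind, since only the existence of $e_0$ is mentioned explicitly there. But your pushforward argument, and your closing direct check of independence of $e_0$, are both correct and complete.
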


\begin{proof}
   When tensored with $\mathbb{Q}$, the extensions
  $X/W_{-2}X$ and $W_{-1}X$ in $\rH^1(\calL)$ respectively $\rH^1(\calL^*)$
  become trivial.  The existence of $e_0\in V_{\mathbb{Q}}$ follows from
  the triviality of $X_{\mathbb{Q}}/W_{-2} X_{\mathbb{Q}}$.  Then the equality
  $\tilde\tau X=p_{-2}((\tilde T-1)(e_0))$ follows from the definition of $\tilde\tau$
  as the image of $X_{\mathbb{Q}}$ in $\Ext^1_{\Delta^*}(\mathbb{Q},\mathbb{Q})=\mathbb{Q}$.
\end{proof}

\begin{proposition}
Suppose $\alpha\in L\cap (T-1)L_{\mathbb{Q}}$ represents a class
in $H^1(\calL)_{\tors}$ and  $\beta\in L^*$ represents a class in 
$H^1(\calL^*)_{\tors}$.  Pick $\ell\in L_{\mathbb{Q}}$ such that $\alpha=(T-1)l$.
Then 
$$
\tau([\alpha],[\beta])=-(\ell,\beta)\mod{\mathbb{Z}}.
$$
\end{proposition}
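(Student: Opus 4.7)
The plan is to exhibit an explicit representative $X\in\mathcal{R}$ with $\pi(X)=([\alpha],[\beta])$ and then apply Proposition~\ref{ttauX} to compute $\tilde\tau(X)$ directly. This reduces the torsion pairing to a one-line calculation with the monodromy.

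First I would construct $X$ as follows. Take as fiber at $s_0$ the abelian group $V=\mathbb{Z} e_0\oplus L\oplus\mathbb{Z} e_{-2}$ with the evident three-step filtration $W$, and define the monodromy operator $\tilde T\in\Aut V$ by $\tilde T(e_{-2})=e_{-2}$, $\tilde T(l)=Tl+(l,\beta)\,e_{-2}$ for $l\in L$, and $\tilde T(e_0)=e_0+\alpha$. This is a well-defined automorphism preserving $W$, and on the graded pieces it restricts to the identity on $\Gr^W_0$, $T$ on $\Gr^W_{-1}=L$, and the identity on $\Gr^W_{-2}$. Unwinding the identifications in Corollary~\ref{c.torsion}, the extension $X/W_{-2}X$ represents $[\alpha]\in \rH^1(\mathcal{L})_{\tors}$ and $W_{-1}X$ represents $[\beta]\in \rH^1(\mathcal{L}^*)_{\tors}$, so $X\in\mathcal{R}$ and $\pi(X)=([\alpha],[\beta])$.

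Next I would compute $\tilde\tau(X)$ using Proposition~\ref{ttauX}. The natural candidate is $e_0':=e_0-\ell\in V_{\mathbb{Q}}$, which visibly satisfies condition (i). For condition (ii), using $(T-1)\ell=\alpha$ one computes
\begin{equation*}
(\tilde T-1)e_0'=(\tilde T-1)e_0-(\tilde T-1)\ell=\alpha-(T-1)\ell-(\ell,\beta)\,e_{-2}=-(\ell,\beta)\,e_{-2},
\end{equation*}
which lies in $W_{-2}V_{\mathbb{Q}}=\mathbb{Q} e_{-2}$. Proposition~\ref{ttauX} then gives $\tilde\tau(X)=-(\ell,\beta)$, and reducing modulo $\mathbb{Z}$ (via the preceding proposition identifying $\tau$ with the reduction of $\tilde\tau$) yields $\tau([\alpha],[\beta])=-(\ell,\beta)\bmod\mathbb{Z}$.

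The substantive content is all packaged in Proposition~\ref{ttauX} and the identification of $\mathcal{R}$ with $\mathcal{R}'$ via the isomorphism of biextensions \eqref{isobiext}; once those are in hand the argument is essentially formal. The main thing to be careful about is the compatibility of sign and normalization conventions: one must check that the extension class of $W_{-1}X$ under the identification $\rH^1(\mathcal{L}^*)=L^*/(T^*-1)L^*$ is indeed $[\beta]$ with the sign used in defining $(\,\cdot\,,\beta)$, and similarly for $X/W_{-2}X$. Apart from this bookkeeping, no estimate or serious structural argument is required.
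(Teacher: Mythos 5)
Your proof is correct and follows essentially the same route as the paper's. The only cosmetic difference is that you construct a specific representative $X$ with the $(3,1)$-entry of the monodromy matrix equal to $0$ and then invoke Proposition~\ref{ttauX}, whereas the paper writes a general $X\in p^{-1}(\alpha,\beta)$ with an arbitrary integer $m$ in that slot and reads off $\tilde\tau(X)=m-(\ell,\beta)$ directly from the change of basis $e_0\mapsto e_0-\ell$; both reduce to $-(\ell,\beta)\bmod\mathbb{Z}$. Your version relies on the (already established) fact that $\tau$ is well-defined on $([\alpha],[\beta])$ so that computing $\tilde\tau$ for one representative suffices, while the paper's version makes the $\bmod\,\mathbb{Z}$ ambiguity visible by keeping $m$; these are interchangeable, and your explicit check that $(\tilde T-1)e_0'=-(\ell,\beta)e_{-2}\in W_{-2}V_{\mathbb{Q}}$ is exactly the paper's computation.
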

\begin{proof}
  We can represent a mixed extension $X\in p^{-1}(\alpha,\beta)$ by giving  
the monodromy with respect to a basis consisting of 
\begin{enumerate}
\item an element $e_0\in X$ projecting to $1$ under the isomorphism
$\Gr^W_0 X=\mathbb{Z}$,
\item elements in $W_{-1}X$ lifting a basis of $L$ under the map 
$\Gr^W_{-1} X=L$, 
\item the generator $e_{-2}$ of $W_{-2}X=\mathbb{Z}$. 
\end{enumerate}
In matrix form, we then have
$$
\tilde{T}=
\begin{pmatrix}
  1      & 0     & 0\\ 
 \alpha  & T     & 0\\
    m    & \beta & 1
\end{pmatrix}
$$
for some $m\in\mathbb{Z}$.
Over $\mathbb{Q}$ we can change $e_0$ to $e_0'=e_0-\ell$.  Then 
$\tilde{T}e_0'=e_0+\alpha +me_{-2} -T\ell - (\ell,\beta)e_{-2}=
e'_0+\alpha+me_{-2}-(T-1)\ell-(\ell,\beta)e_{-2}=e_0'+(m-(\ell,\beta))e_{-2}$.  
So if we change the basis by changing $e_0$ to $e_0'$ the matrix
for $\tilde{T}$ becomes
$$
\begin{pmatrix}
  1              & 0     & 0\\ 
  0              & T     & 0\\
  m-(\ell,\beta)  & \beta & 1
\end{pmatrix}.
$$
So $\tilde\tau(X)=m-(\ell,\beta)$, and $\tau(X)=-(\ell,\beta)$ as desired.
\end{proof}

Suppose now that $\calH$
is a torsion free weight $-1$
variation of Hodge structure on $\Delta^*$.
Fix admissible normal functions
$\nu\in\ANF(\Delta^*,\calH)_{\Delta}$
and $\omega\in\ANF(\Delta^*,\calH^{\vee})_{\Delta}$.
A biextension variation of Hodge structure
$\calV\in\calB^{\ad}(\nu,\omega)$
then gives a mixed extension in the category of admissible variations.
The underlying local system $\calV_{\mathbb{Z}}$ is then a restricted
mixed extension in $\mathcal{R}$.   

\begin{corollary} Suppose $\calH$ has unipotent monodromy.
 Then $\mu(\calV)=\tilde\tau\calV_{\mathbb{Z}}$.   
\end{corollary}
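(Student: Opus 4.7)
The plan is to compute both $\mu(\calV)$ and $\tilde\tau(\calV_{\mathbb{Z}})$ from a single well-chosen rational lift of the canonical generator of $\Gr^W_0$ and verify that the two recipes return the same scalar.

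First I would confirm that $\calV_{\mathbb{Z}}\in\mathcal{R}$. This is automatic because $\calH$ and $\calH^{\vee}$ are both pure variations of weight $-1$, so Lemma~\ref{l.H1W2} gives that the mixed Hodge structures $\rH^1(\calH_{\mathbb{Q}})$ and $\rH^1(\calH_{\mathbb{Q}}^{\vee})$ are concentrated in weights $\geq 1$; the $(0,0)$-classes coming from the admissible normal functions $\nu$ and $\omega$ therefore vanish rationally, i.e.\ are torsion integrally.  Proposition~\ref{ttauX} then produces $e_0\in V_{\mathbb{Q}}$ whose image in $\Gr^W_0 V_{\mathbb{Q}}$ is $1$ and with $(\tilde T-1)e_0\in W_{-2}V_{\mathbb{Q}}$, and gives the identity
\begin{equation*}
\tilde\tau(\calV_{\mathbb{Z}}) = p_{-2}\bigl((\tilde T-1)e_0\bigr).
\end{equation*}

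Next I would use unipotence to write $\tilde T = e^N$ and upgrade the condition $(\tilde T-1)e_0\in W_{-2}$ to $Ne_0\in W_{-2}$. Filtration-preservation of $N$ together with vanishing of $N$ on the rank-one graded pieces $\Gr^W_0$ and $\Gr^W_{-2}$ gives $Ne_0\in W_{-1}$ and $N(W_{-2})=0$ at once. Projecting the expansion $\tilde T-1 = N + N^2/2 + \cdots$ to $\Gr^W_{-1}=\calH$ yields
\begin{equation*}
\pi_{-1}\bigl((\tilde T-1)e_0\bigr) = \phi(N_{\calH})\,\pi_{-1}(Ne_0), \qquad \phi(x):=\tfrac{e^x-1}{x},
\end{equation*}
where $N_{\calH}$ is the monodromy logarithm of $\calH$. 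Since $N_{\calH}$ is nilpotent and $\phi(0)=1$, $\phi(N_{\calH})$ is invertible; hence $\pi_{-1}((\tilde T-1)e_0)=0$ forces $\pi_{-1}(Ne_0)=0$, so $Ne_0\in W_{-2}$. Once this holds, $N^k e_0 = 0$ for all $k\geq 2$ (because $N$ kills $W_{-2}$), and therefore $(\tilde T-1)e_0 = Ne_0$ exactly.

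Finally I would identify $p_{-2}(Ne_0)$ with $\mu(\calV)$ via Remark~\ref{rat-mu} and Theorem~\ref{mu}. Setting $v_0:=e_0$ and choosing any rational $N$-invariant splitting $W_{-1}V_{\mathbb{Q}} = U_{\mathbb{Q}}\oplus\mathbb{Q}v_{-2}$ (with $v_{-2}$ the canonical integral generator of $W_{-2}=\mathbb{Z}(1)$) realises the decomposition of Remark~\ref{rat-mu}, and Theorem~\ref{mu} gives $Ne_0 = \mu(\calV)\,v_{-2}$, i.e.\ $p_{-2}(Ne_0)=\mu(\calV)$. Combining the three steps,
\begin{equation*}
\mu(\calV) = p_{-2}(Ne_0) = p_{-2}\bigl((\tilde T-1)e_0\bigr) = \tilde\tau(\calV_{\mathbb{Z}}).
\end{equation*}
The only step of any substance is the upgrade from $(\tilde T-1)e_0\in W_{-2}$ to $Ne_0\in W_{-2}$; this is handled by the elementary invertibility of $\phi(N_{\calH})$, so there is no real obstacle.
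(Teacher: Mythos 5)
Your argument is correct and is essentially the paper's: the corollary is Proposition~\ref{ttauX} combined with the characterization of $\mu(\calV)$ via rational lifts $e_0$ of $1\in\Gr^W_0 V_{\mathbb{Q}}$ with $Ne_0\in W_{-2}V_{\mathbb{Q}}$ (Remark~\ref{rat-mu}, Theorem~\ref{mu}, Lemma~\ref{stronger-mu}); your invertibility of $(e^x-1)/x$ at the nilpotent $N_{\calH}$ and the identity $(\tilde T-1)e_0=Ne_0$ merely traverse the equivalence of the two normalizations of the lift in the opposite direction from the paper's. One small point to make explicit: the rational $N$-invariant complement $U_{\mathbb{Q}}\subset W_{-1}V_{\mathbb{Q}}$ you ``choose'' is not automatic but is exactly what admissibility of $\omega$ provides (take the $U_{\mathbb{Q}}$ from the decomposition of Remark~\ref{rat-mu}), or avoid the choice entirely by comparing your $e_0$ with that decomposition via Lemma~\ref{stronger-mu}.
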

\begin{proof}
This follows from Proposition~\ref{ttauX}.
\end{proof}

\section{Meromorphic Extensions}\label{sec:merext}

The goal of this section is to show that the biextension line bundle
$\mathcal{L}(\nu,\mu)$ on $S$ has a canonical extension to a
meromorphic line bundle on $\bar S$.  Essentially this is a
consequence of Corollary~\ref{torsor} above, which shows that the
sheaf of biextensions is a $j_*^{\mer}\mathcal{O_S^{\times}}$-torsor.
(We remind the reader that the main work going into that Corollary was
done in \S\ref{sec:mix}.)  What remains to do is to recall the
definition of a meromorphic extension, which we take from Deligne's
book on differential equations~\cite[p.~65]{deligne-diffeq}, and to
show that $j_*^{\mer}\mathcal{O}_S^{\times}$-torsors are in one-one
correspondence with meromorphic line bundles.   Deligne considers meromorphic
extensions of coherent sheaves, and we have followed this but, in order
to understand the category of meromorphic sheaves better, we have rephrased his
definition in the language of stacks. (This gives us a category of meromorphic sheaves, and we use the category in the case of meromorphic line bundles
to recover the $j_*^{\mer}\mathcal{O}_S^{\times}$-torsor.)  We also give a bit of background on
the notion of meromorphic extensions and the (very significant) differences between the
analytic and algebraic cases.

In the next section, \S\ref{s.ethm}, we prove that
$\mathcal{L}(\nu,\mu)$ extends as a holomorphic line bundle.  (In
Deligne's language, we prove that the meromorphic extension is
effective.)  We felt that this section should go before \S\ref{s.ethm}
because the meromorphic extension is unique, while the holomorphic
extension depends on some choices.  However, \S\ref{s.ethm} does not
logically depend on this section.  So, the reader may want to skip
this section starting from subsection~\ref{extsh} (where we begin
the study of meromorphic sheaves) at first reading. 

\subsection{Notation}\label{notation} Following~\cite[p.~61]{deligne-diffeq}, we take
$X$ to be an analytic space with $Y$ a closed analytic subset and
$X^*=X\setminus Y$.  We write $j:X^*\to X$ and $i:Y\to X$ for the
inclusions.

\subsection{Extensions of line bundles} Before bringing up the
subject of extensions of coherent sheaves, we want to consider
extensions of holomorphic line bundles from $X^*$ to $X$ to illustrate
some of the differences between the analytic and algebraic settings.

\begin{definition}\label{lbext}
  Suppose $\mathcal{L}^*$ is a holomorphic line bundle on $X^*$, an
  \emph{extension} of $\mathcal{L}^*$ to a line bundle on $X$ is a
  pair $(\mathcal{L},r)$ where $\mathcal{L}$ is a line bundle on $X$
  and $r:\mathcal{L}_{|X^*}\to \mathcal{L}^*$ is an isomorphism of
  line bundles on $X^*$.  If $(\mathcal{L}_i,r_i)$ ($i=1,2$) are two
  extensions of $\mathcal{L}^*$, then a \emph{morphism} from
  $(\mathcal{L}_1, r_1)$ to $(\mathcal{L}_2,r_2)$ is a morphism of
  line bundles $\phi:\mathcal{L}_1\to \mathcal{L}_2$ commuting with
  the isomorphisms $r_i$.  In this way, we get a category
  $\mathcal{P}(\mathcal{L}^*)$ of extensions of $\mathcal{L}^*$ to
  $X$.
\end{definition}

\begin{lemma}\label{Hartogs}
  Suppose $X$ is complex manifold and $Y$ has codimension at least $2$ in $X$.
  Let $\mathcal{L}^*$ be a line bundle on $X^*$ which extends to a line
  bundle $X$.
  Then this extension is unique up to isomorphism.
\end{lemma}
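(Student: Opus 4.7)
The plan is to reduce the statement to a direct application of the Hartogs extension phenomenon together with the fact that the zero locus of a section of a line bundle, when non-empty, is a hypersurface.

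More concretely, suppose $(\mathcal{L}_1,r_1)$ and $(\mathcal{L}_2,r_2)$ are two extensions of $\mathcal{L}^*$ as in Definition~\ref{lbext}. The first step is to form the line bundle $\mathcal{M} := \mathcal{L}_1^{-1}\otimes \mathcal{L}_2$ on $X$, and to use the composition $\phi^* := r_2^{-1}\circ r_1$ to produce a nowhere-vanishing holomorphic section of $\mathcal{M}$ on $X^*$, i.e., a trivialization of $\mathcal{M}|_{X^*}$.

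The second step is to extend $\phi^*$ to a section of $\mathcal{M}$ over all of $X$. Here I would invoke the version of Hartogs's theorem which asserts that, for $X$ a complex manifold, $Y\subset X$ a closed analytic subset of codimension $\geq 2$, and any locally free sheaf $\mathcal{F}$ on $X$, the natural map $\mathcal{F}\to j_*j^*\mathcal{F}$ is an isomorphism (this is essentially the statement that $\mathcal{O}_X = j_*\mathcal{O}_{X^*}$, tensored by the locally free $\mathcal{F}$, which works since the statement is local). Applied to $\mathcal{F} = \mathcal{M}$ one obtains a (unique) extension $\phi\in H^0(X,\mathcal{M})$ of $\phi^*$.

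The final step is to verify that $\phi$ is nowhere vanishing, and hence defines an isomorphism $\mathcal{L}_1\xrightarrow{\sim}\mathcal{L}_2$ compatible with the $r_i$. The zero scheme $Z(\phi)\subset X$ of a section of a line bundle is either empty or a pure codimension one hypersurface. Since $\phi|_{X^*}=\phi^*$ is nowhere vanishing, $Z(\phi)\subset Y$, and the codimension hypothesis on $Y$ forces $Z(\phi)=\emptyset$. The only real point to keep in mind (and the one place where the codimension $\geq 2$ assumption is essential) is exactly this last argument; everything else is formal. No serious obstacle is anticipated.
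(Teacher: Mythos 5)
Your proof is correct and follows essentially the same route as the paper: form $r_2^{-1}\circ r_1$ on $X^*$ and extend across $Y$ using Hartogs (the identity $\mathcal{O}_X = j_*\mathcal{O}_{X^*}$ in codimension $\geq 2$, applied to the locally free sheaf $\mathcal{H}om(\mathcal{L}_1,\mathcal{L}_2)\cong \mathcal{L}_1^{-1}\otimes\mathcal{L}_2$). The one place you add genuine content is the final step: the paper simply asserts that the Hartogs extension $R$ is an isomorphism, whereas you justify this via the observation that the zero locus of a nonzero section of a line bundle, if nonempty, is purely codimension $1$ and hence cannot lie in $Y$. (An alternative, which avoids reusing the codimension hypothesis, is to extend $r_1^{-1}\circ r_2$ the same way and use uniqueness of Hartogs extensions to conclude the two extensions are mutually inverse; either argument closes the small gap the paper leaves.)
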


\begin{proof}
  Suppose $(\mathcal{L}_i,r_i)$ are two extensions.  Then
  $r_2^{-1}\circ r_1:j^*\mathcal{L}_1\to j^*\mathcal{L}_2$ is an isomorphism
  of line bundles.   By Hartog's theorem, it extends to an isomorphism
  $R:\mathcal{L}_1\to\mathcal{L}_2$ commuting with the restrictions $r_i$. 
  \end{proof}

  There are two main problematic differences between extensions of
  line bundles in the holomorphic and the algebraic case.  The first
  is that not all holomorphic line bundles on $X^*$ extend to $X$ even
  when $X$ is a manifold.  For example, as we pointed out in
  Remark~\ref{remexth}, there are infinitely many analytic line bundles
  on $\mathbb{C}^2\setminus\{0\}$ which do not extend to
  $\mathbb{C}^2$.  In fact, by the exponential exact sequence, it is
  easy to see that every line bundle on $\mathbb{C}^2$ is trivial.  So
  the trivial line bundle is the only line bundle on
  $\mathbb{C}^2\setminus\{0\}$ which extends to $\mathbb{C}^2$.
  By Lemma~\ref{Hartogs}, in this case, the extension is
  unique.

  The second difference is that analytic line bundles can have too
  many extensions.  To be precise, they can have uncountably many
  extensions which are not even meromorphically equivalent.  (We show
  in Proposition~\ref{ame} that, when $X$ is smooth, meromorphic
  extensions of  line bundles in the algebraic setting are unique.)  Before giving Deligne's
  definition of meromorphic equivalence, we want to illustrate this
  problem with an example.

\begin{example}\label{ecurve}
  Let $E$ be an (algebraic) elliptic curve
  over $\mathbb{C}$ and let $p$ be a point in $E$.  Set
  $U=E\setminus\{p\}$ considered as an algebraic curve.   Then
write $X, X^*$ and $Y$ for $E, U$ and $\{p\}$ respectively regarded as
analytic varieties.

We have an exact sequence
$$
0\to \mathbb{Z}\to \Pic E\to \Pic U\to 0.
$$
where the first non-trivial map sends $1$ to the line bundle
$\mathcal{O}_{E}([p])$ and the second sends a line bundle to its
restriction to $U$.   From this, it is not hard to see that any algebraic line
bundle on $U$ extends to $E$, and, while the extension is not
unique, it is unique modulo tensoring with $\mathcal{O}_E([p])$.
(Here we use the obvious algebraic analogue of Definition~\ref{lbext}.)

The analytic case is very different.  Here, by GAGA~\cite{GAGA}, we have
$\Pic X=\Pic E$.  However, from the exponential exact sequence
$$
0\to 2\pi i\mathbb{Z}\to \mathcal{O}_{X^*}\to \mathcal{O}_{X^*}^{\times}\to 1
$$
and the fact that $X^*$ is Stein, it follows easily that $\Pic X^*=0$.
So the restriction of any line bundle on $X$ to $X^*$ is
trivial. The trivial line bundle on $X^*$ has uncountably
many non-isomorphic extensions to $X$ as every line bundle on $X$
gives rise to an extension of $\mathcal{O}_{X^*}$.  In fact, the
situation is even worse than it seems: every line bundle $\mathcal{L}$
on $X$ gives rise to uncountably many non-isomorphic extensions of $\mathcal{O}_{X^*}$.  To understand this phenomenon we make the following definition. 
\end{example}

\begin{definition}
  With $X, Y$ and $X^*$ as in~\eqref{notation}, write $P$ for the
  set of isomorphism classes of extensions $(\mathcal{L},r)$ of the trivial
  line bundle $\mathcal{O}_{X^*}$ to a line bundle on $X$.
  We give $P$  an (abelian) group structure  by
  setting $(\mathcal{L}_1,r_1)(\mathcal{L}_2,r_2)=
  (\mathcal{L}_1\otimes\mathcal{L}_2,r_1\otimes r_2)$.  
\end{definition}

\begin{proposition}
  Suppose that $X$ is a complex manifold and $X^*$ is a Zariski open subset.
  Then we have an exact sequence
  $$
  1\to \frac{H^0(X^*,\mathcal{O}_{X^*}^{\times})}{H^0(X,\mathcal{O}_X^{\times})}
  \to P\to \Pic X\to \Pic X^*.$$
\end{proposition}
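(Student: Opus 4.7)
The plan is to construct all three maps and verify exactness spot by spot, with no geometric input beyond the definition of $\mathcal{P}(\mathcal{O}_{X^*})$ and the fact that $\mathcal{O}_X^\times$ embeds in $j_*\mathcal{O}_{X^*}^\times$ via restriction (which holds because $X$ is a manifold and $X^*$ is dense). First I would make $P$ into a group under tensor product $(\mathcal{L}_1,r_1)\otimes(\mathcal{L}_2,r_2):=(\mathcal{L}_1\otimes\mathcal{L}_2,\,r_1\otimes r_2)$, with identity the class of $(\mathcal{O}_X,\id)$ and inverses via duals. Then I define the forgetful homomorphism $\pi:P\to\Pic X$ by $(\mathcal{L},r)\mapsto[\mathcal{L}]$ and note that the restriction $\Pic X\to\Pic X^*$ annihilates its image, since $r$ itself exhibits a trivialization of $\mathcal{L}|_{X^*}$. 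Exactness at $\Pic X$ is then immediate: a class $[\mathcal{L}]\in\ker(\Pic X\to\Pic X^*)$ admits by definition some isomorphism $r:\mathcal{L}|_{X^*}\xrightarrow{\sim}\mathcal{O}_{X^*}$, and $(\mathcal{L},r)\in P$ is a preimage.

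Next I would analyze $\ker\pi$. If $(\mathcal{L},r)\in\ker\pi$, pick any isomorphism $\phi:\mathcal{L}\xrightarrow{\sim}\mathcal{O}_X$; then $(\mathcal{L},r)\cong(\mathcal{O}_X,\,r\circ\phi^{-1}|_{X^*})$ in $\mathcal{P}(\mathcal{O}_{X^*})$. Any isomorphism $\mathcal{O}_{X^*}\xrightarrow{\sim}\mathcal{O}_{X^*}$ is multiplication by a unique $u\in H^0(X^*,\mathcal{O}_{X^*}^\times)$, so we obtain a surjection
\[
   \sigma: H^0(X^*,\mathcal{O}_{X^*}^\times)\twoheadrightarrow\ker\pi,\qquad u\longmapsto [(\mathcal{O}_X,u)].
\]
This map is a group homomorphism because tensor product of the trivial bundle with itself is the trivial bundle and the trivializations multiply.

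Finally I would compute $\ker\sigma$. By definition, $(\mathcal{O}_X,u_1)\cong(\mathcal{O}_X,u_2)$ in $\mathcal{P}(\mathcal{O}_{X^*})$ iff there exists an automorphism $\psi:\mathcal{O}_X\xrightarrow{\sim}\mathcal{O}_X$ with $u_2\cdot(\psi|_{X^*})=u_1$; such a $\psi$ is multiplication by some $v\in H^0(X,\mathcal{O}_X^\times)$, so the condition is $u_1/u_2\in H^0(X,\mathcal{O}_X^\times)$ (viewed inside $H^0(X^*,\mathcal{O}_{X^*}^\times)$ via restriction). Hence $\sigma$ descends to an isomorphism $H^0(X^*,\mathcal{O}_{X^*}^\times)/H^0(X,\mathcal{O}_X^\times)\xrightarrow{\sim}\ker\pi$, which gives the injectivity on the left and exactness at $P$.

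There is no genuine obstacle here: the argument is purely a diagram-chase in the definition of torsor classes. The only point requiring a brief comment is that the choice of the auxiliary trivialization $\phi$ in the middle step changes $u$ by an element of $H^0(X,\mathcal{O}_X^\times)$, which is precisely the subgroup being quotiented out — so the induced map into the quotient is well-defined and independent of $\phi$. Everything else is formal.
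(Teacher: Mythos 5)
Your proof is correct and follows essentially the same route as the paper's: define the maps, note that exactness at $\Pic X$ is immediate from the definition of $P$, then produce the map $u\mapsto(\mathcal{O}_X,u)$ and identify its kernel and image. The only difference is that you spell out the well-definedness of the quotient map more explicitly, which the paper leaves implicit.
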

\begin{proof}
  First we describe the maps in the sequence.  The last one is restriction.
  The second-to-last sends an extension $(\mathcal{L},r)$ to $\mathcal{L}$.
  The sequence is exact at $\Pic X$ by the definition of $P$ (as
  the set of isomorphism classes of extensions of the trivial line bundle on $X^*$).  We have a map $\phi:H^0(X,\mathcal{O}_{X^*}^{\times})\to P$
  given by $\phi(f)=(\mathcal{O}_X,f)$ where we think of $f$ as an isomorphism
  from $\mathcal{O}_{X^*}$ to itself.  The kernel of $\phi$ consists of
  function $f$ with $(\mathcal{O}_X,f)$ isomorphic to $(\mathcal{O}_X,1)$.
  This is exactly $H^0(X,\mathcal{O}_X^{\times})$.  Since any element in
  the kernel of $P\to \Pic X$ can be written as $(\mathcal{O}_X,f)$
  for some $f$ as above, this finishes the proof of the proposition.
\end{proof}

\begin{remark}
  In the case of the elliptic curve from Example~\ref{ecurve} above,
  both $\Pic X$ and
  the group $H^0(X^*,\mathcal{O}_{X^*}^{\times})/H^0(X,\mathcal{O}_X^{\times})=H^0(X^*,\mathcal{O}_{X^*}^{\times})/\mathbb{C}^{\times}$ are uncountable.
\end{remark}

\subsection{Meromorphic Extensions of Coherent Analytic Sheaves}\label{extsh}
For any analytic space $T$ we write $\Coh T$ for the category of coherent
sheaves on $T$.

Suppose $\mathcal{F}^*$ is a coherent analytic sheaf on $X^*$.  An
\emph{extension of $\mathcal{F}^*$ to $X$} is a coherent analytic
sheaf $\mathcal{F}$ on $X$ together with an isomorphism
$r_{\mathcal{F}}:\mathcal{F}_{|X^*}\to \mathcal{F}^*$.  A morphism
$\mathcal{F}\to \mathcal{F}'$ of extensions is then a morphism of
coherent sheaves on $X$ respecting the isomorphisms to
$\mathcal{F}$ over $X^*$.

We now give a theorem from~\cite[p.~65]{deligne-diffeq}

\begin{theorem}[Deligne]\label{eqconds}
Suppose $\mathcal{F}_1$ and $\mathcal{F}_2$ are extensions of $\mathcal{F}$.
Then the following conditions are equivalent.  
\leqnomode
  \begin{align}
  &\text{there is an extension $\mathcal{F}_3$ of $\mathcal{F}$ along with  morphisms from
    $\mathcal{F}_3$ to $\mathcal{F}_1$ and $\mathcal{F}_2$;}\label{merex1}\\
  &\text{there is an extension $\mathcal{F}_4$ of $\mathcal{F}$ along with a morphisms from $\mathcal{F}_1$ and
    $\mathcal{F}_2$ to $\mathcal{F}_4$;}\label{merex2}\\
  &\text{either \eqref{merex1} or \eqref{merex2} hold locally on $X$.}\label{merex3}
\end{align}
\reqnomode
\end{theorem}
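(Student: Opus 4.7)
The plan is to establish the chain $(\ref{merex3})\Rightarrow(\ref{merex1})\Leftrightarrow(\ref{merex2})\Rightarrow(\ref{merex3})$, the easy direction being from $(\ref{merex1})$ or $(\ref{merex2})$ to $(\ref{merex3})$. The equivalence $(\ref{merex1})\Leftrightarrow(\ref{merex2})$ is formal and uses the fact that pushouts and pullbacks in $\Coh X$ inherit the property of restricting to $\mathcal{F}^{*}$ on $X^{*}$. The implication $(\ref{merex3})\Rightarrow(\ref{merex1})$ is the substantive one, and I would deduce it by exhibiting a canonical candidate for $\mathcal{F}_{3}$ constructed directly from $\mathcal{F}_{1}$ and $\mathcal{F}_{2}$ and then using the hypothesis only to verify coherence locally.

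For $(\ref{merex1})\Rightarrow(\ref{merex2})$, given morphisms of extensions $\phi_{i}\colon\mathcal{F}_{3}\to\mathcal{F}_{i}$, I would set
$$
\mathcal{F}_{4}:=\coker\bigl(\mathcal{F}_{3}\xrightarrow{(\phi_{1},-\phi_{2})}\mathcal{F}_{1}\oplus\mathcal{F}_{2}\bigr),
$$
which is coherent as a quotient of a coherent sheaf. Over $X^{*}$ the map $(\phi_{1},-\phi_{2})$ becomes $(\id,-\id)\colon\mathcal{F}^{*}\to\mathcal{F}^{*}\oplus\mathcal{F}^{*}$, so the cokernel restricts canonically to $\mathcal{F}^{*}$; the two inclusions $\mathcal{F}_{i}\hookrightarrow\mathcal{F}_{1}\oplus\mathcal{F}_{2}\twoheadrightarrow\mathcal{F}_{4}$ are then morphisms of extensions. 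The reverse implication $(\ref{merex2})\Rightarrow(\ref{merex1})$ is the dual construction: take $\mathcal{F}_{3}$ to be the kernel of $(\psi_{1},-\psi_{2})\colon\mathcal{F}_{1}\oplus\mathcal{F}_{2}\to\mathcal{F}_{4}$, which is coherent as the kernel of a map between coherent sheaves, and restricts to $\mathcal{F}^{*}$ for the same reason.

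For $(\ref{merex3})\Rightarrow(\ref{merex1})$, I would take as a global candidate the kernel, in $\mathcal{O}_{X}$-modules, of the map
$$
\mathcal{F}_{1}\oplus\mathcal{F}_{2}\longrightarrow j_{*}\mathcal{F}^{*},\qquad (a,b)\longmapsto r_{1}(a|_{X^{*}})-r_{2}(b|_{X^{*}}),
$$
which I will call $\mathcal{F}_{3}$. It is an $\mathcal{O}_{X}$-module restricting to $\mathcal{F}^{*}$ on $X^{*}$, equipped with canonical projections to $\mathcal{F}_{1}$ and $\mathcal{F}_{2}$ extending the identifications over $X^{*}$; the only point that is not automatic is coherence. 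Cover $X$ by opens $U_{\alpha}$ on which $(\ref{merex1})$ holds, so there exist coherent extensions $\mathcal{G}_{\alpha}$ on $U_{\alpha}$ with morphisms $\mathcal{G}_{\alpha}\to\mathcal{F}_{i}|_{U_{\alpha}}$. By the universal property of the kernel, these factor through a canonical morphism $\mathcal{G}_{\alpha}\to\mathcal{F}_{3}|_{U_{\alpha}}$ which is the identity on $\mathcal{F}^{*}$. I would then verify that this map is an isomorphism by applying $(\ref{merex2})\Rightarrow(\ref{merex1})$ on $U_{\alpha}$ to $\mathcal{F}_{1}|_{U_{\alpha}}$ and $\mathcal{F}_{2}|_{U_{\alpha}}$ with the common receiving extension $\mathcal{F}_{1}|_{U_{\alpha}}\oplus_{\mathcal{G}_{\alpha}}\mathcal{F}_{2}|_{U_{\alpha}}$ and appealing to the universal property that pins down the fiber product up to unique isomorphism. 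Once each restriction $\mathcal{F}_{3}|_{U_{\alpha}}$ is coherent, coherence of $\mathcal{F}_{3}$ follows from the local nature of the coherence condition.

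The main obstacle will be the last step: proving that the canonical comparison map $\mathcal{G}_{\alpha}\to\mathcal{F}_{3}|_{U_{\alpha}}$ is an isomorphism. The potential trouble is that $j_{*}\mathcal{F}^{*}$ is typically not coherent and may have more sections than one expects near $Y$, so a priori $\mathcal{F}_{3}$ could be strictly larger than $\mathcal{G}_{\alpha}$. I expect to handle this by shrinking $U_{\alpha}$ and choosing $\mathcal{G}_{\alpha}$ to be maximal among coherent extensions of $\mathcal{F}^{*}|_{U_{\alpha}\cap X^{*}}$ mapping to both $\mathcal{F}_{i}|_{U_{\alpha}}$, which is legitimate in the coherent analytic setting because ascending chains of coherent subsheaves of a fixed coherent sheaf are locally stationary; alternatively, one can invoke the identification of $\mathcal{F}_{3}$ with the fiber product $\mathcal{F}_{1}\times_{\mathcal{F}_{1}\oplus_{\mathcal{G}_{\alpha}}\mathcal{F}_{2}}\mathcal{F}_{2}$ computed in $\Coh(U_{\alpha})$ via the already-established implication $(\ref{merex2})\Rightarrow(\ref{merex1})$.
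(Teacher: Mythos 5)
Your treatment of $(\ref{merex1})\Leftrightarrow(\ref{merex2})$ is the same as the paper's: the pushout $\coker(\phi_1,-\phi_2)$ for one direction, the kernel (equivalently $\mathcal{F}_1\cap\mathcal{F}_2$ inside $\mathcal{F}_4$) for the other; both are manifestly coherent. For $(\ref{merex3})\Rightarrow(\ref{merex1})$ you take the dual route: you build a canonical global common \emph{source} $\mathcal{F}_3=\ker(\mathcal{F}_1\oplus\mathcal{F}_2\to j_*\mathcal{F}^*)$, whereas Deligne's proof (followed in the paper) builds the canonical global common \emph{receiver} $\mathcal{F}_4$, namely the image of $\mathcal{F}_1\oplus\mathcal{F}_2$ in $j_*\mathcal{F}^*$. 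The two candidates fit into the same short exact sequence, so proving either is coherent proves both; the only issue is which route to coherence you use.

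That is where your proposal has a real gap. You observe, correctly, that the sticking point is showing the comparison map $\mathcal{G}_\alpha\to\mathcal{F}_3|_{U_\alpha}$ is an isomorphism, but the fix you suggest does not close it. Choosing $\mathcal{G}_\alpha$ ``maximal'' is circular: the maximal coherent subsheaf of $\mathcal{F}_3|_{U_\alpha}$ with the required properties equals $\mathcal{F}_3|_{U_\alpha}$ only if $\mathcal{F}_3|_{U_\alpha}$ is already known to be coherent, which is exactly what you are trying to prove; and the ascending-chain appeal, even if it were valid, would only produce a maximal coherent subsheaf, not identify it with $\mathcal{F}_3$. Your alternative suggestion (``identify $\mathcal{F}_3$ with the fiber product computed in $\Coh(U_\alpha)$ via the universal property'') is also not yet an argument, because the fiber product in $\Coh(U_\alpha)$ is a priori $\ker\bigl(\mathcal{F}_1\oplus\mathcal{F}_2\to\mathcal{F}_4^{\mathrm{loc}}\bigr)$ for some local coherent receiver $\mathcal{F}_4^{\mathrm{loc}}$, and it could in principle be strictly smaller than $\ker\bigl(\mathcal{F}_1\oplus\mathcal{F}_2\to j_*\mathcal{F}^*\bigr)$.

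The missing ingredient, which is the engine of the paper's proof, is Serre's theorem that $\Gamma_Y\mathcal{G}$ is coherent for any coherent analytic sheaf $\mathcal{G}$ (so that $\mathcal{G}/\Gamma_Y\mathcal{G}$ is coherent and has no $Y$-torsion). Granting this, the correct completion of your approach is: replace the local receiver by $\mathcal{F}_4^{\mathrm{loc}}/\Gamma_Y\mathcal{F}_4^{\mathrm{loc}}$, which is coherent and injects into $j_*\mathcal{F}^*$; then the two kernels literally coincide because the second map factors through the first injectively, and coherence of $\mathcal{F}_3|_{U_\alpha}$ follows. The paper instead shows directly that $\mathcal{F}_4=\mathrm{im}(\mathcal{F}_1\oplus\mathcal{F}_2\to j_*\mathcal{F}^*)$ is the quotient of $(\mathcal{F}_1\oplus\mathcal{F}_2)/\mathcal{F}_3'$ by its $\Gamma_Y$, hence coherent by Serre's theorem, and deduces coherence of the kernel as a byproduct. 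Either way, you need Serre's $\Gamma_Y$-coherence theorem, and your proposal never brings it in.
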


Deligne says that $\mathcal{F}_1$ and $\mathcal{F}_2$ are \emph{meromorphically
  equivalent} if the conditions in Theorem~\ref{eqconds} above hold. 

\begin{proof}
For the convenience of the reader, we give a slightly
expanded version of Deligne's proof of the
equivalence of (\ref{merex1}---\ref{merex3}) above.   First note that,
if $\mathcal{G}$ is any coherent analytic sheaf on $X$, then the sheaf
$\Gamma_Y\mathcal{G}$ of sections with support in $Y$ is coherent
~\cite[Proposition 3,
p.~366]{SerreProlong}.
Moreover, we have a short exact sequence
\begin{equation}
  \label{e.supex}
  0\to \Gamma_Y\mathcal{G}\to \mathcal{G}\to j_*j^{-1}\mathcal{G}.
\end{equation}
So, the coherence of  $\Gamma_Y\mathcal{G}$ implies the coherence of the
image of the map $\mathcal{G}\to j_*j^{-1}\mathcal{G}$.

Now, suppose we have $\mathcal{F}_3$ as in \eqref{merex1}.  Set
$\mathcal{F}_4=(\mathcal{F}_1\oplus \mathcal{F}_2)/\mathcal{F}_3$
where the embedding of $\mathcal{F}_3$ is the diagonal embedding.
Then $\mathcal{F}_4$ with the obvious morphisms satisfies \eqref{merex2}.
On the other hand, if we have $\mathcal{F}_4$ as in \eqref{merex2},
setting $\mathcal{F}_3=\mathcal{F}_1\cap \mathcal{F}_2$ gives an
extension satisfying \eqref{merex1}.

Finally, suppose \eqref{merex1} holds locally.  Set $\mathcal{F}_4$
equal to the sum of the images of $\mathcal{F}_1$ and $\mathcal{F}_2$
in $j_*\mathcal{F}$.   The morphisms $r_i^{-1}:\mathcal{F}\to j^*\mathcal{F}_i$ for $i=1,2$ induce morphisms $a_i:\mathcal{F}\to j^*\mathcal{F}_4$.  We need to show
that $\mathcal{F}_4$ is coherent and that the $a_i$ ($i=1,2$) are two idenitical
isomorphisms.

Fortunately, both statements above are local on $X$. So we can assume
that \eqref{merex1} holds globally.  Then set $\mathcal{F}_3'$ equal to
the image of $\mathcal{F}_3$ in $\mathcal{F}_1\oplus \mathcal{F}_2$.
We get an exact sequence
\begin{equation}
  \label{e.f3p}
  0\to \Gamma_Y(\frac{\mathcal{F}_1\oplus\mathcal{F}_2}{\mathcal{F}_3'})
  \to \frac{\mathcal{F}_1\oplus\mathcal{F}_2}{\mathcal{F}_3'}\to
  \mathcal{F}_4\to 0.
\end{equation}
This shows that $\mathcal{F}_4$ is coherent.  And applying $j^*$ proves the
rest. 
\end{proof}

We say that two holomorphic line bundles $(\mathcal{L}_i,r_i)$ are
meromorphically equivalent if they are meromorphically equivalent as
coherent analytic sheaves.

We can also make the same definition of meromorphic equivalence in the
algebraic setting replacing $X^*, X, \mathcal{F}^*$ and $\mathcal{F}$ with
algebraic spaces and coherent algebraic sheaves.  Then we have the following proposition.

\begin{proposition}\label{ame}
  Suppose $X$ is a smooth complex algebraic variety and $X^*$ is a Zariski
  open subset.  
  Any two algebraic extensions $(\mathcal{L}_1,r_1)$ and $(\mathcal{L}_2, r_2)$
  of an algebraic line bundle $\mathcal{L}^*$ on $X^*$ are meromorphically
  equivalent. 
\end{proposition}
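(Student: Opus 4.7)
The plan is to construct a third extension $\mathcal{L}_3$ of $\mathcal{L}^*$ equipped with morphisms $\mathcal{L}_3 \to \mathcal{L}_1$ and $\mathcal{L}_3 \to \mathcal{L}_2$ of extensions; this gives condition \eqref{merex1} of Theorem~\ref{eqconds} and hence meromorphic equivalence. The idea is to embed both $\mathcal{L}_1$ and $\mathcal{L}_2$ as coherent subsheaves of a single quasi-coherent ambient sheaf and then take the intersection.

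Concretely, I would consider the quasi-coherent sheaf $\mathcal{M} := j_*\mathcal{L}^*$ on $X$. Since $X$ is smooth and each $\mathcal{L}_i$ is a line bundle, the unit of adjunction $\mathcal{L}_i \to j_*j^*\mathcal{L}_i$ is injective (a local section of a torsion-free sheaf which vanishes on a dense open is zero), and composing with the isomorphism induced by $r_i$ realizes $\mathcal{L}_i$ as a coherent $\mathcal{O}_X$-submodule of $\mathcal{M}$. I then set $\mathcal{L}_3 := \mathcal{L}_1 \cap \mathcal{L}_2 \subset \mathcal{M}$. The restriction $\mathcal{L}_3|_{X^*}$ is automatically $\mathcal{L}^*$, so if $\mathcal{L}_3$ is coherent then it is an extension of $\mathcal{L}^*$, and the inclusions $\mathcal{L}_3\hookrightarrow \mathcal{L}_i$ are by construction morphisms of extensions.

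The heart of the argument is verifying that $\mathcal{L}_3$ is coherent, and this is where I expect the only real work. The question is local, so I would work on an affine open $U$ trivializing both $\mathcal{L}_1$ and $\mathcal{L}_2$ by generators $e_1,e_2$; under the embeddings into $\mathcal{M}$ we have $e_2 = f\,e_1$ for some $f$ regular and nonvanishing on $U\cap X^*$, which we regard as a nonzero element of the function field $\mathcal{K}(X)$. Then
\[
\mathcal{L}_3(U) \;=\; \{\,g \in \mathcal{O}_X(U) : g f^{-1} \in \mathcal{O}_X(U)\,\}\cdot e_1 \;=\; (\mathcal{O}_X \cap f\mathcal{O}_X)(U)\cdot e_1.
\]
Because $X$ is smooth, every Weil divisor is Cartier, so $\mathrm{div}(f) = D_+ - D_-$ with $D_\pm$ effective Cartier divisors supported in $Y$, and $\mathcal{O}_X \cap f\mathcal{O}_X = \mathcal{O}_X(-D_+)$. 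Thus $\mathcal{L}_3|_U$ is a line bundle; in particular $\mathcal{L}_3$ is coherent and even locally free of rank one.

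The principal obstacle is precisely this coherence step. Without the regularity of the local rings of $X$ one cannot represent the intersection of the two fractional ideals $\mathcal{O}_X$ and $f\mathcal{O}_X$ as the ideal sheaf of an effective Cartier divisor, and the intersection could fail to be finitely generated. Smoothness is used in exactly the minimal way needed---to pass from Weil to Cartier---and everything else in the argument is formal manipulation of subsheaves of $j_*\mathcal{L}^*$.
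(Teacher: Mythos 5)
Your proof is correct, and it is in the same spirit as the paper's: reduce to the invertibility of a fractional ideal obtained from the transition function $f = r_2^{-1}\circ r_1$, and invoke the regularity of the local rings of $X$ to control the poles and zeros of $f$ along $Y$. There is, however, a mild structural difference. The paper works locally from the outset (this is legitimate by condition~\eqref{merex3} of Theorem~\ref{eqconds}), factors $f = g/h$ using the fact that $\mathcal{O}_{X,x}$ is a UFD, and takes $\mathcal{L}_3 := \mathcal{O}_X$ with a restriction map twisted by $h$; the morphisms $\mathcal{L}_3\to\mathcal{L}_1$ and $\mathcal{L}_3\to\mathcal{L}_2$ are then multiplication by $h$ and by $g$. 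You instead construct $\mathcal{L}_3$ \emph{globally} and canonically as the intersection $\mathcal{L}_1\cap\mathcal{L}_2$ inside $j_*\mathcal{L}^*$, and then argue locally only to verify coherence, which you do by identifying the local intersection $\mathcal{O}_X\cap f\mathcal{O}_X$ with the line bundle $\mathcal{O}_X(-D_+)$, $D_+$ being the effective (Cartier, by smoothness) part of $\mathrm{div}(f)$. These two uses of regularity --- UFD in the paper, Weil $=$ Cartier in yours --- are of course closely related (both ultimately come down to height-one primes being principal), so the ingredients are the same. What your version buys is that $\mathcal{L}_3$ is defined once and for all as a subsheaf of $j_*\mathcal{L}^*$, with the morphisms to $\mathcal{L}_1$ and $\mathcal{L}_2$ being the honest inclusions; the paper's $\mathcal{L}_3$ is only defined in a neighborhood of each point, with a different $\mathcal{L}_3$ for different choices of $g$ and $h$. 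One small wording issue in your argument: the injectivity of $\mathcal{L}_i\to j_*j^*\mathcal{L}_i$ does not actually use smoothness of $X$; it follows already from $\mathcal{L}_i$ being torsion-free (line bundle) and $X^*$ being dense. Smoothness enters exactly where you flag it, namely to make $D_+$ Cartier.
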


\begin{proof}
  As in the analytic case, by~\eqref{merex3}, the question is local on
  $X$.  So pick a point $x\in X$.  We can find an affine open
  neighborhood of $x$ where $\mathcal{L}_1$ and $\mathcal{L}_2$ are
  trivial, i.e., isomorphic to $\mathcal{O}_X$.  Replacing $X$ with
  this affine open neighborhood, we can regard
  $f:=r_2^{-1}\circ r_1:\mathcal{O}_{X^*}\to \mathcal{O}_{X^*}$ as a
  meromorphic function on $X$ with poles and zeros only on
  $Y=X\setminus X^*$.  Since $X$ is smooth, $\mathcal{O}_{X,x}$ is a UFD.
  So, after possibly shrinking $X$ further about $x$, we can assume that
  $f=g/h$ with $g$ and $h$ non-zero regular functions which are non-vanishing
  off of $Y$. 
Now, set $(\mathcal{L}_3,r_3):=(\mathcal{O}_X, r_1/h)$.  Multiplication
  by $h$ (resp. $g$) gives an inclusion of $\mathcal{L}_3$ in
  $\mathcal{L}_1$ (resp. $\mathcal{L}_2$) compatible with the isomorphisms
  $r_i$.  So, by \eqref{merex1},  $(\mathcal{L}_1,r_1)$ and  
  $(\mathcal{L}_2,r_2)$ are meromorphically equivalent. 
\end{proof}

\begin{remark}
  In the analytic case, there are, in general, uncountably many
  non-meromophically equivalent extensions of a given holomorphic line
  bundle.  This is not hard to see directly from Example~\ref{ecurve}. 
\end{remark}

\begin{lemma}
  Suppose $\mathcal{F}$ is an extension to $X$ of a coherent analytic
  sheaf $\mathcal{F}^*$ on $X^*$.  Set
  $\mathcal{G}:=\mathcal{F}/\Gamma_Y\mathcal{F}$.  Then the composition
  \begin{equation}\label{composition}
  \mathcal{F}^*\stackrel{r_{\mathcal{F}}}{\to} j^*\mathcal{F}\to j^*\mathcal{G}
  \end{equation}
  gives $\mathcal{G}$ the structure of an extension of $\mathcal{F}^*$.
  Moreover, $\Gamma_Y\mathcal{G}=0$, and  $\mathcal{G}$ is meromorphically
  equivalent to $\mathcal{F}$. 
\end{lemma}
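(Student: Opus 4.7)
The plan is to dispatch all three assertions using the single structural observation that $\Gamma_Y\mathcal{F}$, being supported on $Y$, pulls back to zero on $X^*$, combined with exactness of $j^*$.

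First, I would verify that $\mathcal{G}$ is an extension of $\mathcal{F}^*$. Since $\Gamma_Y\mathcal{F}$ is a coherent sheaf with support contained in $Y = X\setminus X^*$, we have $j^*\Gamma_Y\mathcal{F}=0$. Applying the exact functor $j^*$ to the defining sequence
$$
0\to \Gamma_Y\mathcal{F}\to \mathcal{F}\to \mathcal{G}\to 0
$$
therefore shows that $j^*\mathcal{F}\to j^*\mathcal{G}$ is an isomorphism. Composing with the given isomorphism $r_{\mathcal{F}}\colon j^*\mathcal{F}\xrightarrow{\sim}\mathcal{F}^*$ (or rather its inverse) produces an isomorphism $j^*\mathcal{G}\xrightarrow{\sim}\mathcal{F}^*$, which we take as $r_{\mathcal{G}}$.

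Next I would prove $\Gamma_Y\mathcal{G}=0$ by a local lifting argument, which is the one step that requires a little care. Let $U\subseteq X$ be open and $s\in\mathcal{G}(U)$ be a section with $s|_{U\cap X^*}=0$. Since $\mathcal{F}\to\mathcal{G}$ is surjective, after shrinking $U$ to a small open neighbourhood $V$ of any given point we may lift $s|_V$ to $\tilde s\in\mathcal{F}(V)$. Then $\tilde s|_{V\cap X^*}$ maps to $s|_{V\cap X^*}=0$ in $j^*\mathcal{G}$, and since $j^*\mathcal{F}\to j^*\mathcal{G}$ is an isomorphism we conclude $\tilde s|_{V\cap X^*}=0$. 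Hence $\tilde s\in\Gamma_Y\mathcal{F}(V)$, so $s|_V=0$. As this holds locally everywhere, $s=0$.

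Finally, to exhibit meromorphic equivalence I would invoke condition \eqref{merex2} of Theorem~\ref{eqconds} with $\mathcal{F}_4:=\mathcal{G}$: the quotient map $\mathcal{F}\to\mathcal{G}$ and the identity $\mathcal{G}\to\mathcal{G}$ are both morphisms of extensions of $\mathcal{F}^*$ (the former by construction of $r_{\mathcal{G}}$ in the first paragraph), and their existence is exactly the condition required. There is no real obstacle in this proof; the only step that might read as nontrivial is the local lift used to kill $\Gamma_Y\mathcal{G}$, which is why I would spell that argument out explicitly.
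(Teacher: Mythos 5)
Your proof is correct, and the paper itself dismisses the lemma with a one-word ``Obvious,'' so you are simply supplying the details the authors chose not to write out. The route you take --- $j^*\Gamma_Y\mathcal{F}=0$, exactness of $j^*$ to get the isomorphism $j^*\mathcal{F}\to j^*\mathcal{G}$, a local lift to kill $\Gamma_Y\mathcal{G}$, and condition \eqref{merex2} via the quotient map for the meromorphic equivalence --- is exactly the natural unwinding of the definitions, and each step is sound.
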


\begin{proof}
Obvious. \end{proof}

In~\cite[p.~65]{deligne-diffeq}, Deligne gives the following definition.

\begin{definition}\label{MeroExt}
  A coherent analytic sheaf on $X^*$ meromorphic along $Y$ is a
  coherent analytic sheaf on $X^*$ together with a locally defined system of equivalence
  classes of extensions to $X$.  
\end{definition}

We want to reformulate this definition in a way that lends itself to
defining a category of meromorphic extensions.  For this, suppose
$U$ is an open subset of $X$.  Set $U^*=U\cap X^*$, $Y_U=U\cap Y$ and
write $j_U:U^*\to U$ for the inclusion.  Write $\Coh_{(Y)} U$ for the full
subcategory of $\Coh U$ consisting of sheaves supported on $Y$.
This is a thick subcategory.  
Set $\Coh^{\effm} U:=\Coh U/\Coh_{(Y)} U$, the quotient category.  We call this
the category of \emph{effective meromorphic extensions on $U$}.
Since $j^*\Coh_{(Y)} U=0$, the restriction functor $\Coh U\to \Coh U^*$ factors as $\Coh U\stackrel{q}{\to} \Coh^{\effm} U\to \Coh U^*$ naturally (where $q$ is the
quotient functor).

\begin{proposition}
  Suppose $V\subset U$ is the inclusion of an open set. Then the
  restriction functor $\Coh U\to \Coh V$ induces an exact functor
  $\Coh^{\effm} U\to \Coh^{\effm} V$. 
\end{proposition}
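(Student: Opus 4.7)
The plan is to verify the two ingredients needed to descend a functor between Serre quotients: first that restriction is exact on $\Coh$, and second that it carries the thick subcategory $\Coh_{(Y)}U$ into $\Coh_{(Y)}V$. Both are essentially immediate, so the main work is simply bookkeeping the universal property of the quotient.

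First I would observe that for an open immersion $i:V\hookrightarrow U$, the restriction functor $i^*:\Coh U\to\Coh V$ is exact, since restriction of sheaves to an open set is exact (equivalently, $\mathcal{O}_V$ is flat over $i^{-1}\mathcal{O}_U$). Next, if $\mathcal{F}\in\Coh_{(Y)}U$, then $\supp(i^*\mathcal{F})=\supp(\mathcal{F})\cap V\subseteq Y\cap V=Y_V$, so $i^*$ sends $\Coh_{(Y)}U$ into $\Coh_{(Y)}V$.

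Now I would invoke the universal property of the Serre quotient. Let $q_U:\Coh U\to\Coh^{\effm}U$ and $q_V:\Coh V\to\Coh^{\effm}V$ denote the quotient functors. Since $q_V\circ i^*$ is an exact functor that annihilates every object of $\Coh_{(Y)}U$, the universal property of Serre quotients produces a unique exact functor
\begin{equation*}
    \bar\imath^{\,*}:\Coh^{\effm}U\to\Coh^{\effm}V
\end{equation*}
with $\bar\imath^{\,*}\circ q_U=q_V\circ i^*$. This is the induced restriction functor on effective meromorphic sheaves, and it is exact by construction.

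The only subtle point is a notational one: morphisms in the Serre quotient are computed as equivalence classes of roofs, and one should check that $\bar\imath^{\,*}$ acts on such roofs in the obvious way (restrict both legs). This is automatic from the universal property but worth spelling out for the reader. I expect no genuine obstacle here, since the argument is formal once we observe $\supp(i^*\mathcal{F})\subseteq Y_V$.
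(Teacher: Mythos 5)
Your argument is correct and is exactly the routine verification the paper leaves to the reader: restriction along an open immersion is exact, it carries sheaves supported on $Y\cap U$ to sheaves supported on $Y\cap V$, and the universal property of the Serre quotient then yields the induced exact functor $\Coh^{\effm} U\to \Coh^{\effm} V$. Nothing further is needed.
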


\begin{proof}
  Easy (so we leave it to the reader). 
\end{proof}

It follows that we get a category $\Coh^{\effm}_{X}$, or simply $\Coh^{\effm}$, fibered over the category
$X^{\topp}$ of open subsets of $X$ (whose fiber over $U$ is $\Coh^{\effm} U$).
(See~\cite{giraud} for the notion of fibered categories.)

\begin{lemma}\label{qeq}
  Suppose $\mathcal{F}_1$ and $\mathcal{F}_2$ are two extensions to $X$
  of a coherent analytic sheaf $\mathcal{F}^*$ on $X^*$.  Then the following
  are equivalent
  \begin{enumerate}
  \item[(a)] $\mathcal{F}_1$ and $\mathcal{F}_2$ are meromorphically
    equivalent.
  \item[(b)] There is an isomorphism $q(\mathcal{F}_1)\to q(\mathcal{F}_2)$
    commuting with the isomorphisms $j^*\mathcal{F}_i\to \mathcal{F}^*$. 
  \end{enumerate}
\end{lemma}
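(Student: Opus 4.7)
The approach is to identify $\Coh^{\effm} U$ as the Serre (Gabriel) quotient of $\Coh U$ by the thick subcategory $\Coh_{(Y)} U$, and then to deduce everything from a single technical criterion: a morphism $\phi: \mathcal{G}\to\mathcal{H}$ in $\Coh U$ becomes an isomorphism in $\Coh^{\effm} U$ if and only if $j_U^*\phi$ is an isomorphism on $U^*$. This I would prove first by analyzing the four-term exact sequence $0\to\ker\phi\to\mathcal{G}\to\mathcal{H}\to\coker\phi\to 0$: since $j_U^*$ is exact, $j_U^*\phi$ is an isomorphism precisely when $\ker\phi$ and $\coker\phi$ are coherent sheaves supported on $Y_U$, i.e., lie in $\Coh_{(Y)} U$, which is precisely when $q(\phi)$ is an isomorphism.

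Granting this criterion, (a) $\Rightarrow$ (b) is immediate. Condition~\eqref{merex1} of Theorem~\ref{eqconds} provides a third extension $\mathcal{F}_3$ together with morphisms $\phi_i: \mathcal{F}_3\to\mathcal{F}_i$ compatible with the $r_i$; each $j_U^*\phi_i$ then equals $r_i^{-1}\circ r_{\mathcal{F}_3}$ and is thus an isomorphism, so each $q(\phi_i)$ is an isomorphism in $\Coh^{\effm} U$. The composition $q(\phi_2)\circ q(\phi_1)^{-1}: q(\mathcal{F}_1)\to q(\mathcal{F}_2)$ is the desired isomorphism, and it automatically commutes with the $r_i$.

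For the converse (b) $\Rightarrow$ (a), I would invoke the standard description of morphisms in a Gabriel quotient: the given isomorphism $\alpha: q(\mathcal{F}_1)\to q(\mathcal{F}_2)$ is represented by an honest morphism $\beta: \mathcal{F}_1'\to \mathcal{F}_2/\mathcal{K}$ in $\Coh U$, with $\mathcal{F}_1'\subseteq\mathcal{F}_1$ such that $\mathcal{F}_1/\mathcal{F}_1'\in\Coh_{(Y)} U$ and with $\mathcal{K}\subseteq\mathcal{F}_2$ lying in $\Coh_{(Y)} U$; by the criterion, $\ker\beta$ and $\coker\beta$ are also in $\Coh_{(Y)} U$. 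I would then form the coherent fiber product $\mathcal{F}_3 := \mathcal{F}_1'\times_{\mathcal{F}_2/\mathcal{K}}\mathcal{F}_2$ along $\beta$ and the canonical quotient map, equipped with projections $p_1: \mathcal{F}_3\to \mathcal{F}_1$ (via $\mathcal{F}_1'\hookrightarrow\mathcal{F}_1$) and $p_2: \mathcal{F}_3\to\mathcal{F}_2$. A direct diagram chase shows that $\ker p_1$, $\coker p_1$, $\ker p_2$, $\coker p_2$ are subquotients of $\mathcal{K}$, $\mathcal{F}_1/\mathcal{F}_1'$, $\ker\beta$, $\coker\beta$ respectively, all of which lie in $\Coh_{(Y)} U$. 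Hence $j_U^*p_i$ is an isomorphism for $i=1,2$, and $\mathcal{F}_3$ with restriction isomorphism $r_{\mathcal{F}_3}:=r_1\circ j_U^*p_1$ becomes a common extension of $\mathcal{F}^*$ on $U$ verifying condition~\eqref{merex1}.

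The main obstacle will be the final compatibility: verifying that $p_2$ is also a morphism of extensions, i.e., $r_2\circ j_U^*p_2 = r_{\mathcal{F}_3}$, rather than some other isomorphism $\mathcal{F}^*\to\mathcal{F}^*$. This is precisely where the hypothesis that $\alpha$ commutes with the $r_i$ is used, and establishing it amounts to chasing through the factorization of $j_U^*$ through $q$ applied to the roof presentation of $\alpha$.
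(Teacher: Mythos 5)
Your proof is correct, and it rests on the same fundamental idea as the paper's: exploit the description of $\Coh^{\effm} U$ as the Gabriel (Serre) quotient $\Coh U/\Coh_{(Y)} U$, handle (a)$\Rightarrow$(b) directly from condition~\eqref{merex1}, and for (b)$\Rightarrow$(a) use the filtered-colimit description of morphisms in the quotient category to produce a common subextension $\mathcal{F}_3$ mapping to both $\mathcal{F}_1$ and $\mathcal{F}_2$. The noteworthy divergence is in how that $\mathcal{F}_3$ is built in the converse direction. The paper first replaces each $\mathcal{F}_i$ by $\mathcal{F}_i/\Gamma_Y\mathcal{F}_i$ to kill all $Y$-torsion; this allows the representing morphism to be taken as an honest map $\tilde f:\mathcal{F}_1'\to \mathcal{F}_2$ (with no quotient of $\mathcal{F}_2$ by a $Y$-torsion subsheaf $\mathcal{K}$ needed), and then $\mathcal{F}_1'$ itself serves as the $\mathcal{F}_3$ of condition~\eqref{merex1}. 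You instead keep the general roof $\mathcal{F}_1'\to\mathcal{F}_2/\mathcal{K}$ and manufacture $\mathcal{F}_3$ as the fiber product $\mathcal{F}_1'\times_{\mathcal{F}_2/\mathcal{K}}\mathcal{F}_2$, checking directly that the two projections have $Y$-supported kernel and cokernel. Your route avoids the torsion reduction at the cost of a slightly longer diagram chase; both are perfectly valid, and your preliminary criterion (that $q(\phi)$ is an isomorphism if and only if $j^*_U\phi$ is) is a nice way to package the key mechanism. The compatibility concern you raise at the end is the same point the paper elides with ``From this (a) follows directly,'' and it resolves as you expect: since $\alpha=q(p_2)\circ q(p_1)^{-1}$ by construction of the fiber product, applying $j^*$ (which factors through $q$) gives $j^*p_2\circ(j^*p_1)^{-1}=j^*\alpha$, and the hypothesis that $\alpha$ commutes with the $r_i$ then yields $r_2\circ j^*p_2 = r_1\circ j^*p_1 = r_{\mathcal{F}_3}$.
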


\begin{proof}
  Suppose that $\mathcal{F}_1$ and $\mathcal{F}_2$ are meromorphically
  equivalent.  Let $\mathcal{F}_3$ be as in \eqref{merex1}.   Then
  $\mathcal{F}_i/\mathcal{F}_3$ is supported on $Y$ for $i=1,2$.
  So the induced map $q(\mathcal{F}_3)\to q(\mathcal{F}_i)$ is an
  isomorphism for $i=1,2$.  Then (b) follows.

  Now assume that (b) holds.  If we set
  $\mathcal{G}_i:=\mathcal{F}_i/\Gamma_Y\mathcal{F}_i$, for $i=1,2$,  then the
  quotient map $\mathcal{F}_i\to \mathcal{G}_i$ induces both an
  equivalence of meromorphic extensions and an isomorphism in
  $\Coh^{\effm} X$.   So, by replacing $\mathcal{F}_i$ with $\mathcal{G}_i$,
  we see that we can assume that neither $\mathcal{F}_1$ nor$\mathcal{F}_2$
  has a non-trivial subsheaf supported on $Y$.
  
  Assume then that $f:q\mathcal{F}_1\to q\mathcal{F}_2$ is an
  isomorphism commuting as in (b).   By the definition
  of a quotient category, we have
  $$
  \Hom_{\Coh^{\effm} X}(q\mathcal{F}_1,q\mathcal{F}_2)
  =\varinjlim_{\mathcal{F}_1',\mathcal{F}_3'}\Hom(\mathcal{F}_3,\mathcal{F}_2/\mathcal{F}_4)$$
  where the limit runs over all pairs
  $\mathcal{F}_3,\mathcal{F}_4$ of coherent subsheaves of
  $\mathcal{F}_1$ and
  $\mathcal{F}_2$ respectively such that
  $\mathcal{F}_1/\mathcal{F}_3$ and
  $\mathcal{F}_4$ are supported on $Y$.  Since
  $\mathcal{F}_2$ has no non-trivial subsheaf supported on $Y$,
  $f$ is represented by a morphism $\tilde{f}:\mathcal{F}_3\to
  \mathcal{F}_2$.  Since
  $f=q(\tilde{f})$ is an isomorphism,
  $\ker\tilde{f}$ and $\coker\tilde{f}$ are both supported on $Y$.
  Since $\mathcal{F}_1$ has no non-trivial subsheaf supported on $Y$,
  this implies that $\tilde{f}:\mathcal{F}_3\to \mathcal{F}_2$ is a mono-morphism with cokernel supported on $Y$. From this (a) follows directly. 
\end{proof}

\begin{proposition}\label{resmono}
  Suppose $\mathcal{F}$ and $\mathcal{G}$ are coherent
  analytic sheaves on $X$.  Then the natural morphism 
  $$
  \rho:\Hom_{\Coh^{\effm} X}(q\mathcal{F},q\mathcal{G})\to
  \Hom_{\Coh X^*} (j^*\mathcal{F},j^*\mathcal{G})$$
   induced by restriction
  is a monomorphism.  
\end{proposition}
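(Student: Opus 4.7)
The plan is to use the explicit Serre quotient description of $\Coh^{\effm} X = \Coh X/\Coh_{(Y)} X$ that appears earlier in the excerpt. Namely, any element of $\Hom_{\Coh^{\effm} X}(q\mathcal F,q\mathcal G)$ is represented by a morphism $f:\mathcal F'\to \mathcal G/\mathcal G'$ in $\Coh X$, where $\mathcal F'\subset \mathcal F$ and $\mathcal G'\subset \mathcal G$ are coherent subsheaves such that $\mathcal F/\mathcal F'$ and $\mathcal G'$ are both supported on $Y$. Two such morphisms represent the same class iff they agree after restricting to some further pair $\mathcal F''\subset \mathcal F'$ and $\mathcal G'''\supset \mathcal G'$ with $\mathcal F'/\mathcal F''$ and $\mathcal G'''/\mathcal G'$ supported on $Y$. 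Under $\rho$, such an $f$ maps to $j^*f:j^*\mathcal F\to j^*\mathcal G$, where we identify $j^*\mathcal F'=j^*\mathcal F$ and $j^*(\mathcal G/\mathcal G')=j^*\mathcal G$ via the natural isomorphisms coming from the fact that the cokernel and subsheaf $\mathcal F/\mathcal F'$, $\mathcal G'$ vanish on $X^*$.

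Given this, the first step is to start with a class represented by $f:\mathcal F'\to \mathcal G/\mathcal G'$ whose image under $\rho$ vanishes, i.e.\ $j^*f=0$. Set $\mathcal K:=\im(f)\subset \mathcal G/\mathcal G'$, which is coherent because $f$ is a morphism of coherent sheaves. Since $j^*f=0$ and $j^*$ is exact, $j^*\mathcal K=0$, so $\mathcal K$ is supported on $Y$, i.e.\ lies in $\Coh_{(Y)} X$.

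Next I would construct the witness showing $f$ represents the zero morphism in $\Coh^{\effm} X$. Write $\pi:\mathcal G\to \mathcal G/\mathcal G'$ for the projection, and set $\mathcal G''':=\pi^{-1}(\mathcal K)\subset \mathcal G$, which is again coherent. By construction $\mathcal G'''/\mathcal G'\cong\mathcal K$ is supported on $Y$, and since $\mathcal G'$ itself is supported on $Y$ the extension $\mathcal G'''$ is too. Thus $\mathcal G'''$ is an admissible subsheaf in the Serre colimit, i.e.\ $\mathcal G'''\in \Coh_{(Y)} X$. The natural projection $\mathcal G/\mathcal G'\to \mathcal G/\mathcal G'''$ has kernel precisely $\mathcal G'''/\mathcal G'=\mathcal K$, so the composition
\[
\mathcal F'\xrightarrow{f}\mathcal G/\mathcal G'\longrightarrow \mathcal G/\mathcal G'''
\]
factors through $\mathcal K$ and is therefore zero. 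By the definition of morphisms in the Serre quotient, this exhibits $f$ as the zero class in $\Hom_{\Coh^{\effm} X}(q\mathcal F,q\mathcal G)$, proving injectivity of $\rho$.

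There is no serious obstacle; the only thing to be careful about is the bookkeeping of the Serre quotient description of morphisms (in particular, that modifying $\mathcal G'$ upwards to $\mathcal G'''$ within $\Coh_{(Y)} X$ is an allowed move in computing the colimit), and the use of exactness of $j^*$ to pass from $j^*f=0$ to the image being supported on $Y$. Both are standard once the Serre quotient formalism is in place.
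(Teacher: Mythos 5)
Your proof is correct, and it takes a mildly different route from the paper's. The paper first reduces to the case $\Gamma_Y\mathcal{F}=\Gamma_Y\mathcal{G}=0$ (using the lemma just above the proposition, which shows that passing to $\mathcal{F}/\Gamma_Y\mathcal{F}$ changes neither $q\mathcal{F}$ nor $j^*\mathcal{F}$). That reduction collapses the Serre-quotient $\Hom$ to a colimit over subsheaves of $\mathcal{F}$ only: $\Hom_{\Coh^{\effm}X}(q\mathcal{F},q\mathcal{G})=\varinjlim_{\mathcal{F}'}\Hom(\mathcal{F}',\mathcal{G})$, since $\mathcal{G}$ has no nonzero coherent subsheaf supported on $Y$. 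The injectivity then falls out in one line: if $j^*f'=0$ then $\im(f')\subset\Gamma_Y\mathcal{G}=0$. You instead work with the full double-colimit description, observe that $j^*f=0$ forces $\im(f)$ to be supported on $Y$, and then enlarge $\mathcal{G}'$ to $\mathcal{G}'''=\pi^{-1}(\im f)$, noting that $\Coh_{(Y)}X$ is closed under extensions (part of being thick), so this is an allowed enlargement in the colimit. Both are correct; the paper's reduction makes the endgame shorter and is reusable in the rest of the section, while your version avoids the separate normalization lemma at the cost of one more colimit index to track.
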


\begin{proof}
  We can assume that $\Gamma_Y\mathcal{F}=\Gamma_Y\mathcal{G}=0$.
  Then  $\Hom_{\Coh^{\effm} X}(q\mathcal{F},q\mathcal{G})=
  \varinjlim_{\mathcal{F}'} \Hom_{\Coh X}(\mathcal{F}',\mathcal{G})$
  where the limit is taken over all subsheaves $\mathcal{F}'$ of $\mathcal{F}$
  such that $\mathcal{F}/\mathcal{F}'$ is supported on $Y$.   Suppose
  $f$ is a homomorphism from $q\mathcal{F}$ to $q\mathcal{G}$ represented
  by a morphism $f':\mathcal{F}'\to\mathcal{G}$.   If $j^*(f')=0$, then
  $f'(\mathcal{F}')$ is supported on $Y$.  But, since $\Gamma_Y\mathcal{G}=0$,
  this implies that $f'=0$.  
\end{proof}

\begin{definition}
  We write $\Coh^m_X$ for the stackification  of the fibered category
  $\Coh^{\effm}_X$~\cite[p.~76]{giraud}.  This is the category of \emph{coherent analytic sheaves
    on $X$ meromorphic along $Y$}. 
\end{definition}

If $\Coh_X$ denotes the stack of coherent analytic sheaves on $X^{\topp}$,
then we get a sequence of morphisms of fibered categories
\begin{equation}
  \label{Q}
  \Coh_X\to \Coh_X^{\effm}\to \Coh_X^m.
\end{equation}
We call the composition $Q$.  

For any open subset $U\subset X$,  
we get a restriction functor
$\Coh^{\effm}_X(U)\to \Coh_{X^*}(U^*)$ (with $U^*=U\cap X$).
It is easy to see that this functor factors naturally through
$\Coh^{m}_X(U^*)$.  By abuse of notation, we write
\begin{equation}
  \label{ejst}
  j^*:\Coh_X^m(U)\to \Coh_X(U^*)
\end{equation}
for the the induced restriction functor.

\begin{proposition}
  A coherent analytic sheaf on $X^*$ meromorphic along $Y$ is the same
  thing as an object of $\Coh^m_X(X)$. 
\end{proposition}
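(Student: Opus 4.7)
The plan is to unwind the definition of stackification and show that a descent datum for $\Coh^{\effm}_X$ over $X$ is precisely the same thing as Deligne's Definition~\ref{MeroExt}. By construction of the stackification, an object of $\Coh^m_X(X)$ is given by the data of an open cover $\{U_i\}$ of $X$, objects $\mathcal{G}_i \in \Coh^{\effm}_X(U_i)$, and isomorphisms $\phi_{ij}\colon \mathcal{G}_i|_{U_{ij}} \to \mathcal{G}_j|_{U_{ij}}$ in $\Coh^{\effm}_X(U_{ij})$ satisfying the cocycle condition, modulo the usual notion of equivalence of descent data. First I would observe that by Lemma~\ref{qeq}, the essential image of $q\colon \Coh(U) \to \Coh^{\effm}(U)$ is, up to isomorphism, in bijection with meromorphic equivalence classes of extensions on $U$ of coherent sheaves on $U^*$; every object of $\Coh^{\effm}(U)$ is isomorphic to $q\mathcal{F}$ for some extension $\mathcal{F}$ of a coherent sheaf on $U^*$.

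Next I would construct the comparison map in both directions. Given a descent datum $(\{U_i\}, \mathcal{G}_i, \phi_{ij})$ for $\Coh^m_X(X)$, applying $j^*$ (the restriction functor of \eqref{ejst}) gives coherent sheaves $j^*\mathcal{G}_i$ on $U_i^*$ together with honest isomorphisms $j^*\phi_{ij}$ (which are automatically a cocycle by the faithfulness stated in Proposition~\ref{resmono}); since $\Coh_{X^*}$ is itself a stack, these glue to a coherent sheaf $\mathcal{F}^*$ on $X^*$. On each $U_i$, the object $\mathcal{G}_i$ provides a meromorphic equivalence class of extensions of $\mathcal{F}^*|_{U_i^*}$, and this is exactly the locally defined system of equivalence classes required by Definition~\ref{MeroExt}. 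Conversely, given $\mathcal{F}^*$ together with a locally defined system of equivalence classes of extensions, choose representatives $\mathcal{F}_i \in \Coh(U_i)$ extending $\mathcal{F}^*|_{U_i^*}$; then Lemma~\ref{qeq} supplies isomorphisms $\phi_{ij}\colon q\mathcal{F}_i|_{U_{ij}} \to q\mathcal{F}_j|_{U_{ij}}$ on overlaps, and Proposition~\ref{resmono} guarantees that the cocycle condition holds because it holds after applying $j^*$ (where it is automatic).

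The main obstacle, and the reason this requires a little care rather than being pure tautology, is to verify that the equivalence relations match on both sides. On the $\Coh^m_X$ side we quotient descent data by refinement and isomorphism; on the Deligne side we quotient by meromorphic equivalence (locally) and by the sheaf-theoretic equivalence of the underlying $\mathcal{F}^*$. The key input is Proposition~\ref{resmono}, which implies that on a sufficiently fine refinement any two choices of representatives $\mathcal{F}_i$, $\mathcal{F}_i'$ with meromorphically equivalent classes give canonically isomorphic $q\mathcal{F}_i \cong q\mathcal{F}_i'$ in $\Coh^{\effm}(U_i)$, and that the resulting isomorphisms automatically satisfy cocycle compatibility. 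Thus the two constructions are mutually inverse up to canonical isomorphism, proving the proposition.
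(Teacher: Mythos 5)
Your proof is correct and takes essentially the same approach as the paper's: both unwind the stackification into descent data, identify the restriction to $X^*$ as the underlying coherent sheaf, and use Lemma~\ref{qeq} together with Proposition~\ref{resmono} (uniqueness of the gluing morphisms) to see that the cocycle condition is automatic in the backward direction. The only minor quibble is that in your forward direction you invoke Proposition~\ref{resmono} to say the $j^*\phi_{ij}$ form a cocycle, but this is immediate since the $\phi_{ij}$ already satisfy the cocycle condition by definition of a descent datum; the faithfulness of $j^*$ is really needed only for the backward direction and for independence of choices, exactly where the paper applies it.
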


\begin{proof}
  By the definition of stackification, an object $\mathcal{F}$ of $\Coh^m_X(X)$
  consists of a family of objects $\mathcal{F}_{\alpha}$ in
  $\Coh^{\effm}_X(U_{\alpha})$ for an open cover $\{U_{\alpha}\}$ of
  $X$ together with descent data for gluing the data together.
  Explicitly, this descent data consists of isomorphisms
  \begin{equation}\label{dm} \phi_{\alpha,\beta}:
  \mathcal{F}_{\alpha|U_{\alpha}\cap U_{\beta}}
  \to \mathcal{F}_{\beta|U_{\alpha}\cap U_{\beta}}
  \end{equation}
  in the category $\Coh^{\effm}_X(U_{\alpha}\cap U_{\beta})$ which are compatible
  in the sense that $\phi_{\gamma,\beta}\phi_{\alpha,\beta}=\phi_{\gamma,\alpha}$.

  Since the restriction of the descent data to $X^*$ gives descent data
  for a coherent sheaf on $X^*$, the object $\mathcal{F}$
  gives rise to a sheaf $\mathcal{F}^*$ on $X^*$.  Moreover, it is not
  difficult to see that this sheaf $\mathcal{F}^*$ is independent
  of the choice of presentation of $\mathcal{F}$ in terms of descent
  data.  This gives the
  functor $\Coh^m_X(X)\to \Coh X^*$ explicitly.   The sheaves $\mathcal{F}_{\alpha}$ are then by definition meromorphic extensions of $\mathcal{F}_{U_{\alpha}^*}$.   So, from this, we see that an object $\mathcal{F}$ in $\Coh^m_X$ gives
  rise to a coherent sheaf $\mathcal{F}^*$ on $X^*$ along with a meromorphic
  extension of $\mathcal{F}^*$ to $X$ in Deligne's sense.

  On the other hand, suppose we start with a coherent sheaf
  $\mathcal{F}^*$ on $X^*$, an open covering $\{U_{\alpha}\}$ of $X$
  and a family $(\mathcal{F}_{\alpha},r_{\alpha})$ of extensions of
  $\mathcal{F}^*$ from $U_{\alpha}^*$ to $U_{\alpha}$.  Assume that $\mathcal{F}_{\alpha}$ and $\mathcal{F}_{\beta}$ have meromorphically equivalent restrictions to
  $U_{\alpha\beta}:=U_{\alpha}\cap U_{\beta}$. Then there is a morphism
  $\phi_{\alpha,\beta}$ in $\Coh^{\effm}(U_{\alpha\beta})$ as in \eqref{dm}
  commuting with the restrictions $r_{\alpha}$ and $r_{\beta}$.
  The fundamental point to make now is that, in fact, by Proposition~\ref{resmono}, there is a \emph{unique} such morphism $\phi_{\alpha,\beta}$.
  This implies that the resulting family $\{\phi_{\alpha,\beta}\}$ automatically
  satisfies the descent condition required to give an object
  $\mathcal{F}$ in $\Coh^m_X(X)$.

  We leave the rest of the verification (e.g., the fact that $\mathcal{F}$
  is independent of the presentation $(\mathcal{F}_{\alpha},r_{\alpha})$)
  to the reader.
  \end{proof}

  \begin{lemma}\label{ror}
    Suppose $X$ is a complex manifold, and 
    $\mathcal{F}\in\Coh_X^{\effm}(X)$ is is a coherent analytic
    sheaf on $X$ such that $j^*\mathcal{F}$ is an invertible sheaf.
    Then the double dual $\mathcal{F}^{\vee\vee}$ is an invertible sheaf on
    $X$ and the canonical morphism $\mathcal{F}\to\mathcal{F}^{\vee\vee}$
    is an isomorphism in $\Coh_X^{\effm}(X)$.  
  \end{lemma}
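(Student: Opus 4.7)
\textbf{Proof proposal for Lemma~\ref{ror}.}

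The plan is to split the statement into two claims: (a) the kernel and cokernel of the canonical morphism $\mathcal{F}\to\mathcal{F}^{\vee\vee}$ are coherent sheaves supported on $Y$, so that this morphism descends to an isomorphism in $\Coh_X^{\effm}(X)$; and (b) the sheaf $\mathcal{F}^{\vee\vee}$ is locally free of rank one on all of $X$.

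For (a), I would use that the formation of the dual $\mathcal{G}^{\vee}:=\sHom(\mathcal{G},\mathcal{O}_X)$ and of the canonical morphism $\mathcal{G}\to\mathcal{G}^{\vee\vee}$ commutes with restriction to open subsets. Applying $j^*$ to $\mathcal{F}\to\mathcal{F}^{\vee\vee}$ therefore produces the canonical morphism $j^*\mathcal{F}\to(j^*\mathcal{F})^{\vee\vee}$, which is an isomorphism since $j^*\mathcal{F}$ is invertible. Thus both the kernel and cokernel of $\mathcal{F}\to\mathcal{F}^{\vee\vee}$ are coherent sheaves on $X$ vanishing on $X^*$, hence supported on $Y$, which is exactly the condition for the map to become invertible in $\Coh_X^{\effm}(X)$.

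For (b), the question is local on $X$, so I would fix $x\in X$. Since $X$ is a complex manifold, $\mathcal{O}_{X,x}$ is a regular local ring, hence a UFD. I would then invoke two standard facts. First, on a complex manifold, the dual $\mathcal{G}^{\vee}$ of any coherent sheaf $\mathcal{G}$ is reflexive: choosing a local presentation $\mathcal{O}_X^a\to\mathcal{O}_X^b\to\mathcal{G}\to 0$ and dualizing exhibits $\mathcal{G}^{\vee}$ as the kernel of a morphism between locally free sheaves, and such a kernel is reflexive on a regular space by the standard argument (going back to Hartshorne's \emph{Stable reflexive sheaves}) which transfers verbatim to the analytic category. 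In particular $\mathcal{F}^{\vee\vee}=(\mathcal{F}^{\vee})^{\vee}$ is reflexive. Second, a reflexive coherent sheaf of rank one on a complex manifold is invertible: the stalk at $x$ is a reflexive $\mathcal{O}_{X,x}$-module of rank one, hence isomorphic to a divisorial fractional ideal, and unique factorization in $\mathcal{O}_{X,x}$ forces every such ideal to be principal. Since $\mathcal{F}^{\vee\vee}$ has generic rank one on the dense open subset $X^*$ (where it agrees with $j^*\mathcal{F}$), these two facts together show that $\mathcal{F}^{\vee\vee}$ is invertible.

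The main obstacle, modest as it is, lies in the transfer of these reflexivity and UFD arguments from the algebraic to the analytic setting. Both facts are standard, but the usual references state them for schemes; one either reverifies them directly in the analytic category or reduces to the algebraic case via the faithfully flat extension $\mathcal{O}_{X,x}\hookrightarrow\widehat{\mathcal{O}}_{X,x}$, together with standard descent for reflexivity and freeness under faithfully flat base change.
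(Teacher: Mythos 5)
Your proof is correct and follows the same route as the paper: both show that $\mathcal{F}^{\vee\vee}$ is reflexive (via Hartshorne's argument transferred to the analytic setting), then invoke that a rank-one reflexive sheaf on a complex manifold is invertible, and finally observe that the kernel and cokernel of $\mathcal{F}\to\mathcal{F}^{\vee\vee}$ are supported on $Y$. The only difference is cosmetic: you unwind the rank-one-reflexive-implies-invertible step via the UFD/fractional-ideal argument, whereas the paper cites Okonek--Schneider--Spindler for the same fact.
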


  \begin{proof}
    The double dual $\mathcal{F}^{\vee\vee}$ is reflexive.  (This is
    proved in the algebraic setting in~\cite{HartshorneReflexive}, and
    the same proof works in the analytic setting.) Since
    $\mathcal{F}^{\vee\vee}$ is rank $1$ and reflexive, it is a line bundle
    ~\cite[Lemma 1.1.15, p.~154]{OkenekSchneiderSpindler}.
    The kernel and cokernel of the canonical morphism $\phi:\mathcal{F}\to \mathcal{F}^{\vee\vee}$ is clearly supported on $Y$.  So $\phi$ is an isomorphism
    in $\Coh_X^{\effm}(X)$.  
  \end{proof}

  \begin{corollary}\label{ror2}
    Suppose $X$ is a complex manifold, and 
    $\hat{\mathcal{F}}$ is an object in $\Coh_X^m$ such that
    $j^*(\hat{\mathcal{F}})$ is a line bundle.  Then $\hat{\mathcal{F}}$
    is locally isomorphic to a line bundle.
  \end{corollary}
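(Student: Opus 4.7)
The plan is to reduce to the effective meromorphic situation already handled by Lemma~\ref{ror} by unwinding the definition of $\Coh_X^m$ as the stackification of $\Coh_X^{\effm}$. Since the conclusion is local on $X$, it suffices to produce, for each $x\in X$, an open neighborhood $U$ of $x$ and an invertible sheaf $\mathcal{L}$ on $U$ together with an isomorphism $\hat{\mathcal{F}}|_U\cong \mathcal{L}$ in $\Coh_U^m(U)$.

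First I would invoke the universal property of stackification: by definition of $\Coh_X^m$, for every point $x\in X$ there exists an open neighborhood $U$ of $x$ and an object $\mathcal{F}\in \Coh_X^{\effm}(U)$ together with an isomorphism $Q(\mathcal{F})\xrightarrow{\sim}\hat{\mathcal{F}}|_U$ in $\Coh_X^m(U)$, where $Q$ is the composite functor in~\eqref{Q}. Here $\mathcal{F}$ is represented by a genuine coherent analytic sheaf on $U$ (we pick a representative of the equivalence class defining an object of $\Coh_U^{\effm}(U)$). Applying the restriction functor $j^*$ in~\eqref{ejst} and using the fact that $j^*\hat{\mathcal{F}}$ is invertible by hypothesis, we conclude that $j^*\mathcal{F}$ is an invertible sheaf on $U^*$.

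At this point Lemma~\ref{ror} applies directly: the double dual $\mathcal{F}^{\vee\vee}$ is a line bundle on $U$, and the canonical morphism $\mathcal{F}\to\mathcal{F}^{\vee\vee}$ is an isomorphism in $\Coh_U^{\effm}(U)$. Applying the functor $\Coh_U^{\effm}(U)\to \Coh_U^m(U)$ from \eqref{Q} and composing with the isomorphism $Q(\mathcal{F})\xrightarrow{\sim}\hat{\mathcal{F}}|_U$ yields an isomorphism $\hat{\mathcal{F}}|_U\cong \mathcal{F}^{\vee\vee}$ in $\Coh_U^m(U)$, which is what was required.

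The only step that requires any care is the first one, where one has to be sure that a section of the stackification is locally represented by a section of the prestack $\Coh_X^{\effm}$; but this is precisely what stackification provides, so there is no real obstacle. Everything else is a direct appeal to Lemma~\ref{ror}.
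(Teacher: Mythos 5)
Your proof is correct and follows essentially the same approach as the paper's proof: reduce to the local situation, represent the meromorphic sheaf by an actual coherent sheaf via the definition of stackification, and apply Lemma~\ref{ror}. The only difference is that you spell out the stackification step explicitly, which the paper condenses into the phrase ``we can assume that the object $\hat{\mathcal{F}}$ is represented by a coherent analytic sheaf.''
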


  \begin{proof}
    Since this is (by definition) a local question, we can assume that
    the object $\hat{\mathcal{F}}$ is represented by a coherent analytic
    sheaf $\mathcal{F}$ on $X$. Then the result follows from Lemma~\ref{ror}.
  \end{proof}

  \begin{definition}
    We call an object $\hat{\mathcal{F}}\in \Coh^m_X$ which is locally
    isomorphic to a line bundle a 
    \emph{meromorphic line bundle (along $Y$)}.   
  \end{definition}

Write $\hPic_Y(X)$ for the set of isomorphism classes of
meromorphic line bundles along $Y$.   Obviously restriction gives
a map $\hPic_Y(X)\to \Pic(X^*)$, and we get a map $\Pic X\to \hPic_Y(X)$
by associating to each line bundle on $X$ its associated meromorphic line
bundle.   Moreover, the composition $\Pic X\to \hPic_Y X\to \Pic X^*$
is just the usual restriction.

Suppose $\hat{\mathcal{L}}$ and $\hat{\mathcal{M}}$ are meromorphic
line bundles.  Then, in the language of Deligne, $\hat{\mathcal{L}}$
and $\hat{\mathcal{M}}$ are meromorphic extensions of
$\mathcal{L}:=j^*\hat{\mathcal{L}}$ and
$\mathcal{M}:=j^*\hat{\mathcal{M}}$ respectively.  We can find a
covering $\{U_{\alpha}\}_{\alpha\in I}$ of $X$ and extensions
$\mathcal{L}_{\alpha}$, $\mathcal{M}_{\alpha}$ of
$\mathcal{L}_{|U_{\alpha}^*}$ and $\mathcal{M}_{U_{\alpha}^*}$
respectively representing the meromorphic equivalence classes
$\hat{\mathcal{L}}$ and $\hat{\mathcal{M}}$.  By Corollary~\ref{ror2},
we can, moreover, assume that $\mathcal{L}_{\alpha}$ and
$\mathcal{M}_{\alpha}$ are line bundles.  It is not hard to check
that, for $\alpha,\beta\in I$,
$\mathcal{L}_{\alpha}\otimes\mathcal{M}_{\alpha}$ is meromorphically
equivalent to $\mathcal{L}_{\beta}\otimes\mathcal{M}_{\beta}$ as an
extensions of $\mathcal{L}\otimes\mathcal{M}$ from
$U_{\alpha}\cap U_{\beta}\cap X^*$ to $U_{\alpha}\cap U_{\beta}$.
Thus,
$\{\mathcal{L}_{\alpha}\otimes\mathcal{M}_{\alpha}\}_{\alpha\in I}$
represents a meromorphic extension of $\mathcal{L}\otimes\mathcal{M}$.
It is then not hard to check that the meromorphic equivalence class of
this extension is independent of the choice of the covering
$\{U_{\alpha}\}$ and the choices of the $\mathcal{L}_{\alpha}$ and
$\mathcal{M}_{\alpha}$ representing $\hat{\mathcal{L}}$ and
$\hat{\mathcal{M}}$ respectively.  So it makes sense to define the
tensor product $\hat{\mathcal{L}}\otimes\hat{\mathcal{M}}$ to be the
meromorphic extension of $\mathcal{L}\otimes\mathcal{M}$ represented
by the data
$\{\mathcal{L}_{\alpha}\otimes\mathcal{M}_{\alpha}\}_{\alpha\in I}$.
This gives rise to an abelian group structure on $\hPic_Y X$ (defined
by taking isomorphism classes of tensor product).  Moreover, it is
easy to see that the maps $\Pic X \to \hPic_Y X \to \Pic X^*$ are
group homomorphisms under this tensor product.  
  
  Adjunction gives a morphism of sheaves
  $\mathcal{O}_X\to j_*j^{-1}\mathcal{O}_X$ (which is a monomorphism if $X$
  is a complex manifold and $Y$ is nowhere dense).  Then 
  $j_*^{\mer}\mathcal{O}_{X^*}$ is the subsheaf of $j_*j^{-1}\mathcal{O}_X$
  consisting of sections which can be locally written in the form
  $g/h$ with $g,h\in\mathcal{O}_X$ and $h$ invertible outside of $Y$. 
  We then get a morphism of sheaves $\mathcal{O}_X\to j_*^{\mer}\mathcal{O}_{X*}$.
  For a coherent sheaf $\mathcal{F}$ on $X$, restriction gives a morphism
  $R:\mathcal{F}\otimes_{\mathcal{O}_X}j_*^{\mer}\mathcal{O}_{X^*}\to j_*j^{-1}\mathcal{F}$
  (which is a monomorphism for $\mathcal{F}$ locally free).
  Write $\mathcal{F}^{\mer}$  for the image of $R$.

  Note that $\mathcal{O}_X^{\mer}=j_*^{\mer}\mathcal{O}_{X^*}$. 
  
  \begin{proposition}\label{mero}
    Suppose $\mathcal{F}$ is a reflexive sheaf on a
    smooth complex analytic space $X$.  Then
    $$\Hom_{\Coh_X^{m}}(Q\mathcal{O_X},Q\mathcal{F})=
    \mathcal{F}^{\mer}.$$
  \end{proposition}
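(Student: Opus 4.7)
My plan is to realize both sides as sub-sheaves of $j_*j^{-1}\mathcal{F}$ and to write down mutually inverse natural constructions between them. Reflexiveness enters at the outset: reflexive sheaves on smooth spaces are torsion-free, so any section of $\Gamma_Y\mathcal{F}$ is killed by a local non-zero-divisor (for instance, any non-zero element of $\mathcal{O}_{X,x}$ vanishing on $Y$) and therefore vanishes. Hence $\Gamma_Y\mathcal{F} = 0$ and the adjunction map $\mathcal{F} \hookrightarrow j_*j^{-1}\mathcal{F}$ is injective, making $\mathcal{F}^{\mer}$ a genuine sub-sheaf of $j_*j^{-1}\mathcal{F}$. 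On the other side, I will use the construction of $\Coh_X^m$ as the stackification of $\Coh_X^{\effm}$ together with Proposition~\ref{resmono} to identify the sheaf $\Hom_{\Coh_X^m}(Q\mathcal{O}_X, Q\mathcal{F})$ as the sheafification of $U \mapsto \varinjlim_{\mathcal{I}} \Hom(\mathcal{I}, \mathcal{F}_{|U})$ (the colimit over coherent ideals $\mathcal{I} \subset \mathcal{O}_U$ with $\mathcal{O}_U/\mathcal{I}$ supported on $Y \cap U$), and will embed it into $j_*j^{-1}\mathcal{F}$ via restriction to $X^*$.

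To define the forward map, I will represent a local section of $\Hom_{\Coh_X^m}(Q\mathcal{O}_X, Q\mathcal{F})$ near $x$ by a morphism $\tilde{f} : \mathcal{I} \to \mathcal{F}_{|U}$ as above. Since $X$ is smooth, $\mathcal{O}_{X,x}$ is a regular local ring and in particular a domain, so any non-zero $h \in \mathcal{O}_{X,x}$ vanishing on $Y$ near $x$ (which exists because $Y$ is nowhere dense) is a non-zero-divisor. By R\"uckert's analytic Nullstellensatz, $h$ vanishes on $V(\mathcal{I})$, so $h^n \in \mathcal{I}_x$ for some $n \geq 1$, and $\tilde{f}(h^n) \in \mathcal{F}_x$. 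The section $s$ of $j_*j^{-1}\mathcal{F}$ associated to $\tilde{f}$ by restriction to $X^*$ then satisfies $s = \tilde{f}(h^n)/h^n$; equivalently, $s$ is the image in $j_*j^{-1}\mathcal{F}$ of $\tilde{f}(h^n) \otimes h^{-n} \in \mathcal{F}_x \otimes_{\mathcal{O}_{X,x}} j_*^{\mer}\mathcal{O}_{X^*,x}$, so $s \in \mathcal{F}^{\mer}_x$. Independence of $h$, $n$ and of the representative $\tilde{f}$ follows from the injectivity into $j_*j^{-1}\mathcal{F}$.

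For the inverse map, a local section of $\mathcal{F}^{\mer}$ at $x$ lifts to $g \otimes h^{-1}$ with $g \in \mathcal{F}_x$ and $h \in \mathcal{O}_{X,x}$ non-zero and invertible outside $Y$. Since $h$ is a non-zero-divisor, multiplication by $h$ identifies $\mathcal{O}_U$ with the principal ideal $(h) \subset \mathcal{O}_U$ on a small open $U \ni x$, and setting $\tilde{f}(h) := g$ extends uniquely to a morphism $\tilde{f} : (h) \to \mathcal{F}_{|U}$. Because $\mathcal{O}_U/(h)$ is supported on $V(h) \subset Y$, this represents a morphism $Q\mathcal{O}_U \to Q\mathcal{F}_{|U}$ in $\Coh_U^{\effm}$ that evidently maps to $g/h$ under the forward construction. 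Both maps are compatible with the embedding into $j_*j^{-1}\mathcal{F}$ and are therefore mutually inverse; the local morphisms glue through the stackification to give the claimed isomorphism of sheaves on $X$. The main obstacle I anticipate is the bookkeeping around the stackification and the well-definedness in the filtered colimit defining the Hom presheaf of $\Coh_X^{\effm}$, but Proposition~\ref{resmono} should reduce every such ambiguity to an identity inside the ambient sheaf $j_*j^{-1}\mathcal{F}$, handling these subtleties cleanly.
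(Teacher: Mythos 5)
There is a genuine gap in the forward direction of your argument, precisely at the point where the paper's proof invokes the double dual. You pick a nonzero $h\in\mathcal{O}_{X,x}$ \emph{vanishing on} $Y$ so that R\"uckert's Nullstellensatz gives $h^n\in\mathcal{I}_x$, and you then assert that $h^{-n}\in j_*^{\mer}\mathcal{O}_{X^*,x}$. But this last step requires $h$ to be \emph{invertible outside} $Y$, i.e.\ $V(h)\subset Y$, which is the opposite inclusion from the one you have: vanishing of $h$ on $Y$ gives $Y\subset V(h)$, and $V(h)$ is always a hypersurface, so whenever $Y$ has a component of codimension $\geq 2$ the required $h$ simply does not exist. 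Consequently $\tilde f(h^n)\otimes h^{-n}$ does not lie in $\mathcal{F}_x\otimes_{\mathcal{O}_{X,x}}j_*^{\mer}\mathcal{O}_{X^*,x}$, and you have not shown that $\rho(\varphi)\in\mathcal{F}^{\mer}$.

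This gap is not cosmetic. Your forward construction never actually uses reflexiveness of $\mathcal{F}$ (you use it only to get $\Gamma_Y\mathcal{F}=0$, for which torsion-freeness suffices), yet the statement is false for merely torsion-free sheaves. Take $X=\mathbb{C}^2$, $Y=\{0\}$, $\mathcal{F}=\mathfrak{m}_0$ the ideal sheaf of the origin, $\mathcal{I}=\mathfrak{m}_0$ and $\tilde f=\mathrm{id}$. Then $h=x$ vanishes on $Y$, $h\in\mathcal{I}$, $\tilde f(h)/h=1$, and your argument would place the section $1$ in $\mathcal{F}^{\mer}=\mathfrak{m}_0$ (here $j_*^{\mer}\mathcal{O}_{X^*}=\mathcal{O}_X$ since $Y$ has codimension $2$), which is false. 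The paper's fix is exactly what you are missing: pass to $\mathcal{J}:=\mathcal{I}^{\vee\vee}$, which is rank-one reflexive hence invertible on the smooth space $X$, still has $\mathcal{O}_X/\mathcal{J}$ supported on $Y$, and is therefore locally principal $(h)$ with $V(h)\subset Y$; the reflexiveness of $\mathcal{F}$ is then used crucially to extend $\tilde f:\mathcal{I}\to\mathcal{F}$ to $\sigma:\mathcal{J}\to\mathcal{F}^{\vee\vee}=\mathcal{F}$, after which $\rho(\varphi)=\sigma(h)/h$ does lie in $\mathcal{F}^{\mer}$. Your surjectivity argument and your reduction via Proposition~\ref{resmono} are fine, but without the double-dual step the injectivity-into-$\mathcal{F}^{\mer}$ part of the proof does not go through.
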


  \begin{proof}
    Restriction gives a morphism of sheaves
    \begin{equation}\label{reshom}
      \rho:\Hom_{\Coh_X^m}(Q\mathcal{O}_X,Q\mathcal{F})\to
      j_*j^{-1}\mathcal{F}.
    \end{equation}
    Using Proposition~\ref{resmono}, we
    see that this is a monomorphism.  To see that it factors through
    $\mathcal{F}^{\mer}$ is a local question.  So suppose
    $\varphi:Q\mathcal{O}_X\to Q\mathcal{F}$ is a morphism in
    $\Coh_X^m$ defined near a point $x\in X$.  We then have a coherent
    analytic subsheaf $\iota:\mathcal{I}\to\mathcal{O}_X$ and a morphism
    $\psi:\mathcal{I}\to \mathcal{F}$ such that
    $\mathcal{O}_X/\mathcal{I}$ is supported on $Y$ and
    $\varphi=Q(\psi)\circ Q(\iota)^{-1}$.  Taking double duals and using
    the assumption that $\mathcal{F}$ is reflexive, we get a map
    $\sigma:\mathcal{I}^{\vee\vee}\to \mathcal{F}$.  Then
    $J:=\mathcal{I}^{\vee\vee}$ is a rank $1$ reflexive sheaf, and,
    therefore, invertible.  Moreover, we have
    $\mathcal{I}\subset \mathcal{J}\subset\mathcal{O}_X$.  So
    $\mathcal{O}_X/\mathcal{J}$ is also supported on $Y$.  By replacing $X$ with a suitably small open neighborhood of $x$,
    we can assume that $\mathcal{J}=h\mathcal{O}_X$ is a principal
    ideal.  Writing $\iota_{\mathcal{J}}:\mathcal{J}\to \mathcal{O}_X$ for the
    inclusion, it follows that
    $\varphi=Q(\sigma)\circ Q(\iota_J)^{-1}$.  We then get that
    $\rho(\varphi)=\sigma(h)/h\in\mathcal{F}^{\mer}$.

    To show that $\rho$ induces an isomorphism with
    $\mathcal{F}^{\mer}$, take $s\in\mathcal{F}^{\mer}(U)$ for
    some open set $U$.  We can work
    locally near a point $x\in X$, so we can assume that $s=a/h$ where
    $a$ and $h$ are holomorphic near $x$ and $h$ is a unit off of $Y$.
    Set $\mathcal{I}=h\mathcal{O}_X$, and let
    $\iota:\mathcal{I}\to \mathcal{O}_X$ denote the inclusion.
    Write $\alpha:\mathcal{I}\to \mathcal{F}$ for the $\mathcal{O}_X$ linear morphism sending $h$ to $a$.  Then 
    we see that $s=\rho(Q(\alpha))\circ Q(\iota)^{-1})$.  
  \end{proof}

  \begin{definition}\label{merosecs}
    Suppose $\hat{\mathcal{L}}$ is a meromorphic line bundle on $X$
    along $Y$.  We
    write
    $$\hat{\mathcal{L}}^{\mer}:=\Hom_{\Coh_X^m}(Q\mathcal{O}_X,\hat{\mathcal{L}}^{\mer}).$$
    By Proposition~\ref{mero}, the sheaf $\hat{\mathcal{L}}^{\mer}$, which
    we call the \emph{sheaf of meromorphic sections} of $\hat{\mathcal{L}}$,
    is a locally rank one $j_*^{\mer}\mathcal{O}_X$-module.
  \end{definition}

  
  Suppose $X$ is smooth and  $\hat{\mathcal{L}}$ is a meromorphic line bundle
  along $Y$. 
   Write $\hat{\mathcal{O}}_X$ for the object $Q(\mathcal{O}_X)$ in
  $\Coh_X^m$, and write 
  $\Isom_{\Coh_X^m}(\hat{\mathcal{O}}_X,\hat{\mathcal{L}})$
  for the subsheaf of $\Hom_{\Coh_X^m}(\hat{\mathcal{O}}_X,\hat{\mathcal{L}})$
  consisting of isomorphisms.   Then
  $\Isom_{\Coh_X^m}(\hat{\mathcal{O}}_X,\hat{\mathcal{L}})$ is a
  torsor for the sheaf $\Aut_{\Coh_X^m}(\hat{\mathcal{O}}_X)$.
  By Proposition~\ref{mero}, $\Aut_{\Coh_X^m}(\hat{\mathcal{O}}_X)$ is
  identified with the subsheaf $j_*^{\mer}\mathcal{O}_{X^*}^{\times}$
  consisting of invertible sections of $j_*^{\mer}\mathcal{O}_{X^*}$.
  So, this gives a map
  \begin{equation}\label{cmap}
    c:\hPic_Y X\to H^1(X,j_*^{\mer}\mathcal{O}_{X^*}^{\times}).
    \end{equation}

    On the other hand, suppose $E$ is a
    $j_*^{\mer}\mathcal{O}_{X^*}^{\times}$-torsor.  The restriction,
    $E^*$, of $E$ to $X^*$ gives a line bundle $\mathcal{L}^*$ on
    $X^*$ via the Borel construction: Explicitly
    \begin{equation}\label{explicit}
    \mathcal{L}^*:=E^*\times^{\mathcal{O}_{X^*}^{\times}} \mathcal{O}_{X^*}.
    \end{equation}
    In other words, for $U$ open in $X^*$,  $\mathcal{L}^*(U)$ is the quotient of $E^*\times\mathcal{O}_X$ by the diagonal action of $\mathcal{O}_X^{\times}$ (acting by $f(e,g)=(f^{-1}e,fg)$).
    We get a map of sheaves $E^*\to \mathcal{L}^*$ (given explicitly by
    $e\mapsto (e,1)$) sending the sections of $E$
    isomorphically onto the non-vanishing sections of $\mathcal{L}^*$. 

    Now, if $U$ is an open set in $X$ and $s\in E(U)$, we get an
    extension $\mathcal{L}_s$ of $\mathcal{L}^*$ from $U^*=U\cap X^*$
    to $U$ essentially by declaring the image of $s$ in
    $\mathcal{L}^*$ to be a generator of $\mathcal{L}_s$.  To be more
    precise, we set $\mathcal{L}_s=\mathcal{O}_U$ and choose an
    isomorphism $j^*\mathcal{L}_s\to \mathcal{L}$ by sending $1$ to
    $s$.  If we pick a different $t\in E(U)$, then $\mathcal{L}_t$ and
    $\mathcal{L}_s$ are meromorphically equivalent because $s/t$ is
    meromorphic and non-vanishing off of $Y$.  So we can glue together the
    different choices of $\mathcal{L}_s$ for $s\in E(U)$ (varying $s$ and $U$), to get a meromorphic line bundle $b(E)$.   This gives us a map
    \begin{equation}\label{b}
    b:H^1(X,j_*^{\mer}\mathcal{O}_{X^*}^{\times})\to \hPic_Y X.
    \end{equation}

    \begin{proposition} The maps $b$ and $c$ above are inverse to each other.
      So, when $X$ is smooth, the sets $\hPic_Y X$ and $H^1(X,j_*^{\mer}\mathcal{O}_X^{\times})$ are
      isomorphic.  
  \end{proposition}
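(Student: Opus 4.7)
My plan is to verify that both $b\circ c=\id$ and $c\circ b=\id$ by passing to \v{C}ech cocycles on a sufficiently fine open cover. The key conceptual point is that both sides classify the same object, namely ``locally trivial'' structures for the sheaf $j_*^{\mer}\mathcal{O}_{X^*}^{\times}$, once Proposition~\ref{mero} identifies this sheaf with $\Aut_{\Coh_X^m}(\hat{\mathcal{O}}_X)$.

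First I would fix a meromorphic line bundle $\hat{\mathcal{L}}$ and an open cover $\{U_\alpha\}$ of $X$ over which $\hat{\mathcal{L}}$ is trivial: that is, there exist isomorphisms $\phi_\alpha\colon \hat{\mathcal{O}}_{U_\alpha}\xrightarrow{\sim} \hat{\mathcal{L}}|_{U_\alpha}$ in $\Coh_X^m(U_\alpha)$. Such a cover exists by Corollary~\ref{ror2}. The transition data $g_{\alpha\beta}:=\phi_\beta^{-1}\circ\phi_\alpha\in \Aut_{\Coh_X^m}(\hat{\mathcal{O}}_{U_{\alpha\beta}})=j_*^{\mer}\mathcal{O}_{X^*}^{\times}(U_{\alpha\beta})$ is a \v{C}ech $1$-cocycle representing $c(\hat{\mathcal{L}})\in H^1(X,j_*^{\mer}\mathcal{O}_{X^*}^{\times})$, by the usual torsor-to-cocycle recipe. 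Now I would trace through the construction of $b$ applied to this cocycle: one chooses the tautological local sections $s_\alpha=\phi_\alpha$ of the torsor $\Isom_{\Coh_X^m}(\hat{\mathcal{O}}_X,\hat{\mathcal{L}})$, forms the local line bundles $\mathcal{L}_{s_\alpha}=\mathcal{O}_{U_\alpha}$ on $U_\alpha$, and glues them via the meromorphic transition functions $s_\alpha/s_\beta=g_{\alpha\beta}$. The resulting meromorphic line bundle is the gluing of $\{\mathcal{O}_{U_\alpha},g_{\alpha\beta}\}$, and the local trivializations $\phi_\alpha$ then exhibit a canonical isomorphism from this glued object back to $\hat{\mathcal{L}}$ in $\Coh_X^m$. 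This proves $b\circ c=\id$.

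Next I would do the other direction. Given a $j_*^{\mer}\mathcal{O}_{X^*}^{\times}$-torsor $E$, refine the cover so that there exist local sections $t_\alpha\in E(U_\alpha)$; set $g_{\alpha\beta}:=t_\alpha/t_\beta$, a \v{C}ech cocycle representing the class of $E$. The line bundle $b(E)$ is by construction glued from $\mathcal{L}_{t_\alpha}=\mathcal{O}_{U_\alpha}$ with the same transition functions $g_{\alpha\beta}$ (viewed in $\Coh_X^m$). Applying $c$ means looking at the sheaf $\Isom_{\Coh_X^m}(\hat{\mathcal{O}}_X,b(E))$; the tautological trivializations $\psi_\alpha\colon \hat{\mathcal{O}}_{U_\alpha}\xrightarrow{\sim} b(E)|_{U_\alpha}$ coming from the gluing data give local sections of this torsor whose ratios are again $g_{\alpha\beta}$. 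Hence $c(b(E))$ and $E$ have the same class in $H^1$, and the assignment $t_\alpha\mapsto \psi_\alpha$ extends to a canonical torsor isomorphism $E\xrightarrow{\sim} c(b(E))$. This gives $c\circ b=\id$.

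The main technical obstacle I anticipate is the bookkeeping needed to make these ``canonical'' identifications actually canonical and independent of all choices; in particular one must check that the cocycle $g_{\alpha\beta}$ is computed in the same way on both sides of the correspondence, and that the isomorphisms produced on overlaps are compatible on triple intersections. This reduces entirely to Proposition~\ref{mero} (which identifies $\Aut_{\Coh_X^m}(\hat{\mathcal{O}}_X)$ with $j_*^{\mer}\mathcal{O}_{X^*}^{\times}$, so that the action of a scalar on a local trivialization agrees with the torsor action) together with the faithfulness of restriction recorded in Proposition~\ref{resmono} (which ensures uniqueness of the gluing isomorphisms, hence the cocycle condition is automatic). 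Once these two points are in place the verification is formal.
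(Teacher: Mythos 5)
Your proposal is correct and carries out exactly the "wading through the definitions" that the paper explicitly leaves to the reader, using the standard \v{C}ech-cocycle dictionary between torsors and $H^1$, and correctly locating the two points where the paper's earlier machinery (Proposition~\ref{mero} for the identification $\Aut_{\Coh_X^m}(\hat{\mathcal{O}}_X)\cong j_*^{\mer}\mathcal{O}_{X^*}^{\times}$, and Proposition~\ref{resmono} for uniqueness of gluing morphisms and hence the automatic cocycle condition) does the real work. There is no alternative route to compare against, since the paper omits the argument entirely; your write-up is a faithful and adequately detailed version of the verification the authors had in mind.
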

  \begin{proof}
    This is just a matter of wading through the definitions, and we feel
    it is best to leave it to the reader.  
  \end{proof}

  Suppose now that $X$ is smooth and, for simplicity, assume that $Y$
  is nowhere dense in $X$.  A \emph{meromorphic extension} of a line
  bundle $\mathcal{L}^*$ on $X^*$ is then the same thing as a pair
  $(\hat{\mathcal{L}}, r)$ with $\hat{\mathcal{L}}$ a meromorphic line
  bundle and $r:j^*\hat{\mathcal{L}}\to \mathcal{L}^*$ is an isomorphism.
 
  \begin{lemma}\label{extcons}
    Suppose $X$ is smooth and $Y$ is nowhere dense.
    Let $\mathcal{L}^*$ be a line bundle on $X^*$. Then there is a one-one
    correspondence between 
    meromorphic extensions of $\mathcal{L}^*$ and pairs $(E,\rho)$ where
    $E$ is a $j_*^{\mer}\mathcal{O}_{X^*}^{\times}$ torsor and $\rho:E^*\to\mathcal{L}^{*\times}$ is an $\mathcal{O}_{X^*}^{\times}$-equivariant isomorphism from the
    restriction of $E$ to $X^*$ to the sheaf of non-vanishing sections of
    $\mathcal{L}^*$. 
  \end{lemma}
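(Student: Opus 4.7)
The plan is to write down explicit maps $\Phi$ and $\Psi$ in both directions and then check that they are mutually inverse. Most of the work has already been done in the proof that $b$ and $c$ from~\eqref{b} and~\eqref{cmap} are inverse isomorphisms between $\hPic_Y X$ and $H^1(X,j_*^{\mer}\mathcal{O}_{X^*}^{\times})$; the only new content is that we are keeping track of the identification of the restriction to $X^*$ with a fixed line bundle $\mathcal{L}^*$ instead of working only up to isomorphism.

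First I would define $\Phi$. Given a meromorphic extension $(\hat{\mathcal{L}},r)$ of $\mathcal{L}^*$, let $E:=\Isom_{\Coh_X^m}(\hat{\mathcal{O}}_X,\hat{\mathcal{L}})$, which by the discussion leading up to~\eqref{cmap} is a $j_*^{\mer}\mathcal{O}_{X^*}^{\times}$-torsor. Restricting to $X^*$ and using the fact that $j^*\hat{\mathcal{O}}_X=\mathcal{O}_{X^*}$ and the isomorphism $r:j^*\hat{\mathcal{L}}\to \mathcal{L}^*$, one obtains a canonical identification
\[
E^*=\Isom_{\mathcal{O}_{X^*}}(\mathcal{O}_{X^*},j^*\hat{\mathcal{L}})\xrightarrow{r_*} \Isom_{\mathcal{O}_{X^*}}(\mathcal{O}_{X^*},\mathcal{L}^*)=\mathcal{L}^{*\times};
\]
this is the required $\rho$, and it is clearly $\mathcal{O}_{X^*}^{\times}$-equivariant.

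Next I would define $\Psi$. Given $(E,\rho)$, apply the local trivialization construction used in the definition of $b$: cover $X$ by opens $U$ on which $E(U)$ is non-empty, pick $s\in E(U)$, and set $\mathcal{L}_s:=\mathcal{O}_U$ with the isomorphism $j^*\mathcal{L}_s\to \mathcal{L}^*_{|U^*}$ sending $1$ to $\rho(s_{|U^*})$. Different choices of $s$ differ by elements of $j_*^{\mer}\mathcal{O}_{X^*}^{\times}(U)$, which gives meromorphic equivalences between the various $\mathcal{L}_s$'s that commute with the chosen trivializations of the restrictions to $X^*$; these glue by Proposition~\ref{resmono} (applied as in the proof that $b$ is well-defined) to a meromorphic line bundle $\hat{\mathcal{L}}$ together with a distinguished isomorphism $r:j^*\hat{\mathcal{L}}\to \mathcal{L}^*$.

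Finally, one checks that $\Phi\circ\Psi=\id$ and $\Psi\circ\Phi=\id$. Starting with $(E,\rho)$, a section $s\in E(U)$ corresponds by construction to the isomorphism $\hat{\mathcal{O}}_U\to \hat{\mathcal{L}}_{|U}$ given by $1\mapsto s$, and its restriction to $X^*$ is $\rho(s_{|U^*})$; this recovers $E$ as $\Isom_{\Coh_X^m}(\hat{\mathcal{O}}_X,\hat{\mathcal{L}})$ together with $\rho$. Starting with $(\hat{\mathcal{L}},r)$, any local section $s\in \Isom_{\Coh_X^m}(\hat{\mathcal{O}}_U,\hat{\mathcal{L}}_{|U})$ is a local trivialization, and the gluing construction in $\Psi$ reproduces $\hat{\mathcal{L}}$ with the isomorphism $r$. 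The only step requiring genuine care is the gluing in $\Psi$, where one must verify that the locally defined extensions are pairwise meromorphically equivalent in a way compatible with $\rho$; this is where the smoothness of $X$ (so that line bundles are reflexive and we can apply Lemma~\ref{ror} and Proposition~\ref{resmono}) enters, and I expect it to be the only nontrivial obstacle, since the rest is formal bookkeeping.
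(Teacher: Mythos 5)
Your proof is correct and takes essentially the same approach as the paper: both directions use the same constructions (the $\Isom$-torsor for one direction and the construction underlying $b$ for the other), and the verification that the two are mutually inverse is the same; you have simply spelled out the local gluing step in detail where the paper references $b(E)$ and leaves the verification to the reader.
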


  \begin{proof}
    If $(\hat{\mathcal{L}},r)$ is a meromorphic extension of $\mathcal{L}^*$,
    we get a torsor $E:=\Isom_{\Coh^m_X}(\hat{\mathcal{O}}_X,\hat{\mathcal{L}})$
    and the map $r:j^*\hat{\mathcal{L}}\to \mathcal{L}^*$ provides us with the
    isomorphism from $E^*$ to the non-vanishing sections of $\mathcal{L}^*$.
    On the other hand, suppose we are given a pair $(E,\rho)$ as above.
    Then $E$ gives rise to a meromorphic line bundle $\hat{\mathcal{L}}=b(E)$
    as in Proposition~\ref{b} above.  And, the map $\rho$ provides
    an isomorphism of the holomorphic bundle $E^*\times^{\mathcal{O}_{X^*}^{\times}}\mathcal{O}_{X^*}$ with $\mathcal{L}^*$. 
    It is easy to check that these two correspondences are mutually inverse.
    So the lemma follows.
  \end{proof}
  
  \begin{theorem}\label{mexlb}
    Suppose $\nu,\omega, S,\bar S$ and
    $\mathcal{L}=\mathcal{L}(\nu,\omega)$ are as in Q\ref{query:Q1}.
    Set $Y=\bar S\setminus S$.  Then there is a unique (up to isomorphism) meromorphic
    extension $\hat{\mathcal{L}}$ of $\mathcal{L}$ to $\bar{S}$ whose
    non-vanishing meromorphic sections are the admissible
    biextensions.
  \end{theorem}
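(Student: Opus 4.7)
The strategy is to apply Lemma~\ref{extcons}, which provides a bijection between meromorphic extensions of a line bundle on $S$ to $\bar{S}$ and pairs $(E,\rho)$ consisting of a $j_*^{\mer}\mathcal{O}_S^{\times}$-torsor $E$ on $\bar{S}$ together with an identification $\rho$ of its restriction to $S$ with the sheaf of non-vanishing sections of $\mathcal{L}$. By Corollary~\ref{torsor}, the sheaf $\calB^{\ad}(\nu,\omega)$ is already a pseudo-torsor for $j_*^{\mer}\mathcal{O}_S^{\times}$, and its restriction to $S$ is the $\mathcal{O}_S^{\times}$-torsor $\calB(\nu,\omega)$ which is canonically the sheaf of non-vanishing sections of $\mathcal{L}=\mathcal{L}(\nu,\omega)$. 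Once one upgrades $\calB^{\ad}(\nu,\omega)$ to a genuine torsor, Lemma~\ref{extcons} immediately yields a meromorphic extension $\hat{\mathcal{L}}$ whose sheaf of non-vanishing meromorphic sections (in the sense of Definition~\ref{merosecs}) is precisely $\calB^{\ad}(\nu,\omega)$; uniqueness up to isomorphism is also built into the correspondence of Lemma~\ref{extcons}, since any other extension with the same torsor data produces an isomorphic meromorphic line bundle.

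The substance therefore lies in showing non-emptiness of the stalk $\calB^{\ad}(\nu,\omega)_p$ at each point $p\in Y:=\bar{S}\setminus S$. The plan is to reduce to the normal crossing case via Hironaka's theorem: choose a proper holomorphic map $\pi\colon\bar{S}'\to\bar{S}$ which is an isomorphism over $S$ and for which $Y':=\pi^{-1}(Y)$ is a simple normal crossing divisor. Identify $S':=\pi^{-1}(S)$ with $S$, and let $\nu'\in\ANF(S',\pi^*\mathcal{H})_{\bar{S}'}$ and $\omega'\in\ANF(S',\pi^*\mathcal{H}^{\vee})_{\bar{S}'}$ denote the pulled-back normal functions. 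By Theorem~\ref{sec:mix-q} (or Theorem~\ref{sec:mix-1} in the unipotent case), the pseudo-torsor $\calB^{\ad}_{\bar{S}'}(\nu',\omega')$ has non-empty stalks at every point of $\bar{S}'$ and is hence a genuine $j^{\mer}_{S'*}\mathcal{O}_{S'}^{\times}$-torsor. Since admissibility of variations of mixed Hodge structure is compatible with proper birational morphisms (M.~Saito), one obtains a canonical identification of sheaves $\pi_*\calB^{\ad}_{\bar{S}'}(\nu',\omega')\cong\calB^{\ad}_{\bar{S}}(\nu,\omega)$; moreover, by Lemma~\ref{me2}(ii) the pushforward $\pi_*(j^{\mer}_{S'*}\mathcal{O}_{S'}^{\times})$ equals $j^{\mer}_{S*}\mathcal{O}_S^{\times}$.

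Given $p\in Y$, it now suffices to find an open neighborhood $U$ of $p$ in $\bar{S}$ such that the torsor $\calB^{\ad}_{\bar{S}'}(\nu',\omega')$ admits a section on $\pi^{-1}(U)$; by properness, such preimages form a basis of neighborhoods of the compact exceptional fiber $\pi^{-1}(p)$. This reduces via the Leray spectral sequence for $\pi$ to verifying the vanishing of the stalk at $p$ of $R^1\pi_*(j^{\mer}_{S'*}\mathcal{O}_{S'}^{\times})$, which by proper base change is $H^1\bigl(\pi^{-1}(p),\,j^{\mer}_{S'*}\mathcal{O}_{S'}^{\times}|_{\pi^{-1}(p)}\bigr)$. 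I expect this vanishing step to be the principal technical obstacle; it can be approached either through a direct analysis of the sheaf of meromorphic units along the exceptional fibers of the resolution, or by explicitly trivializing the torsor on suitably small $\pi^{-1}(U)$ via the local normal form of an admissible biextension produced in \S\ref{sec:mix}, patching together the local models coming from Theorem~\ref{sec:mix-q} across a finite open cover of the compact fiber. Once this vanishing is in hand, $\calB^{\ad}(\nu,\omega)$ is a torsor and Lemma~\ref{extcons} furnishes the unique meromorphic extension $\hat{\mathcal{L}}$, whose non-vanishing meromorphic sections are by construction the admissible biextensions.
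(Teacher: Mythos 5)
You correctly identify that Corollary~\ref{torsor} only delivers a \emph{pseudo-}torsor and that the substantive content of the theorem is non-emptiness of the stalks of $\calB^{\ad}(\nu,\omega)$; the paper's own proof states this torsor-hood as a given (citing Corollary~\ref{torsor} somewhat loosely), leaving the non-emptiness to be supplied by the construction in the next section (\S\ref{s.ethm}, Theorem~\ref{eb1}, which the authors say is logically independent and could be read first). Your reduction via Hironaka to the normal crossing case, the appeal to Theorem~\ref{sec:mix-q} there, and the identifications $\pi_*\calB^{\ad}_{\bar S'}(\nu',\omega')\cong\calB^{\ad}_{\bar S}(\nu,\omega)$ and $\pi_*j^{\mer}_{S'*}\mathcal{O}_{S'}^{\times}\cong j^{\mer}_{S*}\mathcal{O}_S^{\times}$ are all sound and match the machinery the paper sets up.

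However, your main proposed step --- proving that the stalk of $R^1\pi_*\bigl(j^{\mer}_{S'*}\mathcal{O}_{S'}^{\times}\bigr)$ at $p$ vanishes --- is both stronger than what is needed and very likely false in general. The sheaf $j^{\mer}_{S'*}\mathcal{O}_{S'}^{\times}$ sits in an exact sequence with $\mathcal{O}_{\bar S'}^{\times}$ and the sheaf of divisors supported on the normal crossing boundary; taking $R\pi_*$, the group you want to kill receives a contribution from $R^1\pi_*\mathcal{O}_{\bar S'}^{\times}$, i.e.\ from the Picard groups of the exceptional fibers, which are typically nonzero. What actually has to vanish is only the \emph{class} of the particular torsor $\calB^{\ad}_{\bar S'}(\nu',\omega')$ on small preimages $\pi^{-1}(U)$, and that requires a separate argument. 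The paper's route avoids the cohomological obstruction entirely: in the normal crossing case it uses the torsion pairing to fix coset representatives $\tilde\tau_i$ along the boundary divisors, thereby reducing the $j^{\mer}_*\mathcal{O}_S^{\times}$-torsor to an $\mathcal{O}_{\bar S}^{\times}$-torsor, i.e.\ to a genuine holomorphic line bundle $\overline{\mathcal{M}}$ on $\bar T$. It then pushes forward \emph{the line bundle} and takes the reflexive hull (Lemma~\ref{refl}), which on a smooth $\bar S$ automatically produces an invertible sheaf with no $H^1$ hypothesis. In particular, non-emptiness of the stalks of $\calB^{\ad}(\nu,\omega)$ is a corollary of Theorem~\ref{eb1}, and you should either invoke that result or replicate its ``rigidify then push forward'' strategy rather than attempt a vanishing of $R^1\pi_*$ of the meromorphic units.
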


  \begin{proof}
    The $j_*^{\mer}\mathcal{O}_S^{\times}$-torsor
    $\mathcal{B}^{\ad}(\nu,\omega)$ of Corollary~\ref{torsor} gives us
    a torsor $E$ and the restriction isomorphism from
    $\mathcal{B}^{\ad}(\nu,\omega)$ (which is a torsor on $\bar{S}$) to
    $\mathcal{B}(\nu,\omega)$ (which is the $\mathcal{O}_S^{\times}$
    torsor of invertible section of $\mathcal{L}$) gives us an
    isomorphism $\rho$ as in Lemma~\ref{extcons}.  So we see that
    $(E,\rho)$ gives us a meromorphic extension $\hat{\mathcal{L}}$ of
      $\mathcal{L}$ to $\bar S$.

      The extension is clearly unique up to isomorphism because
      $\mathcal{B}^{\ad}(\nu,\omega)$ determines both the torsor $E$
      and the map $\rho$.
  \end{proof}

  \section{Extension Theorem}\label{s.ethm}

  In this section, the goal is to show that the meromorphic extension
  of $\mathcal{L}(\nu,\omega)$ constructed in Theorem~\ref{mexlb}
actually gives rise to a line bundle
$\overline{\mathcal{L}}(\nu,\omega)$ extending
$\mathcal{L}(\nu,\omega)$.
The extensions $\overline{\mathcal{L}}(\nu,\omega)$ is not unique by the
choices involved are easily understood. 
  
\subsection{Setup and torsion pairings of smooth divisors}\label{stp}
\newcommand\ihz{\IH^1_{\mathbb{Z}}}

We take $\bar S$ to be a complex manifold, $j:S\to \bar S$ the
inclusion of a Zariski open and $\calH$ to be a weight $-1$ variation
of pure Hodge structure without torsion on $S$.  We write
$Y:=\bar S\setminus S$, $Y^{(1)}=\{Y_1,\ldots, Y_k\}$ for the set of
components of $Y$ which are codimension $1$ in $\bar S$.  We write
$Y^{(2)}$ for the union of the codimension $2$ components of $Y$ and
the singular locus of $Y$.  We write $S':=\bar S\setminus Y^{(2)}$,
and $Y'=Y\cap S'$.  Then $Y'$ is a smooth divisor in $S'$ with
components $Y_i':=Y_i\cap S'$.

\begin{definition}  Suppose $\calL$ is a $\mathbb{Z}$ local system 
  on $S$.  Set 
  $$\ihz\calL:=\{\alpha\in \rH^1(S,\calL): \alpha_{\mathbb{Q}}\in \IH^1(S,\calL_{\mathbb{Q}})\}.$$
\end{definition}

By Theorem~\ref{factor}, the map
$\cl:\ANF(S,\calH)_{\bar S}\to \rH^1(S,\mathcal{H})$ factors through
$\IH^1_{\mathbb{Z}}\calH$.

For each $Y_i\in Y^{(1)}$ we are going to define a pairing
\begin{equation}
  \label{e.tp}
  \tau_i:\ihz\calH\otimes\ihz\calH^{\vee}\to\mathbb{Q}/\mathbb{Z}
\end{equation}
which we will call the $i$-th torsion pairing.

We define \eqref{e.tp} first in the case that $\bar S=\Delta$,
$S=\Delta^*$ and $Y=\{0\}$.  Note that, in this case,
$\ihz(\Delta,\calH)=\{\alpha\in \rH^1(\Delta^*,\calH):\alpha_{\mathbb{Q}}=0\}$.
In other words, it is just the set of torsion elements of $\rH^1(\Delta^*,\calH)$. 
So, in this case the pairing in \eqref{e.tp} is just given by the torsion pairing $\tau$.

In general we take a test curve $\bar\varphi:\Delta\to S'$ intersecting
$Y_i\in Y^{(1)}$ transversally at a point $\bar\varphi(0)$.
By Theorem~\ref{IH1Ad}, the pull back of $\ihz\calH$ to $\Delta^*$ consists
of torsion classes.  Then we can define $\tau_i$ to be the torsion
pairing restricted to the test curve. 
It is easy to see that
this definition does not depend on the test curve.

\subsection{Extension of the line bundle $\mathcal{L}(\nu,\omega)$}

\begin{lemma}\label{refl}
  Suppose $j:S\to \bar S$ is an in \S\ref{stp}.  Suppose that
  $\codim_{\bar S} Y\geq 2$.  Let $\pi:\bar T\to \bar S$ be a
  proper morphism from a complex manifold, $\bar T$, which is an isomorphism over $S$,
  and suppose $\mathcal{M}$ is a line bundle on $\bar T$.  Set
  $\mathcal{L}:=(\pi_*\mathcal{M})^{\vee\vee}$, the reflexive hull of
  $\pi_*\mathcal{M}$.  Then $\mathcal{L}$ is a line bundle on $\bar S$
  and there is a canonical isomorphism
  $\mathcal{L}_{|S}\cong \mathcal{M}_{|S}$.  (See Remark~\ref{canext}.)
\end{lemma}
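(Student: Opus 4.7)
The plan is to produce $\mathcal{L}$ as a coherent rank-one reflexive analytic sheaf on the smooth manifold $\bar S$ and then to invoke the same criterion used in the proof of Lemma~\ref{ror}, namely Lemma~$1.1.15$ of Okonek--Schneider--Spindler, to conclude that every such sheaf on a complex manifold is invertible.

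First I would use Grauert's direct image theorem: since $\pi$ is a proper morphism of complex analytic spaces and $\mathcal{M}$ is coherent, the direct image $\pi_*\mathcal{M}$ is a coherent analytic sheaf on $\bar S$. Because $\pi$ restricts to an isomorphism over $S$ and the formation of direct images along proper maps commutes with restriction to open subsets, there is a tautological identification $(\pi_*\mathcal{M})|_S = \mathcal{M}|_S$, which is already a line bundle on $S$.

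Next, $\mathcal{L}:=(\pi_*\mathcal{M})^{\vee\vee}$ is reflexive by construction, and the formation of duals commutes with restriction to open subsets of $\bar S$. Hence $\mathcal{L}|_S = (\mathcal{M}|_S)^{\vee\vee}$, and since line bundles are reflexive, the canonical biduality morphism $\mathcal{M}|_S \to (\mathcal{M}|_S)^{\vee\vee}$ is an isomorphism. Composing with the previous identification gives a canonical isomorphism $\mathcal{L}|_S \cong \mathcal{M}|_S$. In particular $\mathcal{L}$ has generic rank one on $\bar S$.

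Finally I would observe that, since $\bar S$ is a complex manifold and $\mathcal{L}$ is a rank-one reflexive coherent sheaf on it, $\mathcal{L}$ is locally free of rank one by Lemma~$1.1.15$ of Okonek--Schneider--Spindler, exactly as in the proof of Lemma~\ref{ror}. This gives that $\mathcal{L}$ is a line bundle on $\bar S$ extending $\mathcal{M}|_S$ canonically. The argument is essentially bookkeeping; I do not anticipate a serious obstacle. The one point worth stressing is that the isomorphism $\mathcal{L}|_S\cong\mathcal{M}|_S$ arises from a genuinely canonical sequence of maps (base change of the direct image followed by biduality), so that the word ``canonical'' in the statement and in Remark~\ref{canext} is justified; I note that the hypothesis $\codim_{\bar S}Y\geq 2$ plays no role in the construction itself and is needed only when one wants to compare different resolutions $\bar T$, which is the purpose of the subsequent remark.
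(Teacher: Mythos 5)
Your proof is correct and follows essentially the same route as the paper's: coherence of the proper pushforward, base change along the open inclusion of $S$ to identify $\mathcal{L}|_S$ with $\mathcal{M}|_S$ (using that duals commute with restriction and that line bundles are reflexive), and then the Okonek--Schneider--Spindler criterion that a rank-one reflexive coherent sheaf on a complex manifold is a line bundle. The paper is slightly terser but the sequence of ideas is identical.
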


\begin{proof}
  Write $i:S\to \bar T$ for the open immersion lifting $j$.  Since $\pi$ is
  proper, $\pi_*\mathcal{M}$ is coherent.   We have $j^*\pi_*\mathcal{M}=
  i^*\mathcal{M}$ by smooth base change.  So $j^*\mathcal{L}=j^*(\pi_*(\mathcal{M})^{\vee\vee})=(i^*\mathcal{M})^{\vee\vee}=i^*\mathcal{M}$.

  Now, $\mathcal{L}$ is reflexive and agrees with a line bundle outside of
  codimension $2$. It is therefore a rank $1$ reflexive sheaf.  Therefore, since $\bar S$ is a complex manifold, $\mathcal{L}$ is a line bundle. (This is in
  ~\cite{OkenekSchneiderSpindler}.) 
\end{proof}

\begin{theorem}\label{eb1}
  Suppose $S$, $\bar{S}$, $Y$ and $\mathcal{H}$ are as in the beginning of \S\ref{stp},
  so that $\mathcal{H}$ is a torsion free variation of Hodge structure on $S$ of weight $-1$
  and $\bar{S}$ is a smooth partial compactification of $S$.  Suppose
  $$(\nu,\omega)\in \ANF(S,\mathcal{H})_{\bar S}\times \ANF(S,\mathcal{H}^{\vee})_{\bar S}.
  $$
  Then each choice of coset representative $\tilde\tau_i$
  of $\tau_i:=\tau_i(\cl\nu,\cl\omega)\in\mathbb{Q}/\mathbb{Z}$ determines a unique extension
  $\overline{\mathcal{L}}$ of
  $\mathcal{L}:=\mathcal{L}(\nu,\omega)$ to
  $\bar{S}$ whose non-vanishing sections are admissible biextension
  variations
  $\mathcal{V}$ with the following property: for every test curve
  $\bar\varphi:\Delta\to S'$ interesting
  $Y_i$ transversally at
  $\bar\varphi(0)$, $\mu(\varphi^*\mathcal{V})=\tilde\tau_i$.
\end{theorem}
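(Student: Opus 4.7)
The plan is to reduce via a codimension-two argument to constructing $\overline{\calL}$ on $S' = \bar S \setminus Y^{(2)}$, where the divisor $Y' = Y\cap S'$ is smooth with components $Y_i'$. Once the line bundle is constructed on $S'$, its reflexive pushforward to $\bar S$ (via the open immersion $j':S'\to\bar S$, using that $\bar S$ is smooth and $Y^{(2)}$ has codimension $\geq 2$) produces the desired extension by the same double-dual argument that underlies Lemma~\ref{refl}. So the real work is on $S'$.

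On $S'$ I would define a subsheaf $\calB^{\ad,\tilde\tau}(\nu,\omega)\subset \calB^{\ad}(\nu,\omega)$ whose sections over an open $U\subset S'$ are the admissible biextensions $\calV$ of type $(\nu,\omega)$ such that, for every test curve $\bar\varphi:\Delta\to U$ meeting $Y_i'$ transversally, $\mu(\varphi^*\calV) = \tilde\tau_i$. I would then establish three claims: (i) $\calB^{\ad,\tilde\tau}$ is locally non-empty; (ii) the subsheaf of $j_*^{\mer}\calO_S^{\times}$ preserving the $\tilde\tau$-normalization is exactly $\calO_{S'}^{\times}$; and (iii) the restriction of $\calB^{\ad,\tilde\tau}$ to $S$ is the full $\calO_S^{\times}$-torsor of non-vanishing sections of $\calL(\nu,\omega)$. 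Given these, the Borel construction from~\S\ref{sec:merext} turns $\calB^{\ad,\tilde\tau}$ into an $\calO_{S'}^{\times}$-torsor, which yields the line bundle extending $\calL(\nu,\omega)$ over $S'$.

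For~(i), at a point $p\in Y_i'$ I would choose local normal crossing coordinates with $Y_i' = \{s_1=0\}$ and invoke Theorem~\ref{sec:mix-q} to obtain some admissible biextension $\calV_0$ in a polydisk neighborhood. By the corollary at the end of~\S\ref{tp}, $\mu(\varphi^*\calV_0)$ and the torsion pairing $\tau_i(\cl\nu,\cl\omega)$ agree modulo $\mathbb{Z}$ for any transverse test curve through $p$, so $\mu(\varphi^*\calV_0) - \tilde\tau_i\in\mathbb{Z}$. Acting on $\calV_0$ by $s_1^{k}\in j_*^{\mer}\calO_S^{\times}$ shifts $\mu$ by $k$: this follows from Theorem~\ref{JDG5} together with Remark~\ref{rat-mu}, since the action of $s_1^k$ modifies the monodromy of $\calV_0$ only by replacing $N_1$ with $N_1 + k\eta$, where $\eta$ is the central $(-1,-1)$-morphism from $\Gr^W_0$ to $\Gr^W_{-2}$ appearing in Definition~\ref{1v}. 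Choosing $k$ appropriately produces a section in $\calB^{\ad,\tilde\tau}(U)$. Claim~(ii) is the same shift formula read backwards: an $f\in j_*^{\mer}\calO^{\times}$ preserves the $\tilde\tau$-normalization iff $\mathrm{ord}_{Y_i'}(f)=0$ for every $i$, and on the smooth divisor $Y'$ these are exactly the holomorphic units. Claim~(iii) is immediate, since the normalization condition is vacuous away from $Y'$.

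Uniqueness is then essentially formal: if $\overline{\calL}'$ is another line bundle on $\bar S$ extending $\calL(\nu,\omega)$ with the same $\mu$-property, then $\overline{\calL}'\otimes\overline{\calL}^{-1}$ is the line bundle corresponding to a trivialization of the meromorphic extension from Theorem~\ref{mexlb} by a meromorphic function which preserves $\mu$ along every $Y_i'$, hence is a holomorphic unit on $S'$; reflexive hull uniqueness then propagates the triviality across $Y^{(2)}$ to all of $\bar S$. The main obstacle I anticipate is verifying the precise shift formula $\mu(s_1^k\cdot\calV) = \mu(\calV)+k$ in claim~(i) within the quasi-unipotent setting of \S\ref{sec:mix}: Theorem~\ref{JDG5} is proved there for the unipotent normal-crossing case, and one must check that the construction of~\S\ref{sec:mix} plus the finite-order semisimple adjustment of Theorem~\ref{sec:mix-q} carries $\mu$ in the expected way under translation by a unit power of $s_1$. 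It is the control over this local shift that forces the intermediate step on $S'$ rather than a direct global construction on $\bar S$.
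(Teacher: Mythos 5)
Your construction of the normalized torsor on $S'$ is sound and essentially matches the paper's approach: claims (i), (ii), (iii) are correct, and the $\mu$-shift formula under translation by $s_1^k$ is exactly the normalization mechanism the paper uses (implicitly) in the normal crossing case. The anticipated subtlety with the quasi-unipotent case is real but manageable.

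However, there is a genuine gap at the final step, where you pass from $S'$ to $\bar S$. You write that the reflexive pushforward along the open immersion $j':S'\to\bar S$ ``produces the desired extension by the same double-dual argument that underlies Lemma~\ref{refl}.'' But Lemma~\ref{refl} applies to a \emph{proper} morphism $\pi:\bar T\to\bar S$: it is properness that guarantees $\pi_*\calM$ is coherent, after which the double-dual argument applies. An open immersion $j'$ is not proper, and $j'_*\calL'$ need not be coherent even when $\bar S\setminus S'$ has codimension $\geq 2$. This is precisely the obstruction highlighted in Remark~\ref{remexth}: Serre produced uncountably many analytic line bundles on $\mathbb{C}^2\setminus\{(0,0)\}$ that do not extend to $\mathbb{C}^2$, so codimension $\geq 2$ alone gives no extension. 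In fact, the nontrivial content of Theorem~\ref{eb1} in the non-normal-crossing case is \emph{exactly} the assertion that your $\calL'$ does extend across $Y^{(2)}$, so you cannot assume it. The paper closes this gap by first choosing a resolution $\pi:\bar T\to\bar S$ (proper, an isomorphism over $S'$, with $\pi^{-1}Y$ normal crossing), extending the line bundle over all of $\bar T$ using the normal crossing case (possibly with arbitrary choices of $\tilde\tau_i$ on the exceptional components), and only then applying Lemma~\ref{refl} to the proper pushforward $\pi_*\overline{\calM}$, whose coherence is automatic. Your argument needs this intermediate resolution step; without it, the step from $S'$ to $\bar S$ is not justified.
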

\begin{proof}
  First assume that $Y$ is a normal crossing divisor.
  Then,  by Corollary~\ref{me3}, the sheaf $\calB^{\ad}(\nu,\omega)$ of
  admissible biextension variations is a pseudo-torsor for the group
  of non-vanishing meromorphic functions with poles along $Y$.  By
  Theorem~\ref{sec:mix-1}, this pseudo-torsor is actually a torsor for the
  sheaf of non-vanishing meromorphic functions.  For every section $\mathcal{V}$ of $\calB^{\ad}(\nu,\omega)$
  and every test curve $\bar\varphi:\Delta\to S'$ intersecting the divisor $Y$ transversally at $y=\bar\varphi(0)$
  with $y$ a point in $Y_i$, $\mu(\varphi^*\mathcal{V})=\tilde\tau\varphi^*\mathcal{V}$ is a coset representative of $\tau_i$.  Fixing the $\tilde\tau_i$ then reduces the torsor to a
  $\mathcal{O}_{\bar S}^{\times}$ torsor $\overline{\mathcal{B}}^{\ad}(\nu,\omega)$.  Equivalently,
  it gives a line bundle $\overline{\mathcal{L}}$ as desired. 

  Now, in the general case, set $Y'=Y\cap S'$.  Then, as $Y'$ is a normal crossing divisor in $S'$, there is a extension $\mathcal{L}'$ of $\mathcal{L}$
  to $S'$.
  Now, use Hironaka to find a proper morphism $\pi:\bar{T}\to\bar{S}$
  which is an isomorphism over $S'$ such that the inverse image of $Y$ under $\pi$ is a
  normal crossing divisor $D$.  Write $D=\cup_{i=1}^m D_i$ in such a way that the
  $D_i$ ($i=1,\ldots, k$) are the strict transforms of the $Y_i$.   Pick rational numbers
  $\tilde\tau_i$ lifting the $\tau_i$ keeping them the same as for the $Y_i$ for $i=1,\ldots k$
  (and making arbitrary choices for $i=k+1,\ldots, m$).  Then we get a unique extension
  $\overline{\mathcal{M}}$ of $\mathcal{L}'$ to $\bar{T}$.  Finally Lemma~\ref{refl}
  produces the desired extension on $\bar{S}$. 

  The uniqueness follows from Hartog's theorem and the property in the
  statement of Theorem~\ref{eb1}, which defines 
  $\overline{\mathcal{L}}$ on $S'$.  The point is that two extensions
  of $\mathcal{L}$ which agree outside of a codimension $2$ set are equal
  by Hartog's theorem.
\end{proof}

\begin{remark}\label{canext}
  We get a canonical choice of extension
     $\overline{\calL}_{\can}\in \Pic \bar S\otimes\QQ$ defined by taking all the $\tilde\tau_i=0$. 
\end{remark}

\section{The Ceresa cycle and the Hain-Reed bundle}
\newcommand\mgbar{\overline{\mathcal{M}}_g}
\newcommand\mg{\mathcal{M}_g}

\subsection{}\label{c.symp} Fix an integer $g>1$ and let $\mathcal{T}_g$ denote the
Teichm\"uller space of a smooth, projective genus $g$ Riemann surface $X$.
Write $\Gamma_g$ for the mapping class group, the space of orientation
preserving diffeomorphisms of $X$ taken modulo isotopy.  Then, $\mathcal{T}_g$ is
a complex manifold, which is isomorphic as a real manifold 
to $\mathbb{R}^{6g-6}$.
Moreover, $\Gamma_g$ acts on $\mathcal{T}_g$ with orbifold quotient
$\mg$, the moduli stack of smooth, projective genus $g$
curves.  Write $H=\mathrm{H}_1(X,\mathbb{Z})$ and write $Q:\wedge^2
H\to \mathbb{Z}$ for the intersection pairing.  Fix a basis
$e_1,\ldots, e_g, f_1,\ldots, f_g$ for $H$ with the property that 
$$
  Q(e_i,f_j)=\delta_{ij}, Q(e_i,e_j)=Q(f_i,f_j)=0
$$
for $1\leq i,j\leq g$.  Write $\mathbf{Sp}_{2g}(\mathbb{Z})=\mathbf{Sp}(H)$ for
the group of automorphisms of $H$ with determinant one preserving $Q$.
The action of an element of $\Gamma_g$ on $H$ determines a surjection
$\Gamma_g\to \mathbf{Sp}_{2g}(\mathbb{Z})$.   The kernel $T_g$ of this surjection
is called the Torelli group.

The pairing $Q$ induces pairings $Q_k:\wedge^k H\otimes\wedge^k H\to\mathbb{Z}$ for
all non-negative integers $k$.   These are 
$\mathbf{Sp}_{2g}(\mathbb{Z})$-equivariant and $(-1)^k$-symmetric. 
If we set $\theta:=\sum_{i=1}^g e_i\wedge f_i\in \wedge^2 H$, then 
$\theta$ is $\mathbf{Sp}_{2g}(\mathbb{Z})$-invariant.  It follows that the maps
$u:\wedge^k H\to \wedge^{k+2}H$
 induced by $v\mapsto v\wedge \theta$
are $\mathbf{Sp}_{2g}(\mathbb{Z})$-equivariant as well.

\subsection{Boundary components of $\mgbar$}\label{cer-2} Suppose $g>2$ and $h$ is an integer
such that $1\leq h\leq \lfloor g/2\rfloor$.  Then $D_h$ denotes
the Zariski closure of the locus of stable curves consisting of a
smooth curve of genus $h$ and another smooth curve of genus $g-h$
meeting at one point.  $D_0$ denotes the Zariski closure of the
locus of stable curves consisting of a curve of geometric genus $g-1$
with one node.  Then
$D=D_0\cup \cdots \cup D_{\lfloor g/2\rfloor}$ is a normal
crossing divisor whose support is the complement of $\mg$ in $\mgbar$
(the Deligne-Mumford compactification of $\mg$). 

The divisor $D_0$ intersects itself in components
which we will label as $D_{0,h}$ for $0\leq h\leq \lfloor g/2\rfloor$.
($D$ is not a strict normal crossing divisor.) For $h>0$, the
component $D_{0,h}$ is the Zariski closure of the locus of stable
curves consisting of two smooth curves of genus $h$ and $g-h-1$ respectively
meeting at two points.  The generic point of the component $D_{0,0}$
is a curve of geometric genus $g-2$ with $2$ nodes.

\subsection{Dehn twists and bounding pairs}\label{s.dehn} If $\gamma$ is any simple closed curve,
we let $T_{\gamma}$ denote the Dehn twist of $X$ determined by $\gamma$.  This
is an element of the mapping class group.

A simple closed curve $\gamma$
in $X$
is said to be \emph{bounding} if $X\setminus \gamma$
is a union of two open Riemann surfaces.  A pair $(\gamma,\delta)$
of homologous simple closed curves, which are not homologously
trivial, is said to be a \emph{bounding pair} if $\gamma$
and $\delta$
are disjoint, homologous and not homologically trivial.  (See
Johnson~\cite{Johnson}.)   Contracting the simple closed curves 
$\gamma$ and $\delta$ in a bounding pair to two distinct points 
produces a curve $C$ which is a union of two smooth curves
of genus $h=:h(\gamma,\delta)$ and $g-h-1$ respectively for $1\leq h\leq g-1$ meeting at two points.   Thus contracting the simple closed curves
produces a curve in the interior of $D_{0,h}$.
It is possible to pick the symplectic basis 
from \S\ref{c.symp} in such a way that $\{e_1,\ldots, e_h, f_1,\ldots, f_h\}$ 
and $\{e_{h+2},\ldots ,e_{g},f_{h+2},\ldots, f_{g}\}$ are symplectic bases
for the cohomology of the two components.   We say that such a symplectic
basis is \emph{adapted to the bounding pair}. 

\subsection{Johnson Homomorphism} The lattice $\wedge^3 H$ decomposes as a sum
of two sublattices as follows [See~\cite{HainReedJDG}
or \cite{Johnson}]:  Let $u:H\to\wedge^3 H$ and $c:\wedge^3 H\to H$ be the $\mathbf{Sp}(H)$-equivariant 
maps given by 
$$
	u(x) = \theta\wedge x, \qquad c(x\wedge y \wedge z)=Q(x,y)z + Q(y,z)x+ Q(z,x)y
$$
Direct computation shows $c\circ u(x) = (g-1)x$. Define $I:\wedge^3 H\to\wedge^3 H$ by the rule
$$
       I(\omega) = (g-1)\omega - u\circ c(\omega)
$$

\begin{lemma}\label{l.VtorsionFree} $\ker(I) = \im(u)$.  Moreover, there exists a subgroup
$L$ of $\wedge^3 H$ such that $\wedge^3 H = u(H)\oplus L$.
\end{lemma}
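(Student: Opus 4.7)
The plan is to reduce both assertions to a single technical claim: that $u(H)$ is \emph{saturated} in $\wedge^3 H$, i.e.\ $u(H_{\mathbb{Q}})\cap \wedge^3 H=u(H)$.

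Granting saturation, the equality $\ker(I)=\im(u)$ is immediate. The inclusion $\im(u)\subseteq\ker(I)$ follows from the identity $c\circ u=(g-1)\id_H$, which gives $I(u(x))=(g-1)u(x)-u((g-1)x)=0$. Conversely, if $I(\omega)=0$ then $(g-1)\omega=u(c(\omega))$; setting $y:=c(\omega)/(g-1)\in H_{\mathbb{Q}}$, this reads $u(y)=\omega\in\wedge^3 H$, so saturation forces $y\in H$ and hence $\omega\in u(H)$. Existence of a complement $L$ follows as well: saturation means $\wedge^3 H/u(H)$ is torsion-free, hence free (as $\wedge^3 H$ is a finitely generated free abelian group), so the quotient sequence $0\to u(H)\to\wedge^3 H\to \wedge^3 H/u(H)\to 0$ splits and $L$ can be taken to be the kernel of any splitting.

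The heart of the argument, and the main obstacle, is therefore to establish saturation. I plan to do this by an explicit coordinate computation in the symplectic basis. Writing $y=\sum_k(a_k e_k+b_k f_k)\in H_{\mathbb{Q}}$ and expanding
\[u(y)=\theta\wedge y=\sum_{i\neq k}a_k\, e_i\wedge f_i\wedge e_k+\sum_{i\neq k}b_k\, e_i\wedge f_i\wedge f_k,\]
for each symplectic-basis coordinate of $y$ one can exhibit a basis element of $\wedge^3 H$ whose coefficient in this expansion is precisely that coordinate: for $k\geq 2$ the coefficient of $e_1\wedge f_1\wedge e_k$ (respectively $e_1\wedge f_1\wedge f_k$) is $a_k$ (respectively $b_k$), while $a_1$ is recovered from $e_1\wedge e_2\wedge f_2$ (which receives a contribution only from the $(i,k)=(2,1)$ term, via $e_2\wedge f_2\wedge e_1=e_1\wedge e_2\wedge f_2$) and $b_1$ from an analogous element. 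Integrality of $u(y)$ then forces every coordinate of $y$ to be integral, giving $y\in H$.

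The only nontrivial bookkeeping is verifying those coefficient extractions, including the signs coming from reordering wedge products and checking that no other basis element contributes; once they are in hand, the rest of the lemma reduces to the formal manipulations above.
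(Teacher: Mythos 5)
Your argument is correct, but it is organized differently from the paper's. The paper proves $\ker(I)\subseteq\im(u)$ by the same observation that $(g-1)\omega=u(c(\omega))$, and then reduces everything to producing a complement of $u(H)$ \emph{explicitly}: it writes down a list of wedge monomials in the symplectic basis (the $v_i\wedge v_j\wedge v_k$ with $i<j<k$ and the $v_i\wedge e_j\wedge f_j$ with $i-j\not\equiv 0,1\pmod g$) and asserts that these generate a direct complement $L$. You instead isolate the statement that $u(H)$ is saturated, i.e.\ $u(H_{\mathbb{Q}})\cap\wedge^3H=u(H)$, prove it by the coefficient extractions you describe (which do check out: for $k\geq 2$ the coefficient of $e_1\wedge f_1\wedge e_k$ resp.\ $e_1\wedge f_1\wedge f_k$ in $\theta\wedge y$ is exactly $a_k$ resp.\ $b_k$, and $e_2\wedge f_2\wedge e_1=e_1\wedge e_2\wedge f_2$, $e_2\wedge f_2\wedge f_1=f_1\wedge e_2\wedge f_2$ recover $a_1,b_1$ with sign $+1$; no other term of the expansion hits these monomials, and signs are irrelevant for integrality anyway), and then get the complement abstractly from ``finitely generated torsion-free $\Rightarrow$ free $\Rightarrow$ the sequence splits.'' The trade-off: the paper's route yields a concrete $L$, but the verification that its list really spans a complement is left to the reader and is the fussiest part; your route avoids that verification entirely and delivers directly the fact the paper actually uses later (that $V=\wedge^3H/\im(u)$ is torsion free, which for you is immediate from saturation rather than from $V\cong L$), at the cost of a non-explicit complement. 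One cosmetic point: ``kernel of any splitting'' should be phrased as the kernel of a retraction $\wedge^3H\to u(H)$ (equivalently the image of a section of the quotient map); also note $g>1$ is needed both to divide by $g-1$ and for the index $2$ used in extracting $a_1,b_1$, which is harmless since the paper assumes $g>1$.
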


\begin{proof}
  The assertion that $\im u\subset \ker I$ follows from the fact that
  $c\circ u(x)=(g-1)x$.  To see that $\ker I\subset \im u$, suppose $\omega\in\ker I$.
  Then $(g-1)\omega = u\circ c(\omega)$.   So $(g-1)\omega\in \im u$.
  So it suffices to show that $\im u$ has a complement, $L$, in $\wedge^3 H$. 
  
  \par The required subgroup $L$ is generated by the following elements:
   \begin{enumerate}
  \item all products of the form $v_i\wedge v_j\wedge v_k$ where
    each $v_l$ is either $e_l$ of $f_l$ and $i<j<k$,
  \item all products of the form $v_i\wedge e_j\wedge f_j$ where $v_i$
    is either $e_i$ or $f_i$ and 
    $i-j$ is not congruent to $0$ or $1$ modulo $g$.
  \end{enumerate}
\end{proof}

\par Define $V=\wedge^3 H/\im(u)$.  By the previous lemma, the quotient
map $\wedge^3 H\to V$ restricts to an isomorphism $I(\wedge^3 H)\to V$.
Let $j:V\to I(\wedge^3 H)$ denote the inverse isomorphism.  We note
that, since $V\cong L$, $V$ is torsion free.

\begin{theorem}[Johnson]\label{t.johnson} Suppose $g>1$.   There is a surjective
  group homomorphism $\tau:T_g\to V$.  If $(\gamma,\delta)$ is a
  bounding pair then there is a symplectic basis adapted to
  $(\gamma,\delta)$ such that
$$
\tau(T_{\gamma}T_{\delta}^{-1}) = [(\sum_{i=1}^h e_i\wedge f_i)\wedge f_{h+1}]\in V
$$
where $h=h(\gamma,\delta)$.  
\end{theorem}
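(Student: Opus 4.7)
The plan is to follow the construction due to Johnson, producing $\tau$ from the action of the Torelli group on the second nilpotent quotient of $\pi_1(X)$. Pick a basepoint $x_0\in X$ and write $\pi=\pi_1(X,x_0)$, with lower central series $\gamma_1\pi=\pi\supset\gamma_2\pi=[\pi,\pi]\supset\gamma_3\pi=[\pi,\gamma_2\pi]$. The quotient $\gamma_2\pi/\gamma_3\pi$ is naturally identified, via the commutator map $[a,b]\mapsto a\wedge b$, with $\wedge^2 H$, and the surface relation $\prod_{i=1}^g[\alpha_i,\beta_i]$ becomes $\theta\in\wedge^2H$. First, given $\phi\in T_g$, the fact that $\phi$ acts trivially on $H=\pi/\gamma_2\pi$ means that for every $x\in\pi$ the element $\phi(x)x^{-1}$ lies in $\gamma_2\pi$. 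The assignment $x\mapsto[\phi(x)x^{-1}]$ defines a \emph{homomorphism} $\pi\to\gamma_2\pi/\gamma_3\pi=\wedge^2 H$ because $\phi$ is trivial on $H$, so the cross terms that appear in the usual cocycle vanish mod $\gamma_3$. Since this homomorphism is trivial on $\gamma_2\pi$, it factors through $H$ and determines an element $\tilde\tau(\phi)\in\Hom(H,\wedge^2 H)\cong H^*\otimes\wedge^2 H\cong H\otimes\wedge^2 H$ using the isomorphism $a_Q:H\stackrel{\sim}{\to}H^*$ from the intersection form.

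Next I would show that $\tilde\tau(\phi)$ in fact lies in the summand $\wedge^3 H\subset H\otimes\wedge^2 H$, and that its image in $V=\wedge^3 H/u(H)$ is independent of the choice of basepoint. The first point follows from the identity $[xy,y]=0$ modulo $\gamma_3$, which translates into the antisymmetry needed for $\tilde\tau(\phi)$ to be totally antisymmetric; the second follows from the fact that changing basepoint conjugates $\phi(x)x^{-1}$ by a path, and the only effect modulo $\gamma_3$ is to add an element of $\theta\wedge H=u(H)$. This produces $\tau:T_g\to V$. That $\tau$ is a homomorphism then follows from the identity $(\phi\psi)(x)x^{-1}=\phi(\psi(x)x^{-1})\cdot\phi(x)x^{-1}$, which, mod $\gamma_3$ and using that $\phi$ acts trivially on $H=\pi/\gamma_2\pi$, reduces to the sum $\tilde\tau(\phi)+\tilde\tau(\psi)$.

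For the explicit formula on a bounding pair $(\gamma,\delta)$, choose a symplectic basis adapted to the pair as in \S\ref{s.dehn}, so that $\gamma,\delta$ cobound a subsurface of genus $h$ with symplectic basis $e_1,\ldots,e_h,f_1,\ldots,f_h$. The Dehn twists $T_\gamma$ and $T_\delta$ are computed by the standard formula $T_\gamma(x)=x\cdot[\gamma]^{\mathrm{int}(x,\gamma)}$ at the level of $\pi/\gamma_3$; here the homology class of $\gamma$ (and of $\delta$) is $\sum_{i=1}^h e_i\wedge f_i\in\wedge^2 H$ under the commutator identification, because $\gamma$ bounds the genus-$h$ subsurface. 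Combining the formulas for $T_\gamma$ and $T_\delta^{-1}$ and using that $\gamma$ and $\delta$ represent the same element of $H$ but are separated by the curve $f_{h+1}$ transverse to them, a direct computation then yields
\[
\tilde\tau(T_\gamma T_\delta^{-1})(x)=Q(x,f_{h+1})\cdot\Bigl(\sum_{i=1}^h e_i\wedge f_i\Bigr),
\]
which under the identification $\Hom(H,\wedge^2 H)\to\wedge^3 H$ is exactly $(\sum_{i=1}^h e_i\wedge f_i)\wedge f_{h+1}\in V$.

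Finally, for surjectivity I would appeal to Johnson's theorem that $T_g$ is generated by bounding pair maps when $g\geq 3$, together with an explicit linear-algebra check that, as $(\gamma,\delta)$ ranges over bounding pairs of each genus $h$ and over all choices of adapted symplectic basis, the classes $[(\sum_{i=1}^h e_i\wedge f_i)\wedge f_{h+1}]$ span $V$. The main obstacle is the bookkeeping to pass from the action of $T_\gamma, T_\delta^{-1}$ on $\pi/\gamma_3$ to the totally antisymmetric element of $\wedge^3 H$, and to verify independence of choice of basepoint modulo $u(H)$; everything after that is a computation or an appeal to known generation results of Johnson and Powell.
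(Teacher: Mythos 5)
The paper does not prove this theorem: it is stated with the attribution \emph{Johnson} and delegated to the reference \cite{Johnson}, so there is no internal argument to compare against. What you have written is an outline of Johnson's original construction.

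As such, the outline has the right shape but several concrete defects. Most importantly, for a \emph{closed} surface group $\pi$ one has $\gamma_2\pi/\gamma_3\pi\cong(\wedge^2 H)/\mathbb{Z}\theta$, not $\wedge^2 H$: the surface relation becomes $\theta$ there and is therefore killed. Your opening sentence asserts both the identification with $\wedge^2 H$ and that the surface relation ``becomes $\theta\in\wedge^2 H$,'' which is contradictory; this $\theta$-ambiguity is one of the two sources of the quotient $V=\wedge^3 H/u(H)$ and has to be tracked, not suppressed. Second, the claim that ``$[xy,y]=0$ modulo $\gamma_3$'' is false --- one has $[xy,y]=[x,y]$ identically --- so this cannot be what forces $\tilde\tau(\phi)$ into $\wedge^3 H\subset H\otimes\wedge^2 H$; Johnson's proof of that containment uses the surface relation in an essential way and is one of the delicate steps. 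Third, the intermediate bounding-pair formula $\tilde\tau(T_\gamma T_\delta^{-1})(x)=Q(x,f_{h+1})\sum_{i\leq h}e_i\wedge f_i$ is \emph{not} the contraction of $f_{h+1}\wedge\sum_{i\leq h}e_i\wedge f_i$ against $x$ (that contraction has further terms in $Q(x,e_i)$ and $Q(x,f_i)$), so as written it does not a priori define an element of $\wedge^3 H$; the missing terms record the effect of the twists on curves meeting the genus-$h$ subsurface and must be computed before projecting to $V$. Finally, the additivity of $x\mapsto[\phi(x)x^{-1}]$ on $\pi$ is a consequence of $[\pi,\gamma_2\pi]\subset\gamma_3\pi$, i.e.\ conjugation by $\pi$ is trivial on $\gamma_2/\gamma_3$; the triviality of $\phi$ on $H$ is used only to guarantee $\phi(x)x^{-1}\in\gamma_2\pi$ in the first place, not to kill the cross terms. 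None of this casts doubt on the theorem --- the problems sit exactly where you flag ``the main obstacle'' --- but as written the sketch would not survive a careful reading.
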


\subsection{The Variation $V$} By abuse of notation, we can view $H$ as a variation of Hodge 
structure of weight $-1$ on $\mg$.  We get an exact sequence of variations Hodge structure of 
weight $-1$
\begin{equation}
0\to H\stackrel{u}{\to} (\wedge^3 H)(-1) \to V\to 0       \label{eq:third-rep}
\end{equation}
where $V\cong\im(I)(-1)$ as in the previous section.

  \par Suppose now that $C\in D_{0,h}$ is a curve obtained by
  contracting a bounding pair $(\gamma,\delta)$ as in \S\ref{s.dehn}.
  Since $\gamma$ and $\delta$ are homotopic, $T_{\gamma}$
  and $T_{\delta}$ act identically on $H$ and, thus, on $V$.
  The action of $T=T_{\gamma}$ on $H$ is given by $h\mapsto h+Q(h,\gamma)\gamma$.
  So $T=\id +N$ is unipotent with monodromy logarithm $N$ given by
  $h\mapsto Q(h,\gamma)\gamma$.   We have $N^2=0$.

  We can find a polydisk $P=\Delta^{3g-3}$ and an \'etale map $j:
  P\to \mgbar$ such that $j(0)=C$ and $P':=j^{-1}\mg\cong
  \Delta^{*2}\times\Delta^{3g-5}$.   If $C$ has no automorphisms, then
  (by shrinking $P$ if necessary) we can arrange it so that $j$ is an
  isomorphism onto its image. 
    Then the monodromy action of $\mathbb{Z}^2=\pi_1(P')$
  on the pullback of the universal curve to $P'$ is given $(a,b)\mapsto
  T_{\gamma}^aT_{\delta}^b$.   In particular, if we write $N_1$ and $N_2$ for the
  logarithms of the monodromy on $H$, or, rather, its pullback to $P'$,
  we see that $N_1=N_2$.  Moreover, if $N=N_1$, then $N^2=0$. 

  \begin{corollary}
    We have $\IH^1(P, V)=NV$.
  \end{corollary}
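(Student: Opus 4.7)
The plan is to apply Lemma~\ref{cer-3} directly, with $L$ taken to be the fiber of $V$. To do this I must verify the two hypotheses of that lemma: (a) that the two monodromy logarithms on $V$ coincide, and (b) that their common value squares to zero. Then I need to reduce the intersection cohomology of $V$ on the polydisk $P'=\Delta^{*2}\times\Delta^{3g-5}$ to the situation of Lemma~\ref{cer-3}, which is stated over $\Delta^{*2}$.

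For (a), since the monodromies $T_\gamma,T_\delta$ on $H$ are both equal to $1+N$ (with $Nh=Q(h,\gamma)\gamma$), and since the induced action of $N$ on $\wedge^3H$ is given by the Leibniz derivation, the induced monodromy logarithms on $\wedge^3H(-1)$ agree. Because the extension $0\to H\to\wedge^3H(-1)\to V\to 0$ is a morphism of local systems, the two induced logarithms on $V$ also agree; I will denote this common logarithm by $N$ as well. For (b), I compute $N^2$ on $\wedge^3H$: the only possibly nonzero terms in $N^2(x\wedge y\wedge z)$ are cross terms of the form $Nx\wedge Ny\wedge z$ (and its permutations). Since $Nx\wedge Ny=Q(x,\gamma)Q(y,\gamma)\,\gamma\wedge\gamma=0$, these cross terms all vanish, so $N^2=0$ on $\wedge^3H$ and hence on the quotient $V$. (One also checks along the way that $N\theta=\gamma\wedge\gamma=0$, which explains why $N$ descends to $V$.)

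For the reduction, I observe that the local system $V$ over $P'=\Delta^{*2}\times\Delta^{3g-5}$ is pulled back from $\Delta^{*2}$: its monodromy in the last $3g-5$ coordinates is trivial. Therefore the partial Koszul complex $B(V)$ of Kashiwara--Kawai / Cattani--Kaplan--Schmid computing $\IH^\bullet$ uses only the two relevant monodromy logarithms $N_1=N_2=N$, so $\IH^1(P,V)$ equals the $\IH^1$ of the two-variable nilpotent orbit. Lemma~\ref{cer-3}(i) then gives the canonical isomorphism $NV\xrightarrow{\sim}\IH^1(P,V)$ via $Nv\mapsto (0,Nv)\in B^1(V)$. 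The main obstacle is just the verification that $N^2=0$ on $V$, which, as shown above, reduces to the antisymmetry $\gamma\wedge\gamma=0$; everything else is bookkeeping.
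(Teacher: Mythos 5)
Your proposal is correct and follows essentially the same route as the paper, which treats the corollary as immediate from Lemma~\ref{cer-3} together with the preceding discussion of the monodromy of the bounding pair. The details you supply --- that $N^2=0$ on $\wedge^3 H$ (hence on $V$) because the image of $N$ on $H$ is spanned by $\gamma$ so the cross terms contain $\gamma\wedge\gamma=0$, and the reduction of $\IH^1(P,V)$ over $P'\cong\Delta^{*2}\times\Delta^{3g-5}$ to the two-variable partial Koszul complex with $N_1=N_2=N$ --- are exactly the verifications the paper leaves implicit.
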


 \subsection{Normal functions on $\mg$} 

\begin{theorem}
\cite{HainMSRI}\label{t.hain}
   There is an element $\xi\in H^1(\Gamma_g,V)$, which
  is the class of a normal function
  $\nu\in\ANF(\mg,V)_{\bar{\mg}}$.  The restriction of $\xi$ to $T_g$ under the map
  $$
  H^1(\Gamma_g,V)\to H^1(T_g,V)=\Hom(T_g,V)
  $$
  is twice the Johnson homomorphism.
\end{theorem}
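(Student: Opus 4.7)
The plan is to build the normal function $\nu$ from the Ceresa cycle, produce $\xi$ by lifting to the contractible cover $\mathcal{T}_g$, and then pin down the restriction to $T_g$ by studying the behavior near the boundary components $D_{0,h}$.

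First I would construct $\nu$ geometrically. Choose a universal pointed curve $(\mathcal{C},p) \to \mg$ (after passing to a level cover if necessary to kill automorphisms). The relative Ceresa cycle $\mathcal{Z} = \mathcal{C} - \mathcal{C}^-$ in the relative Jacobian is a family of homologically trivial 1-cycles, so Griffiths' Abel--Jacobi map produces a holomorphic section of the intermediate Jacobian of the primitive part of $R^3\pi_*\mathbb{Z}(2)$; decomposing $\wedge^3 H^1 = u(H^1) \oplus V$ as in Lemma~\ref{l.VtorsionFree} and projecting to $V$ yields $\nu \in \NF(\mg, V)$. Since $\nu$ is of geometric origin, admissibility with respect to $\bar{\mg}$ follows from M.~Saito's theory of mixed Hodge modules; alternatively, one can verify admissibility at each boundary divisor $D_h$ directly from the Clemens--Schmid computation of the limit MHS of a nodal degeneration.

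Next I would produce the class $\xi$. Pull $\nu$ back to $\mathcal{T}_g$. Because $\mathcal{T}_g$ is contractible, the underlying $\mathbb{Z}$-local system trivializes, so $\tilde{\nu}$ lifts to a $\Gamma_g$-equivariant holomorphic map $\tilde{\nu}:\mathcal{T}_g \to J(V)$ up to the ambiguity of choosing a global lift through $V_\mathbb{C}/F^0$. Choosing any $C^\infty$ lift $\tilde{\nu}_\infty: \mathcal{T}_g \to V_\mathbb{C}/F^0$, the $\Gamma_g$-cocycle $\gamma \mapsto \gamma \cdot \tilde{\nu}_\infty - \tilde{\nu}_\infty(\gamma \cdot -)$ takes values in the discrete subgroup $V$ (by equivariance modulo monodromy) and is therefore locally constant; its cohomology class in $H^1(\Gamma_g,V)$ is $\xi$, independent of the lift.

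To identify $\xi|_{T_g}$, I note that $T_g$ acts trivially on $V$ so $H^1(T_g,V) = \Hom(T_g,V)$. By standard generation results (Powell), $T_g$ is generated by bounding pair maps and separating Dehn twists (for $g \geq 3$), so it suffices to evaluate $\xi$ on a bounding pair $(\gamma,\delta)$ with $h(\gamma,\delta)=h$ and to check that it vanishes on separating twists. For the bounding pair, I would take a test disk $\Delta \to \bar{\mg}$ through a generic point of $D_{0,h}$ as in \S\ref{cer-2}; on the corresponding $\Delta^{*2}$, the two monodromy logarithms $N_1 = N_2 = N$ coincide, $N^2 = 0$, and Lemma~\ref{cer-3} identifies $\IH^1(V) = NV$. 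Using Proposition~\ref{simple-formula} together with the explicit form of the singularity of the Ceresa normal function (computed by the Clemens--Hain--Pulte limit formula expressing the Abel--Jacobi class of the Ceresa cycle in terms of the relative weight graded of $\pi_1$ of the degenerating curve), the singularity class reads, after the choice of adapted symplectic basis from \S\ref{s.dehn}, as the image in $V$ of $2\bigl((\sum_{i=1}^h e_i\wedge f_i)\wedge f_{h+1}\bigr)$. Comparing with Theorem~\ref{t.johnson} gives $\xi|_{T_g}(T_\gamma T_\delta^{-1}) = 2\tau(T_\gamma T_\delta^{-1})$. Vanishing on separating twists follows from the fact that the Ceresa cycle deforms smoothly across a separating node (the associated limit MHS is $\mathbb{R}$-split at first order).

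The main obstacle will be the last step: extracting the factor of $2$ and the precise formula for the singularity of the Ceresa normal function at a generic point of $D_{0,h}$. This requires either a direct Chen iterated-integral computation (Hain--Pulte) identifying the limit of the Abel--Jacobi class with a Massey product in $H^*(C)$ associated to the vanishing cycle, or a Hodge-theoretic limit computation via the relative weight filtration on $\pi_1(C)/\pi_1(C)^{(3)}$. The factor $2$ arises because the Ceresa cycle is $C - C^-$ (rather than $C$ alone), so the limit Abel--Jacobi invariant picks up a symmetrization; tracking this carefully through the comparison with Johnson's formula is the delicate bookkeeping.
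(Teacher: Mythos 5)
The paper does not prove Theorem~\ref{t.hain} at all; it is a citation to Hain's work in~\cite{HainMSRI}, where the identification of the Ceresa normal function with twice the Johnson homomorphism is established by Hodge theory on the fundamental group of a \emph{fixed} Riemann surface (Chen iterated integrals, the mixed Hodge structure on $\pi_1$, and Pulte's theorem relating the harmonic volume to the Abel--Jacobi image of $C-C^-$), not by degenerating to the boundary of $\overline{\mathcal{M}}_g$.

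Your proposal reverses the logical flow of this section of the paper, and in doing so it opens a real gap. The paper's theorem that immediately follows Theorem~\ref{t.hain} computes $\sing_C\xi = [2(\sum_{i=1}^h e_i\wedge f_i)\wedge f_{h+1}]$ \emph{as a consequence} of Theorem~\ref{t.hain} together with Lemma~\ref{cer-3} and Johnson's formula (Theorem~\ref{t.johnson}). Your argument uses precisely this singularity formula — ``the singularity class reads $\dots$ as the image in $V$ of $2((\sum e_i\wedge f_i)\wedge f_{h+1})$'' — as its key input. Via the identification $\IH^1(V)\cong NV$ from Lemma~\ref{cer-3}, that formula is \emph{equivalent} to the statement you are trying to prove (after matching with Johnson's value on $T_\gamma T_\delta^{-1}$). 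You do flag this step as the ``main obstacle'' and gesture at a ``Clemens--Hain--Pulte limit formula,'' but no such formula is derived or even precisely stated; what you have written relocates the entire content of the theorem into an asserted limit computation. To make your route work honestly, you would need an independent calculation of the limit mixed Hodge structure of the family of $\pi_1$'s (or of the Ceresa Abel--Jacobi class) near a generic point of $D_{0,h}$, not using Theorem~\ref{t.hain}. That is a substantial Clemens--Schmid/Hain--Pulte type computation, and it is the crux that is missing.

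A secondary, lesser point: the vanishing on separating twists is asserted by saying ``the Ceresa cycle deforms smoothly across a separating node'' and that the limit MHS ``is $\mathbb{R}$-split at first order.'' This is plausible but not an argument — the correct reason in Hain's framework is that the Johnson homomorphism vanishes on separating twists by construction and your cocycle matches it, which you cannot yet invoke. If you pursue the degeneration route, you would need to verify directly that the singularity of $\nu$ at a generic point of $D_h$ (a separating boundary stratum) vanishes, which again requires the limit computation you are deferring.

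The construction of $\nu$ via the Ceresa cycle and the projection to $V$, the admissibility via Saito, and the cocycle description of $\xi$ via lifting to Teichm\"uller space are all fine and consonant with Hain. The issue is solely that the identification of $\xi|_{T_g}$ with $2\tau$ — the heart of the theorem — is not established by your argument.
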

  
  \begin{theorem}
    Suppose $C\in\overline{\mathcal{M}}_g$ is a curve without
    automorphism obtained by contracting a bounding pair
    $(\gamma,\delta)$ as in \S\ref{s.dehn}.  Then, in terms of a
    symplectic basis adapted to the bounding pair, we have
    $$
    \sing_{C}\xi=[2 (\sum_{i=1}^h e_i\wedge f_i)\wedge f_{h+1}]\in NV.
    $$
  \end{theorem}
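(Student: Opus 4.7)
The plan is to reduce the computation of $\sing_C(\xi)$ to Lemma~\ref{cer-3} on an étale chart around $C$ and then evaluate using Hain's Theorem~\ref{t.hain} together with Johnson's Theorem~\ref{t.johnson}.

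First I would pick an étale morphism $j : (\Delta^{3g-3}, 0) \to (\overline{\mathcal M}_g, C)$ with $P' := j^{-1}\mathcal M_g \cong \Delta^{*2}\times \Delta^{3g-5}$, chosen so that the two punctured-disk factors parametrize smoothings of the two nodes of $C$. Since $C$ is assumed to have no automorphisms, $j$ can be taken to be an open immersion; in general one first passes to a Teichm\"uller level cover, computes there, and descends, using that $V$ is torsion free by Lemma~\ref{l.VtorsionFree}. The resulting map $\pi_1(P') = \mathbb Z^2 \to \Gamma_g$ then sends the standard loop generators $\gamma_1, \gamma_2$ to the commuting Dehn twists $T_\gamma, T_\delta$. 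Because $\gamma$ and $\delta$ are homologous in $X$, the monodromy logarithms of $V$ around $s_1 = 0$ and $s_2 = 0$ agree with a common operator $N$, and $N^2 = 0$ since each Dehn twist is a transvection on $H$. So the hypotheses of Lemma~\ref{cer-3} are met on any two-disk slice, and that lemma supplies the identification $\IH^1_0(V) \cong NV$.

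Next, by Theorem~\ref{factor} the class $\sing_C(\xi) \in \IH^1_0(V)$ is well defined, and I would compute it via Lemma~\ref{cer-3}(ii). Under the composition
\[
NV \xrightarrow{\sim} \IH^1_0(V) \to H^1(\Delta^{*2}, V) \to H^1(\mathbb Z, V) = V,
\]
where $\mathbb Z \hookrightarrow \pi_1(\Delta^{*2})$ is the loop $\gamma_2 - \gamma_1$, the class $\sing_C(\xi)$ goes to the value of the cocycle representing $\xi|_{P'}$ on the element $T_\delta T_\gamma^{-1} \in T_g \subset \Gamma_g$. Since $T_\delta T_\gamma^{-1}$ lies in the Torelli group, Hain's formula (Theorem~\ref{t.hain}) gives $\xi(T_\delta T_\gamma^{-1}) = 2\tau(T_\delta T_\gamma^{-1})$, and Johnson's formula (Theorem~\ref{t.johnson}) in an adapted symplectic basis then yields $\pm 2[(\sum_{i=1}^h e_i\wedge f_i)\wedge f_{h+1}]$. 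A choice of labeling for $\gamma,\delta$ (equivalently, of which node is parametrized by $s_1$ versus $s_2$) fixes the sign; one verifies directly that $2(\sum_{i=1}^h e_i\wedge f_i)\wedge f_{h+1} = N\bigl(2(\sum_{i=1}^h e_i\wedge f_i)\wedge e_{h+1}\bigr)$ lies in $N(\wedge^3 H)$, so the expression makes sense in $NV$.

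The main obstacle is verifying the compatibility between the two descriptions of the singularity: on the one hand via the partial Koszul complex $B^\bullet(V)$ of Section~\ref{ed2}, on the other via the group-cohomology cocycle $\pi_1(P') \to V$ for $\xi$. Concretely, one must check that the element $Nv \in NV$ that maps to $(0, Nv) \in B^1(V)$ under Lemma~\ref{cer-3}(i) matches, under restriction to the test curve corresponding to the diagonal loop $\gamma_2 - \gamma_1$, the cocycle value $\xi(T_\delta T_\gamma^{-1}) \in V$. This is essentially the content of Lemma~\ref{cer-3}(ii), but applying it to the admissible normal function $\xi|_{P'}$ requires unwinding the local normal form of $\xi$ and matching Koszul representatives to group cocycles. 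Once this compatibility is pinned down, the factor of $2$ coming from Hain's doubling and the geometric factor from Johnson's computation combine to give precisely the formula in the statement.
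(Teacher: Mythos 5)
Your proposal is correct and takes essentially the same approach as the paper: the paper likewise passes to an \'etale chart $P$ around $C$, invokes Lemma~\ref{cer-3} to identify $\IH^1(P,V)$ with $NV$ and to restrict along the diagonal loop $\gamma_2-\gamma_1$ (equivalently $T_\gamma T_\delta^{-1}$), and then evaluates via Hain's Theorem~\ref{t.hain} and Johnson's Theorem~\ref{t.johnson}. The compatibility you flag between the Koszul and group-cocycle descriptions is exactly what Lemma~\ref{cer-3}(ii) supplies, and the sign ambiguity you note is absorbed by the choice of adapted basis, as in the paper.
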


  \begin{proof}
    Consider the commutative diagram
    $$
    \xymatrix{
      \IH^1(\mg,V)\ar[d]\ar[rd]    & \\
      \IH^1(P,V)\ar[d]       & H^1(T_g,V)=\Hom(T_g, V)\ar[d]\\
      H^1(\mathbb{Z}^2,V)=H^1(P',V)\ar[r] &H^1(\mathbb{Z},V)=V.\\
      }
      $$
      The map along the bottom row is induced by the map
      $\mathbb{Z} \to \mathbb{Z}^2=\pi_1(P')$ sending the generator to
      $(1,-1)$.   The map on the right is induced by
      the homomorphism $\mathbb{Z}\to T_g$ which sends the generator
      to $T_{\gamma}T_{\delta}^{-1}$. 

      By Theorem~\eqref{t.hain}, the
      image of $\xi$ in $\Hom(T_g,V)$ is the homomorphism which takes
      $T_{\gamma}T_{\delta}^{-1}$ to
      $[2 (\sum_{i=1}^h e_i\wedge f_i)\wedge f_{h+1}]$.  By restriction, the image of the
      Johnson homomorphism in $H^1(\mathbb{Z},V)=V$ is precisely
      $[2 (\sum_{i=1}^h e_i\wedge f_i)\wedge f_{h+1}]$.  The result
      then follows from Lemma~\ref{cer-3}.
  \end{proof}

  The variation $V$ has a polarization $q:V\otimes V\to \mathbb{Z}(1)$
  defined by
  $$
       q(u,v):=\frac{1}{g-1}Q(j(u),j(v)).
  $$ (See~\cite[p.~203]{HainReedJDG}.)
  This polarization gives an isomorphism $a_q:V\to V^{\vee}$ and thus
  a normal function
  $\nu^{\vee}=a_q(\nu)\in\ANF(\mg, V^{\vee})_{\mgbar}$.  This
  in turn gives a metrized line bundle
  $\mathcal{L}:=\mathcal{L}(\nu,\nu^{\vee})$ on $\mg$.

  \begin{theorem}
    Suppose $C$ is a generic curve in $D_{0,h}$.  Then 
    $$
    h_q(\sing_C\xi, \sing_C\xi)=
    \frac{4t_1t_2}{t_1+t_2}(g-h-1)h
    $$ 
  \end{theorem}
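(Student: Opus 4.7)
Since $C\in D_{0,h}$ is generic, we may find an \'etale neighborhood $P=\Delta^{3g-3}$ of $C$ in $\mgbar$ with $P\cap \mg\cong \Delta^{*2}\times\Delta^{3g-5}$, together with a bounding pair $(\gamma,\delta)$ whose simultaneous pinching deformation parametrizes a transverse $\Delta^{*2}$-slice through $C$. As recalled in the discussion preceding Theorem~\ref{t.hain}, the two local monodromy logarithms on $H$ coincide, and their common value $N$ satisfies $N^2=0$. The same then holds for the induced action of $N$ on $V$, so the restriction of $\mathcal{V}$ to this $\Delta^{*2}$-slice satisfies the hypotheses of Proposition~\ref{simple-formula} with $(\mathcal{H},Q)=(V,q)$.

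Fix a symplectic basis of $H$ adapted to $(\gamma,\delta)$. Since $N$ is the Picard--Lefschetz operator for the vanishing cycle $[\gamma]=f_{h+1}$, we have $N(e_{h+1})=f_{h+1}$ while $N$ annihilates every other basis vector. Define
$$k:=\bigl[\,2\sum_{i=1}^h e_i\wedge f_i\wedge e_{h+1}\,\bigr]\in V.$$
Applying the Leibniz rule for the derivation $N$ on $\wedge^3 H$ gives $Nk=[\,2\sum_{i=1}^h e_i\wedge f_i\wedge f_{h+1}\,]$, which is exactly $\sing_C\xi$ by the previous theorem. Under the identification $NV\cong \IH^1\mathcal{H}$ of Lemma~\ref{cer-3}, Proposition~\ref{simple-formula} therefore yields
$$h_q(t)(\sing_C\xi,\sing_C\xi)=\frac{t_1t_2}{t_1+t_2}\,q(k,Nk).$$

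The remaining task is the polarization computation $q(k,Nk)=4h(g-h-1)$. Using the formula $q(u,v)=\frac{1}{g-1}Q_3(j(u),j(v))$, we lift $k$ and $Nk$ to $I(\wedge^3 H)$ via the projector $I(\omega)=(g-1)\omega-u(c(\omega))$. With $k'=2\sum_{i=1}^h e_i\wedge f_i\wedge e_{h+1}$, a direct computation gives $c(k')=2h\,e_{h+1}$, so that $I(k')$ decomposes into a ``first-component'' contribution supported on indices $1\le i\le h$ with coefficient $2(g-h-1)$, and a ``second-component'' contribution on indices $h+2\le j\le g$ with coefficient $-2h$; an analogous description holds for $I(Nk')$ with $e_{h+1}$ replaced by $f_{h+1}$ throughout. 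Applying the determinantal pairing $Q_3(\alpha_1\wedge\alpha_2\wedge\alpha_3,\beta_1\wedge\beta_2\wedge\beta_3)=\det Q(\alpha_i,\beta_j)$, all cross-terms between the two blocks vanish by symplectic orthogonality, and only the diagonal contributions survive to combine into the stated value.

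The main obstacle lies in this final bookkeeping step, where several factors of $g-1$ enter through the projector $I$ and through the normalization of $q$, combined with the determinantal cancellations on $\wedge^3 H$; the cleanness of the resulting expression reflects the $\mathrm{Sp}(H)$-equivariance of $I$ together with the block decomposition of the adapted basis into ``first-component'' and ``second-component'' pieces.
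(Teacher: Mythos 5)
Your proof is correct and follows essentially the same route as the paper's: both invoke Proposition~\ref{simple-formula} to reduce the height pairing to the polarization value $q(k,Nk)$ with $Nk=\sing_C\xi$, and then evaluate $q$ by lifting to $\wedge^3 H$ via the projector $I=(g-1)\mathrm{id}-u\circ c$ and applying the determinantal pairing $Q$, with the adapted symplectic basis splitting into two blocks whose cross-terms vanish. The paper carries out the final scalar bookkeeping explicitly (arriving at $(h')^2h+h^2h'=hh'(g-1)$ with $h'=g-h-1$) while you only sketch that step, but the method and conclusion coincide.
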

  \begin{proof} To simplify the notation below, we let $h' = g-h-1$, and recall that 
    $$
     \sing_C\xi=2[\sum_{i=1}^h e_i\wedge f_i\wedge f_{h+1}] =
      2N[\sum_{i=1}^h e_i\wedge f_i\wedge e_{h+1}]
    $$ 
    where $N=N_1=N_2$ is the monodromy logarithm around the two branches of 
    $D_0$ intersecting at $C$.   

    \par Invoking Proposition~\eqref{simple-formula}, it follows that:
    \begin{align*}
      h_q(\sing_C\xi,\sing_C\xi)
    &=\frac{t_1t_2}{t_1+t_2}q(2[\sum_{i=1}^h e_i\wedge f_i\wedge e_{h+1}],
      2[\sum_{i=1}^h e_i\wedge f_i\wedge f_{h+1}])\\
    &=\frac{4t_1t_2}{(g-1)(t_1+t_2)}
      Q(j[(\sum_{i=1}^h e_i\wedge f_i)\wedge e_{h+1}],
      j[(\sum_{i=1}^h e_i\wedge f_i)\wedge f_{h+1}]).
    \end{align*}

    Now, for $v=e_{h+1}$ or $f_{h+1}$, $c((\sum_{i=1}^h e_i\wedge f_i)\wedge v)=
    h\theta\wedge v$.   So using this to compute $j$, we see that
    \begin{align*}
      h_q(\sing_C\xi,\sing_C\xi)&=
                                  \frac{4t_1t_2}{(g-1)(t_1+t_2)}Q\left(h'\sum_{i=1}^h e_i\wedge f_i\wedge e_{h+1}-h\sum_{i=h+2}^g e_i\wedge f_i\wedge e_{h+1},\right.\\
                                &\left.\hspace{1.4in}
h'\sum_{i=1}^h e_i\wedge f_i\wedge f_{h+1}-h\sum_{i=h+2}^g e_i\wedge f_i\wedge f_{h+1}\right)\\
                                &=\frac{4t_1t_2}{(g-1)(t_1+t_2)}[(h')^2 h+h^2 (h')]\\
                                &=\frac{4t_1t_2}{(g-1)t_1+t_2}(h')h(h+h')\\
      &=\frac{4t_1t_2}{t_1+t_2}(g-h-1)h
    \end{align*}
since $h+h' = g-1$.
 \end{proof}

\section{General jump pairing}\label{gjp}

\subsection{General Pairing}

The goal of this section 
is to give a definition of the jump pairing on local intersection
cohomology without the assumption that the boundary divisor is normal crossing.

To this end, we fix our usual notation that $\mathcal{H}$ is a
variation of pure Hodge structure with $\mathbb{Q}$ coefficients on a complex manifold $S$
which is a Zariski open subset of another manifold $\bar{S}$.  Write $j:S\to\bar{S}$ for the embedding.  Since we will only be concerned with the local
situation, we will assume that $\bar{S}=\Delta^r$ is a polydisk.

The intermediate extension $j_{!*}\mathcal{H}_{\mathbb{Q}}[r]$ gives
rise to a perverse sheaf $\IC(\mathcal{H})$ on $\bar{S}$.  Write
$\mathcal{P}_{\bar S}$ $\mathcal{P}$ for the set of all isomorphism classes mixed extensions
of $\IC(\mathbb{Q}_{\bar S})$ by $\IC(\mathcal{H})$ by
$\IC(\mathbb{Q}_{\bar{S}})$ in the category of perverse sheaves on
$\bar S$.  For a local system of $\mathbb{Q}$-vector spaces $\calL$ on
$S$, we can write $\IH^k(\mathcal{L})$ for
$\IH^k_0(\mathcal{L})$.   By shrinking $\bar S$ if necessary,
we can assume that $\IH^k_0(\mathcal{L})=\IH^k(\bar S, \mathcal{L})$. 

We get a map 
$$
\pi:\mathcal{P}\to \IH^1(\mathcal{H}_{\mathbb{Q}})\times
\IH^1(\mathcal{H}^{\vee}_{\mathbb{Q}})$$
by identifying the intersection cohomology groups with the appropriate
extension groups. 
Moreover, since $\IH^1(\bar S, \mathbb{Q})=\IH^2(\bar S,\mathbb{Q})=0$,
Proposition~\ref{PhiSurj} shows that $\pi$ is surjective and, for
any pair
\begin{equation}\label{abpair}
  (\alpha,\beta)\in \IH^1(\mathcal{H}_{\mathbb{Q}})\times
  \IH^1(\mathcal{H}^{\vee}_{\mathbb{Q}}),
\end{equation}
the set 
$\Extpan(\alpha,\beta)$ injects into $\mathcal{P}$.  It follows that
$\mathcal{P}$ has the structure of a biextension of
$\IH^1(\mathcal{H}_{\mathbb{Q}})\times \IH^1(\mathcal{H}^{\vee}_{\mathbb{Q}})$
by $\IH^1(\bar S,\mathbb{Q})=0$. 

\begin{corollary}
  For each pair $(\alpha,\beta)$ as in \eqref{abpair}, there exists
  a unique element $X=X(\alpha,\beta)$ of $\mathcal{P}$ with $\pi(X)=(\alpha,\beta)$. 
\end{corollary}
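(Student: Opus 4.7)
The plan is to observe that this corollary is an essentially formal consequence of the biextension structure that has just been established on $\mathcal{P}$, together with the vanishing $\IH^1(\bar S,\mathbb{Q})=0$. Since $\bar S = \Delta^r$ is contractible, we indeed have $\IH^k(\bar S,\mathbb{Q})=\rH^k(\bar S,\mathbb{Q})=0$ for $k=1,2$; the $k=2$ vanishing guaranteed surjectivity of $\pi$ via Proposition~\ref{PhiSurj}, while the $k=1$ vanishing is what will power uniqueness.

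First I would recall, from the discussion immediately preceding the statement, that for any pair $(\alpha,\beta)$ the fiber $\pi^{-1}(\alpha,\beta)$ can be identified with $\Extpan(E_\alpha,E_\beta)$ where $E_\alpha$ and $E_\beta$ are extensions of perverse sheaves representing $\alpha$ and $\beta$ respectively. By Proposition~\ref{prop:Grothendieck}(ii), $\Extpan(E_\alpha,E_\beta)$ is a pseudo-torsor under $\Ext^1(\IC(\mathbb{Q}_{\bar S}),\IC(\mathbb{Q}_{\bar S}))$ in the category of perverse sheaves on $\bar S$. This latter $\Ext^1$ group equals $\IH^1(\bar S,\mathbb{Q})=0$.

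Thus existence of $X(\alpha,\beta)$ follows from the fact, noted in the paragraph above, that $\pi$ is surjective (so the pseudo-torsor is non-empty, hence an actual torsor under the trivial group). Uniqueness follows because a torsor under the trivial group has exactly one element, so $\pi^{-1}(\alpha,\beta)$ is a singleton. Equivalently: if $X_1, X_2 \in \pi^{-1}(\alpha,\beta)$, then by Proposition~\ref{PhiSurj} they differ by the action of an element $G(f_0,f_1) \in \Ext^1(\IC(\mathbb{Q}_{\bar S}),\IC(\mathbb{Q}_{\bar S}))=0$, hence are isomorphic.

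The only step requiring any real care is checking the vanishing $\Ext^1(\IC(\mathbb{Q}_{\bar S}),\IC(\mathbb{Q}_{\bar S}))=\IH^1(\bar S,\mathbb{Q})=0$ on the polydisk $\bar S=\Delta^r$, together with ensuring that the $\Ext^1$ computed in the category of perverse sheaves (or, equivalently, in the derived category with the appropriate shift) agrees with $\IH^1$. This is a standard computation using the adjunction between $\IC(\mathbb{Q}_{\bar S})$ and its role as the intermediate extension, but since no routine obstacles arise for a contractible base, the corollary follows at once.
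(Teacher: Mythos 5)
Your proposal is correct and is essentially the paper's own argument: the corollary is read off from the fact, established in the preceding paragraph, that $\mathcal{P}$ is a biextension of $\IH^1(\mathcal{H}_{\mathbb{Q}})\times\IH^1(\mathcal{H}^{\vee}_{\mathbb{Q}})$ by $\IH^1(\bar S,\mathbb{Q})=0$, so each fiber of $\pi$ is a (non-empty, by the $\IH^2$-vanishing) torsor under the trivial group, hence a singleton. One small nitpick: the uniqueness step is really Proposition~\ref{prop:Grothendieck}(ii) (the pseudo-torsor action of $\Ext^1(\IC\mathbb{Q}_{\bar S},\IC\mathbb{Q}_{\bar S})=0$) rather than Proposition~\ref{PhiSurj}, but since both relevant groups vanish the argument goes through as you wrote it.
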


\begin{proposition} Suppose
  $(\bar\varphi,\varphi):(\Delta,\Delta^*)\to(\bar S,S)$ is a test
  curve.  Then $\varphi^*X(\alpha,\beta)$ is in the
  biextension $\mathcal{R}_{\mathbb{Q}}$ of~\eqref{ciso}.  In other words,
  both $\varphi^*\alpha$ and $\varphi^*\beta$ vanish.  
\end{proposition}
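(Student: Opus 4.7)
The plan is to reduce the statement to a direct application of Theorem~\ref{IH1Ad}. First I would observe that, after restricting to $S$ (where $j^*\IC(\mathcal{H})=\mathcal{H}[d]$ and $j^*\IC(\mathbb{Q}_{\bar S})=\mathbb{Q}_S[d]$) and shifting by $-d$, the perverse sheaf $X(\alpha,\beta)$ becomes a mixed extension of $\mathbb{Q}_S$ by $\mathcal{H}_{\mathbb{Q}}$ by $\mathbb{Q}_S(1)$ in the category of local systems of $\mathbb{Q}$-vector spaces on $S$. Pullback along $\varphi:\Delta^*\to S$ is an exact functor on local systems, so $\varphi^*X(\alpha,\beta)$ is a mixed extension of $\mathbb{Q}_{\Delta^*}$ by $\varphi^*\mathcal{H}_{\mathbb{Q}}$ by $\mathbb{Q}_{\Delta^*}(1)$.

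Next I would identify the gradeds. By the construction of $X(\alpha,\beta)$ via $\pi:\mathcal{P}\to \IH^1(\mathcal{H}_{\mathbb{Q}})\times\IH^1(\mathcal{H}^{\vee}_{\mathbb{Q}})$, the class of $(W_0/W_{-2})X(\alpha,\beta)$ in $\Ext^1_{\Loc(S)}(\mathbb{Q}_S,\mathcal{H}_{\mathbb{Q}})=\rH^1(S,\mathcal{H}_{\mathbb{Q}})$ is the image of $\alpha$ under the injection $\IH^1(\mathcal{H}_{\mathbb{Q}})\hookrightarrow \rH^1(S,\mathcal{H}_{\mathbb{Q}})$ (cf.\ \S\ref{s1.2b}), and similarly the class of $W_{-1}X(\alpha,\beta)$ is the image of $\beta$. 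Since pullback of extensions commutes with pullback of their classes, the extension class of $(W_0/W_{-2})\varphi^*X(\alpha,\beta)$ in $\rH^1(\Delta^*,\varphi^*\mathcal{H}_{\mathbb{Q}})$ is $\varphi^*\alpha$, and likewise for $\varphi^*\beta$.

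Now apply Theorem~\ref{IH1Ad}: since $\mathcal{H}$ is a pure variation of Hodge structure on $S$, every class in $\IH^1_0(\mathcal{H}_{\mathbb{Q}})$ (which equals $\IH^1(\mathcal{H}_{\mathbb{Q}})$ after shrinking $\bar S$) is admissible, meaning its pullback to $\rH^1(\Delta^*,\varphi^*\mathcal{H}_{\mathbb{Q}})$ vanishes for every test curve. The same reasoning applied to $\mathcal{H}^{\vee}$ shows $\varphi^*\beta=0$. Consequently both the top and bottom extensions underlying $\varphi^*X(\alpha,\beta)$ are trivial as rational local system extensions, which is exactly the defining condition for membership in $\mathcal{R}_{\mathbb{Q}}$.

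The main (and essentially only) obstacle is the bookkeeping in the first step: one must verify that the identification of $\mathcal{P}$ with isomorphism classes of perverse mixed extensions is compatible with the identification on $S$ of mixed extensions of local systems via the shift functor $[-d]$, so that pullback along the test curve carries the universal mixed extension $X(\alpha,\beta)$ to the expected object and computes the expected extension classes. This is formal once the shift and restriction functors are in place, so no substantive difficulty arises beyond invoking Theorem~\ref{IH1Ad}.
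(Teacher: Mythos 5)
Your proof is correct and follows the same route as the paper, which simply cites Theorem~\ref{IH1Ad}; you have expanded the one-line argument by spelling out the bookkeeping (restriction to $S$, shift by $-d$, identification of the $\Gr^W$-extension classes with $\alpha$ and $\beta$, exactness of $\varphi^*$ on local systems) before invoking admissibility of intersection cohomology classes. One small slip: the bottom graded piece of $X(\alpha,\beta)$ restricted to $S$ is $\mathbb{Q}_S$, not $\mathbb{Q}_S(1)$, since $\mathcal{P}$ is defined using $\IC(\mathbb{Q}_{\bar S})$ with no Tate twist; this does not affect the argument.
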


\begin{proof}
  This follows directly from Theorem~\ref{IH1Ad}.
\end{proof}

\begin{definition}\label{gdef}
  The \emph{jump} $j(\alpha,\beta,\bar\varphi)$ of $\alpha$ and $\beta$
  along a test curve $\bar\varphi$ is the number $\tilde\tau\varphi^* X(\alpha,\beta)$. 
\end{definition}

Let $\mathcal{Q}_S$ (or simply $\mathcal{Q}$ if $S$ is clear) denote the set of isomorphisms of mixed extensions
of $\mathbb{Q}$ by $\mathcal{H}$ by $\mathbb{Q}$ on $S$
such that the associated classes
$\alpha\in\rH^1(S,\mathcal{H})$ and
$\beta\in\rH^1(S,\mathcal{H}^{\vee})$ lie in
$\IH^1(\mathcal{H})$ and $\IH^1(\mathcal{H}^{\vee})$ respectively.
Here we view $\alpha$ and $\beta$ as extension classes (of $\mathbb{Q}$ by
$\mathcal{H}$ and of $\mathcal{H}$ by $\mathbb{Q}$ respectively).

\begin{lemma}
  For $(\alpha,\beta)\in\IH^1(\mathcal{H})\times\IH^1(\mathcal{H}^{\vee})$,
  $\Extpan(\alpha,\beta)$ is the fiber of $\mathcal{Q}$ over $(\alpha,\beta)$.
\end{lemma}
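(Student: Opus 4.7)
The plan is to deduce this essentially formally from Corollary~\ref{DisjTors} after unpacking the definitions. By convention, $\Extpan(\alpha,\beta)$ stands for $\Extpan(E_0,E_1)$ where $E_0$ is any chosen representative in $\Ext^1(\mathbb{Q},\mathcal{H})$ of the class $\alpha$ and $E_1$ is any chosen representative in $\Ext^1(\mathcal{H},\mathbb{Q})$ of the class $\beta$. An element of $\Extpan(E_0,E_1)$ is (the isomorphism class of) a triple $(X,\pi_0,\pi_1)$ with $X$ a mixed extension of $\mathbb{Q}$ by $\mathcal{H}$ by $\mathbb{Q}$ and $\pi_0:X/W_{-2}X\xrightarrow{\sim} E_0$, $\pi_1:W_{-1}X\xrightarrow{\sim} E_1$. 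The forgetful functor $(X,\pi_0,\pi_1)\mapsto X$ induces a set-theoretic map $F:\Extpan(E_0,E_1)\to\mathcal{Q}$ whose image visibly lies in the fiber $\mathcal{Q}_{(\alpha,\beta)}$ of $\mathcal{Q}$ over $(\alpha,\beta)$.

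I would then show surjectivity of $F$ onto $\mathcal{Q}_{(\alpha,\beta)}$ directly from the definitions: if $X\in\mathcal{Q}$ has associated classes $(\alpha,\beta)$, then $X/W_{-2}X$ is an extension of $\mathbb{Q}$ by $\mathcal{H}$ of class $\alpha$, hence is isomorphic to $E_0$; likewise $W_{-1}X\cong E_1$. Any choice of such isomorphisms gives a preimage of $X$ in $\Extpan(E_0,E_1)$. Injectivity of $F$ --- i.e.\ that different choices of $(\pi_0,\pi_1)$ yield isomorphic objects of $\Extpan(E_0,E_1)$ --- is precisely the content of Corollary~\ref{DisjTors} applied with $(Q_0,Q_1,Q_2)=(\mathbb{Q},\mathcal{H},\mathbb{Q})$, provided the hypothesis $\Hom(\mathbb{Q},\mathcal{H})=\Hom(\mathcal{H},\mathbb{Q})=0$ is satisfied in the ambient abelian category.

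The main point of the proof is therefore verifying the disjointness hypothesis. Since the ambient category under discussion in \S\ref{gjp} is that of (admissible) variations of mixed Hodge structure on $S$ --- the same category in which the extensions classified by $\rH^1(S,\mathcal{H})$ and $\rH^1(S,\mathcal{H}^{\vee})$ are considered when forming mixed extensions --- a morphism $\mathbb{Q}\to\mathcal{H}$ would be a morphism of VMHS from a weight $0$ object to a weight $-1$ object, and such morphisms vanish by strictness of the weight filtration. Symmetrically $\Hom(\mathcal{H},\mathbb{Q})=0$. The triple $(\mathbb{Q},\mathcal{H},\mathbb{Q})$ is thus disjoint in the sense of \S\ref{meb}, so Corollary~\ref{DisjTors} gives a bijection between $\Extpan(E_0,E_1)=\Extpan(\alpha,\beta)$ and $\mathcal{Q}_{(\alpha,\beta)}$, as required.

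The mildly subtle step --- and the only one likely to cause trouble --- is the categorical bookkeeping in the last paragraph: the reader needs to be convinced that the isomorphism classification defining $\mathcal{Q}$ really takes place in the same (Hodge-theoretic) category in which the disjointness of $(\mathbb{Q},\mathcal{H},\mathbb{Q})$ is asserted. Once that is spelled out, everything else is immediate from Grothendieck's formalism developed in \S\ref{meb}, and no new computation is required.
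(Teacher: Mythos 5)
Your argument fails at the crucial step: the ambient category in which $\mathcal{Q}$ is defined is \emph{not} the category of variations of mixed Hodge structure on $S$, but the category of local systems (or $\mathbb{Q}$-sheaves) on $S$. This is made explicit in the paper's proof: ``the $\Hom$ and $\Ext$ groups are taken in the category of local systems on $S$,'' and it is forced by the setup — the classes $\alpha,\beta$ live in $\rH^1(S,\mathcal{H})$ and $\rH^1(S,\mathcal{H}^{\vee})$, which are precisely $\Ext^1$ in the category of local systems, not the normal function groups $\Ext^1_{\VMHS(S)}$. Likewise, the following corollary asserts that $\mathcal{Q}$ is a biextension by $\rH^1(S,\mathbb{Q})=\Ext^1_{\Loc(S)}(\mathbb{Q},\mathbb{Q})$, again the local-system $\Ext$. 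Consequently the disjointness hypothesis $\Hom(\mathbb{Q},\mathcal{H})=\Hom(\mathcal{H},\mathbb{Q})=0$ does \emph{not} hold: in the category of local systems, $\Hom(\mathbb{Q},\mathcal{H})$ is the group of monodromy-invariant sections of $\mathcal{H}$, which can certainly be nonzero, and your appeal to strictness of the weight filtration is unavailable because these are not Hodge-theoretic morphisms. Corollary~\ref{DisjTors} therefore cannot be invoked.

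The correct route is the one the paper follows: use Proposition~\ref{PhiSurj} in its full strength, which reduces the claim to showing $G(f_0,f_1)=0$ for all $(f_0,f_1)\in\Hom(\mathbb{Q},\mathcal{H})\times\Hom(\mathcal{H},\mathbb{Q})$ even when these $\Hom$ groups are nonzero. The vanishing of $G(f_0,f_1)=f_1\cup\alpha+\beta\cup f_0$ is then a genuine fact requiring proof: one checks that the cup product $\Hom(\mathcal{H},\mathbb{Q})\times\IH^1(\mathcal{H})\to\rH^1(S,\mathbb{Q})$ vanishes because every $f_1$ extends to a morphism $\IC(\mathcal{H})\to\IC(\mathbb{Q})$ on $\bar S$, so the pairing factors through $\Ext^1_{\bar S}(\IC(\mathbb{Q}),\IC(\mathbb{Q}))=\IH^1(\bar S,\mathbb{Q})=0$ (using that $\bar S$ is a polydisk). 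This lifting-to-$\bar S$ step — the genuine content of the lemma — is entirely absent from your proof. In short, the result is not ``essentially formal'' as you claim; the hypothesis $\alpha\in\IH^1(\mathcal{H})$ (rather than merely $\alpha\in\rH^1(S,\mathcal{H})$) is used in an essential and non-obvious way.
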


\begin{proof}
  We need to show that $G(f_0,f_1)=0$ for every pair $(f_0,f_1)\in
  \Hom(\mathbb{Q},\mathcal{H})\times \Hom(\mathcal{H},\mathbb{Q})$.
  By symmetry and additivity of $G(f_0,f_1)$, it suffices to show
  that the composition
  $$
  \Hom(\mathcal{H},\mathbb{Q})\times \IH^1(\mathcal{H})\to
  \Ext^1(\mathbb{Q},\mathbb{Q})=\rH^1(S,\mathbb{Q})
  $$
  induced by the cup product vanishes. (Here the $\Hom$ and $\Ext$
  groups are taken in the category of local systems on $S$.)

  To see this, note that $\Hom(\mathcal{H},\mathbb{Q})=\Hom(\IC(\mathcal{H}),\IC(\mathbb{Q}))$ (by restriction).  So we get a commutative diagram
  $$
  \xymatrix{
    \Hom_{\bar S}(\IC(\mathcal{H}),\IC(\mathbb{Q}))\times
    \Ext^1_{\bar S}(\IC(\mathcal{H}),\IC(\mathbb{Q})))\ar[r]\ar@{=}[d] &
    \Ext^1_{\bar S}(\IC(\mathbb{Q}),\IC(\mathbb{Q}))\ar[d]\ar@{=}[r] &\rH^1(\bar S,\mathbb{Q})=0\\
    \Hom_S(\mathcal{H},\mathbb{Q})\times \IH^1(\mathcal{H})\ar[r] &
  \Ext^1_S(\mathbb{Q},\mathbb{Q})\ar@{=}[r] & \rH^1(S, \mathbb{Q})
}$$
where the downward arrows are restriction.
The result follows immediately.    
\end{proof}

\begin{corollary}
  The map $\mathcal{Q}\to \IH^1(\mathcal{H})\times \IH^1(\mathcal{H}^{\vee})$
  makes $\mathcal{Q}$ into a biextension of  $\IH^1(\mathcal{H})\times \IH^1(\mathcal{H}^{\vee})$ by $\rH^1(S,\mathbb{Q})$.  Moreover, the map
  $\mathcal{P}\to\mathcal{Q}$ induced by restriction is a morphism
  of biextensions.  
\end{corollary}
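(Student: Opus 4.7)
The plan is to deduce both assertions from Hardouin's Theorem~\ref{t.Har} applied in the category of $\mathbb{Q}$-local systems on $S$, combined with the preceding lemma which identifies the fibers of $\mathcal{Q}$.

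First I would apply Theorem~\ref{t.Har} to the triple $(\mathbb{Q},\mathcal{H},\mathbb{Q})$ in the category of $\mathbb{Q}$-local systems on $S$. The triple is disjoint (in the sense of \S\ref{me1}) because $\mathcal{H}$ is pure of non-zero weight, so $\Hom(\mathbb{Q},\mathcal{H})=\Hom(\mathcal{H},\mathbb{Q})=0$. Hardouin's theorem therefore gives the set $\Extpan(\mathbb{Q},\mathcal{H},\mathbb{Q})$ the structure of a pseudo-biextension of $\rH^1(S,\mathcal{H})\times \rH^1(S,\mathcal{H}^{\vee})$ by $\Ext^1_S(\mathbb{Q},\mathbb{Q})=\rH^1(S,\mathbb{Q})$, with partial addition laws $+_1$ and $+_2$ as in \S\ref{s.POY}, \S\ref{s.PO}. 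By definition, $\mathcal{Q}$ is the preimage of $\IH^1(\mathcal{H})\times\IH^1(\mathcal{H}^{\vee})$ in $\Extpan(\mathbb{Q},\mathcal{H},\mathbb{Q})$, and the preceding lemma says that, over any point $(\alpha,\beta)\in\IH^1(\mathcal{H})\times \IH^1(\mathcal{H}^{\vee})$, this preimage is an $\rH^1(S,\mathbb{Q})$-torsor equal to $\Extpan(\alpha,\beta)$.

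Next I would check that $\mathcal{Q}$ is closed under the ambient partial operations and the $\rH^1(S,\mathbb{Q})$-action, so that the pseudo-biextension structure restricts to one on $\mathcal{Q}$. The $\rH^1(S,\mathbb{Q})$-action does not change the underlying classes $(\alpha,\beta)$, so closure is immediate. For $+_1$: given two elements of $\mathcal{Q}$ whose first-coordinate projections $\alpha_1,\alpha_2$ both lie in $\IH^1(\mathcal{H})$ and whose second projections agree on some $\beta\in\IH^1(\mathcal{H}^{\vee})$, the class of $X_1+_1 X_2$ projects to $(\alpha_1+\alpha_2,\beta)$, and $\alpha_1+\alpha_2\in\IH^1(\mathcal{H})$ since local intersection cohomology is a subgroup of $\rH^1(S,\mathcal{H})$; the check for $+_2$ is symmetric. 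Finally, to upgrade from pseudo-biextension to biextension, we need surjectivity of $\pi:\mathcal{Q}\to \IH^1(\mathcal{H})\times\IH^1(\mathcal{H}^{\vee})$. This follows from the restriction map $\mathcal{P}\to\mathcal{Q}$: because $\IH^1(\bar S,\mathbb{Q})=0$, the preceding paragraph in the paper shows that $\pi_{\mathcal{P}}:\mathcal{P}\to\IH^1(\mathcal{H})\times\IH^1(\mathcal{H}^{\vee})$ is a bijection, and restricting its inverse section produces a section of $\pi$.

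For the second assertion, that $\mathcal{P}\to\mathcal{Q}$ is a morphism of biextensions, the point is that the operations $+_1$ and $+_2$ on both sides are built, via \S\ref{s.POY}--\S\ref{s.PO}, out of Baer sums, pullbacks, and pushouts of short exact sequences; similarly the torsor actions come from the Yoneda-cup-product pairing with $\Ext^1(\mathbf{1},\mathbf{1})$. The restriction functor from perverse sheaves on $\bar S$ to local systems on $S$ (shifted and restricted to the open part) is exact, preserves the unit, and commutes with all of these constructions. The compatibility of torsor actions is encoded by the canonical map $\IH^1(\bar S,\mathbb{Q})\to \rH^1(S,\mathbb{Q})$, which is zero here since the source vanishes; this is automatically compatible with the vanishing restriction of the $\IH^1(\bar S,\mathbb{Q})$-torsor structure on $\mathcal{P}$ to the $\rH^1(S,\mathbb{Q})$-torsor structure on $\mathcal{Q}$.

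The bulk of the real work has already been done in the preceding lemma, which identifies the fibers of $\mathcal{Q}$ with $\Extpan(\alpha,\beta)$ via the disjointness/weight argument; beyond this the remaining content is the routine (but necessary) bookkeeping that Hardouin's pseudo-biextension structure restricts to $\mathcal{Q}$ and is functorial under $j^*$. The mildest subtlety is verifying that $+_1$ and $+_2$ on $\mathcal{Q}$ coming from local systems on $S$ agree with the ones induced from $\mathcal{P}$; this is exactly the exactness of restriction combined with functoriality of Baer sum.
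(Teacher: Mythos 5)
Your opening move is where the argument breaks down. You claim the triple $(\mathbb{Q},\mathcal{H},\mathbb{Q})$ is disjoint in the category of $\mathbb{Q}$-local systems on $S$ ``because $\mathcal{H}$ is pure of non-zero weight,'' and then invoke Hardouin's theorem to get a pseudo-biextension structure on all of $\Extpan(\mathbb{Q},\mathcal{H},\mathbb{Q})$ over $\rH^1(S,\mathcal{H})\times\rH^1(S,\mathcal{H}^{\vee})$. This is wrong: in the category of local systems, weight considerations are invisible, and $\Hom_S(\mathbb{Q},\mathcal{H})$ is the space of monodromy-invariant vectors $\mathcal{H}^{\pi_1(S)}$, while $\Hom_S(\mathcal{H},\mathbb{Q})$ is dual to the coinvariants; neither need vanish (already for a constant family they are all of $H$). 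So the hypothesis of Theorem~\ref{t.Har} fails, and the ambient $\Extpan(\mathbb{Q},\mathcal{H},\mathbb{Q})$ is in general \emph{not} a pseudo-torsor for $\rH^1(S,\mathbb{Q})$ over the full $\rH^1\times\rH^1$: for general $(\alpha,\beta)$ the fiber is a quotient of $\Extpan(\alpha,\beta)$ by the $G(f_0,f_1)$-action of Proposition~\ref{PhiSurj}, and that quotient need not be a torsor.

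This is precisely the content of the preceding lemma in the paper: for $(\alpha,\beta)\in\IH^1(\mathcal{H})\times\IH^1(\mathcal{H}^{\vee})$, the obstruction classes $G(f_0,f_1)$ vanish because the cup product factors through $\IH^1(\bar S,\mathbb{Q})=0$. If disjointness held, the lemma would be vacuous, which should have been a warning sign. Fortunately, the rest of your argument does not actually depend on the incorrect first step: the operations $+_1$ and $+_2$ from \S\ref{psbi} are defined on $\Extpan(\mathbb{Q},\mathcal{H},\mathbb{Q})$ without any disjointness hypothesis (disjointness enters only to get the pseudo-torsor property); the lemma plus Proposition~\ref{PhiSurj} and Proposition~\ref{prop:Grothendieck} gives the torsor structure on the fibers over $\IH^1\times\IH^1$; surjectivity onto $\IH^1\times\IH^1$ follows, as you say, from the section coming from $\mathcal{P}$; and closure under $+_1,+_2$ holds because $\IH^1$ is a subgroup of $\rH^1$. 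Your discussion of the morphism $\mathcal{P}\to\mathcal{Q}$ (exactness of restriction, compatibility over $0\to\rH^1(S,\mathbb{Q})$) is in the right spirit, though informal. The paper's proof simply points directly at the operations from \S\ref{psbi} and the action of $\Ext^1_S(\mathbb{Q},\mathbb{Q})$, without attempting to invoke Hardouin's theorem at all — exactly because the disjointness hypothesis fails here.
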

\begin{proof}[Explanation]  Here the operations $+_1$ and $+_2$ are
  the obvious ones coming from \S\ref{psbi} as is the action of $\rH^1(S,\mathbb{Q})=\Ext^1_{S}(\mathbb{Q},\mathbb{Q})$. 
\end{proof}

Now suppose $\bar\varphi:\Delta\to\bar S$ is a test curve.  Since
the classes $\alpha$ and $\beta$ vanish on restriction to $\Delta^*$
via $\varphi$, we get a homomorphism of biextensions
$\varphi^*:\mathcal{Q}_S\to \mathcal{Q}_{\Delta^*}$.  Via the
isomorphism $\tilde\tau:\mathcal{Q}_{\Delta^*}\to \mathbb{Q}$, we then
get a number $\tilde\tau\varphi^*X$ for any isomorphism class
$X\in\mathcal{Q}$.

\subsection{Comparison with the asymptotic height pairing}

Now, we want to compare the general jump pairing from Definition~\ref{gdef}
with the asymptotic height pairing defined earlier.  To do this, we first
want to generalize Theorem~\ref{JDG5} to the local systems
$X\in\mathcal{Q}_S$ where $S=\Delta^{*r}$.   Pick $a\in\Delta^*$.
Then, for each $r$-tuple, $t=(t_1,\ldots, t_r)\in\mathbb{Z}_{\geq 0}$,
we can write
$\bar\varphi_t:\Delta\to\Delta^r$ for the test curve
$s\mapsto a(s^{t_1},\ldots, s^{t_r})$.   Write
$$
\tilde\tau_t X:= \tilde\tau \varphi^* X.
$$
It is easy to see that this rational number does not depend on the choice
of $a$.   Recall that $\epsilon_i=(0,\ldots, 0, 1,\ldots, 0)$ with the
$1$ in the $i$-th place.

For any non-negative integer $r$, write $\Perv(\Delta^r)$ for the
category of perverse sheaves on $\Delta^r$ and, following
Saito's notation from~\cite[\S 3.1]{MHM}, write
$\Perv(\mathbb{Q}_X)_{nc}$ for the full
subcategory consisting of perverse sheaves which are constructible
with respect to the stratification induced by the coordinate hyperplanes.

\begin{lemma}
  Suppose $a$ and $b$ are non-negative integers with $a+b=r$, 
  $y=(y_1,\ldots, y_a)\in(\Delta^{*})^a$ and write $\bar\varphi:\Delta^{b}\to\Delta^r$
  for the map sending $z=(z_1,\ldots, z_b)$ to $(y_1,\ldots, y_a,z_1,\ldots, z_b)$.
  If $\mathcal{F}$ is a perverse sheaf in $\Perv(\Delta^r)_{nc}$ then
  $\bar\varphi^*\mathcal{F}$ is an object in $\Perv(\Delta^b)_{nc}$.  Consequently the
functor $\bar\varphi^*:\Perv(\Delta^r)_{nc}\to\Perv(\Delta^b)_{nc}$ is exact.
\end{lemma}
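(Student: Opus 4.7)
The plan is to identify $\bar\varphi$ as a non-characteristic closed immersion with respect to the normal-crossing stratification of $\Delta^r$ and then invoke the standard fact that non-characteristic pullback (with the shift by codimension built into the perverse/MHM convention) preserves the perverse $t$-structure and the class of sheaves constructible with respect to the induced stratification. Concretely, the strata of $\Delta^r$ are indexed by subsets $I\subseteq\{1,\ldots,r\}$ via
\[
S_I=\{z\in\Delta^r : z_i=0 \Longleftrightarrow i\in I\}.
\]
Since $y\in(\Delta^*)^a$, the image $\bar\varphi(\Delta^b)=\{y\}\times\Delta^b$ meets only the strata $S_I$ with $I\subseteq\{a+1,\ldots,r\}$, and for each such $I$ the intersection $\bar\varphi(\Delta^b)\cap S_I=\{y\}\times S_{I-a}$ is a smooth submanifold of $S_I$ whose codimension in $S_I$ equals $a=\codim_{\Delta^r}\bar\varphi(\Delta^b)$; equivalently the tangent spaces satisfy $T_pS_I+T_p\bar\varphi(\Delta^b)=T_p\Delta^r$. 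This is precisely the transversality condition needed for $\bar\varphi$ to be non-characteristic with respect to the coordinate-hyperplane stratification.

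Next I would invoke the general result (see Kashiwara--Schapira for the sheaf-theoretic version, or Saito \cite{MHM} for the version within mixed Hodge modules) that, for a non-characteristic closed embedding $i\colon Y\hookrightarrow X$ of codimension $c$ with respect to a given stratification, the functor $i^*$ (with the convention that bakes the $[-c]$ shift in, as is customary for MHM pullback and as is used in $\Perv(\cdot)_{nc}$) is $t$-exact for the perverse $t$-structures on complexes constructible with respect to the stratification of $X$ and the induced stratification of $Y$. Applying this with $i=\bar\varphi$ and $c=a$ yields $\bar\varphi^*\mathcal{F}\in\Perv(\Delta^b)_{nc'}$ where $nc'$ is the stratification of $\Delta^b$ induced from the normal-crossing stratification of $\Delta^r$.

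The remaining point is to identify the induced stratification $nc'$ with the normal-crossing stratification on $\Delta^b$. This is immediate from the formula $\bar\varphi(\Delta^b)\cap S_I=\{y\}\times S_{I-a}$ for $I\subseteq\{a+1,\ldots,r\}$: the nonempty intersections are exactly the images under $\bar\varphi$ of the strata of $\Delta^b$ given by vanishing of subsets of the coordinates $z_1,\ldots,z_b$. Hence $\bar\varphi^*\mathcal{F}\in\Perv(\Delta^b)_{nc}$, proving the main statement of the lemma. Exactness of $\bar\varphi^*\colon\Perv(\Delta^r)_{nc}\to\Perv(\Delta^b)_{nc}$ is then a direct consequence of $t$-exactness of this functor for the perverse $t$-structure.

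The main obstacle, and really the only nontrivial point, is to cite or state precisely enough the non-characteristic pullback theorem in the form needed here (with its shift convention matching that of $\Perv(\cdot)_{nc}$); once that is available the transversality verification is a one-line computation of codimensions. A self-contained alternative, if one wishes to avoid quoting the general theorem, is to factor $\bar\varphi$ into a sequence of codimension-one inclusions of the form $\{y_i=c\}\hookrightarrow\Delta\times\Delta^{b+i-1}$ with $c\in\Delta^*$, and to verify the $t$-exactness of each step directly from the fact that a sheaf in $\Perv(\Delta\times\Delta^{b+i-1})_{nc}$ is, near any point of $\{y_i=c\}\subset\Delta^*\times\Delta^{b+i-1}$, the shifted pullback of an object in $\Perv(\Delta^{b+i-1})_{nc}$ along the projection that forgets the first factor.
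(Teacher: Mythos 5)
Your proof is correct and follows essentially the same route as the paper: you verify that $\bar\varphi$ is non-characteristic for the coordinate-hyperplane stratification (the paper phrases this via the characteristic variety lying in the conormal bundles of intersections of coordinate hyperplanes) and then invoke Kashiwara's non-characteristic restriction theorem. Your extra details on the induced stratification and the shift convention are fine but not a different method.
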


\begin{proof}
  As in~\cite[\S 3.1]{MHM}, the perverse sheaves in $\Perv(\Delta^r)_{nc}$ consist of
  perverse sheaves on $\Delta^r$ with characteristic variety contained in the conormal
  bundles of the intersections of the coordinate hyperplanes.    Consequently, the map
  $\bar\varphi:\Delta^b\to\Delta^r$ is non-characteristic.   The result then follows
  from Kashiwara's theorem on non-characteristic restriction.
\end{proof}

\begin{corollary}
  We have $\tilde\tau_{\epsilon_i} X(\alpha,\beta)=0$ for all integers
  $i$ with $1\leq i\leq n$.
\end{corollary}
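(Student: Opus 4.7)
The plan is to show that pulling back $X(\alpha,\beta)$ along the test curve $\bar\varphi_{\epsilon_i}$ yields the split element of the biextension $\mathcal P_\Delta$, from which $\tilde\tau_{\epsilon_i}X(\alpha,\beta)=0$ will follow immediately from the isomorphism $\mathcal R_{\mathbb Q}\cong\Ext^1_{\Delta^*}(\mathbb Q,\mathbb Q)=\mathbb Q$.

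First I would apply the preceding lemma to $\bar\varphi_{\epsilon_i}$. The map $\bar\varphi_{\epsilon_i}(s)=(a,\ldots,as,\ldots,a)$ has $as$ in the $i$-th slot and the constant $a\in\Delta^*$ elsewhere; after a linear rescaling of $\Delta$, this is a map of exactly the form considered in the lemma with $b=1$. Hence $\bar\varphi_{\epsilon_i}^*$ is exact on $\Perv(\bar S)_{nc}$. Moreover, the curve meets $\bar S\setminus S$ only at the smooth point $p:=\bar\varphi_{\epsilon_i}(0)=(a,\ldots,0,\ldots,a)$ of $\{s_i=0\}$ and does so transversally, so Kashiwara's non-characteristic restriction theorem gives $\bar\varphi_{\epsilon_i}^*\IC(\mathbb Q_{\bar S})=\IC(\mathbb Q_\Delta)$ and $\bar\varphi_{\epsilon_i}^*\IC(\mathcal H)=\IC(\varphi_{\epsilon_i}^*\mathcal H)$. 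Therefore $\bar\varphi_{\epsilon_i}^*X(\alpha,\beta)$ lies in $\mathcal P_\Delta$, with projection $(\bar\varphi_{\epsilon_i}^*\alpha,\bar\varphi_{\epsilon_i}^*\beta)\in\IH^1(\varphi_{\epsilon_i}^*\mathcal H)\times\IH^1((\varphi_{\epsilon_i}^*\mathcal H)^\vee)$.

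Next I would show this projection is $(0,0)$. The restriction of $\alpha$ to a small polydisk neighborhood of $p$ defines a class in $\IH^1_p(\mathcal H)$; by Theorem~\ref{IH1Ad}, every such class is admissible, so $\bar\varphi_{\epsilon_i}^*\alpha=0$ in $\rH^1(\Delta^*,\varphi_{\epsilon_i}^*\mathcal H)$. Since the map $\IH^1(\Delta,\varphi_{\epsilon_i}^*\mathcal H)\hookrightarrow\rH^1(\Delta^*,\varphi_{\epsilon_i}^*\mathcal H)$ is injective by \S\ref{s1.2b}, this forces $\bar\varphi_{\epsilon_i}^*\alpha=0$ in $\IH^1$, and the same argument handles $\beta$. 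The biextension/uniqueness argument given just before Definition~\ref{gdef}—which used only $\IH^1(\bar S,\mathbb Q)=\IH^2(\bar S,\mathbb Q)=0$—applies verbatim with $\bar S$ replaced by $\Delta$, so $\mathcal P_\Delta$ has a unique element above $(0,0)$: the split mixed extension. Hence $\bar\varphi_{\epsilon_i}^*X(\alpha,\beta)$ is split, its local-system restriction to $\Delta^*$ is the split element of $\mathcal R_{\mathbb Q}$, and under $\mathcal R_{\mathbb Q}\cong\mathbb Q$ this maps to $0$.

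The main technical point requiring care will be the compatibility of non-characteristic pullback with the intermediate extension functor, i.e.\ establishing $\bar\varphi_{\epsilon_i}^*\IC(\mathcal H)=\IC(\varphi_{\epsilon_i}^*\mathcal H)$. This is a standard consequence of Kashiwara's theorem once transversality of $\bar\varphi_{\epsilon_i}$ to the coordinate stratification is verified, but it lies at the heart of the argument and deserves explicit justification. Everything else is a direct application of the structural facts already in hand: the exactness from the preceding lemma, Theorem~\ref{IH1Ad} on admissibility of local intersection cohomology classes, and the uniqueness of the perverse mixed extension over a given pair of classes.
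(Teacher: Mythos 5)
Your argument is correct and takes the same route as the paper: apply the exactness lemma to $\bar\varphi_{\epsilon_i}$, conclude the pullback lies in the trivial biextension $\mathcal P_\Delta$, identify it with the split mixed extension over $(0,0)$, and read off $\tilde\tau=0$. You supply useful detail the paper's two-line proof leaves implicit — in particular that the preceding proposition (resting on Theorem~\ref{IH1Ad}) guarantees $\varphi_{\epsilon_i}^*\alpha=\varphi_{\epsilon_i}^*\beta=0$ so that $\tilde\tau$ is even defined on the pullback, and that identifying the pullback as an object of $\mathcal P_\Delta$ requires not just exactness of $\bar\varphi_{\epsilon_i}^*$ on perverse sheaves but also the compatibility $\bar\varphi_{\epsilon_i}^*\IC(\mathcal H)\cong\IC(\varphi_{\epsilon_i}^*\mathcal H)$, which you correctly flag as the standard consequence of non-characteristic restriction (Kashiwara's theorem) that the paper silently invokes. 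Your checks that $\bar\varphi_{\epsilon_i}$ fits the hypotheses of the lemma (after permuting coordinates so $b=1$) and meets the boundary transversally at a smooth point of $\{s_i=0\}$ are also correct. No gap; the proposal simply makes the paper's compressed argument explicit.
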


\begin{proof}
  Pick $a\in \Delta^*$ to define $\bar\varphi_t:\Delta\to \Delta^r$ as
  above for any $t\in\mathbb{Z}_{\geq 0}$.  Since
  $\bar\varphi^*_{\epsilon_i}:\Perv(\Delta^r)_{nc}\to
  \Perv(\Delta)_{nc}$ is exact,
  $\varphi^*_{\epsilon_i} X(\alpha,\beta)$ is in the trivial
  biextension $\mathcal{P}_{\Delta}$.  So
  $\tilde\tau\varphi^*_{\epsilon_i} X(\alpha,\beta)=0$.
\end{proof}

\begin{corollary}
  Suppose $X\in \mathcal{Q}_{\Delta^{*r}}$, and $t\in\mathbb{Z}_{\geq 0}^r$.
    Then
    $$
    \tilde\tau_t X = \tilde\tau_t X(\alpha,\beta)+ \sum_{i=1}^r
    \tilde\tau_{\epsilon_i} X.
    $$
    \end{corollary}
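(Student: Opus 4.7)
The plan is to exploit the fact that $\mathcal{P}\to \mathcal{Q}$ is a morphism of biextensions and that each fiber $\mathcal{Q}_{(\alpha,\beta)}$ is a torsor for $\rH^1(S,\mathbb{Q})$. Since $X$ and the image of $X(\alpha,\beta)$ in $\mathcal{Q}$ both lie over the same pair $(\alpha,\beta)=\pi(X)$, there is a unique class $c\in \rH^1(\Delta^{*r},\mathbb{Q})$ with $X = c\cdot X(\alpha,\beta)$ in the $\rH^1(S,\mathbb{Q})$-torsor $\mathcal{Q}_{(\alpha,\beta)}$. The identification $\rH^1(\Delta^{*r},\mathbb{Q})\cong \mathbb{Q}^r$ lets us write $c=(c_1,\ldots,c_r)$.

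Next I would apply the pullback $\varphi_t^*$, which is a morphism of biextensions from $\mathcal{Q}_{\Delta^{*r}}$ to $\mathcal{R}_\mathbb{Q}$, and compose with the canonical isomorphism $\tilde\tau:\mathcal{R}_\mathbb{Q}\stackrel{\sim}{\to}\mathbb{Q}$. The additivity of $\tilde\tau$ under the torsor action yields
\[
\tilde\tau_t X \;=\; \tilde\tau_t X(\alpha,\beta) \;+\; \varphi_t^*(c).
\]
The computation of $\varphi_t^*$ on $\rH^1$ is standard: since the generator of $\pi_1(\Delta^*)$ maps under $\bar\varphi_t$ to $\gamma_1^{t_1}\cdots\gamma_r^{t_r}$ in $\pi_1(\Delta^{*r})$, the induced map on first cohomology is $(c_1,\ldots,c_r)\mapsto \sum_i t_i c_i$. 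Setting $t=\epsilon_i$ and using the preceding corollary (which gives $\tilde\tau_{\epsilon_i}X(\alpha,\beta)=0$) identifies the coordinates of $c$ as $c_i=\tilde\tau_{\epsilon_i}X$. Substituting back gives the desired formula.

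The main obstacle is essentially behind us: the real work went into constructing the biextensions $\mathcal{P}$ and $\mathcal{Q}$, proving that the restriction $\mathcal{P}\to\mathcal{Q}$ is a morphism of biextensions, and establishing the non-characteristic pullback property that underlies the previous corollary. Once these are in hand, the present claim is simply the assertion that $\tilde\tau_t$ is $\rH^1$-equivariant with respect to the torsor action, combined with an elementary computation of $\varphi_t^*$ on the cohomology of a product of punctured disks.
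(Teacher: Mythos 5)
Your proposal is correct and follows essentially the same route as the paper's own proof: the paper writes $X=X(\alpha,\beta)+E$ with $E\in\Ext^1_{\Delta^{*r}}(\mathbb{Q},\mathbb{Q})=\rH^1(\Delta^{*r},\mathbb{Q})=\mathbb{Q}^r$, uses additivity of $\tilde\tau_t$ under this torsor translation, and then the "easy fact" $\tilde\tau_t E=\sum_i t_i\,\tilde\tau_{\epsilon_i}(E)$, which is exactly your computation of $\varphi_t^*$ on $\rH^1$ via $\pi_1$, combined with the previous corollary to replace $\tilde\tau_{\epsilon_i}E$ by $\tilde\tau_{\epsilon_i}X$. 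Note that what you prove (and what the paper's proof and its later use in the comparison proposition actually establish) is $\tilde\tau_t X=\tilde\tau_t X(\alpha,\beta)+\sum_{i=1}^r t_i\,\tilde\tau_{\epsilon_i}X$; the factors $t_i$ are missing from the displayed statement, which appears to be a typo.
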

    \begin{proof}
      We have $X=X(\alpha,\beta)+ E$ for some element
      $E\in\Ext^1_{\Delta^{*r}}(\mathbb{Q},\mathbb{Q})=\rH^1(\Delta^{*r},\mathbb{Q})=\mathbb{Q}^r$.   Consequently, $\tilde\tau_t X= \tilde\tau_t X(\alpha,\beta)+
      \tilde\tau_t E$.
      The result follows from the (easy) fact that
      $\tilde\tau_t E= \sum \tilde\tau_{\epsilon_i} (E) t_i$.  
    \end{proof}

    We now state an analogue of Theorem~\ref{JDG5}.
    \begin{proposition}\label{pjdg5}
      Suppose $X\in \mathcal{Q}_{\Delta^{*r}}$ with $\pi(X)=(\alpha,\beta)$.
      Then
      $$h(t)(\alpha,\beta)=-\tilde\tau_t(X) + \sum_{i=1}^r
      \tilde\tau_{\epsilon_i} t_i.
      $$
    \end{proposition}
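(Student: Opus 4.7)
The plan is to reduce the identity to a single statement about the canonical lift $X(\alpha,\beta)$ and then verify that statement by an explicit monodromy computation.

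By the Corollary immediately preceding the Proposition, any two lifts in $\mathcal{Q}$ of $(\alpha,\beta)$ differ by an element of $\rH^1(\Delta^{*r},\QQ)=\QQ^r$, on which $\tilde\tau$ acts linearly. Combined with the vanishing $\tilde\tau_{\epsilon_i}X(\alpha,\beta)=0$ (which comes from exactness of $\bar\varphi_{\epsilon_i}^*$ on $\mathrm{Perv}(\Delta^r)_{nc}$), this gives
\[
  -\tilde\tau_t(X)+\sum_{i=1}^r\tilde\tau_{\epsilon_i}(X)\,t_i \;=\; -\tilde\tau_tX(\alpha,\beta)
\]
for every $X\in\mathcal{Q}$ with $\pi(X)=(\alpha,\beta)$. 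Hence Proposition \eqref{pjdg5} is equivalent to the identity $h(t)(\alpha,\beta)=-\tilde\tau_tX(\alpha,\beta)$.

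Next I would give an explicit model for $X(\alpha,\beta)$. Represent $\alpha=\sum_i\alpha_i\otimes e_i$ and $\beta=\sum_i\beta_i\otimes e_i$ in the partial Koszul complexes, and pick $h_i\in H_\QQ$ with $N_ih_i=\alpha_i$ (possible since $\alpha\in\IH^1\calH$). Identify the reference fiber of a lift $X\in\mathcal{Q}$ with $V=\QQ e_0\oplus H\oplus\QQ e_{-2}$; the monodromy logarithm then has block form
\[
  \tilde N_i(x,h,y) \;=\; \bigl(0,\; x\alpha_i+N_ih,\; x\gamma_i-\beta_i(h)\bigr)
\]
for scalars $\gamma_i\in\QQ$. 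Applying Proposition \eqref{ttauX} to the coordinate test curve $\varphi_{\epsilon_i}$ with the adjusted lift $e_0-h_i$ gives $\tilde\tau_{\epsilon_i}(X)=\gamma_i+\beta_i(h_i)$, so the canonical lift $X(\alpha,\beta)$ is characterized by $\gamma_i=-\beta_i(h_i)$ for each $i$.

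Finally, for general $t$, choose $l(t)\in H_\QQ$ with $N(t)l(t)=\alpha(t)$ by Proposition \eqref{ced6}, and set $e_0':=e_0-l(t)$. A direct calculation yields $\tilde N(t)e_0'=(0,0,\gamma(t)+\beta(t)(l(t)))\in W_{-2}V$ and $\tilde N(t)^2e_0'=0$, whence by Proposition \eqref{ttauX}
\[
  \tilde\tau_tX(\alpha,\beta) \;=\; \gamma(t)+\beta(t)(l(t)) \;=\; \sum_i t_i\bigl(\beta_i(l(t))-\beta_i(h_i)\bigr).
\]
On the other hand, unwinding the definition of $(\cdot,\cdot)_{N_i}$ in formula \eqref{ahp3} gives $h(t)(\alpha,\beta)=\sum_i t_i\bigl(\beta_i(h_i)-\beta_i(l(t))\bigr)$, so $\tilde\tau_tX(\alpha,\beta)=-h(t)(\alpha,\beta)$, as required. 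The main obstacle is the second step: identifying the perverse-sheaf-level canonical lift $X(\alpha,\beta)$ with the local-system-level lift specified by $\gamma_i=-\beta_i(h_i)$. This comes down to showing that $X(\alpha,\beta)$ is characterized within $\mathcal{Q}$ by the vanishing of $\tilde\tau_{\epsilon_i}$ for every $i$, which in turn follows from the preceding Corollary applied one coordinate at a time together with the torsor structure of $\mathcal{Q}$ over $\IH^1\calH\times\IH^1\calH^\vee$.
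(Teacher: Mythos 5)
Your proof is correct, and it follows the approach the paper itself intends: the paper's proof of Proposition~\ref{pjdg5} consists only of the remark that it is ``essentially the same as the proof of Theorem~\ref{JDG5},'' and you have carried out that matrix computation in the local-system setting, using Proposition~\ref{ttauX} in place of Remark~\ref{rat-mu} and Lemma~\ref{stronger-mu}. Your preliminary reduction to the canonical lift $X(\alpha,\beta)$---using the preceding corollary to show the stated identity is equivalent to $h(t)(\alpha,\beta)=-\tilde\tau_tX(\alpha,\beta)$---is a clean reorganization rather than a different argument; the paper's proof of Theorem~\ref{JDG5} works instead with an arbitrary splitting (arbitrary $\gamma_i$) and tracks the $\sum t_i\mu_i$ correction explicitly, but the two are trivially interchangeable given the corollary that $\tilde\tau_{\epsilon_i}X(\alpha,\beta)=0$.
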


    \begin{proof}
      The proof is essentially the same as the proof of Theorem~\ref{JDG5}.
    \end{proof}

\begin{theorem}
  Suppose $\mathcal{H}$ is a torsion free variation of pure Hodge structure on
  $(\Delta^*)^r$ and $(\alpha,\beta)\in\IH^1(\mathcal{H})\times\IH^1(\mathcal{H}^{\vee})$.
  Let $(t_1,\ldots, t_r)\in\mathbb{Z}_{\geq 0}^r$, and write $\bar\varphi:\Delta\to\Delta^r$
  for the test curve $s\mapsto (s^{t_1},\ldots, s^{t_r})$.  Then
  $$
  h(\alpha,\beta)(t)= -j(\alpha,\beta,\bar\varphi_t).
  $$
\end{theorem}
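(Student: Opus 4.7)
The plan is to observe that the desired identity is essentially an immediate consequence of Proposition~\ref{pjdg5} combined with the vanishing result proved for the canonical perverse lift $X(\alpha,\beta)\in\mathcal{P}$ in the corollary directly preceding the theorem.

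More concretely, I would proceed as follows. First recall from Definition~\ref{gdef} that $j(\alpha,\beta,\bar\varphi_t) = \tilde\tau\,\varphi_t^{*}X(\alpha,\beta)$, which in the shorthand introduced just before the theorem is exactly $\tilde\tau_t X(\alpha,\beta)$, with $X(\alpha,\beta)$ now viewed (via restriction from $\bar S=\Delta^r$ to $S=\Delta^{*r}$) as an element of $\mathcal{Q}_{\Delta^{*r}}$. The point is that restriction $\mathcal{P}\to\mathcal{Q}_S$ is a morphism of biextensions, so $X(\alpha,\beta)_{|S}$ lies over $(\alpha,\beta)$.

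Next I would apply Proposition~\ref{pjdg5} with $X = X(\alpha,\beta)_{|S}$: this yields
\[
h(t)(\alpha,\beta) \;=\; -\tilde\tau_t X(\alpha,\beta) \;+\; \sum_{i=1}^{r} \tilde\tau_{\epsilon_i}\bigl(X(\alpha,\beta)\bigr)\, t_i.
\]
The second corollary preceding the theorem (the one relying on Kashiwara's non-characteristic restriction theorem applied to the coordinate hyperplane $\{s_i=0\}$) gives $\tilde\tau_{\epsilon_i} X(\alpha,\beta)=0$ for every $i$. Substituting this into the displayed equation and using the identification $\tilde\tau_t X(\alpha,\beta) = j(\alpha,\beta,\bar\varphi_t)$ gives exactly $h(\alpha,\beta)(t) = -j(\alpha,\beta,\bar\varphi_t)$.

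There is essentially no obstacle; the heavy lifting has already been done. The subtlety worth double-checking is the compatibility between the two roles of $X(\alpha,\beta)$: as a perverse mixed extension on $\bar S$ (where it is unique by $\IH^1(\bar S,\mathbb{Q})=0$) and as a representative in $\mathcal{Q}_{\Delta^{*r}}$ lying over $(\alpha,\beta)$ (where there is a genuine choice, parameterized by $\rH^1(S,\mathbb{Q})\cong\mathbb{Q}^r$). The content of the theorem is precisely that the perverse choice is the \emph{correct} one in the sense that its linear correction terms $\tilde\tau_{\epsilon_i}$ vanish, and this is guaranteed by non-characteristic restriction. Once this is spelled out, the remaining verification is a one-line substitution.
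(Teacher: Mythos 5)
Your proposal is correct and follows exactly the paper's argument: apply Proposition~\ref{pjdg5} to $X=X(\alpha,\beta)$ and use the corollary (via Kashiwara's non-characteristic restriction) that $\tilde\tau_{\epsilon_i}X(\alpha,\beta)=0$ for all $i$, then identify $\tilde\tau_t X(\alpha,\beta)$ with $j(\alpha,\beta,\bar\varphi_t)$ via Definition~\ref{gdef}. Your added remark on the compatibility of the two roles of $X(\alpha,\beta)$ is a sensible elaboration of the same proof, not a different route.
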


\begin{proof}
  Apply Proposition~\ref{pjdg5} to $X=X(\alpha,\beta)$ using the
  fact that $\tilde\tau_{\epsilon_i} X=0$ for all $i$. 
\end{proof}

\bibliographystyle{plain}
\def\noopsort#1{} \def\cprime{$'$} \def\noopsort#1{} \def\cprime{$'$}


\end{document}